\documentclass[10pt,oneside]{article}
\usepackage[T1]{fontenc}
\usepackage[english]{babel}

\usepackage{graphicx,accents}

\newcommand{\cev}[1]{\reflectbox{\ensuremath{\vec{\reflectbox{\ensuremath{#1}}}}}}

\usepackage[cal=cm]{mathalfa}

\usepackage[T1]{fontenc}
\usepackage{titlesec}

\usepackage{hhline}

\titleformat
{\chapter} 
[display] 
{\bfseries\Large\itshape} 
{Story No. \ \thechapter} 
{0.5ex} 
{
    \rule{\textwidth}{1pt}
    \vspace{1ex}
    \centering
} 
[
\vspace{-0.5ex}%
\rule{\textwidth}{0.3pt}
] 

\titleformat{\section}[wrap]
{\normalfont\bfseries}
{\thesection.}{0.5em}{}

\titlespacing{\section}{12pc}{1.5ex plus .1ex minus .2ex}{1pc}

\usepackage{tocloft} 

\usepackage{titlesec}
\titleformat{\subsection}[runin]
       {\normalfont\bfseries}
       {\thesubsection}
       {0.5em}
       {}
       [.]

\makeatletter
\@ifundefined{dddot}{}{}
\usepackage{mathdots}  
\makeatother

\makeatletter
\@ifundefined{ddddot}{}{}
\usepackage{yhmath}
\makeatother

\usepackage{geometry} 
\usepackage{xcolor} 
\usepackage{tikz} 
\usepackage[draft=false]{hyperref} 
\usepackage{mathtools}
\usepackage{amsmath,amsthm,amssymb}
\usepackage[all,cmtip]{xy}

\usepackage{amsfonts,extarrows}
\usepackage{enumerate,amscd,amsxtra,MnSymbol}
\usepackage{mathrsfs}
\usepackage{color}
\usepackage[cmtip,all]{xy}
\usepackage{eucal}
\usepackage{bbold}
\usepackage{upgreek}
\usepackage{paralist}
\usepackage{tikz-cd}
\usepackage{dsfont}
\usepackage{bbm}
\usepackage[bbgreekl]{mathbbol}

\usepackage{fullpage}

\usepackage[capitalise]{cleveref}  
\newcommand{\clevertheorem}[3]{
	\newtheorem{#1}[thm]{#2}
	\crefname{#1}{#2}{#3}
}

\numberwithin{equation}{section} 
\numberwithin{figure}{section} 

\theoremstyle{plain} 
\newtheorem{thm}{Theorem}[subsection]
\crefname{thm}{Theorem}{Theorems}
\newtheorem*{thm*}{Theorem}
\clevertheorem{proposition}{Proposition}{Propositions}
\clevertheorem{prop}{Proposition}{Propositions}
\newtheorem*{prop*}{Proposition}
\clevertheorem{theorem}{Theorem}{Theorems}
\clevertheorem{lemma}{Lemma}{Lemmas}
\clevertheorem{corollary}{Corollary}{Corollaries}
\clevertheorem{cor}{Corollary}{Corollaries}
\clevertheorem{conj}{Conjecture}{Conjectures}
\clevertheorem{questn}{Question}{Questions}

\theoremstyle{definition} 
\clevertheorem{definition}{Definition}{Definitions}
\clevertheorem{assumption}{Assumption}{Assumptions}
\clevertheorem{defn}{Definition}{Definitions}
\clevertheorem{notn}{Notation}{Notations}
\clevertheorem{conv}{Convention}{Conventions}

\clevertheorem{remark}{Remark}{Remarks}
\clevertheorem{rmk}{Remark}{Remarks}
\clevertheorem{warn}{Warning}{Warnings}
\clevertheorem{example}{Example}{Examples}
\clevertheorem{summ}{Summary}{Summaries}

\DeclareMathSymbol\bbDelta \mathord{bbold}{"01}
\DeclareMathSymbol\bDelta \mathord{bbold}{"01}

\newtheorem{remark*}{Remark}

\newtheorem{construction}[thm]{Construction}

\newtheorem{notation}[thm]{Notation}

\newcommand{\bD}{{\mathbb D}}

\renewcommand{\P}{{\mathbb P}}

\newcommand{\mA}{{\mathcal A}}
\newcommand{\mB}{{\mathcal B}}
\newcommand{\mC}{{\mathcal C}}
\newcommand{\mD}{{\mathcal D}}
\newcommand{\mE}{{\mathcal E}}

\newcommand{\mH}{{\mathcal H}}

\newcommand{\mM}{{\mathcal M}}
\newcommand{\mN}{{\mathcal N}}
\newcommand{\mO}{{\mathcal O}}
\newcommand{\mP}{{\mathcal P}}

\newcommand{\mS}{{\mathcal S}}

\newcommand{\mV}{{\mathcal V}}
\newcommand{\mW}{{\mathcal W}}

\newcommand{\mY}{{\mathcal Y}}

\newcommand{\A}{A}
\newcommand{\B}{\mathrm{B}}
\newcommand{\C}{C}

\newcommand{\E}{{E}}
\newcommand{\F}{{F}}
\newcommand{\G}{{G}}

\newcommand{\K}{{K}}
\renewcommand{\L}{{\mathrm L}}

\newcommand{\N}{{\mathrm N}}
\renewcommand{\P}{{P}}
\newcommand{\Q}{{Q}}
\newcommand{\R}{{\mathrm R}}
\newcommand{\rS}{{S}}

\newcommand{\W}{W}
\newcommand{\X}{X}
\newcommand{\Y}{Y}
\newcommand{\Z}{Z}

\newcommand{\bj}{{j}}
\newcommand{\bi}{{i}}
\newcommand{\m}{{m}}
\newcommand{\bk}{{k}}

\newcommand{\n}{{n}}

\newcommand{\op}{\mathrm{op}}

\newcommand{\surj}{\mathrm{surj}}

\newcommand{\Min}{\mathrm{Min}}
\newcommand{\Max}{\mathrm{Max}}

\newcommand{\colim}{\mathrm{colim}}

\newcommand{\LMod}{{\mathrm{LMod}}}

\newcommand{\rev}{{\mathrm{rev}}}

\newcommand{\Env}{{\mathrm{Env}}}  
\newcommand{\ot}{\otimes}

\newcommand{\co}{\mathrm{co}}

\newcommand{\univ}{\mathrm{univ}}

\newcommand{\id}{\mathrm{id}}
\newcommand{\Cat}{\mathrm{Cat}}

\newcommand{\Set}{\mathrm{Set}}

\newcommand{\Alg}{\mathrm{Alg}}

\newcommand{\Rep}{\mathrm{Rep}}

\newcommand{\Fun}{\mathrm{Fun}}

\newcommand{\cocart}{{\mathrm{cocart}}}

\newcommand{\cart}{{\mathrm{cart}}}
\newcommand{\Cart}{{\mathrm{Cart}}}
\newcommand{\bicart}{{\mathrm{bicart}}}

\newcommand{\lax}{{\mathrm{lax}}}
\newcommand{\oplax}{{\mathrm{oplax}}}

\newcommand{\tu}{{\mathbb 1}}
\newcommand{\coop}{\mathrm{coop}}

\newcommand{\Map}{{\mathrm{Map}}}

\newcommand{\Mor}{{\mathrm{Mor}}}

\newcommand{\BM}{\mathrm{BM}}

\newcommand{\coCart}{{\mathrm{coCart}}}

\newcommand{\cop}{\mathrm{cop}}

\newcommand{\map}{\mathrm{Map}}

\newcommand{\RMor}{\mathrm{RMor}}

\newcommand{\cube}{{\,\vline\negmedspace\square}}

\newcommand{\scat}{\mathcal{C}\mathit{at}}

\newcommand{\fcat}{\mathfrak{Cat}}

\newcommand{\LFib}{\mathrm{LFib}}

\begin{document}

\title{\textsc{Colimits
in Oriented Category Theory}}

\author{David Gepner and Hadrian Heine}

\maketitle

\begin{abstract}
In higher category theory, lax colimits are often understood to be a more useful and powerful generalization of usual (homotopy) colimits, which can be recovered from the lax colimit by a suitable localization.
However, lax colimits do not provide the correct notion of gluing from the geometric perspective.
Indeed, they are incompatible with the notion of categorical dimension, the Gray tensor product, and other basic geometric operations.
In this paper, we develop the theory of oriented colimits, which correct the defects of lax colimits, and agree with lax colimits in dimension less than or equal to one.

In order to study oriented colimits, we introduce a version of the Grothendieck construction which is compatible with enrichment in the Gray tensor product.
We prove that the Grothendieck construction induces an equivalence between cartesian fibrations and presheaves of $(\infty,\infty)$-categories, which is enriched in the Gray tensor product
of $(\infty,\infty)$-categories.
Oriented colimits simultaneously generalize the concept of lax colimits and the Gray tensor product,
and differ from lax colimits in much the same way in which the Gray tensor product differs from the cartesian product.
We demonstrate the necessity of oriented colimits by showing that various fundamental constructions in higher category theory fail to be lax colimits but are instances of oriented colimits.

As applications, we classify higher-categorical principal bundles, represent higher dimensional adjunctions by bicartesian fibrations of $(\infty,\infty)$-categories, and obtain higher categorical versions of Quillen's Theorems A and B, which admit very natural formulations in
our framework.

\end{abstract}

\tableofcontents

\vspace{5mm}

\section{Introduction}

\subsection{Colimits in geometry and topology}
Colimits are ubiquitous in geometry, where global objects such as manifolds are typically glued together from local pieces.
For instance, any topological space $X$ is the colimit of an open cover $\{U_i\}$, since $X$ is isomorphic to the colimit of its \v{C}ech nerve
\[
  \cdots \coprod U_{i}\times_X U_j\times_X U_k \Rrightarrow \coprod U_i\times_X U_j \rightrightarrows \coprod U_i.
\]
If $X$ is a manifold and all of the iterated intersections of the $U_i$ are contractible, then $X$ is homotopy equivalent to the geometric realization of its \v{C}ech nerve, a Kan complex which models $X$ as an $\infty$-groupoid.

The homotopical perspective is a fruitful one because various families of structures over $X$, such as principal $G$-bundles, are classified by homotopy classes of maps to an appropriate classifying space.
Specifically, they arise by pulling back a universal family along a classifying map, which is uniquely determined up to homotopy.
This recovers the standard cohomological classification of $G$-bundles, the Mayer--Vietoris sequence in cohomology, and other instances of descent spectral sequences.

Keeping track of higher categorical structure often improves the situation, by identifying the category of such structures with the category of morphisms to the classifying object.
Already in $\infty$-groupoid theory, all maps are classifiable: any map of homotopy types $Y\to X$ is pulled back from the universal family, the projection $\iota(\mS_*)\to\iota(\mS)$ from the core of the category\footnote{In this paper we will write $n$-category in place of $(\infty,n)$-category, for all $-1\leq n\leq\infty$, and write $(m,n)$-category, or strict $(m,n)$-category, when we need to refer to these other notions.} of pointed spaces to the core of the category of spaces (also known as $\infty$-groupoids, homotopy types, or anima).
If as above one wishes to restrict to maps with fixed fibers, it is simply a matter of mapping into the corresponding components of $\iota(\mS)\simeq\coprod_{Z\in\mS} \mathrm{BAut}_{\mS}(Z)$.

Of course, it unnecessary and artificial to pass to the groupoid core, 
and one might guess that the categorical version of the universal family is then the projection $\Cat_*\to\Cat$, the functor which forgets the basepoint.
But this is not sufficiently lax; rather, the universal cocartesian fibration is the projection $\Cat_{\ast//^\oplax}\to\Cat$ from the oplax slice under the point, meaning that morphisms only need to preserve the basepoint up to a specified not-necessarily-invertible morphism.
But even this is not lax enough: $\Cat$ naturally underlies the $2$-category $\scat$, and we ought to be able to consider cocartesian fibrations over a base $2$-category.
It is only in the limiting case where $n=\infty$ that the oplax slice projection $n\scat_{*//^\oplax}\to n\scat$\footnote{We write $n\scat$ for the $\infty$-category of $n$-categories, for all $0\leq n\leq\infty$.} doesn't discard interesting information.
Here one obtains a clean statement: any cocartesian fibration of $\infty$-categories is the pullback of the universal cocartesian family $\infty\scat_{\ast//^\oplax}\to\infty\scat$.

\subsection{Colimits in higher category theory}
Recall that an $n$-category is by definition a category enriched in the category of $(n-1)$-categories, and that
\[
\infty\Cat\simeq\lim\{\cdots\to (n+1)\Cat\to n\Cat\to (n-1)\Cat\to\cdots\}
\]
is the limit, taken along the core functors.
It is easy the see that $n\Cat$ is presentable, hence complete and cocomplete, for all $0\leq n\leq\infty$.
It is even cartesian closed, and is therefore tensored and cotensored over itself in the usual way.
However, these statements are of limited utility.
While it is extremely useful to know that $\infty\Cat$ admits colimits and functor categories, these cartesian-induced operations are not sufficiently lax or compatible with the fundamental notion of categorical dimension.

Even in 1-dimensional category theory, where the Gray tensor and the cartesian product still happen to coincide, there is a more refined notion of colimit, the lax colimit.
The lax colimit of a functor $F:X\to\Cat$ is computed as the total space of the Grothendieck construction applied to $F$, denoted $\int_X F$.
Dually, the lax limit is computed as the category of sections of the projection $\int_X F\to X$.
The ordinary (co)limit is recovered by inverting cocartesian morphisms or restricting to cocartesian sections.

While the lax (co)limit is a powerful and extremely useful construction, it is still not the correct notion, as it also fails to be compatible with categorical dimension.
This is because the lax colimit of the constant functor $X\to\infty\scat$ valued at at $\infty$-category $Y$ is the cartesian product $X\times Y$, and the lax limit is the functor category $\Fun(X,Y)$.
What we need instead is a version of the colimit which returns the Gray tensor product $X\boxtimes Y$ in this basic case.
This is the oriented colimit, and the natural notion of colimit in categories enriched in the Gray monoidal structure on $\infty\Cat$.

\subsection{Oriented category theory}
As in geometry and homotopy theory, the correct product of an $m$-dimensional and an $n$-dimensional category should be an $(m+n)$-dimensional category.
However, their cartesian product is only of dimension the maximum of $\{m,n\}$.
Hence the cartesian product is not graded by dimension.
This leads to a whole host of problems, including the facts that the categorical cyliner, cone, and suspension constructions are only functors of the underling category of $\infty$-categories, and are not compatible with the cartesian enrichment of $\infty\Cat$ over itself via the usual functor categories.

In order to remedy this situation, one is led inevitably to considering the Gray tensor product as the correct refinement of the cartesian product, and its adjoints, the lax and oplax functor categories, as the correct enrichment.
This is the perspective of oriented category theory, as explained below.
As mentioned above and explained in \cite{GepnerHeine2026}, \cite{gepner2026homotopy}, and \cite{gepner2026fibrations}, the cartesian product is not compatible with the natural notion of dimension in higher category theory, and must be replaced by the Gray tensor product in order to remedy this defect.
This difficulty propagates through the entire theory and necessitates the development of a theory of higher categories in which morphism $\infty$-categories compose via the Gray tensor product operation.
Since the Gray tensor comes in two antisymmetric variants, the lax and oplax versions, which are related by various (anti)involutions, it is perhaps useful to think of the resulting structure as a choice of orientation which dictates the precise nature of the composition of the morphism $\infty$-categories.

We write $(\infty\Cat,\boxtimes)$ for the category of $\infty$-categories equipped with the oplax Gray tensor monoidal structure, the category of oriented categories
\[
\Cat\boxtimes =\Cat_{(\infty\Cat,\boxtimes)}
\]
is by definition the category of categories enriched in the monoidal category $(\infty\Cat,\boxtimes)$.
There is also a negatively, or anti-, oriented version
\[
\boxtimes\Cat = {_{(\infty\Cat,\boxtimes)}\Cat},
\]
as well as the bioriented version
\[
\boxtimes\Cat\boxtimes = {_{(\infty\Cat,\boxtimes)}\Cat_{(\infty\Cat,\boxtimes)}}.
\]
With these enrichments, we obtain oriented, antioriented, and bioriented versions of $\infty\Cat$, denoted $\infty\fcat$.

We refer to the resulting theory as oriented category theory, since it is both descriptive as well as compatible with certain pre-existing notions, such as the oriented pullback and oriented pushout familiar from low-dimensional higher category theory.
In fact, these oriented construction are special cases of the theory of oriented (co)limits, which we develop in this paper.

\subsection{Colimits in oriented category theory}

The most basic $n$-category, the $n$-disk $\bD^n=S(\bD^{n-1})=\cdots=S^{\circ n}(\bD^0)$, can be obtained as an iterated unreduced categorical suspension of the point $\bD^0$, the terminal $\infty$-category (which is also the terminal $\infty$-groupoid).
The categorical suspension of an $\infty$-category $X$ is an example of an oriented, but not (op)lax, colimit, namely the (partial) oriented pushout
\[
S(X)=\bD^0\underset{X}{\vec{+}}\,\bD^0
\]
of the span diagram $\bD^0\xleftarrow{} X\xrightarrow{}\bD^0$.
Dually, the morphism categories of an $\infty$-category $X$, appear as the (partial) oriented, but not (op)lax, pullbacks
\[
\Mor_X(s,t)=\bD^0\underset{X}{\vec{\times}}\,\bD^0
\]
of the cospan diagram $\bD^0\xrightarrow{s} X\xleftarrow{t}\bD^0$.

More generally, we have the following table depicting various types of colimits in the classical set-based, homotopical space-based, (op)lax categorical, and oriented higher categorical contexts.
Here $M$ denotes a monoidal $\infty$-category, or category in the classical or homotopical context, and $(-)^\mathrm{gp}$ the group completion.
$$\begin{tabular}{| c | c | c | c| c |}
\hline
& Classical & Homotopical & Categorical & Oriented\\
\hhline{|=|=|=|=|=|}	
Pushout of $Y\leftarrow X\rightarrow Z$ & ${Y\!+\!Z}\!\!\!\underset{X\times \partial I}{+}\!\!\!{X}$ & ${Y\!+\!Z}\!\!\!\underset{X\times\partial I}{+}\!\!\! X\!\times\! I$ & ${Y\!+\!Z}\!\!\!\underset{X\times\partial\bD^1}{+}\!\!\!{X}\!\times\!\bD^1$ & ${Y\!+\!Z}\!\!\!\underset{X\boxtimes\partial\bD^1}{+}\!\!\!{X\!\boxtimes\!\bD^1}$ \\
\hline
\!\!\!\! Constant $Y$-valued functor on $X$\!\!\! &
$X\times Y$ & $X\times Y$ & $X\times Y$ & $ X\boxtimes Y $ \\
\hline
Coend of $[n]\mapsto M^n$ & $*$ & \!\!\!$BM[\mathrm{morphisms}^{-1}]^\mathrm{gp}$\!\!\! & \!\!\!$BM[\mathrm{morphisms}^{-1}]$\!\!\! & $BM$\\
\hline
\end{tabular}$$

As the suspension and morphism objects (a.k.a. oriented paths, or loops when the source and target agree) examples show, it is only the oriented (co)limit which is the suitable for most geometric sorts of applications, just as the homotopy (co)limit is so useful in homotopy theory and its applications in manifold theory or derived categories.
Specifically, it is only the oriented constructions which are compatible with the corresponding operations in homotopy theory and topology.

The category of $\infty$-categories, refines to an oriented category $\infty\fcat$ as well as an $\infty$-category $\infty\scat$, depending on whether or not we enrich it, respectively, with the Gray tensor or cartesian monoidal structure.
The $\infty$-category $\infty\scat$ is capable of expressing (op)lax (co)limits, which are still quite useful even in bicategory theory and are the sorts of higher category (co)limits most frequently studied in the literature.

As alluded to above, however, (op)lax colimits are still somewhat degenerate constructions, as they are not graded by dimension.
The lax colimit of the constant functor $\mD\to\infty\scat$ with value $\mC$ is $\mC\times\mD$, whereas the oriented colimit of the same functor is the Gray tensor product $\mC\boxtimes\mD$.
This strict additivity of dimension is ultimately the reason why it is only the oriented theory which behaves in a geometrically desireable enough way that we are able to import much of the machinery of algebraic topology (or really, obtain them as special cases of their higher-dimensional categorical analogues).

For formal reasons, the category of (anti)oriented categories admits all weighted limits and colimits.
It is therefore possible to consider more general weighted colimits, just as in the (op)lax case, involving $\infty\scat$ equipped with the cartesian monoidal structure.
The (op)lax colimit, however, is a refinement of the ordinary colimit which uses a specific weight obtained from the indexing diagram in question, as explained in \cite{articles}.
The (anti)oriented colimit is a generalization of the (op)lax colimit,
obtained similarly, except that the weights are enriched in the Gray tensor product instead of the cartesian product.

\subsection{The Grothendieck construction}
The Grothendieck construction arose in the study of stacks and fibered categories, where is was used to construct a cartesian fibration $p:X\to S$ from a functor $f:S^{\op}\to\Cat$.
Classically, the fibered category $p:X\to S$ associated to the functor $f:S^{\op}\to\Cat$ (which we assume for the moment factors through $(1,1)$-categories) has objects those pairs $(s,x)$ with $s\in S$ and $x\in f(s)$ and morphisms those pairs $(\varphi,\alpha):(s,x)\to (t,y)$ with $\varphi:s\to t$ in $S$ and $\alpha:x\to \varphi^*(y)$ in $f(s)$, where $\varphi^*:f(t)\to f(s)$ denotes the induced functor.
For certain applications, it is preferable to view presheaves simply as functors $f:S\to\Cat$, by replacing $S$ with $S^{\op}$, in which case the classical Grothendieck construction induces an equivalence between category-valued functors on $S$ and cocartesian fibrations over $S$.

Arguably the most important feature of the Grothendieck construction is that is provides a direct construction of the lax colimit of a functor $f:S\to\Cat$ as the total category $\int_S f$ of the associated cocartesian fibration over $S$.
Analogously, we show that the higher Grothendieck construction computes the oplax colimit of a functor $f:S\to\infty\scat$ for any $\infty$-category $S$.
It is crucial to know the image of the Grothendieck construction
\[
\int_S:\Fun(S,\infty\scat)\to\infty\scat_{/S}.
\]
It automatically factors through $\infty\scat_{/S}\to\infty\scat$, either by its definition of the Grothendieck construction, or by the universal property of the oplax colimit, since the oplax colimit of the terminal object of $\Fun(S,\infty\scat)$ is $S$ itself.

The universal cocartesian fibration can be equivalently described as the oriented fiber of the identity of $\infty\scat$ at the tautological basepoint $\bD^0\to\infty\scat$.
As the oriented fiber of any functor $S\to T$ automatically produces a cocartesian fibration $X\to S$, the functor $\int_S$ necessarily factors through the subcategory $\infty\scat_{/S}^{\cocart}$ spanned by the cocartesian fibrations and (higher) morphisms thereof.
But the oriented fiber refines to an antioriented functor of antioriented categories, and therefore the Grothendieck construction carries this additional structure as well.

To see that the Grothendieck construction is an equivalence onto the (nonfull) subcategory of cartesian fibrations,\footnote{This result is also obtained by Loubaton \cite{loubaton2024categorical} using complicial models.}
one can argue as follows.
For any $\infty$-category $S$ there is a canonical functor
\[
j:S\to \infty\scat^{\cart}_{/S}
\]
which sends $t\in S$ to the (op)lax slice projection $S_{//^\oplax t}\to S$, the image of the Yoneda embedding under the Grothendieck construction.
Given a cartesian fibration $p:X\to S$, we obtain an ``$S$-nerve'' functor
\[
\Fun(S^{\circ},\infty\scat)\to \infty\scat^{\cart}_{/S}\qquad t\mapsto\Fun^\cart_S(S_{//^\oplax t},X).
\]
We show that the (opposite) Grothendieck construction is the left adjoint of this $S$-nerve functor.
Moreover, this observation suggests a strategy to prove that the (opposite) Grothendieck construction is an equivalence onto the subcategory of cartesian fibrations, using the fact that the full subcategory
\[
\{S_{//^\oplax t}:t\in S\}\subset\infty\scat^{\cart}_{/S}
\]
is dense.
This means that the Grothendieck construction of the $S$-nerve functor is an endoequivalence of the category $\infty\scat^{\cart}_{/S}$, and therefore precisely determines the image of the Grothendieck construction.

Consequently, we may unstraighten cartesian fibrations $p:X\to S$ to presheaves $f:S^{\circ}\to\infty\scat$, providing a means of overcoming one of the major technical challenges involved in higher category theory.
This is tremendously useful in practice in homotopy-coherent mathematics, fibrations can usually be specified even when there is only naturally a contractible space of choices for something, whereas a functor requires specific choices for everything which are compatable with the infinity hierarchy of higher compositions.

Already in ordinary category theory, there are several different but useful types of fibrations, and in higher category theory there are many more, although the proliferation of these notions is largely due to the infinite number of automorphisms of the category of $\infty$-categories itself.
For instance, we have the following commutative diagram of various types of fibrations, each of which defines a subcategory of the category of functors of $\infty$-categories.
$$
\begin{xy}
\xymatrix{
 &   \{ {\mathrm{Bicartesian} \ \mathrm{fibrations}} \} \ar[ld]\ar[rd] &
\\
\{ \mathrm{Cocartesian} \ \mathrm{fibrations} \}  \ar[rd] \ar[d] & 
& \{ \mathrm{Cartesian} \ \mathrm{fibrations}  \} \ar[ld] \ar[d]\\
\{ {\mathrm{Locally} \ \mathrm{cocartesian} \ \mathrm{fibrations} \} } \ar[rd] & \{ \mathrm{Exponentiable} \ \mathrm{fibrations} \} \ar[d] & {\{\mathrm{Locally} \ \mathrm{cartesian} \ \mathrm{fibrations} \} }  \ar[ld] \\
& \{ \mathrm{Functors}\} &
}
\end{xy} $$
Note that this excludes the bifibrations, as well as the infinitude of dual versions of (co)cartesian fibrations, such as the anti(co)cartesian fibrations.

\subsection{Quillen's theorems A and B}
There are a number of theoretically and practically significant applications of the general theory of
oriented limits and colimits. One particularly important one is it to the classification of cofinal functors.

Specifically,
Quillen's Theorem A, generated by Lurie to the homotopical 1-categorical world, gives a necessary and
sufficient condition for a functor to be cofinal.
We give a necessary and sufficient condition for a functor of $\infty$-categories $\phi: \mC \to \mD $ to be cofinal, in terms of the oriented fibers
$ \mC \,{\vec{\times}}_\mD \{ X \} $ of $\phi$ over the objects $X$ of $\mD$.
Namely, we show that the localization of $\mC \,{\vec{\times}}_\mD \{ X \} $ with respect to the class of morphisms which are cocartesian over $\mC$ must be contractible. 

We also prove an $\infty$-categorical version of Quillen's Theorem B.
Again, let $\phi: \mC \to \mD $ be a functor of $\infty$-categories and let $X$ be an object of $\mC$.
Then the groupoidification oriented functor, also called the classifying space functor, or the left adjoint $\infty\fcat \to \mS$ of the inclusion $\mS\to\infty\fcat$,
preserves the oriented left fiber of $\phi $ over $X$ if, for every morphism $X \to \Y$ in $\mD$, the induced functor
$$ \mD_{//^\oplax X} \times_\mD \mC \to \mD_{//^\oplax Y} \times_\mD \mC $$ is a weak equivalence.

\subsection{Higher dimensional adjunctions}

Duality is one of the fundamental organizing principles of modern mathematics. In category theory, duality is encoded by adjunctions.
Higher category theory replaces adjunctions by higher dimensional adjunctions, which are central in the study of topological field theories \cite{Cob}.
In cobordism $\infty$-categories every cell --- given by a cobordism --- admits adjoints given by the same cobordism of eventually reversed orientation,
and topological quantum field theories are symmetric monoidal functors defined on bordism $\infty$-categories.

In classical category theory, there is a deep connection between fibrations and adjunctions: adjunctions between two categories are elegantly encoded by bicartesian fibrations over the 1-disk.
We extend this relationship to higher dimensional adjunctions and higher dimensional disks.

A {\em bicartesian fibration} is a functor which is both a cocartesian and cartesian fibration.
An adjunction $F:X\rightleftarrows Y:G$ between categories is equivalent to the data of a bicartesian fibration $p:Z\to\bD^1$ such that the fiber over $0$ is $X$ and the fiber over $1$ is $Y$.
When the base is a higher dimensional category, the situation becomes significantly more complicated.

Let's consider the simplest category of dimension greater than one, namely the two-dimensional disk $\bD^2$.
Over $\bD^2$, a bicartesian fibration $p:Z\to\bD^2$ gives rise to a pair of bicartesian fibration $ Z^L, Z^R \to \bD^1$ by pulling back along the two inclusions $\bD^1 \subset \bD^2.$ The latter correspond to adjunctions $F_0,F_1:X\rightleftarrows Y:G_0,G_1$ between $\infty$-categories $X$ and $Y$, the fibers of $p:Z\to\bD^2$ over 0,1.
Choosing objects $x\in X$ and $y\in Y$, then the induced functor $q:\Mor_Z(x,y)\to\bD^1$ is again a bicartesian fibration, and therefore it classifies an adjunction 
$$\Mor_Z(x,y)_0 \simeq \Mor_{Z^L}(x,y) \simeq \Mor_{Y}(F_0(x),y) \simeq \Mor_{X}(x,G_0(y)) \rightleftarrows $$$$ \Mor_Z(x,y)_1 \simeq \Mor_{Z^R}(x,y) \simeq \Mor_{Y}(F_1(x),y) \simeq \Mor_{X}(x,G_1(y)).$$
By naturality in $x$, we obtain an adjunction
\[
G_0(y) \rightleftarrows G_1(y)
\]
which by naturality in $y$ produces an adjunction
$
G_0 \rightleftarrows G_1.
$
Similarly, by naturality in $y$, we obtain an adjunction
\[
F_0(x) \rightleftarrows F_1(x)
\]
which by naturality in $x$ produces an adjunction
$
F_0 \rightleftarrows F_1,
$
which is an induced adjunction on mates.

We can use bicartesian fibrations to model $\infty$-categories with adjoints.
For example, in the 1-categorical case, the Yoneda embedding $X\to\mP(X)\simeq\infty\scat_{/X}^\cart$ sends an object of $x$ to the free cartesian fibration on $x\to X$.
If additionally $X$ has adjoints, this will factor through the (non full) subcategory $\infty\scat_{/X}^{\mathrm{bicart}}\subset\infty\scat_{/X}^{\cart}$ of bicartesian fibrations over $X$.

In order to obtain a notion of $\infty$-category with adjoints, we can reverse-engineered this process.
Namely, we can define an $\infty$-category with adjoints $X$ to be an $\infty$-category $X$ for which the Yoneda embedding $X\to\infty\scat_{/X}^\cart$ factors through the subcategory $\infty\scat^{\bicart}_{/X}$.
The advantage of this approach is that is automatically takes care of coherence issues.
Of course, other approaches are also possible.

\subsection{Main results}

A crucial feature of homotopy theory is that every homotopy type is the homotopy colimit of the constant functor on a point indexed by the space.
The theory of oriented colimits extends this feature to $\infty$-categories.

\begin{theorem}(\cref{Graytenso})
Let $\mC$ and $\mD$ be $\infty$-categories. The Gray tensor product $\mC\boxtimes\mD$ is the oriented colimit of the constant functor $\mD\to\infty\fcat$ with value $\mC$.
\end{theorem}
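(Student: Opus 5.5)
We want to show that the oriented colimit of the constant functor $\mD\to\infty\fcat$ with value $\mC$ is $\mC\boxtimes\mD$. The approach is to identify both sides by the maps out of them.

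By definition, the oriented colimit of $F:\mD\to\infty\fcat$ is the weighted colimit whose weight is the enriched analogue of the oplax weight. Concretely, for every $\infty$-category $\mE$ it represents the $\infty$-category of oriented (Gray-lax) cocones. So for the constant functor we must produce an equivalence
\[
\Fun(\mC\boxtimes\mD,\mE)\simeq \mathrm{Cocone}^{\mathrm{or}}(\mathrm{const}_\mC,\mE),
\]
natural in $\mE$. Since we may assume the Grothendieck construction results stated earlier, the cleanest route is through them. The oriented colimit of $F$ is computed as the total space of the oriented (Gray-enriched) Grothendieck construction $\int_\mD F\to\mD$, with the cocartesian edges left uninverted.

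The plan has three steps.

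\textbf{Step 1.} Use the earlier identification of the oriented colimit with the total space of the Gray-enriched Grothendieck construction of $F$.

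\textbf{Step 2.} Compute the Grothendieck construction of a constant functor. The key input is that the Gray-enriched Grothendieck construction is an $(\infty\Cat,\boxtimes)$-enriched (antioriented) functor. In particular it is compatible with tensoring by $\infty$-categories: for $G:\mD\to\infty\fcat$ and $\mC\in\infty\Cat$ we should have
\[
\int_\mD(\mC\boxtimes G)\simeq \mC\boxtimes_{\mD}\int_\mD G,
\]
where the right side denotes the appropriate fiberwise Gray tensor over $\mD$. A cleaner alternative is to avoid fiberwise tensors and use enriched adjunctions. The constant functor $\mathrm{const}_\mC$ is the tensor $\mC\boxtimes\mathrm{const}_{\bD^0}$ in the Gray-enriched functor category. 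Oriented colimits, being enriched weighted colimits, commute with tensors by objects of $\infty\Cat$, because the weighted colimit functor is a left adjoint of enriched categories. Hence
\[
\mathrm{colim}^{\mathrm{or}}(\mathrm{const}_\mC)\simeq \mC\boxtimes \mathrm{colim}^{\mathrm{or}}(\mathrm{const}_{\bD^0}).
\]

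\textbf{Step 3.} The oriented colimit of the terminal functor is $\mD$ itself, as already noted in the introduction via the universal property: the Grothendieck construction of the terminal object is $\mathrm{id}_\mD$. Combining the two steps gives $\mC\boxtimes\mD$. Finally, I would check which side the tensor lands on, $\mC\boxtimes\mD$ versus $\mD\boxtimes\mC$. This is determined by the orientation convention (oriented versus antioriented enrichment) and should come out as stated once the enrichment of $\infty\fcat$ used to define the oriented colimit is fixed.

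The main obstacle is Step 2's claim that oriented colimits commute with the Gray tensoring $\mC\boxtimes-$. This requires knowing that the oriented colimit is genuinely an enriched weighted colimit in $\infty\fcat$, so that the enriched hom identifies $\Fun(\mathrm{colim}^{\mathrm{or}}(\mC\boxtimes G),\mE)$ with $\Fun(\mC,\Fun(\mathrm{colim}^{\mathrm{or}} G,\mE))$ for the appropriate lax/oplax internal hom. It also requires that $\infty\fcat$ is tensored, with tensor given by $\boxtimes$, on the correct side. I would first verify this tensoring from closedness of the Gray monoidal structure, that is, the existence of lax/oplax functor categories right adjoint to $\boxtimes$ on each side. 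Then I would match sides carefully so that the one-sided closedness matches the side on which the weight acts.
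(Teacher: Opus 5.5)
Your Step 2 is essentially sound for the bitensored target $\infty\fcat$, modulo the side bookkeeping you already flag. Tensors in the enriched functor category are computed pointwise (\cref{colimenrfun}), so the constant functor at $\mC$ is the tensor of the constant functor at $\bD^0$ by $\mC$. The weighted colimit functor is enriched over the opposite side (\cref{weifunc}) and is a left adjoint, so it commutes with that tensor.

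The gap is Step 3. Its justification rests on Step 1, and Step 1 is false. Nothing identifies the oriented colimit with the total space of a Grothendieck construction, and no such identification can hold. The Grothendieck construction, even in its enriched form (\cref{Groori}), sends the constant functor at $\mC$ to the projection $\mC\times\mD\to\mD$. By \cref{laxgro} its total space is the lax colimit, which for $\mC=\mD=\bD^1$ is $\bD^1\times\bD^1$ rather than $\bD^1\boxtimes\bD^1$.

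The remark from the introduction that you invoke concerns the oplax colimit of the terminal object of $\Fun(\mD,\infty\scat)$. That is a statement about the mapping spaces $\map_{\Fun(\mD^\circ,\infty\scat)}(W,\underline{Y})$ out of the weight. The oriented colimit of the terminal functor is instead characterized by mapping spaces in the Gray-enriched functor category ${\boxtimes\Fun}(\mD^\circ,\infty\fcat)$, whose natural transformations are a priori different. Relatedly, the enriched homs of $\infty\fcat$ are $\Fun^\lax$ and $\Fun^\oplax$, not $\Fun$, so your opening formula already conflates the cartesian and Gray enrichments.

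The missing input is \cref{presheaves}: the functor $\Fun(\mD,\infty\scat)\to{\boxtimes\Fun}(\mD,\infty\fcat)$ is fully faithful on underlying categories. With it, the terminal case goes through as follows.
\begin{enumerate}
\item The weight and any constant functor valued in an $\infty$-category both land in $\infty\scat$. So the relevant mapping space may be computed in $\Fun(\mD^\circ,\infty\scat)$.
\item \cref{Groeq} turns this into maps of cartesian fibrations $\Fun^\oplax(\bD^1,\mD)\to Y\times\mD$ over $\mD$.
\item The free cartesian fibration (\cref{envelo2}) identifies these with $\map_{\infty\scat}(\mD,Y)$.
\end{enumerate}

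This computation never uses that the constant value is $Y$ rather than the morphism $\infty$-category $\L\Mor(\mC,Y)$. Running it with the latter gives $\map_{\infty\scat}(\mD,\L\Mor(\mC,Y))$ directly, which is the universal property of the tensor of $\mC$ by $\mD$. That is the paper's proof. It makes your Step 2 unnecessary, and it works for an arbitrary antioriented target, which need not be tensored on the other side.

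So the step you singled out as the main obstacle is the routine one. The real content is comparing Gray-enriched and ordinary natural transformations between $\infty\scat$-valued functors, and your proposal does not supply it.
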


This result is not formal, despite the fact that oriented categories are categories enriched in the Gray tensor product.
The major steps in the proof of this theorem is the embedding of $\infty$-categories into oriented categories, as constructed in \cite{oriented}, together with the Grothendieck construction, which we also upgrade to the oriented setting.

\begin{theorem}\label{THH1}(\cref{Groeq}, \cref{Groori})
Let $S$ be an $\infty$-category.
Mapping out of the slice categories is a fully faithful oriented functor
$$\infty\fcat_{/S}^\cocart\subset\boxtimes\Fun(S,\infty\fcat)$$
with image those antioriented functors $S\to\infty\fcat$ which factor through $\infty\scat\subset\infty\fcat$.
Moreover, this embedding restricts to give an equivalence of $\infty$-categories
\[
\int_S: \Fun(S,\infty\scat)\simeq\infty\scat_{/S}^\cocart
\]
in which a functor $S\to\infty\scat$ is sent to the oriented left fiber
\[
* \,{\vec{\times}}_{\infty\scat} S \to S.
\]
\end{theorem}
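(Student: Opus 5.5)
The plan is to deduce the statement from the $S$-nerve/density strategy sketched in the introduction, upgraded to the antioriented setting.

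\textbf{Step 1: the functor and its enrichment.} For a cocartesian fibration $X\to S$, consider the functor
\[
S\to\infty\fcat,\qquad t\mapsto \Fun^{\cocart}_S(S_{t//^\oplax},X),
\]
where $S_{t//^\oplax}\to S$ is the free cocartesian fibration on $t$, i.e.\ the image of the corepresentable under the Grothendieck construction. Since $t\mapsto S_{t//^\oplax}$ is contravariant in $t$, this functor is covariant. We must show it is antioriented and that the assignment $X\mapsto(\text{this functor})$ is oriented. For this, the relevant enriched statement is that $\infty\fcat^{\cocart}_{/S}$ is tensored over $(\infty\Cat,\boxtimes)$: for $K\in\infty\Cat$, the tensor is $K\boxtimes X\to S$, formed as an oriented colimit or pullback of $X$ along $K\boxtimes S\to S$. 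The Gray structure on $\infty\fcat^\cocart_{/S}$ then comes from the oplax functor categories $\Fun^{\cocart}_S(-,-)$. With this in hand, the mapping-out functor is the enriched restricted Yoneda functor along the enriched functor $S^{\op}\to\infty\fcat^{\cocart}_{/S}$.

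\textbf{Step 2: full faithfulness.} Full faithfulness is the statement that the full subcategory $\{S_{t//^\oplax}\}$ is enriched-dense in $\infty\fcat^\cocart_{/S}$. In other words, every cocartesian fibration $X$ is the $\boxtimes$-weighted colimit
\[
\int^{t} \Fun^\cocart_S(S_{t//^\oplax},X)\boxtimes S_{t//^\oplax},
\]
with the weighted colimit taken in cocartesian fibrations. The plan is to prove this first fiberwise: over $s\in S$, the coend reduces by the enriched co-Yoneda lemma to $X_s$, using that
\[
\Fun^\cocart_S(S_{t//^\oplax},X)\simeq X_t,
\]
which is the universal property of free cocartesian fibrations. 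It then remains to check that the comparison map is a map of cocartesian fibrations and is fiberwise an equivalence; a fiberwise equivalence between cocartesian fibrations is an equivalence.

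\textbf{Step 3: the image.} Every antioriented functor $S\to\infty\scat$ is hit. Given such an $F$, form the oriented left fiber
\[
*\,\vec{\times}_{\infty\scat}S\to S.
\]
It is a cocartesian fibration because oriented fibers of any functor are. Its mapping-out functor recovers $F$, because mapping $S_{t//^\oplax}$ into the pullback of the universal fibration $\infty\scat_{*//^\oplax}$ computes $F(t)$ by the universal property of the lax slice. Conversely, the functor associated to any $X$ lands in $\infty\scat$ since each $X_t$ is an $\infty$-category. This gives the essential image. Passing to underlying $\infty$-categories, using that the enriched functor $S\to\infty\fcat$ factoring through $\infty\scat$ has underlying space of antioriented functors equal to that of $\Fun(S,\infty\scat)$, yields the equivalence $\int_S$. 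The inverse formula follows from Step 3.

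\textbf{Main obstacle.} The main obstacle is Step 2: the Gray-enriched density. Fiberwise co-Yoneda is formal, but showing that the $\boxtimes$-weighted coend is formed compatibly with the cocartesian structure needs care. Specifically, one must show that $K\boxtimes(-)$ preserves cocartesian fibrations over $S$ and commutes with fibers. This is the step that uses the non-formal input, namely the embedding of $\infty$-categories into oriented categories from \cite{oriented}. I would first attempt it for $S=\bD^n$, where cocartesian fibrations are explicit, and then extend by writing $S$ as a colimit of disks and using descent for cocartesian fibrations.
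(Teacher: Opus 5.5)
Your Steps 1--3 reproduce the formal part of the argument, but the one non-formal input is assumed rather than proved. That formal part consists of the enriched restricted Yoneda functor, the identification $\Fun^{\cocart}_S(S_{t//^\oplax},X)\simeq X_t$ from the free cocartesian fibration, conservativity via fiberwise equivalences, and the check that the oriented left fiber of $F$ is sent back to $F$.

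In Step 2 you compute the fiber over $s$ of the coend $\int^t \Fun^\cocart_S(S_{t//^\oplax},X)\otimes S_{t//^\oplax}$ as the coend of the fibers. This presupposes two things: that such weighted colimits exist among cocartesian fibrations over $S$, and that the fiber functor $X\mapsto X_s$ preserves them. That functor is corepresented by $S_{s//^\oplax}$, so it preserves limits for free. Preservation of colimits is exactly what is at stake. \cref{Grothendieck-char} shows that ``cocartesian fibrations over $\mC$ admit colimits and the fiber functors preserve them'' is equivalent to the Grothendieck construction over $\mC$ being an equivalence, and its proof is precisely your density argument. Similarly, the existence of these weighted colimits (i.e.\ of the enriched left adjoint) needs presentability of $\infty\fcat^\cocart_{/S}$, which the paper only obtains in \cref{orientpres} as a consequence of \cref{Groeq}. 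So the argument is circular at its only hard point.

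The paper supplies the missing input by induction on categorical dimension, in three pieces:
\begin{itemize}
\item Straightening over a suspension $S(\mA)$, via the Grothendieck construction for enriched correspondences plus the inductive hypothesis for cartesian fibrations over $\mA$ (\cref{Groprof}, \cref{suspe}).
\item Gluing over wedges of suspensions (\cref{glue}, \cref{Segal}). This again uses the inductive hypothesis, to know that colimits of cocartesian fibrations over $(m-1)$-categories are computed fiberwise.
\item Descent $\iota_1(m\Cat^\cocart_{/\mC})\simeq\lim_{\mB}\iota_1(m\Cat^\cocart_{/\mB})$ over $\mB\in\Theta((m-1)\Cat)_{/\mC}$ (\cref{Grothe}), which rests on \cref{flatness}.
\end{itemize}
Your closing suggestion (``first $\bD^n$, then descent'') points in this direction, but both pieces are where the work lies and neither is argued. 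In particular, cocartesian fibrations over $\bD^n=S(\bD^{n-1})$ are not explicit without already straightening over $\bD^{n-1}$.

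Moreover, the Gray-enriched fiberwise computation cannot be carried out as you describe, because the lemma you single out is false. The projection $\bD^1\boxtimes\bD^1\to\bD^1$ onto the first factor is not a cocartesian fibration, so $K\boxtimes(-)$ does not preserve cocartesian fibrations. The lift $(0,0)\to(1,0)$ is not cocartesian, since $\Mor((1,0),(1,1))\simeq *$ whereas $\Mor((0,0),(1,1))\simeq\bD^1$; the lifts ending at $(1,1)$ fail against $(1,0)$.

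More to the point, left tensors in $\infty\fcat^\cocart_{/S}$ are not computed fiberwise. Take $S=\bD^2$ with objects $0,1$, and $K=\bD^1$. The pointwise Gray tensor of $K$ with $\Mor_S(0,-)$ is an oriented functor on which the $2$-cell of $S$ acts by a non-strict transformation, so it does not factor through $\infty\scat$. The tensor $K\otimes S_{0//^\oplax}$ is its reflection, whose fiber over $1$ is $K\times\bD^1$ rather than $K\boxtimes\bD^1$. This is exactly why \cref{Groori} is a localization rather than an equivalence onto all antioriented functors.

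The paper therefore never computes Gray-enriched coends fiberwise. It first proves the unenriched equivalence \cref{Groeq}. It then deduces full faithfulness of the antioriented right adjoint $\gamma$ from the comparison square with $\Fun(S,\infty\scat)$, using \cref{presheaves} and the fact that $\gamma$ preserves left cotensors. That same square is what shows $\gamma(X)$ factors through $\infty\scat$. Your reason in Step 3 (``each $X_t$ is an $\infty$-category'') is vacuous, since factoring through $\infty\scat$ constrains how the higher cells of $S$ act, not the values.
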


This latter equivalence is usually referred to as the Grothendieck construction, or the unstraightening and straightening inverse equivalences between functors and fibrations, respectively.
Our proof of this theorem is model-independent and uses induction on categorical dimension.

\begin{corollary}
The oriented left fiber $$ \{ *\} \,{\vec{\times}}_{\infty\fcat} \infty\scat \to \infty\scat $$ of the identity of $\infty\scat$ is the universal cocartesian fibration.
\end{corollary}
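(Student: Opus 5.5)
The plan is to deduce the corollary directly from Theorem~\ref{THH1}, by identifying the oriented left fiber of the identity with the image of the identity functor under $\int_{\infty\scat}$ and then using naturality of the Grothendieck construction in the base.

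First, "universal cocartesian fibration" should be made precise. We mean a cocartesian fibration $\pi: U\to\infty\scat$ such that, for every $\infty$-category $S$, pulling back $\pi$ along a functor $F:S\to\infty\scat$ induces an equivalence of $\infty$-categories
\[
\Fun(S,\infty\scat)\simeq\infty\scat^\cocart_{/S}, \qquad F\mapsto S\times_{\infty\scat}U .
\]
Equivalently, $\pi$ represents the functor $S\mapsto\iota(\infty\scat^\cocart_{/S})$. Here one has to handle the size issue: $\infty\scat$ is large, so we apply the large version of Theorem~\ref{THH1} (with respect to a larger universe) to $S=\infty\scat$ itself.

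Second, apply Theorem~\ref{THH1} with $S=\infty\scat$ to the identity functor. This gives that $\int_{\infty\scat}\id$ is the oriented left fiber $\{*\}\,\vec{\times}_{\infty\fcat}\infty\scat\to\infty\scat$, and in particular that it is a cocartesian fibration.

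Third, the key step is to show base change compatibility. For a functor $F:S\to\infty\scat$, we want a canonical equivalence
\[
\int_S F \simeq S\times_{\infty\scat}\int_{\infty\scat}\id .
\]
By the explicit formula in Theorem~\ref{THH1}, $\int_S F$ is the oriented left fiber $*\,\vec{\times}_{\infty\scat}S$ formed along $F$. Oriented fibers are defined by an oriented pullback with $\bD^0\to\infty\fcat$ at the tautological basepoint. They are therefore stable under ordinary pullback along $F$, by pasting of pullback squares: the oriented pullback of a cospan $*\to T\leftarrow S$ is the ordinary pullback of the oriented pullback of $*\to T\leftarrow T$ along $S\to T$. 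I expect this pasting lemma for oriented pullbacks to be the main point requiring care. I would prove it from the representing description of oriented pullbacks via $\bD^1\boxtimes-$, or equivalently as a pullback involving $\Fun^{\oplax}(\bD^1,T)$, which is manifestly compatible with base change.

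Combining the three steps, the pullback functor $F\mapsto S\times_{\infty\scat}U$ is identified with $\int_S$. By Theorem~\ref{THH1} this is an equivalence $\Fun(S,\infty\scat)\simeq\infty\scat^\cocart_{/S}$, which is exactly the universality claimed.
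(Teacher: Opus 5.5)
Your proposal is correct and essentially matches the paper, which reads the corollary off \cref{THH1} (i.e.\ \cref{Groeq}): there $\int_S$ is by definition pullback along $\infty\scat_{*//^\oplax}\to\infty\scat$, and this oplax coslice is the oriented fiber $\{*\}\,\vec{\times}_{\infty\scat}\,\infty\scat$. The one difference is that you derive base-change compatibility from the oriented-fiber formula by pasting, which follows at once from \cref{0desc}(2) (or \cref{pasting}). In the paper that compatibility holds by construction, so your pasting step and your large-universe application of \cref{THH1} are harmless detours.
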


Considering the effect of the Grothendieck construction on morphism $\infty$-categories gives the following:

\begin{corollary}(\cref{Groorihom})
Let $S$ be an $\infty$-category and $F,G: S \to \infty\fcat$ be oriented functors that factor through the inclusion $\infty\scat\to\infty\fcat$
There is a canonical equivalence
$$ \L\Mor_{{\Fun\boxtimes}(S,\infty\mathfrak{Cat})}(F,G) \simeq \Fun^{\oplax, \cocart}_X(\int_S F, \int_S G)$$
    
\end{corollary}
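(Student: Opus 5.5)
The plan is to deduce this corollary from the fully faithful oriented embedding of \cref{THH1}. Write $\Phi:\infty\fcat_{/S}^\cocart\subset\boxtimes\Fun(S,\infty\fcat)$ for that embedding. Its inverse on the image is $\int_S$: a functor $F$ factoring through $\infty\scat$ is sent to the oriented left fiber $\int_S F\to S$, and $\Phi(\int_S F)\simeq F$ as antioriented functors.

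The first step is to check that the enrichment of $\infty\fcat_{/S}^\cocart$ used in \cref{THH1} is the one whose morphism $\infty$-categories are the oplax functor categories of cocartesian functors over $S$. Concretely,
$$\Mor_{\infty\fcat_{/S}^\cocart}(X,Y)\simeq \Fun^{\oplax,\cocart}_S(X,Y).$$
This is the full subcategory of the oplax functor $\infty$-category $\Fun^\oplax(X,Y)\times_{\Fun^\oplax(X,S)}\{p\}$ spanned by functors preserving cocartesian morphisms. I would obtain it from the oriented enrichment of $\infty\fcat$ (the internal hom adjoint to $\boxtimes$) by passing to the slice over $S$ and restricting to the cocartesian fibrations and cocartesian functors. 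Since $\Phi$ is an oriented fully faithful functor, it then induces equivalences on morphism $\infty$-categories:
$$\Fun^{\oplax,\cocart}_S\Bigl(\int_S F,\int_S G\Bigr)\simeq \Mor_{\boxtimes\Fun(S,\infty\fcat)}\Bigl(\Phi\int_S F,\Phi\int_S G\Bigr)\simeq \Mor_{\boxtimes\Fun(S,\infty\fcat)}(F,G).$$

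The second step is to identify the right-hand morphism object with $\L\Mor_{\Fun\boxtimes(S,\infty\fcat)}(F,G)$. Here $\L\Mor$ is the left morphism object for the other orientation. This reconciles the antioriented functor category appearing in \cref{THH1} with the oriented one in the statement. I would use the involution of $\infty\Cat$ that exchanges the lax and oplax Gray tensor products, say $(-)^{\co}$ or an appropriate $(-)^{\op}$ on $n$-cells. It interchanges left and right enrichment over $(\infty\Cat,\boxtimes)$, identifies $\boxtimes\Fun$ with $\Fun\boxtimes$ after applying the involution to the values, and is compatible with the inclusion $\infty\scat\subset\infty\fcat$.

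I expect this bookkeeping of orientations to be the main obstacle. One has to check that the left morphism object on the oriented side matches the right morphism object on the antioriented side, and not its dual. I would verify this by testing against disks $\bD^n$: maps $\bD^n\to\L\Mor(F,G)$ correspond to oplax transformations $\bD^n\boxtimes F\to G$ (or $F\boxtimes \bD^n$). Under the Grothendieck construction these correspond to cocartesian functors $\bD^n\times\int_S F\to \int_S G$ over $S$. Here I would use the Gray tensor compatibility, namely that the oriented colimit of a constant functor is $\boxtimes$ (\cref{Graytenso}), to confirm which side the $\bD^n$ factor appears on. Everything is natural in $\bD^n$ and these generate $\infty\Cat$ under colimits, so the equivalence of underlying $\infty$-categories follows.
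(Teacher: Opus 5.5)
\textbf{Core step.} Your Step 1 is the paper's argument. The corollary follows from \cref{Groori} together with $\L\Mor_{\infty\fcat^\cocart_{/S}}(X,Y)\simeq\Fun^{\oplax,\cocart}_S(X,Y)$, which is \cite[Theorem 5.1.8]{gepner2026fibrations}, quoted after \cref{cotensol}. But you took the orientations from the introduction, and \cref{THH1} there has them swapped relative to the body. In \cref{Groori}, the right adjoint $\gamma\colon\infty\fcat^\cocart_{/S}\to{\Fun\boxtimes}(S,\infty\fcat)$ is an \emph{antioriented} embedding into the antioriented category of \emph{oriented} functors. By \cref{Groeq}, $\gamma(\int_S F)\simeq F$ for $F$ landing in $\infty\scat$. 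So $\gamma$ directly gives $\L\Mor_{{\Fun\boxtimes}(S,\infty\fcat)}(F,G)\simeq\L\Mor_{\infty\fcat^\cocart_{/S}}(\int_S F,\int_S G)$, with left morphism objects on both sides. Nothing needs to be reconciled.

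\textbf{Orientation step.} The step you flag as the main obstacle is a wrong turn, and it would fail as planned.
\begin{itemize}
\item With $\Phi$ landing in ${\boxtimes\Fun}(S,\infty\fcat)$, your Step 1 asserts $\Fun^{\oplax,\cocart}_S(\int_S F,\int_S G)\simeq\R\Mor_{{\boxtimes\Fun}(S,\infty\fcat)}(F,G)$. Step 2 then needs this to agree with $\L\Mor_{{\Fun\boxtimes}(S,\infty\fcat)}(F,G)$.
\item For $S=\bD^0$ that reads $\Fun^\lax(A,B)\simeq\Fun^\oplax(A,B)$, which is false in general.
\item No involution supplies it. $(-)^\co$ and $(-)^\op$ do not merely move the enrichment to the other side; they also replace $S$ by $S^\co$ or $S^\op$ and dualize the values (cf.\ \cref{grayhoms}).
\end{itemize}

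\textbf{Disk test.} Your test would not catch this. Cocartesian functors $\bD^n\times\int_S F\to\int_S G$ over $S$ compute maps into $\Fun^\cocart_S(\int_S F,\int_S G)$, i.e.\ strict transformations. The correct description is \cref{cotensol}: $\bD^n$-points of $\Fun^{\oplax,\cocart}_S(X,Y)$ are maps of cocartesian fibrations $X\to Y^{\bD^n}_\lax$. This is the left cotensor, which is what $\gamma$ preserves.

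\textbf{Not a full subcategory.} $\Fun^{\oplax,\cocart}_S(X,Y)$ is not the full subcategory of $\Fun^\oplax_S(X,Y)$ on cocartesian functors. By \cref{repren}, an $n$-morphism belongs to it only if the associated functor $X\to Y^{\bD^n}_\lax$ is a map of cocartesian fibrations. This is a real condition on the oplax naturality cells. With the full subcategory the statement already fails for $S=\bD^1$ and $F=*$. There the left side is $\lim G$, while the full subcategory admits oplax transformations of cocartesian sections with non-invertible $2$-cells. Its morphism $\infty$-categories are the oriented pullbacks $\Mor_{Y_1}(\sigma(1),\tau(1))\,\vec{\times}\,\Mor_{Y_0}(\sigma(0),\tau(0))$, where $Y_0,Y_1$ are the fibers of $\int G$ over $0,1$ and $\sigma,\tau$ are cocartesian sections. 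These are strictly larger in general.

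\textbf{Fix.} Drop Step 2 and take $\Phi=\gamma$ from \cref{Groori}. Take the enrichment of $\infty\fcat^\cocart_{/S}$ from the cited theorem rather than by restricting fully.
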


We establish the following close connection between the Grothendieck construction and lax (co)limits and antioriented limits: 

\begin{theorem}\label{THH4}(\cref{laxgro}, \cref{limsect})
Let $F: S \to \infty\scat$ be a functor of $\infty$-categories and $\mE$ a collection of cells of $S$.
\begin{enumerate}[\normalfont(1)]\setlength{\itemsep}{-2pt}
\item The $\mE$-lax colimit of $F$ is the Grothendieck construction $\int F$ localized with respect to (co)cartesian lifts of cells of $\mE.$

\item The lax limit of $F$ is the $\infty$-category of sections of $\int F \to S $ sending cells of $\mE$ to (co)cartesian cells, 
and (strict) natural transformations over $S$.

\item The antioriented limit of $F$ is the $\infty$-category of sections 
of $\int F \to S $ sending cells of $\mE$ to (co)cartesian cells, 
and (higher) oplax natural transformations over $S$ sending cells of $\mE$ to (co)cartesian cells.

\end{enumerate}

\end{theorem}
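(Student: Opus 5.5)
All three statements would be derived from the universal properties of the Grothendieck construction established in \cref{THH1}. The key input is the fully faithful oriented embedding $\infty\fcat_{/S}^\cocart\subset\boxtimes\Fun(S,\infty\fcat)$, $X\mapsto(s\mapsto \Fun^{\cocart}_S(S_{s//^\oplax},X))$. It identifies $\int_S F$ with the weighted colimit of $F$ against the weight given by the oplax slices.

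\textbf{Colimit statement (1).} First treat $\mE=\emptyset$, the plain lax colimit. For an $\infty$-category $T$, compute
\[
\Map(\int_S F, T)\simeq \Map_{\infty\scat^\cocart_{/S}}(\int_S F, S\times T)\simeq \Map_{\Fun(S,\infty\scat)}(F, \Fun^{\oplax}(S_{s//^\oplax},T)).
\]
The first equivalence uses that every functor out of the total space is a map over $S$ into the trivial cocartesian fibration. The second uses the equivalence $\int_S$ of \cref{THH1}, together with the identification of the straightening of $S\times T\to S$ as $s\mapsto\Fun(S_{s//^\oplax},T)$. The right-hand side is exactly the universal property of the lax colimit, whose weight is $s\mapsto S_{s//^\oplax}$ (up to the convention fixing lax versus oplax, which I would match with the weights of \cite{articles}).

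For general $\mE$, the $\mE$-lax colimit is by definition the weighted colimit with weight obtained by localizing each $S_{s//^\oplax}$ at the cells lying over $\mE$. Under the adjunction above, maps out of this localized weight correspond to functors $\int_S F\to T$ inverting the cocartesian lifts of cells of $\mE$. The reason is that a cell of $S_{s//^\oplax}$ over $e\in\mE$ corresponds under $\int_S$ to a cocartesian lift of $e$. This is a pointwise check on $\bD^n$-shaped cells, obtained by pulling back along $\bD^n\to S$ and using that $\int$ commutes with base change.

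\textbf{Limit statements (2) and (3).} These follow dually. The lax limit is $\Map(T,\lim^{\lax}F)\simeq\Map_{\Fun(S,\infty\scat)}(S_{s//^\oplax}\times T, F)$. By $\int_S$, this is $\Map_{/S}^{\cocart}(S\times T\text{-slice},\int F)$. Since the Grothendieck construction of $s\mapsto S_{s//^\oplax}\times T$ is free on $\id_S\times T$, this reduces to $\Map_{/S}(S\times T,\int F)$, i.e.\ functors $T\to$ sections. The $\mE$ condition again amounts to requiring that the sections send cells of $\mE$ to cocartesian cells. The morphisms in (2) are strict transformations over $S$, because the enrichment used here is the cartesian one, $\Fun(S,\infty\scat)$.

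For (3), I would instead use the oriented enrichment from \cref{Groorihom}. Its mapping $\infty$-categories are $\Fun^{\oplax,\cocart}_S$, so running the same Yoneda argument inside $\boxtimes\Fun(S,\infty\fcat)$ replaces strict transformations by oplax ones. The Gray-tensored weight $S_{s//^\oplax}\boxtimes T$ contributes exactly the higher oplax natural transformations over $S$. The restriction to (co)cartesian cells is inherited from the localized weight, as in (1).

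\textbf{Main obstacle.} The hardest step is identifying the free cocartesian fibration on $S_{s//^\oplax}\boxtimes T$ with the relative Gray tensor over $S$, and checking that it is compatible with the $\mE$-localization. I would attempt this by reducing to disks $S=\bD^n$, where it should follow by induction on $n$ using the suspension description of oriented pushouts.
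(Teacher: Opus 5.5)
Your argument for (1) has a genuine gap at its first step. The equivalence $\Map(\int_S F,T)\simeq\Map_{\infty\scat^\cocart_{/S}}(\int_S F,S\times T)$ is false. A map of cocartesian fibrations into $S\times T\to S$ must send cocartesian cells to cells with invertible $T$-component, so the right-hand side consists of functors $\int_S F\to T$ inverting all cocartesian cells, i.e.\ maps out of the ordinary colimit. Only $\Map_{\infty\scat_{/S}}(\int_S F,S\times T)$, which allows all functors over $S$, recovers $\Map(\int_S F,T)$. Likewise, the straightening of $S\times T\to S$ is the constant functor at $T$, not $s\mapsto\Fun(S_{s//^\oplax},T)$.

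The two slips cancel at the endpoints. The correct middle term would be a \emph{cofree} cocartesian fibration $R(S\times T)$ with $\Map_{\infty\scat^\cocart_{/S}}(X,R(S\times T))\simeq\Map_{\infty\scat_{/S}}(X,S\times T)$. You neither construct it nor compute its straightening, and this is not a formality. Representing $X\mapsto\Map(X,T)$ on $\infty\scat^\cocart_{/S}$ for all $T$ amounts to the forgetful functor $\infty\scat^\cocart_{/S}\to\infty\scat$ preserving colimits. In the paper that fact (\cref{cocartcolim}) is \emph{deduced from} the theorem you are proving.

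The paper works on the contravariant side instead:
\begin{itemize}
\item It straightens the weight $\W^\mD_\lax$ to the free cartesian fibration $\Fun^\oplax(\bD^1,\mD)\to\mD$, evaluation at the source.
\item \cref{repre} builds a cartesian fibration $\langle\mC,\mA\rangle\to\mD$ classifying $\Fun(-,\mA)\circ F$, with $\Fun_\mD(\mB,\langle\mC,\mA\rangle)\simeq\Fun(\mB\times_\mD\mC,\mA)$.
\item Then $\Map(\W^\mD_\lax,\Fun(-,\mA)\circ F)\simeq\Fun^\cart_\mD(\Fun^\oplax(\bD^1,\mD),\langle\mC,\mA\rangle)\simeq\Fun_\mD(\mD,\langle\mC,\mA\rangle)\simeq\Fun(\mC,\mA)$, using \cref{envelo2} for the middle step.
\item The $\mE$-refinement comes from the clause of \cref{repre} describing (co)cartesian cells of $\langle\mC,\mA\rangle$ through inversion of (co)cartesian cells of $\bD^n\times_\mD\mC$. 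This is what should replace your heuristic that cells over $e\in\mE$ ``correspond to cocartesian lifts''.
\end{itemize}

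For (2), your argument is essentially the paper's proof of \cref{limsect}(1), up to variance conventions: straighten the weight to a free fibration and reduce to sections.

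For (3), the obstacle you flag is real on your route but self-inflicted. The functor $s\mapsto S_{s//^\oplax}\boxtimes T$ is only an (anti)oriented functor, since $-\boxtimes T$ turns strict transformations into lax ones. So it need not lie in the image of $\Fun(S,\infty\scat)$, the Grothendieck equivalence does not apply to it, and you leave this unresolved. The paper puts the Gray structure on the target rather than the weight. It maps $\W^\mD_\lax$ into the cotensor $\Fun^\lax(\mA,-)\circ F$, which does factor through $\infty\scat$, so by \cref{presheaves} the oriented and ordinary mapping spaces agree. Its straightening is the fiberwise lax cotensor $\mC^\mA_\lax\to\mD$ of \cref{restcoca}. \cref{envelo2} then reduces to sections $\mD\to\mC^\mA_\lax$, and \cref{univeqr} identifies these with functors $\mA\to\Fun^\oplax_\mD(\mD,\mC)$; the $\mE$-restriction is compatible with this (cf.\ \cref{cotensol}). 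No disk induction or relative Gray tensor is needed.
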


\cref{THH1} and \cref{THH4} imply the following:

\begin{corollary}
For any functor $p:Y\to X$ of $\infty$-categories, the following conditions are equivalent:
\begin{enumerate}[\normalfont(1)]\setlength{\itemsep}{-2pt}
\item
$p:Y\to X$ is a cocartesian fibration.
\item[\em{(2)}]
$p:Y \to X$ is the oriented left fiber of a functor $X\to\infty\scat$ at the terminal object inclusion $*\to\infty\scat$.
\item[\em{(3)}]
$p:Y \to X$ is the lax colimit of a functor $X \to\infty\scat$.
\end{enumerate}
\end{corollary}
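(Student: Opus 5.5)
We establish the cycle (1)$\Rightarrow$(2)$\Rightarrow$(3)$\Rightarrow$(1), drawing essentially all the input from \cref{THH1} and \cref{THH4}.

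\emph{(1)$\Rightarrow$(2).} Let $p:Y\to X$ be a cocartesian fibration. By \cref{THH1}, the Grothendieck construction $\int_X:\Fun(X,\infty\scat)\simeq\infty\scat^{\cocart}_{/X}$ is an equivalence, so $p$ is equivalent over $X$ to $\int_X F$ with $F$ the straightening of $p$. The same theorem identifies $\int_X F$ with the oriented left fiber $*\,\vec{\times}_{\infty\scat}X\to X$ of $F$ at the terminal object. Hence $p$ is such an oriented left fiber.

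\emph{(2)$\Rightarrow$(3).} Suppose $p$ is the oriented left fiber of some $F:X\to\infty\scat$. By \cref{THH1} this is $\int_X F\to X$. Now apply \cref{THH4}(1) with $\mE$ empty. No cells are inverted, so the $\emptyset$-lax colimit of $F$ is $\int_X F$ itself. Its structure map to $X$ is the map induced by the universal property, using that the lax colimit of the terminal functor is $X$. Thus $p$ is the lax colimit of $F$, regarded as an object over $X$ in this canonical way.

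\emph{(3)$\Rightarrow$(1).} Suppose $p$ is the lax colimit of some $F:X\to\infty\scat$. By \cref{THH4}(1) with $\mE=\emptyset$, $p$ is identified over $X$ with $\int_X F\to X$. That map is a cocartesian fibration, since \cref{THH1} says $\int_X$ lands in $\infty\scat^{\cocart}_{/X}$. Cocartesian fibrations are stable under equivalence over $X$, so $p$ is one.

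The main obstacle is bookkeeping of variance and orientation. \cref{THH4} is phrased with ``(co)cartesian'' and ``oplax'', and we must make sure the lax colimit in (3) is the variant that \cref{THH4}(1) computes as the cocartesian Grothendieck construction. If the conventions turn out to be opposite, we pass to the dual statement by applying $(-)^{\op}$ in the appropriate categorical degree. We must also make the identification in (3) one over $X$, not merely of total categories. For this we use naturality of the comparison in \cref{THH4}(1) applied to the unique map $F\to *$, together with $\int_X *\simeq X$.
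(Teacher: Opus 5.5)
Your proposal is correct and matches the paper, which gives no separate proof and simply notes that the corollary follows from \cref{THH1} (the Grothendieck construction is an equivalence $\Fun(X,\infty\scat)\simeq\infty\scat^{\cocart}_{/X}$ sending $F$ to its oriented left fiber) together with \cref{THH4}(1) for $\mE=\emptyset$ (the lax colimit of $F$ is the total space $\int_X F$). Your use of naturality along $F\to *$, with $\int_X *\simeq X$, to make the identification in (3) one over $X$ is a reasonable way to spell out a point the paper leaves implicit.
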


\cref{THH4} implies the following $\infty$-categorical version of Quillen's Theorem A:

\begin{corollary}(\cref{QuillenA})
Let $Y$ be an $ \infty$-category and $\mE$ a collection of cells of $Y$.
A functor $\phi: Y \to X $ is $\mE$-lax cofinal if
and only if for every $t \in X$
the induced functor $$\{ t \} \,{\vec{\times}}_X Y \to X_{t//^\oplax} $$ induces an equivalence 
$$ (\{ t \} \,{\vec{\times}}_X Y)[{\bar{\mE}^{-1}}] \to (X_{t//^\oplax})
[(\overline{\phi(\mE)})^{-1}].$$ 

\end{corollary}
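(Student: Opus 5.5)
Our plan is to reduce $\mE$-lax cofinality to a pointwise statement about $\mE$-lax colimits of representable functors, and then to identify those colimits with the localized oriented fibers using \cref{THH4}(1). By definition, $\phi$ is $\mE$-lax cofinal when restriction along $\phi$ preserves $\mE$-lax colimits. Equivalently, for every functor $F: X\to\infty\scat$, the comparison map
\[
\mE\text{-}\mathrm{laxcolim}_Y (F\circ\phi) \to \phi(\mE)\text{-}\mathrm{laxcolim}_X F
\]
is an equivalence. Every $F$ is a colimit of (co)representables $\Mor_X(t,-)$, and both sides commute with colimits in $F$, since lax colimits are left adjoints in the weight and the localized Grothendieck construction commutes with colimits. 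It therefore suffices, and is also necessary, to test on the corepresentables $F_t=\Mor_X(t,-)$ for $t\in X$.

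Next we compute both sides for $F_t$ with \cref{THH4}(1).

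\begin{itemize}
\item The Grothendieck construction of $F_t$ is the cocartesian fibration $X_{t//^\oplax}\to X$. By \cref{THH1} this is the oriented left fiber of $F_t$, equivalently the oriented slice $\{t\}\,\vec{\times}_X X$.
\item Pullback along $\phi$ commutes with the Grothendieck construction, since cocartesian fibrations are stable under base change and $\int$ is compatible with precomposition. Hence $\int_Y F_t\circ\phi\simeq X_{t//^\oplax}\times_X Y\simeq\{t\}\,\vec{\times}_X Y$.
\end{itemize}

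Applying \cref{THH4}(1), the $\mE$-lax colimit of $F_t\circ\phi$ is $(\{t\}\,\vec{\times}_X Y)[\bar{\mE}^{-1}]$, where $\bar{\mE}$ denotes the cocartesian lifts of cells of $\mE$. Likewise, the target is $(X_{t//^\oplax})[(\overline{\phi(\mE)})^{-1}]$. Under these identifications the comparison map is induced by the evident functor $\{t\}\,\vec{\times}_X Y\to X_{t//^\oplax}$, which carries $\bar{\mE}$ into $\overline{\phi(\mE)}$ because $\phi$ preserves cocartesian lifts. This gives the stated criterion.

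The main obstacle is the first step: showing that $\mE$-lax cofinality is detected on corepresentables. This requires the $\mE$-lax colimit to be colimit-preserving in the diagram and the natural comparison map to be natural in $F$. We would establish this using the weighted-colimit description of lax colimits. There the $\mE$-lax colimit is a colimit weighted by a fixed weight, so it is cocontinuous in $F$ and reduces on $F_t$ to evaluation of the weight. If that is awkward, an alternative is to argue directly through the localized Grothendieck construction: write $\int_X F$ as a colimit of the $\int_X F_t$ fiberwise, and use that localization commutes with colimits.
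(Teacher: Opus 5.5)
Your proposal is essentially the paper's proof. Both reduce to the corepresentables $\Mor_X(t,-)$ using that $\phi^*$ and the $\mE$-lax colimit functors are cocontinuous, and both compute each side as a localized Grothendieck construction via \cref{laxgro}, using $\int_Y \Mor_X(t,-)\circ\phi \simeq Y\times_X X_{t//^\oplax}\simeq \{t\}\,\vec{\times}_X Y$. One small correction to your reduction step: $\Fun(X,\infty\scat)$ is generated by the corepresentables under colimits \emph{and tensors} with $\infty$-categories, not under colimits alone (for $X=*$, colimits of the point only give spaces), so you also need that $\phi^*$ and the lax colimit functors preserve tensors; this is how the paper phrases it, and your weighted-colimit description gives it immediately.
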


We proof the following fundamental result on oriented fibers, which implies a higher-categorical version of Quillen's Theorem B:

\begin{theorem}(\cref{QuillenB})
Let $\phi: \mC \to \mD $ be a functor of $\infty$-categories such that 
for every morphism $X \to \Y$ in $\mD$ the induced functor
$$ \mD_{//^\oplax X} \times_\mD \mC \to \mD_{//^\oplax Y} \times_\mD \mC $$ is a weak equivalence, i.e. induces an equivalence on classifying spaces.

For every $Z \in \mC$ the classifying space functor $\infty\fcat \to \mS$
preserves the oriented left fiber of $\phi $ over $Z$.

\end{theorem}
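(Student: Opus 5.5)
We reduce the statement to the classical Quillen Theorem B for spaces by computing both sides as colimits of spaces. The key input is \cref{THH4}(1): the classifying space of a lax colimit is the ordinary colimit of the classifying spaces, since localizing at all cells gives the groupoidification. Let $Z\in\mC$; we read the fiber point as $\phi(Z)\in\mD$.

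\textbf{Step 1: identify the oriented left fiber.} By definition, the oriented left fiber $\{Z\}\,\vec{\times}_\mD\mC$ is the pullback $\mD_{//^\oplax Z}\times_\mD\mC$, up to the orientation convention of the slice. Here $\mD_{//^\oplax Z}\to\mD$ is the cartesian fibration corepresenting $Z$, i.e.\ $j(Z)$. Its pullback along $\phi$ is again a cartesian fibration over $\mC$. Under the opposite Grothendieck construction (\cref{THH1}) it corresponds to the presheaf $F\colon\mC^{\circ}\to\infty\scat$ with
\[
F(c)=\Mor_\mD(\phi(c),Z).
\]

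\textbf{Step 2: compute classifying spaces.} By \cref{THH4}(1) with $\mE$ all cells, the classifying space of the total space is
\[
|\{Z\}\,\vec{\times}_\mD\mC|\simeq\colim_{c\in\mC^{\circ}}|\Mor_\mD(\phi(c),Z)|.
\]
The colimit is taken over the groupoidification, because every cell becomes invertible. The hypothesis says exactly that the functor $Y\mapsto|\mD_{//^\oplax Y}\times_\mD\mC|$ on $\mD$ inverts all morphisms, hence factors through $|\mD|$. So the family of spaces $Y\mapsto|\mD_{//^\oplax Y}\times_\mD\mC|$ is a local system on $|\mD|$.

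\textbf{Step 3: compare with the fiber of $|\mC|\to|\mD|$.} Apply the same computation universally, to the cartesian fibration $\mD^{\to,\oplax}\times_\mD\mC\to\mD$ whose fibers over $Y$ are the oriented fibers above. Its total space is the lax colimit over $\mD$ of these fibers. On classifying spaces, this total space maps to $|\mD|$, and it is equivalent to $|\mC|$, because each $\mD_{//^\oplax Y}$ has a terminal-type object, giving a fiberwise retraction. By the hypothesis and Step 2, the colimit over $|\mD|$ of a local system of spaces is the total space of the associated fibration over $|\mD|$ (straightening for $\infty$-groupoids). Its fiber over $Z$ is therefore $|\{Z\}\,\vec{\times}_\mD\mC|$. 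This exhibits $|\{Z\}\,\vec{\times}_\mD\mC|$ as the fiber of $|\mC|\to|\mD|$ over $Z$.

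\textbf{Step 4: conclude.} Since the oriented left fiber in $\mS$ is just the homotopy fiber, the classifying space functor preserves the oriented left fiber.

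The main obstacle is the identification in Step 3 of $|\mD^{\to,\oplax}\times_\mD\mC|$ with $|\mC|$, compatibly with the projections to $|\mD|$. This needs a higher-dimensional cofinality argument: the identity sections $c\mapsto(\phi(c)\to\phi(c))$ must be shown to be lax cofinal. We would try first to obtain this from \cref{QuillenA}, checking that the relevant oriented fibers have contractible localizations because they possess initial objects.
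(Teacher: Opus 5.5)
Your architecture is the paper's, and your Step 3 colimit argument is sound. The plan is: pass to the enveloping cocartesian fibration $\Env(\mC)=\mC\times_\mD\Fun^\oplax(\bD^1,\mD)\to\mD$, which is cocartesian over the target, not cartesian. Its fibre over $t\in\mD$ is $\mC\,\vec{\times}_\mD\{t\}\simeq\mC\times_\mD\mD_{//^\oplax t}$. This, not $\{t\}\,\vec{\times}_\mD\mC$, is the object the hypothesis controls, and $t$ should range over all of $\mD$, not only over $\phi(Z)$. Then show that $\tau_0$ preserves its fibre sequences because the transports are weak equivalences, and identify $\tau_0(\Env(\mC))\simeq\tau_0(\mC)$ over $\tau_0(\mD)$.

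Your Step 3 argument is a repackaging of \cref{cocafi}. The paper instead replaces $\Env(\mC)$ by its fibrewise groupoidification (\cref{clafib}), which is a Kan fibration and hence pulled back from $\tau_0(\mD)$ (\cref{Kanfi}). Both rest on $\tau_0(\int F)\simeq\colim\tau_0 F$. Both also use that a space-valued functor inverting all morphisms of $\mD$ is left Kan extended from $\tau_0(\mD)$ (\cref{lemkanex}); you use this silently when replacing $\colim_\mD$ by $\colim_{|\mD|}$.

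Two small corrections to Step 2. First, \cref{laxgro} with $\mE$ all cells yields the ordinary colimit in $\infty\scat$, not a space. You still need that $\tau_0$ inverts the localization and commutes with colimits. Second, ``the colimit is taken over the groupoidification'' is false there, since $\tau_0\Mor_\mD(\phi(-),Z)$ need not invert morphisms of $\mC$.

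The genuine gap is the step you flag yourself: $\tau_0(\Env(\mC))\simeq\tau_0(\mC)$ compatibly with the maps to $\tau_0(\mD)$. The reason you offer points at the wrong fibres. The fibre of $\Env(\mC)\to\mD$ over $Y$ is $\mC\times_\mD\mD_{//^\oplax Y}$, which has no terminal-type object in general, since $\id_Y$ need not lift. So there is no retraction fibrewise over $\mD$, and the appeal to \cref{QuillenA} is only a plan.

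The paper settles this formally in \cref{envlemo}. Take $\alpha\colon\mD\to\tau_0(\mD)$. The right adjoint $\alpha^*$ of $\tau_0\colon\infty\scat_{/\mD}\to\mS_{/\tau_0(\mD)}$ lands in cocartesian fibrations, so the adjunction restricts to $\infty\scat^\cocart_{/\mD}$. Uniqueness of left adjoints together with \cref{huip} then gives $\tau_0\simeq\tau_0\circ\Env$ over $\tau_0(\mD)$.

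If you prefer to stay within your framework, contract over the source instead. The source projection $p\colon\Env(\mC)\to\mC$ is a cartesian fibration (base change of \cref{targetfi2}) with fibres $\mD_{\phi(c)//^\oplax}$. These are weakly contractible, because $\tau_0(\mD_{d//^\oplax})\simeq\colim_\mD\tau_0\Mor_\mD(d,-)\simeq *$ by your Step 2 formula and the Yoneda lemma. Hence $\tau_0(p)$ is an equivalence. The identity section $s(c)=(c,\id_{\phi(c)})$ lies over $\mD$ and satisfies $ps=\id$, so it gives the required equivalence over $\tau_0(\mD)$. With that supplied, your argument goes through.
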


\cref{THH1} implies the following classification of bicartesian fibrations by higher adjunctions:

\begin{theorem}(\cref{Grobica})
For every $\infty$-category $S$ there is a subcategory $\infty\scat^{\mathrm{L}}\subset \infty\scat$ of left adjoint functors and left adjoint higher natural transformations,
and a natural equivalence 
$$ \Map_{\infty\Cat}(S,\infty\scat^{\mathrm{L}}) \simeq \iota_0(\infty\scat^{\bicart}_{/S}),$$
where the right hand side is the space of bicartesian fibrations over $S$.

\end{theorem}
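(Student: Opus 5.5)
The plan is to deduce the statement from \cref{THH1}: first apply its equivalence $\int_S:\Fun(S,\infty\scat)\simeq\infty\scat_{/S}^\cocart$, then restrict both sides to the relevant subcategories, and finally pass to cores. For this we need $\infty\scat^{\mathrm{L}}$ to be an honest subcategory of $\infty\scat$, so that $\Fun(S,\infty\scat^{\mathrm L})\subset\Fun(S,\infty\scat)$ is a subcategory whose cores are a union of components. We define $\infty\scat^{\mathrm{L}}$ inductively on cell dimension:
\begin{itemize}
\item its $0$-cells are all $\infty$-categories;
\item its $1$-cells are the functors admitting a right adjoint;
\item its $(n+1)$-cells are those $(n+1)$-cells of $\infty\scat$ between $n$-cells of $\infty\scat^{\mathrm L}$ which are left adjoints in the appropriate morphism $\infty$-category, i.e.\ left adjoint higher natural transformations.
\end{itemize}
Since adjoints compose and identities are adjoints, this is closed under composition. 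Being a left adjoint is a property, so we get a subcategory, formally the limit over the dimension filtration.

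The key claim is the following. A functor $F:S\to\infty\scat$ factors through $\infty\scat^{\mathrm L}$ if and only if $\int_S F\to S$ is also a cartesian fibration. We prove this by induction on the dimension of cells of $S$, using that $S$ is generated under colimits by disks $\bD^n$ and that both sides are local conditions on cells. Only the right-to-left direction needs genuine work.
\begin{itemize}
\item \textbf{Over $\bD^1$.} This is the classical statement: a cocartesian fibration over $\bD^1$ is cartesian exactly when the transition functor has a right adjoint. The cartesian lifts are produced from the unit and counit.
\item \textbf{Over $\bD^{n+1}$.} We pass to morphism categories, exactly as in the introduction's discussion of $\bD^2$. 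For objects $x,y$ in the fibers, the induced map $\Mor_{\int F}(x,y)\to\Mor_S(s,t)$ is again a bicartesian fibration. Its straightening, via \cref{Groorihom}, which identifies morphism categories of $\Fun(S,\infty\fcat)$ with oplax cocartesian functors between Grothendieck constructions, is the functor on mapping categories induced by $F$ on $n$-cells.
\end{itemize}
The inductive hypothesis then shows that the $(n+1)$-cells of $S$ are sent to left adjoint transformations, and conversely.

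Granting this, the result follows from \cref{THH1}. The Grothendieck construction restricts to an equivalence between $\Fun(S,\infty\scat^{\mathrm L})$ and the full-on-objects part of $\infty\scat^{\cocart}_{/S}$ spanned by bicartesian fibrations. Taking cores gives
\[
\Map_{\infty\Cat}(S,\infty\scat^{\mathrm{L}})\simeq\iota_0\Fun(S,\infty\scat^{\mathrm L})\simeq\iota_0(\infty\scat^{\bicart}_{/S}).
\]
For this step we use that equivalences of bicartesian fibrations are exactly equivalences of cocartesian fibrations, and that a natural equivalence between functors into $\infty\scat^{\mathrm L}$ lies in $\infty\scat^{\mathrm L}$ because equivalences are adjoints. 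Naturality in $S$ follows from naturality of $\int$ under pullback.

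The main obstacle is the inductive step identifying higher cartesian-ness with higher adjointness. We must check two things:
\begin{itemize}
\item that cartesian lifts of cells in the total space of a cocartesian fibration over $\bD^{n+1}$ correspond to right adjoints of the straightened transformation in the oplax functor category, with the correct (op)lax orientation;
\item that being a cartesian fibration can be tested on morphism fibrations together with $1$-cells.
\end{itemize}
For the second point we would use a characterization of (co)cartesian fibrations of $\infty$-categories by the local condition on $1$-cells plus the condition that the induced functors on morphism $\infty$-categories are again (co)cartesian fibrations. This characterization presumably appears in the paper's foundational section.
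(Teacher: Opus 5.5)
Your route differs from the paper's, and your gap sits exactly where the two part ways.

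\textbf{How the paper argues.} The paper never identifies the cells of $\infty\scat^{\mathrm L}$.
\begin{itemize}
\item It shows that the presheaf $\theta\mapsto\iota_0(\infty\scat^{\bicart}_{/\theta})$ on $\Theta$ satisfies the Segal condition. By \cref{Groeq} and \cref{inverso}, cocartesian fibrations over the disks of $\theta$ glue to one over $\theta$, and bicartesianness survives this gluing.
\item So the presheaf is the nerve of an $\infty$-category. It is a subcategory of $\infty\scat$ because $\iota_0(\infty\scat^{\bicart}_{/\theta})\subset\iota_0(\infty\scat^{\cocart}_{/\theta})$ is an embedding.
\item Density of $\Theta$ together with \cref{thetalocal} extends representability to arbitrary $S$.
\end{itemize}
``Left adjoint functors and left adjoint higher natural transformations'' names the representing object; the proof never verifies that description. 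By fixing an explicit $\infty\scat^{\mathrm L}$ in advance, you must prove your key claim, which is strictly stronger than the theorem. That claim is precisely the step you leave open.

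\textbf{Why your sketch of the inductive step does not close the gap.}
\begin{itemize}
\item Over $S(\bD^n)$, let $F$ be given by $X$, $Y$ and $\bD^n\to\Fun(X,Y)$. The hom fibration $\Mor_{\int F}(x,y)\to\bD^n$ is the cartesian fibration with fibres $\Mor_Y(F_a(x),y)$. This follows from \cref{homs} applied to $\int F\simeq\{*\}\,\vec{\times}_{\infty\scat}\,S(\bD^n)$. It does not follow from \cref{Groorihom}, which computes morphisms \emph{between} Grothendieck constructions.
\item By \cref{cocarto}, transported along the involutions, $\int F$ is also cartesian iff two things hold. Every such hom fibration must be bicartesian, \emph{and} pre- and postcomposition with fibre morphisms must preserve the new (co)cartesian cells.
\item Your inductive hypothesis concerns a single functor $\bD^n\to\infty\scat$. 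It therefore only controls the first condition, one $(x,y)$ at a time.
\item To conclude that the cells of $\bD^n\to\Fun(X,Y)$ are adjoints in $\Fun(X,Y)$, you also need the second condition, read as a Beck--Chevalley condition, plus an enriched Yoneda argument, in every dimension.
\item This already bites for $n=1$. Componentwise right adjoints $\beta_x$ of $\alpha\colon F_a\Rightarrow F_b$ only assemble via mates into a lax transformation. $\alpha$ is a left adjoint in $\Fun(X,Y)$ exactly when those mates are invertible, and that is what the precomposition condition says.
\item So the induction has to run for families, e.g.\ functors $X^\circ\times Y\to\infty\scat^{\bicart}_{/\bD^n}$. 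It also needs a separate lemma: a cell of $\Fun(K,\infty\scat)$ is an adjoint iff it is pointwise one with invertible mates at all levels, tracking the L/R orientation flips.
\item Your remark about adjoints ``in the oplax functor category'' points the wrong way. There, componentwise adjoints always assemble, so exactly this condition would be lost.
\end{itemize}

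\textbf{Two smaller points.}
\begin{itemize}
\item ``Both sides are local conditions on cells'' fails for cartesian fibrations in general. It holds here only because $\int_SF$ is cocartesian, hence exponentiable by \cref{expone}, so the dual of \cref{cocexp} (or \cref{thetalocal}) applies.
\item $\Fun(S,\infty\scat^{\mathrm L})$ is not equivalent to the full subcategory of bicartesian fibrations in $\infty\scat^{\cocart}_{/S}$. Only the cores agree, which is all you actually use.
\end{itemize}
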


Let $S$ be an $\infty$-category. There is a similar result for right adjoints using the subcategory $\infty\scat^{\mathrm{R}}\subset \infty\scat$ of right adjoint functors and right adjoint higher natural transformations giving a natural equivalence 
$$ \Map_{\infty\Cat}(S,(\infty\scat^{\mathrm{R}})^\coop) \simeq \iota_0(\infty\scat^{\bicart}_{/S}).$$

We obtain the following duality between higher dimensional left and right adjoints:

\begin{corollary}(\cref{leftrightadj})
Let $S$ be an $\infty$-category.
There is a canonical equivalence of $\infty$-categories
$$ \infty\scat^{\mathrm{L}} \simeq (\infty\scat^{\mathrm{R}})^\coop$$
sending left to right adjoints.

\end{corollary}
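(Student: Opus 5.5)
The equivalence will come from comparing two classifications of bicartesian fibrations over $S$: one through left adjoints, one through right adjoints. Both are instances of the Grobica theorem (the classification of bicartesian fibrations by higher adjunctions) stated just before this corollary. Concretely, for every $\infty$-category $S$ we have natural equivalences
$$\Map_{\infty\Cat}(S,\infty\scat^{\mathrm{L}}) \simeq \iota_0(\infty\scat^{\bicart}_{/S}) \simeq \Map_{\infty\Cat}(S,(\infty\scat^{\mathrm{R}})^\coop).$$
The first comes from the Grobica theorem. The second is its dual version for right adjoints, which the paper obtains by applying the cartesian form of the Grothendieck construction (Theorem~\ref{THH1} combined with the appropriate involution) to the same bicartesian fibration. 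The plan is to show that this composite is natural in $S$, so that the Yoneda lemma in $\infty\Cat$ yields an equivalence $\infty\scat^{\mathrm{L}} \simeq (\infty\scat^{\mathrm{R}})^\coop$.

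\textbf{Naturality in $S$.} Both equivalences are induced by pulling back fibrations along functors $S'\to S$, and the Grothendieck construction is compatible with base change. So each outer term, and the middle space of bicartesian fibrations, is a functor $\infty\Cat^{\op}\to\mS$, and the two equivalences are natural transformations of these functors. The point that needs care is that the outer functors are representable. This holds because $\infty\scat^{\mathrm{L}}$ and $\infty\scat^{\mathrm{R}}$ are genuine subcategories of $\infty\scat$ (they are defined in the Grobica theorem), and $(-)^\coop$ is an automorphism of $\infty\Cat$. Yoneda then gives a unique equivalence of $\infty$-categories inducing the composite.

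\textbf{Identifying the equivalence.} To see that it sends left adjoints to right adjoints, evaluate at $S=\bD^1$. A map $\bD^1\to\infty\scat^{\mathrm{L}}$ is a left adjoint $F$. It corresponds to a bicartesian fibration $Z\to\bD^1$, and the cartesian straightening of $Z$ recovers the right adjoint $G$. This is exactly the classical correspondence between adjunctions and bicartesian fibrations over $\bD^1$ described in the introduction. On objects ($S=\bD^0$) the equivalence is the identity on $\infty$-categories, since both straightenings of a fibration over a point return its fiber. At $S=\bD^n$ for higher $n$, the same evaluation gives the passage to mates of higher left adjoint transformations. This explains the $\coop$: the mate of an $n$-cell reverses the direction of the cell in every dimension.

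\textbf{Main obstacle.} The delicate step is the dual classification $\Map(S,(\infty\scat^{\mathrm{R}})^\coop)\simeq\iota_0(\infty\scat^{\bicart}_{/S})$. One has to verify that cartesian straightening lands exactly in $\infty\scat^{\mathrm{R}}$ with the $\coop$ variance. My first attempt would be to apply the Grobica theorem to the dual fibration, obtained by fiberwise passing to $\op$ or $\co$ as appropriate, which turns cartesian fibrations into cocartesian ones. I would then track how that involution acts on higher cells: it should convert left adjoint cells into right adjoint cells while reversing the orientation of cells in all dimensions. Once this variance bookkeeping is settled, the argument above is formal.
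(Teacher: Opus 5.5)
Your argument is essentially the paper's: $\infty\scat^{\mathrm{L}}$ and $(\infty\scat^{\mathrm{R}})^\coop$ both represent the presheaf $S\mapsto\iota_0(\infty\scat^{\bicart}_{/S})$, so the Yoneda lemma gives the equivalence (in $\infty\widehat{\Cat}$, since these are large). The one difference concerns your ``main obstacle'': the paper handles it by transporting \cref{Grobica} along the global involution $\sigma_{\geq 1}=(-)^{\coop}$ applied to both base and total space (the corollaries following \cref{Grobica}), and by noting that this involution leaves the bicartesian condition unchanged, rather than by fiberwise dualization.
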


We use \cref{THH1} to obtain the following characterization of cocartesian fibrations of $\infty$-categories:

\begin{theorem}(\cref{cocexp})
A functor $p: Y \to X$ is a cocartesian fibration if and only if it the following hold:

\begin{enumerate}[\normalfont(1)]\setlength{\itemsep}{-2pt}
\item The functor $p: Y \to X$ is exponentiable, i.e. the pullback functor $p^*:\infty\scat_{/X}\to\infty\scat_{/Y}$ admits a right adjoint $p_*$.

\item The functor $p: Y \to X$ is a locally cocartesian fibration, i.e.
for every functor $\bD^n \to X$ for $n \geq 0$ the pullback $ \bD^n \times_{X} Y \to \bD^n$ is a cocartesian fibration.
    
\end{enumerate}
\end{theorem}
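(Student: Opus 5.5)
The forward direction is the easy one. Assume $p$ is a cocartesian fibration. Condition (2) holds because cocartesian fibrations are stable under pullback. For condition (1), straighten $p$ using \cref{THH1}, so that $p \simeq \int_X F$ for some functor $F: X\to\infty\scat$. For any functor $Z\to Y$, I would build $p_*$ fiberwise: its value over $\bD^n\to X$ should be the $\infty$-category of functors over $\bD^n$ out of $\bD^n\times_X Y$. Exponentiability then amounts to the statement that $p^*$ preserves colimits in $\infty\scat_{/X}$. I would check this on the generators $\bD^n\to X$ as follows:
\begin{itemize}
\item $\infty\scat$ is generated under colimits by the disks, so it suffices to show that the pullback of $\int F$ along a colimit diagram of disks mapping to $X$ is again a colimit.
\item By \cref{THH4}(1), the pullback $\bD^n\times_X Y$ is the oplax colimit of the restriction $F|_{\bD^n}$.
\item Oplax colimits are left Kan extended in the base (they are weighted colimits), so they commute with colimits in the base variable.
\end{itemize}
This is where the Grothendieck construction does the work.

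The reverse direction is the main content. Suppose $p$ is exponentiable and locally cocartesian in the stated sense. The plan is to produce cocartesian lifts of all cells and show they compose. Since every pullback to a disk is a cocartesian fibration, every $n$-cell of $X$ has cocartesian lifts inside $\bD^n\times_X Y$. What must be shown is that a morphism $f$ of $Y$ which is cocartesian for the pullback to $\bD^1$ is already $p$-cocartesian. Equivalently, for each $y'\in Y$ the square relating $\Mor_Y(\mathrm{target}(f),y')$, $\Mor_Y(\mathrm{source}(f),y')$ and the corresponding morphism categories of $X$ should be a pullback.

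To get this I would use exponentiability: $p^*$ preserves pushouts. Mapping spaces, and in fact morphism $\infty$-categories, of $Y$ are detected by pulling back along disks and along composites of disks. In particular, a $2$-simplex or a composite $\bD^1\vee\bD^1\to X$ is a pushout of disks, so its pullback is the corresponding pushout of the pullbacks. Local cocartesianness over each piece, combined with this pushout decomposition, should show that locally cocartesian morphisms are closed under composition. That closure is the classical criterion, Lurie's for $1$-categories, for a locally cocartesian fibration to be cocartesian.

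In the $\infty$-categorical setting, closure under composition also has to be established for higher cells. Here I would induct on the dimension $n$ of cells, using the suspension description $\bD^n = S(\bD^{n-1})$. The morphism-category functor $\Mor_Y(y,y')\to \Mor_X(py,py')$ should inherit both hypotheses from $p$:
\begin{itemize}
\item exponentiability, from the fact that $p^*$ preserves the oriented pushouts defining suspension;
\item local cocartesianness, from the fact that pulling back to $\bD^{n}$ commutes with taking morphism categories.
\end{itemize}
By induction the morphism functors are cocartesian fibrations, and combined with the $1$-dimensional composition argument this makes $p$ cocartesian.

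I expect the main obstacle to be this inductive step. One must show that exponentiability of $p$ passes to the induced functors on morphism categories, and that the cocartesian structures on the various morphism categories are compatible with composition. I would first try to express both properties through the Gray-enriched description of fibrations furnished by \cref{THH1}.
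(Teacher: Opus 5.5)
\paragraph{Forward direction.} This part is in substance the paper's proof of \cref{expone}. By \cref{expbase}, it suffices that base change along $p$ preserves the standard decompositions $\theta\simeq\colim_i\bD^{n_i}$ of objects $\theta\to X$ of $\Theta$. By \cref{laxgro}, each pullback $D\times_X Y$ is the colimit of $F|_D$; in the paper's convention this is the lax colimit, not the oplax one.

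The justification you give for the key step is wrong, however. A weighted colimit is cocontinuous in the diagram, not in the indexing category, because the weight $\W^{D}_\lax$ itself changes with $D$. The correct reason is the following. By \cref{laxgro}, \cref{Groeq}, \cref{Grorep2} and the free cartesian fibration (\cref{envelo2}), maps $\int_D F|_D\to\mB$ correspond to maps $D\to\infty\scat_{//^\oplax\mB}$ over $\infty\scat$. These visibly turn colimits in $D$ into limits, and this is exactly the computation in the proof of \cref{expone}.

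\paragraph{Converse: the gap.} What you list as the expected obstacle is the entire content of this direction, and several of the proposed justifications do not work.
\begin{enumerate}
\item[(i)] Closure of locally cocartesian $1$-cells under composition, obtained from $p^*$ preserving $\bD^1\vee\bD^1$, only controls $1$-cells, i.e.\ mapping spaces. In this paper a $1$-cell is $p$-cocartesian only if the square of morphism $\infty$-categories is a pullback. Checking that requires whiskering with cells of every dimension: pullbacks over $\bD^1\vee\bD^{k}$ together with the coend formula \cref{homsus} for morphisms in a pushout. This is the work done in \cref{glue}.
\item[(ii)] Your induction must also verify the third condition of \cref{cocarto}: pre- and postcomposition with $1$-cells must preserve the distinguished cells of the morphism $\infty$-categories. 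No argument for this is offered. Moreover, in the paper's conventions the hom-functors of a cocartesian fibration are cartesian fibrations (see the proofs of \cref{glue} and \cref{suspe}). So the inductive hypothesis must alternate variance, not say ``cocartesian'' throughout.
\item[(iii)] Exponentiability of $p_{y,y'}\colon\Mor_Y(y,y')\to\Mor_X(py,py')$ does not follow from ``$p^*$ preserves the oriented pushouts defining suspension''. Exponentiability concerns ordinary colimits in $\infty\Cat_{/X}$, and nothing forces $p^*$ to preserve Gray-weighted colimits. Also, $\Mor$ does not preserve pushouts: \cref{homfil} gives only limits and filtered colimits. A correct argument would have to compute morphism $\infty$-categories of $\colim_i(S(\bD^{n_i})\times_X Y)$, which is a separate, nontrivial step.
\end{enumerate}

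\paragraph{The missing idea.} The paper avoids cell-by-cell lifting altogether and uses descent for cocartesian fibrations.
\begin{itemize}
\item By \cref{thetalocal}, it suffices that $\theta\times_X Y\to\theta$ is cocartesian for every $\theta\to X$ with $\theta\in\Theta$.
\item Exponentiability gives $\theta\times_X Y\simeq\colim_i(\bD^{n_i}\times_X Y)$.
\item Local cocartesianness says the $\bD^{n_i}\times_X Y\to\bD^{n_i}$ form an object of $\lim_i\infty\scat^\cocart_{/\bD^{n_i}}$.
\item \cref{inverso} then shows the colimit of such a compatible family is a cocartesian fibration over $\colim_i\bD^{n_i}=\theta$. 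It combines $\infty\scat^\cocart_{/\colim_k F(k)}\simeq\lim_k\infty\scat^\cocart_{/F(k)}$ (from \cref{Groeq}) with exponentiability of the universal cocartesian fibration (\cref{expone}).
\end{itemize}
All the higher-dimensional compatibilities you flag are absorbed by this descent statement.
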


The only if-direction of this result was also obtained by Loubaton \cite{loubaton2024categorical}, although he develops this theory in the complicial model for $\infty$-categories, and his definitions are somewhat different.

\subsection{Relation to other work}
Lax colimits in higher category theory have been studied by a number of authors. In 2-category theory they have been studied by Street \cite{street1976limits}, Kelly \cite{kelly1989elementary}, Lack \cite{lack20092} and Descotte \cite{descotte2018sigma}, and 
were considered in the homotopical setting by Gepner-Haugseng-Nikolaus \cite{articles}, Berman \cite{berman2024lax}, Gagna-Harpaz-Lanari \cite{gagna2020fibrations} and Abell\'an-Haugseng-Martini \cite{abellan2026free}.
In $n$-category theory and $\infty$-category theory, colimits were studied by Moser-Rasekh-Rovelli 
\cite{moser2023limits}, Loubaton, \cite{loubaton2024categorical} and Heine \cite{heine2024higher}.

The lax colimit of a category-valued functor is computed by the Grothendieck construction, which explains the central role of the
Grothendieck construction in the theory of colimits.
Originally studied by Grothendieck in stack theory, in order to efficiently encode pseudo-functors valued in $(2,1)$-categories,
the Grothendieck construction was extended by Lurie \cite{lurie.HTT}, \cite{lurie2009infinity} to the homotopical setting.
It was further extended to more general versions of fibrations by Ayala-Francis \cite{MR4074276}, Blom \cite{blom2024straightening} and Heine \cite{heine2026local}, to fibrations of operads by Heuts \cite{heuts2011algebras} and to monoidal categories by Ramzi \cite{ramzi2026monoidal}.
Moreover it was generalized to 2-categories by Abell\'an-Stern \cite{abellan20262} and Abell\'an-Gagna-Haugseng \cite{abellan2025straightening}, to $n$-categories by Nuiten \cite{nuiten2023straightening} and Moser-Rasekh-Rovelli \cite{moser2023inftyncategoricalstraighteningunstraighteningconstruction}, and to $\infty$-categories by Loubaton \cite{loubaton2024categorical}.

\subsection{Notation and terminology}

We fix a hierarchy of set-theoretic universes whose objects we call small, large, very large, etc.
We call a space (equivalently, $\infty$-groupoid) $X$ small, large, etc. if for any choice of basepoint and natural number $n$ its homotopy sets $\pi_n X$ are small, large, etc.
We call an $\infty$-category small, large, etc. if its maximal subspace and all its mapping spaces are small, large, etc.

We refer to (not necessarily univalent) weak $(\infty,\n)$-categories for $0 \leq \n \leq \infty$ simply as $\n$-categories, and we refer to (not necessarily univalent) weak $(\n,\n)$-categories as $(\n,\n)$-categories.
In particular, we refer to (not necessarily univalent) $(\infty,1)$-categories as 1-categories, or simply categories.
We will sometimes want to work strictly, which can be viewed as a basechange along the colimit-preserving symmetric monoidal functor $\mS\to\Set$.
In this case, we will refer to strict $(\n,\n)$-categories simply as strict $\n$-categories.\footnote{Note that an $(n,n)$-category need not be a strict $(n,n)$-category if $n>2$.
For instance, the fundamental $\infty$-groupoid of the $2$-sphere $S^2\simeq \mathrm{B}^2\Omega^2 S^2$ is an $(\infty,0)$-category which is not strict, nor are its $n$-truncations for any $n>2$.}

\begin{notation}
We will make use of the following notation and terminology when discussing categories, in the sense of categories enriched in the monoidal category of $\infty$-groupoids under the cartesian product.
\begin{enumerate}[\normalfont(1)]\setlength{\itemsep}{-2pt}
\item We write $\mS$ for the category of spaces, by which we mean small $\infty$-groupoids, homotopy types, or anima, and $\Set$ for the category of small sets.
\item We write $\infty\Cat$ for the large category of small $\infty$-categories.

\item We write $\Delta$ for (a skeleton of) the category of finite, non-empty, partially ordered sets and order preserving maps, whose objects we denote by $[\n] = \{0 < ... < \n\}$ for $\n \geq 0$.\footnote{This should not be confused with the category $\bDelta$ of oriented simplices.}
\item We write $\Map_{\mC}(A,B)$ for the space of maps (equivalently, $1$-morphisms) from $A$ to $B$ in $\mC$, for any category $\mC$ containing an ordered pair of objects $(A,B)\in\mC$.

\item We write $\ast$ for the final object and
$\emptyset$ for the initial object in any category.

\item We call a map of spaces $X \to Y$ an embedding if its fibers are empty or contractible, or likewise if the induced map $X \to X \times_Y X$ is an equivalence, or likewise if it is of the form $X \to X \coprod X'$ for spaces $X,X'.$ 

\item We call a morphism $X \to Y$ in any category $\mC$ a monomorphism if for every $Z \in \mC$ the induced map of spaces $\Map_\mC(Z,X) \to \Map_\mC(Z,Y) $ is an embedding. So embeddings of spaces are precisely monomorphisms in $\mS.$ 

\item We call a fully faithful functor $\mC \to \mD$ an embedding generalizing the notation for spaces.

\item We call a functor an inclusion if it is a monomorphism in $\Cat$. A functor $\mC \to \mD$ is an inclusion if and only if it induces an embedding on maximal subspaces and on all mapping spaces. 

\item For a diagram $X\to Z\leftarrow Y$ in a category $\mC$ we write $X\underset{Z}{\prod} Y$ or $X\underset{Z}{\times} Y$ for the pullback, and given a diagram $X\leftarrow W\to Y$ in a category $\mC$, we write $X\underset{W}{\coprod} Y$ or $X\underset{Z}{+} Y$ for the pushout.
\item If $\mC$ and $\mD$ are categories and $\mC\to\mD$ is a left adjoint functor with right adjoint $\mD\to\mC$, we often write $\mC\rightleftarrows\mD$ for this adjunction, where the left adjoint is understood to be the functor going from left to right.

\item
We write $\mC_*$ or $\mC_{\ast/}$ for the category of pointed objects in a category $\mC$, i.e. the full subcategory of $\Fun([1],\mC)$ of arrows in $\mC$ whose source is a final object.

\item We write $\infty\scat$ for the large $\infty$-category of small $\infty$-categories.\footnote{The morphism $\infty$-categories are formed via enrichment in the cartesian monoidal structure.}
\item We write $\Fun(\mD,\mC)$ for the $\infty$-category of functors from an $\infty$-category $\mD$ to an $\infty$-category $\mC$, the value at $\mC$ of the right adjoint to the functor $(-)\times\mC:\infty\Cat\to\infty\Cat$ for the cartesian product.

\item We write $\bD^1$ for the walking arrow, the category with two objects and a unique non-identity arrow.
\item We write $\partial\bD^1$ and $S^0$ for the maximal subspace in $\bD^1$, the set with two elements.

\item We write $\iota_{n}\mC$ for the $n$-category arising from an $\infty$-category $\mC$ by discarding all noninvertible morphisms above dimension $n.$
\item We write $\tau_{n}\mC$ for the $n$-category arising from an $\infty$-category $\mC$ by inverting all morphisms above dimension $n.$
\end{enumerate}
\end{notation} 

\subsection*{Acknowledgements}
We thank Fernando Abell\'an, Thomas Blom, Tim Campion, Felix Loubaton, Naruki Masuda, and Markus Spitzweck for interesting conversations related to the subject of this paper.
We thank the MPIM for their hospitality while much of this work was carried out.

\section{\mbox{Higher categories}}

\subsection{Enriched categories}

We first recall the notion of homotopy coherent enrichment, as defined and studied in, for instance, \cite{MR3345192}, \cite{heine2024higher}, \cite{heine2025equivalence}, \cite{HINICH2020107129}.
For every presentably monoidal category $\mV$ there is a presentable 2-category $${\mV\mathrm{-}\Cat}$$ of (not necessarily univalent) $\mV$-enriched categories and $\mV$-enriched functors and a forgetful functor $$ \iota:{\mV\mathrm{-}\Cat} \to \Cat$$ to the presentable 2-category $\Cat$ of (not necessarily univalent) categories,
which is an equivalence for $\mV= \mS$ the category of homotopy types \cite[Corollary 3.23.]{heine2024bienriched}. 

Let $${\mV\mathrm{-}\Cat}^\univ \subset {\mV\mathrm{-}\Cat}$$ be the reflexive full subcategory of univalent $\mV$-enriched categories.

\begin{notation}Let $\mV$ be a presentably monoidal category.
A $\mV$-enriched category $\mC$ has an underlying category $\iota(\mC)$, for every objects $X,Y \in \iota(\mC)$ a morphism object 
$$\Mor_\mC(X, Y) \in \mV$$
and for every objects $X,Y,Z \in \iota(\mC)$ a composition morphism in $\mV:$
$$\Mor_\mC(Y, Z) \ot \Mor_\mC(X, Y) \to \Mor_\mC(X, Z).$$
We write $\X \in \mC$ for $\X \in \iota(\mC)$ and usually notationally identify $\mC$ with $\iota(\mC).$

\end{notation}

The following is \cite[Example 2.134]{heine2024bienriched}:

\begin{example}\label{linenr}
Let $\mV$ be a presentably monoidal category. Every presentably left $\mV$-tensored category is a $\mV$-enriched category.
Every $\mV$-linear functor between presentably left $\mV$-tensored categories is a $\mV$-enriched functor.
In particular, $\mV$, which is presentably left tensored over itself, is a $\mV$-enriched category.

\end{example}

\begin{notation}Let $\mV$ be a presentably monoidal category.
Let $B\tu_\mV \subset \mV$ be the full $\mV$-enriched subcategory spanned by the tensor unit. Then $\Mor_{B\tu_\mV}(\tu_\mV, \tu_\mV) \simeq \tu_\mV.$
    
\end{notation}

For every enriched category there is an opposite one:

\begin{notation}
There is an involution $$(-)^\circ: {\mV\mathrm{-}\Cat} \simeq {\mV^\rev\mathrm{-}\Cat}$$ forming the opposite enriched category.
For every $\mC \in {\mV\mathrm{-}\Cat}$ and $X,Y \in \mC$
there are canonical equivalences
$ \iota(\mC^\circ) \simeq \iota(\mC)^\op$
and $$ \Mor_{\mC^\circ}(X,Y) \simeq \Mor_{\mC}(Y,X).$$
\end{notation}

The following is \cite[Proposition 3.72]{heine2024bienriched}:

\begin{proposition}

Let $\mV,\mW$ be presentably monoidal categories and 
$\phi:\mV \to \mW$ a lax monoidal functor.

\begin{enumerate}[\normalfont(1)]\setlength{\itemsep}{-2pt}
\item There is an induced functor
$\phi_!: \mV \mathrm{-}\Cat \to \mW \mathrm{-}\Cat$
that transfers the enrichment, which descends to a functor
$$\mV \mathrm{-}\Cat^\univ \to \mW \mathrm{-}\Cat^\univ.$$

\item For every $\mV$-enriched category $\mC$ and $X,Y \in \iota(\mC)$
there is a canonical equivalence
$$ \Mor_{\phi_!(\mC)}(X,Y) \simeq \phi(\Mor_\mC(X,Y)).$$

\item If $\phi$ is monoidal and admits a right adjoint $\gamma$,
there is an induced adjunction
$\phi_!: \mV \mathrm{-}\Cat \to \mW \mathrm{-}\Cat: \phi^*:=\gamma_!$
that descends to an adjunction 
$\mV \mathrm{-}\Cat^\univ \to \mW \mathrm{-}\Cat^\univ.$

\end{enumerate}

\end{proposition}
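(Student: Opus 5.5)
The plan is to build $\phi_!$ from a model of $\mV$-enriched categories as algebras for a non-symmetric operad. In such a model, a $\mV$-enriched category with space of objects $S$ is an algebra over the $\infty$-operad $\mathrm{Ass}_S$, whose colours are the pairs $(X,Y)\in S\times S$, with values in $\mV$. Equivalently, it is a lax monoidal functor into $\mV$ out of the "walking $S$-quiver" monoidal structure. The total category $\mV\mathrm{-}\Cat$ is assembled from these fibrewise categories by a cartesian fibration over $\mS$, with pullback along maps $S\to S'$ of object spaces.

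For part (1), a lax monoidal functor $\phi$ is a map of $\infty$-operads $\mV^\otimes\to\mW^\otimes$. Postcomposing with it gives, for each $S$, a functor $\Alg_{\mathrm{Ass}_S}(\mV)\to\Alg_{\mathrm{Ass}_S}(\mW)$. This functor commutes with restriction along maps of object spaces, so it assembles into a functor over $\mS$; that functor is $\phi_!$. Part (2) is then immediate: the underlying quiver of $\phi_!\mC$ is $\phi$ applied colourwise, so $\Mor_{\phi_!\mC}(X,Y)\simeq\phi(\Mor_\mC(X,Y))$.

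To see that $\phi_!$ descends to univalent objects, I would use the reflective inclusion $\mV\mathrm{-}\Cat^\univ\subset\mV\mathrm{-}\Cat$. The functor $\phi_!$ preserves the morphisms inverted by univalent completion: it preserves fully faithful, essentially surjective functors, because full faithfulness is the condition that the maps on $\Mor$ are equivalences, which $\phi$ preserves, and essential surjectivity is detected on the underlying category. Hence $L\circ\phi_!$ factors through the localization and gives the descended functor $\mV\mathrm{-}\Cat^\univ\to\mW\mathrm{-}\Cat^\univ$.

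For part (3), suppose $\phi$ is strong monoidal with right adjoint $\gamma$. Then $\gamma$ is canonically lax monoidal, being the right adjoint of a monoidal functor. Moreover, the adjunction $\phi\dashv\gamma$ promotes to an adjunction in the $2$-category of $\infty$-operads (lax monoidal functors), whose unit and counit are monoidal transformations. Postcomposition is $2$-functorial, so on $\mathrm{Ass}_S$-algebras it yields an adjunction $\phi_!\dashv\gamma_!$ fibrewise over $\mS$. Both functors preserve cartesian morphisms, since restriction along maps of object spaces commutes with postcomposition. A fibrewise adjunction between cartesian fibrations whose functors preserve cartesian edges assembles into a relative adjunction, hence into a global adjunction.

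It remains to descend this adjunction to univalent objects. Since $\phi_!$ preserves local equivalences, its right adjoint $\gamma_!$ preserves local objects, i.e. univalent categories. The adjunction therefore restricts: $L\phi_!$ is left adjoint to $\gamma_!$ on $\mV\mathrm{-}\Cat^\univ$.

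The main obstacle is the $2$-functoriality step: showing rigorously that a monoidal adjunction induces an adjunction on algebra categories, compatibly across all $S$. I would handle it by treating $\Alg_{\mathrm{Ass}_S}(-)$ as a $2$-functor out of the $2$-category of $\infty$-operads, which carries adjunctions to adjunctions. Alternatively, I would check the adjunction directly by computing mapping spaces via the equalizer or end formula for maps of enriched categories, using $\Map_\mW(\phi V,W)\simeq\Map_\mV(V,\gamma W)$ colourwise.
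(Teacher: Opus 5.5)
The paper does not prove this statement. It quotes it from \cite[Proposition 3.72]{heine2024bienriched}, so the comparison is with that citation rather than with an argument in the text.

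Your proposal is a correct proof, and it is the standard one:
\begin{itemize}
\item model $\mV\mathrm{-}\Cat$ by algebras over $\Delta^{\op}_S$ (your $\mathrm{Ass}_S$), and define $\phi_!$ by postcomposition;
\item descend to univalent objects because $\phi_!$ preserves fully faithful essentially surjective functors;
\item obtain (3) by passing the monoidal adjunction $\phi\dashv\gamma$ to algebras, naturally in $S$;
\item restrict to univalent objects because $\gamma_!$ preserves local objects.
\end{itemize}
The paper uses the same mechanism internally in the proof of \cref{Groprof}. There $\mV\mathrm{-}\Cat_{X\coprod Y}$ is identified with $\Alg_{\Delta^{\op}_{X\coprod Y}/\Delta^{\op}}(\mV)$, and a monoidal adjunction is passed to algebras.

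What the citation buys is that the result is available directly for the paper's own $\mV\mathrm{-}\Cat$. Your route instead makes its two external inputs explicit:
\begin{itemize}
\item the comparison between the paper's $\mV\mathrm{-}\Cat$ and the Gepner--Haugseng operadic model;
\item Gepner--Haugseng's theorem that univalent completion is the localization at fully faithful essentially surjective functors.
\end{itemize}
Together with 2-out-of-3, the second input shows that these functors are exactly the local equivalences. Both of your descent steps rely on that.

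Three points should be tightened.
\begin{itemize}
\item Essential surjectivity is not preserved merely because it is ``detected on the underlying category''. You need the identity-on-objects comparison functor $\iota(\mC)\to\iota(\phi_!\mC)$. It is induced by $\Map_\mV(\tu_\mV,V)\to\Map_\mW(\phi(\tu_\mV),\phi(V))\to\Map_\mW(\tu_\mW,\phi(V))$, using the lax unit $\tu_\mW\to\phi(\tu_\mV)$, and it carries equivalences to equivalences.
\item The restricted right adjoint is $\gamma_!$ on $\mW\mathrm{-}\Cat^\univ$, not on $\mV\mathrm{-}\Cat^\univ$.
\item If (3) is meant as an adjunction of 2-categories, add one line. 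Since $\phi$ is a left adjoint it preserves colimits, so $\phi_!$ commutes with tensoring by $\infty$-categories. A tensor-preserving functor whose underlying functor has a right adjoint then has an enriched right adjoint, by the analogue of \cref{adj}.
\end{itemize}
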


The following is \cite[Proposition 2.1.5]{GepnerHeine2026}:

\begin{proposition}\label{monochar}
Let $\mV$ be a presentably monoidal category.
A $\mV$-enriched functor $\phi: \mC \to \mD$ is a monomorphism in $\mV\mathrm{-}\Cat$ if and only it it induces an embedding $\iota(\mC) \to \iota(\mD)$ on underlying spaces and for every $A,B \in \mC$ the induced morphism 
$\Mor_\mC(A, B) \to \Mor_\mD(\phi(A), \phi(B))$ in $\mV$ is a monomorphism.
    
\end{proposition}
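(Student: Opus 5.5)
We argue in both directions, using the standard description of $\mV$-enriched categories as categories with a space of objects together with morphism objects in $\mV$. Monomorphisms in $\mV\mathrm{-}\Cat$ are detected by the diagonal: $\phi$ is a monomorphism if and only if the canonical functor $\Delta:\mC \to \mC\times_\mD \mC$ is an equivalence. Pullbacks in $\mV\mathrm{-}\Cat$ can be computed levelwise. The space of objects of $\mC\times_\mD\mC$ is $\iota(\mC)\times_{\iota(\mD)}\iota(\mC)$, where $\iota$ here denotes the underlying space of objects. For objects $(A,A',\alpha)$ and $(B,B',\beta)$, the morphism object is the pullback
$$\Mor_\mC(A,B)\times_{\Mor_\mD(\phi A,\phi B)}\Mor_\mC(A',B')$$
in $\mV$, with transport along $\alpha,\beta$ understood. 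I would justify this by noting that the forgetful functor to $\mV$-enriched graphs (or $\mV$-enriched precategories over a fixed object space, via the algebra-over-a-quiver description) preserves limits. Limits of enriched categories are created by the underlying graph, and the monoidal structure plays no role for limits.

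For the reverse implication, suppose both conditions hold. Since $\iota(\mC)\to\iota(\mD)$ is an embedding, the diagonal $\iota(\mC)\to\iota(\mC)\times_{\iota(\mD)}\iota(\mC)$ is an equivalence. Hence $\Delta$ is essentially surjective on object spaces, in the strong sense that it is an equivalence of object spaces. On morphism objects, $\Delta$ induces the diagonal
$$\Mor_\mC(A,B)\to \Mor_\mC(A,B)\times_{\Mor_\mD}\Mor_\mC(A,B),$$
and this is an equivalence precisely because $\Mor_\mC(A,B)\to\Mor_\mD(\phi A,\phi B)$ is a monomorphism in $\mV$. A $\mV$-enriched functor that is an equivalence on object spaces and on all morphism objects is an equivalence in $\mV\mathrm{-}\Cat$; note that in the not necessarily univalent setting this is exactly the notion of equivalence. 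So $\Delta$ is an equivalence and $\phi$ is a monomorphism.

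For the forward implication, suppose $\Delta$ is an equivalence. Then its object-space component is an equivalence, so $\iota(\mC)\to\iota(\mD)$ is an embedding. Its components on morphism objects are equivalences, so each $\Mor_\mC(A,B)\to\Mor_\mD(\phi A,\phi B)$ has an equivalence as its diagonal and is therefore a monomorphism in $\mV$.

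The main obstacle is the levelwise computation of pullbacks in $\mV\mathrm{-}\Cat$ in the non-univalent model, in particular that the object space of the pullback is the pullback of object spaces. I would handle this by working with the presentation of $\mV$-enriched categories as pairs consisting of a space $X$ and an algebra in $\mV$-valued $X$-graphs. A limit is then computed by first forming the limit $X$ of the object spaces, then pulling back all the graph algebras to $X$, and then taking their limit in the fiber. Pullback of algebras is monoidal, so the algebra structure is inherited.
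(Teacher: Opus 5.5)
Your proof is correct. The paper gives no argument for this statement; it imports it verbatim from \cite[Proposition 2.1.5]{GepnerHeine2026}, so there is no in-paper route to compare against. Your argument is a complete, self-contained proof in the non-univalent model the paper works in: the criterion that $\phi$ is a monomorphism exactly when its diagonal is an equivalence, combined with the levelwise description of the pullback $\mC\times_\mD\mC$. As you correctly note, in that model the equivalences in $\mV\mathrm{-}\Cat$ are exactly the functors that are equivalences on object spaces and on all morphism objects.

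First, a small correction. Restriction of $\mV$-valued graphs along a map $f\colon X'\to X$ of object spaces is only lax monoidal for the composition product, not monoidal. The product involves a colimit over the middle object, indexed by $X'$ on one side and by $X$ on the other. Lax monoidality is all you need to transport the algebra structure. Since limits of algebras are computed on underlying graphs, your fiberwise computation of limits along the cartesian fibration $\mV\mathrm{-}\Cat\to\mS$ still goes through.

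Second, you can sidestep the step you flag as the main obstacle. The object space is corepresented: $\iota_0(\mC)\simeq\Map_{\mV\mathrm{-}\Cat}(B\tu_\mV,\mC)$. By \cref{susp}, $\Map_{\mV\mathrm{-}\Cat}(S(V),\mC)$ is the total space of $(X,Y)\mapsto\Map_\mV(V,\Mor_\mC(X,Y))$ over $\iota_0(\mC)\times\iota_0(\mC)$. These corepresentable functors preserve pullbacks, and jointly they detect equivalences, by the Yoneda lemma in $\mV$. Applying them to $\mC\times_\mD\mC$ gives the levelwise description of object spaces and morphism objects with no model-specific computation of limits. The rest of your argument (the diagonal criterion for spaces and for $\mV$) is unchanged.
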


There is a close relationship between enriched categories
and tensored and cotensored categories:

\begin{definition}Let $\mV$ be a presentably monoidal category, $\mC$ a $\mV$-enriched category and $X \in \mC, V \in \mV.$	 
\begin{enumerate}[\normalfont(1)]\setlength{\itemsep}{-2pt}
\item The tensor of $V$ and $X$ in $\mC$ is the object $V \ot X \in \mC $ such that there is a morphism
$V \to \Mor_\mC(X, V \ot X) $ in $\mV$ that induces for every $Y \in \mC$ an equivalence
$$ \Mor_\mC(V \ot X,Y) \to \Mor_\mV(V, \Mor_\mC(X,Y)). $$ 
\item The cotensor of $V$ and $X$ in $\mC$ is the object ${^V X} \in \mC $ that is the tensor of $V $ and $X$ in the opposite $\mV^\rev$-enriched category $\mC^\circ.$
\end{enumerate}	
\end{definition}

Since the category of enriched categories forms a 2-category, there is a natural intrinsic notion of adjunction between enriched categories:

\begin{definition}Let $\mV$ be a presentably monoidal category.
A $\mV$-enriched functor $\mC \to \mD$ admits a left (right) adjoint if
it admits a left (right) adjoint in the 2-category $\mV \mathrm{-}\Cat.$

\end{definition}

\begin{notation}Let $\mV$ be a presentably monoidal category and $\mM, \mN$ be $\mV$-enriched categories.

Let $$\mV\mathrm{-}\Fun(\mM,\mN)$$ be the category of $\mV$-enriched functors
$\mM \to \mN.$

Let $$\mV\mathrm{-}\Fun^\L(\mM,\mN) \subset {\mV\mathrm{-}\Fun(\mM,\mN)}$$ be the full subcategory of $\mV$-enriched functors
$\mM \to \mN$ that admit a $\mV$-enriched right adjoint.
    
\end{notation}

The following is \cite[Remark 2.55]{heine2024bienriched}:

\begin{proposition}
Let $\mV$ be a presentably monoidal category.
\begin{enumerate}[\normalfont(1)]\setlength{\itemsep}{-2pt}
\item A $\mV$-enriched functor $\phi: \mC \to \mD$ admits a $\mV$-enriched right adjoint if and only if for every $\Y \in \mD$ the $\mV^\rev$-enriched functor
$\Mor_\mD(\phi(-),\X): \mC^\circ \to \mV$ is representable.
\item A $\mV$-enriched functor $\phi: \mC \to \mD$ admits a $\mV$-enriched left adjoint if and only if the opposite $\mV^\rev$-enriched functor $\phi^\circ: \mC^\circ \to \mD^\circ$ admits a $\mV^\rev$-enriched right adjoint. By (1) this holds if and only if for every $\Y \in \mD$ the $\mV$-enriched functor
$\Mor_\mD(\Y,\phi(-)): \mC \to \mV$ is representable.
\end{enumerate}
\end{proposition}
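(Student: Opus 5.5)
The plan is to reduce both parts to the 2-categorical definition of adjunction, using the Yoneda lemma in the enriched setting.

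For (1), suppose first that $\phi$ admits a $\mV$-enriched right adjoint $\gamma$, with unit $\eta$ and counit $\varepsilon$ satisfying the triangle identities in the 2-category $\mV\mathrm{-}\Cat$. For fixed $\Y\in\mD$ I would form the composite
\[
\Mor_\mC(-,\gamma(\Y)) \xrightarrow{\phi} \Mor_\mD(\phi(-),\phi\gamma(\Y)) \xrightarrow{\varepsilon_\Y\circ -} \Mor_\mD(\phi(-),\Y).
\]
This is a natural transformation of $\mV^\rev$-enriched functors $\mC^\circ\to\mV$. Its inverse is the analogous composite built from $\gamma$ and $\eta$, and the triangle identities show that the two composites are mutually inverse. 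So $\gamma(\Y)$ represents $\Mor_\mD(\phi(-),\Y)$.

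Conversely, assume that each presheaf $\Mor_\mD(\phi(-),\Y)$ is represented by some object $\gamma(\Y)$. I would use the enriched Yoneda embedding $\mD\to\mV^\rev\mathrm{-}\Fun(\mD^\circ,\mV)$, where $\mV$ is viewed as enriched via \cref{linenr}, followed by restriction along $\phi^\circ$. This gives an enriched functor $\mD\to\mV^\rev\mathrm{-}\Fun(\mC^\circ,\mV)$. By hypothesis it lands in the essential image of the Yoneda embedding of $\mC$. Since that embedding is fully faithful (it is a monomorphism in the sense of \cref{monochar}, with equivalences on morphism objects), the functor factors uniquely through $\mC$, producing an enriched functor $\gamma$. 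The representing equivalences then assemble into a natural equivalence $\Mor_\mC(-,\gamma(-))\simeq\Mor_\mD(\phi(-),-)$ of enriched bimodules. Evaluating at identities gives the unit and counit, and the triangle identities follow from naturality, by the standard Yoneda argument.

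This step is the main obstacle: the construction of $\gamma$ has to be made homotopy-coherently as an enriched functor. I would handle it entirely through fully faithfulness of the enriched Yoneda embedding and the statement that adjunctions in $\mV\mathrm{-}\Cat$ correspond to representable bimodules. I would cite the enriched Yoneda lemma from \cite{heine2024bienriched} rather than build any coherence by hand.

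For (2), I would use the involution $(-)^\circ:\mV\mathrm{-}\Cat\simeq\mV^\rev\mathrm{-}\Cat$. It is an equivalence of 2-categories that reverses the direction of 2-cells. It therefore sends an adjunction $\psi\dashv\phi$ to an adjunction $\phi^\circ\dashv\psi^\circ$. Hence $\phi$ has a left adjoint exactly when $\phi^\circ$ has a right adjoint. Applying (1) to $\phi^\circ$ and using $\Mor_{\mD^\circ}(\phi(-),\Y)\simeq\Mor_\mD(\Y,\phi(-))$ gives the stated criterion.
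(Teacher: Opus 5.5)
The paper does not prove this statement; it imports it from \cite[Remark 2.55]{heine2024bienriched}, so there is no internal argument to compare with. Your ``only if'' half of (1), and your reduction of (2) to (1) through the involution $(-)^\circ$ (which reverses 2-cells and so turns $\psi\dashv\phi$ into $\phi^\circ\dashv\psi^\circ$), are fine. The ``if'' half of (1) has two genuine gaps.

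First, factoring $Y\mapsto\Mor_\mD(\phi(-),Y)$ through $\mC$ needs $\mC$ to be univalent. By \cref{monochar}, the Yoneda embedding $\mC\to\mP_\mV(\mC)$ is a monomorphism only if it is an embedding on spaces of objects, and that holds only for univalent $\mC$; this is also why \cref{Yonedaext} assumes univalence. The paper's $\mV\mathrm{-}\Cat$ explicitly contains non-univalent categories, and there the ``if'' direction actually fails. Take $\mV=\mS$, let $\mC$ be the non-univalent category with one object and endomorphisms a nontrivial discrete group $G$, and let $\phi\colon\mC\to\mD\simeq BG$ be its univalent completion. Then $\phi$ is fully faithful and every object of $\mD$ is equivalent to $\phi(*)$, so every $\Mor_\mD(\phi(-),Y)$ is representable. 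But $\mD$ is a space, so every functor $\gamma\colon\mD\to\mC$ factors through the space of objects $\iota_0(\mC)\simeq *$. A unit $\id_\mC\Rightarrow\gamma\phi$ would then be an element $g\in G$ with $g=gh$ for all $h\in G$, which is impossible. So you must add univalence of $\mC$ as a hypothesis.

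Second, you correctly flag the coherent construction of the unit and counit as the main obstacle, but you do not resolve it. ``Evaluating at identities'' only produces components, and the result you propose to cite (adjunctions in $\mV\mathrm{-}\Cat$ correspond to representable bimodules) is the proposition itself. A non-circular route goes through the enriched adjunction $\phi_!\dashv\phi^*$ on presheaves, which the paper uses in the proof of \cref{fullyfaith}. Let $\iota_\mC,\iota_\mD$ denote the Yoneda embeddings; for $\mD$, compose with its univalent completion if necessary. You have $\alpha\colon\phi_!\iota_\mC\simeq\iota_\mD\phi$, and your construction of $\gamma$ gives $\theta\colon\phi^*\iota_\mD\simeq\iota_\mC\gamma$. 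Since post-composition with a fully faithful enriched functor is fully faithful on enriched functor categories, there are unique 2-cells $\eta$ and $\varepsilon$ with
\[
\iota_\mC\eta\colon \iota_\mC\Rightarrow\phi^*\phi_!\iota_\mC\simeq\phi^*\iota_\mD\phi\simeq\iota_\mC\gamma\phi,\qquad \iota_\mD\varepsilon\colon \iota_\mD\phi\gamma\simeq\phi_!\iota_\mC\gamma\simeq\phi_!\phi^*\iota_\mD\Rightarrow\iota_\mD .
\]
A short computation in the homotopy 2-category shows that the triangle identities for $(\eta,\varepsilon)$ follow from those of $\phi_!\dashv\phi^*$. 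It uses only the interchange law and naturality of the unit and counit, with no compatibility between $\alpha$ and $\theta$ required, and it supplies exactly the step you deferred.
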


The following is \cite[Lemma 2.77]{heine2024bienriched}:

\begin{proposition}\label{adj}
Let $\mV$ be a presentably monoidal category.
\begin{enumerate}[\normalfont(1)]\setlength{\itemsep}{-2pt}
\item A $\mV$-enriched functor $\mC \to \mD$ admits a right adjoint if and only if it preserves tensors and the underlying functor admits a right adjoint.
\item A $\mV$-enriched functor $\mC \to \mD$ admits a left adjoint if and only if it preserves cotensors and the underlying functor admits a left adjoint.
\end{enumerate}

\end{proposition}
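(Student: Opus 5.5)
We prove (1); (2) then follows by passing to opposite enriched categories. Applying (1) to $\phi^\circ:\mC^\circ\to\mD^\circ$, regarded as a $\mV^\rev$-enriched functor, a left adjoint of $\phi$ is the same as a right adjoint of $\phi^\circ$. Tensors in $\mC^\circ$ are by definition cotensors in $\mC$. Likewise, a left adjoint of the underlying functor $\iota(\phi)$ is a right adjoint of $\iota(\phi)^\op=\iota(\phi^\circ)$.

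\emph{Necessity in (1).} Suppose $\phi\dashv\psi$ in the 2-category $\mV\mathrm{-}\Cat$. Applying the underlying-category 2-functor $\iota$, which preserves adjunctions, shows that $\iota(\phi)$ has right adjoint $\iota(\psi)$. The unit and counit give, for all $X\in\mC$ and $Y\in\mD$, an equivalence in $\mV$ natural in both variables:
$$\Mor_\mD(\phi X,Y)\simeq\Mor_\mC(X,\psi Y).$$
Now let $V\ot X$ be a tensor in $\mC$ with structure map $V\to\Mor_\mC(X,V\ot X)$. Compose it with $\phi$ to get $V\to\Mor_\mD(\phi X,\phi(V\ot X))$. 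The induced map
$$\Mor_\mD(\phi(V\ot X),Y)\to\Mor_\mV(V,\Mor_\mD(\phi X,Y))$$
identifies, via the equivalence above, with
$$\Mor_\mC(V\ot X,\psi Y)\to\Mor_\mV(V,\Mor_\mC(X,\psi Y)),$$
which is an equivalence by the definition of the tensor. So $\phi$ preserves tensors.

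\emph{Sufficiency in (1).} By the preceding proposition (Remark 2.55 of \cite{heine2024bienriched}), it suffices to show that for each $Y\in\mD$ the $\mV^\rev$-enriched presheaf $\Mor_\mD(\phi(-),Y)$ on $\mC$ is represented by $\psi Y$, where $\psi$ is the underlying right adjoint. The counit $\phi\psi Y\to Y$ induces a comparison map, natural in $X$:
$$\Mor_\mC(X,\psi Y)\to\Mor_\mD(\phi X,Y).$$
To show it is an equivalence in $\mV$, we test against maps from arbitrary $V\in\mV$. Using the tensor $V\ot X$ and its preservation by $\phi$,
$$\Map_\mV(V,\Mor_\mC(X,\psi Y))\simeq\Map_{\iota\mC}(V\ot X,\psi Y)\simeq\Map_{\iota\mD}(\phi(V\ot X),Y)\simeq\Map_{\iota\mD}(V\ot\phi X,Y)\simeq\Map_\mV(V,\Mor_\mD(\phi X,Y)).$$
Here the first and last steps use $\Map_\mV(\tu_\mV,\Mor(V\ot X,Z))\simeq\Map_\mV(V,\Mor(X,Z))$. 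The middle step is the underlying adjunction, and the third step is tensor preservation. By Yoneda in $\mV$, the comparison is then an equivalence.

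Two points need care, and I expect them to be the main obstacle.
\begin{itemize}
\item The argument tacitly assumes the tensors $V\ot X$ exist in $\mC$. The phrase ``preserves tensors'' should be read as including that hypothesis (or existence is automatic in the intended setting). If tensors do not exist, I would instead argue representability directly, which seems genuinely harder.
\item One must check that the chain of equivalences is induced by the specified comparison map, not merely an abstract equivalence. This is a diagram chase using naturality of the counit and the compatibility of the tensor structure maps with $\phi$.
\end{itemize}
Once representability holds for every $Y$, the preceding proposition upgrades the objectwise assignment $Y\mapsto\psi Y$ to a $\mV$-enriched right adjoint, and this handles the remaining coherence automatically.
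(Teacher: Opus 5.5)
\textbf{Verdict.} Your proof is correct, provided the hypothesis is read as in your first bullet; that reading is in fact forced. The paper gives no argument of its own here and imports the statement from \cite[Lemma 2.77]{heine2024bienriched}, so your self-contained argument is the standard one rather than a competing route. It consists of three pieces:
\begin{itemize}
\item Necessity: transport the universal property of $V\ot X$ across the enriched equivalence $\Mor_\mD(\phi X,Y)\simeq\Mor_\mC(X,\psi Y)$.
\item Sufficiency: use the representability criterion of the preceding proposition, and detect that the comparison map $\Mor_\mC(X,\psi Y)\to\Mor_\mD(\phi X,Y)$ is an equivalence by applying $\Map_\mV(V,-)$ and using the tensor $V\ot X$.
\item Part (2): pass to $\mC^\circ,\mD^\circ$.
\end{itemize}

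\textbf{Your second bullet} is a routine check. Identify $\Map_\mV(V,\Mor_\mC(X,Z))\simeq\Map_{\iota\mC}(V\ot X,Z)$. Under this identification, postcomposing with the comparison map sends $g$ to the element classified by $\epsilon_Y\circ\phi(g)$, relative to the tensor structure map $\phi\circ s$. This holds simply because $\phi$ is compatible with composition. Finally, $g\mapsto\epsilon_Y\circ\phi(g)$ is exactly the underlying adjunction equivalence.

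\textbf{Your first bullet.} Reading the hypothesis as ``$\mC$ admits tensors and $\phi$ preserves them'' is not just convenient but necessary. So there is no harder ``direct representability'' argument to look for: without existence of tensors in $\mC$ the criterion is false. Here is a counterexample.
\begin{itemize}
\item Take $\mV=\mS\times\mS$ with the componentwise cartesian product. Let $\phi\colon B\tu_\mV\sqcup B\tu_\mV\to S((\emptyset,*))$ be the identity on the objects $0,1$.
\item Since $\Map_\mV(\tu_\mV,(\emptyset,*))=\emptyset$, the functor $\iota(\phi)$ is an equivalence of discrete two-object categories, so it has a right adjoint.
\item The only tensors that exist in the source are $V\ot X\simeq X$ for $V=(V_1,V_2)$ with $V_1,V_2\neq\emptyset$, and a direct check shows $\phi$ preserves them.
\item An enriched right adjoint $\psi$ would have $\iota(\psi)$ inverse to $\iota(\phi)$, so it would be the identity on objects. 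It would then give $(\emptyset,\emptyset)=\Mor(0,1)\simeq(\emptyset,*)$, which is absurd.
\end{itemize}
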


Next we introduce the enriched category of enriched presheaves \cite{heine2025equivalence}.

\begin{notation}

Let $\mV, \mW$ be presentably monoidal categories and $\mM$ a $\mV$-enriched category and $\mO$ a $\mW$-enriched category.
Let $\langle \mM, \mN \rangle $ be the $\mV \ot \mW$-enriched category that is the transfer of enrichment of the $\mV \times \mW$-enriched category $\mM \times \mN$ along the universal
monoidal functor $\mV \times \mW \to \mV \ot \mW$ preserving small colimits componentwise.
    
\end{notation}

The next theorem follows from \cite[Proposition 4.11 and Theorem 4.86]{heine2024bienriched}:

\begin{theorem}\label{psinho} Let $\mV, \mW$ be presentably monoidal categories and $\mN$ a univalent $(\mV, \mW)$-bienriched category. 
\begin{enumerate}[\normalfont(1)]\setlength{\itemsep}{-2pt}
\item Let $\mM$ be a small univalent $\mV$-enriched category. The
category ${\mV\mathrm{-}\Fun}(\mM, \mN)$ refines to an univalent $\mW$-enriched category characterized by an 
equivalence
$$ \mW\mathrm{-}\Fun(\mO,{\mV\mathrm{-}\Fun}(\mM, \mN)) \to {\mV \ot \mW\mathrm{-}\Fun}(\langle\mM,\mO\rangle,\mN)$$
natural in any univalent $\mW$-enriched category $\mO$.
\item Let $\mO$ be a small univalent $\mW$-enriched category. 
The category ${\mW\mathrm{-}\Fun}(\mO, \mN)$ refines to an univalent $\mV$-enriched category characterized by an equivalence
$$ \mV\mathrm{-}\Fun(\mM,{\mW\mathrm{-}\Fun}(\mO, \mN)) \to {\mV \ot \mW\mathrm{-}\Fun}(\langle\mM,\mO\rangle,\mN) $$
natural in any univalent $\mV$-enriched category $\mM$.

\item If $\mN$ is a presentably $\mV, \mW$-bitensored category, then ${\mV\mathrm{-}\Fun}(\mM, {\mN}) $ is a presentably right $\mW$-tensored category and ${\mW\mathrm{-}\Fun}(\mO, {\mN}) $ is a presentably left $\mV$-tensored category.

\end{enumerate}

\end{theorem}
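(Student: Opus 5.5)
The plan is to obtain all three parts from the universal property of the tensor product of presentably monoidal categories, combined with the transfer-of-enrichment and enriched functor category constructions already recalled, in particular \cite[Proposition 4.11 and Theorem 4.86]{heine2024bienriched}.

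\emph{Part (1).} The first step is to make the $\mW$-enrichment concrete. For $F,G\in{\mV\mathrm{-}\Fun}(\mM,\mN)$ we define the morphism object $\Mor(F,G)\in\mW$ as an enriched end. Each $\Mor_\mN(F(X),G(Y))$ is a $(\mV,\mW)$-bimorphism object, and we take the end over $\mM$ of these, using the $\mV$-action to pair with $\Mor_\mM(X,Y)$. This end exists because $\mW$ is presentable and $\mM$ is small.

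Rather than verifying coherences by hand, we construct the enriched category representably. Consider the functor
$$\mO\mapsto {\mV \ot \mW\mathrm{-}\Fun}(\langle\mM,\mO\rangle,\mN)$$
on univalent $\mW$-enriched categories. We check that it sends colimits in $\mO$ to limits. This holds because $\langle\mM,-\rangle$ is a left adjoint: it is transfer along a colimit-preserving monoidal functor applied to $\mM\times(-)$, and for small $\mM$ the product commutes with colimits of enriched categories. It then follows from presentability of ${\mW\mathrm{-}\Cat}^\univ$ that the functor is representable. Evaluating at the unit object $B\tu_\mW$ identifies the underlying category of the representing object with ${\mV\mathrm{-}\Fun}(\mM,\mN)$. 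This uses that $\langle\mM,B\tu_\mW\rangle$ is $\mM$ with enrichment transferred along $\mV\to\mV\ot\mW$, together with the adjunction between transfer and restriction of the bienrichment on $\mN$.

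\emph{Part (2).} This is the symmetric statement, obtained by passing to opposites via $(-)^\circ$ and reversing the roles of $\mV$ and $\mW$.

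\emph{Part (3).} To show tensoredness, we construct the tensor $W\ot F$ pointwise as $X\mapsto F(X)\ot W$, using the right $\mW$-tensoring on $\mN$. Since the $\mV$- and $\mW$-actions on a bitensored category commute, this formula defines a $\mV$-enriched functor. The tensor universal property then follows from the end formula for morphism objects, because $(-)\ot W$ is left adjoint to the cotensor in $\mN$.

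For presentability, we write ${\mV\mathrm{-}\Fun}(\mM,\mN)$ as a limit of presentable categories. This limit is indexed by the generating data of $\mM$, either as an accessible localization of $\mV$-enriched functors out of the free enriched category on a graph, or as algebras over a monad that preserves $\kappa$-filtered colimits. A monad-based presentation makes colimit preservation by the tensors automatic.

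The main obstacle is the representability argument in part (1). Specifically, one must show that $\langle\mM,-\rangle$ preserves colimits of univalent $\mW$-enriched categories, which involves univalent completion, and that the underlying category comes out correctly. I would first try to prove this in the non-univalent setting and then apply the reflection onto univalent categories.
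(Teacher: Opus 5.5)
There is a genuine gap, and it sits at the step you yourself flag as the main obstacle. The reason you give for it is incorrect. Neither factor in your decomposition of $\langle\mM,-\rangle$ preserves colimits.
\begin{itemize}
\item The canonical functor $\mV\times\mW\to\mV\ot\mW$ preserves colimits only separately in each variable. It sends $(A,C)\sqcup(A',C')$ to $(A\sqcup A')\ot(C\sqcup C')$, which has the cross terms $A\ot C'$ and $A'\ot C$. So nothing formal makes transfer along it a left adjoint, and in fact it is not: already for $\mV=\mW=\mS$ it fails to preserve pushouts.
\item The functor $\mM\times(-)\colon{\mW\mathrm{-}\Cat}\to(\mV\times\mW)\mathrm{-}\Cat$ does not even preserve coproducts. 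Write $1,2$ for the two objects of $B\tu_\mW\sqcup B\tu_\mW$. In $\mM\times(B\tu_\mW\sqcup B\tu_\mW)$ the morphism object from $(X,1)$ to $(X,2)$ is $(\Mor_\mM(X,X),\emptyset)$, which receives the unit and so is not initial for nontrivial $\mV$. In $(\mM\times B\tu_\mW)\sqcup(\mM\times B\tu_\mW)$ it is $(\emptyset,\emptyset)$.
\end{itemize}
Only after tensoring the two components does the cross term vanish. So colimit preservation can hold at best for the composite, and proving it needs real control over colimits of enriched categories, whose morphism objects are built from free and amalgamated composites.

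Moreover, (1) says precisely that $\langle\mM,-\rangle$ has the right adjoint $\mN\mapsto{\mV\mathrm{-}\Fun}(\mM,\mN)$. So ``$\langle\mM,-\rangle$ is a left adjoint'' cannot be used as input; it is the content of the theorem. Already for one-object $\mM=BA$ and $\mO=BB$, statement (1) contains a centralizer statement: a universal $\mW$-algebra acting on an object $n$ compatibly with a given action of $A$ on $n$. That is a genuine theorem (compare \cite[\S 5.3]{lurie.higheralgebra}), not a formal consequence of products commuting with colimits.

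Even granting colimit preservation, the representability step is incomplete as written.
\begin{itemize}
\item ${\mV\mathrm{-}\Fun}(\mM,\mN)$ is typically large; in this paper $\mN$ is $\infty\fcat$ or $\mV$. The presentability argument therefore has to be run for large $\mW$-enriched categories in an enlarged universe.
\item Representability only gives an equivalence of maximal subspaces. Upgrading it to the asserted equivalence of categories requires compatibility of $\langle\mM,-\rangle$ with the tensoring of ${\mW\mathrm{-}\Cat}$ over $\Cat$.
\item Evaluating at $B\tu_\mW$ recovers only the space of objects of the representing object, not its underlying category, unless that categorical upgrade is already in hand.
\item In (3) you use an end formula for the $\mW$-morphism objects that you never identify with the representably defined enrichment.
\end{itemize}

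The paper does not argue any of this; it imports the statement from \cite[Proposition 4.11 and Theorem 4.86]{heine2024bienriched}. If you want a self-contained proof, one more workable order is to start with presentably bitensored $\mN$. There, by \cref{Yonedaext}, ${\mV\mathrm{-}\Fun}(\mM,\mN)$ is the category of $\mV$-linear left adjoints $\mP_\mV(\mM)\to\mN$, which carries a presentable right $\mW$-action by postcomposition. You would then deduce (1), now as a statement about presentable module categories, and finally reduce general univalent $\mN$ to this case by an enriched Yoneda embedding.
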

    
\begin{definition}

Let $\mV$ be a presentably monoidal category and $\mC$ a small univalent $\mV$-enriched category.
The presentably left $\mV$-tensored category of $\mV$-enriched presheaves on $\mC$ is
$$ \mP_\mV(\mC):= \mV\mathrm{-}\Fun(\mC^\op,\mV). $$
\end{definition}

The next theorem, which follows from \cite[Theorem 3.41 and Theorem 4.70]{heine2024bienriched}, is an enriched version of the universal property of the category of presheaves as the free cocompletion under small colimits \cite[Theorem 5.1.5.6]{lurie.HTT}:

\begin{theorem}
\label{Yonedaext}
Let $\mV$ be a presentably monoidal category, $\mC$ a small univalent $\mV$-enriched category and $\mD$ a presentably left $\mV$-tensored category.
\begin{enumerate}[\normalfont(1)]\setlength{\itemsep}{-2pt}
\item There is a $\mV$-enriched embedding $\iota_\mC : \mC \to \mP_\mV(\mC)$ that sends $X$ to $\L\Mor_\mC(-,X)$ and induces
for every presentably left $\mV$-tensored category $\mD$ an equivalence
$$ {\mV\mathrm{-}\Fun^\L}(\mP_\mV(\mC),\mD)\to \mV\mathrm{-}\Fun(\mC,\mD).$$
\item Let $F: \mC \to \mD$ be a $\mV$-enriched functor and
$\bar{F}: \mP_\mV(\mC) \to \mD $ the unique $\mV$-enriched left adjoint extension of $F$.
For every $\mV$-enriched functor $G: \mP_\mV(\mC) \to \mD $
the induced morphism $$\Map_{{\mV\mathrm{-}\Fun^\L}(\mP_\mV(\mC),\mD)}(\bar{F},G) \to \Map_{\mV\mathrm{-}\Fun(\mC,\mD)}(F,G \circ \iota_\mC) $$ is an equivalence.
\item Let $F: \mC \to \mD$ be a $\mV$-enriched functor and
$\bar{F}: \mP_\mV(\mC) \to \mD $ the unique $\mV$-enriched left adjoint extension of $F$.
The $\mV$-enriched right adjoint $\mD \to \mP_\mV(\mC)$ of $\bar{F}$ sends $Y$ to $\Mor_\mD(-,Y) \circ F. $
\end{enumerate}
\end{theorem}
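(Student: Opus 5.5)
The plan is to reduce the enriched statement to the unenriched universal property of presheaves, \cite[Theorem 5.1.5.6]{lurie.HTT}, together with the characterization of enriched adjoints in \cref{adj}.

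For (1), we first construct $\iota_\mC$ as the enriched Yoneda embedding. We view $\Mor_\mC(-,-)$ as a $\mV\otimes\mV^\rev$-enriched, or bienriched, functor $\langle\mC^\op,\mC\rangle\to\mV$, where $\mV$ is regarded as a $(\mV,\mV)$-bienriched category via \cref{linenr}. \cref{psinho} then curries this into a $\mV$-enriched functor $\mC\to\mV\mathrm{-}\Fun(\mC^\op,\mV)=\mP_\mV(\mC)$ sending $X$ to $\L\Mor_\mC(-,X)$. By \cref{psinho}(3), $\mP_\mV(\mC)$ is presentably left $\mV$-tensored. Full faithfulness comes from the enriched Yoneda lemma, which gives $\Mor_{\mP_\mV(\mC)}(\iota_\mC X,F)\simeq F(X)$ naturally in $F$. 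I would prove this lemma first by identifying the enriched mapping object as an end and using the coYoneda formula.

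Next we show that every enriched presheaf is a colimit of tensors $V\otimes \iota_\mC X$. Concretely, $F$ is the coend $\int^{X}F(X)\otimes\iota_\mC X$, the enriched density formula, which follows from Yoneda by testing against $\Mor(-,G)$. From this:
\begin{itemize}
\item[(a)] Given $F:\mC\to\mD$, define $\bar F$ by the same coend, $\bar F(P)=\int^X P(X)\otimes F(X)$. This exists because $\mD$ is presentably tensored.
\item[(b)] Its right adjoint is $Y\mapsto \Mor_\mD(F(-),Y)$, verified directly via the end formula for $\Mor_{\mP_\mV(\mC)}$. This is (3).
\item[(c)] Uniqueness holds because any left adjoint $G$ preserves tensors and colimits (\cref{adj}), so $G$ is determined by $G\circ\iota_\mC$ via the density formula.
\end{itemize}

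For (2), and the full strength of (1), I would reformulate: both sides of the map in (2) are mapping spaces. Using that $\bar F$ and $G$ are left adjoints, the left side becomes $\Map(\bar F,G)\simeq\Map(\mathrm{id}, R_F\circ G)$ in enriched functors $\mP_\mV(\mC)\to\mP_\mV(\mC)$. Evaluating at $P=\int^X P(X)\otimes\iota X$ and using that $R_F$ is computed pointwise, this reduces to maps $F\to G\iota$. Statement (2) holds for all enriched $G$, not only left adjoints, so one argues instead with natural transformations of coend-defined functors. Because $\bar F$ is an enriched left Kan extension along the fully faithful $\iota_\mC$, the universal property of Kan extension gives the equivalence. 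Statement (1) is then (2) restricted to left adjoints $G$, combined with essential surjectivity from (a).

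The main obstacle is making the enriched Kan extension and coend calculus homotopy-coherent, in particular showing that $\bar F$ is a pointwise enriched left Kan extension and that the enriched Yoneda lemma holds coherently. I would try to avoid direct coherence work by reducing to the unenriched case. Presheaves in $\mV\mathrm{-}\Fun(\mC^\op,\mV)$ can be identified with $\mV$-right-modules over the categorical algebra of $\mC$, giving a monadic or bar-construction description. Then $\bar F$ is a relative tensor product, and the known theory of $\mathrm{LMod}$ and bimodules in presentable categories supplies the free-cocompletion statement.
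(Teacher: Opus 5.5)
Your proposal is correct in outline but takes a genuinely different route from the paper, which does not prove this statement at all: it imports it from Heine's bienriched theory (\cite[Theorem 3.41 and Theorem 4.70]{heine2024bienriched}).

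You instead run the classical Kelly-style argument:
\begin{enumerate}
\item curry the hom bifunctor via \cref{psinho} to obtain $\iota_\mC$;
\item prove the enriched Yoneda lemma and the density formula $P\simeq\int^X P(X)\otimes\iota_\mC X$;
\item recognize $\bar F(P)=\int^X P(X)\otimes F(X)$ as the pointwise enriched left Kan extension of $F$ along $\iota_\mC$;
\item deduce (2) for \emph{all} enriched $G$ from the Kan extension universal property, rightly reading the left side of (2) in $\mV\mathrm{-}\Fun(\mP_\mV(\mC),\mD)$ rather than in the subcategory of left adjoints;
\item obtain (1) from (2) together with $\overline{G\iota_\mC}\simeq G$ for left adjoints $G$.
\end{enumerate}

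Your first attempt at (2), via $\Map(\bar F,G)\simeq\Map(\id,R_F\circ G)$ and ``evaluating at $P$'', is not an argument on its own. A natural transformation out of $\id$ is not determined objectwise, so you would need to know that $\id$ is the left Kan extension of $\iota_\mC$ along itself. The Kan extension step supersedes it anyway.

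Your route buys an explicit mechanism, including the coend formula for left adjoint extensions. The paper uses a formula of exactly this shape, again by citation, in the proof of \cref{homsus}. The citation buys rigor: the homotopy-coherent inputs you defer are precisely the foundational results established in Heine's theory. These are a coherent hom bifunctor, the end formula for enriched natural transformations, and pointwise enriched Kan extensions. Your module-theoretic fallback (presheaves as right modules over the categorical algebra, $\bar F$ as a relative tensor product) is close to standard treatments of those foundations.

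You can also avoid building the coend functor coherently. The assignment $R\colon Y\mapsto\Mor_\mD(F(-),Y)$ is an enriched functor $\mD\to\mP_\mV(\mC)$ by currying. It preserves cotensors and small limits and is accessible. By \cref{adj} and the adjoint functor theorem it therefore has an enriched left adjoint. This gives existence of $\bar F$ and (3) at once, and leaves only the density of representables to establish (1) and (2).
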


The following is the enriched Yoneda lemma, proven in 
\cite[Corollary 4.44]{heine2024bienriched}:

\begin{lemma}
Let $\mV$ be a presentably monoidal category and $\mC$ a small univalent $\mV$-enriched $\infty$-category. For every object $X \in \mC$ and $F \in \mP_\mV(\mC)$ the induced morphism $$\Mor_{\mP_\mV(\mC)}(\Mor_\mC(-,X),F) \to F(X) $$ is an equivalence.

\end{lemma}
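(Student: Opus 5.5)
The Yoneda lemma should follow from the universal property in \cref{Yonedaext}, combined with the tensor–hom characterization of $\mP_\mV(\mC)$. First, identify the representable presheaf $\Mor_\mC(-,X)$ with $\iota_\mC(X)$. Then use that $\mP_\mV(\mC)=\mV\mathrm{-}\Fun(\mC^\op,\mV)$ is presentably left $\mV$-tensored (\cref{psinho}(3)). This lets us test the claimed equivalence after applying $\Map_\mV(V,-)$ for every $V\in\mV$. Since $\mV$ is presentable, a morphism in $\mV$ is an equivalence exactly when it induces equivalences on all such mapping spaces.

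For fixed $V$, the definition of the tensor gives
\[
\Map_\mV(V,\Mor_{\mP_\mV(\mC)}(\iota_\mC X,F))\simeq \Map_{\mP_\mV(\mC)}(V\ot\iota_\mC X,F).
\]
Next I would describe $V\ot \iota_\mC(X)$ as the image of $X$ under an enriched left adjoint. Consider the $\mV$-enriched functor $\mC\to\mP_\mV(\mC)$, $Y\mapsto V\ot\iota_\mC(Y)$. Alternatively, and more cleanly, form the $\mV$-enriched evaluation functor $\mathrm{ev}_X:\mP_\mV(\mC)\to\mV$, $F\mapsto F(X)$. Evaluation is $\mV$-enriched, since by \cref{psinho} it corresponds to the enriched pairing $\langle \mP_\mV(\mC),\mC^\op\rangle\to\mV$ restricted to $X$. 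It preserves tensors and colimits, because these are computed pointwise in presheaf categories valued in a presentably tensored category. By \cref{adj}(1) it therefore admits a $\mV$-enriched left adjoint $L_X:\mV\to\mP_\mV(\mC)$. The claim then reduces to showing $L_X(\tu_\mV)\simeq\iota_\mC(X)$, with unit corresponding to $\id_X\in\Mor_\mC(X,X)$. Granted this, enriched adjointness gives
\[
\Mor_{\mP_\mV(\mC)}(\iota_\mC X,F)\simeq\Mor_{\mP_\mV(\mC)}(L_X\tu_\mV,F)\simeq \Mor_\mV(\tu_\mV,F(X))\simeq F(X),
\]
and one checks that the composite is the map induced by evaluation at $\id_X$.

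To identify $L_X(\tu_\mV)$, restrict along $B\tu_\mV\to\mV$. Then $L_X(\tu_\mV)$ is the $\mV$-enriched left Kan extension of the functor $B\tu_\mV\to\mC$ picking out $X$, followed by $\iota_\mC$. Use \cref{Yonedaext}(3) with $\mC$ replaced by $B\tu_\mV$, so that $\mP_\mV(B\tu_\mV)\simeq\mV$. The right adjoint of the induced extension sends $F$ to $\Mor_{\mP_\mV(\mC)}(\iota_\mC X,F)$. Comparing the two right adjoints, $\mathrm{ev}_X$ and $\Mor(\iota_\mC X,-)$, is precisely the statement to be proven, so this route is circular as stated. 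The genuine input must instead be that the embedding $\iota_\mC$ of \cref{Yonedaext}(1) is, by construction, the functor adjoint under \cref{psinho} to $\Mor_\mC:\langle\mC^\op,\mC\rangle\to\mV$. One should then verify directly that $\Mor_\mC(-,X)$ corepresents evaluation. I would do this by presenting $\mP_\mV(\mC)$ as the category of left $\mV$-modules via the $(\mV,\mV)$-bienriched structure. In that setting the free module on $X$ is $\Mor_\mC(-,X)$, and freeness is exactly the adjunction $L_X\dashv \mathrm{ev}_X$ at $\tu_\mV$.

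The main obstacle is this last identification: the free presheaf generated at $X$ must be the representable presheaf, coherently with the enrichment. I would first try to extract it from the construction in \cite{heine2024bienriched} of $\mP_\mV(\mC)$ as enriched functors, using that maps out of $\langle B\tu_\mV,\mC^\op\rangle$ are computed by evaluation.
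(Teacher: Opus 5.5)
There is a genuine gap: the one nontrivial fact, that the representable presheaf $\Mor_\mC(-,X)$ corepresents evaluation at $X$ via $\id_X$, is never established. Each of your reductions is a reformulation of the lemma itself:
\begin{itemize}
\item Testing against $\Map_\mV(V,-)$ only unwinds what an equivalence in $\mV$ means.
\item The claim $L_X(\tu_\mV)\simeq\iota_\mC(X)$ with unit $\id_X$ is equivalent to the statement, since both say that $\iota_\mC(X)$ corepresents $\mathrm{ev}_X$.
\item You note yourself that the comparison through \cref{Yonedaext}(3) over $B\tu_\mV$ is circular.
\item ``The free module on $X$ is $\Mor_\mC(-,X)$'' is the Yoneda lemma in module language.
\end{itemize}
The module-theoretic idea is a viable strategy, in the spirit of Hinich's approach \cite{HINICH2020107129}. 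But all of its content lies in two steps. The first is identifying $\mV\mathrm{-}\Fun(\mC^\op,\mV)$ with modules over $\mC$, regarded as an algebra in a monoidal category of $\mV$-valued matrices. The second is computing the free module on the unit concentrated at $X$. You carry out neither, and end by proposing to extract the result from \cite{heine2024bienriched}. That is all the paper does as well: it gives no argument and quotes \cite[Corollary 4.44]{heine2024bienriched}. So your proposal adds reductions but no proof of the crux.

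There is also a smaller error. \cref{adj}(1) produces \emph{right} adjoints from preservation of tensors. To obtain the left adjoint $L_X$ of $\mathrm{ev}_X$ you need \cref{adj}(2), i.e.\ preservation of cotensors together with a left adjoint of the underlying functor. This is repairable, since evaluation preserves weighted limits (\cref{colimenrfun}) and filtered colimits. As written, though, the citation points the wrong way.

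If you are willing to treat \cref{Yonedaext} as a black box, the useful application is not over $B\tu_\mV$. Instead apply it to $F=\iota_\mC$ with $\mD=\mP_\mV(\mC)$, which is presentably left $\mV$-tensored by \cref{psinho}(3). Uniqueness in part (1) gives $\bar F\simeq\id$, whose right adjoint is the identity. Part (3) then yields a natural equivalence $G\simeq\Mor_{\mP_\mV(\mC)}(\iota_\mC(-),G)$. Evaluate at $X$ and check via the (co)unit that this equivalence is inverse to the map induced by $\id_X$; that gives the lemma. But this merely repackages another result imported from the same source, so it is not an independent proof either.
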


We will use the following terminology to describe compatible enrichments in two monoidal categories:

\begin{definition}\label{bienr}
Let $\mV$ and $\mW$ be presentably monoidal categories.	 
\begin{enumerate}[\normalfont(1)]\setlength{\itemsep}{-2pt}
\item A left $\mV$-enriched category is a $\mV$-enriched category.
\item A left $\mV$-enriched functor is a $\mV$-enriched functor
\item A right $\mV$-enriched category is a $\mV^\rev$-enriched category.
\item A right $\mV$-enriched functor is a $\mV^\rev$-enriched functor.
\item A $\mV,\mW$-bienriched category is a $\mV \ot \mW^\rev$-enriched category.
\item A $\mV,\mW$-enriched functor is a $\mV \ot \mW^\rev$-enriched functor.
\end{enumerate}
\end{definition}

We often refer to a $(\mV,\mV)$-bienriched category simply as $\mV$-bienriched category.

\begin{notation}
Let $\mV, \mW$ be presentably monoidal categories.
\begin{enumerate}[\normalfont(1)]\setlength{\itemsep}{-2pt}
\item Let $${_\mV \Cat}:= {\mV}\mathrm{-}\Cat$$
be the 2-category of left $\mV$-enriched categories and left $\mV$-enriched functors.
\item Let $${\Cat_\mW}:= {\mW^\rev}\mathrm{-}\Cat$$
be the 2-category of right $\mW$-enriched categories and right $\mW$-enriched functors.
\item Let $${_\mV \Cat_\mW}:= {\mV \ot \mW^\rev}\mathrm{-}\Cat$$
be the 2-category of $\mV,\mW$-bienriched categories and $\mV,\mW$-enriched functors.
\end{enumerate}
\end{notation}

\begin{notation}

We will write $\L\Mor, \R\Mor, \mathrm{B}\Mor$ for the morphism objects in a left, right and bienriched category, respectively.    
\end{notation}

\begin{notation}
Let $\mV, \mW$ be presentably monoidal categories and $\mM, \mN$ be $(\mV,\mW)$-bienriched categories.
Let $$_\mV\Fun_\mW(\mM,\mN):= {\mV \ot \mW^\rev \mathrm{-}\Fun(\mM,\mN)}$$ be the category of $\mV,\mW$-enriched functors $\mM \to \mN.$

\end{notation}

The following is \cite[Example 2.134]{heine2024bienriched}:

\begin{example}
Let $\mV,\mW$ be presentably monoidal categories. Every presentably $\mV,\mW$-bitensored category is a $\mV,\mW$-enriched category.
Every presentably right $\mW$-tensored category is a right $\mW$-enriched category.
This follows also from \cref{linenr} identifying
presentably $\mV,\mW$-bitensored categories with presentably left $\mV \ot \mW^\rev$-tensored categories, and identifying
presentably right $\mW$-tensored categories with presentably left $\mW^\rev$-tensored categories.
In particular, $\mV$, which is presentably bitensored over itself, is a $\mV,\mV$-bienriched category.

Similarly, every $\mV,\mW$-linear functor between presentably $\mV,\mW$-bitensored categories is a $\mV,\mW$-enriched functor.

\end{example}

\begin{definition}Let $\mV$ and $\mW$ be presentably monoidal categories, let $\mC$ be a  
$\mV,\mW$-enriched category, and let $X \in \mC, V \in \mV, W \in \mW.$	 
\begin{enumerate}[\normalfont(1)]\setlength{\itemsep}{-2pt}
\item The left (co)tensor of $V$ and $X$ in $\mC$ is the (co)tensor of
$V \ot \tu_\mW \in \mV \ot \mW^\rev$ and $X$ in $\mC.$

\item The right (co)tensor of $W$ and $X$ in $\mC$ is the (co)tensor of
$\tu_\mV \ot W \in \mV \ot \mW^\rev$ and $X$ in $\mC.$

\end{enumerate}

\end{definition}

In the following we consider enriched slice categories.
To define bienriched slice categories we use the following lemma, which is \cite[Lemma 2.1.23.]{oriented}: 

\begin{lemma}\label{tensorunit}
Let $\mV, \mW$ be presentably monoidal categories whose tensor unit is final.
The tensor unit of $\mV \ot \mW$ is final.
	
\end{lemma}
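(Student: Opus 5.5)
We have to show that if $\tu_\mV$ is final in $\mV$ and $\tu_\mW$ is final in $\mW$, then $\tu_{\mV\ot\mW}\simeq \tu_\mV\ot\tu_\mW$ is final in $\mV\ot\mW$. The approach is to reduce finality of the tensor unit to a statement about the monoidal structure, using the universal property of the Lurie tensor product $\mV\ot\mW$. We will use that $\mV\ot\mW$ is generated under small colimits by the objects $V\ot W$, and that the canonical functor $\mV\times\mW\to\mV\ot\mW$ is monoidal and preserves colimits in each variable.

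\textbf{Step 1 (reformulate finality).} For a presentably monoidal category $\mA$, the unit $\tu_\mA$ is final if and only if, for every $A\in\mA$, the space $\Map_\mA(A,\tu_\mA)$ is contractible. The subcategory of objects $A$ with this property is closed under small colimits, since $\Map_\mA(\colim_i A_i,\tu_\mA)\simeq\lim_i\Map_\mA(A_i,\tu_\mA)$ and a limit of contractible spaces is contractible. So it suffices to check contractibility on a family of objects generating $\mV\ot\mW$ under small colimits, namely the pure tensors $V\ot W$.

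\textbf{Step 2 (pure tensors).} The tensor product $\mV\ot\mW$ can be computed as $\Fun^{\R}(\mV^\op,\mW)$, the category of limit-preserving functors $\mV^\op\to\mW$. Under this identification, the unit $\tu_\mV\ot\tu_\mW$ corresponds to the right Kan extension-type functor $V\mapsto \Map_\mV(V,\tu_\mV)\pitchfork \tu_\mW$, where $\pitchfork$ denotes the cotensor of a space with an object of $\mW$. Since $\Map_\mV(V,\tu_\mV)\simeq *$, this functor is constant with value $\tu_\mW$.

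For any limit-preserving $G\colon\mV^\op\to\mW$, maps $G\to\mathrm{const}_{\tu_\mW}$ are computed objectwise in $\mW$, where $\tu_\mW$ is final. The constant functor with value a final object is the final object of $\Fun(\mV^\op,\mW)$, and it preserves limits, so it lies in the full subcategory $\Fun^{\R}(\mV^\op,\mW)$. Hence it is final there as well.

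\textbf{Main obstacle.} The step I expect to be hardest is identifying the unit of $\mV\ot\mW$ under the equivalence $\mV\ot\mW\simeq\Fun^{\R}(\mV^\op,\mW)$. I would try first to verify this identification using the formula $V\ot W\mapsto \Map_\mV(-,V)\pitchfork W$, which describes the image of pure tensors.

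If that identification is awkward, an alternative route avoids it. The functors $\mV\to *$ and $\mW\to *$ are monoidal and preserve colimits, as do their right adjoints $*\to\mV$ and $*\to\mW$, which pick out $\tu$. Finality of the unit is then equivalent to the inclusion $*\simeq\mathbb{1}\text{-}\mathrm{Mod}\to\mV$ picking out $\tu$ being right adjoint to $\mV\to *$, compatibly with the monoidal structure. Applying $-\ot-$ to these adjunctions in $\mathrm{Pr}^\L$, whose objects are presentable categories and whose morphisms are colimit-preserving functors, yields an adjunction $\mV\ot\mW\rightleftarrows *\ot *\simeq *$ whose right adjoint picks out $\tu_\mV\ot\tu_\mW$. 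A right adjoint from $*$ picks out a final object, so $\tu_\mV\ot\tu_\mW$ is final.

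The point to check in this alternative is that the right adjoints $*\to\mV$ and $*\to\mW$ themselves preserve colimits. This holds because colimits in $*$ are trivial and the functor picking out a final object sends small colimits of $*$ to colimits in $\mV$ only for weakly contractible diagrams. I would therefore argue instead through the $2$-functoriality of $\ot$ on $\mathrm{Pr}^\L$, which preserves adjunctions, and handle the right adjoints as morphisms in $\mathrm{Pr}^\R$, where they live naturally.
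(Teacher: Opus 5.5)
\textbf{There is a genuine gap.} Your Step~2 restates the lemma rather than proving it. Under $\mV\ot\mW\simeq\Fun^{\mathrm{R}}(\mV^\op,\mW)$, the constant functor at the final object $\tu_\mW$ is \emph{always} the final object. So identifying $\tu_\mV\ot\tu_\mW$ with this constant functor, which you flag as the main obstacle, is equivalent to the statement itself, and you never establish it.

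The formula you propose for the identification is also incorrect: pure tensors do not correspond to $\Map_\mV(-,V)$ cotensored with $W$. Take $\mV=\mW=\mS$, where $\Fun^{\mathrm{R}}(\mS^\op,\mS)\simeq\mS$ by evaluation at the point. Your recipe sends $V\ot W$ to $Z\mapsto\Map(\Map(Z,V),W)$. This is not limit-preserving in $Z$ (at $Z=\emptyset$ it gives $W$ rather than the point), and its value at the point is $\Map(V,W)$ rather than $V\times W$.

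Your Step~1 reduction to pure tensors is never used, since you never compute $\Map(V\ot W,\tu_\mV\ot\tu_\mW)$.

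The alternative route fails for the reason you partly notice. The functor $*\to\mV$ picking out $\tu_\mV$ preserves the empty colimit only if $\tu_\mV$ is also initial. In that case $V\simeq V\ot\tu_\mV\simeq V\ot\emptyset\simeq\emptyset$ for all $V$, i.e.\ $\mV\simeq *$. So $\mV\rightleftarrows *$ is not an adjunction in $\mathrm{Pr}^{\mathrm{L}}$. Working in $\mathrm{Pr}^{\mathrm{R}}$ only tells you that $\mV\ot\mW\to *$ has a right adjoint, i.e.\ that $\mV\ot\mW$ has a final object. Nothing identifies that object with $\tu_\mV\ot\tu_\mW$: the right adjoint of $F\ot G$ is not computed factorwise on pure tensors. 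Showing that the comparison map $\tu_\mV\ot\tu_\mW\to *$ is an equivalence is exactly the lemma.

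\textbf{How to close the gap.} What is missing is a correct description of pure tensors by a universal property.
\begin{itemize}
\item Choose a large enough regular cardinal $\kappa$ such that $\mV,\mW$ are $\kappa$-compactly generated and $\tu_\mV,\tu_\mW$ are $\kappa$-compact.
\item Model $\mV\ot\mW$ as the full subcategory of presheaves on $\mV^\kappa\times\mW^\kappa$ sending $\kappa$-small colimits in each variable separately to limits.
\item For $\kappa$-compact $V,W$, the object $V\ot W$ is the localization of the representable presheaf of $(V,W)$, so it corepresents $\Phi\mapsto\Phi(V,W)$. In terms of $\Fun^{\mathrm{R}}(\mV^\op,\mW)$: $V\ot W$ corepresents $H\mapsto\Map_\mW(W,H(V))$.
\item Since $(\tu_\mV,\tu_\mW)$ is final in $\mV^\kappa\times\mW^\kappa$, its representable presheaf is the constant presheaf with value the point.
\item This presheaf sends all colimits to limits, so it is already local. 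Hence it \emph{is} $\tu_\mV\ot\tu_\mW$, and it is the final object of the full subcategory.
\end{itemize}
Equivalently, in your functor-category picture: because $\tu_\mV$ is initial in $\mV^\op$, the constant functor $c$ at $\tu_\mW$ satisfies $\Map(c,H)\simeq\Map_\mW(\tu_\mW,H(\tu_\mV))$, and $c$ preserves limits. By Yoneda, $c\simeq\tu_\mV\ot\tu_\mW$. With that identification in hand, the last paragraph of your Step~2 completes the proof.
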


\begin{corollary}\label{finality}
	
Let $\mV, \mW$ be presentably monoidal categories whose tensor unit is final.
Then ${_\mV \Cat_\mW}$ admits a final object $*$.
	
\end{corollary}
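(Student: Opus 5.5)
The plan is to construct the final object explicitly as the $\mV\ot\mW^\rev$-enriched category with a single object and tensor unit as its endomorphism object, and then check finality. By definition ${_\mV \Cat_\mW} = {\mV \ot \mW^\rev}\mathrm{-}\Cat$, and $\mW^\rev$ has the same underlying category and the same tensor unit as $\mW$. So its tensor unit is again final, and \cref{tensorunit}, applied to $\mV$ and $\mW^\rev$, shows that the tensor unit $\tu$ of $\mV \ot \mW^\rev$ is a final object of $\mV\ot\mW^\rev$. This reduces the statement to the following claim: for a presentably monoidal category $\mU$ whose tensor unit is final, $\mU\mathrm{-}\Cat$ has a final object.

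For the claim, I take $* := B\tu_\mU$, the full $\mU$-enriched subcategory of $\mU$ spanned by the tensor unit. It has one object, and its endomorphism object is $\tu_\mU$ by the notation preceding \cref{bienr}. Given any $\mU$-enriched category $\mC$, a $\mU$-enriched functor $\mC \to B\tu_\mU$ consists of the unique map $\iota(\mC)\to *$ on underlying spaces, together with morphisms $\Mor_\mC(X,Y)\to \tu_\mU$ in $\mU$ that are compatible with composition and units. Since $\tu_\mU$ is final, each such morphism lives in a contractible space. Likewise the compatibility data consist of paths and higher homotopies between maps into the final object, so they are contractible as well.

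To make this rigorous without relying on a particular model of enriched categories, I will use the equivalent formulation: the constant map $\mU \to *$ is lax monoidal, and the terminal functor $\mU\to \mU$ sending everything to $\tu_\mU$ agrees with the composite $\mU\to *\to \mU$. By the transfer of enrichment proposition, this gives a functor $\mU\mathrm{-}\Cat\to \mU\mathrm{-}\Cat$ sending $\mC$ to the enriched category with the same underlying space and all morphism objects equal to $\tu_\mU$. That enriched category is the \emph{chaotic} one on $\iota(\mC)$, and it receives a natural map from $\mC$ that is unique up to contractible choice. The chaotic enriched category on a space $K$ is equivalent to $B\tu_\mU$ after univalent completion. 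In the non-univalent setting I instead take the chaotic category on a point, and show directly that $\Map(\mC,*)$ is contractible.

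The main obstacle is precisely this contractibility of the mapping space in the homotopy-coherent setting. My first approach would be to use the presentation of enriched categories as algebras in $\mU$-valued quivers, that is, as a fibration over spaces: a functor $\mC\to *$ over the unique map of spaces $\iota(\mC)\to *$ is then an algebra map into the terminal algebra of $\mU$-quivers on a point. Since the forgetful functor from algebras creates limits, the terminal quiver $\tu_\mU$ on a point carries the terminal algebra structure, and this yields the contractibility.
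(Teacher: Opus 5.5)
Your proposal is correct and follows the paper's route. The paper deduces the corollary directly from \cref{tensorunit} applied to $\mV$ and $\mW^\rev$, with the final object being $B\tu$ of $\mV\ot\mW^\rev$, which is what the subsequent identification ${_\mV\Fun_\mW}(*,\mC)\simeq\mC$ needs. Your categorical-algebra argument supplies the check the paper leaves implicit, and the detour through transfer along $\mathcal{U}\to *\to\mathcal{U}$ can be dropped. To state that check precisely: a functor $\mC\to B\tu$ lying over $\iota(\mC)\to *$ is an algebra map from $\mC$ into the constant quiver at $\tu$ on $\iota(\mC)\times\iota(\mC)$, and that quiver is terminal among algebras.
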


\begin{corollary}Let $\mV, \mW$ be presentably monoidal categories whose tensor unit is final.
For every $(\mV,\mW)$-bienriched category $\mC$
the induced functor ${_\mV\Fun_\mW}(*,\mC) \to \mC$ is an equivalence.
    
\end{corollary}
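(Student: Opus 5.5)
The statement follows from the final-object corollary just above together with the enriched Yoneda lemma. By \cref{finality} and \cref{tensorunit}, the final object $*$ of ${_\mV \Cat_\mW}$ is the $(\mV,\mW)$-bienriched category with a single object whose endomorphism object is the tensor unit of $\mV\ot\mW^\rev$. That unit is also final, since $\mW^\rev$ has the same underlying category and unit as $\mW$. So $*$ is the enriched category $B\tu$ associated to the final tensor unit $\tu=\tu_{\mV\ot\mW^\rev}$.

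I would identify ${_\mV\Fun_\mW}(*,\mC)$ with $\mV\ot\mW^\rev\mathrm{-}\Fun(B\tu,\mC)$. Writing $\mU=\mV\ot\mW^\rev$, an enriched functor $B\tu\to\mC$ consists of an object $X\in\iota(\mC)$ together with a morphism of monoids $\tu\to\Mor_\mC(X,X)$ in $\mU$. The key observation is that $\tu$, being the tensor unit, is the initial monoid (initial $E_1$-algebra). Hence the space of such algebra maps is contractible, and the evaluation functor ${_\mV\Fun_\mW}(*,\mC)\to\mC$ is at least essentially surjective with contractible fibers on objects.

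For a cleaner argument that also handles morphisms, I would use the free-cocompletion and representability results. Enriched functors out of $B\tu$ correspond to enriched left adjoints out of $\mP_\mU(B\tu)$ by \cref{Yonedaext}. Since $\tu$ is both unit and final, $\mP_\mU(B\tu)\simeq\mU$ as left tensored categories: a presheaf on $B\tu$ is an object of $\mU$ with a right action of the monoid $\tu$, which is no data. However, $\mC$ is not assumed presentably tensored, so I would rather argue directly.

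The step I expect to need the most care is computing morphisms in ${_\mV\Fun_\mW}(*,\mC)$ between functors $F_X,F_Y$ picking out $X,Y$. By the enriched end formula for natural transformations, this space is the space of maps $\tu\to\Mor_\mC(X,Y)$ in $\mU$ that are compatible with the two unit actions. That compatibility is automatic because the actions factor through the unit, so the space reduces to $\Map_\mU(\tu,\Mor_\mC(X,Y))$. This is by definition $\Map_{\iota(\mC)}(X,Y)$, which yields full faithfulness. Combined with essential surjectivity from the initiality of $\tu$ among monoids, this gives the equivalence. If the end computation is awkward in this generality, I would instead pass through the forgetful functor $\iota$ and the adjunction characterizing the underlying category, namely $\iota(\mC)\simeq$ maps from $B\tu$, as in \cite{heine2024bienriched}.
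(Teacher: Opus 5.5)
Your argument is correct, and its first step is what the paper intends. The statement sits directly after \cref{tensorunit} and \cref{finality} with no separate proof, and the point is that the final object of ${_\mV\Cat_\mW}$ is $B\tu$ for the final unit of $\mathcal{U}:=\mV\ot\mW^\rev$.

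You differ in how you show that $B\tu$ corepresents the underlying category. In the paper's framework this is formal. $B\tu$ is the transfer of the terminal category along the monoidal left adjoint $\epsilon\colon\mS\to\mathcal{U}$, $X\mapsto\coprod_X\tu$. The right adjoint $\Map_{\mathcal{U}}(\tu,-)$ of $\epsilon$ induces $\iota$, so the transfer adjunction $\epsilon_!\dashv\iota$ identifies ${_\mV\Fun_\mW}(B\tu,\mC)$ with $\Fun(*,\iota(\mC))\simeq\iota(\mC)$. This is the fallback you mention in your last sentence.

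You instead verify the corepresentability by hand: on objects via initiality of $\tu$ in $\Alg_{E_1}(\mathcal{U})$, and on mapping spaces via an end over $B\tu$. This is more explicit, and it shows that finality of the unit is used only to identify $*$ with $B\tu$. The cost is that it needs an end formula for mapping spaces of enriched functors into an arbitrary, possibly non-univalent $\mC$, which the paper does not quote in that generality.

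Two points of rigor need fixing.
\begin{enumerate}
\item Replace ``the compatibility is automatic'' by the fact that the forgetful functor from $\tu$-$\tu$-bimodules in $\mathcal{U}$ to $\mathcal{U}$ is an equivalence. In the $\infty$-categorical setting compatibility with the unit actions is data, not a property, and it is this equivalence that makes the cosimplicial diagram computing the end constant.
\item In your concluding step, use the equivalence on spaces of objects that you actually proved (contractible fibres), not mere essential surjectivity. The paper's enriched categories need not be univalent, and for those, fully faithful plus essentially surjective does not imply an equivalence.
\end{enumerate}
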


For the next notation we that the 2-category $\mV\mathrm{-}\Cat$ admits cotensors.

\begin{notation}
Let $\mV$ be a presentably monoidal category whose tensor unit is final, $\mC$ a $\mV$-enriched category and $X $ an object of $\mC.$
Let $$ \mC_{\X/}:= \{X\} \times_{\mC^{\{0\}}} \mC^{\bD^1} $$ be the pullback of the $\mV$-enriched functor $\mC^{\bD^1} \to \mC^{\{0\}}$ evaluating at the source along the $\mV$-enriched functor $* \to \mC$ classifying $X$. 
By definition there is a $\mV$-enriched functor $\mC_{\X/}\to \mC$.
    
\end{notation}

\begin{remark}\label{init}
Let $\mV$ be a presentably monoidal category whose tensor unit is final, $\mC$ a $\mV$-enriched category and $\X \to \Y, \X \to \Z$ morphisms in $\mC.$
The induced morphism $$\tu \to \Mor_{\mC_{\X/}}(\X,\X) \to \Mor_{\mC_{\X/}}(\X,\Z)$$ is an equivalence and the resulting commutative square
$$\begin{xy}
\xymatrix{
\Mor_{\mC_{\X/}}(\Y,\Z) \ar[d]^{} \ar[r]
& \Mor_{\mC}(\Y,\Z) \ar[d] \ar[d]^{}
\\ 
\tu \simeq \Mor_{\mC_{\X/}}(\X,\Z) \ar[r] & \Mor_{\mC}(\X,\Z)
}
\end{xy}$$
is a pullback square.
    
\end{remark}

The following is \cite[Lemma 2.2.3]{oriented}:

\begin{lemma}

Let $\mV$ be a presentably monoidal category whose tensor unit is final, $\mC$ a $\mV$-enriched category and $X $ an object of $\mC.$
If $\mC$ admits small weakly contractible conical colimits, then $\mC_{\X/}$ admits small colimits.

\end{lemma}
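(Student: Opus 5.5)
We want to show that $\mC_{X/}$ admits all small colimits whenever $\mC$ has small weakly contractible conical colimits. We split an arbitrary small colimit into two pieces: the initial object, and weakly contractible colimits. Every small diagram $K\to\mC_{X/}$ extends to the cone $K^{\triangleleft}$ by the initial object. The cone is weakly contractible, and a colimit over $K^{\triangleleft}$ of this extended diagram is a colimit of the original diagram. So it suffices to prove two things: (a) $\mC_{X/}$ has an initial object, and (b) $\mC_{X/}$ admits small weakly contractible conical colimits, preserved and created by the forgetful functor $\mC_{X/}\to\mC$.

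For (a) we take the identity $X\to X$. By \cref{init}, for every object $X\to Z$ of $\mC_{X/}$ we have $\Mor_{\mC_{X/}}(X,Z)\simeq\tu$. Since the tensor unit is final in $\mV$, mapping spaces out of $\tu$ in $\mV$ are contractible. Hence $\mathrm{id}_X$ is initial in the enriched sense, and in particular a conical colimit of the empty diagram.

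For (b), let $F:K\to\mC_{X/}$ with $K$ weakly contractible, write $\bar F$ for its image in $\mC$, and let $L=\colim\bar F$ be the conical colimit in $\mC$. Since $K$ is weakly contractible, the colimit of the constant diagram at $X$ is $X$ itself, so the structure maps $X\to\bar F(k)$ assemble to a map $X\simeq\colim_K X\to L$. This gives an object $X\to L$ of $\mC_{X/}$, equipped with a cocone under $F$. To see that this cocone is a conical colimit in the enriched sense, we fix $Z\in\mC_{X/}$ and use the pullback square of \cref{init}:
$$\Mor_{\mC_{X/}}(W,Z)\simeq \Mor_{\mC}(W,Z)\times_{\Mor_\mC(X,Z)}\tu .$$
We then compute
$$\lim_K \Mor_{\mC_{X/}}(F(k),Z)\simeq \lim_K\Mor_\mC(\bar F(k),Z)\times_{\lim_K\Mor_\mC(X,Z)}\lim_K\tu ,$$
using that limits commute with pullbacks. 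Because $K$ is weakly contractible, $\lim_K$ of a constant diagram in $\mV$ is that constant value, so the base of this pullback is $\Mor_\mC(X,Z)$ and the last factor is $\tu$. Because $L$ is a conical colimit in $\mC$, the first factor is $\Mor_\mC(L,Z)$. The right-hand side therefore becomes $\Mor_\mC(L,Z)\times_{\Mor_\mC(X,Z)}\tu\simeq\Mor_{\mC_{X/}}(L,Z)$, which is exactly the required equivalence.

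The main obstacle is making this step rigorous. We have to check that the comparison maps really are the canonical ones induced by the cocone, and that the identification of $\lim_K$ of a constant diagram uses weak contractibility correctly. The latter holds because a limit over $K$ of a constant diagram is the cotensor with the groupoidification of $K$, which is a point. We also have to confirm that the slice defined through the cotensor $\mC^{\bD^1}$ has underlying category the ordinary slice, so that the construction of $X\to L$ above makes sense. If the argument on the level of enriched morphism objects proves awkward, we instead first prove the statement on underlying categories, using the standard fact that $\iota(\mC)_{X/}\to\iota(\mC)$ creates weakly contractible colimits. We then upgrade to conical colimits by applying the pullback square of \cref{init} objectwise, as above.
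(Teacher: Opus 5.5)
Your proposal is correct. The paper gives no proof of this lemma here and only cites \cite[Lemma 2.2.3]{oriented}, but \cref{init}, stated immediately before it, supplies exactly the two ingredients your route needs: $\mathrm{id}_X$ is initial, and $\mC_{X/}\to\mC$ creates weakly contractible conical colimits. The naturality in $W$ of that pullback square, which you flag, can be obtained from the definition $\mC_{X/}=\{X\}\times_{\mC^{\{0\}}}\mC^{\bD^1}$ as a pullback of $\mV$-enriched categories, and your $K^{\triangleleft}$ reduction then gives all small colimits.

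One wording slip. $\mathrm{id}_X$ is initial because $\Mor_{\mC_{X/}}(X,Z)\simeq\tu$ is the final object of $\mV$, so in particular $\Map_{\mV}(\tu,\tu)\simeq *$. It is not because mapping spaces out of $\tu$ are contractible, which fails in general.
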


\begin{remark}
Let $\mV$ be a presentably monoidal category whose tensor unit is final, $\mC, \mD$ be $\mV$-enriched categories, $F: \mC \to \mD$ a $\mV$-enriched functor 
and $Y \to X $ a morphism in $\mC.$
There is an induced $\mV$-enriched functor 
$\mC_{\X/} \to \mD_{F(\Y)/}$ that fits into a commutative square 
$$\begin{xy}
\xymatrix{
\mC_{\X/} \ar[d]^{} \ar[r]
&  \mD_{F(\Y)/} \ar[d] \ar[d]^{}
\\ 
\mC
\ar[r]^F & \mD
}
\end{xy}$$	
of $\mV$-enriched categories.
    
\end{remark}

The following is \cite[Lemma 2.2.5]{oriented}:

\begin{lemma}\label{bien}
Let $\mV$ be a presentably monoidal category whose tensor unit is final, $\mC$ a $\mV$-enriched category and $X \to Y$ a morphism in $\mC.$
If $\mC$ admits conical pushouts, the $\mV$-enriched functor $\mC_{\Y/} \to \mC_{\X/}$ admits a $\mV$-enriched left adjoint.
	
\end{lemma}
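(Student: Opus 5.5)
The idea is to build the left adjoint objectwise and then assemble it. By the enriched criterion for right adjoints (the proposition characterising right adjoints via representability, applied to the restriction functor $\mC_{\Y/}\to\mC_{\X/}$, dually for left adjoints), it suffices to show the following. For every object $\Z$ of $\mC_{\X/}$, i.e. every morphism $f:\X\to\Z$ in $\mC$, the $\mV$-enriched functor
\[
\Mor_{\mC_{\X/}}(\Z,(-)|_{\X}):\mC_{\Y/}\to\mV
\]
should be corepresentable. The candidate corepresenting object is the conical pushout $\Z\underset{\X}{+}\Y$, which exists by hypothesis. It comes with its canonical map from $\Y$, making it an object of $\mC_{\Y/}$, together with the unit morphism $\Z\to\Z\underset{\X}{+}\Y$ under $\X$.

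To verify corepresentability, I fix an object $g:\Y\to\W$ of $\mC_{\Y/}$ and compute both morphism objects using Remark~\ref{init}. It exhibits $\Mor_{\mC_{\Y/}}(\Z\underset{\X}{+}\Y,\W)$ as the fiber of
\[
\Mor_\mC(\Z\underset{\X}{+}\Y,\W)\to\Mor_\mC(\Y,\W)
\]
over the point $\tu\to\Mor_\mC(\Y,\W)$ picked out by $g$.

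Since the pushout is conical, we have
\[
\Mor_\mC(\Z\underset{\X}{+}\Y,\W)\simeq \Mor_\mC(\Z,\W)\times_{\Mor_\mC(\X,\W)}\Mor_\mC(\Y,\W)
\]
in $\mV$. Pulling back along $g$ and pasting pullback squares, this fiber becomes the fiber of $\Mor_\mC(\Z,\W)\to\Mor_\mC(\X,\W)$ over the point given by $g$ precomposed with $\X\to\Y$. By Remark~\ref{init} again, that fiber is exactly $\Mor_{\mC_{\X/}}(\Z,\W)$, where $\W$ is regarded under $\X$ via restriction.

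One must check that this composite equivalence is the map induced by composing with the unit $\Z\to\Z\underset{\X}{+}\Y$. I expect this to follow from naturality of the pasting: the pushout's universal property and the identification in Remark~\ref{init} are both induced by precomposition.

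The main obstacle is coherence: passing from pointwise corepresentability to an actual $\mV$-enriched left adjoint. Here I would invoke the representability criterion itself, which produces the adjoint from the pointwise data. Alternatively one can use \cref{adj}: show that restriction preserves cotensors, which are computed in $\mC$ since the forgetful functors from slices create them, and that the underlying functor has a left adjoint. The underlying statement is the classical fact for slice $\infty$-categories, namely pushout along $\X\to\Y$.

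A secondary subtlety is that the fibers in Remark~\ref{init} are taken in $\mV$ over the tensor unit, which is final. This is why the hypothesis that the unit is final is used, so that ``fiber over a point'' makes sense in $\mV$.
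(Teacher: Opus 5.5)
Your argument is correct and is the natural proof. You corepresent $\Mor_{\mC_{X/}}(Z,(-)|_X)$ by the conical pushout $Z \coprod_X Y$, compute both morphism objects as fibers over the final unit via \cref{init}, paste pullbacks, and then apply the representability criterion for enriched left adjoints; the paper does not reprove the lemma but cites it from its companion paper, placing \cref{init} immediately before as exactly the input you use. Drop your alternative route through \cref{adj}, since it presupposes that $\mC$ (hence $\mC_{Y/}$) admits cotensors, which is not among the hypotheses, whereas the representability argument works in the stated generality.
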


The following is \cite[Corollary 2.2.2]{GepnerHeine2026}:

\begin{corollary}\label{susp}

Let $\mV$ be a presentably monoidal category and $\A \in \mV$.
There is a $\mV$-enriched category $S(\A) $, which we call the suspension of $A$, satisfying the following properties:
\begin{enumerate}[\normalfont(1)]\setlength{\itemsep}{-2pt}
\item The space of objects of $S(\A) $ is the set $\{0,1\}.$

\item For every $0 \leq \ell \leq 1$ the unit $\tu \to \Mor_{S(\A)}(\ell,\ell)$ is an equivalence.

\item The morphism object $\Mor_{S(\A)}(1,0)$ is initial.

\item There is an equivalence $ \Mor_{S(\A)}(0,1) \simeq \A$. 

\item For every $\mV$-enriched category $\mC $ and objects $\X,\Y$ of $\mC$
the induced map is an equivalence $$ \Map_{\mV\mathrm{-}\Cat_{B(\tu) \coprod B(\tu)/}}(S(\A), (\mC; \X,\Y)) \to \Map_\mV(\A, \Mor_\mC(\X,\Y)).$$
\end{enumerate}
    
\end{corollary}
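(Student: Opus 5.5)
We construct $S(\A)$ as a left adjoint applied to $\A$ and read off properties (1)–(5) from the construction. Let $\mV\mathrm{-}\Cat_{\ast\ast} := \mV\mathrm{-}\Cat_{B(\tu)\coprod B(\tu)/}$ be the category of $\mV$-enriched categories with two marked objects. Consider the functor
$$\Mor: \mV\mathrm{-}\Cat_{\ast\ast} \to \mV,\qquad (\mC;\X,\Y)\mapsto \Mor_\mC(\X,\Y).$$

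The first step is to show that $\Mor$ preserves small limits and is accessible. Limits in the slice are computed in $\mV\mathrm{-}\Cat$. Morphism objects of a limit of enriched categories are limits of the morphism objects, since the forgetful functor to $\mV$-enriched quivers on a fixed object space creates limits. Because $\mV\mathrm{-}\Cat$ is presentable, the adjoint functor theorem then yields a left adjoint. We define $S(\A)$ to be the value of this left adjoint on $\A$. Property (5) is then the adjunction equivalence itself, so the remaining task is to identify $S(\A)$ concretely and deduce (1)–(4).

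For the concrete identification, we use the description of $\mV$-enriched categories with object space $\{0,1\}$ as monoids in the monoidal category of $\mV$-valued quivers on $\{0,1\}$, equipped with the convolution tensor product. Let $Q_\A$ be the quiver with $Q_\A(0,1)=\A$ and all other entries initial. The free monoid (free category) on $Q_\A$ has
$$\Mor(\ell,\ell)=\tu,\qquad \Mor(0,1)=\A,\qquad \Mor(1,0)=\emptyset.$$
This holds because every composable string of length at least two passes through an initial object, and tensoring with an initial object gives an initial object, as $\mV$ is presentably monoidal. This candidate satisfies (1)–(4) by inspection.

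It remains to check that the candidate satisfies the universal property (5). Given $(\mC;\X,\Y)$, a functor out of the free category on $Q_\A$ is determined by a map of object spaces $\{0,1\}\to\iota(\mC)$ together with a quiver map $Q_\A \to (\text{restriction of } \mC)$. Fixing the object map to be $(\X,\Y)$ leaves exactly the datum of a map $\A\to\Mor_\mC(\X,\Y)$. This comparison is the step I expect to be the main obstacle. The difficulty arises when $\X=\Y$, or when the relevant objects of $\mC$ are not distinct, because then the restriction of $\mC$ to the image is not a quiver on $\{0,1\}$ in the naive sense. I would handle this by pulling $\mC$ back along $\{0,1\}\to\iota(\mC)$, which gives a (non-univalent) enriched category with object set exactly $\{0,1\}$ and the same morphism objects. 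This is allowed since $\mV\mathrm{-}\Cat$ includes non-univalent categories, and the free/forgetful adjunction between quivers on $\{0,1\}$ and categories with object set $\{0,1\}$ then gives (5) directly.
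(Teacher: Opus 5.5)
Your argument is correct and is a complete proof on its own. The paper gives no argument for this statement; it imports it from \cite[Corollary 2.2.2]{GepnerHeine2026}, so there is no in-paper proof to compare against. The tools you use are the same ones the paper relies on in the proof of \cref{Groprof}:
\begin{itemize}
\item the cartesian fibration $\mV\mathrm{-}\Cat\to\mS$, whose fiber over a space $X$ is $\Alg_{\Delta^\op_X/\Delta^\op}(\mV)$;
\item the identification, via \cite[Proposition 10.20]{HEINE2023108941new}, of the fiber over $\{0,1\}$ with associative algebras in $\Fun(\{0,1\}\times\{0,1\},\mV)$ under the convolution product.
\end{itemize}
Since convolution preserves colimits in each variable, the free algebra on $Q_\A$ is $\coprod_{n\geq 0}Q_\A^{\otimes n}$. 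Your observation that every term with $n\geq 2$ is initial is right, so (1)--(4) follow.

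Two remarks on organization.

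First, the opening step via the adjoint functor theorem is superfluous, and as written it is the least justified part. You argue that $\Mor$ preserves limits, but accessibility needs control of filtered colimits in $\mV\mathrm{-}\Cat$, where the object spaces change. Your final step verifies (5) directly for the free category on $Q_\A$. That alone proves the corollary, and it exhibits $\A\mapsto S(\A)$ as a pointwise left adjoint to $\Mor$. You can simply delete the first step.

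Second, in the final step, state explicitly the two facts you use.
\begin{itemize}
\item $\Map_{\mV\mathrm{-}\Cat}(S(\A),\mC)$ lies over $\Map_\mS(\{0,1\},\iota(\mC))$, with fiber over $f$ the space of algebra maps $S(\A)\to f^*\mC$. This is exactly what justifies your pullback trick when $\X\simeq\Y$.
\item $B(\tu)\coprod B(\tu)$ is the unit quiver, i.e.\ the initial algebra on $\{0,1\}$. Hence $\Map_{\mV\mathrm{-}\Cat}(B(\tu)\coprod B(\tu),\mC)\simeq\iota(\mC)\times\iota(\mC)$, and the mapping space in the slice category is precisely the fiber over $(\X,\Y)$.
\end{itemize}
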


\begin{definition}
Let $\mV$ be a presentably monoidal category, $\mC, \mD$ be $\mV$-enriched categories and $X,Y \in \mC, Y,Z \in \mD$
objects. The bipointed wedge $(\mC; X,Y) \vee (\mD; Y,Z) $
is the pushout $ (\mC \coprod_{\{Y\}} \mD; X,Z).$
    
\end{definition}

\begin{corollary}

Let $\mV$ be a presentably monoidal category, $n \geq 1$ and $\A_1, ..., \A_\n \in \mV$.
\begin{enumerate}[\normalfont(1)]\setlength{\itemsep}{-2pt}
\item The space of objects of the $\mV$-enriched category $S(A_1) \vee ...\vee S(A_n)$ is the set $\{0,...,\n\}.$
\item For every $0 \leq \ell \leq \n$ the unit $\tu \to \Mor_{S(A_1) \vee ...\vee S(A_n)}(\ell,\ell)$ is an equivalence.
\item For every $0 \leq \bk < \ell \leq \n$ the morphism object $\Mor_{S(A_1) \vee ...\vee S(A_n)}(\ell,\bk)$ is initial.
\item For every $0 \leq \ell < \n$ there is an equivalence $ \Mor_{S(A_1) \vee ...\vee S(A_n)}(\ell,\ell+1) \simeq \A_{\ell+1}$. 
\item For every $0 \leq \bk < \m \leq \n$ the following induced morphism is an equivalence $$\bigotimes_{\bk \leq \ell < \m}  \A_{\ell+1} \simeq \bigotimes_{\bk \leq \ell < \m} \Mor_{S(A_1) \vee ...\vee S(A_n)}(\ell,\ell+1) \to \Mor_{S(A_1) \vee ...\vee S(A_n)}(\bk,\m).$$
\end{enumerate}
\end{corollary}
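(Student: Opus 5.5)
We argue by induction on $n$, building the wedge one suspension at a time. For $n=1$ the statement is exactly \cref{susp}: (1) through (4) are its items (1) through (4), and (5) is the tautology $\A_1 \simeq \Mor_{S(\A_1)}(0,1)$. For the inductive step write $\mC_n := S(\A_1)\vee\cdots\vee S(\A_n)$. By definition,
\[
\mC_{n+1}=\mC_n \coprod_{\{n\}} S(\A_{n+1}),
\]
the pushout in $\mV\mathrm{-}\Cat$ along the inclusions $B\tu_\mV \to \mC_n$ at $n$ and $B\tu_\mV \to S(\A_{n+1})$ at $0$. The main task is therefore to compute morphism objects of a pushout of enriched categories glued along a single object.

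The key step is an explicit description of this pushout. The underlying space of objects of $\mC_{n+1}$ is the pushout of spaces $\{0,\dots,n\}\coprod_{\{n\}}\{0,1\}\simeq\{0,\dots,n+1\}$. I propose a candidate $\mV$-enriched category $\mE$ with this object set and morphism objects as follows:
\begin{itemize}
\item $\Mor_\mE(k,m)=\Mor_{\mC_n}(k,m)$ for $k,m\le n$;
\item $\Mor_\mE(k,n+1)=\Mor_{\mC_n}(k,n)\ot \A_{n+1}$ for $k\le n$;
\item $\Mor_\mE(n+1,n+1)=\tu$;
\item $\Mor_\mE(n+1,k)=\emptyset$ for $k\le n$.
\end{itemize}
Composition is induced from that of $\mC_n$ and the monoidal structure. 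The initial object of $\mV$ is a two-sided zero for $\ot$, since $\ot$ preserves colimits in each variable, so composites through an initial morphism object are forced. Hence $\mE$ is a well-defined enriched category; concretely, it is a "one-sided" gluing in which no morphisms go backwards.

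I then verify that $\mE$ has the universal property of the pushout. Given $\mV$-enriched functors $F:\mC_n\to\mD$ and $G:S(\A_{n+1})\to\mD$ agreeing on $n\mapsto 0$, there is a unique extension to $\mE$. On objects and on the morphism objects inside $\mC_n$ it is given by $F$. On $\Mor_\mE(k,n+1)$ it is the composite
\[
\Mor_{\mC_n}(k,n)\ot\A_{n+1}\to \Mor_\mD(F k,Fn)\ot\Mor_\mD(G0,G1)\to\Mor_\mD(Fk,G1).
\]
There is no choice on the initial and unit morphism objects. To make "unique" rigorous homotopy-coherently, I would present $\mV$-enriched categories with object space $X$ as algebras in the monoidal category of $X\times X$-graded objects (quivers). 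The pushout of categories with the same object set is then a pushout of such algebras. Since the quivers are "upper triangular" along the order $0<\dots<n+1$, the free/relative-free algebra computation yields exactly the formulas above: all paths from $k\le n$ to $n+1$ factor uniquely through $n$, by the triangularity of $\mC_n$ from item (3).

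\textbf{Main obstacle.} This coherent identification of the pushout is the step I expect to be hardest. An alternative I would try first is to check the universal property via \cref{susp}(5) and the mapping-space characterization of maps out of $\mC_n$ furnished by the induction hypothesis.

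Given $\mE\simeq\mC_{n+1}$, the five properties follow from the inductive hypothesis:
\begin{itemize}
\item (1) and (2) are immediate from the construction of $\mE$.
\item (3) holds because the only new backward morphism objects are $\Mor(n+1,k)=\emptyset$.
\item (4) holds because $\Mor(n,n+1)=\tu\ot\A_{n+1}\simeq \A_{n+1}$.
\item (5) holds for $m=n+1$ because $\Mor(k,n+1)=\Mor_{\mC_n}(k,n)\ot\A_{n+1}\simeq\bigotimes_{k\le\ell<n}\A_{\ell+1}\ot\A_{n+1}$, and the composition map identifies with this equivalence; for $m\le n$, (5) is inherited from $\mC_n$.
\end{itemize}
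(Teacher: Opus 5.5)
The paper records this corollary without proof, as a direct consequence of \cref{susp} and the definition of the wedge, so your argument has to stand on its own. Your formulas for $\mE$ are the right answer, and deducing (1)--(5) from them is fine. The gap is the one step with real content, which you rightly flag as the main obstacle: showing that these formulas describe the pushout $\mC_{n+1}=\mC_n\coprod_{B\tu}S(\A_{n+1})$. That step is not established.

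\begin{itemize}
\item Prescribing morphism objects and binary composition maps does not define a $\mV$-enriched category in the homotopy-coherent sense.
\item Writing down maps on morphism objects for given $F$ and $G$ neither produces an enriched functor $\mE\to\mD$ nor shows it is unique. To compute $\Map(\mE,\mD)$ you would already need to know that $\mE$ is free in a suitable sense, which is the claim at issue.
\item Your fallback does not supply this. Items (1)--(5) for $\mC_n$ describe its morphism objects, not the functor $\Map(\mC_n,-)$. That functor comes from the definition of $\mC_n$ as an iterated pushout together with \cref{susp}(5), and in any case says nothing about maps out of a hand-built $\mE$.
\item The quiver-algebra picture does not make the pushout automatic. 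After pushing forward to the object set $\{0,\dots,n+1\}$, the image of $B\tu$ is the unit (hence initial) algebra. So $\mC_{n+1}$ is a coproduct of algebras, i.e.\ a free product, which is not computed on underlying quivers in general. The upper-triangularity heuristic needs an actual argument.
\end{itemize}

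Either of the following closes the gap.

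\emph{Within your induction.} You need not construct $\mE$ at all. The statement only concerns morphism objects and composition in $\mC_{n+1}$, and \cref{fullyfaith} and \cref{homsus} (proved later in the paper, independently of this corollary) compute exactly these for pushouts along fully faithful enriched functors.
\begin{itemize}
\item The functors $B\tu\to\mC_n$ at $n$ and $B\tu\to S(\A_{n+1})$ at $0$ are fully faithful, by (2) for $\mC_n$ and by \cref{susp}(2).
\item So \cref{fullyfaith}(1), applied with either leg as $\alpha$, shows that $\mC_n\to\mC_{n+1}$ and $S(\A_{n+1})\to\mC_{n+1}$ are fully faithful.
\item \cref{homsus}, whose coend over $B\tu$ is just a tensor product, gives $\Mor_{\mC_{n+1}}(k,n+1)\simeq\Mor_{\mC_n}(k,n)\ot\A_{n+1}$. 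With the legs swapped it gives $\Mor_{\mC_{n+1}}(n+1,k)\simeq\Mor_{S(\A_{n+1})}(1,0)\ot\Mor_{\mC_n}(n,k)\simeq\emptyset$.
\end{itemize}
You must still check two things. For (5), that the first equivalence is induced by composition. For (1), that the space of objects of a pushout is the pushout of the spaces of objects; this holds in the non-univalent setting, since the object-space functor $\mV\mathrm{-}\Cat\to\mS$ has a right adjoint given by chaotic enriched categories.

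\emph{Without induction.} By \cref{susp}(5) and the definition of the wedge, $S(\A_1)\vee\cdots\vee S(\A_n)$ corepresents, on enriched categories with $n+1$ marked objects, the functor $(\mD;X_0,\dots,X_n)\mapsto\prod_{\ell}\Map_\mV(\A_\ell,\Mor_\mD(X_{\ell-1},X_\ell))$. That is, it is the free $\mV$-enriched category on the linear quiver $Q$ on $\{0,\dots,n\}$.
\begin{itemize}
\item View enriched categories with this object set as algebras in $\Fun(\{0,\dots,n\}^{2},\mV)$ under the convolution product. The free algebra is then $\coprod_{j\ge 0}Q^{\ot j}$.
\item Since $\emptyset$ is absorbing for $\ot$, $Q^{\ot j}(k,m)$ is $\bigotimes_{k\le\ell<m}\A_{\ell+1}$ if $j=m-k$, and initial otherwise.
\end{itemize}
This yields (1)--(5) simultaneously, with (5) holding because multiplication is concatenation. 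It is the precise form of the ``relative-free algebra computation'' you allude to.
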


\begin{notation}\label{Thetasgen}Let $\mV$ be a small monoidal category. Let $\Theta(\mV) \subset \mV\mathrm{-}\Cat$ be the full subcategory spanned by the wedges of suspensions of objects of $\mV$ and $B\tu_\mV$.

\end{notation}

\begin{definition}

Let $\mV$ be a small monoidal category.
A presheaf on $\Theta(\mV) $ satisfies the Segal condition 
if for every $n \geq 2$ and $X_1,...,X_n \in \mV$
the following canonical map is an equivalence:
$$ F(S(X_1) \vee\cdots\vee S(X_n)) \to F(S(X_1)) \times_{F(B\tu_\mV)}\times\cdots \times_{F(B\tu_\mV)} F(S(X_n)).$$
    
\end{definition}

\begin{definition}Let $\mV$ be a small monoidal category, $F$ a presheaf on $\Theta(\mV)$ and $A,B \in F(B\tu_\mV). $ 
The presheaf of morphisms $A$ to $B$ in $F$ is the following presheaf on $\mV$, where $S: \mV \to \mV\mathrm{-}\Cat_{B\tu_\mV \coprod B\tu_\mV/}$ is the suspension:
$$\Mor_F(A,B):= (F \circ S) \times_{F(B\tu_\mV) \times F(B\tu_\mV)} \{(A,B)\}.$$
    
\end{definition}

The following is \cite[Theorem 2.4.8]{GepnerHeine2026}:

\begin{theorem}\label{denseinherited} Let $\mW$ be a presentably monoidal category and $\mV$ a small dense full monoidal subcategory of $\mW$.
The full subcategory $\Theta(\mV)$ of $\mV\mathrm{-}\Cat$ spanned by the wedges of suspensions of objects of $\mV$ and $B\tu_\mV$ is dense in $\mW\mathrm{-}\Cat.$
A presheaf on $\Theta(\mV) $ belongs to the essential image of the $\Theta(\mV)$-nerve
if and only if it satisfies the Segal condition and 
all morphism presheaves belong to the essential image of the $\mW$-nerve.

\end{theorem}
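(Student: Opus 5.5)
The plan is to prove density and the nerve characterization together by reducing to the Segal-space description of enriched categories.

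\emph{Density.} Write $\nu:\mW\mathrm{-}\Cat\to\mP(\Theta(\mV))$ for the $\Theta(\mV)$-nerve $\mC\mapsto\Map_{\mW\mathrm{-}\Cat}(-,\mC)$, where $\Theta(\mV)$ sits in $\mW\mathrm{-}\Cat$ via the transfer of enrichment along the inclusion $\mV\subset\mW$. Density means $\nu$ is fully faithful. It suffices to check that every $\mC$ is the colimit of the canonical diagram $\Theta(\mV)_{/\mC}\to\mW\mathrm{-}\Cat$.

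\emph{Probing by the generators.} By \cref{susp}(5), maps from $S(V)$ to $\mC$ are pairs of objects $X,Y$ together with a map $V\to\Mor_\mC(X,Y)$ in $\mW$. The wedges $S(V_1)\vee\cdots\vee S(V_n)$ corepresent composable strings of such maps. Hence $\nu(\mC)$ satisfies the Segal condition, and its morphism presheaf $\Mor_{\nu\mC}(X,Y)$ is exactly the restricted Yoneda image of $\Mor_\mC(X,Y)\in\mW$ along $\mV\subset\mW$. By density of $\mV$ in $\mW$, the $\mV$-nerve $\mW\to\mP(\mV)$ is fully faithful.

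Now I use the model of $\mW$-enriched categories as Segal objects: presheaves on $\Delta^{\op}$ fibered over spaces, or equivalently algebras in $\mW$-graphs over a space of objects, as in \cite{heine2024bienriched} and \cite{GepnerHeine2026}. A $\mW$-enriched functor $\mC\to\mD$ consists of:
\begin{itemize}
\item a map on spaces of objects $\iota\mC\to\iota\mD$;
\item maps $\Mor_\mC(X,Y)\to\Mor_\mD(FX,FY)$ in $\mW$, coherently compatible with composition.
\end{itemize}
The map on objects is recovered from $\nu$ evaluated at $B\tu_\mV$. The maps on morphism objects are recovered, by density of $\mV$ in $\mW$, from $\nu$ evaluated on the suspensions. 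The coherence data lives on the wedges, since the Segal condition identifies $\nu(\mC)(S(V_1)\vee\cdots\vee S(V_n))$ with $n$-fold composable data.

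This gives an equivalence between mapping spaces $\Map(\mC,\mD)\simeq\Map(\nu\mC,\nu\mD)$. The cleanest packaging is an adjunction argument. The left Kan extension $L:\mP(\Theta(\mV))\to\mW\mathrm{-}\Cat$ of the inclusion is left adjoint to $\nu$, and I will show the counit $L\nu\mC\to\mC$ is an equivalence. It is essentially surjective because $B\tu$ probes objects. It is fully faithful on morphism objects because both sides have morphism objects computed as colimits in $\mW$ of the canonical $\mV$-diagram, and $\mV$ is dense.

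\emph{Essential image.} Necessity has just been shown. For sufficiency, take $F$ Segal with each $\Mor_F(A,B)$ in the image of $\mW\to\mP(\mV)$, say represented by $M_{A,B}\in\mW$. Build a $\mW$-graph on the space $F(B\tu_\mV)$ with these morphism objects. The Segal maps give the composition
\[
\nu(M_{B,C}\otimes M_{A,B})\to\Mor_F(A,C).
\]
This requires knowing that the restricted Yoneda of $M\otimes N$ is computed by the Day-convolution-type colimit over $\mV\otimes\mV\subset\mW\otimes\mW$, which holds because $\mV$ is a dense monoidal subcategory and $\otimes$ preserves colimits in each variable. Its higher coherences come from the higher wedges, giving an enriched category $\mC$ with $\nu\mC\simeq F$.

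\emph{Main obstacle.} The hard step is this last one: extracting a coherent $\mW$-enriched composition from a Segal presheaf on $\Theta(\mV)$. I would handle it by identifying Segal presheaves on $\Theta(\mV)$ with categories enriched in $\mP(\mV)$ with the Day convolution structure, via \cite[Theorem 4.86]{heine2024bienriched}-type results for $\mP(\mV)$. Then $\mW\mathrm{-}\Cat\subset\mP(\mV)\mathrm{-}\Cat$ is the full subcategory obtained by transfer along the lax monoidal fully faithful nerve $\mW\to\mP(\mV)$, which has a monoidal left adjoint. Its image is detected on morphism objects by \cref{monochar}-style arguments.
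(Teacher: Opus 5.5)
\textbf{There is a genuine gap: you reduce the theorem to its own special case $\mW=\mP(\mV)$ without proving that case.}

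Your last paragraph has the right architecture. It also matches how the paper uses this result: the paper does not prove the statement but imports it from \cite[Theorem 2.4.8]{GepnerHeine2026}, and in \cref{flatness} applies it in the form ``$\mP(\mV)$-enriched categories are the localization of $\mP(\Theta(\mV))$ at the Segal maps''. The reduction itself is sound:
\begin{itemize}
\item The restricted Yoneda functor $\gamma\colon\mW\to\mP(\mV)$ is fully faithful and lax monoidal, with monoidal left adjoint $\lambda$. Hence $\lambda_!\dashv\gamma_!$ with $\lambda_!\gamma_!\simeq\mathrm{id}$.
\item So $\gamma_!$ is fully faithful, and its image consists of the $\mP(\mV)$-enriched categories whose morphism objects lie in the image of $\gamma$. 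To see this, look at the unit, which is the identity on objects.
\item Since $\lambda_!$ carries suspensions and wedges in $\mP(\mV)\mathrm{-}\Cat$ to those in $\mW\mathrm{-}\Cat$, the $\Theta(\mV)$-nerve of $\mW\mathrm{-}\Cat$ is that of $\mP(\mV)\mathrm{-}\Cat$ composed with $\gamma_!$.
\end{itemize}
But this only reduces the theorem to the case $\mW=\mP(\mV)$. There the condition on morphism presheaves is vacuous, and the statement says exactly that the $\Theta(\mV)$-nerve identifies $\mP(\mV)\mathrm{-}\Cat$ with Segal presheaves. That case carries essentially all of the content, and you assume it rather than prove it. The reference you point to, \cite[Theorem 4.86]{heine2024bienriched}, is about enriched functor categories and does not supply it. What is missing is an actual argument that a Segal presheaf on $\Theta(\mV)$ carries a coherent Day-convolution-enriched composition, and that the nerve is fully faithful. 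One possible route: identify $\mP(\mV)$-enriched categories with object space $X$ with $\Delta^{\op}_X$-algebras in $\mP(\mV)$ under Day convolution, identify those with Segal presheaves on $\mV$-labelled simplices, and then let $X$ vary.

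The direct arguments earlier in your proposal do not fill this gap, and two of their steps fail as stated.
\begin{itemize}
\item \emph{Counit argument.} You assert that the morphism objects of your $L\nu\mC=\colim_{\Theta(\mV)_{/\mC}}\theta$ are colimits in $\mW$ of the canonical $\mV$-diagram. Morphism objects of colimits in $\mW\mathrm{-}\Cat$ are not computed this way: free composites and their relations enter, as the paper remarks before \cref{pulla}. So this step presupposes the density it is meant to prove.
\item \emph{Essential-image argument.} You claim that the restricted Yoneda image of $M\otimes N$ is the Day convolution $\gamma M\otimes_{\mathrm{Day}}\gamma N$. This is false in general, because $\gamma$ does not preserve the colimits $M\simeq\colim_{\mV_{/M}}V$ and is only lax monoidal. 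For a counterexample, take $\mW=\mathrm{Ab}$ and $\mV$ the finitely generated free abelian groups. The reflection of $\gamma(\mathbb{Z}/2)\otimes_{\mathrm{Day}}\gamma(\mathbb{Z}/2)$ into product-preserving presheaves is the derived tensor product, which is not discrete, whereas $\gamma(\mathbb{Z}/2\otimes\mathbb{Z}/2)$ is discrete.
\end{itemize}
The binary composition should instead come from the Segal map $\gamma M_{B,C}\otimes_{\mathrm{Day}}\gamma M_{A,B}\to\gamma M_{A,C}$ by applying $\lambda$ and using $\lambda\gamma\simeq\mathrm{id}$. The higher coherences cannot be assembled ``from the higher wedges'' by hand; they are exactly what the presheaf case has to provide.
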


\subsection{Higher categories}

Our main application of theory of enriched categories will be to $\infty$-categories and oriented categories.
We recall these basic notions in the following two subsections.

\begin{definition}
For every $\n \geq 0$ we inductively define the presentable cartesian closed category $\n\Cat$ of small (not necessarily univalent) $\n$-categories by setting
$$(\n+1)\Cat:= {{\n\Cat}\mathrm{-}\Cat} $$
starting with $$ 0\Cat :=\mS.$$
\end{definition}

\begin{notation}
For every $\n \geq 0$ we inductively define colocalizations
$$\n\Cat \rightleftarrows (\n+1)\Cat: \iota_\n,$$
where both adjoints  preserve finite products and filtered colimits.
Let
$$0\Cat= \mS \rightleftarrows 1\Cat = {\mS\mathrm{-}\Cat} : \iota_0 $$ be the canonical colocalization whose right adjoint assigns the space of objects. Let $$(\n+1)\Cat= {{\n\Cat}\mathrm{-}\Cat} \rightleftarrows (\n+2)\Cat= {{(\n+1)\Cat}\mathrm{-}\Cat}:\iota_{\n+1}:= (\iota_\n)_! $$
be the induced adjunction.	
	
\end{notation}

\begin{definition}The presentable category $\infty\Cat$ of small (non-univalent) $\infty$-categories is the limit
$$\infty\Cat:= \lim(\cdots\xrightarrow{\iota_{\n}} \n \Cat \xrightarrow{\iota_{\n-1}}\cdots \xrightarrow{\iota_0} 0 \Cat) $$
of presentable categories and right adjoint functors.

\end{definition}

The next proposition follows from the fact that the functor ${(-)\mathrm{-}\Cat}$ preserves small limits:

\begin{proposition}\label{fix}
	
There is a canonical equivalence
$$ \infty\Cat \simeq {\infty\Cat}\mathrm{-}\Cat. $$
	
\end{proposition}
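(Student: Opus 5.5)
The plan is to exploit that the assignment $\mV \mapsto \mV\mathrm{-}\Cat$ preserves small limits of presentably monoidal categories along lax monoidal right adjoints. Concretely, consider the tower
$$\cdots\xrightarrow{\iota_{\n}} \n \Cat \xrightarrow{\iota_{\n-1}}\cdots \xrightarrow{\iota_0} 0 \Cat.$$
Each $\iota_\n$ preserves finite products and filtered colimits, so it is a cartesian monoidal right adjoint. Hence the tower lifts to a diagram of presentably monoidal categories for the cartesian structure, and its limit $\infty\Cat$ inherits a cartesian monoidal structure. Applying $(-)\mathrm{-}\Cat$ termwise produces a tower
$$\cdots \to (\n\Cat)\mathrm{-}\Cat \xrightarrow{(\iota_{\n-1})_!} ((\n-1)\Cat)\mathrm{-}\Cat \to \cdots,$$
which by definition is the tower $\cdots\to(\n+1)\Cat\xrightarrow{\iota_\n}\n\Cat\to\cdots$ shifted by one index.

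The steps are as follows.

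\emph{Step 1.} Use the projections $\infty\Cat \to \n\Cat$, which are monoidal right adjoints. Via the transfer of enrichment $(-)_!$ from the proposition on lax monoidal functors, they induce compatible functors $(\infty\Cat)\mathrm{-}\Cat \to (\n\Cat)\mathrm{-}\Cat = (\n+1)\Cat$. Assembling these gives a canonical comparison functor
$$(\infty\Cat)\mathrm{-}\Cat \to \lim_\n (\n+1)\Cat \simeq \infty\Cat,$$
where the last equivalence holds because dropping the bottom term $0\Cat$ from a tower does not change its limit.

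\emph{Step 2.} Show that the comparison is an equivalence by invoking the statement that ${(-)\mathrm{-}\Cat}$ preserves small limits, i.e. that $\lim_\n \mV_\n \mapsto \lim_\n (\mV_\n\mathrm{-}\Cat)$ is an equivalence. To make this concrete, I would use the description of enriched categories as Segal presheaves on $\Theta(\mV)$ (\cref{denseinherited}), or equivalently as flagged objects: a space of objects $X$ together with an algebra in $\mV$-valued $X\times X$-matrices (quivers).

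\emph{Step 3.} Check limit-compatibility in this description. Both the quiver construction $\Fun(X\times X,\mV)$ and the passage to algebras for the matrix-multiplication monoidal structure commute with limits of monoidal categories, taken levelwise in $X$. This gives a level-wise equivalence of non-univalent enriched categories, compatible in $X$. Since $\infty\Cat$ consists of non-univalent objects, no univalent completion enters, and the equivalence follows.

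The main obstacle is Step 2: making rigorous that $\mV\mapsto\mV\mathrm{-}\Cat$ sends this limit of presentably monoidal categories along monoidal right adjoints to a limit. There are two subtleties. First, the transition functors $(\iota_\n)_!$ must be identified with the transfer of enrichment, which holds by construction of the $\iota_{\n+1}$. Second, the space of objects must be handled uniformly. This works because all $\iota_\n$ with $\n\geq 1$ induce equivalences on object spaces after transfer: $\iota_0$ of $(\n+1)\Cat$ is the object space, and it is unchanged by $(\iota_\n)_!$. So the limit can be computed fiberwise over a fixed space of objects, where it reduces to limits of algebra categories in the matrix monoidal categories, which are preserved.
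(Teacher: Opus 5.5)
Your proposal is correct and takes the same route as the paper: its entire proof is that ${(-)\mathrm{-}\Cat}$ preserves small limits. This is applied to the tower defining $\infty\Cat$, whose image under ${(-)\mathrm{-}\Cat}$ is, by the definition $\iota_{n+1} := (\iota_n)_!$, the same tower shifted by one, and so has the same limit. Your Steps 2--3 only unpack that limit-preservation fiberwise over object spaces, which is fine. It is cleanest to phrase this with $\Delta^{\op}_X$-algebras, so that you do not need the transition functors to be compatible with the space-indexed colimits in matrix multiplication.
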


\begin{notation}
Let $\partial\bD^1$ denote the two element set $\{0,1\}$.
\end{notation}

\begin{notation}
Let $\Mor: \infty\Cat_{\partial\bD^1/} \to \infty\Cat$
be the canonical functor $$\infty\Cat_{\partial\bD^1/} \simeq \mS_{\partial\bD^1/}\times_\mS {\infty\Cat}\mathrm{-}\Cat \to \infty\Cat $$
sending $(\mC,\X,\Y)$ to $\Mor_\mC(\X,\Y).$
\end{notation}

\begin{remark}\label{homfil}
The functor $\Mor: \infty\Cat_{\partial\bD^1/} \to \infty\Cat$
preserves small filtered colimits and limits.
	
\end{remark}

\begin{definition}

A functor $X \to Y$ of $\infty$-categories is an inclusion -- or subcategory inclusion-- if is a monomorphism in the category $\infty\Cat,$ i.e. for every $\infty$-category $Z$ the induced map $$\Map_{\infty\Cat}(Z,X) \to \Map_{\infty\Cat}(Z,Y)$$ is an embedding.
In this case we also say that $X$ is a subcategory of $Y$.
 
\end{definition}

\cref{monochar} implies the following:

\begin{corollary}

A functor $\phi: X \to Y$ of $\infty$-categories is an inclusion if and only if it induces an embedding $\iota_0(X) \to \iota_0(Y)$ on underlying spaces and for every $A,B \in X$ the induced functor
$$\Mor_X(A, B) \to \Mor_Y(\phi(A), \phi(B))$$ is an inclusion.

\end{corollary}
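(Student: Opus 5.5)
This is a direct application of \cref{monochar} with $\mV=\infty\Cat$ under the cartesian product. The only task is to match the two notions of monomorphism: monomorphisms in $\infty\Cat$ on one side, and monomorphisms in $\infty\Cat\mathrm{-}\Cat$ on the other.

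First, I would use \cref{fix}, the equivalence $\infty\Cat\simeq\infty\Cat\mathrm{-}\Cat$. Under this equivalence a functor of $\infty$-categories $\phi\colon X\to Y$ corresponds to an $\infty\Cat$-enriched functor. Its underlying map on spaces of objects is $\iota_0(X)\to\iota_0(Y)$, and its morphism objects are $\Mor_X(A,B)$. The underlying-space functor $\iota$ of $\infty\Cat\mathrm{-}\Cat$ is compatible with $\iota_0$ under \cref{fix}, since $\iota_0$ was defined inductively as the space of objects. Because \cref{fix} is an equivalence of categories, $\phi$ is a monomorphism in $\infty\Cat$ (i.e.\ an inclusion) if and only if the corresponding enriched functor is a monomorphism in $\infty\Cat\mathrm{-}\Cat$.

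Second, I would apply \cref{monochar} with $\mV=\infty\Cat$, which is presentably monoidal since it is presentable and cartesian closed. It says $\phi$ is a monomorphism exactly when $\iota_0(X)\to\iota_0(Y)$ is an embedding and each $\Mor_X(A,B)\to\Mor_Y(\phi A,\phi B)$ is a monomorphism in $\infty\Cat$. By the definition of inclusion just given, a monomorphism in $\infty\Cat$ is precisely an inclusion of $\infty$-categories, so this is exactly the stated criterion.

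The only potential subtlety is the first step: checking that the equivalence of \cref{fix} identifies underlying spaces and morphism objects in the expected way. This holds because the equivalence is assembled from the equivalences $(\n+1)\Cat=\n\Cat\mathrm{-}\Cat$, which are compatible with $\iota_\n=(\iota_{\n-1})_!$. I expect no real obstacle here. Note that the criterion is self-referential, since it is stated in terms of inclusions of the morphism $\infty$-categories, but no induction is needed because \cref{monochar} applies directly in $\infty\Cat$.
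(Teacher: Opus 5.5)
Your proposal is correct and is exactly the paper's argument: the paper deduces this corollary directly from \cref{monochar}, applied to $\mV=\infty\Cat$ with the cartesian product, via the identification $\infty\Cat\simeq\infty\Cat\mathrm{-}\Cat$ of \cref{fix}. Monomorphisms in $\infty\Cat$ are inclusions by definition, and, as you note, the only thing to check is that this identification matches underlying spaces and morphism $\infty$-categories.
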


\begin{definition}\label{ruik} Let $\n \geq 0.$
We inductively define involutions $$(-)^\op_\n, (-)^\co_\n: \n\Cat \to \n\Cat$$ by setting
$(-)^\co_0, (-)^\op_0:  0\Cat \to 0\Cat$ are the identities, $$(-)^\op_{\n+1}: {(\n+1)}\Cat \xrightarrow{(-)^\circ} {(\n+1)}\Cat \xrightarrow{((-)^\co_\n)_!}{(\n+1)}\Cat, $$ $$(-)^\co_{\n+1}:=((-)^\op_\n)_!: {(\n+1)}\Cat \xrightarrow{ }{(\n+1)}\Cat.$$
There are commutative squares:
$$\begin{xy}
\xymatrix{
{(\n+1)}\Cat \ar[d]^{\iota_\n}  \ar[r]^{(-)_{\n+1}^\op} & {(\n+1)}\Cat  \ar[d]^{\iota_\n}
\\ 
{\n}\Cat \ar[r]^{(-)_{\n}^\op} & {\n}\Cat
}
\end{xy}
\qquad
\begin{xy}
\xymatrix{
{(\n+1)}\Cat \ar[d]^{\iota_\n}  \ar[r]^{(-)_{\n+1}^\co} & {(\n+1)}\Cat  \ar[d]^{\iota_\n}
\\ 
{\n}\Cat \ar[r]^{(-)_{\n}^\co} & {\n}\Cat
}
\end{xy}
$$
and so induced involutions on the limit $$(-)^\op, (-)^\co: \infty\Cat \to \infty\Cat. $$

\end{definition}

\begin{remark}\label{oppo} By \cref{ruik} there are commutative squares, where $\sigma$ permutes the distinguished objects:

$$\begin{xy}
\xymatrix{
\infty\Cat_{\partial\bD^1/} \ar[d]^{\Mor}  \ar[r]^{(-)^\co} & \infty\Cat_{\partial\bD^1/}  \ar[d]^{\Mor}
\\ 
\infty\Cat \ar[r]^{(-)^\op} & \infty\Cat
}
\end{xy}\qquad
\begin{xy}
\xymatrix{
\infty\Cat_{\partial\bD^1/} \ar[d]^{\Mor}  \ar[r]^{\sigma \circ (-)^\op} & \infty\Cat_{\partial\bD^1/}  \ar[d]^{\Mor}
\\ 
\infty\Cat \ar[r]^{(-)^\co} & \infty\Cat.
}
\end{xy}
$$

\end{remark}

The following follows from \cref{susp}:

\begin{proposition}\label{suspi}

The functor $\Mor: \infty\Cat_{\partial\bD^1/} \to \infty\Cat$ admits a left adjoint
$S: \infty\Cat \to \infty\Cat_{\partial\bD^1/}$ such that for every $\infty$-category $\mC$ the $\infty$-category $S(\mC),$ called categorical suspension of $\mC$, has two objects 0,1, and morphism $\infty$-categories:
$$\Mor_{S(\mC)}(1,0)\simeq \emptyset, \ \Mor_{S(\mC)}(0,1)\simeq \mC, \ \Mor_{S(\mC)}(0,0)\simeq \Mor_{S(\mC)}(1,1) \simeq *.$$
    
\end{proposition}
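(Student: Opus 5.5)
The plan is to specialize \cref{susp} to $\mV=\infty\Cat$ with its cartesian monoidal structure, which is legitimate by \cref{fix}: $\infty\Cat\simeq\infty\Cat\mathrm{-}\Cat$, and $\infty\Cat$ is presentable and cartesian closed, hence presentably monoidal. Under this identification, the category $\infty\Cat_{\partial\bD^1/}$ of bipointed $\infty$-categories becomes $\mV\mathrm{-}\Cat_{B\tu\coprod B\tu/}$. Here $B\tu_\mV$ is the enriched category with one object and endomorphism object the point, i.e.\ the terminal $\infty$-category $\bD^0$. Since $\infty\Cat$ has final tensor unit, $B\tu_\mV=\ast$ and $B\tu\coprod B\tu=\partial\bD^1$. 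The functor $\Mor$ of the statement is then, by construction, the functor $(\mC;X,Y)\mapsto\Mor_\mC(X,Y)$ of \cref{susp}(5).

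For each $\infty$-category $\A$, \cref{susp} produces an enriched category $S(\A)$ together with bipointing $0,1$. Property (5) says that $S(\A)$ corepresents the functor
\[
(\mC;X,Y)\mapsto\Map_{\infty\Cat}(\A,\Mor_\mC(X,Y))
\]
on $\infty\Cat_{\partial\bD^1/}$. So for every $\A$ the functor $\Map_{\infty\Cat}(\A,\Mor(-))$ is corepresentable. By the pointwise criterion for adjoints (left adjoints exist iff each such functor is corepresentable), $\Mor$ admits a left adjoint, and its value on $\A$ is $S(\A)$. The only bookkeeping point is that the equivalence in (5) is natural in $(\mC;X,Y)$, which holds since it is induced by the universal morphism $\A\to\Mor_{S(\A)}(0,1)$. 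Functoriality of $S$ in $\A$ then follows formally.

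The description of the objects and morphism $\infty$-categories is read off directly from \cref{susp}:
\begin{itemize}
\item by (1), the objects of $S(\mC)$ are $\{0,1\}$;
\item by (2), $\Mor_{S(\mC)}(\ell,\ell)\simeq\tu=\ast$ for $\ell=0,1$;
\item by (3), $\Mor_{S(\mC)}(1,0)$ is initial, i.e.\ $\emptyset$;
\item by (4), $\Mor_{S(\mC)}(0,1)\simeq\mC$.
\end{itemize}

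I expect no genuine obstacle. The only step requiring care is the identification of $\infty\Cat_{\partial\bD^1/}$ with bipointed $\infty\Cat$-enriched categories, and of the paper's $\Mor$ with the enriched morphism-object functor. This follows from \cref{fix} together with the fact that the underlying space functor $\iota_0$ corresponds to taking the space of objects.
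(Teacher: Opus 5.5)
Your proposal is correct and follows the paper's own route. The paper simply states that \cref{suspi} follows from \cref{susp} applied to $\mV=\infty\Cat$ with the cartesian structure, via the identification \cref{fix}. Your pointwise-corepresentability argument using property (5), together with reading off (1)--(4), is exactly what that one-line deduction leaves implicit.
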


\begin{notation}
For every $0 \leq \n \leq \m$ the left adjoint embeddings $\n\Cat \leftrightarrows \m\Cat$ preserve small limits and thus induce a left adjoint embedding $\n\Cat \leftrightarrows \infty\Cat: \iota_\n$ that preserves small limits
and so admits a left adjoint $\tau_\n: \infty\Cat \to \n\Cat$ by presentability.

\end{notation}

\begin{remark}
The $\iota_n$ collectively filter every $\infty$-category $X$ as a colimit $X\simeq\colim_{n}\iota_n(X)$.
\end{remark}

The next is \cite[Lemma 2.4.16.]{GepnerHeine2026}:

\begin{lemma}\label{carclo}
The presentable category $\infty\Cat$ is cartesian closed.
\end{lemma}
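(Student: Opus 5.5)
We prove that $\infty\Cat$ is cartesian closed by induction on categorical dimension. The argument has three parts: show each $\n\Cat$ is cartesian closed, then pass to the limit, then check the answer is compatible with that limit.

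\textbf{Step 1: each $\n\Cat$ is cartesian closed.} Since $\infty\Cat$ is presentable, it suffices to show that for each $\infty$-category $X$ the functor $X \times (-)$ preserves small colimits. We prove the analogous statement for $\n\Cat$ by induction on $\n$.

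The base case $0\Cat=\mS$ is classical. For the inductive step, suppose $\n\Cat$ is presentably cartesian closed. Then $\n\Cat\mathrm{-}\Cat$ is presentable, and it is generated under colimits by the dense subcategory $\Theta(\n\Cat)$ of \cref{denseinherited} (taking a small dense monoidal subcategory of $\n\Cat$).

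We use the following criterion: a product of $\mV$-enriched categories preserves colimits in each variable when $\mV$ is presentably cartesian closed. This is the standard argument: one writes the product $\mC\times\mD$ of enriched categories objectwise and morphism-objectwise as $\Mor_\mC\times\Mor_\mD$. Colimits in the second variable are then controlled through the Segal presheaf model of \cref{denseinherited}, because products of Segal presheaves are computed pointwise and the Segal condition is stable under products. Concretely, the nerve $\mV\mathrm{-}\Cat\to\mP(\Theta(\mV))$ preserves products. Its left adjoint (the localization) preserves finite products because the localization is compatible with products with representables. That compatibility in turn reduces to checking that the product of a wedge of suspensions with a representable is a colimit of such generators. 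This is where cartesian closedness of $\mV$ enters, through $S(A)\times S(B)$ being built from $S(A\times B)$ and wedges.

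\textbf{Step 2: pass to the limit.} The functors $\iota_\n$ preserve finite products, and so do their left adjoint inclusions. Hence the finite-product structures on the $\n\Cat$ assemble into the cartesian structure on the limit $\infty\Cat$.

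For $X\in\infty\Cat$, write $X\simeq\colim_\n\iota_\n X$. We define the internal hom $\Fun(X,Y)$ as the compatible family $\big(\Fun_{\n\Cat}(\iota_\n X,\iota_\n Y)\big)_\n$. To see this family is compatible, we need the identity $\iota_\n\Fun_{\n+1}(A,B)\simeq\Fun_\n(\iota_\n A,\iota_\n B)$. This follows by adjunction from the fact that the left adjoint inclusion $\n\Cat\to(\n+1)\Cat$ preserves products.

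\textbf{Step 3 (main obstacle): verify the universal property.} We must check
\[
\Map(Z\times X,Y)\simeq\Map(Z,\Fun(X,Y)).
\]
The space on the right is the limit over $\n$ of $\Map_{\n\Cat}(\iota_\n Z,\Fun_\n(\iota_\n X,\iota_\n Y))$. The difficulty is that $\iota_\n$ is a right adjoint and need not commute with products against the colimit decomposition. We handle this through \cref{fix}: the equivalence $\infty\Cat\simeq\infty\Cat\mathrm{-}\Cat$ lets us run the Step 1 argument directly for $\mV=\infty\Cat$, as a fixed-point induction on morphism objects. This reduces the claim to showing that $-\times X$ preserves colimits, which can be checked on the generating cells $\bD^\n$. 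The required statement is that $\bD^\n\times\bD^m$ is a colimit of cells. This holds by induction using the suspension $S$ of \cref{suspi}.
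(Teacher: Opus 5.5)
\textbf{There is a genuine gap in Steps 2 and 3.} The paper itself gives no argument for this lemma; it imports it from \cite[Lemma 2.4.16]{GepnerHeine2026}.

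\emph{Step 2 is false as stated.} The identity $\iota_n\Fun_{n+1}(A,B)\simeq\Fun_n(\iota_nA,\iota_nB)$ does not hold. Take $n=0$ and $A=B=\bD^1$. The left side is the set of the three functors $\bD^1\to\bD^1$, while the right side is the set of all four self-maps of $\{0,1\}$. The family $(\Fun_n(\iota_nX,\iota_nY))_n$ therefore cannot be the internal hom. Indeed, the universal property with $Z=\bD^0$ forces $\iota_0\Fun(X,Y)\simeq\Map_{\infty\Cat}(X,Y)$, and this is not determined by $\iota_0X$ and $\iota_0Y$. Your adjunction argument actually proves $\iota_n\Fun_{n+1}(jA,B)\simeq\Fun_n(A,\iota_nB)$ for $A\in n\Cat$, with $j$ the inclusion. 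There the source is already $n$-dimensional and is not truncated. The argument breaks for general $A$ because the counit $j\iota_nA\to A$ is not an equivalence.

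\emph{Step 3 does not repair this.} Invoking \cref{fix} to rerun Step 1 with $\mV=\infty\Cat$ presupposes that $\infty\Cat$ is presentably cartesian closed, which is exactly what you are proving. Moreover, ``$\bD^n\times\bD^m$ is a colimit of cells'' holds for every $\infty$-category by \cref{theta}. So it gives no information about whether $-\times X$ preserves colimits. What would actually be needed is that the generating Segal local equivalences remain local equivalences after taking the product with a representable. Your sketch of Step 1 has the same imprecision, but Step 1 itself is a standard fact about categories enriched in a presentably cartesian closed category, so it can be quoted.

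\textbf{How to fix it.} Keep your correct opening reduction (presentability plus $X\times-$ preserving small colimits), and do not build the hom levelwise.
\begin{enumerate}
\item Let $j_n:n\Cat\to\infty\Cat$ be the inclusions. They preserve small colimits and finite products.
\item The $\iota_n$ preserve limits and filtered colimits. Hence finite products and sequential colimits in $\infty\Cat$ are computed levelwise, so they commute with each other by Step 1.
\item We have $X\simeq\colim_n j_n\iota_nX$. Interchanging colimits also gives $\colim_iY_i\simeq\colim_n j_n(\colim_i\iota_nY_i)$, with the inner colimit taken in $n\Cat$.
\item Using cofinality of the diagonal in the doubly indexed sequential colimit, one gets
\[
X\times\colim_iY_i\simeq\colim_n j_n\bigl(\iota_nX\times\colim_i\iota_nY_i\bigr)\simeq\colim_n\colim_i j_n(\iota_nX\times\iota_nY_i)\simeq\colim_i(X\times Y_i).
\]
The middle equivalence is cartesian closedness of $n\Cat$.
\end{enumerate}
The adjoint functor theorem then supplies $\Fun(X,-)$.
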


\begin{notation}

For any $\infty$-category $\mC$ let 
$\Fun(\mC,-): \infty\Cat \to \infty\Cat$ be the right adjoint of the functor $(-) \times \mC:\infty\Cat \to \infty\Cat.$
    
\end{notation}

\begin{remark}
Since $\infty\Cat$ is cartesian closed, it refines to an $\infty$-category $\infty\scat$ such that for every two objects $X$ and $Y$:
\[
\Mor_{\infty\scat}(X,Y)=\Fun(X,Y).
\]
\end{remark}

\begin{definition}Let $\n \geq 0$.
The $\n$-disk is the $n$-fold iterated suspension $\bD^\n:= S^{\n}(\bD^0)$ of the terminal $\infty$-category $\bD^0$.
\end{definition}

\begin{definition}Let $\n \geq 0$.
The boundary of the $\n$-disk is the $n$-fold iterated suspension $\partial\bD^\n:= S^{\n}(\emptyset)$ of the initial $\infty$-category $\emptyset$.
	
\end{definition}

\begin{remark}Let $\n \geq 0.$
The functor $\emptyset=\partial\bD^0 \subset \bD^0=*$ induces an inclusion $\partial\bD^\n \subset \bD^\n$.
	
\end{remark}

\begin{definition}
The bipointed wedge of an ordered pair $(\mC,\mD)$ of bipointed $\infty$-categories  $\mC\in\infty\Cat_{/\partial\bD^1}$ and $\mD\in\infty\Cat_{/\partial\bD^1}$ is the bipointed $\infty$-category obtained by taking the pushout along the last basepoint of $\mC$ and the first basepoint of $\mD$.
\end{definition}

\begin{definition}\label{Theta}
	
Let $\Theta' \subset \infty\Cat_{\partial\bD^1/}$ be the full subcategory generated by $\bD^0$ under suspensions and bipointed wedges and $\Theta \subset \infty\Cat$ the essential image of $\Theta'$ under the forgetful functor.	
	
\end{definition}

\begin{remark}

An $\infty$-category belongs to $\Theta$ if and only if it is of the form $$ \bD^{i_0} \coprod_{\bD^{j_1}} \bD^{i_1} \coprod_{\bD^{j_2}} \cdots \coprod_{\bD^{j_n}} \bD^{i_n} $$
for any sequence of natural numbers $n, i_0,\ldots, i_n, j_1,\ldots, j_n$ and monomorphisms 
$ \bD^{j_\ell} \rightarrowtail \bD^{i_\ell},\bD^{j_\ell} \rightarrowtail \bD^{i_{\ell-1}}$, $ 1 \leq \ell \leq n$.

\end{remark}

The next result is \cite[Theorem 2.5.8]{GepnerHeine2026}:

\begin{theorem}\label{theta}

The restricted Yoneda embedding $\N:
\infty\Cat \to \Fun(\Theta^\op,\mS)$ is fully faithful and admit left adjoints that preserves finite products.
A $\Theta$-space is in the essential image of the restricted Yoneda embedding if and only if it satisfies the Segal condition, i.e. it is local with respect to the map 
$$ \N(\bD^{i_0}) \coprod_{\N(\bD^{j_1})} \N(\bD^{i_1})\coprod_{\N(\bD^{j_2})} \cdots \N(\coprod_{\bD^{j_n}}) \N(\bD^{i_n}) \to \N(\bD^{i_0} \coprod_{\bD^{j_1}} \bD^{i_1} \coprod_{\bD^{j_2}} \cdots \coprod_{\bD^{j_n}} \bD^{i_n}) $$
for any sequence of natural numbers $n, i_0,\ldots, i_n, j_1,\ldots, j_n$ and monomorphisms 
$ \bD^{j_\ell} \rightarrowtail \bD^{i_\ell},\bD^{j_\ell} \rightarrowtail \bD^{i_{\ell-1}}$, $ 1 \leq \ell \leq n$.
For every $ 0 \leq n \leq \infty$ the full subcategory $n\Cat \subset \infty\Cat$ is generated under small colimits by the disk of dimension less than or equal to $n$, or simply the $n$-disk itself (since the disks of strictly lower dimension are retracts of the $n$-disks).

\end{theorem}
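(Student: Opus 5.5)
The plan is to obtain the statement from \cref{denseinherited}, applied inductively along the tower $\n\Cat$ and then passed to the limit $\infty\Cat$ using \cref{fix}. The $\infty$-categories in $\Theta$ are the wedges of suspensions built from $\bD^0$. So $\Theta$ plays the role of $\Theta(\mV)$ when $\mV$ is itself (a copy of) $\Theta$, viewed as a dense full subcategory of $\infty\Cat\simeq\infty\Cat\mathrm{-}\Cat$. The main ingredients are:
\begin{itemize}
\item the universal property of suspension (\cref{suspi});
\item the description of morphism objects of wedges of suspensions;
\item density inheritance (\cref{denseinherited}).
\end{itemize}

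First I prove the statement for each $\n\Cat$ by induction on $\n$, with $\Theta_\n$ the disks and wedges of dimension at most $\n$. For $\n=0$, $\Theta_0=\{\bD^0\}$ and $\mS\simeq\Fun(\{*\},\mS)$, which is trivial. For the inductive step, assume $\Theta_\n\subset\n\Cat$ is dense with nerve image the Segal $\Theta_\n$-spaces. Since $\bD^0$ is the tensor unit, $\Theta_\n$ is a small dense full monoidal subcategory for the cartesian product, and it is closed under products up to colimits; if needed, I replace it by its closure under finite products, which is still small and dense. \cref{denseinherited} then shows that wedges of suspensions of objects of $\Theta_\n$, which is exactly $\Theta_{\n+1}$ up to this enlargement, are dense in $(\n+1)\Cat$. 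It also shows that the essential image consists of presheaves satisfying the Segal condition for wedges whose morphism presheaves are Segal. Unwinding the morphism presheaf $\Mor_F(A,B)=(F\circ S)\times_{F(\bD^0)^2}\{(A,B)\}$ turns the combined condition into locality for the displayed maps with gluing along disks $\bD^{j_\ell}\rightarrowtail\bD^{i_\ell}$: the inner Segal conditions become suspended Segal maps, and suspension commutes with pushouts under $\partial\bD^1$. I also have to remove the finite-product enlargement. For this I note that the Segal condition forces the value on a product of $\Theta$-objects to be determined by its values on $\Theta$ itself, since such a product is an iterated colimit of $\Theta$-objects along the Segal maps; this is the Gray/cartesian pasting decomposition of products of disks.

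Next I pass to $\infty$. Because the $\iota_\n$ preserve filtered colimits and $X\simeq\colim_\n\iota_n X$, the mapping spaces satisfy $\Map(\mC,\mD)\simeq\lim_\n\Map(\iota_\n\mC,\iota_\n\mD)$. Each $\Theta$-object lies in some $\n\Cat$, so the nerve $\N$ is the limit of the fully faithful nerves $\N_\n$, after checking that these are compatible with restriction along $\Theta_\n\subset\Theta_{\n+1}$. Hence $\N$ is fully faithful, and its essential image is the Segal $\Theta$-spaces. The left adjoint exists by presentability, namely as the localization onto Segal spaces.

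For finite products of the left adjoint, it suffices to show that the Segal-local equivalences are stable under products with representables. This in turn reduces to showing that $\N(\bD^k)\times(\text{Segal map})$ is a local equivalence, which I would argue via cartesian closedness (\cref{carclo}): the Segal objects form an exponential ideal because $\Fun(\bD^k,-)$ preserves the relevant pushouts. For generation, $\n\Cat$ is generated under colimits by $\Theta_\n$, and every object of $\Theta_\n$ is an iterated pushout of disks of dimension at most $\n$. Each $\bD^k$ with $k\le\n$ is a retract of $\bD^\n$ via the degeneracy $\bD^\n\to\bD^k$ and an inclusion.

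The main obstacle is the finite-product compatibility, equivalently the exponential-ideal property. It requires knowing that $\bD^k\times(\text{wedge})$ decomposes correctly into $\Theta$-cells. I would attack it by induction on dimension, using suspension to reduce the problem to lower-dimensional morphism categories.
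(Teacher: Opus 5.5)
Your outer structure is sound. That covers the induction on $n$ via \cref{denseinherited}, the passage to $\infty\Cat=\lim_n n\Cat$, the existence of the left adjoint because the image is cut out by a set of local conditions, and generation by $\bD^n$ via retracts. The paper itself gives no proof here (it quotes \cite[Theorem 2.5.8]{GepnerHeine2026}), so this judges your argument on its own.

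\textbf{The gap.} It is the step where you shrink from $\Theta(\mV)$, with $\mV$ the finite-product closure of $\Theta_n$, back to $\Theta_{n+1}$. Your justification does not work. Products of $\Theta$-objects are not built by the gluings in the Segal condition. Already $\bD^1\times\bD^1\simeq(\bD^1\vee\bD^1)\coprod_{\bD^1}(\bD^1\vee\bD^1)$ is glued along the composite edge, which is not one of the displayed maps. Higher products would need pasting results not available in this paper.

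More fundamentally, presenting $V$ as \emph{some} colimit of $\Theta$-objects only shows that $\Theta_{n+1}$ generates under colimits. What you actually need are two density statements:
\begin{enumerate}
\item[(a)] for every $Y\in(n+1)\Cat$, the presheaf $\N_{\Theta(\mV)}(Y)$ is right Kan extended from $\Theta_{n+1}$; equivalently, each $T=S(V_1)\vee\cdots\vee S(V_k)$ is the colimit of the canonical diagram $(\Theta_{n+1})_{/T}\to(n+1)\Cat$;
\item[(b)] the right Kan extension of an arbitrary Segal presheaf on $\Theta_{n+1}$ satisfies the hypotheses of \cref{denseinherited}.
\end{enumerate}
A Segal presheaf on $\Theta_{n+1}$ carries no direct information about $S(V)$ for $V\notin\Theta_n$. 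So (b) is exactly where an argument is required, and your proposal does not give one.

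\textbf{How to fill it.} The mechanism is the inductive hypothesis, not a cell decomposition of products.
\begin{itemize}
\item Density of $\Theta_n$ gives the canonical presentation $V_i\simeq\colim_{(\Theta_n)_{/V_i}}\theta$.
\item Each $(\Theta_n)_{/V_i}$ is weakly contractible, since its classifying space is $\tau_0(V_i)\simeq *$, $V_i$ being a product of $\Theta_n$-objects.
\item Suspension is a left adjoint (\cref{suspi}), and the wedge is left adjoint to restricting a $(k+1)$-pointed category to a $k$-tuple of bipointed ones. Hence $T$ is the colimit in $(n+1)\Cat$ of $(\theta_i)_i\mapsto S(\theta_1)\vee\cdots\vee S(\theta_k)$ over $\prod_i(\Theta_n)_{/V_i}$.
\item This diagram is final in $(\Theta_{n+1})_{/T}$. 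The relevant slices are products of factorization categories of maps $\theta\to V_i$, which have initial objects, and copies of $(\Theta_n)_{/V_j}$, which are weakly contractible. Finality gives (a).
\item The same finality, applied to a Segal $F$, shows that its right Kan extension is Segal on wedges. Its morphism presheaves are $V\mapsto\Map(V,M)=\N_\mV(M)$, where $\Mor_F(a,b)\simeq\N_{\Theta_n}(M)$ by induction. This gives (b).
\end{itemize}
Alternatively, prove a version of \cref{denseinherited} for dense full subcategories not closed under the tensor product.

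\textbf{Finite products.} This is not the obstacle you expect. Since $\N$ preserves products and is fully faithful, \cref{carclo} gives $\N(C)^{\N(\theta)}(\theta')\simeq\Map(\theta'\times\theta,C)\simeq\N(\Fun(\theta,C))(\theta')$. So Segal objects form an exponential ideal, and the localization preserves finite products. No decomposition of $\bD^k\times\theta$ is needed. Nor is any pushout-preservation by $\Fun(\bD^k,-)$, which in any case fails.
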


By \cite[Definition 2.6.25]{GepnerHeine2026} for every $n \geq 0$ there is an oriented $n$-cube $\cube^n$
whose 1-truncation is $(\bD^1)^{\times n}.$

\begin{notation}\label{sqGray}

Let $\cube \subset \infty\Cat$ be the full subcategory of oriented cubes.
\end{notation}

The following is \cite[Theorem 2.8.12]{GepnerHeine2026}:

\begin{theorem}\label{cubedense}

The full subcategory $\cube \subset \infty\Cat$ is dense.
    
\end{theorem}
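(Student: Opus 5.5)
The plan is to deduce density of $\cube$ from density of $\Theta$ (\cref{theta}), by showing that every disk, and hence every object of $\Theta$, is built from oriented cubes in a controlled way. I will use two formal facts about dense subcategories of $\infty\Cat$:
\begin{enumerate}[\normalfont(a)]\setlength{\itemsep}{-2pt}
\item If $\mA \subset \mB \subset \infty\Cat$ are full subcategories and $\mA$ is dense, then $\mB$ is dense.
\item If every object of $\mB$ is a retract of an object of $\mA$, then restriction $\Fun(\mB^\op,\mS)\to\Fun(\mA^\op,\mS)$ is an equivalence. In that case $\mA$ is dense if and only if $\mB$ is.
\end{enumerate}

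For (a), I would check that the counit of the $\mB$-nerve adjunction at $X$, namely the colimit over $\mB_{/X}$ of the objects of $\mB$ mapping to $X$, is an equivalence. The inclusion $\mA_{/X} \subset \mB_{/X}$ should be final. Indeed, for an object $b \to X$ of $\mB_{/X}$ the relevant comma category is $\mA_{/b}$, and its realization is contractible because $b \simeq \colim_{\mA_{/b}} a$ canonically by density of $\mA$. So the colimit over $\mB_{/X}$ agrees with the colimit over $\mA_{/X}$, which is $X$. Statement (b) is the standard invariance of presheaf categories under idempotent completion: both presheaf categories are presheaves on the common idempotent completion, and the idempotent completions of $\mA$ and $\mB$ agree inside $\infty\Cat$, which is idempotent complete.

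Applying (a) with $\mA=\Theta$ and $\mB=\cube\cup\Theta$ shows that $\cube\cup\Theta$ is dense. By (b) it then suffices to show that every object of $\Theta$ is a retract of an oriented cube. This step needs more than the disk case, since a general object of $\Theta$ is a wedge-like pushout of disks, not a disk. I would therefore enlarge in two steps. First, every disk $\bD^n$ should be a retract of $\cube^n$. Second, I would try to show that every object of $\Theta$ is a retract of a suitable Gray product or pasting of cubes, which is again a cube up to retract, by embedding a pasting diagram into a large enough cube $\cube^N$ as a "staircase". If that step resists, the fallback is to replace (b) by a colimit argument: each object of $\Theta$ is an iterated pushout of disks along disks, hence a $\cube\cup\{\bD^n\}$-colimit preserved by the nerve.

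The main obstacle is constructing the retraction $\bD^n \to \cube^n \to \bD^n$, and I would argue by induction on $n$ using $\cube^n \simeq \cube^{n-1}\boxtimes\bD^1$ and $\bD^n = S(\bD^{n-1})$.
\begin{enumerate}[\normalfont(1)]\setlength{\itemsep}{-2pt}
\item For the retraction, there is a canonical collapse map $X\boxtimes\bD^1 \to S(X)$ contracting $X\times\{0\}$ to $0$ and $X\times\{1\}$ to $1$; it is adjoint to the interchange cell $X \to \Mor_{X\boxtimes\bD^1}$ composed with the two collapses. Postcomposing with $S(r_{n-1})$, where $r_{n-1}\colon \cube^{n-1}\to\bD^{n-1}$ is the retraction from the inductive hypothesis, gives $r_n\colon \cube^n \to S(\bD^{n-1}) = \bD^n$.
\item For the section, I would take the map $s_n\colon \bD^n\to\cube^n$ classifying the top-dimensional cell from the initial to the terminal vertex. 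This cell exists because $\cube^n$ is generated by its top cell, with source and target the two composite boundary paths.
\item The identity $r_n s_n \simeq \id$ reduces, via the suspension adjunction of \cref{suspi}, to $r_{n-1}s_{n-1}\simeq\id$ on the morphism category between the two extreme vertices.
\end{enumerate}
The delicate point is identifying that morphism category of $\cube^{n-1}\boxtimes\bD^1$ well enough to see that the collapse map restricts correctly on it.
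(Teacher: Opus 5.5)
\textbf{There is a genuine gap.} The paper does not prove this statement here; it imports it from \cite[Theorem 2.8.12]{GepnerHeine2026} (see also \cite{campion2022cubesdenseinftyinftycategories}). Your reduction through (a) and (b) is sound: it suffices that every object of $\Theta$ lies in the retract closure of $\cube$. Your disk retraction $\bD^n\to\cube^n\to\bD^n$ is also believable, modulo the identification you flag.

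The problem is that the disk case carries none of the weight, because disks are not dense. For instance, the canonical colimit of disks over $[2]=\bD^1\vee\bD^1$ is the free category on the graph with edges $0\to1$, $1\to2$, $0\to2$, since presheaves on disks record no composition. So knowing that disks are retracts of cubes says nothing about whether $[2]$ is a canonical $\cube$-colimit. All the content sits in the non-globular objects of $\Theta$ ($\bD^1\vee\bD^1$, $\bD^2\vee\bD^2$, $S(\bD^1\vee\bD^1)$, \dots), and there you only name a ``staircase'' without constructing it.

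The sentence ``a pasting of cubes is again a cube up to retract'' is exactly what has to be proved. The equivalence $\cube^m\boxtimes\cube^n\simeq\cube^{m+n}$ does not give it, because $\Theta$ is generated by wedges and suspensions, not by Gray products. An induction over $\Theta$ would need two constructions you have not given:
\begin{itemize}
\item for bipointed $\theta_1,\theta_2$, a retraction $\theta_1\boxtimes\theta_2\to\theta_1\vee\theta_2$ of the section $x\mapsto(x,\bot_2)$, $y\mapsto(\top_1,y)$;
\item a section of the collapse $\cube^N\boxtimes\bD^1\to S(\cube^N)$.
\end{itemize}
Both need connection-type maps. One way to confine the remaining work to a single family of shapes, as in your disk argument, is to start from the density of oriented simplices (\cref{orientdense}) instead of \cref{theta}. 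You would then exhibit each $\bDelta^n$ as a retract of $\cube^n$, via a staircase section and a collapse map $\cube^n\to\bDelta^n$ chosen to match the orientation conventions.

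\textbf{The fallback is circular.} The nerve $\N\colon\infty\Cat\to\mP(\cube)$ is a right adjoint and does not send the decomposition $\theta\simeq\bD^{i_0}\sqcup_{\bD^{j_1}}\cdots\sqcup_{\bD^{j_n}}\bD^{i_n}$ to a pushout of presheaves. Already at $\cube^1$, the composite arrow $0\to2$ of $[2]$ does not lie in $\Map(\bD^1,\bD^1)\sqcup_{\Map(\bD^1,\bD^0)}\Map(\bD^1,\bD^1)$. What you would actually need is that every cube-nerve $\N Y$ is local for $\N\bD^{i_0}\sqcup_{\N\bD^{j_1}}\cdots\to\N\theta$. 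That is equivalent to $\theta\simeq\colim_{\cube_{/\theta}}c$, i.e.\ to the density statement at $\theta$.

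\textbf{Your proof of (a) is wrong, although (a) is true.} Finality of $\mA_{/X}\subset\mB_{/X}$ would require the categories of factorizations $b\to a\to X$ to be weakly contractible, not $\mA_{/b}$. This fails in general: for $\{*\}\subset\{*,S^1\}$ in $\mS$, at $\id_{S^1}$ that category is empty. A correct argument runs as follows. Density of $\mA$ gives $\Map(b,Y)\simeq\lim_{\mA_{/b}}\Map(a,Y)$, so every $\mB$-nerve is right Kan extended from $\mA$. Hence $\Map(\N_\mB X,\N_\mB Y)\simeq\Map(\N_\mA X,\N_\mA Y)\simeq\Map(X,Y)$.
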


By \cite[Definition 2.6.32]{GepnerHeine2026} for every $n \geq 0$ there is an oriented $n$-simplex $\bDelta^n$
whose 1-truncation is the totally ordered set $[n]= \{0 < ...< n \}.$

\begin{notation}

Let $\bDelta \subset\infty\Cat$ be the full subcategory spanned by the oriented simplices.
  
Let $\bDelta^+ \subset\infty\Cat$ be the full subcategory spanned by the oriented simplices and the initial object.
  
\end{notation}

The next is \cite[Theorem 2.7.8]{GepnerHeine2026}:

\begin{theorem}\label{orientdense}

The full subcategory $\bDelta\subset\infty\Cat$ is dense.
    
\end{theorem}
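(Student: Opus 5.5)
Since \cref{orientdense} is cited from \cite[Theorem 2.7.8]{GepnerHeine2026}, the plan is to deduce density of $\bDelta$ from density of the cubes, \cref{cubedense}, by comparing the two restricted Yoneda functors. Write $\N_\bDelta:\infty\Cat\to\Fun(\bDelta^\op,\mS)$ and $\N_\cube:\infty\Cat\to\Fun(\cube^\op,\mS)$. Density of $\bDelta$ means $\N_\bDelta$ is fully faithful. Equivalently, every $\infty$-category $X$ is the canonical colimit of the diagram $\bDelta_{/X}\to\infty\Cat$. Since $\cube$ is dense, it suffices to show that each oriented cube $\cube^n$ lies in the closure of $\bDelta$ under those colimits that $\N_\bDelta$ preserves. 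Concretely, I will show that for each $n$ the canonical map $\colim_{\bDelta_{/\cube^n}}\bDelta^k\to\cube^n$ is an equivalence, and that this colimit is preserved by $\N_\bDelta$.

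The key steps run as follows. First, decompose $\cube^n$ as a gluing of oriented simplices, analogous to the triangulation of $(\bD^1)^{\times n}$ into $n!$ simplices indexed by permutations. The pieces are the oriented simplices $\bDelta^n\to\cube^n$ corresponding to maximal chains in the poset $[1]^n$, glued along their common faces, which are again oriented simplices. This presentation comes from the Gray-cube formula $\cube^n\simeq(\bD^1)^{\boxtimes n}$ and the known description of oriented simplices as oriented nerves (Street's orientals). I expect this decomposition to be an equivalence already at the level of the Segal $\Theta$-spaces of \cref{theta}.

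Second, use this to get a cofinal subdiagram of $\bDelta_{/\cube^n}$, so that $\cube^n$ is the colimit of its simplices. Third, apply the general criterion: a full subcategory $\mathcal A\subset\mathcal B$ containing a dense subcategory $\mathcal{A}'$ up to canonical colimits is itself dense. To do this, I will check that restriction along $\bDelta\to\bDelta\cup\cube$ identifies $\N_{\bDelta\cup\cube}$ with a functor whose values on cubes are determined by the simplicial values through the above colimit formula, i.e.\ $\N_{\bDelta\cup\cube}$ is right Kan extended from $\bDelta$. Full faithfulness of $\N_\cube$ then transfers to $\N_\bDelta$.

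The main obstacle is the first step: proving that the triangulation colimit of oriented simplices actually computes the oriented cube in $\infty\Cat$, and not merely in a strict or 1-truncated setting. It must also be \emph{preserved} by $\N_\bDelta$, which means checking that it is a Segal-type colimit. My first attempt would be an induction on $n$ using $\cube^{n+1}\simeq\cube^n\boxtimes\bD^1$, together with the join/cone description $\bDelta^{k+1}\simeq\bDelta^k\star\bD^0$, reducing to the case of a prism $\bDelta^k\boxtimes\bD^1$. That case can be verified on morphism $\infty$-categories, where it becomes the corresponding statement one dimension lower, by the suspension adjunction of \cref{suspi}.
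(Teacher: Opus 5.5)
The paper gives no proof of this statement; it is quoted from \cite[Theorem 2.7.8]{GepnerHeine2026}, so your argument has to stand on its own. Your reduction is sound. Given \cref{cubedense}, it suffices that each $\cube^n$ be the canonical colimit $\colim_{\bDelta_{/\cube^n}}\bDelta^k$, i.e.\ that the presheaves $\N_{\bDelta\cup\cube}(Y)$ be right Kan extended from $\bDelta$. The gap is in how you propose to verify this.

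\textbf{Step 1 is false in the oriented setting.} The Gray cube is \emph{not} obtained by gluing the $n!$ oriented simplices of the classical triangulation along common faces. Already for $n=2$ we have $\Mor_{\cube^2}((0,0),(1,1))\simeq\bD^1$: two 1-morphisms $c_1,c_2$ (the two composites) and a single 2-cell between them. Gluing two copies of $\bDelta^2$ along the long edge instead gives:
\begin{itemize}
\item a third 1-morphism $d$ from $(0,0)$ to $(1,1)$;
\item two 2-cells $c_1\Rightarrow d\Leftarrow c_2$ (or $c_1\Leftarrow d\Rightarrow c_2$).
\end{itemize}
Both 2-cells point the same way relative to $d$, because each is the 2-cell of $\bDelta^2$ relating its long edge to the composite of its short edges. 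Worse, for one of the two maximal chains of $[1]^2$, the only functor $\bDelta^2\to\cube^2$ sending the spine to that chain is degenerate: it factors through $\tau_1\bDelta^2=\bD^1\vee\bD^1$. A nondegenerate one would need a 2-cell between $c_1$ and $c_2$ in the direction opposite to the one $\cube^2$ has. Your prism induction fails at its first step for the same reason, since $\bDelta^1\boxtimes\bD^1=\cube^2$.

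\textbf{A correct colimit presentation would still not suffice.} Even a valid $\cube^n\simeq\colim_D\bDelta^{k_d}$ in $\infty\Cat$ is not enough. You need it to be preserved by $\N_\bDelta$, i.e.\ $\Map(\bDelta^\bullet,\cube^n)$ must be the corresponding colimit of representables in $\Fun(\bDelta^\op,\mS)$. That means controlling \emph{every} functor $\bDelta^k\to\cube^n$, including degenerate ones and ones straddling several pieces. This is where the content of the theorem lies, and your proposal has no argument for it.

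Your two proposed checks do not address this:
\begin{itemize}
\item ``Segal-type'' is the wrong criterion. Segal decompositions are in general \emph{not} preserved by restricted Yoneda functors; for instance, $\bDelta^2\to\tau_1\bDelta^2=\bD^1\vee\bD^1$ factors through neither copy of $\bD^1$.
\item Checking morphism $\infty$-categories via \cref{suspi} can only confirm a colimit in $\infty\Cat$, not in presheaves on $\bDelta$.
\end{itemize}

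\textbf{A possible way around.} You can avoid computing $\N_\bDelta(\cube^n)$ by using absolute colimits. If every object of some dense subcategory is a retract of an oriented simplex, then $\bDelta$ is dense, because retract diagrams are absolute colimits and so are preserved by $\N_\bDelta$. Cubes cannot play this role: $\cube^2$ is not a retract of any $\bDelta^k$. A section would be injective on objects, and $(1,0)$ and $(0,1)$ admit no morphism in either direction, whereas any two distinct objects of $\bDelta^k$ are joined by a morphism. Globes and other objects of $\Theta$ are natural candidates instead (e.g.\ $\bD^2$ and $\bD^1\vee\bD^1$ are retracts of $\bDelta^2$), to be combined with \cref{theta}.

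Finally, in \cite{GepnerHeine2026} cube density (Theorem 2.8.12) comes after simplex density (Theorem 2.7.8). Make sure the version of \cref{cubedense} you invoke does not itself rely on the statement you are proving.
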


In the following we introduce the Gray tensor product
and categories enriched in the Gray tensor product.

By \cite[Definition 2.6.23]{GepnerHeine2026} there is a canonical monoidal structure on $\cube$ whose tensor unit is
$\bD^0$ and such that $\cube^n \boxtimes \cube^m= \cube^{n+m}$ for every $n,m \geq 0.$

We have the following result of Campion \cite{campion2022cubesdenseinftyinftycategories}, which is also proven in \cite[Corollary 3.4.2]{GepnerHeine2026}:

\begin{corollary}\label{locmon2}
Let $\cube \subset \infty\Cat$ denote the full subcategory of oriented cubes, equipped with the Gray monoidal structure.
The convolution monoidal structure descends along 
the localization $$\mP(\cube) \rightleftarrows \infty\Cat.$$

\end{corollary}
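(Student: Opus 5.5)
The plan is to use the standard criterion for a monoidal structure to descend along a reflective localization. Let $L:\mP(\cube)\rightleftarrows\infty\Cat$ be the localization, and let $W$ be the class of $L$-equivalences. By \cref{cubedense} the restricted Yoneda functor $\infty\Cat\to\mP(\cube)$ is fully faithful, so $L$ is indeed a reflective localization.

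The convolution product on $\mP(\cube)$ preserves small colimits in each variable. It therefore suffices to show that for every oriented cube $\cube^k$ and every morphism $f:A\to B$ in $W$, the morphisms $\cube^k\boxtimes f$ and $f\boxtimes\cube^k$ again lie in $W$. Representables generate under colimits, and the class $W$ is strongly saturated, so this closure property then extends to arbitrary presheaves in place of $\cube^k$. Once it holds, the localization is compatible with the monoidal structure in the sense of Lurie (HA 2.2.1.7). The product $L(X\boxtimes Y)$ then defines a monoidal structure on $\infty\Cat$ for which $L$ is monoidal.

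Next I reduce to a generating set of local equivalences. Here I use \cref{theta}, or its cubical analogue. The class $W$ is generated as a strongly saturated class by the maps $\coprod_{\text{Segal}}\N(\cube^{\bullet})\to\N(\text{composite})$: these are the Segal-type maps expressing each object of $\Theta$ as a colimit of cubes. It also includes the completeness maps, if univalence is required. Equivalently, $W$ is generated by the maps $\colim_{\cube_{/X}}\cube^{n}\to X$ as $X$ runs over the cells $\bD^m$. Because $\cube^k\boxtimes-$ preserves colimits, it is enough to check that $\cube^k\boxtimes g$ is an $L$-equivalence for each such generator $g$. This amounts to saying that the Gray product with a cube commutes with the relevant pushouts of cubes after localization. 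Concretely, it means $L(\cube^k\boxtimes \N(D))\simeq \colim\, \cube^{k+n_i}$, where $D=\colim\,\cube^{n_i}$ is a disk presented as a gluing of cubes.

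The main obstacle is this last verification. It is the combinatorial heart of Campion's theorem: one must show that $\cube^k\boxtimes(\text{gluing of cubes along faces})$ is again the corresponding gluing of higher cubes in $\infty\Cat$. I would argue by induction on dimension using suspensions. Disks are iterated suspensions, and \cref{suspi} describes morphism $\infty$-categories, so I would compute $\Mor$ in both sides and reduce the problem to the case of lower-dimensional cubes. The key inputs are two facts. First, the morphism categories of $\cube^{a}\boxtimes\cube^{b}=\cube^{a+b}$ between vertices decompose as joins and products of smaller cubes. Second, the relevant face inclusions are monomorphisms, which \cref{monochar} lets me check on hom-objects. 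The result is that both sides have equivalent morphism $\infty$-categories, which finishes the proof.
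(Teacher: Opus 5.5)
The paper gives no internal proof of this statement. It imports it from \cite{campion2022cubesdenseinftyinftycategories} and \cite[Corollary 3.4.2]{GepnerHeine2026}.

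Your first reduction is fine. Day convolution $\otimes$ preserves colimits in each variable, and the class $W$ of $L$-equivalences is strongly saturated. So descent amounts to $y(\cube^k)\otimes(-)$ and $(-)\otimes y(\cube^k)$ preserving $W$, and it suffices to test this on a generating set of $W$. But the generating sets you propose do not work:
\begin{itemize}
\item The maps $\colim_{c\in\cube_{/X}} y(c)\to \N(X)$ are already equivalences in $\mP(\cube)$. Here $\cube_{/X}$ is the category of elements of the presheaf $\N(X)$, so the map is its co-Yoneda presentation, and these maps generate only the trivial class. Read in $\infty\Cat$ instead, the same formula is just the density statement \cref{cubedense}, not a set of maps in $\mP(\cube)$.
\item \cref{theta} presents $\infty\Cat$ by Segal conditions on $\Theta$, whose objects are glued from disks, not cubes. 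No ``cubical analogue'' describing the local objects of $\mP(\cube)$ is available here.
\end{itemize}
Honest generators can be written down. For example, take the $\Theta$-Segal maps $\N(\bD^{i_0})\coprod_{\N(\bD^{j_1})}\cdots\to\N(\theta)$ formed in $\mP(\cube)$, together with the maps $\colim_{\theta\in\Theta_{/\cube^n}}\N(\theta)\to y(\cube^n)$ coming from density of $\Theta$. But testing these requires knowing $L(y(\cube^k)\otimes\N(\theta))\simeq\colim_{c\in\cube_{/\theta}}\cube^k\boxtimes c$ as an object of $\infty\Cat$.

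That is where the real gap lies. The step you call the main obstacle is the whole content of the statement, and the hom-computation you sketch does not establish it:
\begin{itemize}
\item To compare morphism $\infty$-categories you need formulas for the homs of the colimits involved. These gluings are not along fully faithful functors in general: already $\bD^2\coprod_{\bD^1}\bD^2$ is glued along $\bD^1\to\bD^2$, which is not fully faithful. So \cref{homsus} does not apply. Knowing that the inclusions are monomorphisms (\cref{monochar}) gives no handle on the homs of a pushout.
\item The object $L(y(\cube^k)\otimes\N(\theta))$ you must compare with is defined only as a colimit in $\infty\Cat$. So ``computing $\Mor$ on both sides'' presupposes the answer.
\item $\cube^k\boxtimes S(X)$ is not a suspension, so induction via \cref{suspi} does not reach it.
\item Your claim that the homs of $\cube^{a+b}$ decompose into joins and products of smaller cubes is not accurate as stated. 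Between the extreme vertices of $\cube^3$ the hom is the hexagon-shaped $2$-category on the six edge paths, which is neither.
\end{itemize}
What is missing is a genuine colimit (pasting) theorem in $\infty\Cat$. It would have to identify Gray products of cubes with pasting diagrams as the corresponding gluings of higher cubes, compatibly with the Segal decompositions. That is exactly the nontrivial input the paper takes from the cited sources rather than proving.
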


\begin{notation}
	
Since the Gray tensor product defines a presentably monoidal structure on $\infty\Cat$, it is closed: for every $\infty$-category $\mC$ the functor $$ \mC \boxtimes (-): \infty\Cat \to \infty\Cat$$ admits a right adjoint $\Fun^\lax(\mC,-)$, and the functor $$ (-) \boxtimes \mC : \infty\Cat \to \infty\Cat$$ admits a right adjoint $\Fun^\oplax(\mC,-)$.
\end{notation}

\begin{definition}Let $\F,\G:\mC \to \mD $ be functors of $\infty$-categories.
\begin{enumerate}[\normalfont(1)]\setlength{\itemsep}{-2pt}
\item A lax natural transformation $\F \to \G$ is a morphism in $\Fun^\lax(\mC,\mD)$.

\item An oplax natural transformation $\F \to \G$ is a morphism in $\Fun^\oplax(\mC,\mD)$.
\end{enumerate}
\end{definition}

\begin{remark}\label{funcompa}
	
Let $\mC,\mD$ be $\infty$-categories. The canonical functors $\mC \to *, \mD \to *$
give rise to functors $\mC \boxtimes \mD \to \mC \boxtimes * \simeq \mC$ and $\mC \boxtimes \mD \to * \boxtimes \mD \simeq \mD$ and so to a functor $\mC \boxtimes \mD \to \mC \times \mD.$
By adjointess the latter functor induces functors
$ \Fun(\mC,\mD)\to \Fun^\lax(\mC,\mD)$ and $\Fun(\mC,\mD)\to \Fun^\oplax(\mC,\mD).$
\end{remark}

\begin{remark}\label{lao}\label{grayspace}
If $\mC$ is an $\n$-category and $\mD$ an $\m$-category for $\n,\m \geq 0$,
then $\mC \boxtimes \mD$ is an $\n+\m$-category.
This holds since $\n\Cat$ is closed under small colimits in $\infty\Cat$ and $\n\Cat$ is generated under small colimits by the oriented $\ell$-cubes for $1 \leq \ell \leq \n$.
For $n=m=0$ one finds that the Gray-monoidal structure restricts to the 
full subcategory of spaces $\mS \subset \infty\Cat$.
The restricted Gray-monoidal structure on $\mS$ is the cartesian structure since $\mS$ is generated in $\infty\Cat$ under small colimits by the final category, the tensor unit for the Gray tensor product and the cartesian product.
Moreover the left adjoint $\tau_0: \infty\Cat \to \mS$ of the monoidal embedding $\mS \subset \infty\Cat$ is monoidal since
$\infty\Cat$ is generated by the oriented cubes under small colimits and the image of any oriented cube under $\tau_0$ is contractible.

\end{remark}

The following is \cite[Proposition 3.4.8]{GepnerHeine2026}: 
\begin{proposition}\label{dua}
	
There are canonical monoidal involutions
$$(-)^\op, (-)^\co: (\infty\Cat, \boxtimes) \simeq (\infty\Cat, \boxtimes)^\rev $$
refining the involutions of \cref{ruik}.
\end{proposition}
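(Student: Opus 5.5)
We construct the involutions on the dense full subcategory $\cube$ of oriented cubes and then extend them to all of $\infty\Cat$ using \cref{locmon2}. The first step is to check that the involutions $(-)^\op$ and $(-)^\co$ of \cref{ruik}, which are equivalences of $\infty\Cat$, restrict to the full subcategory $\cube$. Concretely, we show that $(\cube^n)^\op \simeq \cube^n$ and $(\cube^n)^\co \simeq \cube^n$. For this we use the inductive description of the oriented cubes from \cite[Definition 2.6.25]{GepnerHeine2026} as built from iterated suspensions and wedges of disks. By \cref{oppo}, $(-)^\co$ commutes with suspension up to applying $(-)^\op$ on morphism objects, and $(-)^\op$ commutes with suspension up to swapping the two endpoints and applying $(-)^\co$. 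Since the disks $\bD^n$ are self-dual under both operations, it follows inductively that each cube is sent to a cube. On the $1$-truncation $(\bD^1)^{\times n}$ this is the evident reversal of each coordinate.

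The second step is to show that these restricted involutions $\cube \to \cube$ are monoidal for the reversed Gray structure. In other words, we need natural equivalences
\[
(\cube^n\boxtimes\cube^m)^\op\simeq (\cube^m)^\op\boxtimes(\cube^n)^\op ,
\]
together with the analogous equivalences for $(-)^\co$, all compatible with the associativity data. The monoidal structure on $\cube$ of \cite[Definition 2.6.23]{GepnerHeine2026} is defined combinatorially, with $\cube^n\boxtimes\cube^m=\cube^{n+m}$, presumably via a strict model of cubes. Reversing the order of the coordinates, combined with the identification of each $(\cube^1)^\op\simeq\cube^1$, should give a strict monoidal isomorphism $\cube \simeq \cube^{\rev}$ in that model. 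We then check that this isomorphism agrees with the restriction of $(-)^\op$, and similarly for $(-)^\co$.

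\textbf{Main obstacle.} I expect this second step to be the hardest part. It amounts to verifying that the Gray orientation conventions flip correctly: $\op$ reverses $1$-cells, and this should exchange the lax and oplax interchange $2$-cells in exactly the way that reordering the tensor factors does. It also requires the identification to be coherent, and not merely objectwise. The approach I would try first is to work entirely at the level of the strict combinatorial model of cubes used to define the monoidal structure, where these coherences are equalities. From there one transports the result into $\infty\Cat$ via the full faithfulness of the strict cubes in $\cube$.

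With the monoidal equivalences $\cube\simeq\cube^\rev$ in hand, we apply Day convolution. A monoidal equivalence induces a monoidal equivalence
\[
\mP(\cube)\simeq\mP(\cube^\rev)=\mP(\cube)^\rev ,
\]
by left Kan extension, since Day convolution is functorial in monoidal functors. This equivalence preserves the class of Segal-type local maps, because the underlying equivalence of $\infty\Cat$ is $(-)^\op$ (respectively $(-)^\co$) and the cube nerve is compatible with it. Hence, by \cref{locmon2} and \cref{cubedense}, it descends to a monoidal equivalence of the localization $(\infty\Cat,\boxtimes)\simeq(\infty\Cat,\boxtimes)^\rev$. Its underlying functor is the unique colimit-preserving extension of the restriction to $\cube$, so it agrees with the involution of \cref{ruik}. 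Finally, the resulting functor squares to the identity because its underlying functor does, and monoidal structures on an equivalence extending a given one on the dense generator are unique. This shows that we obtain involutions.
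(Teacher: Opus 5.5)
Your route is the natural one here: show that $(-)^\op$ and $(-)^\co$ preserve $\cube$ and are anti-monoidal there using a strict model of the cubes, push this through Day convolution and the monoidal localization of \cref{locmon2}, and use \cref{cubedense} to identify the descended functors with the involutions of \cref{ruik}; the paper gives no proof of its own and imports the statement from \cite[Proposition 3.4.8]{GepnerHeine2026}. One correction: for $n\geq 2$ cubes are not built from disks by suspensions and wedges alone (they are not in $\Theta$), so Step 1 must also go through the strict model, and the identification $(\cube^n)^\op\simeq\cube^n$ reverses each coordinate \emph{and} the order of the coordinates while $(\cube^n)^\co\simeq\cube^n$ only reverses the order, which your Step 2 gets right but your remark on $1$-truncations does not.
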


\begin{corollary}\label{grayhoms}
Let $\mC,\mD $ be $\infty$-categories.
There are canonical equivalences $$ \Fun^\oplax(\mC,\mD)^\op \simeq \Fun^\lax(\mC^\op,\mD^\op), $$$$\Fun^\oplax(\mC,\mD)^\co \simeq \Fun^\lax(\mC^\co,\mD^\co).$$	
\end{corollary}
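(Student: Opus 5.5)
We deduce both equivalences from \cref{dua} by passing to right adjoints. We treat the $(-)^\op$ case; the $(-)^\co$ case is word for word the same, with $(-)^\co$ in place of $(-)^\op$.

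\cref{dua} gives an involution $(-)^\op$ of $\infty\Cat$ which is monoidal as a functor $(\infty\Cat,\boxtimes)\to(\infty\Cat,\boxtimes)^\rev$. Spelled out, this means there are equivalences
$$(\mE\boxtimes\mC)^\op\simeq \mC^\op\boxtimes\mE^\op,$$
natural in $\infty$-categories $\mE,\mC$.

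Fix $\mC,\mD$. For a test $\infty$-category $\mE$ we compute mapping spaces. Using that $(-)^\op$ is an involution, hence an equivalence of categories, we get
$$\Map(\mE,\Fun^\oplax(\mC,\mD)^\op)\simeq\Map(\mE^\op,\Fun^\oplax(\mC,\mD)).$$
By the defining adjunction $(-)\boxtimes\mC\dashv\Fun^\oplax(\mC,-)$, this is equivalent to $\Map(\mE^\op\boxtimes\mC,\mD)$.

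Applying the involution again, this space is $\Map((\mE^\op\boxtimes\mC)^\op,\mD^\op)$. By the monoidality above, $(\mE^\op\boxtimes\mC)^\op\simeq\mC^\op\boxtimes\mE$, so we obtain $\Map(\mC^\op\boxtimes\mE,\mD^\op)$.

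The adjunction $\mC^\op\boxtimes(-)\dashv\Fun^\lax(\mC^\op,-)$ then identifies this with $\Map(\mE,\Fun^\lax(\mC^\op,\mD^\op))$.

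Each step in this chain is natural in $\mE$. The Yoneda lemma therefore gives $\Fun^\oplax(\mC,\mD)^\op\simeq\Fun^\lax(\mC^\op,\mD^\op)$. The equivalence is canonical, and natural in $\mC$ and $\mD$ as well.

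The one step that needs care is the order of the tensor factors. The involution reverses the tensor order, and this reversal is exactly what converts the right-adjoint-to-$(-)\boxtimes\mC$ (oplax) into the right-adjoint-to-$\mC^\op\boxtimes(-)$ (lax). So we must use that \cref{dua} lands in $(\infty\Cat,\boxtimes)^\rev$, and not in $(\infty\Cat,\boxtimes)$ itself.

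The natural equivalences $(\mE\boxtimes\mC)^\op\simeq\mC^\op\boxtimes\mE^\op$ are part of the data of a monoidal functor. Hence no coherence beyond naturality in $\mE$ is required for the Yoneda argument.
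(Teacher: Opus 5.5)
Your proposal is correct and matches the paper's approach. The paper gives no separate proof and presents the statement as an immediate consequence of \cref{dua}. Its content is exactly what you wrote: transport the adjunctions $(-)\boxtimes\mC\dashv\Fun^\oplax(\mC,-)$ and $\mC\boxtimes(-)\dashv\Fun^\lax(\mC,-)$ along the order-reversing monoidal involutions, then apply the Yoneda lemma.
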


Next we define join and slice.

\begin{notation} For every small category $\mC$ that admits an initial object let $ \mP_{\mathrm{red}}(\mC) \subset \mP(\mC) $  
be the full subcategory of presheaves on $\mC$
that are reduced, i.e. send the initial object to the final space.
\end{notation}

\begin{notation}

Let $\bDelta^+\subset\infty\Cat$ denote the full subcategory consisting of the oriented simplices and the initial object. 

\end{notation}

By \cite[Definition 2.6.30]{GepnerHeine2026} the category $\bDelta^+$ carries a monoidal structure, the join,
whose tensor unit is the empty $\infty$-category and such that $$\bDelta^n \star \bDelta^m = \bDelta^{n+m+1}$$ for every $n,m \geq -1.$
The following is \cite[Corollary 3.5.8.]{GepnerHeine2026}:

\begin{theorem}\label{locmon4}

The join monoidal structure descends along the localization $\mP_{\mathrm{red}}(\bDelta^+) \rightleftarrows \infty\Cat. $

\end{theorem}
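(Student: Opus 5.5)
The plan is to mirror the argument for the Gray tensor product in \cref{locmon2}, with cubes replaced by oriented simplices and presheaves by reduced presheaves. Write $L: \mP_{\mathrm{red}}(\bDelta^+) \rightleftarrows \infty\Cat$ for the localization. It exists because $\bDelta \subset \infty\Cat$ is dense by \cref{orientdense}; adjoining the initial object and imposing reducedness only accounts for $\emptyset$ being sent to the empty presheaf condition. The join on $\bDelta^+$ extends by Day convolution to a presentably monoidal structure on $\mP(\bDelta^+)$, which restricts to reduced presheaves; the unit is the representable on $\emptyset=\bDelta^{-1}$.

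By the standard criterion for a convolution monoidal structure to descend along a localization, it suffices to show the following. Let $f$ be a generating local equivalence, i.e.\ one of the Segal-type maps whose inversion cuts out $\infty\Cat$ inside $\mP_{\mathrm{red}}(\bDelta^+)$. Then for every representable $\bDelta^n$ with $n\ge -1$, the maps $f \star \bDelta^n$ and $\bDelta^n \star f$ are again local equivalences. Since convolution commutes with colimits in each variable, this reduces to checking the condition on generators on both sides. By the symmetry $(-)^\op$ of \cref{dua}, which should exchange the two sides of the join, one side suffices.

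The key step is an explicit description of the generating local equivalences. Following the proof of \cref{orientdense} in \cite{GepnerHeine2026}, I would take them to be the spine-type inclusions $\bDelta^{k}\coprod_{\bDelta^0}\bDelta^{n-k} \to \bDelta^n$, together with the oriented-simplex boundary/horn generators encoding the higher cells. I would then compute $(-)\star\bDelta^m$ on each of these. The join of a pushout of simplices along a simplex is the pushout of the joins, since joining with a fixed simplex preserves connected colimits in the presheaf category. The result is a decomposition of $\bDelta^{n+m+1}$ as a union of sub-simplices, and I would need to show this is again a colimit in $\infty\Cat$. The plan is to identify $X\star\bDelta^m$, as a colimit of $\infty\Cat$, with an iterated oriented pushout / suspension-type cone. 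For this I would use the relation $\bDelta^n\star\bDelta^0$ being a cone, expressible through the Gray tensor with $\bD^1$ collapsed at one end, namely $(X\boxtimes\bD^1)\coprod_{X\boxtimes\{1\}}\bD^0$. Because $\boxtimes$ preserves colimits in each variable by \cref{locmon2} and pushouts commute with pushouts, this cone construction preserves colimits of connected diagrams. The general $\star\bDelta^m$ then follows by iterating cones, since $\bDelta^m=\bDelta^{m-1}\star\bDelta^0$ and the join is associative on $\bDelta^+$.

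The main obstacle will be establishing that the cone formula $X\star\bD^0\simeq (X\boxtimes\bD^1)\coprod_{X\boxtimes\{1\}}\bD^0$ agrees with the combinatorial join on oriented simplices, i.e.\ that $\bDelta^{n+1}$ is this Gray cone on $\bDelta^n$. I would prove it by comparing morphism $\infty$-categories via \cref{suspi}. The mapping objects out of the cone point are computed by the Gray cylinder $\bDelta^n\boxtimes\bD^1$, whose mapping objects are known from the cube description in \cref{cubedense}. I would then check this by induction on $n$ against the definition of $\bDelta^{n+1}$ from \cite[Definition 2.6.32]{GepnerHeine2026}.
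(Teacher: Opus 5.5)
The paper gives no argument for this statement; it is imported from \cite[Corollary 3.5.8]{GepnerHeine2026}. So I am assessing your proposal on its own terms, and it has a genuine gap.

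\textbf{The key step fails.} The spine inclusions $\bDelta^{k}\coprod_{\bDelta^0}\bDelta^{n-k}\to\bDelta^n$ are not local equivalences for oriented simplices. Take $\bDelta^1\coprod_{\bDelta^0}\bDelta^1\to\bDelta^2$. Since $1\Cat\subset\infty\Cat$ is closed under colimits, the source is the $1$-category $[2]$. But $\bDelta^2$ has a non-invertible $2$-cell; only its $1$-truncation is $[2]$. (Were the oriented simplices $1$-categories, maps out of them would only detect $\iota_1$, contradicting \cref{orientdense}.)
\begin{itemize}
\item For such $f$, the claim that $f\star\bDelta^m$ is a local equivalence is false, already for $m=-1$ where $f\star\bDelta^{-1}=f$.
\item The unspecified ``boundary/horn generators'' cannot repair this: without markings, a horn such as $\Lambda^2_1$ is again a spine.
\item Nothing quoted in the paper gives an explicit generating set; \cref{orientdense} only gives density.
\end{itemize}
A smaller inaccuracy: on $\mP_{\mathrm{red}}(\bDelta^+)$, convolution does \emph{not} commute with all colimits in each variable. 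The unit $y(\emptyset)$ is the initial reduced presheaf, mirroring $X\star\emptyset\simeq X$. Only weakly contractible colimits are preserved; these are computed in $\mP(\bDelta^+)$.

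\textbf{What would work, and what is still missing.} Your cone idea leads to a correct argument that bypasses generators entirely. Put $C(X):=(X\boxtimes\bD^1)\coprod_{X\boxtimes\{1\}}\bD^0$, and suppose $L(F\star y\bDelta^0)\simeq C(LF)$ naturally in $F\in\mP_{\mathrm{red}}(\bDelta^+)$. Then:
\begin{itemize}
\item For every local equivalence $f$, $f\star y\bDelta^0$ is a local equivalence, because the right side depends only on $LF$.
\item Associativity of convolution together with $y\bDelta^{m-1}\star y\bDelta^0\simeq y\bDelta^m$ gives all $m$.
\item A general $Z$ follows: a reduced presheaf is the colimit of representables over its category of elements, which has the initial object $(\emptyset,*)$ and is therefore weakly contractible.
\item The other side needs $(X\star Y)^\op\simeq Y^\op\star X^\op$ on $\bDelta^+$. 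Note that \cref{dua} concerns $\boxtimes$, and $(-)^\co$ would turn the join into the antijoin.
\end{itemize}
Both $F\mapsto L(F\star y\bDelta^0)$ and $C\circ L$ preserve weakly contractible colimits. So everything reduces to an equivalence $\bDelta^{n+1}\simeq C(\bDelta^n)$ of functors $\bDelta^+\to\infty\Cat$, natural in all maps of $\bDelta^+$.

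That natural equivalence is the real content, and your proposal does not supply it.
\begin{itemize}
\item \cref{suspi} describes suspensions, which have two objects. It says nothing about the hom-$\infty$-categories of the pushout $C(\bDelta^n)$.
\item An induction on $n$ yields at best objectwise equivalences, not a coherent natural transformation. You need a comparison map coming from a universal property, or from the actual definition of the join on $\bDelta^+$ in \cite[Definition 2.6.30]{GepnerHeine2026}.
\item You also cannot borrow the cone formula from the paper's identification of $\mC_{//^\oplax X}$ with the target fibre of $\Fun^\oplax(\bD^1,\mC)$. That identification does match your orientation, but the oplax slice is defined as a right adjoint of the join on $\infty\Cat$, so it presupposes the statement being proved.
\end{itemize}
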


We also define the antijoin:

\begin{definition}
Let $X, Y \in \infty\Cat.$ The antijoin of $X, Y$ is
$$X \bar{\star} Y := (X^{\co} \star Y^{\co})^{\co} .$$

\end{definition}

\begin{definition}Let $X$ be a small $\infty$-category.
\begin{enumerate}[\normalfont(1)]\setlength{\itemsep}{-2pt}
\item The oplax slice, or oplax over $\infty$-category, functor is the right adjoint $$ \infty\Cat_{X/ } \to \infty\Cat, \qquad (F:X \to Y) \mapsto Y_{//^\oplax F}$$ of the functor $ (-) \star X: \infty\Cat \to \infty\Cat_{X/ }.$

\item The lax coslice, or lax under $\infty$-category, functor is the right adjoint $$ \infty\Cat_{X/ } \to \infty\Cat,\qquad (F:X \to Y) \mapsto Y_{F//^\lax }$$ of the functor $ X \star (-): \infty\Cat \to \infty\Cat_{\X / }.$

\item The lax slice, or lax over $\infty$-category, functor is the right adjoint $$ \infty\Cat_{\X / } \to \infty\Cat,\qquad (F:X \to Y) \mapsto Y_{//^\lax F} := (Y^\co_{//^\oplax F^\co})^\co $$
of the functor $ (-) \bar{\star} X: \infty\Cat \to \infty\Cat_{X/ }.$

\item The oplax coslice, or oplax under $\infty$-category, functor is the right adjoint $$ \infty\Cat_{X/ } \to \infty\Cat,\qquad (F:X \to Y) \mapsto Y_{F//^\oplax}:=(Y^\co_{F^\co//^\lax })^\co $$
of the functor $ X \bar{\star} (-): \infty\Cat \to \infty\Cat_{\X / }.$

\end{enumerate}

\end{definition}

\begin{notation}Let $F: X \to Y $ be a functor.
If unspecified, $Y_{//F}$ will refer to the oplax slice, and $Y_{F//}$ will refer to the oplax coslice.
\end{notation}

The following is \cite[Lemma 3.7.13]{GepnerHeine2026}:

\begin{lemma}Let $F: X \to Y $ be a functor.
There is a canonical equivalence of $\infty$-categories
$$ (Y_{//^\oplax F})^\op \simeq (Y^\op)_{F^\op//^\lax }.$$
\end{lemma}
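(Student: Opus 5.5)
The plan is to argue at the level of universal properties, using the adjunction defining the oplax slice and the relation between the join and the antijoin under the involutions $(-)^\op$ and $(-)^\co$. First I will record how the join interacts with $(-)^\op$. The natural candidate is
\[
(A\star B)^\op \simeq B^\op \,\bar\star\, A^\op,
\]
or equivalently $(A\star B)^\op\simeq (B^{\op\co}\star A^{\op\co})^\co$. To prove this it suffices, by \cref{locmon4}, to check it on the generators $\bDelta^+$, compatibly with the monoidal structure. For oriented simplices this amounts to identifying $(\bDelta^n)^\op$ with $\bDelta^n$ under the order-reversing relabelling, up to the appropriate $\co$-twist. Both $(-)^\op$ and $(-)^\star$ are colimit-preserving in each variable (the join on reduced presheaves), so the identity extends from $\bDelta^+$ to all of $\infty\Cat$.

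With that in hand, the lemma follows by Yoneda. For any $\infty$-category $Z$, apply the involution $(-)^\op$ from \cref{ruik}, then the adjunction defining the oplax slice, then the join identity, and finally the adjunction defining the lax coslice:
\[
\Map(Z,(Y_{//^\oplax F})^\op)\simeq \Map(Z^\op, Y_{//^\oplax F})\simeq \Map_{X/}(Z^\op\star X, Y)\simeq \Map_{X^\op/}\big((Z^\op\star X)^\op, Y^\op\big).
\]
If the identity reads $(Z^\op\star X)^\op\simeq X^\op\star Z$, the last space is $\Map_{X^\op/}(X^\op\star Z, Y^\op)\simeq \Map(Z,(Y^\op)_{F^\op//^\lax})$. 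All of these equivalences are natural in $Z$, so the lemma follows.

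The main obstacle is pinning down the exact form of the join identity, and in particular whether $\op$ alone turns the join into the reversed join or into the reversed antijoin. The statement forces the former, $(A\star B)^\op\simeq B^\op\star A^\op$. So I will verify on oriented simplices that $(\bDelta^n)^\op\simeq\bDelta^n$ via $i\mapsto n-i$, and that this identification is compatible with $\bDelta^n\star\bDelta^m=\bDelta^{n+m+1}$ as a monoidal equivalence $(\bDelta^+,\star)^{\op\text{-twisted}}\simeq(\bDelta^+,\star)^\rev$. This check depends on the orientation conventions for oriented simplices from \cite{GepnerHeine2026}. It is exactly the point where $\op$ and $\co$ behave differently, since $\co$ would produce the antijoin, as built into the definition of $\bar\star$. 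Once this is verified, extending along the localization of \cref{locmon4} and using that $(-)^\op$ is an equivalence gives the identity on all of $\infty\Cat$.
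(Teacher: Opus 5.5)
Your Yoneda chain is correct. With the paper's definitions (oplax slice right adjoint to $(-)\star X$, lax coslice right adjoint to $X\star(-)$, and $(-)^\op$ a covariant automorphism of $\infty\Cat$), the lemma reduces to an equivalence $(A\star B)^\op\simeq B^\op\star A^\op$, natural in both variables and compatible with the inclusions of $B^\op$. This is presumably also how the cited \cite[Lemma 3.7.13]{GepnerHeine2026} is obtained.

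You also end on the right identity. The antijoin version in your first paragraph is false already for $A=\bDelta^1$, $B=\bDelta^0$. Reversing the $1$-cells of $\bDelta^2$ and relabelling $i\mapsto 2-i$ returns the $2$-cell in its original direction, so $(\bDelta^2)^\op\simeq\bDelta^2$. By contrast, $\bDelta^0\bar\star\bDelta^1=(\bDelta^2)^\co$ has its $2$-cell reversed.

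The gap is that the proof of the join identity, which carries all of the content, is not in place.

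First, the extension step fails as written. The join $\star$ does not preserve colimits in either variable, neither on $\infty\Cat$ nor on reduced presheaves: the unit $\emptyset$ is initial, $\emptyset\star B\simeq B$, and $(A_1\sqcup A_2)\star B\simeq (A_1\star B)\sqcup_B(A_2\star B)$. What is true is that $(-)\star B$ is a left adjoint into $\infty\Cat_{B/}$, so $\star$ preserves weakly contractible colimits in each variable. You can then write every $A$ as the colimit of the canonical diagram on $\bDelta^+_{/A}$. This category is weakly contractible since $\emptyset\to A$ is initial, so both bifunctors are left Kan extended from $\bDelta^+\times\bDelta^+$.

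Second, and more seriously, the check on generators is not just $(\bDelta^n)^\op\simeq\bDelta^n$ plus bookkeeping of labels. The category $\bDelta^+$ is not $\Delta_+$, because functors between oriented simplices need not be simplicial operators. For instance, a functor $\bDelta^1\to\bDelta^2$ may pick the composite $0\to1\to2$ instead of the edge $0\to2$. You need a natural equivalence $(\theta\star\theta')^\op\simeq\theta'^\op\star\theta^\op$ on $\bDelta^+\times\bDelta^+$, unital with respect to $\emptyset$; associativity coherence is not needed for the lemma. To get it, you must know how the join of \cite[Definition 2.6.30]{GepnerHeine2026} acts on all such functors. In other words, you must actually prove the join analogue of \cref{dua}: that $(-)^\op$ restricts to an anti-monoidal autoequivalence of $(\bDelta^+,\star)$.

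Your proposal defers exactly this step. As it stands, it reduces the lemma to an unproved duality rather than proving it.
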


In the following we apply the theory of bienriched $\infty$-categories to the Gray monoidal structure on $\infty\Cat$ \cite{oriented}, \cite{heine2026stable}.
See \cite{heine2024bienriched} for a detailed discussion of bienriched category theory.

\begin{definition}
\begin{enumerate}[\normalfont(1)]\setlength{\itemsep}{-2pt}
\item An oriented category is a category right enriched in $(\infty\Cat,\boxtimes).$

\item An oriented functor is a functor right enriched in $(\infty\Cat,\boxtimes).$

\item An antioriented category is a category left enriched in $(\infty\Cat,\boxtimes).$

\item An antioriented functor is a functor left enriched in $(\infty\Cat,\boxtimes).$

\item A bioriented category is a category bienriched in  $((\infty\Cat,\boxtimes), (\infty\Cat,\boxtimes))$.

\item A bioriented functor is a functor bienriched in  $((\infty\Cat,\boxtimes), (\infty\Cat,\boxtimes))$.

\end{enumerate}

\end{definition}

\begin{notation}
\begin{enumerate}[\normalfont(1)]\setlength{\itemsep}{-2pt}
\item Let $\mC$ be an oriented category and $\X,\Y \in \mC.$	
By the structure of a right $(\infty\Cat,\boxtimes)$-enriched category there is a right morphism $\infty$-category $\R\Mor_\mC(\X,\Y)$.

\item Let $\mC$ be an antioriented category and $\X,\Y \in \mC.$	
By the structure of a left $(\infty\Cat,\boxtimes)$-enriched category there
is a left morphism $\infty$-category $\L\Mor_\mC(\X,\Y)$.

\item Let $\mC$ be a bioriented category and $\X,\Y \in \mC.$	
By the structure of a $(\infty\Cat,\boxtimes)$-bienriched category there
is a morphism $\infty$-category $\Mor_\mC(\X,\Y) \in \infty\Cat \otimes \infty\Cat.$

\end{enumerate}
	
\end{notation}

\begin{example}
The Gray monoidal structure on $\infty\Cat$ is closed and so endows $\infty\Cat$ as bienriched in $(\infty\Cat,\boxtimes).$ This way we see
$\infty\Cat$ as a large bioriented category, which we denote by $ \infty\fcat.$
Every full subcategory of $\infty\Cat$ inherits the structure of a bioriented category.

\end{example}	

\begin{definition}
We refer to morphims in $\boxtimes\Cat$ as {\em antioriented functors}, to morphims in $\Cat\boxtimes$ as {\em oriented functors}, and to morphims in $\boxtimes\Cat\boxtimes$ as {\em bioriented functors}.
\end{definition}

\begin{notation}

Let $$\Cat\boxtimes,\qquad \boxtimes\Cat,\qquad \boxtimes\Cat\boxtimes$$
denote the categories of oriented categories, antioriented categories and bioriented categories, respectively.
\end{notation}

\begin{remark}
An oriented, antioriented, or bioriented category is {\em presentable} if the respective left, right, or bienriched category is presentable in the sense of enriched $\infty$-category theory, which is likewise a presentable category endowed with a closed left action, closed right action, or closed biaction of the enriching monoidal category or categories, respectively.
\end{remark}

\begin{notation}
\begin{enumerate}[\normalfont(1)]\setlength{\itemsep}{-2pt}
\item Let $\mC,\mD \in \boxtimes\Cat$. We write $${\boxtimes\Fun}(\mC,\mD)$$ for the category of antioriented functors $\mC \to \mD.$	
		
\item Let $\mC,\mD \in \Cat\boxtimes$. We write $${\Fun\boxtimes}(\mC,\mD)$$ for the category of oriented functors $\mC \to \mD.$	
		
\item Let $\mC,\mD \in \boxtimes\Cat\boxtimes $. We write $${\boxtimes\Fun\boxtimes}(\mC,\mD)$$ for the category of bioriented functors $\mC \to \mD.$		

\end{enumerate}
\end{notation}

\begin{remark}
We refer to adjunctions of oriented, antioriented, and bioriented categories as oriented, antioriented, or bioriented adjunctions.
\end{remark}

\begin{remark}\label{adj2}
\begin{enumerate}[\normalfont(1)]\setlength{\itemsep}{-2pt}
\item An antioriented (oriented) functor $\mC \to \mD$ admits a right adjoint if and only if it preserves left (right) tensors and the underlying functor admits a right adjoint.

\item A bioriented functor $\mC \to \mD$ admits a right adjoint if and only if it preserves left and right tensors and the underlying functor admits a right adjoint.

\item An antioriented (oriented) functor $\mC \to \mD$ admits a left adjoint if and only if it preserves left (right) cotensors and the underlying functor admits a left adjoint.
	
\item A bioriented functor $\mC \to \mD$ admits a left adjoint if and only if it preserves left and right cotensors and the underlying functor admits a left adjoint.

\end{enumerate}

\end{remark}

Next we define the appropriate notions of opposite oriented category.

\begin{definition}
    
Let
\begin{align*}
&(-)^\circ: {\Cat\boxtimes} \simeq {\boxtimes\Cat}\\
&(-)^\circ: {\boxtimes\Cat} \simeq {\Cat\boxtimes}\\
&(-)^\circ:  {\boxtimes\Cat\boxtimes} \simeq {\boxtimes\Cat\boxtimes}
\end{align*}
be the opposite enriched category involutions.	
	
\end{definition}

\begin{definition}
We define the following involutions, which reverse the even dimensional cells.
Note that we are also forced to reverse the orientation.
The equivalences of \cref{dua} give rise to the equivalences
\begin{align*}
&(-)^\co:= (-)^\op_!: {\boxtimes\Cat} \simeq {\Cat\boxtimes}\\
&(-)^\co:= (-)^\op_!: {\Cat\boxtimes} \simeq {\boxtimes\Cat}\\
&(-)^\co:= ((-)^\op, (-)^\op)_!: {\boxtimes\Cat\boxtimes} \simeq {\boxtimes\Cat\boxtimes}.
\end{align*}
\end{definition}

\begin{definition}
We define the following involutions, which reverse the odd dimensional cells.
Note that we are also forced to keep the orientation.
\begin{align*}
&(-)^\op:= (-)^\circ\circ (-)^\co_!: {\boxtimes\Cat} \simeq {\boxtimes\Cat}\\
&(-)^\op:= (-)^\circ\circ (-)^\co_!: {\Cat\boxtimes} \simeq {\Cat\boxtimes}\\
&(-)^\op :=(-)^\circ \circ ((-)^\co, (-)^\co)_!: {\boxtimes\Cat\boxtimes} \simeq {\boxtimes\Cat\boxtimes}
\end{align*}
\end{definition}

\begin{definition}
Combining the latter two types of involutions gives rise to the following sort of involution, which reverses all cells:
\begin{align*}
&{(-)^{\co\op}}:= {(-)^{\co}} \circ {(-)^{\op}} \simeq {(-)^{\op}} \circ{(-)^{\co}}:{\boxtimes\Cat} \simeq {\Cat\boxtimes}\\
&{(-)^{\co\op}}:= {(-)^{\co}} \circ {(-)^{\op}} \simeq {(-)^{\op}} \circ {(-)^{\co}}:{\Cat\boxtimes} \simeq {\boxtimes\Cat}\\
&{(-)^{\co\op}}:= {(-)^{\co}} \circ {(-)^{\op}} \simeq {(-)^{\op}} \circ {(-)^{\co}}: {\boxtimes\Cat\boxtimes} \simeq {\boxtimes\Cat\boxtimes},
\end{align*}
where all equivalences are involutions.
\end{definition}

Next we define oriented pushouts and oriented pullbacks in oriented categories and the dual notions of antioriented pushouts and antioriented pullbacks in antioriented categories.

\begin{definition}
Let $\mC$ be an oriented category.
\begin{enumerate}[\normalfont(1)]\setlength{\itemsep}{-2pt}
\item 
The oriented pullback of the diagram $X \to Z\leftarrow Y$ in $\mC$ is a diagram
\[
\xymatrix{& W\ar[rd]\ar[ld] &\\
X\ar[rd] & \Longrightarrow & Y\ar[ld]\\
& Z &}
\]
in $\mC$ such that for all objects $T$ of $\mC$ the induced functor
\[
\R\Mor_{\mC}(T,W)\to \R\Mor_{\mC}(T,X) \underset{\R\Mor_{\mC}(T,Z)}{\times}
\Fun^\oplax(\bD^1,\R\Mor_{\mC}(T,Z)) \underset{\R\Mor_{\mC}(T,Z)}{\times} \R\Mor_{\mC}(T,Y)
\]
is an equivalence.
In this case, we will write $X\underset{Z}{\vec{\times}} Y$ or $ Y\underset{Z}{{\cev\times}} X$ for $W.$

\item 
The oriented pushout of the oriented diagram $X \leftarrow Z\to Y$ in $\mC$ is the oriented pullback of the corresponding diagram in the oriented category $\mC^\op,$ which we denote by $ X\underset{Z}{{\vec{+}}} Y $ or $Y\underset{Z}{\cev{+}} X$.

\end{enumerate}

\end{definition}

\begin{remark}
An oriented pullback square in an oriented category $\mC$ is a diagram $\cube^2\to\mC$ which satisfies the universal property above.
\end{remark}

\begin{definition}
Let $\mC$ be an antioriented category.
\begin{enumerate}[\normalfont(1)]\setlength{\itemsep}{-2pt}
\item 
The antioriented pullback of the diagram $X \to Z\leftarrow Y$ in $\mC$ is the oriented pushout of the corresponding diagram in the oriented category $\mC^\circ$, which we denote by $X\underset{Z}{\bar{\vec{\times}}} Y$ or $ Y\underset{Z}{{\bar{\cev\times}}} X$.

\item 
The antioriented pushout of the diagram $X \leftarrow Z\to Y$ in $\mC$
is the oriented pullback of the corresponding diagram in the oriented category $\mC^\circ$, which we denote by $ X\underset{Z}{{\bar{\vec{+}}}} Y $ or $Y\underset{Z}{\bar{\cev{+}}} X.$

\end{enumerate}

\end{definition}

\begin{definition}Let $\mC$ be a bioriented category.
\begin{enumerate}[\normalfont(1)]\setlength{\itemsep}{-2pt}

\item The (anti)oriented pullback of a diagram $A \to C\leftarrow B$ in $\mC$ is the (anti)oriented pullback in the underlying (anti)oriented category of $\mC$.

\item The (anti)oriented pushout of a diagram $A \leftarrow C \to B$ in $\mC$ is the (anti)oriented pushout in the underlying (anti)oriented category of $\mC$.

\end{enumerate}

\end{definition}

\begin{remark}
Note that the oriented pullback is not symmetric since it makes use of a specified ordering of the sources of the maps $A\to C$ and $B\to C$.
\end{remark}

The following is \cite[Lemma 3.8.6.]{oriented}:

\begin{lemma}\label{0desc}\label{adesc}
Let $\mC$ be an oriented category.
\begin{enumerate}[\normalfont(1)]\setlength{\itemsep}{-2pt}
\item Let $A \leftarrow C \to \B$ be morphisms in $\mC$. If $\mC$ admits pushouts and right tensors with $\bD^1$, 
there is a canonical equivalence $$\A\,\underset{C}{\vec{+}}\, \B \simeq \A\!\!\underset{C \otimes \{0\}}{+} \!\!(\C \ot \bD^1) \!\!\underset{C \otimes \{1\}}{+}\!\!\B.$$	

\item Let $A \to C\leftarrow B$ be morphisms in $\mC$. If $\mC$ admits pullbacks and right cotensors with $\bD^1$, 
there is a canonical equivalence $${\A \,\underset{\C}{\vec{\times}}}\, \B \simeq \A \!\!\underset{\C^{\{0\}}}{\times} \!\!{\C^{\bD^1}}\!\!\underset{{\C^{\{1\}}}}{\times}  \!\!\B.$$	
\end{enumerate}

\end{lemma}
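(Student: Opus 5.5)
For (1), I would write out the universal property of an oriented pushout as the oriented pullback in the opposite oriented category $\mC^\op$, and then match it object-wise against the right-hand side. The aim is to show that for every $T\in\mC$ the canonical comparison functor
\[
\R\Mor_\mC\bigl(\A\!\!\underset{C \otimes \{0\}}{+} \!\!(\C \ot \bD^1) \!\!\underset{C \otimes \{1\}}{+}\!\!\B,\,T\bigr)\to \R\Mor_\mC(\A,T)\underset{\R\Mor_\mC(C,T)}{\times}\Fun^\oplax(\bD^1,\R\Mor_\mC(C,T))\underset{\R\Mor_\mC(C,T)}{\times}\R\Mor_\mC(\B,T)
\]
is an equivalence, possibly with the lax/oplax variant and the order of factors swapped, as dictated by how $(-)^\op$ acts on $\mC$.

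The first step is to use that $\R\Mor_\mC(-,T)$ carries conical pushouts in $\mC$ to pullbacks in $\infty\Cat$. This is the defining property of conical colimits in an enriched category. Applied twice, the left-hand side becomes
\[
\R\Mor_\mC(\A,T)\times_{\R\Mor_\mC(C\ot\{0\},T)}\R\Mor_\mC(C\ot\bD^1,T)\times_{\R\Mor_\mC(C\ot\{1\},T)}\R\Mor_\mC(\B,T).
\]

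The second step uses the defining property of the right tensor: $\R\Mor_\mC(C\ot\bD^1,T)\simeq \Fun^{?}(\bD^1,\R\Mor_\mC(C,T))$, the internal hom in $(\infty\Cat,\boxtimes)$ adjoint to tensoring on the appropriate side. This identification is natural in $\bD^1$, so restricting along $\{0\},\{1\}\subset \bD^1$ gives evaluation at the endpoints, and we recover $\R\Mor_\mC(C\ot\{i\},T)\simeq \R\Mor_\mC(C,T)$ because $\{i\}$ is the tensor unit. Substituting identifies the two sides.

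Part (2) is the formally dual statement. It follows either by running the same argument with cotensors and $\R\Mor_\mC(T,-)$, which preserves pullbacks and sends $\C^{\bD^1}$ to the internal hom of $\bD^1$ into $\R\Mor_\mC(T,C)$, or directly from the definition of oriented pullback. The second route is essentially immediate, since the definition is already phrased via $\Fun^\oplax(\bD^1,\R\Mor_\mC(T,Z))$.

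The main obstacle is bookkeeping the orientation. For a right-enriched category, the tensor $C\ot\bD^1$ corepresents a specific one of $\Fun^\lax(\bD^1,-)$ and $\Fun^\oplax(\bD^1,-)$, and passing to $\mC^\op$ (which also applies $(-)^\co$ to hom-objects by \cref{dua}, \cref{grayhoms}) may swap lax with oplax and the endpoints $0$ and $1$. I would check this against the convention that the oriented pushout in $\mC$ is the oriented pullback in $\mC^\op$. I would verify it on the basic example $\mC=\infty\fcat$ with $\A=\B=\bD^0$, where the formula should recover the suspension $S(C)$. Any mismatch would be absorbed by relabelling $\{0\}$ and $\{1\}$, which is exactly what the stated formula's placement of $\A$ at $C\otimes\{0\}$ encodes.
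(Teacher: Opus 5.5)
Your proposal is correct. Note that the paper gives no proof of its own here: it quotes the lemma from \cite[Lemma 3.8.6]{oriented}. Your argument is the standard unwinding of the definitions, reading ``pushouts'' and ``pullbacks'' as conical ones, as you do.

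The orientation check you leave open comes out with no adjustment needed. Since $\R\Mor_{\mC^\op}(X,Y)\simeq \R\Mor_\mC(Y,X)^\co$, and by \cref{grayhoms} $\Fun^\oplax(\bD^1,Z^\co)^\co\simeq \Fun^\lax(\bD^1,Z)$ compatibly with evaluation at $0$ and $1$ (because $(\bD^1)^\co=\bD^1$), the oriented pushout is characterized by
\[
\R\Mor_\mC\bigl(A\,\underset{C}{\vec{+}}\,B,T\bigr)\simeq \R\Mor_\mC(A,T)\underset{\R\Mor_\mC(C,T)}{\times}\Fun^\lax(\bD^1,\R\Mor_\mC(C,T))\underset{\R\Mor_\mC(C,T)}{\times}\R\Mor_\mC(B,T),
\]
with $A$ over the source and $B$ over the target.

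The right tensor is a tensor for $(\infty\Cat,\boxtimes)^\rev$, whose internal hom is $\Fun^\lax$. So it satisfies exactly $\R\Mor_\mC(C\otimes\bD^1,T)\simeq \Fun^\lax(\bD^1,\R\Mor_\mC(C,T))$. Dually, the right cotensor gives $\R\Mor_\mC(T,C^{\bD^1})\simeq\Fun^\oplax(\bD^1,\R\Mor_\mC(T,C))$, which matches the definition used in (2). Equivalently, $C\otimes\bD^1$ in $\mC$ is the right cotensor $C^{\bD^1}$ in $\mC^\op$, so (1) is literally (2) applied to $\mC^\op$.

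You should drop the fallback that a mismatch ``would be absorbed by relabelling $\{0\}$ and $\{1\}$''. Exchanging the endpoints reverses the direction of the cocone's $2$-cell, turning the problem into $B\,\underset{C}{\vec{+}}\,A$. It does not turn $\Fun^\lax(\bD^1,-)$ into $\Fun^\oplax(\bD^1,-)$, so it could not repair a lax/oplax discrepancy. Fortunately, none arises.
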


Dually, we obtain the following:

\begin{corollary}\label{bdesc}
Let $\mC$ be an antioriented category.
\begin{enumerate}[\normalfont(1)]\setlength{\itemsep}{-2pt}
\item 
Let $A \leftarrow C \to \B$ be morphisms in $\mC$. If $\mC$ admits pushouts and left tensors with $\bD^1$, 
there is a canonical equivalence $$\A\,\underset{C}{\bar{\vec{+}}}\, \B \simeq \A \underset{\{0\}\otimes\C}{+} ( \bD^1\ot \C) \underset{\{1\}\otimes\C}{+} \B.$$	

\item Let $A \to C\leftarrow B$ be morphisms in $\mC$. If $\mC$ admits pullbacks and left cotensors with $\bD^1$, 
there is a canonical equivalence $$\A\underset{C}{\bar{\vec{\times}}} \B \simeq \A \underset{^{\{0\}}\C}{\times}{^{\bD^1}\C} \underset{^{\{1\}}\C}{\times} \B.$$	
\end{enumerate}

\end{corollary}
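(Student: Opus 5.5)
The plan is to deduce both statements from \cref{0desc} by passing to the opposite enriched category $\mC^\circ$. By the definition of the opposite-category involutions, $(-)^\circ$ carries the antioriented category $\mC$ (left enriched in $(\infty\Cat,\boxtimes)$) to an oriented category $\mC^\circ$ (right enriched). The underlying category is $\iota(\mC)^\op$, and the morphism objects are $\R\Mor_{\mC^\circ}(X,Y)\simeq \L\Mor_\mC(Y,X)$.

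First I would record how the relevant structures transfer under $(-)^\circ$.
\begin{itemize}
\item Pushouts in $\mC$ are pullbacks in $\mC^\circ$, and pullbacks in $\mC$ are pushouts in $\mC^\circ$. Here I mean conical (co)limits, which are defined through the morphism objects, so they dualize directly.
\item A left tensor $V\ot X$ in $\mC$ is characterized by $\L\Mor_\mC(V\ot X,Y)\simeq \Mor(V,\L\Mor_\mC(X,Y))$. This is exactly the universal property of the right cotensor of $V$ and $X$ in $\mC^\circ$, by the definition of cotensor as the tensor in the opposite enriched category.
\item By the same argument, a left cotensor ${^V}X$ in $\mC$ is a right tensor in $\mC^\circ$.
\end{itemize}
The hypotheses therefore translate as needed. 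If $\mC$ has pushouts and left tensors with $\bD^1$, then $\mC^\circ$ has pullbacks and right cotensors with $\bD^1$. Likewise, pullbacks and left cotensors in $\mC$ give pushouts and right tensors in $\mC^\circ$.

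For part (1), the antioriented pushout of $A\leftarrow C\to B$ in $\mC$ is by definition the oriented pullback of $A\to C\leftarrow B$ in $\mC^\circ$. Applying \cref{0desc}(2) in $\mC^\circ$ gives
\[
A\underset{C^{\{0\}}}{\times} C^{\bD^1}\underset{C^{\{1\}}}{\times} B \quad\text{in } \mC^\circ .
\]
Translating back to $\mC$, the right cotensors $C^{\bD^1}$ and $C^{\{i\}}$ become the left tensors $\bD^1\ot C$ and $\{i\}\ot C$, and the iterated pullback becomes the iterated pushout. This yields the formula in (1). Part (2) is the same argument applied to \cref{0desc}(1): the antioriented pullback is the oriented pushout in $\mC^\circ$, and right tensors there are left cotensors in $\mC$.

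The main point to check is that the maps $\{0\},\{1\}\to\bD^1$, and hence the labelling of which end attaches to $A$ and which to $B$, are preserved under the dualization. Since $(-)^\circ$ does not apply $(-)^\op$ or $(-)^\co$ to the enriching category $\infty\Cat$, the object $\bD^1$ and its endpoints are unchanged. So the endpoint labels carry over directly, and the result is canonical because each step is an identification of universal properties.
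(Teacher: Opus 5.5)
Your proposal is correct and is exactly the argument the paper intends: the paper introduces this corollary with ``Dually'' after \cref{0desc}, and the proof is to apply \cref{0desc} in the oriented category $\mC^\circ$. This works because antioriented pushouts and pullbacks in $\mC$ are by definition oriented pullbacks and pushouts in $\mC^\circ$, and because right cotensors and right tensors in $\mC^\circ$ are precisely left tensors and left cotensors in $\mC$. Your check that $(-)^\circ$ leaves $\bD^1$ and its endpoint inclusions unchanged, so that $A$ and $B$ stay attached to $\{0\}$ and $\{1\}$ respectively, is the one detail worth making explicit, and you handle it correctly.
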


We have the following pasting law, which is \cite[Lemma 3.8.11.]{oriented}:

\begin{lemma}\label{pasting}

Consider the following diagram in any oriented category $\mC$, where the left hand square is a commutative square:
\[
\begin{tikzcd}
\Q \ar{d} \ar{r} & \P \ar{r}{} \ar{d}[swap]{} & \B  \ar{d}{} \\
\E \ar{r} & \A \ar[double]{ur}{}  \ar{r}[swap]{} & \C
\end{tikzcd}
\]
If the right hand square is an oriented pullback square, the left hand square is a pullback square if and only if the outer square is an oriented pullback square.

\end{lemma}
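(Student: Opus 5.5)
The plan is to reduce everything to a statement about right morphism $\infty$-categories and then apply the ordinary pasting law for pullbacks in $\infty\Cat$. Fix an object $T$ of $\mC$ and write $M(-)=\R\Mor_\mC(T,-)$. The right square being an oriented pullback means that the canonical functor
$$M(P)\to M(A)\underset{M(C)}{\times}\Fun^\oplax(\bD^1,M(C))\underset{M(C)}{\times}M(B)$$
is an equivalence. Write $O$ for the target of this functor. The left square is a pullback square exactly when $M(Q)\to M(E)\times_{M(A)}M(P)$ is an equivalence for every $T$, since pullbacks in an enriched category are detected representably. The outer square is an oriented pullback when
$$M(Q)\to M(E)\underset{M(C)}{\times}\Fun^\oplax(\bD^1,M(C))\underset{M(C)}{\times}M(B)$$
is an equivalence for every $T$. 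Here the structure $2$-cell of the outer square is the whiskering of the $2$-cell of the right square along $E\to A$.

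The key step is to show that the canonical map from $M(Q)$ to this last target factors as
$$M(Q)\to M(E)\times_{M(A)}M(P)\to M(E)\times_{M(A)}O,$$
and that $M(E)\times_{M(A)}O$ is canonically identified with the outer target. The identification is formal. The functor $O\to M(A)$ is the projection to the first factor, so the pullback $M(E)\times_{M(A)}O$ is obtained by replacing the first factor $M(A)$ with $M(E)$. The structure map of $M(E)$ to $M(C)$ is the composite through $M(A)$, so the iterated fibre products reassociate.

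The factorization should follow from functoriality of the oriented comparison map in the given diagram $\cube^2\to\mC$. Precisely, precomposing the right square with the commutative square on the left, viewed as a degenerate oriented square, yields the outer square. The induced comparison functors are therefore compatible with the projections to $M(E)$ and to $M(P)$.

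Given the factorization, the second functor is the base change of the equivalence $M(P)\simeq O$ along $M(E)\to M(A)$, so it is an equivalence. By two-out-of-three, the first functor is an equivalence if and only if the composite is. Since this holds for all $T$, the left square is a pullback if and only if the outer square is an oriented pullback.

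I expect the main obstacle to be making the coherence in the factorization precise. One has to check that the whiskered $2$-cell of the outer square really corresponds, under the identification, to the $2$-cell data carried through $O$. I would handle this by first proving a naturality statement: the oriented comparison functor is natural in the diagram in the functor category $\Fun(\cube^2,\mC)$. That statement is formal once the comparison is written as restriction along the enriched functor $\R\Mor_\mC(T,-)$ applied to the cotensor-type description of $\Fun^\oplax(\bD^1,-)$.
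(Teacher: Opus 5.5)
Your proof is correct and is the expected argument; the paper does not reprove this lemma but quotes it from its companion work on oriented categories. Apply $\R\Mor_\mC(T,-)$ and observe that the left square, together with the identities of $B$ and $C$, is a morphism from the outer oriented square to the right one, so naturality of the comparison functor gives exactly your factorization through the base change $M(E)\times_{M(A)}O$, after which the ordinary pasting law (two-out-of-three) in $\infty\Cat$ finishes. The only slip is that the outer $2$-cell is the right-hand $2$-cell whiskered along $Q\to P$, with its source identified with $Q\to E\to A\to C$ via the left square, not whiskered along $E\to A$.
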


The following is \cite[Proposition 3.8.12.]{oriented}:

\begin{proposition}\label{homs} 
Let $\F: \mA \to \mC,\G: \mB \to \mC$ be functors and $\A,\A'\in \mA, B,B' \in \mB$ and $\sigma: \F(\A)\to \G(B), \sigma': \F(\A')\to \G(B')$ morphisms. There is a canonical equivalence $$\Mor_{{\mA \,\underset{\mC}{\vec{\times}}}\, \mB}((\A,B, \sigma),(\A',B', \sigma')) \simeq {\Mor_\mB(B,B') \,\underset{\Mor_\mC(\F(\A),\G(B'))}{\vec{\times}}}\, \Mor_{\mA}(\A,\A').$$

\end{proposition}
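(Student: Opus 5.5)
Here $\mA\,\vec{\times}_{\mC}\,\mB$ is the oriented pullback in $\infty\fcat$, with morphisms $\Mor$ taken in the underlying $\infty$-category. The plan is to reduce to a computation of mapping $\infty$-categories in a strict pullback, using \cref{0desc}(2). The oriented category $\infty\fcat$ admits pullbacks and right cotensors with $\bD^1$. The right cotensor of $\mC$ with $\bD^1$ is $\Fun^\oplax(\bD^1,\mC)$. Hence
\[
\mA\,\vec{\times}_{\mC}\,\mB \simeq \mA \times_{\mC} \Fun^\oplax(\bD^1,\mC) \times_{\mC} \mB .
\]
By \cref{homfil}, the functor $\Mor$ preserves limits. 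So the morphism $\infty$-category from $(\A,B,\sigma)$ to $(\A',B',\sigma')$ is the iterated pullback
\[
\Mor_\mA(\A,\A') \times_{\Mor_\mC(\F\A,\F\A')} \Mor_{\Fun^\oplax(\bD^1,\mC)}(\sigma,\sigma') \times_{\Mor_\mC(\G B,\G B')} \Mor_\mB(B,B').
\]

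The key step is to identify $\Mor_{\Fun^\oplax(\bD^1,\mC)}(\sigma,\sigma')$ with an oriented pullback of morphism $\infty$-categories of $\mC$. Concretely, I expect
\[
\Mor_{\Fun^\oplax(\bD^1,\mC)}(\sigma,\sigma') \simeq \Mor_\mC(\G B,\G B') \,\vec{\times}_{\Mor_\mC(\F\A,\G B')}\, \Mor_\mC(\F\A,\F\A').
\]
Here the two legs are postcomposition with $\sigma'$ and precomposition with $\sigma$, respectively.

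To prove this, I would use the Yoneda lemma for $\infty\Cat$ and test against an arbitrary $\infty$-category $K$. A $K$-family of such morphisms corresponds to a map from the suspension $S(K)$ to $\Fun^\oplax(\bD^1,\mC)$, pointed by $\sigma$ and $\sigma'$ (\cref{suspi}). By adjunction this is a map
\[
S(K)\boxtimes\bD^1 \to \mC
\]
with prescribed restriction to $\partial\bD^1\boxtimes\bD^1$.

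So the crux is a decomposition of $S(K)\boxtimes\bD^1$ relative to $\partial\bD^1\boxtimes\bD^1$. It should be glued from two copies of $S(K)$ and one suspension $S(K\boxtimes\bD^1)$ joining them. I plan to check this on generators $K=\cube^n$, using the definition of the Gray tensor on cubes via \cref{locmon2}. Both sides commute with colimits in $K$, so the check on cubes suffices. This step is the main obstacle; I would try first to match the oriented cube $\cube^{n+2}=\cube^n\boxtimes\cube^2$ with the suspension description of $\cube^n\boxtimes\bD^1$ inserted between the two faces.

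Granting the decomposition, a map out of $S(K)\boxtimes\bD^1$ consists of three pieces of data. These are a $K$-family in $\Mor_\mC(\F\A,\F\A')$ and one in $\Mor_\mC(\G B,\G B')$. The remaining piece is a map $K\boxtimes\bD^1\to\Mor_\mC(\F\A,\G B')$ connecting their composites with $\sigma'$ and $\sigma$. This is exactly a $K$-point of the oriented pullback, by the description of the right cotensor with $\bD^1$ in $\infty\fcat$ via $\Fun^\oplax(\bD^1,-)$.

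Finally I substitute this into the iterated pullback and cancel the factors $\Mor_\mC(\F\A,\F\A')$ and $\Mor_\mC(\G B,\G B')$ by pasting pullbacks (\cref{pasting}). This leaves $\Mor_\mB(B,B')\,\vec{\times}_{\Mor_\mC(\F\A,\G B')}\,\Mor_\mA(\A,\A')$. The remaining work is to confirm that the order of factors matches the orientation convention of the definition, which dictates that $\Mor_\mB$ appears on the left.
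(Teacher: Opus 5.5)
You reduce the proposition to its special case correctly, but you then assume the computation that carries all of its content. The reduction is fine. By \cref{0desc} the oriented pullback in $\infty\fcat$ is $\mA\times_{\mC}\Fun^\oplax(\bD^1,\mC)\times_{\mC}\mB$, and $\Mor$ preserves limits by \cref{homfil}. Pasting strict pullbacks then reduces the proposition to the case $\mA=\mB=\mC$, $\F=\G=\id$, i.e.\ to the formula for $\Mor_{\Fun^\oplax(\bD^1,\mC)}(\sigma,\sigma')$ recorded in \cref{homs2}. (Minor slip: in $\Mor_\mC(\G B,\G B')\,\vec{\times}_{\Mor_\mC(\F\A,\G B')}\,\Mor_\mC(\F\A,\F\A')$ the first leg is precomposition with $\sigma$ and the second is postcomposition with $\sigma'$, not the other way round.)

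This special case is equivalent to the whole proposition, and you do not prove it. By the Yoneda argument you describe, knowing the formula naturally in $\mC$ amounts to knowing your decomposition of $S(K)\boxtimes\bD^1$ relative to $\partial\bD^1\boxtimes\bD^1$. That decomposition has diagonal morphism $\infty$-category $K\boxtimes\bD^1$, running from the composite through $\G B$ to the composite through $\F\A'$. So the step you ``grant'' is the statement itself, rephrased. Note that the paper gives no proof here either; it quotes the proposition from its companion work on oriented category theory.

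Your sketched verification does not supply this step, for three reasons.

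First, density of cubes only helps once you have a natural comparison functor between the glued pushout and $S(K)\boxtimes\bD^1$. In particular you need a natural functor $K\boxtimes\bD^1\to\Mor_{S(K)\boxtimes\bD^1}((0,0),(1,1))$, and none is constructed.

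Second, testing on $K=\cube^n$ is not a computation inside the monoidal structure of \cref{locmon2}, because $S(\cube^n)$ is not a cube for $n\geq 1$ (already $S(\cube^1)=\bD^2$). At best, using $S(K)\simeq\bD^0\,\vec{+}_K\,\bD^0$, you can present $S(\cube^n)\boxtimes\bD^1$ as a quotient of $\cube^{n+2}$ in which two opposite faces are collapsed. You would then have to compute a morphism $\infty$-category of that quotient from those of $\cube^{n+2}$, and recognize it as $K\boxtimes\bD^1$ with the correct ends. None of this is done, and none of it is routine.

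Third, the orientation cannot be left to a final convention check. Whether the diagonal piece is $K\boxtimes\bD^1$ or $\bD^1\boxtimes K$, and which composite is its source, is exactly what separates this formula from the $\Fun^\lax$ formula in \cref{homs2}. That orientation is the substantive claim.

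What is missing is an actual structural computation of the Gray cylinder $S(K)\boxtimes\bD^1$, or equivalently of the morphism $\infty$-categories of $\Fun^\oplax(\bD^1,\mC)$.
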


\begin{corollary}\label{homso}

Let $\F: \mA \to \mC,\G: \mB \to \mC$ be functors and $\A,\A'\in \mA, B,B' \in \mB$ and $\sigma: \F(\A)\to \G(B), \sigma': \F(\A')\to \G(B')$ morphisms. There is a canonical equivalence $$\Mor_{\mA\underset{\mC}{\bar{\vec{\times}}} \mB}((\A,B, \sigma),(\A',B', \sigma')) \simeq \Mor_\mA(\A,\A')\underset{\Mor_\mC(\F(\A),\G(B'))}{\bar{\vec{\times}}} \Mor_\mB(B,B').$$

\end{corollary}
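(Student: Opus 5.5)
The plan is to deduce \cref{homso} from \cref{homs} by dualizing with the involutions introduced above, rather than repeating the argument. The antioriented pullback $\mA\,\bar{\vec{\times}}_\mC\,\mB$ is, by definition, the oriented pushout of the same diagram in the oriented category $\infty\fcat^\circ$. In concrete terms, \cref{bdesc}(2) identifies it with $\mA\times_{^{\{0\}}\mC}{^{\bD^1}\mC}\times_{^{\{1\}}\mC}\mB$. Here the left cotensor with $\bD^1$ in $\infty\fcat$ is $\Fun^\lax(\bD^1,\mC)$, while in the oriented case \cref{0desc} uses $\Fun^\oplax(\bD^1,\mC)$. The key step is to express $\Fun^\lax(\bD^1,\mC)$ through the oplax version using \cref{grayhoms}: $\Fun^\lax(\bD^1,\mC)\simeq\Fun^\oplax(\bD^1,\mC^\op)^\op$, since $(\bD^1)^\op\simeq\bD^1$ after swapping the endpoints.

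Applying $(-)^\op$ therefore gives an identification
\[
(\mA\,\bar{\vec{\times}}_\mC\,\mB)^\op\simeq \mB^\op\,\vec{\times}_{\mC^\op}\,\mA^\op .
\]
The order of the factors swaps because the endpoints of $\bD^1$ are exchanged, and $(-)^\op$ preserves pullbacks because it is an involution of $\infty\Cat$. Under this identification the object $(A,B,\sigma)$ corresponds to $(B,A,\sigma^\op)$ with $\sigma^\op:G(B)\to F(A)$ in $\mC^\op$.

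Next apply \cref{homs} to the functors $G^\op:\mB^\op\to\mC^\op$ and $F^\op:\mA^\op\to\mC^\op$. Since morphisms from $(A,B,\sigma)$ to $(A',B',\sigma')$ in the original category are morphisms in the opposite direction in $(-)^\op$, and using \cref{oppo} (which gives $\Mor_{X^\op}(x,y)\simeq\Mor_X(y,x)^\co$), we obtain
\[
\Mor((A,B,\sigma),(A',B',\sigma'))^\co\simeq \Mor_\mA(A,A')^\co\,\vec{\times}_{\Mor_\mC(F(A),G(B'))^\co}\,\Mor_\mB(B,B')^\co .
\]
One must check carefully which components enter in which order. It remains to remove the $(-)^\co$. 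Since $(-)^\co$ reverses the orientation of the Gray tensor product (\cref{dua}), it sends $\Fun^\oplax(\bD^1,-)$ to $\Fun^\lax(\bD^1,-)$. It therefore converts an oriented pullback in $\infty\fcat$ into an antioriented pullback, which yields the claimed formula $\Mor_\mA(A,A')\,\bar{\vec{\times}}_{\Mor_\mC(F(A),G(B'))}\,\Mor_\mB(B,B')$.

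The main obstacle is the bookkeeping. One has to track which involution ($\op$ or $\co$) acts at each level, how it swaps the endpoints of $\bD^1$ and hence the order of the factors, and confirm that the two reversals compose so that the final expression has exactly the stated order and the antioriented (lax) orientation. If this gets tangled, I would fall back on rerunning the proof of \cref{homs} verbatim with $\Fun^\lax(\bD^1,-)$ in place of $\Fun^\oplax(\bD^1,-)$. The concrete step there is $\Mor_{\Fun^\lax(\bD^1,\mC)}(\sigma,\sigma')\simeq \Mor_\mC(FA,FA')\,\bar{\vec{\times}}_{\Mor_\mC(FA,GB')}\,\Mor_\mC(GB,GB')$, which is the $\co$-dual of the oplax computation. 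One would then take pullbacks along $\mA$ and $\mB$, using that $\Mor$ preserves limits (\cref{homfil}).
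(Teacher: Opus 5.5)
Your argument is correct, and it is essentially the intended one. The paper states this corollary without proof, as the dual of \cref{homs}. The bookkeeping works out: the endpoint swap coming from $(\bD^1)^\op\simeq\bD^1$ and the order swap in \cref{homs} cancel. The structure maps come out as $f\mapsto\sigma'\circ F(f)$ and $g\mapsto G(g)\circ\sigma$. Applying $(-)^\co$, which fixes the endpoints of $\bD^1$ and exchanges $\Fun^\oplax(\bD^1,-)$ with $\Fun^\lax(\bD^1,-)$, then yields exactly $\Mor_\mA(A,A')\,\bar{\vec{\times}}_{\Mor_\mC(F(A),G(B'))}\,\Mor_\mB(B,B')$.
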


\begin{corollary}\label{homs2}\label{homso2} Let $\mB$ be an $\infty$-category and $\sigma: \A \to B, \sigma': \A' \to B'$ morphisms in $\mB$. There are canonical equivalences $$\Mor_{\Fun^\lax(\bD^1,\mB)}(\sigma, \sigma') \simeq \Mor_\mA(\A,\A')\underset{\Mor_\mC(\A,B')}{\bar{\vec{\times}}} \Mor_\mB(B,B'), $$
$$\Mor_{\Fun^\oplax(\bD^1,\mB)}(\sigma, \sigma') \simeq {\Mor_\mB(B,B') \,\underset{\Mor_\mC(\A,B')}{\vec{\times}}}\, \Mor_{\mA}(\A,\A'). $$\end{corollary}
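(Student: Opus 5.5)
The plan is to realize both functor categories as (anti)oriented pullbacks of the identity cospan. Then the formulas for their morphism $\infty$-categories follow directly from \cref{homs} and \cref{homso}. Throughout, the objects written $\A,\A'$ and the base written $\mC$ in the statement are to be read as objects of $\mB$ and $\mB$ itself: we apply these results with $\mA=\mB=\mC$ and $\F=\G=\id_\mB$.

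\emph{Step 1: identify the cotensors.} We view $\infty\fcat$ as a bioriented category, so it is both oriented and antioriented. It admits all pullbacks. The right tensor of $T$ with $\bD^1$ is $T\boxtimes\bD^1$, whose right adjoint in $T$ is $\Fun^\oplax(\bD^1,-)$. Hence the right cotensor of $\mB$ with $\bD^1$ is $\Fun^\oplax(\bD^1,\mB)$, and the evaluations at $\{0\},\{1\}$ are the source and target functors. Dually, the left tensor is $\bD^1\boxtimes T$, whose right adjoint is $\Fun^\lax(\bD^1,-)$, so the left cotensor is $\Fun^\lax(\bD^1,\mB)$.

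\emph{Step 2: apply the pullback descriptions.} By \cref{0desc}(2) applied to $\mB\xrightarrow{\id}\mB\xleftarrow{\id}\mB$ we obtain
\[
\mB\,\underset{\mB}{\vec{\times}}\,\mB\simeq \mB\underset{\mB}{\times}\Fun^\oplax(\bD^1,\mB)\underset{\mB}{\times}\mB\simeq\Fun^\oplax(\bD^1,\mB).
\]
By \cref{bdesc}(2) we likewise obtain $\mB\underset{\mB}{\bar{\vec{\times}}}\mB\simeq\Fun^\lax(\bD^1,\mB)$. Under these equivalences, an object $(\A,B,\sigma)$ of the (anti)oriented pullback corresponds to the arrow $\sigma:\A\to B$, viewed as an object of the functor category.

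\emph{Step 3: compute morphisms.} For the oplax case, \cref{homs} with $\F=\G=\id$ gives
\[
\Mor_{\Fun^\oplax(\bD^1,\mB)}(\sigma,\sigma')\simeq \Mor_\mB(B,B')\,\underset{\Mor_\mB(\A,B')}{\vec{\times}}\,\Mor_\mB(\A,\A').
\]
For the lax case, \cref{homso} gives the antioriented pullback $\Mor_\mB(\A,\A')\underset{\Mor_\mB(\A,B')}{\bar{\vec{\times}}}\Mor_\mB(B,B')$, which is the claimed formula.

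\emph{Main obstacle.} The main point needing care is Step 2's identification of source and target. We must check that the maps $\Fun^{\oplax}(\bD^1,\mB)\to\mB$ induced by $\{0\},\{1\}\subset\bD^1$ are exactly the legs used in \cref{0desc} and \cref{bdesc}, with matching conventions. We must also check that the left versus right enrichment is assigned correctly: lax goes with antioriented, oplax with oriented. If the conventions turned out to be swapped, the lax statement would instead be deduced from the oplax one by applying the involution $(-)^\op$ together with \cref{grayhoms}.
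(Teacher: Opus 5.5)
Your proposal is correct and is essentially the paper's argument. The corollary is stated there without proof, as the special case of \cref{homs} and \cref{homso} in which all three categories equal $\mB$ and $F=G=\mathrm{id}$. That case uses $\Fun^\oplax(\bD^1,\mB)\simeq\mB\,\underset{\mB}{\vec{\times}}\,\mB$ (also the case $\mA=\ast$ of \cref{targetfibr}) and dually $\Fun^\lax(\bD^1,\mB)\simeq\mB\underset{\mB}{\bar{\vec{\times}}}\mB$.

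Your cotensor identifications are the right ones: the right cotensor is $\Fun^\oplax(\bD^1,-)$ and the left cotensor is $\Fun^\lax(\bD^1,-)$. So the convention swap you flag as the main obstacle does not occur.
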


The following is \cite[Corollary 2.3.25.]{gepner2026fibrations}:

\begin{corollary}\label{dimensio}

Let $ n \geq 0$ and $\mA,\mB,\mC$ be $n$-categories.
Let $\F: \mA \to \mC,\G: \mB \to \mC$ be functors.
The oriented pullback $\mA\underset{\mC}{\vec{\times}} \mB$
is an $n$-category.
    
\end{corollary}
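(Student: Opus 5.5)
Since the statement is cited from \cite{gepner2026fibrations}, I would still give a self-contained argument by induction on $n$, using \cref{homs} to reduce the morphism $\infty$-categories of an oriented pullback to oriented pullbacks of morphism $\infty$-categories. Recall that an $\infty$-category $\mX$ is an $n$-category exactly when it lies in the image of $n\Cat\subset\infty\Cat$. Via the identification $\infty\Cat\simeq\infty\Cat\mathrm{-}\Cat$ of \cref{fix}, this holds for $n\ge 1$ if and only if all morphism $\infty$-categories $\Mor_\mX(x,y)$ are $(n-1)$-categories. For $n=0$ it means that $\mX$ is a space.

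\emph{Base case $n=0$.} Suppose $\mA,\mB,\mC$ are spaces. By \cref{0desc}, which applies because $\infty\fcat$ has pullbacks and right cotensors, we have
$$\mA\underset{\mC}{\vec{\times}}\mB\simeq \mA\times_{\mC}\Fun^\oplax(\bD^1,\mC)\times_{\mC}\mB.$$
For a space $\mC$, the functor $\Fun^\oplax(\bD^1,\mC)$ is again the space $\mC^{\bD^1}\simeq\mC^{\tau_0\bD^1}\simeq \mC$. This follows by adjunction from \cref{grayspace}: $\tau_0$ is monoidal and $\mS\subset\infty\Cat$ is closed under the relevant colimits, so maps $K\boxtimes\bD^1\to\mC$ into a space factor through $\tau_0(K)\times\tau_0(\bD^1)\simeq\tau_0(K)$. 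Hence the oriented pullback is a pullback of spaces along diagonal-type maps, and is therefore a space. I would double-check that the underlying $\infty$-category of $\Fun^\oplax(\bD^1,\mC)$ is computed this way, testing against disks $\bD^k$, $k\ge1$, and noting that $\bD^k\boxtimes\bD^1$ has contractible $\tau_0$.

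\emph{Inductive step.} Assume the claim for $n-1$, and let $\mA,\mB,\mC$ be $n$-categories. Objects of $W=\mA\underset{\mC}{\vec{\times}}\mB$ are triples $(A,B,\sigma)$. By \cref{homs},
$$\Mor_W((A,B,\sigma),(A',B',\sigma'))\simeq \Mor_\mB(B,B')\underset{\Mor_\mC(\F(A),\G(B'))}{\vec{\times}}\Mor_\mA(A,A').$$
This is an oriented pullback of $(n-1)$-categories, so by induction it is an $(n-1)$-category. Hence all morphism $\infty$-categories of $W$ are $(n-1)$-categories, and $W$ is an $n$-category.

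\emph{Main obstacle.} The delicate point is the criterion "all morphism objects lie in $(n-1)\Cat$ implies $W\in n\Cat$". This needs the inclusion $n\Cat\subset\infty\Cat$ to agree with the inclusion $((n-1)\Cat)\mathrm{-}\Cat\subset(\infty\Cat)\mathrm{-}\Cat$, where the latter is induced by the embedding $(n-1)\Cat\subset\infty\Cat$. That embedding is monoidal for the cartesian structure and fully faithful. I would verify the compatibility directly from the inductive definition, using that $\iota_{n}$ is $(\iota_{n-1})_!$ and that the embeddings commute with the limit defining $\infty\Cat$. Then an enriched category whose morphism objects lie in the essential image of a fully faithful monoidal left adjoint comes from an enrichment in the smaller category, by Proposition 3.72 transfer and its right adjoint.

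The base case also hides the subtle step of identifying the underlying oriented-cotensor with a space. If that proves awkward, I would instead start the induction at $n=0$ using \cref{homs}: for spaces, all morphism objects are spaces of equivalences, and one can argue via $\iota_0$ on objects together with the invertibility of all cells.
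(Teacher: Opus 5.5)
Your proof is correct, and it matches how the paper obtains this result. The paper gives no proof of its own: it cites the statement from \cite{gepner2026fibrations} and places it directly after \cref{homs}, which signals that it follows by induction on $n$ from the identification of morphism $\infty$-categories of an oriented pullback with oriented pullbacks of morphism $(n-1)$-categories, as you argue. Your separate base case, via $\Fun^\oplax(\bD^1,\mC)\simeq\mC$ for a space $\mC$ (using that $\tau_0$ is monoidal), and your justification of the criterion ``all morphism objects are $(n-1)$-categories'' via transfer of enrichment along the fully faithful embedding $(n-1)\Cat\subset\infty\Cat$ are both sound.
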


The following is \cite[Theorem 4.2.7.]{oriented}.

\begin{theorem}\label{interchange}
The induced functor $$\infty\Cat\simeq\Cat_{\infty\Cat} \to {\Cat\boxtimes} $$ of 2-categoriesis fully faithful. The essential image precisely consists of the oriented categories $\mC$ such that 
for every $X,Y,Z \in \mC$ and functors $\bD^n \to \R\Mor_\mC(Y,Z)$ and $\bD^m \to \R\Mor_\mC(X,Y)$
the functor $$\bD^n \boxtimes \bD^m \to \R\Mor_\mC(Y,Z) \boxtimes \R\Mor_\mC(X,Y) \to \R\Mor_\mC(X,Z)$$ factors through the epimorphism
$\bD^n \boxtimes \bD^m \to \bD^n \times \bD^m.$ 
There is a similar statement for antioriented categories.
\end{theorem}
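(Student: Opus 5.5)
The plan is to realize the functor in question as transfer of enrichment $\phi_!$ along a lax monoidal functor, and then reduce both fully faithfulness and the description of the essential image to one fact: the lax structure maps of $\phi$ are epimorphisms in $\infty\Cat$. The functor $\phi$ is the identity of $\infty\Cat$, viewed as a lax monoidal functor from $(\infty\Cat,\times)$ to $(\infty\Cat,\boxtimes)$; for the right-enriched (oriented) version we use the reversed structure. Its lax structure maps are the canonical comparison functors $\mC\boxtimes\mD\to\mC\times\mD$ of \cref{funcompa}, and the unit map is the identity of $\bD^0$. So $\phi_!$ leaves underlying categories and morphism $\infty$-categories unchanged, and only regards the cartesian composition as a Gray composition by precomposing with $\boxtimes\to\times$.

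\textbf{Step 1 (epimorphism).} I first show that for all $\infty$-categories $A_1,\dots,A_n$ the comparison functor $A_1\boxtimes\cdots\boxtimes A_n\to A_1\times\cdots\times A_n$ is an epimorphism, i.e.\ its codiagonal is an equivalence. Both sides preserve small colimits in each variable separately: for $\boxtimes$ because the Gray structure is presentably monoidal, and for $\times$ by cartesian closedness, \cref{carclo}. Epimorphisms are closed under colimits in the arrow category. By \cref{theta} every $\infty$-category is a colimit of disks, so it suffices to treat $A_i=\bD^{k_i}$. In that case I would use the explicit descriptions from \cite{GepnerHeine2026}: the $1$-truncation of $\cube^n$ is $(\bD^1)^{\times n}$, and the Gray product of disks is generated as a localization of cubes. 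From these, every cell of $\bD^{k_1}\times\cdots\times\bD^{k_n}$ is hit by a composite of images of cells. An induction on dimension, using that $\Mor$ commutes with the relevant pushouts (\cref{homs}, \cref{homfil}), then upgrades this to the codiagonal statement. \emph{I expect this step to be the main obstacle}, since it is a genuinely $\infty$-categorical computation with Gray products of disks.

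\textbf{Step 2 (embedding of operads).} Consequently, the induced map of $\infty$-operads $(\infty\Cat,\times)^\otimes\to(\infty\Cat,\boxtimes)^\otimes$ is an equivalence on colors and on unary operations. On $n$-ary operations it is the map
\[
\Map(\textstyle\prod A_i,B)\to\Map(\boxtimes A_i,B),
\]
and this is an embedding by Step~1. Now present $\mV$-enriched categories with a fixed space of objects $X$ as algebras over the generalized operad $\mathrm{Cat}_X$ with values in $\mV$. Mapping spaces of such algebras, and of enriched functors between them, are limits of multimorphism spaces. These limits are indexed by the same diagrams for $\times$ and for $\boxtimes$, and they are built from points and paths in multimorphism spaces.

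\textbf{Step 3 (fully faithfulness).} An embedding of spaces is fully faithful on paths. A path between two points of the subspace lies in the subspace, and the unary data (the functors on morphism $\infty$-categories) are literally identical. Hence the map on spaces of enriched functors $\Map(\mC,\mD)\to\Map(\phi_!\mC,\phi_!\mD)$ is an equivalence. Indeed, every coherence homotopy of a Gray-enriched functor between cartesian-enriched categories compares two multimorphisms that both factor through products, so it lies in the cartesian subspace. The same argument applies to natural transformations, via cotensors with $\bD^1$, and gives the full faithfulness of the 2-functor. Univalence is preserved because $\phi_!$ does not change the underlying categories.

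\textbf{Step 4 (essential image).} An oriented category $\mC$ lies in the image exactly when its structure multimorphisms lie in the subspace $\Map(\prod,-)$. Multimorphisms of arity $0$ and $1$ are automatic, and by associativity the $n$-ary composites are determined by the binary ones. By the embedding property, lying in the image is a property rather than extra structure, so the remaining coherences come for free. Finally, whether the binary composition $\R\Mor(Y,Z)\boxtimes\R\Mor(X,Y)\to\R\Mor(X,Z)$ factors through the product can be tested on disks. Writing each morphism $\infty$-category as a colimit of disks (\cref{theta}) and using Step~1, this factorization is equivalent to the factorization of each $\bD^n\boxtimes\bD^m\to\R\Mor(Y,Z)\boxtimes\R\Mor(X,Y)\to\R\Mor(X,Z)$ through $\bD^n\times\bD^m$, which is exactly the stated criterion. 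The antioriented statement follows by applying $(-)^\circ$, or equivalently by repeating the argument with the left enrichment.
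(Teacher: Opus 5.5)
The paper does not prove this statement: it quotes it from \cite{oriented}, together with \cref{funcompo}, which is exactly the epimorphism input your argument needs. So there is no in-paper proof to compare against. Your architecture is the natural one, and it is sound. The unit comparison is the identity and the $n$-ary comparisons $\Map(\prod_i A_i,B)\to\Map(\boxtimes_i A_i,B)$ are embeddings. Hence the map of operads is a subcategory inclusion that is an equivalence on colors and on morphisms over inert maps. Algebra maps have unary components and naturality data given by composites inside this subcategory, so they and all their coherences automatically land in the cartesian part. Lying in the image is then a property, and it can be checked on binary composition.

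Two places need one more sentence each. For the 2-categorical statement you must know that $\phi_!$ commutes with the cotensor by $\bD^1$ in the two 2-categories of enriched categories. This holds because $\phi$ is the identity on underlying $\infty$-categories and has identity unit constraint, but it should be said. In Step 4, disks generate $\infty\Cat$ under colimits but are not dense (only $\Theta$ is), so phrase the reduction differently. Show that the class of $A$ (resp.\ $B$) for which the disk test suffices is closed under colimits and contains the disks. Closure holds because factoring through an epimorphism is a property, and a limit of embeddings of spaces is cut out componentwise.

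The step that does not hold up is your own proof of Step 1; replace it by a citation. By adjunction, $\Map(K,\Fun(X,Y))\to\Map(K,\Fun^\oplax(X,Y))$ is the map $\Map(K\times X,Y)\to\Map(K\boxtimes X,Y)$. So \cref{funcompo} says precisely that $K\boxtimes X\to K\times X$ is an epimorphism for all $K,X$; the statement itself already presupposes this. The $n$-ary case follows from the binary one, since $A\boxtimes(-)$ preserves pushouts, hence epimorphisms, and epimorphisms compose.

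Your sketch for disks would not establish this, for two reasons. First, being hit on all cells, even via composites, does not make a functor an epimorphism in $\infty\Cat$. The fold map $\bD^1\sqcup_{\partial\bD^1}\bD^1\to\bD^1$ is surjective on cells, but restriction along it is the diagonal of $\iota_0\Mor_C(x,y)$, which is not an embedding unless that space is discrete. Second, \cref{homs} concerns oriented pullbacks and \cref{homfil} concerns filtered colimits and limits. Morphism $\infty$-categories do not commute with pushouts, so the proposed induction on dimension has nothing to run on.
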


The following is \cite[Theorem 4.1.9.]{oriented}, where we use the functors of \cref{funcompa}.

\begin{theorem}\label{funcompo}

Let $X,Y$ be $\infty$-categories. The canonical functors
$$ \Fun(X, Y) \to \Fun^\lax(X, Y), \ \Fun(X, Y) \to \Fun^\oplax(X, Y)$$
are inclusions.
    
\end{theorem}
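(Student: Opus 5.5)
The plan is to reduce the statement, via the Yoneda lemma and the adjunctions defining the lax and oplax functor categories, to the claim that the comparison map $X \boxtimes Z \to X \times Z$ of \cref{funcompa} is an epimorphism in $\infty\Cat$. I treat the lax case; the oplax case is symmetric (or follows from \cref{grayhoms} by applying $(-)^\op$).

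\textbf{Step 1 (reformulation).} A functor $A \to B$ is an inclusion if and only if, for every $\infty$-category $Z$, the map $\Map_{\infty\Cat}(Z,A) \to \Map_{\infty\Cat}(Z,B)$ is an embedding. Here we have
$$\Map(Z,\Fun(X,Y)) \simeq \Map(X \times Z, Y), \qquad \Map(Z,\Fun^\lax(X,Y)) \simeq \Map(X \boxtimes Z, Y).$$
Unwinding \cref{funcompa}, the induced map between these is precomposition with the canonical functor $X \boxtimes Z \to X \times Z$. Hence it suffices to show that $X \boxtimes Z \to X \times Z$ is an epimorphism for all $X,Z$. By definition, this means that precomposition with it induces an embedding on mapping spaces into every $Y$.

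\textbf{Step 2 (reduction to disks).} Both $\boxtimes$ and $\times$ preserve small colimits separately in each variable. For the Gray product this holds because it is presentably monoidal; for the cartesian product it holds by \cref{carclo}. The comparison map is natural in $X$ and $Z$, so it is a colimit, in the arrow category, of the comparison maps for the pieces.

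Epimorphisms are closed under colimits in the arrow category. Indeed, if $f = \colim_i f_i$ with $f_i \colon A_i \to B_i$, then $\Map(\colim B_i, Y) \to \Map(\colim A_i, Y)$ is the limit of the maps $\Map(B_i,Y) \to \Map(A_i,Y)$. A limit of embeddings of spaces is an embedding.

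By \cref{theta}, $\infty\Cat$ is generated under small colimits by the disks $\bD^n$. Writing $X$ and $Z$ as colimits of disks, in two successive steps, therefore reduces the claim to the case $X=\bD^n$, $Z=\bD^m$.

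\textbf{Step 3 (the disk case).} It remains to show that $\bD^n \boxtimes \bD^m \to \bD^n \times \bD^m$ is an epimorphism. This is the expected main obstacle, and it is precisely the epimorphism already asserted in the statement of \cref{interchange}, which I would invoke directly.

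If one wanted an independent argument, I would proceed by induction on $n+m$. The strategy is to show that $\bD^n \times \bD^m$ is obtained from $\bD^n \boxtimes \bD^m$ by a colimit of maps that only collapse non-invertible higher cells into identities. Concretely, I would use the oriented cube decomposition (each disk is a retract of a cube, and $\cube^n \boxtimes \cube^m = \cube^{n+m}$), together with the fact that the $\Theta$-nerve of \cref{theta} detects surjectivity on cells. The aim is to check that every cell of $\bD^n \times \bD^m$ lifts, uniquely up to the relevant identifications, so that restriction along the map is injective on mapping spaces.
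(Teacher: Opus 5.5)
Your Steps 1 and 2 are correct: the map on $Z$-points is restriction along $X\boxtimes Z\to X\times Z$, epimorphisms are closed under colimits in $\Fun(\bD^1,\infty\Cat)$, and the reduction to disks goes through. But every implication there reverses trivially. So you have shown that \cref{funcompo} (for all $X,Y$) is \emph{equivalent} to the assertion that $\bD^n\boxtimes\bD^m\to\bD^n\times\bD^m$ is an epimorphism for all $n,m$. All of the content therefore sits in Step 3, and there you take the word ``epimorphism'' from the statement of \cref{interchange}.

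The paper proves neither result. \cref{funcompo} is quoted as \cite[Theorem 4.1.9]{oriented}, and \cref{interchange} as \cite[Theorem 4.2.7]{oriented}, which comes later in that source; its epimorphism clause may well be deduced from Theorem 4.1.9 itself. Inside this paper your argument is therefore formally valid as a black-box derivation. But it cites a reformulation of the statement rather than proving it, and it carries a real risk of circularity.

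Your fallback sketch would not close this, because lifting cells is not how epimorphisms in $\infty\Cat$ are detected. You need $\Map(\bD^n\times\bD^m,Y)\to\Map(\bD^n\boxtimes\bD^m,Y)$ to be an embedding of spaces, i.e.\ injective on $\pi_0$ and an equivalence on all higher homotopy. Surjectivity on cells controls neither. For instance, $\partial\bD^1\to\bD^0$ is surjective on cells, yet restriction along it is the diagonal $\iota_0Y\to\iota_0Y\times\iota_0Y$, which is not an embedding unless $\iota_0Y$ is $0$-truncated. Nor do cells lift uniquely here: the diagonal morphism of $\bD^1\times\bD^1$ has two lifts to $\cube^2=\bD^1\boxtimes\bD^1$, joined by a non-invertible $2$-cell.

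What would work is a localization argument. Suppose you exhibit $\bD^n\times\bD^m$ as obtained from $\bD^n\boxtimes\bD^m$ by inverting a set of cells, that is, by pushouts along collapses $\bD^k\to\bD^{k-1}$ as in the construction $X[\mE^{-1}]$. Then restriction along it is fully faithful on functor $\infty$-categories, by the lemma on $X[\mE^{-1}]$, which rests on \cite[Corollary 4.1.4]{oriented}. It is therefore an embedding on maximal subspaces, as required. For $n=m=1$ this is the fact that $\bD^1\times\bD^1=\tau_1(\cube^2)$ arises by inverting the unique non-identity $2$-cell of $\cube^2$. In general you must identify the interchange cells of $\bD^n\boxtimes\bD^m$ that become identities in the product, and show that inverting them yields $\bD^n\times\bD^m$. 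That is the step your proposal is missing.
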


The following is \cite[Theorem 4.2.8.]{oriented}.

\begin{theorem}\label{presheaves}
Let $\mD$ be an $\infty$-category.
The following canonical functors induce fully faithful functors on underlying 1-categories:

$$\Fun(\mD,\infty\scat) \to {\boxtimes\Fun}(\mD,\infty\fcat) $$
and 
$$\Fun(\mD,\infty\scat) \to {\Fun\boxtimes}(\mD,\infty\fcat). $$
\end{theorem}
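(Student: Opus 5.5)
The plan is to deduce \cref{presheaves} from \cref{interchange}, using the description of functor categories of enriched categories in \cref{psinho}. I treat the antioriented case $\Fun(\mD,\infty\scat)\to{\boxtimes\Fun}(\mD,\infty\fcat)$; the oriented case is dual. One can either apply $(-)^\circ$ and the involutions of \cref{dua}, or run the same argument with right enrichment.

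First, I make the comparison functor explicit. The monoidal functor $(\infty\Cat,\boxtimes)\to(\infty\Cat,\times)$ from \cref{funcompa} is the identity on objects. Via the fully faithful embedding $\infty\Cat\simeq\Cat_{\infty\Cat}\hookrightarrow\boxtimes\Cat$ of \cref{interchange}, this exhibits $\infty\scat$ as an antioriented category. Its morphism objects are $\Fun(X,Y)$, and the canonical map $\infty\scat\to\infty\fcat$ is given on morphism objects by the inclusions $\Fun(X,Y)\to\Fun^\lax(X,Y)$ of \cref{funcompo}. By \cref{monochar} this map is a monomorphism of antioriented categories, and it is the identity on objects.

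Next, I identify the underlying categories. By the fully faithfulness in \cref{interchange}, $\Fun(\mD,\infty\scat)$ has underlying category ${\boxtimes\Fun}(\mD,\infty\scat)$, where $\mD$ and $\infty\scat$ are regarded as antioriented. The comparison functor is then postcomposition with the monomorphism $j:\infty\scat\to\infty\fcat$. So it remains to show that postcomposition with a monomorphism $j$ of enriched categories induces a fully faithful functor on the underlying $1$-categories of enriched functor categories.

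For this I use the universal property in \cref{psinho}. Maps $\bD^1\to{\boxtimes\Fun}(\mD,\mN)$, i.e.\ natural transformations, correspond to enriched functors $\langle \mD,\bD^1\rangle\to\mN$. Here $\bD^1$ is viewed as an $\mS$-enriched category, so $\langle\mD,\bD^1\rangle$ is again antioriented. Likewise, objects correspond to enriched functors $\mD\to\mN$. Because $j$ is a monomorphism in $\boxtimes\Cat$, the maps $\Map(\langle\mD,\bD^1\rangle,\infty\scat)\to\Map(\langle\mD,\bD^1\rangle,\infty\fcat)$ are embeddings. So the functor on underlying categories is an inclusion. Fully faithfulness now reduces to showing that every transformation $\langle\mD,\bD^1\rangle\to\infty\fcat$ whose restrictions to both endpoints land in $\infty\scat$ itself factors through $\infty\scat$.

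That factorization is the main obstacle. Since $j$ is an identity on objects and a monomorphism, I would check it on morphism objects. The morphism objects of $\langle\mD,\bD^1\rangle$ are of the form $\Mor_\mD(a,b)\times\Mor_{\bD^1}(i,j)$, and $\Mor_{\bD^1}(i,j)$ is a set, so the Gray and cartesian products agree here. The components between the $0$- and $1$-ends are therefore determined by composites of cells that already land in $\Fun$, namely the restrictions to the endpoints together with the components $\{a\}\to\Fun^\lax(F(a),G(a))$ (objects are ordinary functors). To show the composites stay in $\Fun\subset\Fun^\lax$, I would use the characterization in \cref{interchange}: a functor factors through the image if every Gray-composite of cells $\bD^n\boxtimes\bD^m$ factors through $\bD^n\times\bD^m$. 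In our situation one of the factors is always $\bD^0$ coming from the $\bD^1$-direction, and in that case the epimorphism $\bD^n\boxtimes\bD^0\to\bD^n\times\bD^0$ is an equivalence, so the condition holds automatically. This is exactly why the statement only concerns underlying $1$-categories: for a $2$-cell in the $\bD^1$-direction the argument breaks down.
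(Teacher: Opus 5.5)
The paper gives no proof of this statement; it imports it as \cite[Theorem 4.2.8]{oriented}. Your proposal is therefore a self-contained alternative, and most of it is sound. The following steps all work:
\begin{itemize}
\item identifying $\iota_1\Fun(\mD,\infty\scat)$ with ${\boxtimes\Fun}(\mD,\infty\scat)$ via \cref{interchange};
\item deducing from \cref{monochar}, \cref{funcompo} and \cref{psinho} that postcomposition with $j$ is an inclusion of $1$-categories;
\item reducing fullness to showing that an antioriented functor $H\colon\langle\mD,\bD^1\rangle\to\infty\fcat$ whose restrictions $H_0,H_1$ are strict factors through $\infty\scat$.
\end{itemize}
Testing that factorization on morphism objects is legitimate. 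The maps $\Fun(X,Y)\to\L\Mor_{\infty\fcat}(X,Y)$ are monomorphisms, so the coherence data of a lift are unique once the lift exists on morphism objects. You should state this explicitly.

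The final step, however, is misjustified. \cref{interchange} characterizes which antioriented categories come from $\infty$-categories. It is not a criterion for an antioriented functor into $\infty\fcat$ to factor through $\infty\scat$, and the criterion as you state it is false. Take an antioriented functor $\bD^2\to\infty\fcat$ classifying a non-strict oplax transformation. It satisfies your Gray-composite condition trivially, since every composable pair of cells involves an identity, so one factor is $\bD^0$. Yet it does not factor through $\infty\scat$. Likewise, the equivalence $\bD^n\boxtimes\bD^0\simeq\bD^n\times\bD^0$ is not what makes the component strict.

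What you actually need is only that $j$ is an antioriented functor. By the unit law and functoriality of $H$, the component of $H$ from $(a,0)$ to $(b,1)$ factors as
\[
\Mor_\mD(a,b)\simeq\Mor_\mD(a,b)\boxtimes\bD^0\xrightarrow{(H_1)_{a,b}\boxtimes\alpha_a}\Fun(H_1a,H_1b)\boxtimes\Fun(H_0a,H_1a)\to\L\Mor_{\infty\fcat}(H_0a,H_1b).
\]
Two facts then finish the argument:
\begin{itemize}
\item $\alpha_a=H(\id_a,0\to1)$ is an object of $\L\Mor_{\infty\fcat}(H_0a,H_1a)$, so it lies in $\Fun(H_0a,H_1a)$. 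This is because $\Fun(X,Y)\to\L\Mor_{\infty\fcat}(X,Y)$ induces an equivalence on maximal subspaces (both are $\Map(X,Y)$).
\item Compatibility of $j$ with composition says that the composition of $\infty\fcat$, restricted to $\Fun(H_1a,H_1b)\boxtimes\Fun(H_0a,H_1a)$, is the map $\Fun(H_1a,H_1b)\boxtimes\Fun(H_0a,H_1a)\to\Fun(H_1a,H_1b)\times\Fun(H_0a,H_1a)\to\Fun(H_0a,H_1b)$ followed by the inclusion. This holds for cells of every dimension, not only when one factor is $\bD^0$.
\end{itemize}
With this replacement the proof is complete. The genuinely $1$-categorical input is that the $\bD^1$-direction contributes only the $0$-cells $\alpha_a$. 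For higher cells of the oriented refinement, the components are $1$-cells of $\L\Mor_{\infty\fcat}$, which need not be strict.
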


\subsection{\mbox{Fibrations of higher categories}}

\begin{notation}Let $\phi: \mC \to \mD$ be a functor and $X,Y \in \mC.$ 
We write
\begin{align*}
\phi_{X,Y}:\Mor_\mC(X,Y)\to\Mor_\mD(\phi(X),\phi(Y))
\end{align*}
for the induced functor on morphism $\infty$-categories.
\end{notation}

\begin{definition}Let $1 \leq \n \leq \infty$ and $\phi: \mC \to \mD$ a functor of $\infty$-categories.

\begin{enumerate}[\normalfont(1)]\setlength{\itemsep}{-2pt}

\item A 1-morphism $f: X \to Y$ in $\mC$ is $\phi$-cocartesian if for every $Z\in \mC$ the commutative square 
\begin{equation}\label{filler}
\begin{xy}
\xymatrix{
\Mor_\mC(Y,Z) \ar[d] \ar[r]
& \Mor_\mC(X,Z) \ar[d]^\phi
\\ 
\Mor_\mD(\phi(Y), \phi(Z)) \ar[r] & \Mor_\mD(\phi(X), \phi(Z))
}
\end{xy}\end{equation}
is a pullback square.

\item An $n$-morphism $\alpha:\bD^n\to\mC$ is $\phi$-cocartesian if
for every pair of morphisms $(X \to \alpha(0), \alpha(1) \to Y)$ in $\mC$ the composite $n-1$-morphism
\[
\bD^{n-1}\to\Mor_\mC(\alpha(0),\alpha(1)) \to \Mor_\mC(X,Y)
\]
is $\phi_{X,Y}$-cocartesian.
\end{enumerate}
\end{definition}

\begin{example}Let $\phi: \mC \to \mD$ be a functor and $n \geq 0$.
Every $n$-morphism in $\mC$ that is an equivalence, is $\phi$-cocartesian.
An $n$-morphism in $\mC$ is an equivalence if and only if it is 
cocartesian for the functor $\mC \to \bD^0$.
    
\end{example}

\begin{remark}\label{elemen}
Let $n \geq 1$ and $\phi: \mC \to \mD$ a functor.
By pasting of pullbacks the composite of two composable $\phi$-cocartesian 1-morphisms is $\phi$-cocartesian.
Therefore for any three parallel $n-1$-morphisms $f,g,h$ in $\mC$
the composite of any two $\phi$-cocartesian $n$-morphisms $\alpha : f \to g, \beta: g \to h$ is again $\phi$-cocartesian.

Moreover the pasting law for pullbacks implies that for two functors
$\phi: \mC \to \mD, \kappa: \mD \to \mE$
an $n$-morphism of $\mC$ lying over a $\kappa$-cocartesian $n$-morphism of $\mD$, is $\phi$-cocartesian if and only if it is $\kappa \circ \phi$-cocartesian.
In particular, an $n$-morphism of $\mC$ that is inverted by $\phi$, is $\phi$-cocartesian if and only if it is an equivalence.

Also note that since forming morphism objects preserves pullbacks,
for any functors $\phi:\mC \to \mD$ and $\mB \to \mD$
an $n$-morphism in the pullback $\mB \times_\mD \mC $ is cocartesian for the projection to $\mB$ if it is $\phi$-cocartesian.
    
\end{remark}

\begin{definition} Let $0 \leq n \leq \infty.$
A functor $\phi: \mC \to \mD$ is an $n$-anticocartesian fibration if for every $1 \leq k \leq n$ 
every commutative square 
\begin{equation}\label{filler2}
\begin{xy}
\xymatrix{
\bD^{k-1} \ar[d] \ar[r]
& \mC \ar[d]^\phi
\\ 
\bD^k \ar[r] & \mD
}
\end{xy}\end{equation}
admits a filler by a $\phi$-cocartesian $k$-morphism,
where the left vertical functor is the source inclusion.

An anticocartesian fibration is an $\infty$-anticocartesian fibration.

\end{definition}

\begin{remark}
It follows immediately from the definition and \cref{elemen} that $n$-anticocartesian fibrations are stable under base change and composition.
\end{remark}

\begin{definition}Let $1 \leq n \leq \infty. $

\begin{enumerate}[\normalfont(1)]\setlength{\itemsep}{-2pt}

\item A functor $\phi: \mC \to \mD$ is an $n$-cocartesian fibration if $\phi^\co$ is a $n$-anticocartesian fibration.

\item A functor $\phi: \mC \to \mD$ is an $n$-cartesian fibration if $\phi^{\op}$ is an $n$-anticocartesian fibration.

\item A functor $\phi: \mC \to \mD$ is an $n$-anticartesian fibration if $\phi^{\co\op}$ is an $n$-anticocartesian fibration.

For $n=\infty$ we drop $n.$

\end{enumerate}
    
\end{definition}

\begin{remark}
Since the $(-)^\co$ involution fixes the orientation of 1-morphisms, a functor is a 1-(co)cartesian fibration if and only if it is a 1-anti(co)cartesian fibration.

\end{remark}

We have the following inductive definition of anticocartesian fibrations, which is \cite[Proposition 3.2.12.]{gepner2026fibrations}:

\begin{proposition}\label{cocarto}
Let $1 \leq n \leq \infty$ and $\phi: \mC \to \mD$ a functor.
The following are equivalent:

\begin{enumerate}[\normalfont(1)]\setlength{\itemsep}{-2pt}
\item The functor $\phi$ is an $n$-anticocartesian fibration.

\item The functor $\phi$ is a 1-anticocartesian fibration, and for every $\X,\Y \in \mC$ the functor
\[
\phi_{X,Y}: \Mor_\mC(\X,\Y) \to \Mor_\mD(\phi(\X),\phi(\Y))
\]
is a $n-1$-anticocartesian fibration
and for every pair of morphisms $X' \to X$ and $Y \to Y'$ in $\mC$
the induced functor 
$\Mor_\mC(\X,\Y) \to \Mor_\mC(X',Y')$ sends $\phi_{X,Y}$-cocartesian morphisms to $\phi_{X',Y'}$-cocartesian morphisms.
\end{enumerate}

\end{proposition}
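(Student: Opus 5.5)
The plan is to reduce everything to the observation that $\bD^k=S(\bD^{k-1})$ and that the source inclusion $\bD^{k-1}\to\bD^k$ is, for $k\geq 2$, the suspension of the source inclusion $\bD^{k-2}\to\bD^{k-1}$. This holds because both source and target inclusions of $\bD^{k-1}$ into $\bD^k$ fix the two objects $0,1$.

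By the adjunction $S\dashv\Mor$ of \cref{suspi}, a lifting square \eqref{filler2} for $\phi$ in dimension $k\geq 2$ is the same as the following data:
\begin{itemize}
\item a pair of objects $X,Y\in\mC$;
\item a lifting square of the same shape in dimension $k-1$ for $\phi_{X,Y}:\Mor_\mC(X,Y)\to\Mor_\mD(\phi X,\phi Y)$.
\end{itemize}
Fillers $\bD^k\to\mC$ correspond to fillers $\bD^{k-1}\to\Mor_\mC(X,Y)$. Moreover, by definition, a $k$-morphism $\alpha$ of $\mC$ with $\alpha(0)=X$, $\alpha(1)=Y$ is $\phi$-cocartesian if and only if, for every $X'\to X$ and $Y\to Y'$, its whiskering is a $\phi_{X',Y'}$-cocartesian $(k-1)$-morphism. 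Taking identities shows in particular that $\alpha$ is then $\phi_{X,Y}$-cocartesian.

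(2)$\Rightarrow$(1): The case $k=1$ is the assumed $1$-anticocartesian condition. For $2\leq k\leq n$, take the adjoint square for $\phi_{X,Y}$ and choose a $\phi_{X,Y}$-cocartesian filler $\beta$, using that $\phi_{X,Y}$ is $(n-1)$-anticocartesian. By the third condition in (2), every whiskering of $\beta$ along $X'\to X$, $Y\to Y'$ is $\phi_{X',Y'}$-cocartesian. Hence the corresponding $k$-morphism of $\mC$ is $\phi$-cocartesian and fills the original square.

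(1)$\Rightarrow$(2): $\phi$ is $1$-anticocartesian trivially. For the hom functors, a square for $\phi_{X,Y}$ in dimension $k-1\leq n-1$ is adjoint to a square for $\phi$ in dimension $k$. A $\phi$-cocartesian filler is then in particular $\phi_{X,Y}$-cocartesian, so $\phi_{X,Y}$ is $(n-1)$-anticocartesian.

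The main point is the whiskering condition. Let $\beta$ be an arbitrary $\phi_{X,Y}$-cocartesian $(k-1)$-morphism of $\Mor_\mC(X,Y)$, with $k\leq n$. Choose a $\phi$-cocartesian $k$-morphism $\gamma$ of $\mC$ filling the square with the same source and the same image in $\mD$ as $\beta$. Then $\gamma$ is also $\phi_{X,Y}$-cocartesian.

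Now use uniqueness of cocartesian lifts. Since $\gamma$ is $\phi_{X,Y}$-cocartesian, the factorization property (the pullback square \eqref{filler} at the appropriate level) gives a $(k-1)$-morphism $\epsilon$ with $\beta\simeq\epsilon\circ\gamma$ lying over an identity in $\Mor_\mD$. By \cref{elemen}, applied to $\beta$ and $\gamma$ both being cocartesian, $\epsilon$ is $\phi_{X,Y}$-cocartesian. Being inverted by $\phi_{X,Y}$, it is therefore an equivalence, so $\beta\simeq\gamma$. Consequently the whiskerings of $\beta$ agree with those of $\gamma$, which are $\phi_{X',Y'}$-cocartesian by definition of $\gamma$ being $\phi$-cocartesian.

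The main obstacle is to make this uniqueness argument precise. One has to check that composing $(k-1)$-morphisms sharing endpoint cells inside $\Mor_\mC(X,Y)$ is compatible with the inductive definition of cocartesianness, i.e.\ that \cref{elemen} applies in all dimensions. The second composition clause of \cref{elemen} should supply this, and I would verify it by induction on $k$.
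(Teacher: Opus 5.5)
Your argument is correct. The paper does not prove this proposition; it imports it from \cite[Proposition 3.2.12.]{gepner2026fibrations}, so there is no in-text proof to compare with. Your approach is the natural one: identify the source inclusion $\bD^{k-1}\to\bD^k$, for $k\geq 2$, with the suspension of the source inclusion one dimension down, unwind lifting problems through $S\dashv\Mor$ (\cref{suspi}), and use uniqueness of cocartesian lifts for the whiskering clause. Two small points are worth tightening.

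First, you read the whiskering clause in (2) as concerning $\phi_{X,Y}$-cocartesian cells of dimension at most $n-1$, so that it is vacuous for $n=1$. This is the right reading, and it is exactly what both of your directions use. For cells of higher dimension the clause can fail: a $1$-anticocartesian fibration over $\bD^2$ need not have whiskering preserve cocartesian $2$-cells.

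Second, the cancellation step is not contained in \cref{elemen}, which only records closure under composition. What you need is the other half of the pasting law for pullbacks, and only one level down.
\begin{itemize}
\item Taking identity whiskerings repeatedly, $\beta$ and $\gamma$ are cocartesian in the sense of the $1$-morphism definition. Here they are regarded as $1$-morphisms of the iterated morphism $\infty$-category $\mN$ of cells parallel to their common source, for the induced functor to the corresponding iterated morphism $\infty$-category of $\mD$.
\item The pullback property of $\gamma$ gives the factorization $\beta\simeq\epsilon\circ\gamma$ with $\epsilon$ over an identity.
\item Pullback pasting then shows that $\epsilon$ is cocartesian. By the last clause of \cref{elemen}, a cocartesian $1$-morphism over an equivalence is an equivalence.
\end{itemize}
Hence $\beta\simeq\gamma$ as points of $\Map(\bD^{k-1},\Mor_\mC(X,Y))$, and cocartesianness is invariant under equivalence. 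So the whiskerings of $\beta$ are cocartesian because those of $\gamma$ are. In particular, you do not need $\epsilon$ to be $\phi_{X,Y}$-cocartesian in the full recursive sense, nor closure of cocartesian cells under composition in higher dimensions. The obstacle you flagged therefore disappears.
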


\begin{definition}Let $n \geq 0$ and $\sigma$ a commutative square
\begin{equation}\label{sqco}
\begin{xy}
\xymatrix{
X \ar[d]^\psi \ar[r]^\kappa
& Y \ar[d]^\phi
\\ 
S \ar[r]^\rho & T
}
\end{xy}\end{equation}

\begin{enumerate}[\normalfont(1)]\setlength{\itemsep}{-2pt}
\item A commutative square \ref{sqco} is a map of $n$-anticocartesian fibrations if $\psi, \phi$ are $n$-anticocartesian fibrations and $\kappa$ sends $\psi$-cocartesian morphisms of dimension smaller or equal $n$ to $\phi$-cocartesian morphisms.

\item A commutative square \ref{sqco}
is a map of $n$-cocartesian fibrations if $\sigma^\co$ is a map of $n$-anticocartesian fibrations.

\item A commutative square \ref{sqco}
is a map of $n$-anticartesian fibrations if $\sigma^\op$ is a map of $n$-anticocartesian fibrations.

\item A commutative square \ref{sqco}
is a map of $n$-cartesian fibrations if $\sigma^\coop$ is a map of $n$-anticocartesian fibrations.

\item A commutative square \ref{sqco}
is a map of (anti)(co)cartesian fibrations if it is a map of
$n$-(anti)(co)cartesian fibrations for every $n \geq 1.$

\item A map of (anti) $n$-(co)cartesian fibrations over $\mD$ is a map of $n$-(anti)(co)cartesian fibrations such that $\rho$ is the identity.

\item A map of (anti)(co)cartesian fibrations over $\mD$ is a map of (anti) (co)cartesian fibrations such that $\rho$ is the identity.
\end{enumerate}

\end{definition}

\begin{notation}\label{notfib1}

Let $$\coCart, \Cart, \co\overline{\Cart}, \overline{\Cart} \subset \Fun(\bD^1,\infty\scat)$$
be the respective subcategories of cocartesian, cartesian, anticocartesian, anticartesian fibrations and morphisms of such. 

\end{notation}

\begin{notation}\label{notfib2} Let $S$ be an $\infty$-category.
Let $$ \infty\scat^\cocart_{/S}, \infty\scat^\cart_{/S}, \infty\scat^{\overline{\cocart}}_{/S}, \infty\scat^{\overline{\cart}}_{/S} \subset \infty\scat_{/S} $$ be the respective subcategories of cocartesian, cartesian, anticocartesian, anticartesian fibrations and maps of such.

\end{notation}

\begin{remark}\label{seqcol}
It follows immediately from the definition that the subcategories of
\cref{notfib1} and \cref{notfib2} admit small filtered colimits preserved by the respective inclusions to
$\Fun(\bD^1,\infty\scat), \infty\scat_{/S}.$
\end{remark}

The following is \cite[Corollary 3.3.3]{gepner2026fibrations}:

\begin{corollary}\label{fiberwiseeq}

A map $\kappa: \mB \to \mC$ of cocartesian fibrations over $\mD$ is an equivalence if and only for every $Z \in \mD$ the following induced functor is an equivalence:
$$ \{Z\} \times_\mD \mB \to  \{Z\} \times_\mD \mC. $$
    
\end{corollary}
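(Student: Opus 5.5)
We prove the nontrivial direction; the ``only if'' direction is immediate, since equivalences are stable under pullback along $\{Z\} \to \mD$. The plan is to show that $\kappa$ is essentially surjective and fully faithful on morphism $\infty$-categories. Because $\mD$ and the morphism categories are $\infty$-dimensional, the argument is arranged as an induction on truncation level.

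Since $\infty\Cat$ is the limit of the $n\Cat$ along the $\iota_n$, a functor is an equivalence if and only if $\iota_n$ of it is an equivalence for every $n \geq 0$. We therefore prove, by induction on $n$, the following statement $P(n)$: for every map of fibrations over a base, of any of the four types (cocartesian, cartesian, anticocartesian, anticartesian), which is a fiberwise equivalence, $\iota_n$ of the map is an equivalence. Allowing all four types at once is harmless, because the involutions $(-)^\co$, $(-)^\op$, $(-)^{\co\op}$ interchange the types and commute with $\iota_n$ and with fibers up to the corresponding involution. So it suffices to treat anticocartesian fibrations, where \cref{cocarto} applies directly.

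For $n=0$ we need that $\kappa$ is surjective on equivalence classes of objects and that $\iota_0$ of $\kappa$ is an equivalence. Every object of $\mB$ or $\mC$ lies in some fiber $\{Z\}\times_\mD(-)$, so the fiberwise equivalences give essential surjectivity. The mapping-space part is absorbed into the step below, applied with $n-1 = 0$ on morphism categories. For the inductive step, fix $X,Y \in \mB$ over $A,Z \in \mD$. By \cref{cocarto}, the functors $\phi_{X,Y}\colon \Mor_\mB(X,Y)\to\Mor_\mD(A,Z)$ and the corresponding functor for $\mC$ are again anticocartesian fibrations. The map $\kappa_{X,Y}$ is a map of such fibrations over $\Mor_\mD(A,Z)$, because a map of fibrations preserves cocartesian cells in all dimensions.

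It remains to identify the fiber of $\kappa_{X,Y}$ over a $1$-morphism $f\colon A\to Z$. Choose a cocartesian lift $\bar f\colon X\to f_!X$. The pullback square \eqref{filler} for $\bar f$, restricted to the fiber over $f$, gives
$$\{f\}\times_{\Mor_\mD(A,Z)}\Mor_\mB(X,Y)\simeq \Mor_{\{Z\}\times_\mD\mB}(f_!X,Y),$$
and the same holds in $\mC$. Because $\kappa$ carries $\bar f$ to a cocartesian lift of $f$, this identification is compatible with $\kappa$. Hence the fiber of $\kappa_{X,Y}$ over $f$ is the functor induced on morphism categories by the fiberwise equivalence $\{Z\}\times_\mD\mB\to\{Z\}\times_\mD\mC$, and so it is an equivalence. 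By $P(n-1)$, $\iota_{n-1}\kappa_{X,Y}$ is an equivalence for all $X,Y$. Together with essential surjectivity this shows that $\iota_n\kappa$ is an equivalence, which completes the induction.

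The main obstacle is bookkeeping. First, \cref{cocarto} hands back fibrations of the twisted type on morphism categories, and one must check that the inductive hypothesis covers that type; this is exactly why $P(n)$ is stated for all four types. Second, the fiber identification requires cocartesian lifts to be compatible both with $\kappa$ and with the dualities. I would check this first in the $(-)^\co$-dual anticocartesian formulation, where the lifting square \eqref{filler} applies verbatim.
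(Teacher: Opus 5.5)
The paper does not prove this statement; it cites it from \cite[Corollary 3.3.3]{gepner2026fibrations}, so your argument has to stand on its own. Most of it does. Passing to $\kappa^{\co}$ is fine, as is using \cref{cocarto} to see that $\kappa_{X,Y}$ is again a map of anticocartesian fibrations over $\Mor_\mD(A,Z)$. So is identifying its fiber over $f$ with $\Mor_{\{Z\}\times_\mD\mB}(f_!X,Y)\to\Mor_{\{Z\}\times_\mD\mC}(\kappa(f_!X),\kappa Y)$ via a cocartesian lift of $f$.

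Two small remarks. First, \cref{cocarto} keeps the anticocartesian type, so you do not need all four types in $P(n)$. Second, the claim that $\kappa_{X,Y}$ preserves cocartesian cells uses that the $\phi_{X,Y}$-cocartesian $j$-cells are exactly the $\phi$-cocartesian $(j+1)$-cells with endpoints $X,Y$. This follows from existence and uniqueness of cocartesian lifts.

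The genuine gap is the base case. $P(0)$ asserts that $\iota_0\kappa$ is an equivalence of spaces, but you only prove essential surjectivity and defer the rest to ``the step below''. That step, at $n=1$, applies $P(0)$ to the functors $\kappa_{X,Y}$, which is exactly what is unproven, so the induction never starts.

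The same issue reappears in the inductive step, where you conclude from essential surjectivity and full faithfulness that $\iota_n\kappa$ is an equivalence. In this paper $n$-categories are explicitly not necessarily univalent. A functor of such $(n-1)\Cat$-enriched categories is an equivalence only if it is an equivalence on spaces of objects and on all morphism objects, so essential surjectivity does not suffice.

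The repair is short and needs no fibration hypothesis. Since $\iota_0\colon\infty\Cat\to\mS$ is a right adjoint, $\iota_0(\{Z\}\times_\mD\mB)\simeq\{Z\}\times_{\iota_0\mD}\iota_0\mB$, and likewise for $\mC$. So $\iota_0\kappa$ is a map of spaces over $\iota_0\mD$ that is an equivalence on the fiber over every point, hence an equivalence. This proves $P(0)$ for any map over any base that is a fiberwise equivalence.

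In the inductive step, use this instead of essential surjectivity. The functor $\iota_n\kappa$ induces $\iota_0\kappa$ on object spaces and $\iota_{n-1}\kappa_{X,Y}$ on morphism objects, since $\Mor_{\iota_n\mB}(X,Y)\simeq\iota_{n-1}\Mor_\mB(X,Y)$. Both are equivalences, the latter by $P(n-1)$, so $\iota_n\kappa$ is an equivalence. With this change your induction goes through.
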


The following is \cite[Corollary 3.3.6]{gepner2026fibrations}:

\begin{corollary}\label{paracocart}
A map $\kappa: \mB \to \mC$ of cocartesian fibrations over $\mD$ is a cocartesian fibration if and only if for every $Z \in \mD$ the induced functor 
$
\{Z\} \times_\mD \mB \to  \{Z\} \times_\mD \mC
$
is a cocartesian fibration and for every morphism $Z \to W$ in $\mD$ the induced functor
$\{W\} \times_\mD \mB \to \{Z\} \times_\mD \mB$
is a map of cocartesian fibrations.
   
\end{corollary}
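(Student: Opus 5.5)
The plan is to reduce everything to a comparison of morphism $\infty$-categories, and to handle higher cells by induction on dimension using \cref{cocarto}. Write $p:\mB\to\mD$ and $q:\mC\to\mD$ for the two cocartesian fibrations, so that $p=q\circ\kappa$. For a morphism $h:Z\to V$ in $\mD$, write $h_!$ for the induced transport functors on fibers.

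The basic computation is the following. Let $b\in\mB_W$ and $x\in\mB_V$. Since $p$ is cocartesian, the functor
\[
\Mor_\mB(b,x)\to\Mor_\mD(W,V)
\]
has fiber over $h$ equivalent to $\Mor_{\mB_V}(h_!b,x)$, by precomposing with a $p$-cocartesian lift $b\to h_!b$. The same holds for $\mC$, and $\kappa$ is compatible with these identifications because it sends $p$-cocartesian morphisms to $q$-cocartesian ones.

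\emph{Key lemma.} A $1$-morphism $u:b_1\to b_2$ in a fiber $\mB_W$ is $\kappa$-cocartesian if and only if, for every $h:W\to V$, the morphism $h_!u$ is $\kappa_V$-cocartesian. To prove it, check the square \eqref{filler} for $\kappa$ fiberwise over $\Mor_\mD(W,V)$. By the identification above, it becomes the corresponding square for $\kappa_V$ and $h_!u$. A square of $\infty$-categories over $\Mor_\mD(W,V)$ is a pullback exactly when it is a pullback on each fiber. This last point needs care: the comparison functor is a map of categories over $\Mor_\mD(W,V)$, and it suffices to check that it is an equivalence fiberwise. I plan to justify this using \cref{fiberwiseeq}, applied to the cocartesian fibrations obtained from \cref{cocarto}.

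I would also use a second, easier fact. By \cref{elemen}, any $p$-cocartesian morphism is $\kappa$-cocartesian, since its image under $\kappa$ is $q$-cocartesian.

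\emph{``If'' direction, dimension $1$.} Let $b\in\mB_W$ and let $c:\kappa(b)\to c'$ be a morphism in $\mC$ lying over $g:W\to V$. Factor $c$ as a $q$-cocartesian morphism $\kappa(b)\to g_!\kappa(b)\simeq\kappa(g_!b)$ followed by a morphism $c''$ in the fiber $\mC_V$. Lift the first factor by the $p$-cocartesian morphism $b\to g_!b$, which is $\kappa$-cocartesian by the fact above. Lift $c''$ by a $\kappa_V$-cocartesian morphism $u$, which exists by hypothesis. By hypothesis all transports $h_!u$ are again $\kappa$-cocartesian in their fibers, so $u$ is $\kappa$-cocartesian by the key lemma. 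The composite of the two lifts is then $\kappa$-cocartesian by \cref{elemen}.

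\emph{Higher cells.} Here I use \cref{cocarto}. For $x,y\in\mB$, the functor
\[
\Mor_\mB(x,y)\to\Mor_\mC(\kappa x,\kappa y)
\]
is a map of cocartesian fibrations over $\Mor_\mD(px,py)$, again by \cref{cocarto}, now applied to $p$ and $q$. By the fiber identification, its fibers are the functors $\Mor_{\mB_V}(h_!x,y)\to\Mor_{\mC_V}(h_!\kappa x,\kappa y)$. These are cocartesian fibrations because $\kappa_V$ is one. Their transports are induced by $\mB_V$-functoriality and by $h_!$, and they preserve cocartesian morphisms by hypothesis. So the statement itself applies to these morphism functors, one dimension lower. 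The remaining condition in \cref{cocarto}, that pre- and postcomposition preserve cocartesian cells, follows likewise from the hypotheses on the $h_!$ and the key lemma.

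I would organise the whole argument as an induction on $n$ for $n$-cocartesian fibrations, passing to the limit $n=\infty$ at the end. I expect this inductive bookkeeping to be the main obstacle: the statement has to be strengthened so that the induction closes, namely to also describe the $\kappa$-cocartesian cells as exactly those that are fiberwise cocartesian and stable under transport.

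\emph{``Only if'' direction.} Fiberwise cocartesianness follows because cocartesian fibrations are stable under base change along $\{Z\}\to\mD$, and a $\kappa$-cocartesian cell lying in a fiber is $\kappa_Z$-cocartesian by \cref{elemen}. For the transports, the key lemma in the ``only if'' direction shows that $h_!$ sends $\kappa_W$-cocartesian morphisms to $\kappa_V$-cocartesian ones. This uses that, by uniqueness of cocartesian lifts, the $\kappa_W$-cocartesian morphisms are exactly the $\kappa$-cocartesian morphisms lying in $\mB_W$. Higher cells are handled by the same morphism-category induction.
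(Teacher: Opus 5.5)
The paper does not prove this statement. It is imported from \cite[Corollary 3.3.6]{gepner2026fibrations}, so there is no in-paper argument to compare with, and your proposal has to be judged on its own.

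Its one-dimensional core is correct. The fiber identification $\Mor_\mB(b,x)_h\simeq\Mor_{\mB_V}(h_!b,x)$, your key lemma for morphisms lying in a fiber, and the factorisation of a lift into a $p$-cocartesian morphism (which is $\kappa$-cocartesian by \cref{elemen}) followed by a fiberwise one together settle dimension one in both directions. You also read the arrow $\{W\}\times_\mD\mB\to\{Z\}\times_\mD\mB$ correctly as the transport $\mB_Z\to\mB_W$. The part about higher cells, however, contains a concrete error and leaves the real verification undone.

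\textbf{The variance is wrong.} By \cref{oppo}, $(\phi^\co)_{X,Y}\simeq(\phi_{X,Y})^\op$. Since \cref{cocarto} is a statement about anticocartesian fibrations, it shows that the morphism functors of a \emph{cocartesian} fibration are \emph{cartesian} fibrations. This is why the proof of \cref{glue} speaks of enrichment in cartesian fibrations. Consequently:
\begin{itemize}
\item $\Mor_\mB(x,y)\to\Mor_\mC(\kappa x,\kappa y)$ is a map of cartesian fibrations over $\Mor_\mD(px,py)$, not cocartesian ones.
\item Its transports run contravariantly along $2$-cells $\eta\colon h\Rightarrow h'$ of $\mD$, as precomposition with the induced morphism $h_!x\to h'_!x$ in $\mB_V$.
\item ``The statement itself, one dimension lower'' does not apply; what applies is its $(-)^{\co\op}$-dual.
\end{itemize}
So the induction must be set up for all variances at once, or for anticocartesian fibrations with a conversion by $(-)^\co$ at each step. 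Inside the key lemma you likewise need the cartesian version of \cref{fiberwiseeq}. There you also need that $\Mor_\mB(b_1,x)\times_{\Mor_\mC(\kappa b_1,\kappa x)}\Mor_\mC(\kappa b_2,\kappa x)$ is again a cartesian fibration over $\Mor_\mD(W,V)$, with cartesian cells detected componentwise, so that the comparison functor is a map of fibrations. This holds because morphism objects and the defining pullback squares commute with limits, but it has to be said.

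\textbf{The third condition of \cref{cocarto} is only asserted.} This is where the hypothesis on the $h_!$ actually enters above dimension one. The strengthening you need is lighter than you fear.
\begin{itemize}
\item Your key lemma never uses that $\kappa$ is a fibration. Its dual, applied to $\kappa_{x,y}$ over $\Mor_\mD(W,V)$, shows that a $1$-cell of $\Mor_\mB(x,y)$ over an identity is $\kappa_{x,y}$-cartesian iff its counterpart in $\Mor_{\mB_V}(h_!x,y)$ is $(\kappa_V)_{h_!x,y}$-cartesian.
\item The reason is that the transports are precompositions in $\mB_V$, which preserve such cells by \cref{cocarto} applied to $\kappa_V$. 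So here the input is that $\kappa_V$ is a fibration, not the hypothesis on $h_!$ as you wrote.
\item By cancellation, every $\kappa_{x,y}$-cartesian $1$-cell is such a fiber cell composed with a $p_{x,y}$-cartesian one.
\end{itemize}
Now compute whiskering fiberwise.
\begin{itemize}
\item Precomposition with $x'\to x$ over $f$ sends the fiber over $h$ to the fiber over $hf$ by precomposing in $\mB_V$ with $h_!$ of the fiber component $f_!x'\to x$. This is covered by \cref{cocarto} for $\kappa_V$.
\item Postcomposition with $y\to y'$ over $g$ acts as $g_!$ followed by postcomposition in $\mB_{V'}$ with the fiber component $g_!y\to y'$. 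This uses exactly that $g_!$ is a map of cocartesian fibrations.
\end{itemize}

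The ``only if'' direction for cells of dimension $\geq 2$ needs the same ingredients. In addition, use that $h_!$ on $\Mor_{\mB_W}(b_1,b_2)$ is postcomposition with the cocartesian lift $b_2\to h_!b_2$ followed by the fiber identification.

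With these points supplied the induction closes. As written, the proposal establishes the statement only in dimension one.
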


We have the following local definition of cocartesian fibrations, which is \cite[Corollary 3.3.8]{gepner2026fibrations}:

\begin{corollary}\label{thetalocal} Let $\phi: \mC \to \mD$ be a functor.
The following are equivalent:

\begin{enumerate}[\normalfont(1)]\setlength{\itemsep}{-2pt}
\item The functor $\phi$ is a cocartesian fibration.

\item For every $\theta \in \Theta$ the pullback of $\phi$ along any functor $\theta \to \mD$ is a cocartesian fibration.

\item The pullback of $\phi$ along any functor $\bbDelta^n \to \mD$
for $n \geq 0 $ is a cocartesian fibration.

\item The pullback of $\phi$ along any functor $\cube^n \to \mD$
for $n \geq 0 $ is a cocartesian fibration.

\end{enumerate}

\end{corollary}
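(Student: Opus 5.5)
The plan is to prove the cycle of implications $(1)\Rightarrow(4)\Rightarrow(3)\Rightarrow(2)\Rightarrow(1)$, up to reordering. The first and last steps are nearly formal. Cocartesian fibrations are stable under base change: for cocartesian morphisms this is the last part of \cref{elemen}, applied in each dimension, together with the fact that fillers of squares \eqref{filler2} pull back. This gives $(1)\Rightarrow(2),(3),(4)$, since oriented simplices, oriented cubes and the objects of $\Theta$ are $\infty$-categories mapping to $\mD$. Moreover every disk $\bD^k$ lies in $\Theta$, and so does the source inclusion $\bD^{k-1}\to\bD^k$. Hence the lifting condition in the definition of (the $\co$-dual of) an anticocartesian fibration can be tested after pulling back along $\bD^k\to\mD$.

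The subtle point is that being $\phi$-cocartesian is defined using the morphism $\infty$-categories of all of $\mC$. A morphism that is cocartesian in a pullback $\theta\times_\mD\mC$ need not be cocartesian in $\mC$. So the real content is the converse direction, $(2)\Rightarrow(1)$, and its variants with $\bbDelta^n$ or $\cube^n$.

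For $(2)\Rightarrow(1)$ I would argue by induction on dimension using \cref{cocarto}, in its $\co$-dual form. Pulling back along $\bD^1\to\mD$ and using that $\bD^1\in\Theta$ gives lifts of $1$-morphisms. To see that such a lift $f:X\to Y$ is cocartesian in $\mC$, I need the square \eqref{filler} to be a pullback for every $Z\in\mC$. Mapping in a test object $\bD^m$, a point of the pullback amounts to a functor out of a wedge $\bD^1\vee S(\bD^{m-1})$-type object of $\Theta$, namely a gluing of $\bD^1$ with a disk along the object $Y$ or $X$. Pulling $\phi$ back along the image of this $\Theta$-object in $\mD$ reduces the pullback condition to the corresponding one in $\theta\times_\mD\mC$. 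There it holds because that pullback is a cocartesian fibration and cocartesian lifts are unique.

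The same method handles the conditions on $\phi_{X,Y}$ and the compatibility with pre- and post-composition in \cref{cocarto}. Morphism $\infty$-categories of $\theta\times_\mD\mC$ are computed as pullbacks of those of $\mC$, and $\Theta$ is closed under suspension and wedges. So the data relevant for $\phi_{X,Y}$ over a disk $\bD^{k-1}\to\Mor_\mD(\phi X,\phi Y)$ is detected by the suspended-and-wedged object $\bD^1\vee S(\bD^{k-1})\vee\bD^1$ of $\Theta$. Induction then gives $(1)$. \textbf{The main obstacle} is this step, namely showing that cocartesianness is local, i.e.\ detected in pullbacks along $\Theta$-shapes. If the direct argument gets messy, I would fall back on the Segal/density description of \cref{theta}, which writes the relevant mapping data as limits over $\Theta$.

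For $(3)\Rightarrow(2)$ and $(4)\Rightarrow(2)$ I would use density. By \cref{orientdense} and \cref{cubedense}, every $\theta\in\Theta$ is a colimit of oriented simplices, respectively cubes, canonically over $\theta$. Pulling back along $\theta\to\mD$ commutes with these colimits because $\infty\Cat$ is cartesian closed (\cref{carclo}), so colimits are universal. It then remains to show that a functor to $\theta$ whose pullback to every simplex (cube) over $\theta$ is a cocartesian fibration is itself one. Since disks are retracts of simplices and cubes (each $\bD^k$ is a quotient of $\bbDelta^k$ and of $\cube^k$ admitting a section), the lifting property over disks transfers directly. Cocartesianness of the chosen lifts is then checked by the same locality argument as above, now with test shapes replaced by simplices or cubes. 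Alternatively, $(3)\Leftrightarrow(2)$ follows by noting that the $(2)\Rightarrow(1)$ argument only used finitely many $\Theta$-shapes, each of which is a retract of, or is covered by, oriented simplices (respectively cubes).
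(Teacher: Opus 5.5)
The genuine gap is in $(3)\Rightarrow(2)$ and $(4)\Rightarrow(2)$. Your $(1)\Rightarrow(2),(3),(4)$ is fine. The locality strategy for $(2)\Rightarrow(1)$ is sound in outline: lifts come from disks, cocartesianness from wedge shapes plus uniqueness of cocartesian lifts, and the induction runs via \cref{cocarto}. The paper gives no in-text proof to compare against, only a citation to \cite[Corollary 3.3.8]{gepner2026fibrations}.

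The density step is not justified. Cartesian closedness (\cref{carclo}) only says that $-\times Z$ preserves colimits. Colimits in $\infty\Cat$ are not universal. Whether base change along $\theta\times_\mD\mC\to\theta$ preserves such colimits is an exponentiability question. Exponentiability fails in general (the paper's example is the endpoint inclusion $\Delta^1\to\Delta^2$). For a locally cocartesian fibration it is equivalent to being cocartesian by \cref{cocexp}, so this step presupposes a large part of what you are proving. Even granting it, writing $\theta\times_\mD\mC$ as a colimit of cocartesian fibrations over simplices gives nothing without a gluing result like \cref{inverso}. The paper only obtains that after \cref{Groeq} and \cref{expone}. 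Indeed, your sentence ``it then remains to show \ldots'' is just $(3)\Rightarrow(1)$ for $\mD=\theta$, so the reduction gains nothing.

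Your fallback does not close the gap either. Retracting $\bD^k$ off $\bbDelta^k$ or $\cube^k$ gives existence of lifts. But in your $(2)\Rightarrow(1)$ argument, cocartesianness of the lifts came from knowing that pullbacks along \emph{non-disk} shapes are cocartesian fibrations. These shapes are $\bD^1\vee S(\bD^m)$, $\bD^1\vee S(\bD^{k-1})\vee\bD^1$, and the iterated suspensions used for $\phi_{X,Y}$. Nothing in your proposal derives this from (3) or (4), and being ``covered by'' simplices or cubes is the same gluing problem again.

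What is missing is a retract lemma. Suppose $\theta$ is a retract of $\bbDelta^N$ (resp.\ $\cube^N$), with $rs=\id_\theta$. Then for $g:\theta\to\mD$, the pullback along $g$ is the base change along $s$ of the pullback along $g\circ r$. It is therefore a cocartesian fibration by (3) (resp.\ (4)) and base-change stability. For instance, $\bD^1\vee\bD^1$ is such a retract of $\bbDelta^2$: include it as the spine, and retract by sending the long edge to the composite and the $2$-cell to an identity. It is similarly a retract of $\cube^2$. You would need such retractions for every shape your induction uses, or for all of $\Theta$. Then $(3),(4)\Rightarrow(2)$ is immediate and the density discussion can be dropped.

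There is also a smaller point in $(2)\Rightarrow(1)$. Compatibility with whiskering cannot be read off a single $\Theta$-shape, because a cell of $\Mor_\mC(X',Y')$ issuing from a whiskered composite occurs in no object of $\Theta$. The shape $\bD^1\vee S(\bD^{k-1})\vee\bD^1$ only gives cocartesianness over the image disk. You must then use uniqueness of cocartesian lifts over that disk to transfer it to the shapes $S(\bD^1\vee S(\bD^m))$ that test $\phi_{X',Y'}$.
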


\begin{definition}

Let $ n \geq 0 $ and $\mA,\mB,\mC$ be $\infty$-categories.

\begin{enumerate}[\normalfont(1)]\setlength{\itemsep}{-2pt}
\item An $n$-bifibration 
is a functor $\mC \to \mA \times \mB $ that is a map of $n$-cartesian fibrations over $\mA$ and $n$-cocartesian fibrations over $\mB.$

\item A $n$-antibifibration 
is a functor $\mC \to \mA \times \mB $ that is a map of $n$-anticartesian fibrations over $\mA$ and $n$-anticocartesian fibrations over $\mB.$

\item A bifibration 
is a functor $\mC \to \mA \times \mB $ that is a $n$-bifibration for every $n \geq 0.$

\item An antibifibration 
is a functor $\mC \to \mA \times \mB $ that is a $n$-antibifibration for every $n \geq 0.$
\end{enumerate}

\end{definition}

\begin{remark}
Let $X$ be an $\infty$-category.
We will show in \cref{targetfibr} that the source and target projections
\[
\Fun^{\oplax}(\bD^1,X)\to X\times X\qquad\text{and}\qquad \Fun^{\lax}(\bD^1,X)\to X\times X
\]
form a bifibration and antibifibration, respectively.
\end{remark}

\begin{definition}
Let $ n \geq 0 $ and $\mA,\mB,\mC, \mA',\mB',\mC'$ be $\infty$-categories.
A map of (anti) $n$-bifibrations is a commutative square
\begin{equation}\label{sqfib}
\xymatrix{
\mC \ar[r] \ar[d] & \mC' \ar[d] \\
\mA \times \mB \ar[r] & \mA' \times \mB'
}
\end{equation}
whose vertical functors are (anti) $n$-bifibrations,
that induces a map of (anti) $n$-cartesian fibrations after projection to the first factor and induces a map of (anti) $n$-cocartesian fibrations after projection to the second factor.

A map of (anti) bifibrations is a commutative square
\ref{sqfib} that is a map of (anti) $n$-bifibrations for every $n \geq 0.$

\end{definition}

The following is \cite[Corollary 4.1.7.]{gepner2026fibrations}:

\begin{corollary}\label{targetfibr}
Let $\mA, \mC$ be $\infty$-categories.
The functor $$\Fun^\oplax(S(\mA),\mC) \simeq \mC \,{{\vec{\times}}}_{\Fun^\oplax(\mA,\mC)} \mC \to \mC \times \mC $$
is a bifibration.
\end{corollary}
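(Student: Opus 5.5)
We must show that $p:\Fun^\oplax(S(\mA),\mC)\to\mC\times\mC$ is a bifibration: a map of cartesian fibrations over the first factor (source) and of cocartesian fibrations over the second factor (target). The first step identifies $\Fun^\oplax(S(\mA),\mC)$ with the oriented pullback. By adjunction, a functor $\mD\boxtimes S(\mA)\to\mC$ amounts to two functors $\mD\to\mC$ together with a functor $\mD\boxtimes\mA\to\mC$ compatible with them, using that $(-)\boxtimes S(\mA)$ is computed as the pushout $\mD\coprod_{\mD\boxtimes\mA\boxtimes\partial\bD^1}(\mD\boxtimes\mA\boxtimes\bD^1)$ modulo collapse. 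Equivalently, one applies \cref{0desc} to the cotensor of the bipointed object $S(\mA)$, so that $\Fun^\oplax(S(\mA),\mC)\simeq \mC\times_{\Fun^\oplax(\mA,\mC)}\Fun^\oplax(\bD^1,\Fun^\oplax(\mA,\mC))\times_{\Fun^\oplax(\mA,\mC)}\mC$ along the constant-functor maps. Hence it suffices to prove the general statement: for functors $F:\mB\to\mE$ and $G:\mB'\to\mE$, the projection $\mB\,\vec{\times}_\mE\,\mB'\to\mB\times\mB'$ is a bifibration. I will prove this by induction on the dimension of cells, using \cref{cocarto}.

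For the $1$-dimensional part, I would construct cocartesian lifts over the second factor explicitly. Given an object $(b,b',\sigma:F b\to G b')$ and a morphism $g:b'\to b''$, the lift is $(b,b'',G(g)\circ\sigma)$. To check that this lift is cocartesian, I use \cref{homs}, which identifies the morphism categories as oriented pullbacks $\Mor_{\mB'}(b',c')\,\vec{\times}_{\Mor_\mE(Fb,Gc')}\Mor_\mB(b,c)$. The cocartesian condition then reduces to a pullback square of oriented pullbacks, which follows from the pasting law \cref{pasting}. The cartesian lift over the first factor is dual: precompose with $F(f)$. Pasting again shows that these lifts are preserved, so that we get maps of fibrations over each factor. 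This uses that projecting to the other factor preserves the relevant lifts, since their components there are identities.

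For the higher dimensional part, I apply \cref{cocarto} (and its $\op$, $\co$ variants) to the morphism functors. By \cref{homs}, each of these is again a projection from an oriented pullback, $\Mor_{\mB'}\,\vec{\times}\,\Mor_\mB\to\Mor_{\mB'}\times\Mor_\mB$. The factors now appear in swapped order, which is exactly compatible with the $\co$ involution in the definition of $n$-cocartesian fibrations, by \cref{oppo}. Induction therefore gives the $n$-(co)cartesian property for all $n$. What remains is the compatibility that composition with morphisms $X'\to X$, $Y\to Y'$ preserves (co)cartesian cells. This follows from the naturality of the equivalence in \cref{homs}, since composition acts factorwise.

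The main obstacle is making this induction honest. The $n=\infty$ case requires an argument valid at all dimensions simultaneously, and the orientation flips must be tracked carefully: the second factor becomes cartesian versus cocartesian after $\co$. I would handle this by proving the stronger statement for all oriented pullbacks at once, namely that they are $n$-bifibrations for every $n$, by induction on $n$, where each step invokes \cref{homs} and the inductive hypothesis.
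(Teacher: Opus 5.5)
This corollary is not proved in the paper; it is quoted from \cite[Corollary 4.1.7]{gepner2026fibrations}. So there is no in-text argument to compare with, and I assess your outline on its own.

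Most of the outline is sound. Passing to arbitrary oriented pullbacks costs nothing. The projection $\mB\,\vec{\times}_\mE\,\mB'\to\mB\times\mB'$ is the base change of $\Fun^\oplax(\bD^1,\mE)\to\mE\times\mE$ along $F\times G$, and bifibrations are stable under such base change by \cref{elemen}. Hence your general claim is equivalent to \cref{targetfi2}. Your explicit $1$-cell lifts and the pullback check via \cref{homs} are fine. The factor swap in \cref{homs} does match the alternation coming from \cref{oppo}: cartesianness over $\mB$ requires $\Mor_P(X,Y)\to\Mor_\mB(a,a')$ to be cocartesian, and $\Mor_\mB$ is now the second factor. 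Your worry about $n=\infty$ is unfounded, since $\infty$-(co)cartesian just means $n$-(co)cartesian for every finite $n$. One minor slip: $\mD\boxtimes\mA\boxtimes\bD^1$ corepresents $\Fun^\oplax(\mA,\Fun^\oplax(\bD^1,\mC))$, not $\Fun^\oplax(\bD^1,\Fun^\oplax(\mA,\mC))$. This is harmless, since the identification is part of the statement.

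The real gap is the compatibility clause of \cref{cocarto}, which you dismiss with ``composition acts factorwise''. It does not act factorwise. Precomposition with a morphism $(f_0,g_0,\alpha_0)\colon X'\to X$ of $P$ sends $(g,f,\beta)$ to $(gg_0,\,ff_0,\,(\beta F(f_0))\cdot(G(g)\alpha_0))$. Under \cref{homs} this functor is therefore induced by a map of cospans whose left square commutes only up to the transformation $G(-)\circ\alpha_0$. Postcomposition with $(f_1,g_1,\alpha_1)$ is likewise twisted, by $\alpha_1\circ F(-)$ on the right square.

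Two ingredients are needed to show these functors preserve the relevant (co)cartesian cells, and your proposal has neither.
\begin{enumerate}
\item You need an explicit description of those cells. For example, a cell is cartesian over the first factor exactly when its second component and its comparison cell are invertible. This description has to be carried through the induction. The bare bifibration property only says the second component is invertible, which does not let you recognize the image of a cartesian cell as cartesian.
\item You need $G(-)\circ\alpha_0$ and $\alpha_1\circ F(-)$ to have invertible naturality cells. This holds because composition in the $\infty$-category $\mE$ is a functor out of the cartesian product (cf.\ \cref{interchange}). These twists are therefore functors out of $\Mor_{\mB'}(b,b')\times\bD^1$, resp.\ $\Mor_\mB(a,a')\times\bD^1$, rather than out of a Gray product.
\end{enumerate}
With both in hand, the comparison cell of a whiskered (co)cartesian cell is a composite of invertible cells, and the clause follows.

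This is precisely the point where the argument uses that $\mE$ is an $\infty$-category and not merely an oriented category. For a genuinely oplax twist the step would generally fail, and your sketch never uses this.
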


\begin{corollary}\label{targetfi2}
The functor $\Fun^{\oplax}(\bD^1,\mC)\to\mC\times\mC$ is a bifibration whose fiber over $(A,B)$ is $\Mor_\mC(A,B)$.
\end{corollary}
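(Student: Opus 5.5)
The plan is to specialize \cref{targetfibr} to $\mA=\bD^0$. Since $S(\bD^0)\simeq\bD^1$ by \cref{suspi}, that corollary shows that $\Fun^\oplax(\bD^1,\mC)\to\mC\times\mC$ is a bifibration. It also provides the identification
$$\Fun^\oplax(\bD^1,\mC)\simeq \mC\,\vec{\times}_{\Fun^\oplax(\bD^0,\mC)}\,\mC.$$
The tensor unit of the Gray monoidal structure is $\bD^0$, so $\Fun^\oplax(\bD^0,\mC)\simeq\mC$. The right-hand side is therefore the oriented pullback $\mC\,\vec{\times}_{\mC}\,\mC$ of the cospan of identities.

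It remains to compute the fiber over a pair $(A,B)$. Pullbacks commute with pullbacks, so the fiber is the oriented pullback of the cospan $\{A\}\to\mC\leftarrow\{B\}$:
$$\{(A,B)\}\times_{\mC\times\mC}\bigl(\mC\,\vec{\times}_{\mC}\,\mC\bigr)\simeq \bD^0\,\vec{\times}_{\mC}\,\bD^0 .$$
This step uses the pasting law \cref{pasting} twice, once for each leg. The first application pulls back along $\{A\}\to\mC$ on the left factor, and the second along $\{B\}\to\mC$ on the right.

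To identify $\bD^0\vec{\times}_\mC\bD^0$ with $\Mor_\mC(A,B)$, I would use the universal property of the oriented pullback in $\infty\fcat$. A map $T\to \bD^0\vec\times_\mC \bD^0$ is the same as an object of $\Fun^\oplax(\bD^1,\Fun^{\oplax}(T,\mC))$ whose endpoints are the constant functors at $A$ and at $B$. By adjunction this amounts to a functor $T\boxtimes\bD^1\to\mC$ sending $T\boxtimes\{0\}$ to $A$ and $T\boxtimes\{1\}$ to $B$. Such a functor factors through the quotient of $T\boxtimes\bD^1$ by these two ends, and I expect this quotient to be the suspension $S(T)$.

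Granting that, the universal property of \cref{suspi} identifies these data with maps $T\to\Mor_\mC(A,B)$, naturally in $T$, and Yoneda finishes the argument. Alternatively, one can bypass the quotient computation by applying \cref{homs2} (or \cref{homs}) to the objects $A$ and $B$.

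The main obstacle is the identification of $\bD^0\cup_{T\boxtimes\{0\}}(T\boxtimes\bD^1)\cup_{T\boxtimes\{1\}}\bD^0$ with $S(T)$. I would check it on the dense generators $T=\cube^n$ from \cref{cubedense}, where $\cube^n\boxtimes\bD^1=\cube^{n+1}$. Collapsing the two end faces should give $S(\cube^n)$ by the inductive construction of the cubes, and both sides preserve colimits in $T$, so the identification extends to all $T$.
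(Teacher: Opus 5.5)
Your first step is the paper's own argument. The corollary is recorded without proof as the case $\mA=\bD^0$ of \cref{targetfibr}, and the fiber over $(A,B)$ is $\bD^0\vec{\times}_\mC\bD^0$; this follows at once from \cref{0desc}(2), without the pasting law. The paper does not reprove that this oriented pullback is $\Mor_\mC(A,B)$. It treats $\Mor_\mC(A,B)\simeq\bD^0\vec{\times}_\mC\bD^0$, dually $S(T)\simeq\bD^0\vec{+}_T\bD^0$, as a basic input asserted in the introduction. Via \cref{0desc}(1), the latter is exactly your lemma $Q(T):=\partial\bD^1\cup_{T\boxtimes\partial\bD^1}(T\boxtimes\bD^1)\simeq S(T)$. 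Note that $Q$ is left adjoint to the fiber functor $(\mC,A,B)\mapsto\{A\}\times_\mC\Fun^\oplax(\bD^1,\mC)\times_\mC\{B\}$, and $S$ is left adjoint to $\Mor$ by \cref{suspi}. So a natural equivalence $Q\simeq S$ is literally the statement you want, and your reduction moves all of the content into this lemma.

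That is where the gap is: your sketch does not prove the lemma.
First, checking $Q(\cube^n)\simeq S(\cube^n)$ cube by cube does not extend along the dense inclusion of \cref{cubedense}. You first need a natural transformation between the two colimit-preserving functors, for instance a natural map $T\boxtimes\bD^1\to S(T)$ collapsing the ends, and only then can you test it on cubes. Constructing that map is the real work, and you do not do it.
Second, the claim that collapsing the end faces of $\cube^{n+1}$ yields $S(\cube^n)$ is asserted rather than derived. Nothing in the paper's description of the cubes gives it, and it concerns a pushout in $\infty\Cat$, where strict Gray-cylinder pictures are not automatically valid.
Third, the proposed bypass via \cref{homs} or \cref{homs2} only shows that the hom-categories of $\bD^0\vec{\times}_\mC\bD^0$ are again oriented pullbacks of points, now in $\Mor_\mC(A,B)$. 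Turning that recursion into an equivalence needs a comparison functor compatible with composition, an induction on $n$, and a passage to the limit.

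Also watch the side of the tensor. Objects of $\Fun^\oplax(\bD^1,\Fun^\oplax(T,\mC))$ are functors $\bD^1\boxtimes T\to\mC$, not $T\boxtimes\bD^1\to\mC$. This matters: using the involution $(-)^\op$ of \cref{dua}, the end-collapse of $\bD^1\boxtimes T$ is the suspension of a dual of $T$, not $S(T)$. The correct side comes directly from $\Map(T,\Fun^\oplax(\bD^1,\mC))\simeq\Map(T\boxtimes\bD^1,\mC)$.

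The economical fix is to cite $\bD^0\vec{\times}_\mC\bD^0\simeq\Mor_\mC(A,B)$ as background, as the paper implicitly does. Otherwise you must actually construct the comparison map.
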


\begin{corollary}\label{slicecocart}
Let $\mC$ be an $\infty$-category and $X \in \mC.$

\begin{enumerate}[\normalfont(1)]\setlength{\itemsep}{-2pt}
\item The functor $ \mC_{//^\oplax X}\to \mC$ is a cartesian fibration. 

\item The functor $ \mC_{X //^\oplax}\to \mC$ is a cocartesian fibration.

\end{enumerate}
\end{corollary}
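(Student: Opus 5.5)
Both statements will follow from \cref{targetfibr} and \cref{targetfi2} by realizing the slice projections as base changes of the bifibration $\Fun^{\oplax}(\bD^1,\mC)\to\mC\times\mC$.

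The first step is to identify the oplax slice and coslice with fibers of this bifibration:
$$\mC_{//^\oplax X}\simeq \Fun^\oplax(\bD^1,\mC)\times_{\mC}\{X\}, \qquad \mC_{X//^\oplax}\simeq \{X\}\times_{\mC}\Fun^\oplax(\bD^1,\mC),$$
where the fiber is taken over the target, respectively the source, evaluation. For the first equivalence we test against an arbitrary $K\in\infty\Cat$. On one side, maps $K\to \mC_{//^\oplax X}$ correspond to maps $K\star\bD^0\to\mC$ sending the cone point to $X$. On the other side, maps $K\to\Fun^\oplax(\bD^1,\mC)\times_\mC\{X\}$ correspond to maps $K\boxtimes\bD^1\to\mC$ restricting to the constant functor at $X$ on $K\boxtimes\{1\}$. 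It therefore suffices to establish a pushout
$$K\star\bD^0\simeq (K\boxtimes\bD^1)\coprod_{K\boxtimes\{1\}}\bD^0.$$
By density (\cref{orientdense}) and the fact that both sides preserve colimits in $K$ in the appropriate (reduced) sense, it is enough to check this pushout for $K=\bDelta^n$. There it becomes the standard identification of the oriented simplex $\bDelta^{n+1}=\bDelta^n\star\bDelta^0$ with the collapsed Gray cylinder $\bDelta^n\boxtimes\bDelta^1/\bDelta^n\boxtimes\{1\}$. I expect this comparison of join and Gray cylinder to be the main obstacle. If it is not already available from \cite{GepnerHeine2026}, I would deduce it from the descriptions of the join and Gray monoidal structures on the generators $\bDelta^+$ and $\cube$ in \cref{locmon4} and \cref{locmon2}. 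The coslice case is dual: apply the antijoin and the involution $(-)^\co$, using \cref{grayhoms} and \cref{dua} to convert $\Fun^\lax$ statements into $\Fun^\oplax$ statements.

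Once these identifications are in place, the fibration statements are formal. By \cref{targetfi2}, $\Fun^\oplax(\bD^1,\mC)\to\mC\times\mC$ is a map of cartesian fibrations over the source factor and a map of cocartesian fibrations over the target factor. Base-changing along $\{X\}\to\mC$ in one factor keeps the corresponding fibration property in the other factor, since (co)cartesian fibrations are stable under base change (\cref{elemen}). Hence $\mC_{//^\oplax X}\to\mC$, obtained by fixing the target to be $X$ and projecting to the source, is a cartesian fibration, which is (1). Likewise $\mC_{X//^\oplax}\to\mC$, obtained by fixing the source to be $X$ and projecting to the target, is a cocartesian fibration, which is (2).

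The one remaining point is bookkeeping: I have to check that the orientation conventions match, that is, that the oplax slice corresponds to fixing the target of $\Fun^\oplax(\bD^1,-)$ rather than of $\Fun^\lax(\bD^1,-)$. I would verify this on $1$-truncations, where both constructions recover the usual slice, and then on $2$-cells via \cref{homs2}.
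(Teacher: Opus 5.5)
Your argument is essentially the paper's. There the corollary is stated without separate proof as an immediate consequence of \cref{targetfibr} and \cref{targetfi2}, reading $\mC_{//^\oplax X}$ and $\mC_{X//^\oplax}$ as the fibers of the bifibration $\Fun^\oplax(\bD^1,\mC)\to\mC\times\mC$ over the target and source respectively. The paper uses that identification implicitly; it does not derive it from a join/Gray-cylinder pushout as you propose. If you do derive it that way, the reduction to $\bDelta^n$ needs a comparison map natural in $K$, not just objectwise equivalences.

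One correction to the last step: fixing the target is not a plain base change of the source projection, so \cref{elemen} alone does not justify it. What keeps the source-cartesian lifts inside the fiber over $X$ is the bifibration clause that they map to cartesian cells of $\mC\times\mC\to\mC$, i.e.\ their target component is invertible. They then remain cartesian in the fiber by the last part of \cref{elemen}.
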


The following is \cite[Lemma 4.6.8.]{gepner2026fibrations}:

\begin{lemma}\label{Grorep}
Let $X$ be a small $\infty$-category and $Z \in X.$
The following commutative square is a pullback square:
$$
\begin{xy}
\xymatrix{
X_{Z //^\oplax} \ar[d] \ar[r]
& \infty\scat_{*//^\oplax } \ar[d]
\\ 
X  \ar[r]^{\Mor_\X(Z,-)} & \infty\scat
}
\end{xy}$$

\end{lemma}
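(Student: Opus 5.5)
The statement is a pullback square exhibiting $X_{Z//^\oplax}\to X$ as the base change of $\infty\scat_{*//^\oplax}\to\infty\scat$ along the corepresentable functor $\Mor_X(Z,-)$. My plan is to compare the two sides by testing against maps out of a dense family and using the adjunction defining the oplax coslice. Since the oplax coslice is right adjoint to $Z\bar\star(-)$, a functor $K\to X_{Z//^\oplax}$ is the same as a functor $\bD^0\bar\star K\to X$ sending the cone point to $Z$. Likewise a functor $K\to \infty\scat_{*//^\oplax}$ is a functor $\bD^0\bar\star K\to\infty\scat$ sending the cone point to $*$.

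To compare these I would describe both coslices as oriented pullbacks. The first step is to show that
\[X_{Z//^\oplax}\simeq \{Z\}\,\vec{\times}_X X \qquad\text{and}\qquad \infty\scat_{*//^\oplax}\simeq\{*\}\,\vec{\times}_{\infty\scat}\infty\scat.\]
I expect this to follow from \cref{0desc} (or \cref{bdesc}), expressing the oriented pullback as $\{Z\}\times_{X}\Fun^\oplax(\bD^1,X)\times_X X$. One then checks, via the join/Gray comparison $\bD^0\bar\star K\simeq \bD^0\amalg_{\{0\}\boxtimes K}(\bD^1\boxtimes K)$ with the right orientation, that maps out of $K$ agree.

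By the pasting law \cref{pasting}, the square in question is then a pullback if and only if the outer oriented square
\[X_{Z//^\oplax}\to X\to\infty\scat \Leftarrow \{*\}\]
is an oriented pullback. Equivalently, $\{*\}\,\vec\times_{\infty\scat}X\simeq \{Z\}\,\vec\times_X X$ over $X$, where $X\to\infty\scat$ is $\Mor_X(Z,-)$.

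For this I would compute a functor $K\to\{*\}\,\vec\times_{\infty\scat}X$. It consists of $g\colon K\to X$ together with an oplax natural transformation from the constant functor at $*$ to $\Mor_X(Z,g(-))$, i.e.\ a map $\bD^0\to\Fun^\oplax(K,\infty\scat)$ of a particular shape. The enriched (Gray) Yoneda lemma of \cref{Yonedaext} and the lemma following it, applied in $\infty\fcat$, identifies such oplax transformations out of the point-valued functor with data $Z\to g(-)$ oplax-natural in $K$, which is a functor $K\to\{Z\}\,\vec\times_XX$. Naturality in $K$, together with density of $\Theta$ or of the oriented cubes (\cref{theta}, \cref{cubedense}), then gives the equivalence of $\infty$-categories.

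Alternatively, since both vertical maps are cocartesian fibrations by \cref{slicecocart}, and pullbacks of cocartesian fibrations are cocartesian, I could first construct the comparison map $X_{Z//^\oplax}\to X\times_{\infty\scat}\infty\scat_{*//^\oplax}$ and check that it is a map of cocartesian fibrations. By \cref{fiberwiseeq} it then suffices to show it is an equivalence on fibres over each $Y\in X$. There the fibre of the left side is $\Mor_X(Z,Y)$, by \cref{targetfi2}, and the fibre of the right side is the fibre of the universal fibration over $\Mor_X(Z,Y)$, which is $\Mor_{\infty\scat}(*,\Mor_X(Z,Y))\simeq\Mor_X(Z,Y)$.

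The main obstacle is the \emph{coherent} identification of the oplax transformation from the constant point-valued functor with the Yoneda data. This requires the oriented (Gray-enriched) Yoneda lemma rather than the cartesian one, and careful bookkeeping of the lax/oplax orientations, so that the oplax coslice matches the oriented pullback with the orientation $\{*\}\Rightarrow\Mor_X(Z,-)$. In the fibrewise approach, the corresponding difficulty is producing the comparison map and verifying that it preserves cocartesian morphisms in all dimensions. I would try the fibrewise route first, since \cref{fiberwiseeq} reduces everything to an objectwise check.
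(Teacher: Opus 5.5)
\textbf{There is a genuine gap: in both of your routes, the step that carries the content of the lemma is asserted rather than proved.}

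The paper does not prove \cref{Grorep} itself; it imports it from \cite{gepner2026fibrations}. It is also an input to \cref{Groeq}: the proof of \cref{Grothendieck-char} uses it to identify $\int_\mC \Mor_\mC(X,-)$ with $\mC_{X//^\oplax}$. So your argument has to stand on its own, and any route through weights or straightening would be circular.

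\emph{The fibrewise route.} By \cref{fiberwiseeq} you must show that the comparison functor $\theta_Y$ on the fibres over $Y$ is an equivalence. You only observe that both fibres are abstractly equivalent to $\Mor_X(Z,Y)$, which says nothing about $\theta_Y$. The missing idea is to identify $\theta_Y$ as the composite
\[
\Mor_X(Z,Y)\xrightarrow{\ \Mor_X(Z,-)_{Z,Y}\ }\Fun(\Mor_X(Z,Z),\Mor_X(Z,Y))\xrightarrow{\ \mathrm{ev}_{\id_Z}\ }\Fun(\bD^0,\Mor_X(Z,Y))\simeq\Mor_X(Z,Y).
\]
This composite is the identity, because the action of the corepresentable functor on morphism $\infty$-categories is adjoint to composition in $X$, and $\id_Z$ is a unit. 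Your proposal never uses this unit/composition compatibility, and nothing else can make the comparison an equivalence.

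To use it, you must build $\theta$ so that its fibres are computable. One way is as
\[
\{Z\}\times_X\Fun^\oplax(\bD^1,X)\to\{\Mor_X(Z,Z)\}\times_{\infty\scat}\Fun^\oplax(\bD^1,\infty\scat)\to\infty\scat_{*//^\oplax},
\]
applying $\Mor_X(Z,-)$ and then cartesian transport along $\id_Z\colon *\to\Mor_X(Z,Z)$ in the source variable of the bifibration of \cref{targetfi2}.

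Alternatively, define $\theta$ as the map of cocartesian fibrations out of $X_{Z//^\oplax}\simeq\Env(\{Z\}\to X)$ classified by $\id_Z$, via \cref{envelo}, as the paper does in \cref{univ3}. This removes your worry about cocartesian cells in all dimensions, but the fibre computation is still needed.

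You can also bypass \cref{fiberwiseeq}. The map is essentially surjective, and by \cref{homs} both sides have morphism $\infty$-categories $\Mor_X(Y,Y')\,\vec{\times}_{\Mor_X(Z,Y')}\{f'\}$. On one side this is formed along $-\circ f$, on the other along $\mathrm{ev}_f\circ\Mor_X(Z,-)_{Y,Y'}$, and these agree by the same adjointness.

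\emph{The first route.} It has the same hole in a different place. The enriched Yoneda lemma (the lemma following \cref{Yonedaext}) computes strict enriched transformations \emph{out of} a representable presheaf. What you need is that oplax transformations from the constant functor at $*$ \emph{into} $\Mor_X(Z,g(-))$ correspond to oplax transformations from the constant functor at $Z$ into $g$. That is a Yoneda lemma for oplax cones, i.e.\ a restatement of \cref{Grorep}, not a consequence of \cref{Yonedaext}. Rewriting oplax transformations as strict ones out of an oplax weight would go through \cref{weigrot}. That rests on the Grothendieck construction being an equivalence, which in this paper is proved using the lemma itself.

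A minor point: \cref{pasting} pastes along the source leg of the oriented square, while you paste along the target leg $\infty\scat_{*//^\oplax}\to\infty\scat$. The mirrored statement holds by the same argument, but you should state it.
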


The following is \cite[Corollary 4.2.7.]{gepner2026fibrations}:

\begin{corollary}\label{indulax}

Let $\mB$ be an $\infty$-category and $\mC \to \mD$ a cocartesian fibration. The induced functor
$$\Fun^\lax(\mB, \mC) \to \Fun^\lax(\mB, \mD) $$
is a cocartesian fibration.

\end{corollary}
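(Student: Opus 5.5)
Since $\mC\to\mD$ is a cocartesian fibration, the plan is to reduce the statement about $\Fun^\lax(\mB,-)$ to a statement about the oplax slice/lax coslice, via the adjunction $\mB\boxtimes(-)\dashv\Fun^\lax(\mB,-)$. I will verify the criterion of \cref{thetalocal}(4). That is, I show that for every oriented cube $\cube^n\to\Fun^\lax(\mB,\mD)$ the pullback
\[
\cube^n\times_{\Fun^\lax(\mB,\mD)}\Fun^\lax(\mB,\mC)\to\cube^n
\]
is a cocartesian fibration. The cube $\cube^n\to\Fun^\lax(\mB,\mD)$ corresponds by adjunction to a functor $F:\mB\boxtimes\cube^n\to\mD$. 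Let $\mC_F:=(\mB\boxtimes\cube^n)\times_\mD\mC\to\mB\boxtimes\cube^n$ be the base change of $\mC\to\mD$ along $F$; cocartesian fibrations are stable under base change. Then the pullback above is the $\infty$-category of sections of $\mC_F\to\mB\boxtimes\cube^n$ relative to the slices $\{b\}\boxtimes\cube^n$. More precisely, it is the pullback of $\Fun^\lax(\mB,\mC_F)\to\Fun^\lax(\mB,\mB\boxtimes\cube^n)$ along the unit $\cube^n\to\Fun^\lax(\mB,\mB\boxtimes\cube^n)$. So it suffices to treat the universal case: show that $\Fun^\lax(\mB,-)$ sends a cocartesian fibration $p:\mE\to\mB\boxtimes\cube^n$ to a functor whose pullback along the unit is cocartesian.

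For this I will work inductively, using the characterization \cref{cocarto} applied to $(-)^\co$. I then construct cocartesian lifts explicitly. Take a lax transformation $s:\mB\to\mE$ (an object over a vertex $v$ of $\cube^n$) and a morphism $v\to w$. The candidate lift is pushforward along cocartesian morphisms of $p$ pointwise in $b\in\mB$: the morphisms $(b,v)\to(b,w)$ in $\mB\boxtimes\cube^n$ have $p$-cocartesian lifts starting at $s(b)$. The lax naturality data of the pushed-forward section is then produced from the universal property of cocartesian morphisms, namely the pullback squares \eqref{filler} on morphism $\infty$-categories. This is exactly the cell-by-cell filling that the Gray cells $\bD^k\boxtimes\bD^1$ of $\mB\boxtimes\cube^n$ need. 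To make this coherent rather than cell-by-cell, I would use the density of $\Theta$ (\cref{theta}) in $\mB$ and reduce to $\mB=\bD^k$. Both $\Fun^\lax(-,\mC)\to\Fun^\lax(-,\mD)$ and the property of being a cocartesian fibration behave well under limits in the first variable, using \cref{fiberwiseeq}, \cref{paracocart} and stability under pullback. Then $\Fun^\lax(\bD^k,\mC)$ is built from iterated oriented pullbacks via \cref{targetfibr}: $\Fun^\lax(S(\mA),\mC)$ is an antioriented pullback $\mC\,\bar{\vec\times}_{\Fun^\lax(\mA,\mC)}\mC$ of the dual bifibration. So I would induct on $k$ using \cref{homso} together with \cref{cocarto}.

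The key steps in order are:
\begin{enumerate}[\normalfont(1)]\setlength{\itemsep}{-2pt}
\item Reduce to $\mB\in\Theta$, and further to disks $\bD^k$, by writing $\mB$ as a colimit and checking that cocartesian fibrations with cocartesian-preserving transition maps are closed under the relevant limits.
\item For $\bD^k=S(\bD^{k-1})$, express $\Fun^\lax(\bD^k,\mC)\to\Fun^\lax(\bD^k,\mD)$ as a map of antioriented pullbacks.
\item Show that an antioriented pullback of cocartesian fibrations, along maps of cocartesian fibrations, is again a cocartesian fibration. This is done by checking the morphism-level condition of \cref{cocarto} through \cref{homso} and induction on dimension.
\end{enumerate}

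The main obstacle is step (3), in particular showing that the transition functors $\Mor(X,Y)\to\Mor(X',Y')$ preserve cocartesian morphisms. Lax (rather than strict) naturality squares twist which end a cocartesian lift must be compared at. I would handle this with the pasting law \cref{pasting} and \cref{elemen}, checking that cocartesianness over $\mD$ of the target component suffices.
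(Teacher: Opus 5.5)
The paper does not prove this corollary itself; it is imported from \cite{gepner2026fibrations}. So I judge your outline on its own terms. Steps (1) and (2) are reasonable reductions, but step (3), where all the content lives, is missing the one idea that uses the hypothesis, so as it stands there is a genuine gap.

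The statement is orientation-sensitive. With $\Fun^\oplax(\mB,-)$ in place of $\Fun^\lax(\mB,-)$ (equivalently, oriented instead of antioriented pullbacks in step (3)) it is false. This already happens for $\mB=\bD^1$ and a cocartesian fibration over $\bD^2$ classifying a natural transformation between functors of posets: there the relevant morphism has several pairwise incomparable lifts and no cocartesian one. Nothing in your justification distinguishes the two cases. Your inputs \cref{cocarto}, \cref{homso} and induction on dimension all have oriented counterparts (\cref{homs}). The only concrete mechanism you give for building lifts is the pullback squares \eqref{filler}. These factor a morphism through a cocartesian one only when the comparison in the base is an equivalence, so they cannot produce lax naturality $2$-cells over a non-invertible $2$-cell of the base. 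The obstacle you single out (that $\Mor(X,Y)\to\Mor(X',Y')$ preserves cocartesian morphisms) is routine by comparison. The real issue is the existence of $1$-cocartesian lifts.

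Concretely, the missing step is the following. Let $C\to C'$, $A\to A'$, $B\to B'$ be cocartesian fibrations, and let $F\colon A\to C$, $G\colon B\to C$ be maps of cocartesian fibrations. In $A\,\bar{\vec\times}_C B$ an object is $(a,b,\sigma\colon Fa\to Gb)$. By \cref{homso}, a morphism to $(a_1,b_1,\sigma_1)$ is a triple $(u,v,\tau\colon \sigma_1F(u)\Rightarrow G(v)\sigma)$. Given a base morphism $(u',v',\tau')$, proceed as follows.
\begin{itemize}
\item Take cocartesian lifts $u\colon a\to a_1$ and $v\colon b\to b_1$.
\item Lift $\tau'$ to a cartesian $2$-cell $\tilde\tau\colon w\Rightarrow G(v)\sigma$ in $\Mor_C(Fa,Gb_1)$ with prescribed target. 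This exists because the functor $\Mor_C(Fa,Gb_1)\to\Mor_{C'}(F'a',G'b'_1)$ is a cartesian fibration, by \cref{cocarto} applied to $(C\to C')^\co$.
\item Only then apply \eqref{filler} to the cocartesian morphism $F(u)$ to write $w\simeq\sigma_1\circ F(u)$.
\end{itemize}
In the oriented pullback the $2$-cell points the other way, $G(v)\sigma\Rightarrow\sigma_1F(u)$. One would then need a lift with prescribed source, which a cocartesian fibration does not provide; this is exactly where laxness is used.

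Three further points need attention.
\begin{itemize}
\item You still have to check via \cref{homso} that this lift is cocartesian.
\item Your induction on dimension must carry the dual statement, because by \cref{cocarto} the morphism $\infty$-categories are \emph{cartesian} fibrations. You need that antioriented pullbacks of cartesian fibrations are cartesian, which follows from the cocartesian case by applying $(-)^{\co\op}$.
\item Step (1) needs more than closure under limits. For closure itself, \cref{fiberwiseeq} and \cref{paracocart} are statements over a fixed base and are not the right tools; use instead that $\Mor$ commutes with limits and that cocartesian lifts are unique. Beyond that, you must show that restriction along every map of $\Theta_{/\mB}$, including composition and degeneracy maps, is a map of cocartesian fibrations. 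The same applies to the diagonal functors $\mC\to\Fun^\lax(\bD^{k-1},\mC)$ in step (2). This requires carrying through the induction an explicit description of the cocartesian cells of $\Fun^\lax(\theta,\mC)$, as those with (co)cartesian components in the appropriate dimensions.
\end{itemize}
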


\begin{notation}
Let $X \to S$ be a cocartesian fibration
and $K$ an $\infty$-category.
Let $X^K_\lax \to S $ be the pullback of the induced functor
$$ \Fun^\lax(K,X) \to \Fun^\lax(K,S) $$
along the diagonal functor $S \to \Fun^\lax(K,S). $

\end{notation}

\cref{indulax} gives the following:

\begin{corollary}\label{restcoca}

Let $X \to S$ be a cocartesian fibration
and $K$ an $\infty$-category.
The functor $X^K_\lax \to S $ is a cocartesian fibration.
   
\end{corollary}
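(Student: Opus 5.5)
We need to prove Corollary \ref{restcoca}: $X^K_\lax \to S$ is a cocartesian fibration, where $X^K_\lax$ is the pullback of $\Fun^\lax(K,X)\to\Fun^\lax(K,S)$ along the diagonal $S\to\Fun^\lax(K,S)$.

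The plan is to get this from \cref{indulax} together with stability of cocartesian fibrations under base change. By \cref{indulax}, applied with $\mB=K$ and the cocartesian fibration $X\to S$, the induced functor
$$\Fun^\lax(K,X)\to\Fun^\lax(K,S)$$
is a cocartesian fibration. By definition, $X^K_\lax\to S$ is its pullback along the diagonal functor $S\to\Fun^\lax(K,S)$. It remains to check that cocartesian fibrations are stable under pullback along arbitrary functors.

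For the base change step, recall that a functor $\phi$ is a cocartesian fibration iff $\phi^\co$ is an anticocartesian fibration. The involution $(-)^\co$ on $\infty\Cat$ is an equivalence, so it preserves pullback squares. The remark following the definition of $n$-anticocartesian fibrations says these are stable under base change. The underlying reason is \cref{elemen}: a morphism in $\mB\times_\mD\mC$ whose image in $\mC$ is $\phi$-cocartesian is cocartesian for the projection to $\mB$. Fillers are then produced by pairing a cocartesian lift in $\mC$ with the given cell in $\mB$.

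Hence the pullback of a cocartesian fibration along any functor is a cocartesian fibration. Applying this to the diagonal $S\to\Fun^\lax(K,S)$ finishes the proof.

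The only potentially delicate point is the claim in \cref{elemen} that cocartesian cells in $\mB\times_\mD\mC$ are detected in $\mC$. This holds because forming morphism $\infty$-categories commutes with pullbacks, so the defining pullback squares for cocartesianness in $\mC$ pull back to the corresponding squares in $\mB\times_\mD\mC$.
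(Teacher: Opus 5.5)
Your proposal is correct and matches the paper, which states the corollary as an immediate consequence of \cref{indulax}: the functor $\Fun^\lax(K,X)\to\Fun^\lax(K,S)$ is a cocartesian fibration, and $X^K_\lax\to S$ is its pullback along the diagonal. The only other input is that cocartesian fibrations are stable under base change, which you correctly obtain by transporting the stability of anticocartesian fibrations (via \cref{elemen}) along the involution $(-)^\co$.
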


\begin{notation}
Let $\alpha: X \to S, \beta: Y \to S$ be functors.
Let $\Fun^\oplax_S(X,Y)$ be the fiber of the induced functor
$$ \Fun^\oplax(X,Y) \to \Fun^\oplax(X,S)$$ over $\alpha.$
    
\end{notation}

\begin{remark}\label{univeqr}
    
For every $\infty$-category $K$ and functors $X \to S, Y \to S$ there is a canonical equivalence
$$ \Fun^\oplax_S(X,Y^K_\lax) \simeq \Fun^\lax(K,\Fun^\oplax_S(X,Y)).$$
\end{remark}

The following is \cite[Proposition 5.1.4.]{gepner2026fibrations}:

\begin{lemma}\label{repren}
Let $X \to S, Y \to S$ be cocartesian fibrations.
There is a subcategory $$ \Fun^{\oplax, \cocart}_S(X,Y) \subset \Fun^\oplax_S(X,Y)$$ whose $n$-morphisms for some $n \geq 0$ are the $n$-morphisms of $ \Fun^\oplax_S(X,Y)$ corresponding to a map $X \to Y^{\bD^n}_\lax $ of cocartesian fibrations over $S.$

\end{lemma}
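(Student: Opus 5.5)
The plan is to build the subcategory through its $\Theta$-nerve, using \cref{theta}. The key input is \cref{univeqr}. For every $\theta \in \Theta$ it gives natural equivalences
$$\Map_{\infty\Cat}(\theta, \Fun^\oplax_S(X,Y)) \simeq \iota_0\Fun^\lax(\theta,\Fun^\oplax_S(X,Y)) \simeq \iota_0\Fun^\oplax_S(X,Y^\theta_\lax) \simeq \Map_{\infty\Cat_{/S}}(X, Y^\theta_\lax).$$
So the $\Theta$-nerve of $\Fun^\oplax_S(X,Y)$ is the presheaf $\theta \mapsto \Map_{/S}(X,Y^\theta_\lax)$. By \cref{restcoca}, each $Y^\theta_\lax \to S$ is a cocartesian fibration. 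I then define a sub-presheaf $N'$ by letting $N'(\theta)$ be the union of those components consisting of maps of cocartesian fibrations $X \to Y^\theta_\lax$ over $S$. For $\theta = \bD^n$ this is exactly the prescribed class of $n$-morphisms.

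Two things must then be checked. First, $N'$ must actually be a sub-presheaf. For a functor $\theta \to \theta'$ in $\Theta$, the restriction $Y^{\theta'}_\lax \to Y^\theta_\lax$ has to be a map of cocartesian fibrations over $S$. I would get this from the proof of \cref{indulax}, where cocartesian cells of $\Fun^\lax(\mB,\mC)\to\Fun^\lax(\mB,\mD)$ are identified as those whose restrictions along all cells $\bD^k \to \mB$ are cocartesian. With that description, restriction along $\theta \to \theta'$ visibly preserves them, and pulling back along the diagonal $S \to \Fun^\lax(-,S)$ preserves this by \cref{elemen}.

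Second, $N'$ must satisfy the Segal condition. Let $\theta = \bD^{i_0}\coprod_{\bD^{j_1}}\cdots\coprod_{\bD^{j_n}}\bD^{i_n}$. Since $\Fun^\lax(-,Y)$ sends colimits to limits, $Y^\theta_\lax$ is the iterated pullback of the $Y^{\bD^{i_\ell}}_\lax$ over the $Y^{\bD^{j_\ell}}_\lax$, taken in $\infty\Cat_{/S}$. The claim is that a morphism of $Y^\theta_\lax$ is cocartesian over $S$ iff its images in each $Y^{\bD^{i_\ell}}_\lax$ are. Granting this, a map $X \to Y^\theta_\lax$ is a map of cocartesian fibrations iff all its components are. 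That is precisely the statement that $N'(\theta) \to N'(\bD^{i_0})\times_{N'(\bD^{j_1})}\cdots\times N'(\bD^{i_n})$ is the restriction of the Segal equivalence for the nerve of $\Fun^\oplax_S(X,Y)$, hence itself an equivalence.

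Given these two checks, \cref{theta} gives an $\infty$-category $\mC$ with nerve $N'$. The monomorphism of presheaves $N' \subset \N(\Fun^\oplax_S(X,Y))$ then yields a functor $\mC \to \Fun^\oplax_S(X,Y)$ that is a monomorphism, since the nerve is fully faithful and monomorphisms are detected after applying it. This $\mC$ is the desired subcategory $\Fun^{\oplax,\cocart}_S(X,Y)$. It is determined by its $n$-morphisms, because the cells of any $\theta$ are disks.

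The step I expect to be the main obstacle is the pointwise detection of cocartesian cells in $Y^\theta_\lax$, and its compatibility with pullbacks of cocartesian fibrations. For the pullback part, I would argue with \cref{paracocart} and \cref{elemen}: a cell in a pullback of cocartesian fibrations, where the legs are maps of cocartesian fibrations, is cocartesian iff its images are, since $\Mor$ preserves pullbacks and the defining squares \eqref{filler} are pullbacks componentwise. If the pointwise description of cocartesian cells in \cref{indulax} is not directly available, I would instead use \cref{thetalocal}. That reduces everything to the case $S = \bD^m$, where the restriction functors can be compared by hand.
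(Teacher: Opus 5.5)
The paper does not prove this lemma; it quotes it from \cite[Proposition 5.1.4]{gepner2026fibrations}, so there is no in-text argument to compare yours with. Your architecture is sound in three respects. First, the identification of the $\Theta$-nerve of $\Fun^\oplax_S(X,Y)$ with $\theta\mapsto\Map_{/S}(X,Y^\theta_\lax)$ via \cref{univeqr} is correct. Second, the componentwise detection of cocartesian cells in an iterated pullback of cocartesian fibrations, when the legs are maps of cocartesian fibrations, is correct. Third, a union-of-components sub-presheaf satisfying the Segal condition does give, via \cref{theta}, a subcategory with exactly the prescribed $n$-morphisms.

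What is missing is the one input that both of your checks depend on. You need that for every map $\theta\to\theta'$ in $\Theta$, the restriction $Y^{\theta'}_\lax\to Y^{\theta}_\lax$ is a map of cocartesian fibrations over $S$; for the Segal check this is needed for the legs $Y^{\bD^{i_\ell}}_\lax\to Y^{\bD^{j_\ell}}_\lax$. Nothing stated in this paper provides it. \cref{indulax} only asserts that $\Fun^\lax(\mB,\mC)\to\Fun^\lax(\mB,\mD)$ is a cocartesian fibration and says nothing about which cells are cocartesian. So citing a characterization from ``the proof of \cref{indulax}'' means relying on an argument you have not seen.

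Your fallback does not fill this in. \cref{thetalocal} detects whether a functor is a cocartesian fibration; it does not say which cells are cocartesian or whether a given functor preserves them. Reducing to $S=\bD^m$ is legitimate, using \cref{elemen}, uniqueness of cocartesian lifts, and $S'\times_S Y^K_\lax\simeq(S'\times_S Y)^K_\lax$. But the difficulty lies in the variable $K=\theta$, not in $S$, so ``comparing by hand'' is exactly where the actual proof would have to be. Nor can the point be sidestepped: this is precisely what makes the source, target and identity of a designated cell designated again, so any proof of the lemma must contain some version of it.

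To close the gap, give a description of the cocartesian cells of $Y^K_\lax\to S$ that is visibly natural in $K$. For a $1$-cell $e\colon s\to s'$ of $S$ and an object $y\colon K\to Y_s$, show that the cocartesian lift is $K\boxtimes\bD^1\to K\times\bD^1\to Y_s\times\bD^1\to\bD^1\times_S Y$, where the last functor is the transport supplied by \cref{envelo}. Then establish the analogous statement for higher cells, for instance inductively via \cref{cocarto} and the morphism objects of \cref{homs2}. Uniqueness of cocartesian lifts then shows that restriction along $\theta\to\theta'$ preserves cocartesian cells, and both your sub-presheaf and Segal arguments go through as written.
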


The following is \cite[Lemma 5.1.5.]{gepner2026fibrations}:
 
\begin{lemma}\label{cotensol}

Let $X \to S, Y \to S$ be cocartesian fibrations and $K$ an $\infty$-category. The canonical equivalence
$$ \Fun^\oplax_S(X,Y^K_\lax) \simeq \Fun^\lax(K,\Fun^\oplax_S(X,Y)) $$
restricts to an equivalence
$$ \Fun^{\oplax, \cocart}_S(X,Y^K_\lax) \simeq \Fun^\lax(K,\Fun^{\oplax, \cocart}_S(X,Y)). $$
    
\end{lemma}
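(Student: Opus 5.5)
The strategy is to reduce the claim to a statement about arbitrary test shapes, and then to check it cell by cell.

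First, both $\Fun^{\oplax,\cocart}_S(X,Y^K_\lax)\subset \Fun^\oplax_S(X,Y^K_\lax)$ and $\Fun^\lax(K,\Fun^{\oplax,\cocart}_S(X,Y))\subset\Fun^\lax(K,\Fun^\oplax_S(X,Y))$ are subcategory inclusions. For the second, this uses that $\Fun^\lax(K,-)$ is a right adjoint and so preserves monomorphisms. An equivalence between ambient $\infty$-categories restricts to an equivalence of subcategories exactly when it identifies the two subspaces of $\Map(\bD^n,-)$ for every $n\geq 0$. It is convenient to prove this for all test shapes $L$ at once. So I would establish the following auxiliary statement: for any $\infty$-category $L$, a functor $L\to\Fun^\oplax_S(X,Y)$ factors through $\Fun^{\oplax,\cocart}_S(X,Y)$ if and only if the corresponding functor $X\to Y^L_\lax$ over $S$, obtained from \cref{univeqr}, is a map of cocartesian fibrations. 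By \cref{restcoca}, $Y^L_\lax\to S$ is indeed a cocartesian fibration.

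Granting the auxiliary statement, the lemma follows. A map $\bD^n\to\Fun^\lax(K,\Fun^\oplax_S(X,Y))$ is, by adjunction, a map $K\boxtimes\bD^n\to\Fun^\oplax_S(X,Y)$; here I take whichever factor order the definition of $\Fun^\lax$ dictates. A map $\bD^n\to\Fun^\oplax_S(X,Y^K_\lax)$ corresponds to a map $X\to (Y^K_\lax)^{\bD^n}_\lax\simeq Y^{K\boxtimes \bD^n}_\lax$ over $S$. The last equivalence holds because $\Fun^\lax(K\boxtimes\bD^n,-)\simeq\Fun^\lax(\bD^n,\Fun^\lax(K,-))$ is compatible with pulling back along diagonals. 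Applying the auxiliary statement to $L=K\boxtimes\bD^n$ on one side, and to $L=\bD^n$ with $Y$ replaced by $Y^K_\lax$ on the other, both subspaces become the space of maps $X\to Y^{K\boxtimes\bD^n}_\lax$ of cocartesian fibrations over $S$. The two identifications are compatible with the equivalence of \cref{univeqr}, since both come from the same currying.

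To prove the auxiliary statement, note that by \cref{repren} it holds for $L=\bD^k$. The one-dimensional case also shows that it holds when $L$ is a point or $\partial\bD^k$. Factoring through a subcategory is detected on all cells $\bD^k\to L$, because a subcategory inclusion is determined on objects and morphism categories. So the statement reduces to the following locality property: a functor $X\to Y^L_\lax$ over $S$ preserves cocartesian morphisms if and only if, for every cell $\bD^k\to L$, the composite $X\to Y^L_\lax\to Y^{\bD^k}_\lax$ does.

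I expect this locality to be the main obstacle. The plan is to write $L$ as the colimit of its cells, for instance via density of $\Theta$ (\cref{theta}). Then $Y^L_\lax\simeq\lim_{\bD^k\to L}Y^{\bD^k}_\lax$ is a limit over $S$ of cocartesian fibrations, since $\Fun^\lax(-,Y)$ turns colimits into limits. The transition functors are restrictions, and these are maps of cocartesian fibrations; this follows from the same proof as \cref{indulax}. It then remains to show that in such a limit a morphism of any dimension is cocartesian if and only if all of its images are. Since morphism $\infty$-categories commute with limits, the defining pullback squares for cocartesian $1$-morphisms are limits of the corresponding squares, and limits of pullback squares are pullback squares. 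The higher-dimensional cases then follow by induction using \cref{cocarto}. The converse implication, that cocartesian morphisms of the limit map to cocartesian morphisms in each component, needs the existence of cocartesian lifts in the limit. I would obtain this from the cocartesian lifts in each $Y^{\bD^k}_\lax$ together with the fact that the transition maps preserve them, using \cref{elemen} to compare any two lifts.
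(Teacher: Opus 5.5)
The paper gives no proof of this statement. It is quoted from \cite[Lemma 5.1.5]{gepner2026fibrations}, so there is no in-text argument to compare your route against, and I can only judge the proposal on its own terms. On those terms it is sound in outline. The cellwise reduction is right: $(Y^K_\lax)^{\bD^n}_\lax\simeq Y^{K\boxtimes\bD^n}_\lax$ over $S$ by currying, and the $n=0$ case of the lemma is exactly your auxiliary statement for $L=K$. So everything hinges on your locality property. The half ``all images cocartesian $\Rightarrow$ cocartesian'' works as you say, by induction on the recursive definition of cocartesian cells via $\Mor$ (rather than \cref{cocarto}). This induction is for arbitrary functors over a base, after applying $(-)^\co$ to match the paper's convention, and it needs no hypothesis on the transition maps.

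Two points need repair.

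First, $L$ is not the colimit of its disks. For $L=\bD^1\vee\bD^1$, the colimit over disks over $L$ is the free category on the arrows $0\to1$, $1\to2$, $0\to2$, which has two distinct morphisms $0\to2$. So $Y^L_\lax\simeq\lim_{\bD^k\to L}Y^{\bD^k}_\lax$ is false as written. Use $L\simeq\colim_{\theta\in\Theta_{/L}}\theta$ and then the standard decomposition of each $\theta$. Your limit argument passes through both stages, and the disks that occur are still cells of $L$.

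Second, the other half of locality, and your compatible-lifts argument, rest entirely on restriction functors $Y^\theta_\lax\to Y^{\theta'}_\lax$ being maps of cocartesian fibrations. \cref{indulax} does not state this, and ``the same proof'' is not available here. You can extract it from \cref{repren} itself, in the following steps.
\begin{enumerate}
\item Apply \cref{repren} with $X=Y^{\bD^n}_\lax$, which is cocartesian by \cref{restcoca}. The identity is a map of cocartesian fibrations, so the corresponding $n$-cell of $\Fun^\oplax_S(Y^{\bD^n}_\lax,Y)$ lies in the subcategory.
\item Its precomposite with any $\bD^m\to\bD^n$ also lies in the subcategory. By naturality of \cref{univeqr}, this precomposite corresponds to the restriction $Y^{\bD^n}_\lax\to Y^{\bD^m}_\lax$, so restrictions along disk maps preserve cocartesian cells.
\item Your compatible-lifts argument over the standard decomposition of $\theta$ then shows that the projections $Y^\theta_\lax\to Y^{\bD^{n_i}}_\lax$ preserve cocartesian cells.
\item Hence the functor $\theta\to\Fun^\oplax_S(Y^\theta_\lax,Y)$ corresponding to the identity lands in the subcategory, since membership is detected on the standard disks.
\item Therefore restriction along every $\bD^m\to\theta$ preserves cocartesian cells. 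By your first half, so does restriction along every $\theta'\to\theta$ in $\Theta$.
\end{enumerate}
With this in place, the compatible-lifts argument over $\Theta_{/L}$ gives the remaining half of locality for arbitrary $L$, and your proof closes.
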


The following is \cite[Theorem 5.1.8.]{gepner2026fibrations}:

\begin{theorem}

Let $S$ be an $\infty$-category.
There is antioriented subcategory 
$$ \infty\mathfrak{Cat}^\cocart_{/S} \subset \infty\mathfrak{Cat}_{/S}$$
such that for every cocartesian fibrations $X \to S, Y \to S$
there is a canonical equivalence
$$ \L\Mor_{\infty\mathfrak{Cat}^\cocart_{/S}}(X,Y) \simeq \Fun^{\oplax,\cocart}_S(X,Y).$$

\end{theorem}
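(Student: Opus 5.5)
To build $\infty\mathfrak{Cat}^\cocart_{/S}$, I would first make $\infty\mathfrak{Cat}_{/S}$ antioriented and then cut out the subcategory using \cref{monochar}.

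\textbf{The ambient category.} The left morphism $\infty$-categories of $\infty\mathfrak{Cat}_{/S}$ should be $\L\Mor(X,Y)=\Fun^\oplax_S(X,Y)$. To get this, take the left cotensor of $Y\to S$ by $K$ to be $Y^K_\lax\to S$, i.e.\ the pullback of $\Fun^\lax(K,Y)\to\Fun^\lax(K,S)$ along the diagonal. \cref{univeqr} then says exactly that $\Fun^\oplax_S(X,-)$ turns these into lax functor categories. So these objects represent the enriched morphism objects, and the enrichment on $\infty\mathfrak{Cat}_{/S}$ is the one induced from the bioriented structure of $\infty\mathfrak{Cat}$ by slicing. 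More concretely, $\Fun^\oplax_S(X,Y)$ is the fiber of $\Fun^\oplax(X,Y)\to\Fun^\oplax(X,S)$ over $\alpha$. Composition comes from composition in $\infty\mathfrak{Cat}$, and it is compatible with the fibers because $\Fun^\oplax(X,S)$ receives composition with the identity of $S$.

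\textbf{Cutting out the subcategory.} By \cref{monochar}, an enriched subcategory is determined by two pieces of data: an embedding on objects, and subobjects of the morphism $\infty$-categories that are closed under units and composition. I would take:
\begin{itemize}
\item as objects, the cocartesian fibrations over $S$;
\item as morphism objects, the subcategories $\Fun^{\oplax,\cocart}_S(X,Y)\subset\Fun^\oplax_S(X,Y)$ supplied by \cref{repren}.
\end{itemize}
These are inclusions by construction, so it remains to check units and composition.

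For units, the identity is a map of cocartesian fibrations. For composition, I need the composition functor
\[
\Fun^\oplax_S(Y,Z)\boxtimes\Fun^\oplax_S(X,Y)\to\Fun^\oplax_S(X,Z)
\]
to carry the product of the two $\cocart$-subcategories into $\Fun^{\oplax,\cocart}_S(X,Z)$. Since $\infty\Cat$ is generated by cubes (\cref{cubedense}), it suffices to test on cells $\bD^n\boxtimes\bD^m$, or on cubes.

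\textbf{The main obstacle: composing higher cells.} Via \cref{univeqr} applied twice, a cell $\bD^n\to\Fun^{\oplax,\cocart}_S(Y,Z)$ is a map of cocartesian fibrations $Y\to Z^{\bD^n}_\lax$. A cell $\bD^m\to\Fun^{\oplax,\cocart}_S(X,Y)$ is likewise a map of cocartesian fibrations $X\to Y^{\bD^m}_\lax$. Their composite corresponds to
\[
X\to Y^{\bD^m}_\lax\to (Z^{\bD^n}_\lax)^{\bD^m}_\lax\simeq Z^{\bD^n\boxtimes\bD^m}_\lax .
\]
The final equivalence follows from the adjunction $\Fun^\lax(K,\Fun^\lax(L,-))\simeq\Fun^\lax(K\boxtimes L,-)$ together with \cref{univeqr}.

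The step I expect to be hardest is showing that the middle map is a map of cocartesian fibrations. This is the map obtained by applying $(-)^{\bD^m}_\lax$ to a map of cocartesian fibrations. I would prove it by using \cref{indulax} and \cref{restcoca}, which show that $(-)^K_\lax$ preserves cocartesian fibrations. Their proof should identify the cocartesian morphisms of $\Fun^\lax(K,Y)$ as pointwise cocartesian, and then maps preserving cocartesian cells are preserved as well.

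With the middle map in hand, I still need to know that a map into $Z^{\bD^n\boxtimes\bD^m}_\lax$ which is a map of cocartesian fibrations actually defines cells of $\Fun^{\oplax,\cocart}_S(X,Z)$. This follows from \cref{cotensol} with $K=\bD^n\boxtimes\bD^m$. That completes closure under composition, and the required equivalence $\L\Mor\simeq\Fun^{\oplax,\cocart}_S(X,Y)$ then holds by construction.
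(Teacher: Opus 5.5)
\textbf{Verdict: there is a genuine gap, at exactly the step you flag as hardest.} The paper gives no in-text proof of this statement; it imports it from \cite[Theorem 5.1.8]{gepner2026fibrations}, with \cref{repren} and \cref{cotensol} as inputs. So your argument has to stand on its own.

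\textbf{What works.} The ambient antioriented slice with $\L\Mor(X,Y)=\Fun^\oplax_S(X,Y)$ is correct, and so is the choice of objects and morphism objects. Units are fine. Precomposing a cell $Y\to Z^{\bD^n}_\lax$ with a $0$-cell $X\to Y$ is harmless. The final appeal to \cref{cotensol} with $K=\bD^n\boxtimes\bD^m$ also works.

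\textbf{The gap.} The substantive claim is that $(-)^K_\lax$ sends a map $g\colon Y\to W$ of cocartesian fibrations over $S$ to a map of cocartesian fibrations $g^K_\lax\colon Y^K_\lax\to W^K_\lax$. This is not supplied by what you cite.
\begin{itemize}
\item \cref{indulax} and \cref{restcoca} concern objects only. They say $Y^K_\lax\to S$ is a cocartesian fibration. They do not say which cells of it are cocartesian, and so say nothing about $g^K_\lax$.
\item You cannot extract the claim from \cref{cotensol} either. Let $e$ be the $K$-cell of $\Fun^{\oplax,\cocart}_S(Y^K_\lax,Y)$ corresponding to $\mathrm{id}_{Y^K_\lax}$. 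By \cref{cotensol}, $g^K_\lax$ is a map of cocartesian fibrations iff the cell $g\circ e$ lies in $\Fun^{\oplax,\cocart}_S(Y^K_\lax,W)$. That is exactly the case $n=0$ of the closure under composition you are trying to prove, so the argument is circular.
\end{itemize}

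\textbf{The proposed fix is also wrong as stated.} Cocartesian cells of $\Fun^\lax(K,Y)\to\Fun^\lax(K,S)$ are not detected pointwise. Take $S=*$, where the cocartesian cells are the equivalences. A lax square in $\Fun^\lax(\bD^1,Y)$ with identity vertical edges and a non-invertible $2$-cell is pointwise an equivalence. It is not an equivalence in $\Fun^\lax(\bD^1,Y)$.

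In general, a $k$-cell $K\boxtimes\bD^k\to Y$ can only be cocartesian if conditions hold on its components at all cells of $K$, not just at objects. These are the cells of $Y$ picked out by $\bD^j\boxtimes\bD^k\to K\boxtimes\bD^k$, i.e. the lax-naturality $2$-cells, $3$-cells, and so on, which must themselves be (co)cartesian of the appropriate variance.

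\textbf{What you need to add.} Either prove such a cell-wise characterization (for instance by induction on dimension, in the style of \cref{cocarto}) in a form that maps of cocartesian fibrations visibly preserve. Or give another argument that $(-)^K_\lax$ is functorial on $\infty\scat^\cocart_{/S}$. Once that is in place, the rest of your argument goes through.

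\textbf{Two smaller points.}
\begin{itemize}
\item \cref{monochar} only characterizes monomorphisms. You should add the easy fact that subobjects of the morphism objects closed under unit and binary composition inherit a unique enriched structure. This holds because factorizations through monomorphisms are unique, so all coherences are automatic.
\item The reduction to cells $\bD^n\boxtimes\bD^m$ does not come from \cref{cubedense}. It comes from writing each tensor factor as an iterated colimit of disks ($\Theta$-density plus the Segal decompositions), together with cocontinuity of $\boxtimes$ in each variable. You also have the order in $\Fun^\lax(K,\Fun^\lax(L,-))$ backwards: it is $\Fun^\lax(L\boxtimes K,-)$. Your final formula $Z^{\bD^n\boxtimes\bD^m}_\lax$ is nonetheless correct.
\end{itemize}
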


\begin{definition}
Let $\phi: \mC \to \mD $ be a functor.
The enveloping cocartesian fibration or free cocartesian fibration
is the cocartesian fibration $$\Env(\mC):=\Fun^\oplax(\bD^1,\mD) \times_{\Fun^\oplax(\{0\},\mD)} \mC \to \Fun^\oplax(\{1\},\mD)\simeq \mD.$$

\end{definition}

\begin{remark}

The unique functor $\bD^1 \to \bD^0$ induces an inclusion
$ \mD \simeq \Fun^\oplax(\bD^0,\mD) \to \Fun^\oplax(\bD^1,\mD)$.
The latter gives rise to an inclusion
$$ \mC \simeq \mD \times_{\mD} \mC \to \Fun^\oplax(\bD^1,\mD) \times_{\Fun^\oplax(\{0\},\mD)} \mC = \Env_\mD(\mC) $$
over $\Fun^\oplax(\{1\},\mD).$

\end{remark}

\begin{remark}
For every $X \in \mD$ the fiber $\{X\} \times_\mD \Env(\mC)$
is canonically equivalent to $$ \mC \times_{\mD} \Fun^\oplax(\bD^1,\mD) \times_\mD \{X\} \simeq \mC \underset{\mD}{{\,\vec{\times}}} \{X\} .$$

\end{remark}

\begin{notation}

Let $\mB \to \mD, \mC \to \mD$ be functors.
\begin{enumerate}[\normalfont(1)]\setlength{\itemsep}{-2pt}

\item Let $\Fun_\mD^\cocart(\mB,\mC) \subset \Fun_\mD(\mB,\mC) $
be the full subcategory of maps of cocartesian fibrations over $\mD.$

\item Let $\Fun_\mD^\cart(\mB,\mC) \subset \Fun_\mD(\mB,\mC) $
be the full subcategory of maps of cartesian fibrations over $\mD.$

\end{enumerate}

\end{notation}

The following is \cite[Theorem 4.5.6.]{gepner2026fibrations}:

\begin{theorem}\label{envelo}

Let $\phi: \mC \to \mD $ be a functor and
$\rho: \mB \to \mD $ a cocartesian fibration.
The induced functor
$$ \Fun^\cocart_{\Fun^\oplax(\{1\},\mD)}(\Env(\mC),\mB) \to \Fun_\mD(\mC,\mB) $$
is an equivalence.

\end{theorem}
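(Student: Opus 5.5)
The plan is to build an explicit inverse to restriction along the inclusion $\mC \to \Env(\mC)$, using cocartesian lifts in $\rho:\mB\to\mD$. Then I check both composites by the universal property of cocartesian morphisms.

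First I reduce to a statement about mapping spaces. Replacing $\mB$ by $\mB^K_\lax \to \mD$ for an arbitrary $\infty$-category $K$ is harmless: this is again a cocartesian fibration by \cref{restcoca}, and \cref{cotensol} together with \cref{univeqr} identify both sides with $\Fun^\lax(K,-)$ of the original sides. It therefore suffices to show that restriction induces an equivalence on underlying spaces of objects, naturally in $\mB$. By \cref{theta} (or \cref{orientdense}) this recovers the equivalence of $\infty$-categories.

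The key construction is the following. Let $\Fun^{\oplax}_{\cocart}(\bD^1,\mB) \subset \Fun^\oplax(\bD^1,\mB)$ be the subcategory of $\rho$-cocartesian $1$-morphisms, with higher morphisms those sent to cocartesian cells under the bifibration structure of \cref{targetfi2}. I claim that the functor
$$\Fun^{\oplax}_{\cocart}(\bD^1,\mB) \to \mB \times_{\mD} \Fun^\oplax(\bD^1,\mD), \qquad (b \to b') \mapsto (b,\ \rho(b)\to\rho(b'))$$
is an equivalence. This is the higher-dimensional "cocartesian lifts are unique" statement. On objects it follows from the definition of $\rho$ being a cocartesian fibration, and from the contractibility of the space of fillers coming from the pullback square \eqref{filler}. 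On morphism $\infty$-categories I use \cref{homs2}, which identifies morphisms in $\Fun^\oplax(\bD^1,-)$ with oriented pullbacks of morphism $\infty$-categories. I then argue by induction on dimension using \cref{cocarto}: the functors $\rho_{X,Y}$ are again cocartesian fibrations, compatible with pre- and post-composition. The cocartesian condition is exactly that the relevant square of morphism $\infty$-categories is a pullback, so the oriented pullbacks agree.

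With this in hand, given $F:\mC\to\mB$ over $\mD$, I define
$$\bar F:\Env(\mC)=\mC\times_\mD \Fun^\oplax(\bD^1,\mD) \xrightarrow{F\times\id} \mB\times_\mD\Fun^\oplax(\bD^1,\mD)\simeq \Fun^{\oplax}_{\cocart}(\bD^1,\mB)\xrightarrow{\mathrm{ev}_1}\mB.$$
This functor lies over $\mD$. It sends cocartesian cells of $\Env(\mC)$ to cocartesian cells, because those cells are determined by the target projection and $\mathrm{ev}_1$ on cocartesian arrows transports them to cocartesian cells. This gives a functor $\Fun_\mD(\mC,\mB)\to\Fun^\cocart_\mD(\Env(\mC),\mB)$.

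For the composite on $\Fun_\mD(\mC,\mB)$: restricted along $\mC\to\Env(\mC)$, the arrows involved are identities, and their cocartesian lifts are identities, so $\bar F|_\mC\simeq F$. For the other composite, take $G$ a map of cocartesian fibrations. Every object $(c,\ \phi(c)\xrightarrow{f}x)$ of $\Env(\mC)$ receives a canonical morphism from $(c,\id)$. I check via \cref{fiberwiseeq}-style fiberwise reasoning that this morphism is cocartesian over $f$. Since $G$ preserves cocartesian cells, $G$ is recovered as $\overline{G|_\mC}$, and the same argument applies functorially via the arrow $\Env(\mC)\to\Fun^{\oplax}_{\cocart}(\bD^1,\Env(\mC))$.

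The main obstacle is the claimed equivalence $\Fun^{\oplax}_{\cocart}(\bD^1,\mB)\simeq\mB\times_\mD\Fun^\oplax(\bD^1,\mD)$ in all dimensions. Doing this requires carefully matching the oriented pullback descriptions of \cref{homs2} with the iterated cocartesian conditions of \cref{cocarto}. I would first establish it for $\mB\to\mD$ pulled back to disks $\bD^n$ using \cref{thetalocal}, and then glue.
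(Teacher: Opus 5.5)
The paper does not prove this statement itself; it quotes it from \cite[Theorem 4.5.6]{gepner2026fibrations}. Your architecture is the standard one and is viable: identify the $\rho$-cocartesian arrows with $\mB\times_\mD\Fun^\oplax(\bD^1,\mD)$, then invert by evaluating at the target. However, the step you flag as the main obstacle is set up with the wrong variance, and as you state it, it is false. In this paper $\rho$ is a cocartesian fibration when $\rho^{\co}$ is anticocartesian. Since $\Mor_{\mB^{\co}}(X,Y)\simeq\Mor_\mB(X,Y)^{\op}$ (\cref{oppo}), \cref{cocarto} then says that the functors $\rho_{X,Y}$ are \emph{cartesian} fibrations, not cocartesian ones as you claim. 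Compare the remark that a left fibration has cocartesian odd-dimensional cells and cartesian even-dimensional cells. This is exactly the direction the oplax arrow category needs. By \cref{homs2}, a morphism from $\sigma\colon A\to B$ to $\sigma'\colon A'\to B'$ in $\Fun^\oplax(\bD^1,\mB)$ is a triple $(v,u,\alpha)$ with $\alpha\colon v\sigma\Rightarrow\sigma'u$ in $\Mor_\mB(A,B')$. Given $u$ and the image square in $\mD$, you must lift $\rho(\alpha)$ with prescribed \emph{target} $\sigma'u$. This is possible, essentially uniquely, once you require $\alpha$ to be $\rho_{A,B'}$-cartesian; then $v$ is recovered from $v\sigma$ because $\sigma$ is cocartesian. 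Your definition of the subcategory and your appeal to \cref{cocarto} instead impose cocartesian conditions on the higher cells. Under those conditions the required lifts need neither exist nor be unique, and the comparison functor is not an equivalence on morphism $\infty$-categories.

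So the subcategory must impose alternating conditions: cocartesian arrows, cartesian $2$-cells, cocartesian $3$-cells, and so on. The induction also cannot stay inside a single statement. On morphism $\infty$-categories, the comparison for $\rho$ turns into the \emph{dual} comparison for the cartesian fibration $P=\Mor_\mB(A,B')\to Q=\Mor_\mD(\rho A,\rho B')$. That dual statement says the subcategory of $\Fun^\oplax(\bD^1,P)$ on cartesian arrows, now with cocartesian $2$-cells, maps by an equivalence to $\Fun^\oplax(\bD^1,Q)\times_{Q}P$ via the target. One level further down you return to the cocartesian version. You have to prove both versions simultaneously, by induction on $n$ for $\iota_n$ of the comparison functors; this is what turns the regress into an honest induction in $\infty\Cat=\lim_n n\Cat$. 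The same alternating bookkeeping is needed to prove that $\bar F$ preserves cocartesian cells in all dimensions, which you only assert. Your fallback of working over disks via \cref{thetalocal} does not obviously apply either. The $k$-cells of $\Fun^\oplax(\bD^1,\mB)$ are Gray cylinders $\bD^k\boxtimes\bD^1\to\mB$, whose images need not factor through disks of $\mD$.

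Finally, your reduction to spaces tests the wrong mapping $\infty$-categories. \cref{univeqr} and \cref{cotensol} give $\Map_\mD(\mC,\mB^K_\lax)\simeq\Map(K,\Fun^\oplax_\mD(\mC,\mB))$ and its cocartesian analogue, i.e.\ they concern the \emph{oplax} functor $\infty$-categories. The statement concerns the strict ones, $\Fun_\mD(\mC,\mB)$ and $\Fun^\cocart_\mD(\Env(\mC),\mB)$. For the statement as posed, replace $\mC$ by $K\times\mC$ instead, using $\Map(K,\Fun_\mD(\mC,\mB))\simeq\Map_\mD(K\times\mC,\mB)$ and $\Env(K\times\mC)\simeq K\times\Env(\mC)$.
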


\begin{corollary}\label{envelo2}
Let $\phi: \mC \to \mD $ be a functor and
$\rho: \mB \to \mD $ a cartesian fibration.
The induced functor
$$ \Fun^\cart_{\Fun^\oplax(\{0\},\mD)}(\Env(\mC^\coop)^\coop,\mB) \to \Fun_\mD(\mC,\mB) $$
is an equivalence.
\end{corollary}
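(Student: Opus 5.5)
The plan is to deduce this from \cref{envelo} by applying the involution $(-)^\coop$, which reverses all cells. The steps are as follows.

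First, check how $(-)^\coop$ acts on the notions involved. By definition, a functor $\rho$ is a cartesian fibration exactly when $\rho^\op$ is an anticocartesian fibration. A functor $\psi$ is a cocartesian fibration exactly when $\psi^\co$ is anticocartesian. Since $(-)^\co$ and $(-)^\op$ are commuting involutions, $\rho$ is cartesian if and only if $\rho^\coop$ is cocartesian. The same bookkeeping on the definition of maps of fibrations shows that a square is a map of cartesian fibrations over $\mD$ if and only if its $\coop$ is a map of cocartesian fibrations over $\mD^\coop$. Because $(-)^\coop$ is an automorphism of $\infty\Cat$, it preserves pullbacks and fibers. Hence it gives equivalences
\[
\Fun_\mD(\mC,\mB)^{\simeq}\;\simeq\;\Fun_{\mD^\coop}(\mC^\coop,\mB^\coop)^{\simeq},
\]
and the analogous equivalence on the full subcategories of maps of (co)cartesian fibrations. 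On underlying $\infty$-categories this is really an equivalence $\Fun(X,Y)^\coop\simeq\Fun(X^\coop,Y^\coop)$; since $\coop$ of a functor category of the cartesian product is computed this way, an equivalence of functors after applying $\coop$ is still an equivalence.

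Second, apply \cref{envelo} to $\phi^\coop:\mC^\coop\to\mD^\coop$ and the cocartesian fibration $\rho^\coop:\mB^\coop\to\mD^\coop$. This gives an equivalence
\[
\Fun^\cocart_{\mD^\coop}(\Env(\mC^\coop),\mB^\coop)\to\Fun_{\mD^\coop}(\mC^\coop,\mB^\coop),
\]
where $\Env(\mC^\coop)\to\mD^\coop$ is the projection $\Fun^\oplax(\bD^1,\mD^\coop)\times_{\Fun^\oplax(\{0\},\mD^\coop)}\mC^\coop\to\Fun^\oplax(\{1\},\mD^\coop)$. Applying $(-)^\coop$ to both sides and using the first step gives
\[
\Fun^\cart_{\mD}(\Env(\mC^\coop)^\coop,\mB)\to\Fun_\mD(\mC,\mB).
\]
This functor is again an equivalence, and it is induced by restriction along the canonical inclusion $\mC\simeq(\mC^\coop)^\coop\to\Env(\mC^\coop)^\coop$.

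The remaining point, and the main obstacle, is identifying the base correctly. We must check that the structure map $\Env(\mC^\coop)^\coop\to(\mD^\coop)^\coop=\mD$ is the one the statement labels $\Fun^\oplax(\{0\},\mD)$ rather than $\{1\}$. Here the plan is to use \cref{grayhoms}:
\[
\Fun^\oplax(\bD^1,\mD^\coop)^\coop\simeq\Fun^{\oplax}((\bD^1)^\coop,\mD),
\]
obtained by applying the op and co equivalences in turn, which switch lax and oplax twice. Together with $(\bD^1)^\coop=(\bD^1)^\op$, which swaps the endpoints $0$ and $1$, this shows that the former target evaluation becomes evaluation at $0$. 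This is exactly the indexing in the statement. Since the statement only asks that this base be identified with $\mD$, the labeling is bookkeeping, and once it is settled the corollary follows.
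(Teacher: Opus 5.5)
Your proposal is correct and is essentially the paper's route: the paper states this corollary without proof, as the $(-)^\coop$-dual of \cref{envelo}. Your use of \cref{grayhoms} to get $\Fun^\oplax(\bD^1,\mD^\coop)^\coop\simeq\Fun^\oplax((\bD^1)^\op,\mD)$, under which target evaluation becomes evaluation at $0$, correctly accounts for the indexing $\Fun^\oplax(\{0\},\mD)$ in the statement.
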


\begin{definition}Let $\mC$ be a small $\infty$-category.
Let $$ \infty\scat^\cocart_{/\mD}\subset \infty\scat_{/\mD} $$
be the subcategory of cocartesian fibrations over $\mC$ and maps of cocartesian fibrations over $\mD$.
\end{definition}

The following is \cite[Corollary 4.5.9.]{gepner2026fibrations}:

\begin{corollary}\label{huip}
The inclusion of $\infty$-categories $$\infty\scat^\cocart_{/\mD}\subset \infty\scat_{/\mD}$$
admits a left adjoint that sends $\mC \to \mD$ 
to $\Env(\mC) \to \mD$.
\end{corollary}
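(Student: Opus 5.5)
The statement to prove is \cref{huip}: $\Env$ provides a left adjoint to the inclusion $\infty\scat^\cocart_{/\mD}\subset \infty\scat_{/\mD}$. I would deduce it from \cref{envelo} by the pointwise criterion for adjoints. For each object $\phi:\mC\to\mD$ of $\infty\scat_{/\mD}$ it suffices to produce an object $\Env(\mC)\to\mD$ of $\infty\scat^\cocart_{/\mD}$ together with a unit morphism $\eta:\mC\to\Env(\mC)$ over $\mD$. This unit must induce, for every cocartesian fibration $\rho:\mB\to\mD$, an equivalence of mapping spaces
\[
\Map_{\infty\scat^\cocart_{/\mD}}(\Env(\mC),\mB)\to \Map_{\infty\scat_{/\mD}}(\mC,\mB).
\]
The left adjoint is then assembled from these universal arrows in the standard way.

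First I record that $\Env(\mC)\to\mD$ is a cocartesian fibration. The paper calls it the enveloping cocartesian fibration, and this fact is part of the setup of \cref{envelo}. It follows from \cref{targetfi2}: the target projection $\Fun^\oplax(\bD^1,\mD)\to\mD\times\mD$ is a bifibration, so after base change along $\phi$ in the source variable the target projection is a cocartesian fibration. For the unit I take the inclusion $\mC\simeq \mD\times_\mD\mC\to\Env(\mC)$ over $\Fun^\oplax(\{1\},\mD)\simeq\mD$ from the remark following the definition. It lies over $\mD$ because the constant arrow at $\phi(c)$ has target $\phi(c)$.

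Next I identify the mapping spaces. In $\infty\scat^\cocart_{/\mD}$ the mapping space from $\Env(\mC)$ to $\mB$ is the core of $\Fun^\cocart_\mD(\Env(\mC),\mB)$. This holds because the subcategory is defined by the maps of cocartesian fibrations over $\mD$, which form the full subcategory $\Fun^\cocart_\mD\subset\Fun_\mD$ as in the Notation. In $\infty\scat_{/\mD}$ the mapping space is the core of $\Fun_\mD(\mC,\mB)$. The map between them is restriction along $\eta$, which is precisely the functor appearing in \cref{envelo}. That theorem says this functor is an equivalence of $\infty$-categories, so it is in particular an equivalence on cores. This gives the required universal property.

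The only real check is the compatibility in the previous step: the functor of \cref{envelo} must literally be precomposition with the inclusion $\mC\to\Env(\mC)$, and the spaces $\Map$ in the (non-full) subcategory must be the cores of $\Fun^\cocart_\mD$. The latter holds because morphisms in $\infty\scat^\cocart_{/\mD}$ are exactly maps over $\mD$ preserving cocartesian morphisms of all dimensions, which is the condition cutting out $\Fun^\cocart_\mD$. With these identifications the corollary is immediate.
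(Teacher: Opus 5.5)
Your route is the same as the paper's: the paper records \cref{huip} as an immediate consequence of \cref{envelo}, with unit the inclusion $\mC\to\Env(\mC)$. The one thing to repair is the level at which you check the universal property.

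The statement concerns the $\infty$-categories $\infty\scat^\cocart_{/\mD}\subset\infty\scat_{/\mD}$, i.e.\ an adjunction of $\infty\Cat$-enriched categories. Such an adjunction has to induce equivalences of morphism $\infty$-categories $\Fun^\cocart_\mD(\Env(\mC),\mB)\simeq\Fun_\mD(\mC,\mB)$, natural in $\mB$, and not merely equivalences of mapping spaces. Verifying the universal arrow only on mapping spaces yields a left adjoint of the underlying functor $\infty\Cat^\cocart_{/\mD}\subset\infty\Cat_{/\mD}$. An underlying left adjoint need not be enriched; by \cref{adj} you would additionally need the inclusion to preserve cotensors. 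So your claim that ``it suffices to produce \dots\ an equivalence of mapping spaces'' is not sufficient for the statement as posed.

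The fix is free, because you already have the stronger input and only need to stop passing to cores. By the enriched Yoneda lemma, the point $\eta\in\Fun_\mD(\mC,\Env(\mC))$ determines an $\infty\Cat$-enriched natural transformation
$\Fun^\cocart_\mD(\Env(\mC),-)\to\Fun_\mD(\mC,-)$
of functors $\infty\scat^\cocart_{/\mD}\to\infty\scat$, given componentwise by precomposition with $\eta$. \cref{envelo} says each component is an equivalence of $\infty$-categories. Hence $\Env(\mC)$ corepresents $\Fun_\mD(\mC,-)|_{\infty\scat^\cocart_{/\mD}}$ in the enriched sense. The enriched corepresentability criterion for left adjoints from \cite[Remark 2.55]{heine2024bienriched}, recalled in the paper, then gives the left adjoint of $\infty$-categories.

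The rest of your proposal is fine: the cocartesianness of $\Env(\mC)\to\mD$ (which the paper builds into the definition) and the identification of morphism objects in the non-full subcategory with $\Fun^\cocart_\mD$.
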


The following is \cite[Theorem 5.1.17.]{gepner2026fibrations}:

\begin{corollary}
The inclusion of antioriented $\infty$-categories $$\infty\fcat^\cocart_{/\mD}\subset \infty\fcat_{/\mD}$$
admits a left adjoint that sends $\mC \to \mD$ 
to $\Env(\mC) \to \mD$.
\end{corollary}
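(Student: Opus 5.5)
The final statement is the oriented refinement of \cref{huip}. It says that the inclusion of antioriented categories $\infty\fcat^\cocart_{/\mD}\subset \infty\fcat_{/\mD}$ has a left adjoint $\mC \mapsto \Env(\mC)$. The plan is to deduce it from \cref{huip} together with the enriched adjoint criterion of \cref{adj2}, working on left morphism $\infty$-categories.

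By \cref{huip}, the underlying functor of the inclusion already admits the left adjoint $\Env$. By \cref{adj2}(3), an antioriented functor admits an antioriented left adjoint if and only if its underlying functor admits a left adjoint and it preserves left cotensors. It therefore suffices to show that the inclusion $\infty\fcat^\cocart_{/\mD}\to \infty\fcat_{/\mD}$ preserves left cotensors.

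To compute the cotensors, I will identify the left cotensor of a cocartesian fibration $Y \to \mD$ by an $\infty$-category $K$, in both antioriented categories, with $Y^K_\lax \to \mD$.
\begin{enumerate}[\normalfont(1)]\setlength{\itemsep}{-2pt}
\item In $\infty\fcat_{/\mD}$, I will use \cref{univeqr}. It gives $\Fun^\oplax_\mD(X,Y^K_\lax)\simeq \Fun^\lax(K,\Fun^\oplax_\mD(X,Y))$ naturally in $X$, and this is precisely the universal property of the left cotensor ${^K Y}$. Here the left morphism objects of $\infty\fcat_{/\mD}$ are $\Fun^\oplax_\mD(-,-)$, and $\Fun^\lax(K,-)$ is right adjoint to $K\boxtimes(-)$.
\item In $\infty\fcat^\cocart_{/\mD}$, I will first check that $Y^K_\lax \to \mD$ is again a cocartesian fibration, which is \cref{restcoca}. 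I will then use the restricted equivalence of \cref{cotensol}, $\Fun^{\oplax,\cocart}_\mD(X,Y^K_\lax)\simeq \Fun^\lax(K,\Fun^{\oplax,\cocart}_\mD(X,Y))$, for cocartesian $X$. Combined with the identification of the left morphism objects, $\L\Mor_{\infty\fcat^\cocart_{/\mD}}(X,Y)\simeq \Fun^{\oplax,\cocart}_\mD(X,Y)$, this exhibits $Y^K_\lax$ as the left cotensor there too.
\end{enumerate}
Both cotensors are the same object $Y^K_\lax$. The comparison map induced by the inclusion is the identity on it, so cotensors are preserved.

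Once this holds, \cref{adj2}(3) yields an antioriented left adjoint, and its underlying functor is the left adjoint of \cref{huip}, namely $\mC\mapsto\Env(\mC)$.

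The main obstacle is step (2): matching the equivalences to the cotensor universal property. One has to check that the equivalence of \cref{cotensol} is induced by the canonical map $K \to \L\Mor(Y^K_\lax, Y)$, i.e. evaluation, so that it really is the cotensor structure map rather than an abstract equivalence. I would settle this by tracing it to the construction of \cref{univeqr} via the adjunction $K\boxtimes(-)\dashv\Fun^\lax(K,-)$, using that evaluation $Y^K_\lax \to Y$ is a map of cocartesian fibrations. If this proves awkward, the fallback is to verify the left adjoint property directly. For that, I would show that the unit $\mC\to\Env(\mC)$ induces equivalences $\Fun^{\oplax,\cocart}_\mD(\Env(\mC),\mB)\to\Fun^\oplax_\mD(\mC,\mB)$. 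Apply $\iota_0$ after cotensoring $\mB$ by each $\bD^n$, using \cref{envelo} on underlying spaces together with \cref{cotensol} and \cref{univeqr}. Then conclude by density of the disks (\cref{theta}).
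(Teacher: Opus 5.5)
Your proposal is correct. The paper gives no argument for this corollary: it imports it from \cite[Theorem 5.1.17]{gepner2026fibrations}, so there is no in-text proof to compare against step by step. Your derivation is the natural one from the results the paper does record, and it uses the same techniques the paper applies nearby:
\begin{itemize}
\item Identifying the left cotensors in $\infty\fcat^\cocart_{/S}$ with $X^K_\lax$ via \cref{cotensol} is exactly what the proof of \cref{orientpres} relies on.
\item Passing from an underlying statement plus compatibility with left cotensors to an enriched statement is the pattern of the proof of \cref{Groori}.
\end{itemize}
Two further points in your favour. You establish that the cotensors exist in the source $\infty\fcat^\cocart_{/\mD}$ (via \cref{restcoca} and \cref{cotensol}), which the criterion of \cref{adj2} implicitly needs. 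You also identify, implicitly, the underlying category of $\infty\fcat^\cocart_{/\mD}$ with $\infty\Cat^\cocart_{/\mD}$ when invoking \cref{huip}; this holds because $\Fun(X,Y)\to\Fun^\oplax(X,Y)$ is an equivalence on maximal subspaces.

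The step you flag as the main obstacle is less delicate than you fear. \cref{cotensol} is stated as a restriction of the equivalence of \cref{univeqr}. Evaluating at $X=Y^K_\lax$ on the identity therefore shows that the inclusion carries the structure map $K\to\Fun^{\oplax,\cocart}_\mD(Y^K_\lax,Y)$ to the structure map $K\to\Fun^{\oplax}_\mD(Y^K_\lax,Y)$ of the cotensor in $\infty\fcat_{/\mD}$. Hence the comparison map is the identity.

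Your fallback also does not need density of disks. The chain
$\Map(K,\Fun^{\oplax,\cocart}_\mD(\Env(\mC),\mB))\simeq\Map_{\infty\Cat^\cocart_{/\mD}}(\Env(\mC),\mB^K_\lax)\simeq\Map_{\infty\Cat_{/\mD}}(\mC,\mB^K_\lax)\simeq\Map(K,\Fun^\oplax_\mD(\mC,\mB))$
is natural in an arbitrary $\infty$-category $K$ and compatible with restriction along $\mC\to\Env(\mC)$. The Yoneda lemma in $\infty\Cat$ then finishes directly. This is just the unwound form of your \cref{adj2} argument.
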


\begin{definition}
A left fibration is a cocartesian fibration whose fibers are spaces.
    
\end{definition}

\begin{remark}

A cocartesian fibration is a left fibration if and only if
every odd dimensional cell is cocartesian and every even dimensional cell is cartesian.
One sees this easily by induction on $n \geq 0$ proving the following more general statement: a cocartesian fibration has fibers $n$-categories 
if and only if every odd dimensional cell of dimension larger $n$ is cocartesian and every even dimensional cell of dimension larger $n$ is cartesian.
    
\end{remark}

\begin{notation}
Let $\LFib \subset \coCart$ be the full subcategory of left fibrations.
    
\end{notation}

\begin{remark} Let $S$ be an $\infty$-category.
Then $$ \LFib_S \subset \infty\scat^\cocart_{/S}$$ is the full reflexive subcategory of left fibrations over $S.$
We write $ L: \infty\scat^\cocart_{/S} \to \LFib_S $ for the left adjoint.
    
\end{remark}

\begin{notation}\label{fibinv}
Let $X \to S$ be a cocartesian fibration and $\mE $ a collection of cells of $X.$
Let $X^{\mE^{-1}}$ be the pushout
$$  X \times _{(\mE_n \times \Env_S(\bD^n))} (\mE_n \times L(\Env_S(\bD^n))).$$
    
\end{notation}

\begin{lemma}

Let $X \to S, Y \to S$ be cocartesian fibrations and $\mE $ a collection of cells of $X.$
The induced functor
$$ \Fun^\cocart_S(X^{\mE^{-1}},Y) \to \Fun^\cocart_S(X,Y) $$ is fully faithful and the essential image precisely consists of the maps $X \to Y$ of cocartesian fibrations over $S$ sending $\mE$ to cocartesian morphisms.

\end{lemma}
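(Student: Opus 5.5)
The plan is to read $X^{\mE^{-1}}$ as a pushout in $\infty\scat^\cocart_{/S}$ and use its universal property there. Concretely, the defining pushout is
$$X^{\mE^{-1}} = X \coprod_{\coprod_{n}\mE_n \times \Env_S(\bD^n)} \Big(\coprod_n \mE_n \times L(\Env_S(\bD^n))\Big).$$
Here each $e\in\mE_n$ is a cell $\bD^n\to X$, viewed over $S$ through the projection, and $\Env_S(\bD^n)\to X$ is its adjoint map of cocartesian fibrations, coming from \cref{envelo}. The map $\Env_S(\bD^n)\to L(\Env_S(\bD^n))$ is the unit of the reflection onto left fibrations. Mapping out of the pushout gives a pullback of mapping spaces, so
$$\Fun^\cocart_S(X^{\mE^{-1}},Y) \simeq \Fun^\cocart_S(X,Y)\times_{\prod_{e}\Fun^\cocart_S(\Env_S(\bD^n),Y)} \prod_{e}\Fun^\cocart_S(L\Env_S(\bD^n),Y).$$
This holds first on maximal subspaces, and on the whole functor $\infty$-categories after tensoring with $\bD^k$ via \cref{cotensol}.

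This reduces the lemma to a single-cell claim. Restriction along the unit $\Env_S(\bD^n)\to L\Env_S(\bD^n)$ should give a fully faithful functor
$$\Fun^\cocart_S(L\Env_S(\bD^n),Y)\to \Fun^\cocart_S(\Env_S(\bD^n),Y)\simeq \Fun_S(\bD^n,Y),$$
whose image is exactly the $n$-cells of $Y$ that are cocartesian over $S$. Full faithfulness is preserved under pullback, and the image condition is checked cell by cell. So the lemma follows once this claim is in hand, using that $X\to Y$ sends $e$ to a cocartesian cell exactly when the composite $\bD^n\to Y$ is cocartesian.

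To prove the claim I would use that $\Env_S(\bD^n)\to L\Env_S(\bD^n)$ is a localization in $\infty\scat^\cocart_{/S}$. Maps out of $L\Env_S(\bD^n)$ are then exactly the maps out of $\Env_S(\bD^n)$ that invert the cells killed by $L$, and under \cref{envelo} these correspond to the fiberwise-noninvertible cells of the fibers. The remaining step is to identify the condition that the adjoint map $\Env_S(\bD^n)\to Y$ ``factors through $L$'' with the condition that the cell $\bD^n\to Y$ is cocartesian. I expect this to be the main obstacle.

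My approach to it is induction on $n$, using \cref{cocarto} and the characterization of left fibrations in the remark above: odd cells are cocartesian and even cells are cartesian. Reading this through $\Mor$ reduces the question about an $n$-cell to one about an $(n-1)$-cell in the morphism categories. The base case is $n=1$: a morphism is $\phi$-cocartesian exactly when its image in the fiber of the free fibration becomes an equivalence. This is the classical fact that the free cocartesian fibration on $\bD^1$ over $S$ reflects to the left fibration corepresented by the source.

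Alternatively, I would fall back on \cref{fiberwiseeq} and \cref{paracocart} to check the claim fiberwise over each $Z\in S$. In the fibers it reduces to a statement about oriented fibers $\bD^n\vec{\times}_S\{Z\}$ and their groupoidification.
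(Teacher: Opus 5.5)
Your reduction matches the paper's: map out of the pushout in $\infty\scat^\cocart_{/S}$, use $\Fun^\cocart_S(\Env_S(\bD^n),Y)\simeq\Fun_S(\bD^n,Y)$, and reduce to a statement about a single cell. The gap is that this single-cell claim, as you formulate it, is false. Neither the induction on $n$ nor the fiberwise check can prove it.

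Since $L$ is fiberwise groupoidification, a map $\Env_S(\bD^n)\to Y$ factors through $L\Env_S(\bD^n)$ exactly when it inverts \emph{every} cell of every fiber $\bD^n\vec{\times}_S\{t\}$. This collapses the whole cell, boundary included, rather than making its top cell cocartesian.

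Already for $S=\bD^0$ the claim fails. Here $\Env_S(\bD^n)=\bD^n$ and $L(\bD^n)\simeq\bD^0$, so for $n=2$ your functor is restriction $Y\simeq\Fun(\bD^0,Y)\to\Fun(\bD^2,Y)$. Its essential image consists of the $2$-cells equivalent to some $\mathrm{id}_{\mathrm{id}_y}$. But every invertible $2$-cell is cocartesian for $Y\to\bD^0$, for instance $\mathrm{id}_u$ with $u$ a non-invertible $1$-cell.

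Concretely, take $X=Y=\bD^1$ and $\mE=\{\mathrm{id}_u\}$. Then $X^{\mE^{-1}}=\bD^1\coprod_{\bD^2}\bD^0$. Every functor out of it restricts to an invertible arrow on $\bD^1$: restrict the equivalence $\mathrm{id}_{u'}\simeq\mathrm{id}_{\mathrm{id}_y}$ along a face of $\bD^2$. So $\mathrm{id}_{\bD^1}$ is not in the image, although it sends $\mE$ to a cocartesian cell.

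Your base case $n=1$ also fails once $S$ has non-invertible $2$-cells. The fiber of $\Env_S(\bD^1)$ over $t$ contains $\Mor_S(s,t)$. Factoring through $L$ therefore forces the $2$-cells of $S$ out of $s$ to act invertibly on the source object of the cell in $Y$. That condition has nothing to do with the cell being cocartesian. For example, take $S=\bD^2$ and let $Y$ classify the functor $\bD^2\to\infty\scat$ that picks $0,1\in\bD^1$ on the two $1$-cells and the arrow $0\to 1$ on the $2$-cell. The ``classical fact'' you invoke holds only for $1$-categorical $S$.

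So, read literally with $L$ the reflection onto left fibrations, the lemma is false, and no argument can establish your key step. What works is a different second leg of the pushout. For $e\in\mE_n$, glue in the free cocartesian fibration on the $(n-1)$-dimensional face of $e$ that determines cocartesian lifts. Glue it along the map from $\Env_S(\bD^n)$ classifying the cocartesian lift of the image of $e$ in $S$.

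Maps from this fibration into $Y$ form $\Fun_S(\bD^{n-1},Y)$. That is the factor that actually appears in the paper's displayed pullback; the paper's proof has exactly your pushout-to-pullback skeleton. Your single-cell claim then becomes: taking cocartesian lifts $\Fun_S(\bD^{n-1},Y)\to\Fun_S(\bD^n,Y)$ is fully faithful, with image the cocartesian $n$-cells over that base cell. This is uniqueness of cocartesian lifts, and with that replacement your reduction goes through.
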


\begin{proof}
The induced functor
$ \Fun^\cocart_S(X^{\mE^{-1}},Y) \to \Fun^\cocart_S(X,Y) $ identifies with the functor
$$ \Fun^\cocart_S(X^{\mE^{-1}},Y) \simeq \Fun^\cocart_S(X,Y) \times_{\prod_{n > 0}\Fun(\mE_n, \Fun_S(\bD^n,Y))} \prod_{n > 0}\Fun(\mE_n,\Fun_S(\bD^{n-1},Y')) \to \Fun^\cocart_S(X,Y). $$
This functor is fully faithful by \cite[Corollary 4.1.4.]{oriented} and the essential image precisely consists of the maps $X \to Y$ of cocartesian fibrations over $S$ sending $\mE$ to cocartesian morphisms.
\end{proof}

\subsection{The Grothendieck construction}

For the next definition we use \cref{envelo2}:

\begin{definition}

The Grothendieck construction is the unique map
$$\int: \infty\widehat{\Cat}_{//^\oplax \infty\scat} \to \co\widehat{\Cart}$$
of cartesian fibrations over
$\infty\widehat{\Cat}$ sending $\infty\scat$ to the universal cocartesian fibration $ \infty\scat_{*//^\oplax} \to \infty\scat.$

The map $\int: \infty\widehat{\Cat}_{//^\oplax \infty\scat} \to \co\widehat{\Cart}$ 
restricts to a map 
\begin{equation}\label{GroCon}
\int: \infty\scat \times_{\infty\widehat{\Cat}} \infty\widehat{\Cat}_{//^\oplax \infty\scat} \to \coCart
\end{equation} of cartesian fibrations over $\infty\scat$,
which we also call the Grothendieck construction.

\end{definition}

\begin{remark}
The map $$\int: \infty\widehat{\Cat}_{//^\oplax \infty\scat} \to \co\widehat{\Cart}$$ of cartesian fibrations over
$\infty\widehat{\Cat}$ induces on the fiber over every $\mC \in \infty\widehat{\Cat}$ the functor
$$\int_\mC: \Fun(\mC, \infty\scat) \to \infty\widehat{\Cat}^\coCart_{/\mC}, $$
which takes the pullback along the universal cocartesian fibration.
The map 
\begin{equation}\label{GroCon}
\int: \infty\scat \times_{\infty\widehat{\Cat}} \infty\widehat{\Cat}_{//^\oplax \infty\scat} \to \coCart
\end{equation} of cartesian fibrations over $\infty\scat$ induces on the fiber over every $\mC \in \infty\scat$ the functor
$$\int_\mC : \Fun(\mC, \infty\scat) \to \infty\scat^\cocart_{/\mC}$$
that takes the pullback along the universal cocartesian fibration.

\end{remark}

\begin{remark}

For every $ 0 \leq n \leq \infty$ the map (\ref{GroCon}) restricts to a map $$\int: n\Cat \times_{\infty\widehat{\Cat}} \infty\widehat{\Cat}_{//^\oplax n\Cat} \to {n\coCart}$$ of cartesian fibrations over $n\Cat,$ which induces on the fiber over every $\mC \in {n\Cat}$ a functor 
$$\int_\mC: \Fun(\mC, n\Cat) \to {n\Cat}^\cocart_{/\mC}.$$
\end{remark}

\begin{definition}
Let $ 0 \leq n \leq \infty$ and $\mC \in {n\Cat}$.
\begin{enumerate}[\normalfont(1)]\setlength{\itemsep}{-2pt}
\item The Grothendieck construction for cartesian fibrations is the 
functor $$\int_\mC: \Fun(\mC^\circ, n\Cat) \to {n\Cat}^\cart_{/\mC}$$
corresponding to the composition
$$\Fun(\mC^\circ, n\Cat)^\cop \simeq \Fun(\mC, n\Cat^\circ)^{\co\op} \xrightarrow{(-)^{\co\op}_!} \Fun(\mC, n\Cat^{\co\op})^{\co\op} \simeq $$$$ \Fun(\mC^{\co\op}, n\Cat) \xrightarrow{\int_{\mC^{\co\op}}} {n\Cat}^\cocart_{/\mC^{\co\op}} \xrightarrow{(-)^{\co\op}} ({n\Cat}^\cart_{/\mC})^\cop$$

\item The Grothendieck construction for anticartesian fibrations is the functor $$\int_\mC: \Fun(\mC^{\co\op}, n\Cat) \to {n\Cat}^{\overline{\cart}}_{/\mC}$$
corresponding to the composition $$ \Fun(\mC^{\co\op}, n\Cat)^\co \simeq \Fun(\mC, n\Cat^{\co\op})^\op \xrightarrow{(-)^{\op}_!} \Fun(\mC, n\Cat^\op)^\op \simeq $$$$ \Fun(\mC^\op, n\Cat) \xrightarrow{\int_{\mC^\op}} {n\Cat}^\cocart_{/\mC^\op} \xrightarrow{(-)^{\op}} ({n\Cat}^{\overline{\cart}}_{/\mC})^\co$$
\item The Grothendieck construction for anticocartesian fibrations is the
functor $$\int_\mC: \Fun(\mC^{\cop}, n\Cat) \to {n\Cat}^{\co\overline{\cart}}_{/\mC}$$
corresponding to the composition $$ (\Fun(\mC^\cop, n\Cat)^\op)^\circ \simeq \Fun(\mC, n\Cat^\cop)^\co \xrightarrow{(-)^{\co}_!} \Fun(\mC, n\Cat^\co)^\co \simeq $$$$ \Fun(\mC^\co, n\Cat) \xrightarrow{\int_{\mC^\co}} {n\Cat}^\cocart_{/\mC^\co} \xrightarrow{(-)^{\co}} (({n\Cat}^{\co\overline{\cart}}_{/\mC})^\op)^\circ.$$
\end{enumerate}
\end{definition}

\begin{definition}Let $ 0 \leq n \leq \infty.$
We say that the $n$-categorical Grothendieck construction is an equivalence if for every $\mC \in n\Cat$ the following functor is an equivalence:
$$\int_\mC: \Fun(\mC, n\Cat) \to {n\Cat}^\cocart_{/\mC}. $$
    
\end{definition}

The following is \cite[Corollary 4.6.9.]{gepner2026fibrations}:

\begin{corollary}\label{Grorep2}

Let $X$ be an $\infty$-category and $Z \in X.$
The Grothendieck construction for cartesian fibrations sends 
$$ \Mor_\X(-,Z): X^\circ \to \infty\scat $$ 
to the cartesian fibration $$ X_{//^\oplax Z} \to X. $$

\end{corollary}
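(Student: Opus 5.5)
The plan is to reduce to \cref{Grorep} by passing through the duality $(-)^{\co\op}$ that defines the Grothendieck construction for cartesian fibrations. By definition, $\int_X$ for cartesian fibrations is the composite
\[
\Fun(X^\circ,\infty\scat)^\cop \simeq \Fun(X^{\co\op},\infty\scat) \xrightarrow{\int_{X^{\co\op}}} \infty\scat^\cocart_{/X^{\co\op}} \xrightarrow{(-)^{\co\op}} (\infty\scat^\cart_{/X})^\cop .
\]
The first identification postcomposes with the involution $(-)^{\co\op}$ of $\infty\scat$. So I would first compute the image of $\Mor_X(-,Z)\colon X^\circ\to\infty\scat$ under this equivalence. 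It is a functor $X^{\co\op}\to\infty\scat$, and I claim it is $\Mor_{X^{\co\op}}(Z,-)$. Pointwise this holds because $\Mor_X(Y,Z)^{\co\op}\simeq \Mor_{X^{\co\op}}(Z,Y)$, which follows from \cref{oppo}: $(-)^\op$ on $X$ reverses the order of objects and applies $(-)^\co$ to morphism $\infty$-categories, while $(-)^\co$ applies $(-)^\op$. The naturality in $Y$ should come from the fact that the corepresentable functor $\Mor_X(-,Z)$ is obtained by transferring the enrichment of $X$ (the Yoneda functor $X\to\infty\scat$ underlying the enriched structure), and that $(-)^{\co\op}$ is compatible with transfer of enrichment by the definitions in \cref{ruik}. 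I expect this step to be the main bookkeeping obstacle: one must check that the enriched Yoneda functor commutes with the involutions, including the twist $(-)^\circ$ versus $(-)^\op$ on the source.

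Next, apply \cref{Grorep} to the $\infty$-category $X^{\co\op}$ and the object $Z$. The Grothendieck construction (for cocartesian fibrations) of $\Mor_{X^{\co\op}}(Z,-)$ is the pullback of the universal cocartesian fibration along this functor. By \cref{Grorep} this pullback is $(X^{\co\op})_{Z//^\oplax}\to X^{\co\op}$.

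Finally, apply $(-)^{\co\op}$. I need the equivalence $((X^{\co\op})_{Z//^\oplax})^{\co\op}\simeq X_{//^\oplax Z}$ over $X$. The oplax coslice is defined as $(Y^\co_{F^\co//^\lax})^\co$. Using the lemma $(Y_{//^\oplax F})^\op\simeq (Y^\op)_{F^\op//^\lax}$, I would get
\[
(X^{\co\op})_{Z//^\oplax} \simeq ((X^{\op})_{Z//^\lax})^\co \simeq ((X_{//^\oplax Z})^\op)^\co .
\]
Applying $(-)^{\co\op}$ then gives $X_{//^\oplax Z}$. Compatibility with the projections is clear, since each equivalence is natural over the base. \cref{slicecocart}(1) confirms that the result is indeed a cartesian fibration, which serves as a consistency check.
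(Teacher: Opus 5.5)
Your argument is correct and is the intended one. The paper imports this statement from \cite[Corollary 4.6.9]{gepner2026fibrations} as the dual of \cref{Grorep}, and your three steps are exactly that deduction: transport $\Mor_X(-,Z)$ to $\Mor_{X^{\co\op}}(Z,-)$ along the $(-)^{\co\op}$-duality in the definition of the cartesian Grothendieck construction, apply \cref{Grorep} to $X^{\co\op}$, and identify $((X^{\co\op})_{Z//^\oplax})^{\co\op}\simeq X_{//^\oplax Z}$ using the definition of the oplax coslice and the lemma $(Y_{//^\oplax F})^\op\simeq (Y^\op)_{F^\op//^\lax}$.

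The naturality bookkeeping you flag is easier to settle with the Yoneda lemma than by tracking transfer of enrichment. The identification $\Fun(X^\circ,\infty\scat)\simeq\Fun(X^{\co\op},\infty\scat)$ preserves the direction of $1$-morphisms, since it only reverses cells of dimension $\geq 2$. It also intertwines evaluation at $Z$ with $(-)^{\co\op}$. Hence it sends the functor corepresenting $F\mapsto\iota_0F(Z)$ to the functor corepresenting $G\mapsto\iota_0G(Z)$, which is $\Mor_{X^{\co\op}}(Z,-)$.
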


\begin{example}\label{Gro0}

The 0-categorical Grothendieck construction is an equivalence since 
for every small space $X$ the canonical functor
$$\Fun(X, \mS) \to {0\Cat}^\cocart_{/X} = \mS_{/X} $$
taking the fiber over the contractible space is an equivalence.

\end{example}

\begin{remark}\label{clas}
Let $ 0 \leq n \leq \infty$ and assume the $n$-categorical Grothendieck construction is an equivalence.
Then the map $$\int: n\Cat \times_{\infty\widehat{\Cat}} \infty\widehat{\Cat}_{//^\oplax n\Cat} \to {n\coCart}$$ of cartesian fibrations over $n\Cat$ is fiberwise an equivalence and so an 
equivalence by \cref{fiberwiseeq}.
So by \cref{Grorep2} the cartesian fibration
$ {n\coCart} \to n\Cat$ is the Grothendieck construction associated to the functor $$\Fun(-,n\Cat): n\Cat^\circ \to \infty\scat.$$

\end{remark}

\begin{proposition}\label{Grothendieck-char}
Let $ 0 \leq n \leq \infty$ and $\mC$ an $n$-category. The following are equivalent:

\begin{enumerate}[\normalfont(1)]\setlength{\itemsep}{-2pt}

\item The $n$-categorical Grothendieck construction $\int_\mC: \Fun(\mC,n\Cat) \to n\Cat^\cocart_{/\mC} $ is an equivalence.

\item The $\infty$-category $ n\Cat^\cocart_{/\mC}$ admits small weighted colimits and for every
$X \in \mC$ the functor $$ n\Cat^\cocart_{/\mC} \to n\Cat$$
taking the fiber over $X$ preserves small weighted colimits.

\item The category $\iota_1(n\Cat^\cocart_\mC)$ admits small colimits and for every
$X \in \mC$ the functor $$ \iota_1(n\Cat^\cocart_{/\mC}) \to \iota_1(n\Cat) $$ taking the fiber over $X$ preserves small colimits.
    
\end{enumerate}
    
\end{proposition}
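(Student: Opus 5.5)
The plan is to prove (1)$\Rightarrow$(2)$\Rightarrow$(3)$\Rightarrow$(1). The first two implications are formal; the last is where a density argument does the work.

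For (1)$\Rightarrow$(2): if $\int_\mC$ is an equivalence, it suffices to know that $\Fun(\mC,n\Cat)$ admits small weighted colimits and that evaluation at each $X\in\mC$ preserves them. Indeed, $n\Cat$ is presentable and cartesian closed, so $n\scat$ admits small weighted colimits. Functor categories into it inherit these colimits, computed pointwise, so evaluation at $X$ preserves them. Under $\int_\mC$, evaluation at $X$ corresponds to taking the fiber over $X$, because $\int_\mC$ is defined by pullback along the universal cocartesian fibration and the fiber of the pullback over $X$ is the fiber of $\infty\scat_{*//^\oplax}\to\infty\scat$ over $F(X)$, which is $F(X)$ itself. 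Transporting along the equivalence gives (2).

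For (2)$\Rightarrow$(3): conical colimits are special weighted colimits, and small colimits in $\iota_1$ of an $\infty$-category with weighted colimits are computed by the conical ones. Hence (3) follows by restricting to the underlying $1$-categories.

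For (3)$\Rightarrow$(1), the strategy is as follows.
\begin{itemize}
\item[(a)] First I show that $\int_\mC$ is fully faithful on underlying $1$-categories and conservative. Conservativity follows from \cref{fiberwiseeq}: a map of cocartesian fibrations is an equivalence if it is one on fibers, and on fibers $\int_\mC$ is evaluation. For full faithfulness, I use that $\int_\mC$ preserves colimits, which holds because under (3) colimits in both categories are detected fiberwise and are computed pointwise on the functor side. Since $\Fun(\mC,n\Cat)$ is generated under colimits by the representables $\Mor_\mC(Z,-)\times D$, with $D$ ranging over disks of dimension at most $n$, full faithfulness reduces to these generators. By the co-op dual of \cref{Grorep2}, or by \cref{Grorep} directly, $\int_\mC\Mor_\mC(Z,-)\simeq \mC_{Z//^\oplax}$. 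The Yoneda property is then supplied by \cref{envelo}: $\mC_{Z//^\oplax}=\Env(\{Z\})$ is the free cocartesian fibration on $Z$, so maps of cocartesian fibrations out of it are the fiber over $Z$, matching $\Map(\Mor_\mC(Z,-),G)\simeq G(Z)$. Multiplying by a disk $D$ is treated via $\Env(D\times\{Z\}\to\mC)$ in the same way.
\item[(b)] Next I show essential surjectivity. Given a cocartesian fibration $p$, I form the functor $G$ with $G(Z)=$ the fiber of $p$ over $Z$, using the right adjoint of the colimit-preserving functor $\int_\mC$, which exists by (3) together with an adjoint functor theorem argument. Equivalently, I present $p$ as a colimit of free fibrations $\Env$ of disks mapping to it, using \cref{huip} and density of disks via \cref{theta}. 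By (3) this colimit is preserved fiberwise, so the counit $\int_\mC G\to p$ is a fiberwise equivalence and therefore an equivalence by \cref{fiberwiseeq}.
\end{itemize}

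I expect the main obstacle to be step (b), specifically justifying that every cocartesian fibration is a colimit in $\iota_1(n\Cat^\cocart_{/\mC})$ of free ones $\Env(D\to\mC)$. I would first try to obtain this from the left adjoint $\Env$ of \cref{huip}: the counit resolution $\Env(\Env(p))\rightrightarrows\Env(p)\to p$ is a split-type colimit diagram. Combining it with the fact that fibers of $\Env$ are oriented fibers $\mC\vec{\times}_\mD\{X\}$ then lets me reduce to generators.
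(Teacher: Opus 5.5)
Your proof is correct, but for the main implication it takes a different route from the paper.

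The paper proves (2)$\Rightarrow$(1) through an auxiliary functor. It extends $Z\mapsto\mC_{Z//^\oplax}$ along the enriched Yoneda embedding to a left adjoint $F\colon\Fun(\mC,n\Cat)\to n\Cat^\cocart_{/\mC}$ (\cref{Yonedaext}). It shows $F$ is fully faithful, because the $\mC_{Z//^\oplax}$ corepresent the fiber functors, which preserve weighted colimits by (2). It shows the right adjoint $\N$ is conservative, because $\N(\mD)(X)\simeq\mD_X$ by \cref{envelo} and then \cref{fiberwiseeq} applies. Only afterwards does it identify $F$ with $\int_\mC$, via the transformation $F\to\int_\mC$ and $\N\circ\int_\mC\simeq\id$. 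It needs this detour explicitly because it does not know that $\int_\mC$ preserves colimits.

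You prove that fact directly. Under (3) the fiber functors preserve colimits and are jointly conservative by \cref{fiberwiseeq}, so they jointly reflect colimits. Since $\int_\mC$ followed by a fiber functor is evaluation, $\int_\mC$ preserves colimits. You can then work with $\int_\mC$ itself: full faithfulness is checked on the generators $\Mor_\mC(Z,-)\times D$ via \cref{Grorep} and \cref{envelo}, and the right adjoint comes from the adjoint functor theorem. This removes the auxiliary $F$ and the comparison step, and uses only the $1$-categorical hypothesis (3), which is the form invoked in \cref{Groeq}. In exchange, the paper's route stays enriched throughout and directly gives an equivalence of $\infty$-categories.

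Two remarks on your step (b).

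\emph{The obstacle you anticipate is not one.} By \cref{envelo}, $R(p)(Z)\simeq p_Z$ naturally in $p$, so the right adjoint $R$ is conservative by \cref{fiberwiseeq}. A fully faithful left adjoint with a conservative right adjoint is an equivalence, so you never need to present $p$ as a colimit of free fibrations. That is fortunate: reducing the bar resolution's $\Env(Y)$ to $\Env(D\to\mC)$ would also require knowing that $\Env(D\to\mC)$ lies in the image of $\int_\mC$ for nonconstant $D\to\mC$, which you do not establish.

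\emph{You should add the upgrade from $\iota_1$ to $\infty$-categories.} Since you argue on underlying $1$-categories, note that $\int_\mC$ preserves tensors with $\infty$-categories $K$: the canonical map $\int_\mC(K\times G)\to K\times\int_\mC G$ is a fiberwise equivalence. Both sides are tensored, so an equivalence on underlying $1$-categories is then an equivalence of $\infty$-categories. Alternatively, run your full-faithfulness check on morphism $\infty$-categories, which \cref{envelo} also provides.
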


\begin{proof}

(2) and (3) are equivalent since $n\Cat^\cocart_{/\mC}$ admits tensors and for any $X \in \mC$ the functor $ n\Cat^\cocart_{/\mC}\to n\Cat$ preserves tensors.

(1) evidently implies (3) since the category
$\iota_1(\Fun(\mC, n\Cat))$ admits small colimits and for every $X \in \mC$ the composition
of the functor $ \Fun(\mC, n\Cat) \to n\Cat^\cocart_{/\mC}$
and the functor taking the fiber over $X$ is the functor evaluating at $X.$

We prove that (2) implies (1).
If $ n\Cat^\cocart_{/\mC}$ admits small weighted colimits, by \cref{Yonedaext} the composition 
$$ \mW: \mC^\circ \subset \Fun(\mC, n\Cat) \xrightarrow{\int} n\Cat^\cocart_{/\mC} $$
of the Yoneda embedding and the Grothendieck construction
uniquely extends to a left adjoint functor 
$F: \Fun(\mC, n\Cat) \to n\Cat^\cocart_{/\mC}.$
The right adjoint $\N$ of $F$ is the restricted Yoneda embedding
$$n\Cat ^\coCart_{/\mC} \to \Fun((n\Cat^\cocart_{/\mC})^\circ, \infty\scat) \xrightarrow{\mW^*} \Fun(\mC, \infty\scat), $$
which lands in $ \Fun(\mC, n\Cat).$
In other words the functor $\N$ sends a cocartesian fibration $\mD \to \mC $ to the functor $ \Mor_{n\Cat^\cocart_{/\mC}}(-,\mD) \circ \mW $. By \cref{envelo} for every $X \in \mC$ there is a canonical equivalence
$$\N(\mD)(X)= \Mor_{n\Cat^\cocart_{/\mC}}(\mC_{X//^\oplax},\mD) \simeq \mD_{X}. $$
Hence by \cref{fiberwiseeq} the functor $\N$ is conservative.

If (2) holds, for every $X \in \mC$ the functor
$n\Cat^\cocart_{/\mC} \to n\Cat$ taking the fiber over $X$ preserves small weighted colimits.
This functor identifies with the functor
\[
\Mor_{n\Cat^\cocart_{/\mC}}(\mC_{X//^\oplax},-) \simeq \Mor_{n\Cat_{/\mC}}(\{X\},-) : n\Cat^\cocart_{/\mC} \to n\Cat.
\]
Consequently, the cocartesian fibration $ \mC_{X//^\oplax} \to \mC$
corepresents a functor preserving small weighted colimits.
Since the functor $\mW$ is fully faithful by \cite[Corollary 4.54. (2)]{heine2024bienriched}, this implies that the functor $F$ is fully faithful.
The right adjoint $\N$ of $F$ is conservative so that $F$ is an equivalence.

It is not clear that the functor $F$ agrees with the Grothendieck construction since we do not know that the Grothendieck construction preserves small weighted colimits.
However by \cref{Yonedaext} (2) there is a unique natural transformation
$F \to \int $ whose restriction along the Yoneda embedding is the identity.
The latter and the unit $\id \to \N \circ F$ give rise to a natural transformation $ \id \to \N \circ F \to \N \circ \int$
whose component at any functor $\alpha: \mC \to \infty\scat$
is the canonical map of cocartesian fibrations over $\mC$ whose fiber at any $X \in \mC$ is the identity:  
$$\alpha(X) \to \Fun^\cocart(\mC_{X//^\oplax}, \alpha^*(\infty\scat_{*//^\oplax})) \simeq \{\alpha(X)\}\times_{\infty\scat} \infty\scat_{*//^\oplax} \simeq\alpha(X).$$
Thus the functor $\int: \Fun(\mC, n\Cat) \to n\Cat^\cocart_{/\mC} $ is 
a right inverse of the equivalence $\N$ and so an equivalence.
Since $\int$ preserves weighted colimits, by \cref{Yonedaext} the unique natural transformation $F \to \int $ whose restriction along the Yoneda embedding is the identity, is an equivalence.
\end{proof}

\begin{lemma}
Let $\mC$ be an $\infty$-category.

\begin{enumerate}[\normalfont(1)]\setlength{\itemsep}{-2pt}
\item
The canonical functor $$\infty\scat_{/\mC} \to \lim_{n \geq 1} n\Cat_{/\iota_n(\mC)} $$ is an equivalence and restricts to an 
equivalence $$ \infty\scat^\cocart_{/\mC} \to \lim_{n \geq 1} n\Cat^\cocart_{/\iota_n(\mC)} .$$

\item There is a canonical commutative square:

$$\begin{xy}
\xymatrix{
\Fun(\mC,\infty\scat) \ar[d]^{} \ar[rr]^{\int_\mC}
&& \infty\scat^\cocart_{/\mC} \ar[d] \ar[d]^{}
\\ 
\lim_{n \geq 1} \Fun(\iota_n(\mC),n\Cat)
\ar[rr]^{\lim_{n \geq 1} \int_{\iota_n(\mC)}} && \lim_{n \geq 1} n\Cat^\cocart_{/\iota_n(\mC)}
}
\end{xy}$$	
\end{enumerate}
\end{lemma}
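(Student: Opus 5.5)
For (1), I will use that $\infty\Cat \simeq \lim_n n\Cat$ along the core functors $\iota_n$. Slicing commutes with limits of categories: for a limit $\mathcal{L}=\lim_n \mathcal{L}_n$ and an object $\mC=(\mC_n)_n$, the canonical functor $\mathcal{L}_{/\mC} \to \lim_n (\mathcal{L}_n)_{/\mC_n}$ is an equivalence. This holds because $\mathcal{L}_{/\mC}\simeq \mathcal{L}\times_{\Fun(\{1\},\mathcal{L})}\Fun(\bD^1,\mathcal{L})$, and both $\Fun(\bD^1,-)$ and pullbacks commute with limits. The image of $\mC$ in $n\Cat$ is $\iota_n(\mC)$, so this gives the first equivalence.

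For the restriction to cocartesian fibrations, the claim is that a functor $\phi:\mD\to\mC$ is a cocartesian fibration if and only if each $\iota_n(\phi):\iota_n(\mD)\to\iota_n(\mC)$ is an ($n$-categorical) cocartesian fibration. Likewise, a map over $\mC$ is a map of cocartesian fibrations if and only if each truncation is one. I will argue this via \cref{cocarto}, by induction on dimension, using two facts:
\begin{itemize}
\item $\Mor_{\iota_n\mD}(X,Y)\simeq \iota_{n-1}\Mor_\mD(X,Y)$;
\item a $k$-morphism with $k\le n$ is $\phi$-cocartesian if and only if it is $\iota_n\phi$-cocartesian.
\end{itemize}
The second fact holds because the defining squares \eqref{filler} are pullbacks of morphism $\infty$-categories, $\iota_n$ preserves limits, and a square in $\infty\Cat$ is a pullback if and only if its image under every $\iota_n$ is one. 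Fillers of $\bD^{k-1}\to\bD^k$ for $k\le n$ only involve $\iota_n$.

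The main obstacle here is the subtlety that in $n\Cat$ one needs $n$-cells that are cocartesian for $\iota_n\phi$ but whose higher data are invertible. The point to check is that the cocartesian $k$-cells of $\iota_n\phi$ for $k\le n$ are exactly the cocartesian cells of $\phi$ of dimension $k$. Cells above dimension $n$ in $\iota_n\mD$ are equivalences, and these are automatically cocartesian by the Example following the definition. Hence the subcategories match levelwise, and since inclusions (monomorphisms) are stable under limits, the restricted functor is an equivalence onto $\lim_n n\Cat^\cocart_{/\iota_n(\mC)}$.

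For (2), note that $\int_\mC$ is the pullback along the universal cocartesian fibration $\infty\scat_{*//^\oplax}\to\infty\scat$, and $\int_{\iota_n(\mC)}$ is the pullback along $n\Cat_{*//^\oplax}\to n\Cat$. The left vertical map sends $F:\mC\to\infty\scat$ to the compatible family $\iota_n(\mC)\to\iota_n(\infty\scat)\to n\Cat$. Here I use that $\iota_n(\infty\scat)\to n\Cat$ is induced by $\tau_n$ or $\iota_n$ on morphism categories; more precisely, $\iota_n$ applied to $\Fun$ gives functors into $n\Cat$ viewed inside $(n+1)\Cat$. The square then commutes once I establish a pullback square
\[
\begin{tikzcd}
\iota_n(\infty\scat_{*//^\oplax}) \ar{r}\ar{d} & n\Cat_{*//^\oplax}\ar{d}\\
\iota_n(\infty\scat)\ar{r} & n\Cat
\end{tikzcd}
\]
compatible in $n$, together with the fact that $\iota_n$ preserves pullbacks.

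I will obtain this square from \cref{Grorep} and the description of oriented slices by morphism objects (\cref{homs}): the fibers are compared via $\Mor$, and the result follows from (1) and \cref{fiberwiseeq}, since both sides are cocartesian fibrations with matching fibers. Naturality in $F$ follows because everything is defined by pullback along fixed maps.
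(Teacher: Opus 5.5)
Your plan is the paper's: slices commute with the limit $\infty\Cat\simeq\lim_n n\Cat$; the restriction to cocartesian fibrations is checked on morphism $\infty$-categories; and (2) reduces to $\iota_n(\infty\scat_{*//^\oplax})\to\iota_n(\infty\scat)$ being the pullback of $n\Cat_{*//^\oplax}\to n\Cat$. (The paper phrases the middle step via $\mD\simeq\colim_n\iota_n\mD$ and morphism objects commuting with filtered colimits, which is dual to your use of $\Mor_{\iota_n\mD}\simeq\iota_{n-1}\Mor_\mD$.)

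The gap is your second bullet, which you then treat as ``the point to check''. For fixed $n$, a $k$-cell with $k\le n$ that is $\iota_n\phi$-cocartesian need not be $\phi$-cocartesian. Take $\phi=S(B\mathbb{N}\to\bD^0)\colon S(B\mathbb{N})\to\bD^1$ and $n=k=1$. Since $\iota_0(B\mathbb{N})\simeq\ast$, the functor $\iota_1\phi$ is an equivalence, so the unique $1$-cell $0\to 1$ is $\iota_1\phi$-cocartesian. It is not $\phi$-cocartesian, because $\ast=\Mor(1,1)\to\Mor(0,1)=B\mathbb{N}$ is not an equivalence. Your justification (a square is a pullback iff all its $\iota_j$-images are) proves only two things: $\phi$-cocartesian cells are $\iota_n\phi$-cocartesian for each $n$, and a cell is $\phi$-cocartesian iff it is $\iota_n\phi$-cocartesian for \emph{all} $n$. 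So ``the subcategories match levelwise'' is not yet justified. It is needed both to glue a compatible family of cocartesian fibrations into one, and to show that a map whose truncations are maps of cocartesian fibrations preserves cocartesian cells.

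The missing ingredient is uniqueness of cocartesian lifts, and it works only because the functors involved are already cocartesian fibrations.
\begin{itemize}
\item If $\phi$ is a cocartesian fibration, an $\iota_n\phi$-cocartesian $k$-cell with $k\le n$ solves the same lifting problem as the $\phi$-cocartesian filler. That filler is itself $\iota_n\phi$-cocartesian, so the two agree and the cell is $\phi$-cocartesian.
\item If instead every $\iota_m\phi$ is a cocartesian fibration, then for $m\ge n$ the $\iota_m\phi$-cocartesian filler is $\iota_n\phi$-cocartesian, hence agrees with the $\iota_n\phi$-cocartesian one. So that filler is $\iota_m\phi$-cocartesian for every $m\ge n$, hence $\phi$-cocartesian, and the lifts are compatible in $n$.
\end{itemize}
The paper's filtered-colimit argument tacitly needs the same compatibility. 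With this in place, your proof of (1) goes through.

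For (2), before applying \cref{fiberwiseeq} you must actually produce a comparison functor $\iota_n(\infty\scat_{*//^\oplax})\to n\Cat_{*//^\oplax}$ over $\iota_n(\infty\scat)\to n\Cat$, and check that it preserves cocartesian cells. Matching fibers alone does not give this. It is also not automatic, since $\iota_n$ does not commute with $\Fun^\oplax(\bD^1,-)$.
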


\begin{proof}
We first prove (1). The canonical functor $$\infty\scat \to \lim_{n \geq 1} n\Cat $$ 
that induces on the $\n$-th factor the functor $\iota_n$ is an equivalence by the definition of $\infty\scat$.
The latter equivalences induces an equivalence
$$ \infty\scat_{/\mC} \to \lim_{n \geq 1} n\Cat_{/\iota_n(\mC)} $$
that restricts to an equivalence $$ \infty\scat^\cocart_{/\mC} \to \lim_{n \geq 1} n\Cat^\cocart_{/\iota_n(\mC)}.$$
The latter follows immediately from the fact that forming morphism $\infty$-categories commutes with filtered colimits.
Assertion (2) follows by the Yoneda lemma from the fact that
the cocartesian fibration $ \iota_n(\infty\scat_{*//^\oplax}) \to \iota_n(\infty\scat) $ classifies the functor 
$ \iota_n: \iota_n(\infty\scat) \to \n\Cat .$
\end{proof}

\begin{corollary}\label{Groind}

The $\infty$-categorical Grothendieck construction is an equivalence if and only if for every $n \geq 0$ the $n$-categorical Grothendieck construction is an equivalence.
   
\end{corollary}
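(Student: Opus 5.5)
The proof goes through the commutative square of part (2) of the preceding lemma. Its right vertical functor is an equivalence by part (1) of that lemma, so the work is to show that the left vertical functor
\[
\Fun(\mC,\infty\scat)\to\lim_{n\geq 1}\Fun(\iota_n(\mC),n\Cat)
\]
is also an equivalence for every small $\infty$-category $\mC$.

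For this I would combine two facts: $\mC\simeq\colim_m\iota_m(\mC)$ (the filtration remark) and $\infty\Cat\simeq\lim_n n\Cat$ along the functors $\iota$. Since $\infty\Cat$ is cartesian closed (\cref{carclo}), $\Fun(-,\mathcal{X})$ sends colimits in the source to limits, and $\Fun(\mathcal{A},-)$ preserves limits, because it is a right adjoint; the same holds for the underlying $\infty$-categories of functors, by applying $\Map(K\times -, -)$ for $K$ ranging over $\infty$-categories. Therefore
\[
\Fun(\mC,\infty\scat)\simeq\lim_{(m,n)\in\mathbb{N}^{\op}\times\mathbb{N}^{\op}}\Fun(\iota_m(\mC),n\Cat).
\]
The diagonal $\mathbb{N}^{\op}\to\mathbb{N}^{\op}\times\mathbb{N}^{\op}$ is limit-cofinal, so this limit agrees with the diagonal limit $\lim_n\Fun(\iota_n(\mC),n\Cat)$. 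I would then check that the identification is the left vertical map of the square, i.e.\ restriction along $\iota_n(\mC)\to\mC$ followed by the projection $\infty\Cat\to n\Cat$. This bookkeeping is the main thing to verify carefully. Granting it, the square identifies $\int_\mC$ with $\lim_n\int_{\iota_n(\mC)}$ up to equivalences.

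For the ``if'' direction: if every $n$-categorical Grothendieck construction is an equivalence, then each $\int_{\iota_n(\mC)}$ is an equivalence, since $\iota_n(\mC)$ is an $n$-category. A limit of equivalences is an equivalence, so $\int_\mC$ is an equivalence for every $\mC$.

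For the ``only if'' direction, fix $n$ and an $n$-category $\mC$. The $n$-categorical construction $\int_\mC$ is the restriction of the $\infty$-categorical one along the inclusions $\Fun(\mC,n\Cat)\subset\Fun(\mC,\infty\scat)$ and $n\Cat^\cocart_{/\mC}\subset\infty\scat^\cocart_{/\mC}$. This holds because the universal fibration over $n\Cat$ is the pullback of $\infty\scat_{*//^\oplax}\to\infty\scat$.
\begin{enumerate}[\normalfont(1)]\setlength{\itemsep}{-2pt}
\item The first inclusion is fully faithful, since $n\Cat\subset\infty\scat$ is full.
\item The second inclusion is fully faithful, being a full subcategory of the slice.
\item Hence full faithfulness of the $\infty$-categorical $\int_\mC$ restricts to the $n$-categorical one.
\end{enumerate}
For essential surjectivity, take a cocartesian fibration $\mD\to\mC$ with $\mD$ an $n$-category. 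It is $\int_\mC F$ for some $F:\mC\to\infty\scat$, and $F(X)$ is the fiber $\{X\}\times_\mC\mD$. This fiber is a pullback of $n$-categories and hence an $n$-category, because $n\Cat\subset\infty\Cat$ is closed under limits. So $F$ factors through $n\Cat$, which completes the converse.
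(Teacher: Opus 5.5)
Your proof is correct and follows the paper's route: the corollary is read off from the commutative square in the preceding lemma, and you additionally justify that the left vertical functor is an equivalence (via $\mC\simeq\colim_m\iota_m(\mC)$, $\infty\scat\simeq\lim_n n\Cat$ and cofinality of the diagonal), which the paper leaves implicit. For the ``only if'' direction, which the paper does not spell out and which does not follow from the square alone, your restriction argument is the natural way to fill the gap: restrict along the full inclusions, and use that $n\Cat\subset\infty\Cat$ is closed under limits, so the classifying functor of a cocartesian fibration between $n$-categories lands in $n$-categories.
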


The following is \cite[
Proposition 4.6.1.]{gepner2026fibrations}:

\begin{proposition}

Evaluation at the target $\coCart \to \infty\scat$
is a cartesian fibration.
  
\end{proposition}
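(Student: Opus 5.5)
We have to show that evaluation at the target $\coCart \to \infty\scat$, which sends a cocartesian fibration $X \to S$ to $S$, is a cartesian fibration. The plan is to check the defining lifting condition one dimension at a time and then apply the inductive characterization \cref{cocarto} in its cartesian form (via $(-)^\op$).

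\textbf{Cartesian 1-morphisms.} The candidate cartesian lifts are pullbacks.
\begin{itemize}
\item Given a cocartesian fibration $p: X \to T$ and a functor $f: S \to T$, the base change $f^*X = S \times_T X \to S$ is again a cocartesian fibration, since these are stable under base change.
\item The square $f^*X \to X$ over $f$ is a map of cocartesian fibrations, by the last assertion of \cref{elemen}.
\end{itemize}
We then show that this square is cartesian for $\coCart \to \infty\scat$. Take any map of cocartesian fibrations $(Y \to R) \to (X \to T)$ over some $g: R \to T$ together with a factorization $g \simeq f h$. We must show that the induced square of morphism $\infty$-categories in $\coCart$ is a pullback. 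This reduces to two facts:
\begin{itemize}
\item Morphism objects of $\Fun(\bD^1,\infty\scat)$ over a fixed base map are computed by $\Fun(Y, f^*X) \simeq \Fun(Y,X)\times_{\Fun(Y,T)}\Fun(Y,S)$.
\item The cocartesian condition is checked after composing to $X$, since a cell of $f^*X$ is cocartesian iff its image in $X$ is (again \cref{elemen}, using that $f^*X \to X$ is a pullback).
\end{itemize}
Since $\coCart \subset \Fun(\bD^1,\infty\scat)$ is a subcategory whose $n$-cells are cells of the arrow category satisfying this condition, the square of morphism $\infty$-categories is the restriction of the pullback square in $\Fun(\bD^1,\infty\scat)$. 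In the arrow category, target evaluation is a cartesian fibration with pullback lifts, by the usual argument.

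\textbf{Higher cells.} Here we need, for every $X, Y \in \coCart$, that the functor on morphism $\infty$-categories
\[
\Mor_{\coCart}(Y\to R, X\to T) \to \Fun(R,T)
\]
is a cartesian fibration, and that pre- and post-composition preserve cartesian cells. We identify $\Mor_{\coCart}(Y,X)$ with the full subcategory of $\Fun(Y,X)\times_{\Fun(Y,T)}\Fun(R,T)$ on maps of cocartesian fibrations; the higher cells are the strict natural transformations compatible with cocartesian cells. A cell $\alpha: \bD^k \to \Fun(R,T)$ corresponds to a functor $R\times \bD^k \to T$. Its cartesian lift at a given map is produced by pulling $X$ back along $R \times \bD^k \to T$ and restricting to an appropriate source face. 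Equivalently, we apply the 1-dimensional argument to the cocartesian fibration obtained by cotensoring, $X^{\bD^k} \to T^{\bD^k}$, which is cocartesian by \cref{indulax} with strict functor categories replacing lax ones. Iterating this with \cref{cocarto} handles all dimensions.

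\textbf{Main obstacle.} The hardest step is this higher-cell step. One has to verify that the lifts of cells built from pullbacks along $R\times\bD^k \to T$ again consist of maps of cocartesian fibrations. One also has to verify that the cartesian condition on them is preserved by whiskering, which is the last clause of \cref{cocarto}. I would handle both with the cotensor reduction above, applying the 1-morphism case to the cartesian closed structure: $\Fun(\bD^k,-)$ preserves pullbacks and the relevant cocartesian cells.
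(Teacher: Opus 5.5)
Your 1-morphism step is fine. Base changes $f^*X\to X$ are the cartesian lifts, and the required pullback of morphism $\infty$-categories comes from the one in $\Fun(\bD^1,\infty\scat)$ by restricting to the full subcategories of maps of cocartesian fibrations. (Target evaluation on $\Fun(\bD^1,\infty\scat)$ is only a 1-cartesian fibration, but that is all you use.)

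The gap is in the higher-cell step, which has the wrong variance. In this paper $\phi$ is cartesian iff $\phi^{\op}$ is anticocartesian. By \cref{oppo}, $\Mor_{\mC^{\op}}(A,B)\simeq\Mor_{\mC}(B,A)^{\co}$, so $(\phi^{\op})_{A,B}\simeq(\phi_{B,A})^{\co}$. Applying \cref{cocarto} to $\phi^{\op}$ therefore says that $\phi$ is cartesian iff three things hold:
\begin{enumerate}
\item it is 1-cartesian;
\item each $\phi_{B,A}$ is a \emph{cocartesian} fibration;
\item pre- and postcomposition preserve the cocartesian cells of these functors.
\end{enumerate}
So cartesian fibrations lift odd-dimensional cells cartesianly and even-dimensional cells cocartesianly. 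What you set out to show, that $\Mor_{\coCart}(Y\to R,X\to T)\to\Fun(R,T)$ is a cartesian fibration, is false in general. Take $X\to T$ to be $\{1\}\hookrightarrow\bD^1$, a cocartesian fibration classifying $0\mapsto\emptyset$, $1\mapsto\bD^0$, and take $Y\to R$ to be the identity of $\bD^0$. Then this morphism functor is $\{1\}\hookrightarrow\bD^1$ itself, which has no lift of $0\to1$ with target $1$. For the same reason, pulling $X$ back along $R\times\bD^k\to T$ cannot produce lifts with prescribed target: passing from the $1$-face back to the $0$-face would need cartesian transport in a fibration that is only cocartesian.

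What is needed is the cocartesian lift with prescribed source. Let $G\colon Y\to X$ be a map of cocartesian fibrations over $g$, let $q\colon Y\to R$ be the projection, and let $\beta\colon g\Rightarrow g'$. Pull $X$ back along the adjoint $R\times\bD^1\to T$ of $\beta$, and compose $G$ with the resulting cocartesian transport $g^*X\to g'^*X$; this transport is a map of cocartesian fibrations over $R$ by \cref{paracocart}. On objects this gives $G'(y)=(\beta_{q(y)})_!G(y)$, and the pointwise cocartesian transformation $G\Rightarrow G'$ is the lift.

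Systematically: $\Mor_{\coCart}(Y\to R,X\to T)$ is the full subcategory of $\Fun(Y,X)\times_{\Fun(Y,T)}\Fun(R,T)$ on maps of cocartesian fibrations, and it is closed under cocartesian transport. So it suffices that:
\begin{itemize}
\item $\Fun(Y,X)\to\Fun(Y,T)$ is a cocartesian fibration, a strict analogue of \cref{indulax} which you would have to prove;
\item whiskering with maps of cocartesian fibrations preserves pointwise cocartesian transformations.
\end{itemize}
This is exactly where the hypothesis that $X\to T$ is cocartesian enters. When you iterate with \cref{cocarto}, the variance must keep alternating. For example, $3$-cells are lifted cartesianly, using that the morphism functors of the cocartesian fibration $X\to T$ are cartesian fibrations.
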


For the following notation we use \cref{slicecocart}.
\begin{notation}

Let $$\Rep\Cart \subset \Cart$$ be the full subcategory of cartesian fibrations of the form
$\mC_{//^\oplax X} \to \mC $ for some small $\infty$-category $\mC.$

\end{notation}

The following is \cite[Proposition 4.6.4.]{gepner2026fibrations}:

\begin{proposition}\label{univ2}
Evaluation at the target $\Rep\Cart \to \infty\scat$
is a cocartesian fibration.
The unique map $$ \infty\scat_{*//^\oplax} \to \Rep\Cart $$
of cocartesian fibrations over $ \infty\scat$ sending $* $ to $*$ is an equivalence.

\end{proposition}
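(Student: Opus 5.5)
The plan has two stages. First I show that target evaluation $\Rep\Cart \to \infty\scat$ is a cocartesian fibration by writing down explicit cocartesian lifts. Then I deduce the equivalence fiberwise from \cref{fiberwiseeq}.

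\emph{Step 1: cocartesian lifts.} Let $F:\mC\to\mD$ be a functor and $\mC_{//^\oplax X}\to\mC$ an object of $\Rep\Cart$ over $\mC$. The candidate lift is the square
\[
\begin{tikzcd}
\mC_{//^\oplax X}\ar{r}\ar{d} & \mD_{//^\oplax F(X)}\ar{d}\\
\mC\ar{r}{F} & \mD
\end{tikzcd}
\]
induced by functoriality of the oplax slice. This square is a map of cartesian fibrations, because it is the image of the identity under the universal property below.

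The key input is \cref{envelo2}. Since $\{X\}\to\mC$ is a functor, $\mC_{//^\oplax X}$ is the free cartesian fibration $\Env(\{X\}^\coop)^\coop$ on it. Hence for any $H:\mC\to\mE$ and any cartesian fibration $\mB\to\mE$, maps of cartesian fibrations $\mC_{//^\oplax X}\to\mB$ lying over $H$ are the same as maps of cartesian fibrations $\mC_{//^\oplax X}\to H^*\mB$ over $\mC$. By \cref{envelo2} these are in turn the same as objects of the fiber $\mB_{H(X)}$. Applying this both to $\mC_{//^\oplax X}$ over $G\circ F$ and to $\mD_{//^\oplax F(X)}$ over $G$, in both cases we obtain $\mB_{GF(X)}$ naturally. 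Therefore the square required in the definition of a $1$-cocartesian morphism is a pullback square.

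To obtain full cocartesianness, i.e.\ the condition on all higher cells via \cref{cocarto}, I would run the same argument on mapping $\infty$-categories. For this I replace \cref{envelo2} by its enriched form: the antioriented left adjoint statement following \cref{huip}, dualized by $(-)^\coop$. This identifies the whole morphism $\infty$-category $\Fun^{\oplax,\cart}_\mE(H^*\text{-})$ with the fiber, and not only its space of objects. The lifts are then stable under precomposition and postcomposition, since they are all described by restriction to the point $X$.

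\emph{Step 2: the equivalence.} Both $\infty\scat_{*//^\oplax}\to\infty\scat$ (by \cref{slicecocart}) and $\Rep\Cart\to\infty\scat$ (by Step 1) are cocartesian fibrations. The fiber of $\Rep\Cart$ over $\mC$ has objects $\mC_{//^\oplax X}$. By the enriched \cref{envelo2}, its morphism $\infty$-categories are
\[
\Mor(\mC_{//^\oplax X},\mC_{//^\oplax Y})\simeq(\mC_{//^\oplax Y})_X\simeq\Mor_\mC(X,Y).
\]
The fiber of $\infty\scat_{*//^\oplax}$ over $\mC$ is likewise described by \cref{Grorep}, restricted to the point: its objects are $X\in\mC$ with the same morphism $\infty$-categories.

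I then identify the comparison functor on fibers with $X\mapsto\mC_{//^\oplax X}$. This uses that the map sends $*$ to $*$ and preserves cocartesian morphisms, together with \cref{Grorep2}. With that identification, the comparison on fibers is the Yoneda-type functor whose effect on mapping $\infty$-categories is the equivalence above. It is essentially surjective by the definition of $\Rep\Cart$, so it is an equivalence, and \cref{fiberwiseeq} concludes.

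\emph{Main obstacle.} I expect the hardest part to be the higher-cell part of Step 1, together with the identification of the fiberwise comparison in Step 2. Both require the morphism-$\infty$-category (enriched) version of \cref{envelo2}, rather than its statement on underlying $\infty$-categories. I would try to obtain this first from the antioriented adjunction following \cref{huip}, applying $(-)^\coop$ throughout.
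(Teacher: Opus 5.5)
The paper quotes this proposition from its companion paper on fibrations without giving a proof, so I am judging your argument on its own terms. The strategy is sensible, but Step 1 has a genuine gap.

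For each object $G\in\Fun(\mD,\mE)$ you identify the fibers over $G$ of both sides of the comparison functor
\[
\Mor_{\Rep\Cart}(\mD_{//^\oplax F(X)},\mB)\longrightarrow\Mor_{\Rep\Cart}(\mC_{//^\oplax X},\mB)\times_{\Fun(\mC,\mE)}\Fun(\mD,\mE)
\]
with $\mB_{GF(X)}$, and you conclude that the square is a pullback. But a functor of $\infty$-categories over $\Fun(\mD,\mE)$ can be an equivalence on fibers over objects without being an equivalence. You would first need, say, both sides to be cartesian fibrations over $\Fun(\mD,\mE)$ and the functor to be a map of cartesian fibrations, so that the dual of \cref{fiberwiseeq} applies.

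That is exactly the higher-cell condition you defer. By the $(-)^\co$-dual of \cref{cocarto}, each $\Mor_{\Rep\Cart}(P,Q)\to\Fun(\mC,\mE)$ must be a cartesian fibration whose cartesian morphisms are preserved by pre- and postcomposition. ``Running the same argument on mapping $\infty$-categories'' does not establish this.

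Your proposed remedy also targets the wrong issue. The morphism $\infty$-categories of $\Rep\Cart\subset\Fun(\bD^1,\infty\scat)$ are built from strict transformations, and \cref{envelo2} is already an equivalence of such $\infty$-categories. The antioriented variant only computes $\Fun^{\oplax,\cart}_\mC$, again over a fixed base. What is missing is control as the base functor varies, not a different enrichment.

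The statement you need is freeness in families: restriction along $\id_X\in\mC_{//^\oplax X}$ should give an equivalence
\[
\Mor_{\Rep\Cart}\bigl((\mC_{//^\oplax X}\to\mC),(\mB\to\mE)\bigr)\simeq\Fun(\mC,\mE)\times_{\mE}\mB
\]
over $\Fun(\mC,\mE)$, where $\Fun(\mC,\mE)\to\mE$ evaluates at $X$. One way to prove it is to test against an arbitrary $\infty$-category $K$. Maps $K\to\Mor_{\Rep\Cart}(\mC_{//^\oplax X},\mB)$ are maps of cartesian fibrations from $\mC_{//^\oplax X}\to\mC$ to $\Fun(K,\mB)\to\Fun(K,\mE)$, granting that the latter is a cartesian fibration with pointwise cartesian cells. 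On these mapping spaces your fiberwise use of \cref{envelo2} over $H\in\Map(\mC,\Fun(K,\mE))$ is legitimate. Given this equivalence, the $1$-cocartesian square is a pullback because $\mathrm{ev}_{F(X)}=\mathrm{ev}_X\circ F^*$. The morphism functors also become base changes of the cartesian fibration $\mB\to\mE$.

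Step 2 needs a similar repair. Knowing that the map sends $*$ to $*$ and preserves cocartesian morphisms pins it down on objects of fibers, but not on morphism $\infty$-categories. \cref{Grorep2} concerns the Grothendieck construction, not this map. Instead:
\begin{enumerate}
\item Use that $(\bD^0,*)\to(\mC,X)$ and its image are cocartesian to reduce full faithfulness to morphisms out of $(\bD^0,*)$.
\item There both morphism $\infty$-categories into $(\mE,Y)$ are $\mE_{//^\oplax Y}$ over $\mE$. The induced functor is a map of cartesian fibrations sending $\id_Y$ to $\id_Y$, hence an equivalence by \cref{envelo2}.
\item Essential surjectivity holds by the definition of $\Rep\Cart$.
\end{enumerate}
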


\begin{proposition}\label{univ3}
Let $n \geq 0$ and $S$ an $n$-category. Let $\alpha: S \to n\Cat$ be a functor and $X:=S \times_{\infty\scat} \infty\scat_{*//^\oplax} \to S $ the pullback along $\alpha.$
We assume that the $n$-categorical Grothendieck construction is an equivalence.
The canonical map 
$$ n\Cat_{*//^\oplax} \to n \Cat \times_{\infty\scat} \Rep\Cart $$
of cocartesian fibrations over $ n\Cat$ sending $*$ to $*$
induces a map
$$
\Map_{\infty\widehat{\Cat}_{/S}}(S,X) \simeq \Map_{\infty\widehat{\Cat}_{/n\Cat}}(S,n\Cat_{*//^\oplax}) \to $$$$\Map_{\infty\widehat{\Cat}_{/n\Cat}}(S,n \Cat \times_{\infty\scat} \Rep\Cart) \subset \Map_{\infty\widehat{\Cat}_{/n\Cat}}(S,\Fun(\bD^1, n\Cat)) \simeq $$$$\iota_0(\Fun(S,n\Cat)_{/\alpha}) \xrightarrow{\int_S}\iota_0((n\Cat^{\cocart}_{/S})_{/X})$$
which sends a section of the cocartesian fibration $X:=S \times_{\infty\scat} \infty\scat_{*//^\oplax} \to S $ to the map of cocartesian fibrations
$$ \mC^{\bD^1}_\oplax \times_{\mC_\oplax^{\{1\}}} S \to \mC_\oplax^{\{0\}}$$
over $S.$

\end{proposition}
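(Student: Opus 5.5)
The plan is to reduce to a universal situation, identify the two cocartesian fibrations fiberwise, and then conclude with \cref{fiberwiseeq}.

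\textbf{Unwinding the map.} First I will make explicit what the composite does to a section $x: S \to X$. By \cref{univ2}, together with the displayed equivalence $n\Cat_{*//^\oplax}\simeq n\Cat\times_{\infty\scat}\Rep\Cart$, the section $x$ corresponds to a functor $S \to \Rep\Cart$ lying over $\alpha$. Such a functor sends $s\in S$ to the representable cartesian fibration $\alpha(s)_{//^\oplax x(s)} \to \alpha(s)$. Regarding it as a functor $S\to\Fun(\bD^1,n\Cat)$ over $\alpha$ gives a functor
\[
\beta: S\to n\Cat,\qquad \beta(s)=\alpha(s)_{//^\oplax x(s)},
\]
together with a natural transformation $\beta\to\alpha$. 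Applying $\int_S$ produces a map $\int_S\beta \to \int_S\alpha = X$ of cocartesian fibrations over $S$. The claim is therefore that $\int_S\beta$ is identified with $X^{\bD^1}_\oplax\times_{X^{\{1\}}_\oplax} S$, where the pullback is taken along the section $x$, and that under this identification the map to $X$ is evaluation at the source $\{0\}$.

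\textbf{Reduction to the universal case.} Both sides are compatible with base change in $S$:
\begin{itemize}
\item the Grothendieck construction is a map of cartesian fibrations over $n\Cat$, so $\int$ commutes with pullback;
\item the construction $X\mapsto X^{K}_\oplax$ is defined by pullback along the diagonal, and \cref{restcoca} (dualised) makes it compatible with pullback.
\end{itemize}
Hence it suffices to treat the universal case. There the base is $S=n\Cat_{*//^\oplax}$, $\alpha$ is the projection, and $x$ is the tautological section of $X=S\times_{n\Cat}n\Cat_{*//^\oplax}$. In that case $\beta$ is the functor $(\mC,c)\mapsto \mC_{//^\oplax c}$.

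\textbf{Constructing the comparison map.} Next I will produce a map of cocartesian fibrations over $S$:
\[
\Phi: X^{\bD^1}_\oplax\times_{X^{\{1\}}_\oplax}S\longrightarrow \int_S\beta .
\]
The route is:
\begin{itemize}
\item Since $n$-categorical Grothendieck is assumed an equivalence, maps into $\int_S\beta$ over $S$ correspond to transformations between the classified functors.
\item The left side is a cocartesian fibration by \cref{indulax} and \cref{targetfibr}, since it is a pullback of the bifibration $\Fun^\oplax(\bD^1,-)\to(-)\times(-)$.
\item Its fiber over $s$ is $\alpha(s)\,\vec{\times}_{\alpha(s)}\{x(s)\}\simeq \alpha(s)_{//^\oplax x(s)}$, by the Remark on fibers of $\Env$ and \cref{Grorep2}.
\item The left side therefore classifies a functor agreeing pointwise with $\beta$. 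I will build the natural comparison $\Phi$ using the universal property \cref{envelo}/\cref{envelo2}, by mapping out of the free fibration generated by the identity arrows at the section.
\end{itemize}

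\textbf{Concluding, and the expected obstacle.} Both sides are cocartesian fibrations over $S$, and $\Phi$ is fiberwise the identity of $\alpha(s)_{//^\oplax x(s)}$. By \cref{fiberwiseeq}, $\Phi$ is then an equivalence. It remains to check that the composite with the map to $X$ is source evaluation; I will do this fiberwise as well, where it reduces to the statement that $\mC_{//^\oplax c}\to\mC$ is the source projection of $\Fun^\oplax(\bD^1,\mC)\times_\mC\{c\}$.

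The main obstacle is constructing $\Phi$ coherently, rather than merely matching fibers. My plan is to use the Yoneda-style description from \cref{Grothendieck-char}: the right adjoint $\N$ evaluates a cocartesian fibration at $X\mapsto$ fiber. I would then compare $\N$ of the left side with $\beta$ through the equivalence $\Rep\Cart\simeq\infty\scat_{*//^\oplax}$ of \cref{univ2}, so that naturality in $s$ comes for free.
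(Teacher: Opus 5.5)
\textbf{Gap: the comparison map $\Phi$ is never constructed.} Your reduction to the universal case is sound. It is the same Yoneda step the paper uses, since $\int$ and $(-)^{\bD^1}_\oplax$ both commute with base change. The gap is in what you do there.

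Knowing that $\int_S^{-1}$ of $X^{\bD^1}_\oplax\times_{X^{\{1\}}_\oplax}S$ agrees with $\beta$ objectwise gives no natural transformation between them. \cref{fiberwiseeq} only applies once a map of cocartesian fibrations is in hand, so constructing $\Phi$ is the whole content. Neither mechanism you sketch produces it:
\begin{itemize}
\item The left-hand fibration is not the free cocartesian fibration on the identity section. The free one on $\id_S$ is $\Fun^\oplax(\bD^1,S)\to S$, whose fibers are $S_{//^\oplax s}$ rather than $\alpha(s)_{//^\oplax x(s)}$. So \cref{envelo} gives no map out of it.
\item Checking the map to $X$ \emph{fiberwise} cannot identify two maps of fibrations. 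Agreement on each fiber is not a homotopy between the maps.
\item Your universal base $n\Cat_{*//^\oplax}$ is an $(n+1)$-category, not an $n$-category. So the hypothesis does not give you the Grothendieck equivalence over it. Your route needs exactly that equivalence to turn maps into $\int_S\beta$ into natural transformations.
\end{itemize}

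\textbf{What is missing: freeness of the source.} The missing idea is to use the universal property of the \emph{source} $n\Cat_{*//^\oplax}$, instead of comparing two fibrations over the universal base. The paper proceeds as follows.
\begin{itemize}
\item It takes the expected assignment $\theta$ as the definition: a section goes to source evaluation $X^{\bD^1}_\oplax\times_{X^{\{1\}}_\oplax}S\to X$.
\item $\theta$ is natural in $S$. Composing with $\int_S^{-1}$ (used only for $n$-categories $S$) and applying Yoneda yields a functor $\rho: n\Cat_{*//^\oplax}\to\Fun(\bD^1,n\Cat)$ over $n\Cat$.
\item By \cref{targetfibr}, $\rho$ lands in $n\Cat\times_{\infty\scat}\Rep\Cart$, and it is a map of cocartesian fibrations.
\item Since $n\Cat_{*//^\oplax}\simeq\Env(\{*\}\to n\Cat)$, \cref{envelo} shows $\rho$ is determined by its value at $*$, which is $*$.
\end{itemize}
Hence $\rho$ is the canonical map, and $\theta$ is the composite in the statement. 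All coherence is supplied by the universal property, and no comparison map $\Phi$ is ever built.

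Your closing idea of passing through \cref{univ2} \emph{so that naturality comes for free} points in this direction. However, the two decisive ingredients are missing from your proposal: that the represented functor preserves cocartesian morphisms, and that maps out of the free fibration are determined by their value at $*$.
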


\begin{proof}

We consider the map $$ \theta: \Map_{\infty\widehat{\Cat}_{/S}}(S,X) \to \iota_0((n\Cat^{\cocart}_{/S})_{/X}), $$ which sends a section of the cocartesian fibration $X \to S $ to the map of cocartesian fibrations
$$ X^{\bD^1}_\oplax \times_{X_\oplax^{\{1\}}} S \to X_\oplax^{\{0\}}$$
over $S$.
The map $$\Map_{\infty\widehat{\Cat}_{/n\Cat}}(S,n\Cat_{*//^\oplax}) \simeq \Map_{\infty\widehat{\Cat}_{/S}}(S,X) \xrightarrow{\theta} $$$$\iota_0((n\Cat^{\cocart}_{/S})_{/X}) \xrightarrow{\int_S^{-1}}\iota_0(\Fun(S,n\Cat)_{/\alpha})$$$$ \simeq \Map_{\infty\widehat{\Cat}_{/n\Cat}}(S,\Fun(\bD^1, n\Cat)) $$
represents a functor $$ \rho: n\Cat_{*//^\oplax} \to \Fun(\bD^1, n\Cat) $$
over $ n\Cat$. By \cref{targetfibr} the functor $\rho$ lands in $ n \Cat \times_{\infty\scat} \Rep\Cart \subset \Fun(\bD^1, n\Cat)$. The functor $\rho $ is a map of cocartesian fibrations over $ n\Cat$
and so by \cref{envelo} is uniquely determined by the image of $*$ in the fiber over $*.$ But $\rho$ sends the image of $*$ in the fiber over $*$ to $*.$
\end{proof}

Let $n \geq 0$. The unique map $$ n\Cat_{*//^\oplax} \to n \Cat \times_{\infty\scat} \Rep\Cart $$
of cocartesian fibrations over $ n\Cat$ sending $* $ to $*$
induces on the fiber over every $X \in n\Cat$ a functor $$X \to \infty\scat^\cart_{/X}.$$

\cref{univ3} implies the following:

\begin{corollary}\label{univ4} Let $n \geq 0$ and $X $ an $n$-category. We assume that the $n$-categorical Grothendieck construction is an equivalence.
The Grothendieck construction sends the functor 
$X \to \Fun(X^\circ, \infty\scat) \xrightarrow{\int_X} \infty\scat_{/X}$
to the canonical map of cocartesian fibrations
over $X:$
$$ \Fun^\oplax(\bD^1,X) \to X \times X. $$

\end{corollary}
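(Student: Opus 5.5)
We deduce \cref{univ4} from \cref{univ3} by specializing to the universal case over $S=X$ and reading off what the map $\theta$ does. The functor $X\to\infty\scat^\cart_{/X}$ is the fiber over $X$ of the unique map $n\Cat_{*//^\oplax}\to n\Cat\times_{\infty\scat}\Rep\Cart$ of cocartesian fibrations. By \cref{Grorep2} it agrees with the composite of the Yoneda embedding $X\to\Fun(X^\circ,\infty\scat)$ with $\int_X$, sending $Z$ to $X_{//^\oplax Z}\to X$. This functor classifies a cocartesian fibration over $X$, obtained by pulling back $n\Cat\times_{\infty\scat}\Rep\Cart\to n\Cat$ along $X\to n\Cat$ (constant at $X$, up to the $(-)^\circ$ conventions for cartesian straightening). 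We must identify that total space, with its map to $X\times X$, with $\Fun^\oplax(\bD^1,X)\to X\times X$.

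First, apply \cref{univ3} with $S=X$ and the functor $\alpha$ classifying the cocartesian fibration $\mathrm{pr}:X\times X\to X$, i.e. the constant functor at $X$. The map in \cref{univ3} is built from the functor $\rho$ classifying $X\to\infty\scat^\cart_{/X}$. Hence the Grothendieck construction of the composite $X\to\Fun(X^\circ,\infty\scat)\xrightarrow{\int_X}\infty\scat_{/X}$, viewed as an object over $\int_X\alpha=X\times X$, is the image under $\theta$ of the tautological section. This is the diagonal section $X\to X\times X$, corresponding to the point $\id_X$ of $\alpha(Z)=X$ in each fiber.

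Second, compute $\theta$ on the diagonal section. By \cref{univ3}, it is sent to
\[
(X\times X)^{\bD^1}_\oplax\times_{(X\times X)^{\{1\}}_\oplax}X\to (X\times X)^{\{0\}}_\oplax
\]
over $X$. Here we unwind: a lax cotensor over the base $X$ of the trivial fibration $X\times X\to X$ is $X\times\Fun^\oplax(\bD^1,X)$ (up to the variance convention matching \cref{univeqr}). Pulling back along the diagonal at the endpoint $1$ leaves exactly $\Fun^\oplax(\bD^1,X)$, mapping via source and target to $X\times X$. Compatibility of the map of cocartesian fibrations over $X$ with the target projection is \cref{targetfi2}, which shows $\Fun^\oplax(\bD^1,X)\to X\times X$ is a bifibration, so in particular a map of cocartesian fibrations over the second factor.

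The main obstacle is bookkeeping of variance. We must check that the lax/oplax cotensor and endpoint conventions in \cref{univ3} produce $\Fun^\oplax(\bD^1,X)$ rather than $\Fun^\lax$, and that the fibration is cocartesian over the correct factor. We would settle this by checking fibers: by \cref{fiberwiseeq} it suffices to match fibers over each $Z\in X$. On the one side the fiber is $X_{//^\oplax Z}$ by \cref{Grorep2}. On the other side it is $\Fun^\oplax(\bD^1,X)\times_X\{Z\}$, which is $X_{//^\oplax Z}$ by its description as the oriented pullback $X\,\vec{\times}_X\{Z\}$ (\cref{0desc}).
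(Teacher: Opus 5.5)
Your proposal is correct and is essentially the paper's argument, which just deduces \cref{univ4} from \cref{univ3}: you take $S=X$, $\alpha$ constant at $X$ (so the pullback is $X\times X\to X$) and the diagonal section, and compute $(X\times X)^{\bD^1}_\oplax\times_{(X\times X)^{\{1\}}_\oplax}X\simeq\Fun^\oplax(\bD^1,X)$, which lies over $X$ via the target and maps to $X\times X$ via target and source. The slips along the way are harmless: the cotensor is the oplax one; the diagonal section picks $Z\in\alpha(Z)=X$ and corresponds to $\id_X$ under $\Map_{/X}(X,X\times X)\simeq\Map(X,X)$; and the pullback of $n\Cat\times_{\infty\scat}\Rep\Cart$ along a constant functor is a product, not the total space you want. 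The closing fiberwise check is superfluous, since \cref{fiberwiseeq} needs a comparison map and \cref{univ3} already provides the identification.
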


\section{\mbox{Oriented colimits}}

\subsection{Weighted colimits}
Although the oriented pullback admits an elementary definition, it is also an instance of a weighted colimit.
In the following we define weighted colimits following \cite{heine2024higher}.

\begin{definition}
Let $\mV$ be a presentably monoidal category and $\mC$ a small $\mV$-enriched category.

\begin{itemize}
\item 
A $\mV$-enriched weight on $\mC$ is an object of $ \mP_\mV(\mC)$.

\item A small $\mV$-enriched weight is a $\mV$-enriched weight on some small $\mV$-enriched category.

\end{itemize}

\end{definition}

\begin{definition}
Let $\mV$ be a presentably monoidal category, $\mC$ a small $\mV$-enriched category, $\phi: \mC \to \mD$ a $\mV$-enriched functor, $X \in \mD$ and $H \in \mP_\mV(\mC).$

\begin{itemize}

\item A $H$-weighted cocone on $X$ is a morphism in $\mP_\mV(\mC):$
$$ H \to \Mor_\mD(-,X) \circ \phi. $$

\item A $\mV$-enriched cocone on $X$ is a $H$-weighted cocone on $X$ for some $H \in \mP_\mV(\mC).$

\item A $\mV$-enriched cocone in $\mD$ is a $\mV$-enriched cocone on $X$ for some $X \in \mD.$

\end{itemize}

\end{definition}

\begin{definition}
Let $\mV$ be a presentably monoidal category, $\mC$ a small $\mV$-enriched category and $H \in \mP_\mV(\mC).$ 
Let $\phi: \mC \to \mD$ be a $\mV$-enriched functor.
The $H$-weighted colimit of $\phi$ if it exists, is the object 
$\colim^H(\phi) \in \mD$ such that there is a $H$-weighted cocone on $\colim^H(\phi)$ that induces for every $Y \in \mD$ an equivalence
$$ \Mor_{\mD}(\colim^H(\phi),Y) \to \Mor_{\mP_\mV(\mC)}(H, \Mor_\mD(-,Y) \circ \phi).$$

\end{definition}

\begin{definition}
Let $\mV$ be a presentably monoidal category, $\mC$ a small $\mV$-enriched category and $H \in \mP_{\mV^\rev}(\mC^\circ).$ 
Let $\phi: \mC \to \mD $ be a $\mV$-enriched functor.
The $H$-weighted limit of $\phi$ is the $H$-weighted colimit of the opposite $\mV^\rev$-enriched functor $\phi^\circ : \mC^\circ \to \mD^\circ. $

\end{definition}

\begin{definition}
Let $\mV$ be a presentably monoidal category.

\begin{itemize}

\item Let $\mC$ be a small $\mV$-enriched category and $H \in \mP_\mV(\mC).$ A $\mV$-enriched category $\mD$ admits $H$-weighted colimits if it admits the $H$-weighted colimit of every $\mV$-enriched functor $\mC \to \mD.$

\item Let $\mH$ be a set of small $\mV$-enriched weights. 
A $\mV$-enriched category $\mD$ admits $\mH$-weighted colimits if it admits $H$-weighted colimits for every $H \in \mH.$ 
  
\end{itemize}

\end{definition}

We make a similar definition for weighted limits.

\vspace{1mm}

For presentably monoidal categories $\mV, \mW$ let 
$\alpha_\mV: \mV \to \mV \ot \mW, \alpha_\mW: \mW \to \mV \ot \mW$ be the canonical left adjoint functors.
The next is \cite[Theorem 3.102.]{heine2024higher}:

\begin{proposition}\label{colimenrfun}
Let $\mV, \mW $ be presentably monoidal categories, $\mC$ a small $\mV$-enriched category, $\mD$ a $\mV \ot \mW$-enriched category
and $\mH$ a collection of $\mW$-enriched weights such that $\mD$ admits
$(\alpha_\mW)_!(\mH)$-weighted (co)limits.
The $\mW$-enriched category ${\mV\mathrm{-}\Fun}(\mC,\mD) $
admits $\mH$-weighted (co)limits and for every $X \in \mC$
the $\mW$-enriched functor ${\mV\mathrm{-}\Fun}(\mC,\mD) \to \alpha^*_\mW(\mD) $ evaluating at $X$ preserves $\mH$-weighted (co)limits.
    
\end{proposition}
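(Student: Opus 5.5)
Both statements come down to an objectwise computation. Fix $H \in \mP_\mW(\mB)$ in $\mH$ and a $\mV$-enriched functor $\phi: \mC \to {\mV\mathrm{-}\Fun}(\mC',\mD)$; to avoid a clash of names, write $\mC$ for the source of $\phi$ and $\mC'$ for the small $\mV$-enriched category of the statement. Since the weight lives over $\mW$, $\mC$ is really a small $\mW$-enriched category $\mB$. By \cref{psinho} (2), the $\mW$-enriched functor $\phi: \mB \to {\mV\mathrm{-}\Fun}(\mC',\mD)$ corresponds to a $\mV\ot\mW$-enriched functor $\Phi: \langle \mC', \mB\rangle \to \mD$. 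For every $X \in \mC'$ the restriction $\Phi(X,-): (\alpha_\mW)_!(\mB) \to \mD$ is $\mV\ot\mW$-enriched, and $\Phi(X,-)$ is precisely the composite of $\phi$ with evaluation at $X$. By hypothesis $\mD$ admits the $(\alpha_\mW)_!(H)$-weighted colimit of $\Phi(X,-)$; call it $L(X)$.

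The first step is to assemble $X \mapsto L(X)$ into a $\mV$-enriched functor $L: \mC' \to \mD$. To do this, consider the $\mV\ot\mW$-enriched presheaf category on $\langle \mC',\mB\rangle$ and the weight $\Mor_{\mC'}(-,-)\boxtimes H$, i.e. the external product of the hom-weight of $\mC'$ with $H$. The colimit of $\Phi$ weighted by it over $\langle\mC',\mB\rangle$ relative to $\mC'$ is the pointwise left Kan extension along the projection. A cleaner route is to use the enriched Yoneda extension, \cref{Yonedaext}, applied to the presentable enrichment $\mP_{\mV\ot\mW}(\mD^\circ)^\circ$, followed by restriction. Concretely, embed $\mD$ via the Yoneda embedding into a presentably tensored category where all weighted colimits exist and are computed pointwise, compute the weighted colimit there as a $\mV$-functor in $X$, and observe that its values lie in the essential image of $\mD$, since objectwise they are represented by $L(X)$. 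Full faithfulness of the Yoneda embedding then gives $L$ as a $\mV$-enriched functor together with an $H$-weighted cocone $H \to \Mor_{{\mV\mathrm{-}\Fun}(\mC',\mD)}(\phi(-),L)$.

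The second step is to verify the universal property. For $G \in {\mV\mathrm{-}\Fun}(\mC',\mD)$, the $\mW$-object $\Mor(L,G)$ is an enriched end $\int_{X}\Mor_\mD(L(X),G(X))$, suitably transported along $\alpha_\mW$. Objectwise this is $\Mor_{\mP}((\alpha_\mW)_!H, \Mor_\mD(\Phi(X,-),G(X)))$. Interchanging the end over $X$ with the presheaf mapping object (both are limits in $\mV\ot\mW$) identifies this with $\Mor_{\mP_\mW(\mB)}(H, \Mor(\phi(-),G))$. Preservation by evaluation at $X$ then holds by construction, since $L(X)$ is the objectwise colimit. Limits are dual, obtained by passing to $(-)^\circ$.

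The main obstacle is the first step: coherently producing $L$ as an enriched functor rather than just a family of objects. I would handle it via the Yoneda-embedding argument above, since that reduces coherence to the closure of the essential image and the fact that presheaf categories compute weighted colimits pointwise (for presheaf categories, \cref{psinho} (3) supplies the tensors).
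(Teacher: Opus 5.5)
Your architecture (co-Yoneda embed $\mD$, compute in the envelope, factor back through the full subcategory) is a sensible self-contained route. The paper itself gives no argument here and imports the statement from \cite[Theorem 3.102]{heine2024higher}. However, the one step that carries all the content is asserted rather than proved.

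To produce $L$ as a $\mV$-enriched functor you need two things. First, that $\mV\mathrm{-}\Fun(\mC',\mP(\mD^\circ)^\circ)$ admits the $H$-weighted colimit of $j_*\circ\phi$, where $j\colon\mD\to\mP(\mD^\circ)^\circ$ is the co-Yoneda embedding and $j_*$ is post-composition with it. Second, that evaluation at every $X\in\mC'$ preserves this colimit. That is exactly the proposition for the target $\mP(\mD^\circ)^\circ$, and your justification does not cover it:
\begin{enumerate}
\item $\mP(\mD^\circ)^\circ$ is the opposite of a presentably tensored category, so \cref{psinho}(3) does not apply to it.
\item Even for a presentably bitensored target, \cref{psinho}(3) only tells you the functor category is presentably tensored. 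That gives existence of weighted colimits there, not that evaluation at $X$ preserves them.
\end{enumerate}
The fact that presheaf categories compute weighted (co)limits pointwise is pointwise in the variable from $\mD$, not in $X\in\mC'$.

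The missing ingredient is an enriched adjoint to evaluation. Pass to opposites: $\mV\mathrm{-}\Fun(\mC',\mP(\mD^\circ)^\circ)$ is the opposite of $\mV^\rev\mathrm{-}\Fun((\mC')^\circ,\mP(\mD^\circ))$. By \cref{psinho}(3) the latter is presentably tensored (after enlarging the universe), and therefore has all small weighted limits. By the enriched Yoneda lemma, evaluation at $X$ on it has the enriched left adjoint $N\mapsto\Mor_{\mC'}(-,X)\ot N$, so it preserves weighted limits. Dually, this is the objectwise description you need in the envelope.

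With that in place, the rest goes through more simply than you wrote it. The embedding $j$ preserves the weighted colimits $L(X)$ that exist in $\mD$ (dual to the Yoneda embedding preserving weighted limits). Hence the colimit of $j_*\circ\phi$ lies objectwise in the essential image of $j$, and so in the full $\mW$-enriched subcategory $\mV\mathrm{-}\Fun(\mC',\mD)$. A weighted colimit in an ambient category that lies in a full subcategory is a weighted colimit there.

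Your Step 2 end-interchange is therefore unnecessary. As written, it also presupposes that the equivalences $\Mor_\mD(L(X),G(X))\simeq\Mor((\alpha_\mW)_!H,\Mor_\mD(\Phi(X,-),G(X)))$ are natural in $X$, which is precisely what Step 1 has to deliver. (Minor: the currying you use is \cref{psinho}(1), not (2).)
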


\begin{notation}
    
Let $\mV$ be a presentably monoidal category and $\mH$ a set of small $\mV$-enriched weights. Let $\mV \mathrm{-}\Cat(\mH) \subset \mV \mathrm{-}\Cat$ be the subcategory of $\mV$-enriched categories that admit $\mH$-weighted colimits and $\mV$-enriched functors preserving $\mH$-weighted colimits.
    
\end{notation}

The following is \cite[Proposition 4.47.]{heine2024higher}:

\begin{proposition}\label{weiadj} Let $\mV$ be a presentably monoidal category and $\mH$ a set of small $\mV$-enriched weights.
The inclusion $\mV \mathrm{-}\Cat(\mH) \subset \mV \mathrm{-}\Cat$ admits a left adjoint.

\end{proposition}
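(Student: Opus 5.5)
The plan is to build the left adjoint as a free cocompletion under $\mH$-weighted colimits, realised inside enriched presheaves. Given a small $\mV$-enriched category $\mC$, consider the Yoneda embedding $\iota_\mC:\mC\to\mP_\mV(\mC)$ of \cref{Yonedaext}. Since $\mP_\mV(\mC)$ is presentably left $\mV$-tensored, it admits all small weighted colimits; these are computed via the formula $\colim^H(\phi)\simeq H\otimes_{\mC'}\phi$, i.e.\ as a coend of tensors. Let $\mathcal{F}_\mH(\mC)\subset\mP_\mV(\mC)$ be the smallest full $\mV$-enriched subcategory that contains the representables and is closed under $\mH$-weighted colimits. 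It is obtained by transfinite iteration: at each stage adjoin all $\mH$-weighted colimits of diagrams in the previous stage, and take the union at a regular cardinal $\kappa$ exceeding the sizes of the weights in $\mH$. Because $\mH$ is a set of small weights, this stabilises and $\mathcal{F}_\mH(\mC)$ is small. The claimed unit is $\mC\to\mathcal{F}_\mH(\mC)$.

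For the universal property, let $\mD\in\mV\mathrm{-}\Cat(\mH)$ be arbitrary. I would reduce to the presentable case by embedding $\mD$, via its own Yoneda embedding, into $\widehat{\mP}_\mV(\mD)$ (presheaves in a larger universe). This embedding preserves whichever weighted colimits exist in $\mD$, since it is fully faithful and the weighted colimit is defined representably. A functor $F:\mC\to\mD$ then extends by \cref{Yonedaext}(1) to a left adjoint $\bar F:\mP_\mV(\mC)\to\widehat{\mP}_\mV(\mD)$, which preserves all weighted colimits. Consider the full subcategory of $\mathcal{F}_\mH(\mC)$ of objects that $\bar F$ sends into the essential image of $\mD$. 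It contains the representables and is closed under $\mH$-weighted colimits, because $\mD\subset\widehat{\mP}_\mV(\mD)$ is closed under $\mH$-weighted colimits. Hence it is all of $\mathcal{F}_\mH(\mC)$, and $\bar F$ restricts to an $\mH$-colimit-preserving functor $\mathcal{F}_\mH(\mC)\to\mD$ extending $F$.

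It remains to show that restriction $\mathrm{Fun}^{\mH}(\mathcal{F}_\mH(\mC),\mD)\to\mV\mathrm{-}\Fun(\mC,\mD)$ is an equivalence on mapping spaces. Essential surjectivity is the construction above. For full faithfulness I would use \cref{Yonedaext}(2): natural transformations out of the left Kan extension $\bar F$ are detected on representables. Any $\mH$-colimit-preserving $G$ agrees with the restricted left Kan extension of $G\circ\iota_\mC$, by the same closure argument applied to the comparison map $\bar{(G\iota_\mC)}\to G$. Assembling these naturally in $\mD$ produces the left adjoint on the level of mapping spaces, hence as a functor of categories.

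The main obstacle is the comparison in this last step. One must show that the canonical transformation $\overline{G\iota_\mC}|_{\mathcal{F}_\mH(\mC)}\to G$ is an equivalence on every object of $\mathcal{F}_\mH(\mC)$, and not just on representables. This needs a careful transfinite induction, using that both functors preserve $\mH$-weighted colimits, together with the fact that weighted colimits are defined via the morphism object $\Mor_{\mP_\mV}(H,-)$, so that they are preserved by the enriched Yoneda embedding of $\mD$. The size bookkeeping needed to keep $\mathcal{F}_\mH(\mC)$ small is routine by comparison.
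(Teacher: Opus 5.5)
The paper gives no argument for this statement; it simply cites \cite[Proposition 4.47]{heine2024higher}. Your free-cocompletion strategy is the standard one: take the closure of the representables in $\mP_\mV(\mC)$ under $\mH$-weighted colimits, with unit $\mC\to\mathcal{F}_\mH(\mC)$. The construction of $\mathcal{F}_\mH(\mC)$ and its smallness are fine.

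The genuine gap is your claim that the Yoneda embedding $\mD\to\widehat{\mP}_\mV(\mD)$ preserves whatever weighted colimits exist in $\mD$. This is false. The enriched Yoneda embedding preserves weighted \emph{limits}, as the paper records right after the statement. A fully faithful functor only \emph{reflects} colimits. ``Defined representably'' actually works against you here. The object $\colim^H(\phi)$ is characterized by $\Mor_\mD(\colim^H(\phi),Y)$ for $Y\in\mD$, and $\widehat{\mP}_\mV(\mD)$ has many more test objects. Already for $\mV=\mS$ and $\mD$ the category of finite sets, $y(a\sqcup b)\not\simeq y(a)\sqcup y(b)$.

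Hence the essential image of $\mD$ is not closed under $\mH$-weighted colimits in $\widehat{\mP}_\mV(\mD)$. Your closure argument therefore does not show that $\bar F$ maps $\mathcal{F}_\mH(\mC)$ into $\mD$, and in general it does not: for binary coproducts, $\bar F$ sends $\iota_\mC(a)\sqcup\iota_\mC(b)$ to $y(Fa)\sqcup y(Fb)$. The same defect breaks your last step, which needs the composite of $G$ with the embedding of $\mD$ to be $\mH$-cocontinuous.

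The repair is to embed $\mD$ in a way that does preserve $\mH$-colimits.
\begin{itemize}
\item Take $\Lambda$ to be the small set of all $\mH$-weighted colimit cocones in $\mD$, and replace $\widehat{\mP}_\mV(\mD)$ by $\mP^\Lambda_\mV(\mD)$.
\item By \cref{weiloc}, the Yoneda embedding lands in $\mP^\Lambda_\mV(\mD)$ and the composite $j:\mD\to\mP^\Lambda_\mV(\mD)$ is fully faithful. It also sends the cocones of $\Lambda$ to weighted colimits, i.e.\ $j$ preserves $\mH$-weighted colimits.
\item $\mP^\Lambda_\mV(\mD)$ is an enriched reflective localization of $\mP_\mV(\mD)$ at a small set of maps. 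So it is presentably left $\mV$-tensored, and \cref{Yonedaext} applies to $j\circ F$.
\end{itemize}
Now the essential image of $j$ is closed under $\mH$-weighted colimits, and your closure argument goes through.

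For the comparison in the last step, note that \cref{Yonedaext}(2) concerns functors defined on all of $\mP_\mV(\mC)$, whereas $G$ lives only on $\mathcal{F}_\mH(\mC)$. Produce the map $\bar F|_{\mathcal{F}_\mH(\mC)}\to j\circ G$ instead from the fact that $\bar F|_{\mathcal{F}_\mH(\mC)}$ is the pointwise left Kan extension of $j\circ F$ along $\mC\subset\mathcal{F}_\mH(\mC)$, since $\Mor(\iota_\mC(-),W)\simeq W$. Then run your induction.

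Finally, say explicitly that all categories are univalent. For non-univalent $\mD$ the representables only see the univalent completion of $\mD$. In that setting the statement actually fails. Take $\mV=\mS$ and $\mH$ the empty weight. The value of a left adjoint at $\emptyset$ would need exactly one initial-object-preserving functor to the chaotic category on two objects. Since its object space is nonempty, it has at least two such functors.
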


The following is \cite[Corollary 3.68.]{heine2024higher}:

\begin{proposition}\label{weiadj} Let $\mV$ be a presentably monoidal category and $\mC$ a small $\mV$-enriched category.
The $\mV$-enriched Yoneda embedding $\mC \to \mP_\mV(\mC)$ preserves weighted limits.

\end{proposition}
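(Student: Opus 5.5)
The plan is to reduce everything to the enriched Yoneda lemma together with the formula for morphism objects in $\mP_\mV(\mC)$. A weighted limit in $\mC$ is by definition a weighted colimit in $\mC^\circ$. So we must show the following: if $\phi:\mK\to\mC$ is a $\mV$-enriched functor with a weighted limit $L=\lim^H\phi$, where $H\in\mP_{\mV^\rev}(\mK^\circ)$, then the representable $\Mor_\mC(-,L)$ is the $H$-weighted limit of $\iota_\mC\circ\phi$ in $\mP_\mV(\mC)$. Unwinding, we need, for every $F\in\mP_\mV(\mC)$, a natural equivalence
\[
\Mor_{\mP_\mV(\mC)}(F,\Mor_\mC(-,L))\simeq \Mor_{\mP_{\mV^\rev}(\mK^\circ)}\bigl(H,\Mor_{\mP_\mV(\mC)}(F,\Mor_\mC(-,\phi(-)))\bigr),
\]
compatible with the canonical weighted cone obtained by applying $\iota_\mC$ to the limit cone of $L$.

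First I check this for representable $F=\Mor_\mC(-,X)$. By the enriched Yoneda lemma both sides reduce to statements in $\mC$. The left side becomes $\Mor_\mC(X,L)$. The right side becomes $\Mor(H,\Mor_\mC(X,\phi(-)))$. These agree precisely by the defining universal property of $\lim^H\phi$, and one checks that the comparison map induced by the image cone is the one from the universal property.

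Second, I extend to arbitrary $F$. Every enriched presheaf is a small weighted colimit of representables; this is the density of the Yoneda embedding, available from \cref{Yonedaext}(1) since $\iota_\mC$ is the universal map into a presentably tensored category. Both sides of the displayed map, viewed as functors of $F$, send weighted colimits in $\mP_\mV(\mC)$ to the corresponding weighted limits in $\mV$:
\begin{itemize}
\item the left side because $\Mor_{\mP_\mV(\mC)}(-,G)$ does so by definition of weighted colimits;
\item the right side because $\Mor(H,-)$ commutes with limits in its second variable, and $\Mor_{\mP_\mV(\mC)}(-,\Mor_\mC(-,\phi(k)))$ converts colimits into limits pointwise in $k$, the limits in $\mP_{\mV^\rev}(\mK^\circ)$ being computed objectwise by \cref{colimenrfun}.
\end{itemize}
Since the comparison is natural in $F$ and an equivalence on representables, it is an equivalence for all $F$.

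The main obstacle I expect is making the second step rigorous in the enriched, homotopy-coherent setting. One must produce the comparison as an honest $\mV$-enriched natural transformation of functors $\mP_\mV(\mC)^\circ\to\mV$. One must also know that two such transformations preserving weighted colimits and agreeing on representables are equivalent. The second point I would take from \cref{Yonedaext}(2), applied to the opposite side to handle the contravariance, with $\mD=\mV^{\mathrm{op}}$-type targets. The naturality I would obtain by writing the right side as $\Mor_{\mP_\mV(\mC)}(F,-)$ applied to the cotensor construction, which commutes with the required limits by \cref{colimenrfun}.
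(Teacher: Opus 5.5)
The paper gives no argument of its own here; it quotes \cite[Corollary 3.68]{heine2024higher}. Your proof is correct, but it is longer than it needs to be, because your first step is already essentially the whole argument. By \cref{colimenrfun}, applied to $\mP_\mV(\mC)$ with $\mV$ regarded as bienriched over itself, the presheaf category admits all small weighted limits, and evaluation at each $X\in\mC$ preserves them. By the enriched Yoneda lemma, evaluation at $X$ is $\Mor_{\mP_\mV(\mC)}(\iota_\mC(X),-)$. So your computation on representables says exactly that the canonical map $\iota_\mC(L)\to\lim^H(\iota_\mC\circ\phi)$ becomes an equivalence after evaluating at every $X$. It is therefore an equivalence, since evaluations jointly detect equivalences of enriched presheaves.

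Your second step is then unnecessary: the density argument, the interchange of $\Mor(H,-)$ with $F$-weighted limits, and the construction of the comparison as an enriched transformation natural in $F$. What your route buys is a direct check of the universal property against every test object $F$, without using that weighted limits exist in $\mP_\mV(\mC)$. But it still relies on the same objectwise formula from \cref{colimenrfun}, only for $\mP_{\mV^\rev}(\mathcal{K}^\circ)$ instead of $\mP_\mV(\mC)$. Your second bullet holds because $X\mapsto\Mor_\mC(X,\phi(-))$ is enriched on the same side as the weight $F$. Consequently $\Mor(H,-)$ preserves that $F$-weighted limit simply as a corepresentable functor.

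One correction: \cref{Yonedaext}(2) cannot be applied with a target like $\mV^{\mathrm{op}}$, since it requires a presentably left $\mV$-tensored target. You do not need it. Write $F\simeq\colim^F\iota_\mC$ using the enriched Yoneda lemma. Both sides then send $F$ to the $F$-weighted limit of their restrictions to representables. The component of your comparison at $F$ is the induced map between these limits, and hence is an equivalence.
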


\begin{notation}Let $\mV$ be a presentably monoidal category, $\mC$ a small $\mV$-enriched category and $\Lambda$ a set of small $\mV$-enriched cocones in $\mC$. Let $\mP^\Lambda_\mV(\mC) \subset \mP_\mV(\mC)$ be the full $\mV$-enriched subcategory of $\mV$-enriched presheaves sending $\mV$-enriched cocones in $\mC$ of $\Lambda$ to weighted colimits in $\mV^\circ$.
    
\end{notation}

The following is \cite[Lemma 3.80., Remark 3.85.]{heine2024higher}:

\begin{proposition}\label{weiloc} Let $\mV$ be a presentably monoidal category, $\mC$ a small $\mV$-enriched category and $\Lambda$ a set of small $\mV$-enriched cocones in $\mC$.

\begin{enumerate}[\normalfont(1)]\setlength{\itemsep}{-2pt}

\item The $\mV$-enriched embedding $\mP^\Lambda_\mV(\mC) \subset \mP_\mV(\mC)$ admits a $\mV$-enriched left adjoint.

\item The restricted $\mV$-enriched localization $\mC \to \mP_\mV(\mC) \to \mP^\Lambda_\mV(\mC)$ sends $\mV$-enriched cocones of $\Lambda$ to weighted colimits.

\item If $\Lambda$ consists of weighted colimit cocones, the 
$\mV$-enriched Yoneda embedding of $\mC$ lands in $\mP^\Lambda_\mV(\mC)$
and so the restricted $\mV$-enriched localization $\mC \to \mP_\mV(\mC) \to \mP^\Lambda_\mV(\mC)$ is a $\mV$-enriched embedding and preserves 
weighted limits.

\end{enumerate}

\end{proposition}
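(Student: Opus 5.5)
The plan is to identify $\mP^\Lambda_\mV(\mC)$ as the full subcategory of objects local with respect to a small set of morphisms in the presentably left $\mV$-tensored category $\mP_\mV(\mC)$, and then obtain all three assertions from standard properties of (enriched) accessible localizations.

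\textbf{Step 1: reduce membership in $\mP^\Lambda_\mV(\mC)$ to a locality condition.}
\begin{itemize}
\item Let $\iota_\mC: \mC \to \mP_\mV(\mC)$ be the enriched Yoneda embedding of \cref{Yonedaext}.
\item For each cocone $\lambda: H \to \Mor_\mC(-,X)\circ\phi$ in $\Lambda$, with $\phi: \mK \to \mC$, the weighted colimit $\colim^H(\iota_\mC\circ\phi)$ exists, since $\mP_\mV(\mC)$ is presentably tensored.
\item By the universal property, $\lambda$ induces a comparison morphism $f_\lambda: \colim^H(\iota_\mC\circ\phi) \to \iota_\mC(X)$ in $\mP_\mV(\mC)$.
\item For $F \in \mP_\mV(\mC)$, the enriched Yoneda lemma identifies $\Mor(\iota_\mC(X),F)\simeq F(X)$.
\item The universal property of the weighted colimit identifies $\Mor(\colim^H(\iota_\mC\phi),F)$ with $\Mor_{\mP_\mV(\mK)}(H, F\circ\phi)$.
\item Hence $F$ sends $\lambda$ to a weighted colimit in $\mV^\circ$ exactly when $\Mor_{\mP_\mV(\mC)}(f_\lambda,F)$ is an equivalence in $\mV$.
\end{itemize}

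\textbf{Step 2: pass from enriched to ordinary locality.} Choose a small set $G$ of generators of $\mV$ under colimits. The $\mV$-valued map $\Mor(f_\lambda,F)$ is an equivalence if and only if $\Map(V\otimes f_\lambda, F)$ is an equivalence of spaces for all $V\in G$. So $\mP^\Lambda_\mV(\mC)$ is the full subcategory of objects local for the small set
\[
S=\{V\otimes f_\lambda : V\in G,\ \lambda\in\Lambda\}.
\]

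\textbf{Step 3: part (1).} Presentability gives an accessible left adjoint $L$ to the inclusion on underlying categories.
\begin{itemize}
\item Since $S$ is closed under tensoring with objects of $\mV$, the local objects are closed under cotensors in $\mP_\mV(\mC)$.
\item The inclusion therefore preserves cotensors.
\item By \cref{adj} (2), the underlying adjunction refines to a $\mV$-enriched adjunction.
\end{itemize}
This enrichment step is the main obstacle. One must check carefully that locality against $S$ coincides with \emph{enriched} locality, that is, that $\Mor(f_\lambda,F)$ is an equivalence in $\mV$ and not merely on mapping spaces. This reduces to the tensor–hom adjunction $\Map(V\otimes f,F)\simeq \Map_\mV(V,\Mor(f,F))$ together with $G$ generating $\mV$.

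\textbf{Step 4: part (2).}
\begin{itemize}
\item $L$ is an enriched left adjoint, so it preserves weighted colimits.
\item $L$ inverts each $f_\lambda$, because $f_\lambda\in S$ up to the unit $\tu\otimes f_\lambda$.
\item Hence $L\iota_\mC(X)\simeq L\colim^H(\iota_\mC\phi)\simeq \colim^H(L\iota_\mC\phi)$, computed in $\mP^\Lambda_\mV(\mC)$.
\end{itemize}

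\textbf{Step 5: part (3).}
\begin{itemize}
\item If each $\lambda$ is a weighted colimit cocone in $\mC$, then for each $Z\in\mC$ the map $\Mor_\mC(X,Z)\to\Mor(H,\Mor_\mC(\phi-,Z))$ is an equivalence.
\item By Step 1 this says precisely that $\iota_\mC(Z)$ is $f_\lambda$-local, so $\iota_\mC$ lands in $\mP^\Lambda_\mV(\mC)$.
\item Consequently $L\iota_\mC\simeq\iota_\mC$, which is an enriched embedding.
\item The inclusion of $\mP^\Lambda_\mV(\mC)$ is an enriched right adjoint, so it creates weighted limits.
\item The enriched Yoneda embedding preserves weighted limits by the preceding proposition.
\item Therefore the restricted localization also preserves weighted limits.
\end{itemize}
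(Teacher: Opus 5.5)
Your proof is correct, and it is the standard argument behind the result, which the paper does not prove itself but cites from \cite[Lemma 3.80, Remark 3.85]{heine2024higher}. That argument exhibits $\mP^\Lambda_\mV(\mC)$ as the local objects for the comparison maps $f_\lambda\colon \colim^H(\iota_\mC\phi)\to\iota_\mC(X)$ and their tensors, takes the accessible reflection on underlying categories, and upgrades it to an enriched adjunction via closure under cotensors and \cref{adj}~(2). The only cosmetic point is that $f_\lambda$ need not literally lie in $S$ unless you put $\tu$ into $G$; however, your Step 2 already shows it is an $S$-local equivalence, which is all Step 4 needs.
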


The following is \cite[Proposition 3.19.]{heine2024higher}:

\begin{proposition}\label{weifunc}

Let $\mV, \mW$ be presentably monoidal categories, $\mC$ a small $\mV$-enriched category, $H$ a $\mV$-enriched weight on $\mC$ and $\mD$ a $\mV \ot \mW$-enriched category that admits $(\alpha_\mV)_!(H)$-weighted colimits.
There is a $\mW$-enriched functor 
$$ \mV\mathrm{-}\Fun(\mC,\mD) \to \alpha^*_\mW(\mD)$$
that sends a $\mV$-enriched functor to its $H$-weighted colimit.

\end{proposition}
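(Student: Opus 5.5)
The plan is to build the functor in one step as the $\mW$-enriched functor corepresented by the weight through the universal property of weighted colimits. The idea is to use the characterization of ${\mV\mathrm{-}\Fun}(\mC,\mD)$ as a $\mW$-enriched category from \cref{psinho}. There the $\mW$-enriched morphism objects between $\mV$-enriched functors $\phi,\psi:\mC\to\mD$ are computed as an enriched end of the $\mV\ot\mW$-enriched morphism objects $\Mor_\mD(\phi(-),\psi(-))$. For each $\phi$ we fix the $H$-weighted colimit $\colim^H(\phi)$, computed in $\mD$ regarded via $(\alpha_\mV)_!$. By assumption it represents
$$Y\mapsto \Mor_{\mP_{\mV\ot\mW}(\mC')}\big((\alpha_\mV)_!H,\ \Mor_\mD(-,Y)\circ\phi\big),$$
where $\mC'=(\alpha_\mV)_!\mC$.

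First, consider the $\mW$-enriched functor
$$\Theta:\ {\mV\mathrm{-}\Fun}(\mC,\mD)^\circ\otimes \alpha_\mW^*(\mD)\to \mW,\qquad (\phi,Y)\mapsto \alpha_\mW^R\,\Mor_{\mP_{\mV\ot\mW}}\big((\alpha_\mV)_!H,\Mor_\mD(-,Y)\circ\phi\big).$$
I would construct it as the composite of two pieces. The first is the evaluation/composition pairing $\langle {\mV\mathrm{-}\Fun}(\mC,\mD),\mC\rangle\to\mD$ supplied by \cref{psinho}, which gives a functor into $\mV\ot\mW$-presheaves on $\mC$. The second is the enriched Yoneda functor $\Mor_{\mP}(H,-)$, which is $\mV\ot\mW$-enriched and so transfers to a $\mW$-enriched functor along the right adjoint of $\alpha_\mW$.

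Second, by the weighted colimit hypothesis, for each fixed $\phi$ the presheaf $\Theta(\phi,-)$ on $\alpha_\mW^*(\mD)^{\circ}$ is corepresented by $\colim^H(\phi)$. To see this, note that $\Mor_{\alpha_\mW^*\mD}(\colim^H\phi,Y)$ is $\alpha_\mW^R$ applied to the $\mV\ot\mW$-morphism object. That object is equivalent to $\Mor_{\mP}((\alpha_\mV)_!H,\Mor_\mD(-,Y)\circ\phi)$ by the defining equivalence of the weighted colimit.

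Third, I would apply the enriched Yoneda lemma, in the form of \cref{Yonedaext} and the fully faithfulness of the Yoneda embedding, to the curried functor ${\mV\mathrm{-}\Fun}(\mC,\mD)\to\mP_{\mW^\rev}(\alpha_\mW^*(\mD)^\circ)$, which sends $\phi$ to the corepresented presheaf. Its image lies in the essential image of the $\mW$-enriched (co)Yoneda embedding of $\alpha_\mW^*(\mD)$. It therefore factors, uniquely, through a $\mW$-enriched functor ${\mV\mathrm{-}\Fun}(\mC,\mD)\to\alpha_\mW^*(\mD)$, and on objects this functor is $\phi\mapsto\colim^H(\phi)$. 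Size issues are handled by passing to a larger universe, so that the presheaf categories exist.

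The main obstacle is the first step: making $\Theta$ a genuinely $\mW$-enriched functor, natural in $\phi$. One has to check that the $\mV\ot\mW$-enriched mapping object out of $(\alpha_\mV)_!H$ carries a residual $\mW$-action compatible with the morphism objects of ${\mV\mathrm{-}\Fun}(\mC,\mD)$ from \cref{psinho}. I would handle this by working with $\langle\mC,{\mV\mathrm{-}\Fun}(\mC,\mD)^{\circ}\rangle$ and the universal property (2) of \cref{psinho}, which turns the whole construction into a single $\mV\ot\mW$-enriched functor before transferring along $\alpha_\mW$.
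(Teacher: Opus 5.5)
The paper gives no proof of this statement; it is quoted from \cite[Proposition 3.19]{heine2024higher}, so there is no in-paper argument to compare yours with. On its own terms, your argument is the standard representability argument and it is correct in outline.

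Your second step is right. Transfer of enrichment gives $\Mor_{\alpha^*_\mW(\mD)}(X,Y)\simeq\alpha^R_\mW\Mor_\mD(X,Y)$. The universal cocone, read as a map $\tu_\mW\to\Theta(\phi,\colim^H(\phi))$, induces the corepresenting equivalence. Your third step is also right: an enriched functor into copresheaves that lands objectwise in corepresentables factors through the fully faithful co-Yoneda embedding.

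Three things should be tightened.

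First, you are implicitly assuming $\mD$ is univalent. \cref{psinho} needs this even to give ${\mV\mathrm{-}\Fun}(\mC,\mD)$ its $\mW$-enrichment. The factorization through the essential image of the co-Yoneda embedding is also only guaranteed, and unique, when the target is univalent; otherwise you only land in the univalent completion. This costs nothing. Since $\alpha_\mW$ is monoidal, $\Map_\mW(\tu_\mW,\alpha^R_\mW M)\simeq\Map_{\mV\ot\mW}(\tu,M)$. So $\alpha^*_\mW(\mD)$ has the same underlying category as $\mD$, and it is univalent exactly when $\mD$ is.

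Second, $\mW$ is not assumed symmetric, so ${\mV\mathrm{-}\Fun}(\mC,\mD)^\circ\ot\alpha^*_\mW(\mD)$ is not a $\mW$-enriched category. You should only ever construct the curried form of $\Theta$. Since $\Theta$ is contravariant in $\phi$, that curried form is a $\mW$-enriched functor ${\mV\mathrm{-}\Fun}(\mC,\mD)\to\mP_{\mW^\rev}(\alpha^*_\mW(\mD)^\circ)^\circ$; you dropped the outer $(-)^\circ$. The co-Yoneda embedding $Y\mapsto\Mor_{\alpha^*_\mW(\mD)}(Y,-)$ has the same target.

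Third, the obstacle you single out is bookkeeping, not a gap. For fixed $Y$, $\Theta(-,Y)$ is $\alpha^R_\mW$ applied to the corepresentable functor $\Mor_{\mP_{\mV\ot\mW}}((\alpha_\mV)_!H,-)$, precomposed with $\phi\mapsto\Mor_\mD(\phi(-),Y)$. That map is the evaluation functor $\langle\mC,{\mV\mathrm{-}\Fun}(\mC,\mD)\rangle\to\mD$ from \cref{psinho} followed by the Yoneda embedding of $\mD$ and currying. Corepresentable functors are automatically enriched, so your plan goes through.
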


Next we consider pushouts along enriched embeddings.
Pushouts of categories are typically quite difficult to describe, as they generalize the amalgamated product from monoid, or group, theory to the many-object setting.
However, in we restrict to fully faithful functors, the pushout admits an easier description.

\begin{lemma}\label{pulla}
Let $\mV$ be a presentably monoidal category and $\theta: \mC' \to \mC$
a $\mV$-enriched functor.
Let $\mD \to \mC$ be a $\mV$-enriched functor that admits a left
$\mV$-enriched fully faithful left adjoint $\phi$.
Let $\theta': \mD':= \mD \times_\mC \mC' \to \mD$ be the projection.
The projection $\mD' \to \mC' $ admits a $\mV$-enriched fully faithful left adjoint $\phi'$ and the canonical natural transformation
$$ \phi \circ \theta \to \theta' \circ \phi' $$
of functors $\mC'\to \mD$ is an equivalence.
    
\end{lemma}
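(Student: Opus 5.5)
The plan is to verify the adjunction for $\phi'$ objectwise, using the criterion that a $\mV$-enriched functor admits a left adjoint if and only if its opposite admits a right adjoint. In representable form this means: for every object $c' \in \mC'$, the $\mV$-enriched functor $\Mor_{\mC'}(c', \pi(-)) : \mD' \to \mV$ must be corepresentable. Here $\pi: \mD' \to \mC'$ denotes the projection.

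Write $p: \mD \to \mC$ for the given functor. Objects of $\mD' = \mD \times_\mC \mC'$ are triples $(d, c', \alpha: p(d) \simeq \theta(c'))$. Since morphism objects in a pullback of enriched categories are pullbacks in $\mV$, we have
$$\Mor_{\mD'}((d,c'),(e,b')) \simeq \Mor_\mD(d,e) \times_{\Mor_\mC(\theta c', \theta b')} \Mor_{\mC'}(c',b').$$
Because $\phi$ is fully faithful, the unit $\id \to p\phi$ is an equivalence. So for $c' \in \mC'$ we obtain a canonical object
$$\phi'(c') := (\phi\theta(c'), c', \eta^{-1}) \in \mD'.$$
We also get a morphism $c' \to \pi\phi'(c') = c'$ given by the identity, which will serve as the unit.

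We then compute the morphism object from $\phi'(c')$ to an arbitrary $(e,b')$. Using the adjunction $\Mor_\mD(\phi\theta c', e) \simeq \Mor_\mC(\theta c', p e)$ and the equivalence $p(e) \simeq \theta(b')$, we get
$$\Mor_{\mD'}(\phi'(c'),(e,b')) \simeq \Mor_\mC(\theta c', \theta b') \times_{\Mor_\mC(\theta c', \theta b')} \Mor_{\mC'}(c',b') \simeq \Mor_{\mC'}(c', b').$$
The main point to check is that the map $\Mor_\mD(\phi\theta c', e) \to \Mor_\mC(\theta c', \theta b')$ appearing in the fiber product, which is induced by $p$ followed by precomposition with the unit, is exactly the adjunction equivalence. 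This holds by the description of the adjunction equivalence as $p$ followed by precomposition with the unit. Given this, the displayed equivalence exhibits $\phi'(c')$ as corepresenting $\Mor_{\mC'}(c',\pi(-))$, naturally in $(e,b')$. Hence the objectwise left adjoints assemble into a $\mV$-enriched left adjoint $\phi'$. Its unit $c' \to \pi\phi'(c')$ is the identity, so $\phi'$ is fully faithful.

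Finally, $\theta'\phi'(c') = \phi\theta(c')$ by construction. The canonical transformation $\phi\theta \to \theta'\phi'$ is the mate of the identification $p\theta' \simeq \theta\pi$: it is obtained from the composite $\theta \to \theta\pi\phi' \simeq p\theta'\phi'$ by adjunction. Since the unit of $\pi \dashv \phi'$ is the identity, this composite is the unit of $p \dashv \phi$ at $\theta(c')$. Its adjoint is therefore the identity of $\phi\theta(c')$, and the transformation is an equivalence.

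I expect the main obstacle to be the coherence in the enriched setting: the objectwise corepresentability must assemble into a genuine $\mV$-enriched functor. I would handle this with the corepresentability criterion for enriched adjoints recorded earlier, applied to $\pi^\circ$, so that only objectwise corepresentability needs to be checked.
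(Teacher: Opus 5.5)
Your argument is correct, but it takes a different route from the paper.

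\textbf{The paper's argument.} It is purely formal. Since the unit of $\phi \dashv p$ is invertible, $\phi$ is a functor over $\mC$, and $\phi \dashv p$ is an adjunction in the slice 2-category $\mV\mathrm{-}\Cat_{/\mC}$. Here the paper uses that a functor over $\mC$ has a left adjoint relative to $\mC$ iff it has a left adjoint whose unit is sent to an equivalence in $\mC$. Base change along $\theta$ is a 2-functor $\mV\mathrm{-}\Cat_{/\mC} \to \mV\mathrm{-}\Cat_{/\mC'}$, so it carries this relative adjunction to an adjunction $\phi' \dashv \pi$. Then $\phi'$ is fully faithful because fully faithful functors are stable under pullback. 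In that approach $\phi'$ is literally the base change $\mC' \simeq \mC \times_\mC \mC' \to \mD \times_\mC \mC'$ of $\phi$. The identification $\theta'\phi' \simeq \phi\theta$ is therefore built in, and the paper does not even discuss the mate.

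\textbf{Your argument.} You construct the same functor objectwise and verify the adjunction by hand through the corepresentability criterion. You use only the pullback formula for morphism objects and the description of the adjunction equivalence as ``apply $p$, then precompose with the unit''. This is more elementary and avoids the theory of relative adjunctions. It also makes explicit that the unit of $\phi' \dashv \pi$ is the identity. The cost is that you must check the mate separately, which you do correctly. (Where you write ``the unit of $\pi \dashv \phi'$'' you mean $\phi' \dashv \pi$.)

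\textbf{A caveat shared with the paper.} To regard $\eta^{-1}$ as a point of the object space of $\mD \times_\mC \mC'$, the invertible morphism $\eta_{\theta c'}$ must come from a path in $\iota_0(\mC)$. The paper needs the same thing to regard $\phi$ as a morphism in the slice over $\mC$. In both cases this amounts to assuming $\mC$ univalent.

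Without univalence the statement can fail. Take $\mV = \mS$ and let $\mC$ be the non-univalent category with object set $\{a,b\}$ and all morphism spaces contractible. Let $\mD = \{a\}$ and $\mC' = \{b\}$. Then $\mC \to \{a\}$ is a fully faithful left adjoint of the inclusion, but $\mD \times_\mC \mC' = \emptyset$.

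In the application in \cref{fullyfaith} all categories involved are presheaf categories, which are univalent, so the caveat is harmless there.
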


\begin{proof}

We use the following terminology.
A $\mV$-enriched adjunction relative to $\mC$ is an adjunction in the 2-category $ {\mV\mathrm{-}\Cat}_{/\mC}.$
There is a forgetful 2-functor $ {\mV\mathrm{-}\Cat}_{/\mC} \to {\mV\mathrm{-}\Cat}$ so that any $\mV$-enriched adjunction relative to $\mC$ lies over a $\mV$-enriched adjunction.
Similarly, for every $\mV$-enriched functor
$\theta: \mC' \to \mC$ there is a base change 2-functor $ \theta^*:{\mV\mathrm{-}\Cat}_{/\mC'} \to {\mV\mathrm{-}\Cat}_{/\mC},$ which as a 2-functor preserves adjunctions.
In other words the pullback of any adjunction relative to $\mC$ along $ \theta:\mC' \to \mC$ is an adjunction relative to $\mC'$. 
Moreover note that $\theta^*$ sends fully faithful $\mV$-enriched functors over $\mC$ to fully faithful $\mV$-enriched functors over $\mC'$ since fully faithful functors are stable under base change (because equivalences are stable under base change and forming morphism objects commutes with pullbacks).

Any $\mV$-enriched functor $\mB \to \mA$ over $\mC$ admits a $\mV$-enriched left adjoint relative to $\mC$ if and only if it admits a  $\mV$-enriched left adjoint and the unit is sent by the functor $\mA \to \mC$ to an equivalence. In particular, the unique $\mV$-enriched functor $\mD \to \mC$ over $\mC$, which admits a fully faithful $\mV$-enriched left adjoint, also admits a fully faithful $\mV$-enriched left adjoint relative to $\mC$,
which is preserved by base change along $\theta: \mC' \to \mC.$
\end{proof}

\begin{proposition}\label{fullyfaith}

Let $\mV$ be a presentably monoidal category,
$\beta: \mC \to \mD$ a $\mV$-enriched functor, and $\alpha: \mC \to \mC'$
a fully faitful $\mV$-enriched functor.
\begin{enumerate}[\normalfont(1)]\setlength{\itemsep}{-2pt}
\item The cobase change
$\alpha': \mD \to \mD' := \mC' \coprod_\mC \mD$ is fully faithful.
\item Let $\beta': \mC' \to \mD' $ be the cobase change.
The canonical natural transformation
$$ \alpha_! \circ \beta^* \to \beta'^* \circ \alpha'_! $$
of functors $\mP_\mV(\mD) \to \mP_\mV(\mC') $ is an equivalence.
\end{enumerate}   
\end{proposition}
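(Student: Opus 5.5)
The plan is to reduce both assertions to \cref{pulla}, applied to presheaf categories. The key observation is that a fully faithful enriched functor $\alpha:\mC\to\mC'$ induces a fully faithful left adjoint $\alpha_!:\mP_\mV(\mC)\to\mP_\mV(\mC')$ of the restriction $\alpha^*$, by \cref{Yonedaext} and full faithfulness of $\alpha$. The first step is to build the presheaf category on the pushout as a pullback:
\[
\mP_\mV(\mD')\simeq \mP_\mV(\mC')\times_{\mP_\mV(\mC)}\mP_\mV(\mD),
\]
with the maps given by $\alpha^*$ and $\beta^*$. This should follow because $\mV\mathrm{-}\Fun(-,\mV^{\circ})$ sends colimits of $\mV$-enriched categories to limits. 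I will argue it via the universal property of \cref{psinho}: enriched functors out of a pushout form the pullback of the functor categories. I expect some care to be needed about univalence, since the pushout in $\mV\mathrm{-}\Cat$ need not be univalent and presheaves only see the univalent completion. I will handle this by working with $\mV\mathrm{-}\Fun$ out of not-necessarily-univalent categories, where the universal property still holds.

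Next, I will apply \cref{pulla} with base $\mP_\mV(\mC)$, the functor $\theta=\beta^*:\mP_\mV(\mD)\to\mP_\mV(\mC)$, and the functor $\alpha^*:\mP_\mV(\mC')\to\mP_\mV(\mC)$, which has the fully faithful left adjoint $\alpha_!$. The lemma then gives two things. First, the projection $\mP_\mV(\mD')\to\mP_\mV(\mD)$, which is $\alpha'^*$, admits a fully faithful left adjoint, which is necessarily $\alpha'_!$. Second, the Beck--Chevalley transformation $\alpha_!\circ\beta^*\to\beta'^*\circ\alpha'_!$ is an equivalence. This second fact is exactly assertion (2), provided I check that the transformation produced by \cref{pulla} agrees with the canonical one. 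That agreement holds because both are mates of the same commutative square of restrictions.

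For (1), I will show $\alpha'$ is fully faithful by testing on representables. For $X,Y\in\mD$,
\[
\Mor_{\mD'}(\alpha'X,\alpha'Y)\simeq \Mor_{\mP_\mV(\mD')}(\alpha'_!\,y_X,\alpha'_!\,y_Y)\simeq \Mor_{\mP_\mV(\mD)}(y_X,y_Y)\simeq\Mor_\mD(X,Y).
\]
The first equivalence uses the Yoneda embedding together with $\alpha'_!\circ y\simeq y\circ\alpha'$. The second uses full faithfulness of $\alpha'_!$. The third is the enriched Yoneda lemma. For the first step, I also need the Yoneda embedding of $\mD'$ to be fully faithful, or else I must pass to univalent completions, which doesn't change morphism objects.

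The main obstacle is the first step: identifying presheaves on the pushout with the pullback of presheaf categories, and doing so enrichedly, not just on underlying categories. If the direct universal-property argument is too delicate, I will fall back on the adjunction $\mV\mathrm{-}\Cat\rightleftarrows(\mathrm{Pr}^{\L}_\mV)$ sending $\mC\mapsto\mP_\mV(\mC)$. This left adjoint preserves pushouts, and pushouts in presentable tensored categories are computed as pullbacks along right adjoints.
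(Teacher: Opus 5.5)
Your proposal is correct and follows essentially the same route as the paper. Like the paper, you identify $\alpha'^*$ as the base change of $\alpha^*$ along $\beta^*$, using that $\mP_\mV(-)$ turns colimits into limits via \cref{psinho}, and then apply \cref{pulla} to obtain both the fully faithful left adjoint $\alpha'_!$ and the Beck--Chevalley equivalence of (2). The only difference is cosmetic: the paper deduces (1) from the general equivalence ``$\phi$ is fully faithful iff $\phi^*$ admits a fully faithful left adjoint'', while you check the needed direction directly on representables, which is the same argument written out.
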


\begin{proof}

By \cite[Corollary 1.8.]{heine2024bienriched} every $\mV$-enriched functor $\phi: \mA \to \mB$ 
gives rise to a $\mV$-enriched adjunction
$$\phi_! : \mP_\mV(\mA) \rightleftarrows \mP_\mV(\mB) : \phi^* $$ on $\mV$-enriched presheaves, i.e. an adjunction in the 2-category ${\mV\mathrm{-}\Cat}$, where $\phi^*$ restricts along $\phi$ and
$\phi_!$ extends $\phi$ along the $\mV$-enriched Yoneda embeddings.
Since $\phi^* $ preserves small weighted colimits and
$\mP_\mV(\mB)$ is generated by the representable presheaves 
under small weighted colimits and the $\mV$-enriched Yoneda embeddings are fully faithful, $\phi$ is fully faithful
if and only if $\phi_!$ is fully faithful.
So by uniqueness of left adjoints the $\mV$-enriched functor $\phi$ is fully faithful if and only if $\phi^*$ admits a fully faithful $\mV$-enriched left adjoint.
\cref{psinho} implies that the functor $ \mP_\mV(-): {\mV\mathrm{-}\Cat}^\op \to {_\mV\widehat{\Cat}} $ sends small colimits to limits. 
Hence $$\alpha'^* : \mP_\mV(\mD') \to \mP_\mV(\mD) $$ is the base change of $\alpha^* : \mP_\mV(\mC') \to \mP_\mV(\mC)$ along $\beta^*: \mP_\mV(\mD) \to \mP_\mV(\mC)$.
By what we have proven, it suffices to see that $\alpha'^*$ admits a fully faithful $\mV$-enriched left adjoint if $\alpha^*$ does.
This follows from \cref{pulla}.
(2) follows from the second part of \cref{pulla}.
\end{proof}

\begin{proposition}\label{homsus}
Let $\mV$ be a presentably monoidal category,
$\beta: \mC \to \mD$ a $\mV$-enriched functor, and $\alpha: \mC \to \mC'$
a fully faithful $\mV$-enriched functor.
Let $\alpha': \mD \to \mD' := \mC' \coprod_\mC \mD, 
\beta': \mC' \to \mD' := \mC' \coprod_\mC \mD$ be the cobase change.
For every $X \in \mC', Y \in \mD$ 
there is a canonical equivalence
$$ \Mor_{\mD'}(\beta'(X),\alpha'(Y)) \simeq \int_{T \in \mC} \Mor_{\mC'}(X,\alpha(T)) \ot \Mor_{\mD}(\beta(T),Y) . $$ 

\end{proposition}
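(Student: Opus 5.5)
We compute the morphism object in $\mD'$ through presheaves, using \cref{fullyfaith}. The starting point is that $\Mor_{\mD'}(\beta'(X),\alpha'(Y))$ is the value at $X$ of the presheaf $\beta'^*(\Mor_{\mD'}(-,\alpha'(Y)))$ on $\mC'$.

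First, identify the representable. By \cref{fullyfaith} (1), $\alpha'$ is fully faithful. The left Kan extension $\alpha'_!$ sends representables to representables, so $\alpha'_!(\Mor_\mD(-,Y)) \simeq \Mor_{\mD'}(-,\alpha'(Y))$.

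Second, apply the base change equivalence $\alpha_! \circ \beta^* \simeq \beta'^* \circ \alpha'_!$ of \cref{fullyfaith} (2) to the presheaf $\Mor_\mD(-,Y)$. This gives
$$\beta'^*\Mor_{\mD'}(-,\alpha'(Y)) \simeq \alpha_!\bigl(\Mor_\mD(\beta(-),Y)\bigr).$$
Evaluating at $X \in \mC'$, the left side is exactly the morphism object we want.

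Third, use the standard coend formula for enriched left Kan extension of presheaves along $\alpha: \mC \to \mC'$:
$$\alpha_!(F)(X) \simeq \int_{T\in\mC} \Mor_{\mC'}(X,\alpha(T)) \ot F(T).$$
To justify it, note that $\alpha_!$ is the $\mV$-enriched left adjoint extending $\alpha$ along the Yoneda embeddings (\cref{Yonedaext}). Every presheaf is the weighted colimit of representables given by the coend $F \simeq \int_T \Mor_\mC(-,T)\ot F(T)$, which is the enriched co-Yoneda lemma. Since $\alpha_!$ preserves weighted colimits and tensors, and sends $\Mor_\mC(-,T)$ to $\Mor_{\mC'}(-,\alpha(T))$, the formula follows. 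Taking $F = \Mor_\mD(\beta(-),Y)$ yields the claimed equivalence.

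The main obstacle is this third step, since the coend formula is not stated earlier in the paper. I would deduce it from \cref{Yonedaext} (2)–(3), computing $\Mor_{\mP_\mV(\mC')}(\alpha_!F,G) \simeq \Mor_{\mP_\mV(\mC)}(F,\alpha^*G)$, together with the enriched Yoneda lemma. The remaining work is keeping track of the order of the tensor factors under the left-enrichment conventions; the order $\Mor_{\mC'}(X,\alpha(T)) \ot \Mor_\mD(\beta(T),Y)$ matches composition $\Mor(\alpha T,-)\ot\Mor(X,\alpha T)$ read contravariantly.
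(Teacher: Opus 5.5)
Your proposal is correct and follows the paper's proof essentially step for step: you identify the morphism object as $\beta'^*\alpha'_!(\Mor_\mD(-,Y))$ evaluated at $X$, swap to $\alpha_!\beta^*$ via \cref{fullyfaith}(2), and then apply the coend formula for $\alpha_!$. The paper cites that coend formula from the bienriched literature rather than deriving it from the co-Yoneda lemma as you sketch. Also, $\alpha'_!$ preserves representables for any enriched functor, so invoking \cref{fullyfaith}(1) there is unnecessary; full faithfulness of $\alpha$ is only genuinely used through \cref{fullyfaith}(2).
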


\begin{proof}
Since $\alpha'_!: \mP_\mV(\mD) \to \mP_\mV(\mD') $ preserves representable $\mV$-enriched presheaves, there is an equivalence
$$ \Mor_{\mD'}(\beta'(X),\alpha'(Y)) \simeq \beta'^*(\Mor_{\mD'}(-,\alpha'(Y)))(X) \simeq \beta'^*(\alpha'_!(\Mor_{\mD}(-,Y)))(X). $$
Since $\alpha$ is fully faithful, by \cref{fullyfaith} (2) the canonical $\mV$-enriched natural transformation
$$ \alpha_! \circ \beta^* \to \beta'^* \circ \alpha'_! $$
of $\mV$-enriched functors $\mP_\mV(\mD) \to \mP_\mV(\mC') $ is an equivalence.
So there is a canonical equivalence
$$ \beta'^*(\alpha'_!(\Mor_{\mD}(-,Y)))(X) \simeq \alpha_!(\beta^*(\Mor_{\mD}(-,Y)))(X)= \alpha_!(\Mor_{\mD}(\beta(-),Y))(X).$$

By \cite[Theorem 4.119., Theorem 4.43.]{heine2024bienriched} for every $Z \in \mP_\mV(\mC) $ there is a canonical equivalence
$$ \alpha_!(Z) \simeq \int_{T \in \mC} \Mor_{\mC'}(-,\alpha(T)) \ot Z(T) .$$
Hence there is a canonical equivalence
$$ \alpha_!(\Mor_{\mD'}(\beta(-),Y)) \simeq \int_{T \in \mC} \Mor_{\mC'}(-,\alpha(T))\ot \Mor_{\mD}(\beta(T),Y) $$
and so a canonical equivalence
$$ \alpha_!(\Mor_{\mD'}(\beta(-),Y))(X) \simeq \int_{T \in \mC} \Mor_{\mC'}(X,\alpha(T) \ot \Mor_{\mD}(\beta(T),Y)). $$
\end{proof}

\subsection{Oriented colimits}

\begin{definition}Let $\mC$ be an $\infty$-category.
\begin{enumerate}[\normalfont(1)]\setlength{\itemsep}{-2pt}
\item The oplax weight on $\mC$ is the functor $$\W^\mC_\oplax: \mC \to \Fun(\mC^\circ, \infty\scat) \xrightarrow{\int_\mC} \infty\scat. $$

\item The lax weight on $\mC$ is the functor $$\W^\mC_\lax: \mC^{\circ} \to \Fun(\mC, \infty\scat) \xrightarrow{\int_\mC} \infty\scat.$$
\end{enumerate}

\end{definition} 

\begin{remark}\label{weigrot}
Let $0 \leq n \leq \infty$ and $\mC $ an $n$-category. We assume that the $n$-categorical Grothendieck construction is an equivalence.
By \cref{univ4} the Grothendieck construction sends 
$\W^\mC_\oplax: \mC \to \infty\scat $ to the cocartesian fibration $ \Fun^\oplax(\bD^1,\mC) \to \mC
$ evaluating at the target.
    
\end{remark}

\begin{definition}
Let $\mC$ be an $\infty$-category.

\begin{enumerate}[\normalfont(1)]\setlength{\itemsep}{-2pt}

\item Let $\phi: \mC \to \infty\scat$ be an oriented functor.
The oriented limit of $\phi$ is the $\infty$-category
$$ \overset{\to}{\lim}(\phi):=\L\Mor_{\Fun\boxtimes(\mC,\infty\mathfrak{Cat})}(\W^{\mC}_\oplax, \phi)$$
(the right $\W^{\mC}_\oplax$-weighted limit of $\phi$).

\item Let $\phi: \mC \to \infty\scat$ be an antioriented functor.
The antioriented limit of $\phi$ is the $\infty$-category
$$ \overset{\bar{\to}}{\lim}(\phi):=\R\Mor_{\boxtimes\Fun(\mC,\infty\mathfrak{Cat})}(\W^{\mC^\circ}_\lax, \phi)$$
(the left $\W^{\mC^\circ}_\lax$-weighted limit of $\phi$).

\end{enumerate}

Now we use oriented weights to define oriented colimits.

\begin{definition}

Let $\mC$ be an $\infty$-category, $\mD$ an oriented category and $\phi: \mC \to \mD$ an oriented functor. 

\begin{enumerate}[\normalfont(1)]\setlength{\itemsep}{-2pt}

\item The oriented colimit of $\phi$ is the object of $\mD$ representing the following copresheaf on $\mD:$
$$ X \mapsto \mathrm{-} \overset{\to}{\lim}(\mathrm{Mor}_\mD(-,X)\circ \phi)$$
(the right $\W^{\mC^\circ}_\oplax$-weighted colimit of $\phi$).
Precisely, this copresheaf is the following composition of oriented functors
$$ \mD \to {\boxtimes\Fun}(\mD^\circ,\infty\mathfrak{Cat}) \xrightarrow{\phi^*} {\boxtimes\Fun}(\mC^\circ,\infty\mathfrak{Cat}) \xrightarrow{\R\Mor_{\boxtimes\Fun(\mC^\circ,\infty\mathfrak{Cat})}(\W^{\mC^\circ}_\oplax, -)} \infty\mathfrak{Cat}.$$

\item The oriented limit of $\phi$ is the object of $\mD$ representing the following presheaf on $\mD:$
$$ X \mapsto \overset{\to}{\lim}(\mathrm{Mor}_\mD(X,-)\circ \phi)$$
(the right $\W^{\mC}_\oplax$-weighted limit of $\phi$).
Precisely, this presheaf is the following composition of antioriented functors
$$ \mD^\circ \to {\Fun\boxtimes}(\mD,\infty\mathfrak{Cat}) \xrightarrow{\phi^*} {\Fun\boxtimes}(\mC,\infty\mathfrak{Cat}) \xrightarrow{\L\Mor_{\Fun\boxtimes(\mC,\infty\mathfrak{Cat})}(\W^{\mC}_\oplax, -)} \infty\mathfrak{Cat}.$$

\end{enumerate}

\end{definition}

\end{definition}

\begin{definition}

Let $\mC$ be an $\infty$-category, $\mD$ an antioriented category  and $\phi: \mC \to \mD$ an antioriented functor. 

\begin{enumerate}[\normalfont(1)]\setlength{\itemsep}{-2pt}

\item The antioriented colimit of $\phi$ is the object of $\mD$ representing the following copresheaf on $\mD:$
$$ X \mapsto \overset{\bar{\to}}{\lim}(\mathrm{Mor}_\mD(-,X)\circ \phi)$$
(the left $\W^{\mC}_\lax$-weighted colimit of $\phi$).
Precisely, this copresheaf is the following composition of antioriented functors
$$ \mD \to {\Fun\boxtimes}(\mD^\circ,\infty\mathfrak{Cat}) \xrightarrow{\phi^*} {\Fun\boxtimes}(\mC^\circ,\infty\mathfrak{Cat}) \xrightarrow{\L\Mor_{\Fun\boxtimes(\mC^\circ,\infty\mathfrak{Cat})}(\W^{\mC}_\lax, -)} \infty\mathfrak{Cat}.$$

\item The antioriented limit of $\phi$ is the object of $\mD$ representing the following presheaf on $\mD:$
$$ X \mapsto \overset{\bar{\to}}{\lim}(\mathrm{Mor}_\mD(X,-)\circ \phi)$$
(the left $\W^{\mC^\circ}_\lax$-weighted limit of $\phi$).
Precisely, this presheaf is the following composition of oriented functors
$$ \mD^\circ \to {\boxtimes\Fun}(\mD,\infty\scat) \xrightarrow{\phi^*} {\boxtimes\Fun}(\mC,\infty\mathfrak{Cat}) \xrightarrow{\R\Mor_{\boxtimes\Fun(\mC,\infty\mathfrak{Cat})}(\W^{\mC^\circ}_\lax, -)} \infty\mathfrak{Cat}.$$

\end{enumerate}

\end{definition}

The following is an important example:

\begin{example}

Let $\mC$ be an oriented $\infty$-category that admits right tensors and pullbacks.
Let $Y \to X \leftarrow Z$ be morphisms in $\mC.$

The oriented limit of the functor $(\partial\bD^1)^{\triangleright} \to \mC $
corresponding to $Y \to X \leftarrow Z$ is the pullback
$$Y  \times_{X} X^{\bD^1} \times_{X} X^{\bD^1} \times_{X} Z $$
    
\end{example}

\begin{proof}

There is a canonical equivalence
$$ W:= W^{(\partial\bD^1)^{\triangleright}}_{\oplax} \simeq $$$$ \Map_{(\partial\bD^1)^{\triangleright}}(0,-) \coprod_{\Map_{(\partial\bD^1)^{\triangleright}}(*,-)} \Map_{(\partial\bD^1)^{\triangleright}}(*,-) \boxtimes \bD^1 \coprod_{\Map_{(\partial\bD^1)^{\triangleright}}(*,-)} \Map_{(\partial\bD^1)^{\triangleright}}(*,-) \boxtimes \bD^1 $$$$ \coprod_{\Map_{(\partial\bD^1)^{\triangleright}}(*,-)}\Map_{(\partial\bD^1)^{\triangleright}}(1,-)  $$
in $\Fun((\partial\bD^1)^{\triangleright}, \infty\Cat) .$

Let $F: (\partial\bD^1)^{\triangleright} \to \mC $ be the functor 
corresponding to $Y \to X \leftarrow Z$.

Hence for every $T \in \mC$ there is a canonical equivalence
$$ \RMor_{\Fun((\partial\bD^1)^{\triangleright}, \infty\fcat)}(W, \RMor_{\mC}(T,-) \circ F) \simeq $$
$$ \RMor_{\mC}(T, Y) \times_{\RMor_{\mC}(T, X)} \Fun^\oplax(\bD^1, \RMor_{\mC}(T, X)) \times_{\RMor_{\mC}(T, X)} $$$$ \Fun^\oplax(\bD^1, \RMor_{\mC}(T, X)) \times_{\RMor_{\mC}(T, X)} \RMor_{\mC}(T, Z).$$
This proves the result.
\end{proof}

\subsection{Partial oriented colimits}

\begin{definition}

A marked $\infty$-category is a pair $(\mC,\mE)$ consisting of an $\infty$-category $\mC$ and a collection  $\mE$ of cells of $\mC.$
    
\end{definition}

\begin{notation}Let $X$ be an $\infty$-category.
We assume the Grothendieck construction is an equivalence.
Let $F: S \to \infty\scat$ be a functor classified by a cocartesian fibration $X \to S.$ Let $\mE$ be a collection of cells of $X.$
Let $F^{\mE^{-1}}: S \to \infty\scat$ be the functor classified by the cocartesian fibration $X^{\mE^{-1}} \to S$ of \cref{fibinv}.

\end{notation}

\begin{definition}Let $(\mC, \mE)$ be a marked $\infty$-category.
We assume the Grothendieck construction is an equivalence.
\begin{enumerate}[\normalfont(1)]\setlength{\itemsep}{-2pt}
\item The $\mE$-oplax weight on $\mC$ is $$ \W^{\mC,\mE}_\oplax:= (\W^{\mC}_\oplax)^{\mE^{-1}}.$$

\item The $\mE$-lax weight on $\mC$ is $$ \W^{\mC,\mE}_\lax:=(\W_\lax ^{\mC})^{\mE^{-1}}.$$

\end{enumerate}

\end{definition} 

\begin{remark}
Our argument that the Grothendieck construction is an equivalence is logically independent of this notion of partially (op)lax colimit.
Specifically, we do not use this notion at all in the proof that the Grothendieck construction is an equivalence.    
\end{remark}

\begin{definition}
Let $(\mC, \mE)$ be a marked $\infty$-category.

\begin{enumerate}[\normalfont(1)]\setlength{\itemsep}{-2pt}

\item Let $\phi: \mC \to \infty\scat$ be an oriented functor.
The $\mE$-oriented limit of $\phi$ is the $\infty$-category
$$ \mE-\overset{\to}{\lim}(\phi):=\L\Mor_{\Fun\boxtimes(\mC,\infty\mathfrak{Cat})}(\W^{\mC,\mE}_\oplax, \phi)$$
(the right $\W^{\mC,\mE}_\oplax$-weighted limit of $\phi$).

\item Let $\phi: \mC \to \infty\scat$ be an antioriented functor.
The $\mE$-antioriented limit of $\phi$ is the $\infty$-category
$$ \mE-\overset{\bar{\to}}{\lim}(\phi):=\R\Mor_{\boxtimes\Fun(\mC,\infty\mathfrak{Cat})}(\W^{\mC^\circ,\mE}_\lax, \phi)$$
(the left $\W^{\mC^\circ,\mE}_\lax$-weighted limit of $\phi$).

\end{enumerate}

Now we use oriented weights to define oriented colimits.

\begin{definition}

Let $(\mC, \mE)$ be a marked $\infty$-category, $\mD$ an oriented category and $\phi: \mC \to \mD$ an oriented functor. 

\begin{enumerate}[\normalfont(1)]\setlength{\itemsep}{-2pt}

\item The $\mE$-oriented colimit of $\phi$, denoted by $ \mE-\overset{\to}{\colim} $, is the object of $\mD$ representing the following copresheaf on $\mD:$
$$ X \mapsto \mE \mathrm{-} \overset{\to}{\lim}(\mathrm{Mor}_\mD(-,X)\circ \phi)$$
(the right $\W^{\mC^\circ,\mE}_\oplax$-weighted colimit of $\phi$).
Precisely, this copresheaf is the following composition of oriented functors
$$ \mD \to {\boxtimes\Fun}(\mD^\circ,\infty\mathfrak{Cat}) \xrightarrow{\phi^*} {\boxtimes\Fun}(\mC^\circ,\infty\mathfrak{Cat}) \xrightarrow{\R\Mor_{\boxtimes\Fun(\mC^\circ,\infty\mathfrak{Cat})}(\W^{\mC^\circ,\mE}_\oplax, -)} \infty\mathfrak{Cat}.$$

\item The $\mE$-oriented limit of $\phi$, denoted by $ \mE-\overset{\to}{\lim} $, is the object of $\mD$ representing the following presheaf on $\mD:$
$$ X \mapsto  \mE-\overset{\to}{\lim}(\mathrm{Mor}_\mD(X,-)\circ \phi)$$
(the right $\W^{\mC,\mE}_\oplax$-weighted limit of $\phi$).
Precisely, this presheaf is the following composition of antioriented functors
$$ \mD^\circ \to {\Fun\boxtimes}(\mD,\infty\mathfrak{Cat}) \xrightarrow{\phi^*} {\Fun\boxtimes}(\mC,\infty\mathfrak{Cat}) \xrightarrow{\L\Mor_{\Fun\boxtimes(\mC,\infty\mathfrak{Cat})}(\W^{\mC,\mE}_\oplax, -)} \infty\mathfrak{Cat}.$$

\end{enumerate}

\end{definition}

\end{definition}

\begin{definition}

Let $\mC$ be an $\infty$-category, $\mD$ an antioriented category  and $\phi: \mC \to \mD$ an antioriented functor. 

\begin{enumerate}[\normalfont(1)]\setlength{\itemsep}{-2pt}

\item The $\mE$-antioriented colimit of $\phi$, denoted by $ \mE-\overset{\bar{\to}}{\colim} $, is the object of $\mD$ representing the following copresheaf on $\mD:$
$$ X \mapsto \mE-\overset{\bar{\to}}{\lim}(\mathrm{Mor}_\mD(-,X)\circ \phi)$$
(the left $\W^{\mC,\mE}_\lax$-weighted colimit of $\phi$).
Precisely, this copresheaf is the following composition of antioriented functors
$$ \mD \to {\Fun\boxtimes}(\mD^\circ,\infty\mathfrak{Cat}) \xrightarrow{\phi^*} {\Fun\boxtimes}(\mC^\circ,\infty\mathfrak{Cat}) \xrightarrow{\L\Mor_{\Fun\boxtimes(\mC^\circ,\infty\mathfrak{Cat})}(\W^{\mC,\mE}_\lax, -)} \infty\mathfrak{Cat}.$$

\item The $\mE$-antioriented limit of $\phi$, denoted by $ \mE-\overset{\bar{\to}}{\lim} $, is the object of $\mD$ representing the following presheaf on $\mD:$
$$ X \mapsto \mE-\overset{\bar{\to}}{\lim}(\mathrm{Mor}_\mD(X,-)\circ \phi)$$
(the left $\W^{\mC^\circ,\mE}_\lax$-weighted limit of $\phi$).
Precisely, this presheaf is the following composition of oriented functors
$$ \mD^\circ \to {\boxtimes\Fun}(\mD,\infty\scat) \xrightarrow{\phi^*} {\boxtimes\Fun}(\mC,\infty\mathfrak{Cat}) \xrightarrow{\R\Mor_{\boxtimes\Fun(\mC,\infty\mathfrak{Cat})}(\W^{\mC^\circ,\mE}_\lax, -)} \infty\mathfrak{Cat}.$$

\end{enumerate}

\end{definition}

\begin{proposition}\label{weichara} Let $\mC$ be an $\infty$-category, $\mE$ a collection of 1-morphisms and $H: \mC \to \infty\Cat$ a functor and
$$ \phi: \W^{\mC}_\oplax \to H $$ a natural transformation of functors $\mC \to \infty\Cat$.
The following are equivalent:

\begin{enumerate}[\normalfont(1)]\setlength{\itemsep}{-2pt}
\item The natural transformation $\phi: \W^{\mC}_\oplax \to H $ of functors $\mC \to \infty\Cat$ 
factors through $\W^{\mC}_\oplax \to \W^{\mC,\mE}_\oplax$.

\item For every $X \in \mC$ the induced functor
$$ (\W^{\mC}_\oplax)_X \simeq \mC_{//^\oplax X} \to H(X) $$ 
inverts morphisms which lie over morphisms of $\mE$ and belong to
$\mC_{/X}.$

\item For every $X \in \mC$ the induced functor
$$ (\W^{\mC}_\oplax)_X \simeq \mC_{//^\oplax X} \to H(X) $$ 
inverts morphisms which lie over morphisms of $\mE$ and are the unique morphism to the final object in $\mC_{/X}.$

\end{enumerate}

\end{proposition}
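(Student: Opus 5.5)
The plan is to translate condition (1) into a statement about maps of cocartesian fibrations over $\mC$, and then use the universal property of the localization $X^{\mE^{-1}}$ from the lemma following \cref{fibinv}.

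By \cref{weigrot}, the weight $\W^{\mC}_\oplax$ is classified by the fibration $\Fun^\oplax(\bD^1,\mC)\to\mC$ given by evaluation at the target. Its fiber over $X$ is $\mC_{//^\oplax X}$. By definition, $\W^{\mC,\mE}_\oplax$ is classified by the localization $\Fun^\oplax(\bD^1,\mC)^{\bar{\mE}^{-1}}$. Here $\bar{\mE}$ denotes the cells of the total space that are the cocartesian lifts of cells of $\mE$. Since $\mE$ consists only of 1-morphisms, $\bar{\mE}$ is the collection of cocartesian lifts of morphisms in $\mE$.

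Let $\mathcal H\to\mC$ be the fibration classifying $H$. Under the Grothendieck equivalence (assumed throughout), $\phi$ is a map of cocartesian fibrations $\Fun^\oplax(\bD^1,\mC)\to\mathcal H$ over $\mC$. The lemma after \cref{fibinv} then identifies condition (1) with the following: $\phi$ sends the marked cells to cocartesian morphisms. Because these cells already lie over morphisms of $\mE$, and because a morphism lying over an identity is cocartesian exactly when it is an equivalence (\cref{elemen}), everything hinges on identifying precisely which cells get marked.

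The first key step is to describe the marking concretely. In $\Fun^\oplax(\bD^1,\mC)$, a morphism with source $(f\colon A\to X)$ and identity component on the target is cocartesian. The cocartesian lift of $e\colon X\to Y$ starting at $f$ is $(A\to X)\mapsto(A\to Y)$, with source component the identity. The cells to invert are built from the free fibration on a marked arrow, namely $\Env_\mC(\bD^1)$ with $L$ applied. Inverting these amounts to declaring that the map induced on fibers by such a lift becomes an equivalence after passing to $H$.

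I would then factor any morphism of the total space lying over $e$ as a fiberwise morphism followed by a cocartesian one. Using naturality of $\phi$, condition (1) becomes the following: for each $e\colon X\to Y$ in $\mE$, the transport functor $\mC_{//^\oplax X}\to\mC_{//^\oplax Y}$ is compatible with $\phi$ in a way that forces the fiberwise comparison to be invertible. Equivalently, fiberwise over $Y$, $\phi_Y$ must invert the morphisms of $\mC_{//^\oplax Y}$ from $(A\to X\to Y)$ to $(X\xrightarrow{e}Y)$ whose underlying arrow is $A\to X$, where $X\to Y$ lies in $\mE$. Setting things up carefully via the free fibration $\Env$ (\cref{envelo}), this should give (1)$\Leftrightarrow$(2).

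(2)$\Rightarrow$(3) is immediate, since (3) imposes only a special case. For (3)$\Rightarrow$(2), I use two-out-of-three inside the strict slice $\mC_{/X}\subset\mC_{//^\oplax X}$. Let $g\colon(A\to X)\to(B\to X)$ be a morphism of $\mC_{/X}$ lying over $e\in\mE$. Then the morphisms from $(A\to X)$ and from $(B\to X)$ to the final object $\id_X$ in $\mC_{/X}$ both lie over the corresponding arrows to $X$. The first composite equals the second after $g$.

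The main obstacle is this last step. The arrows $A\to X$ and $B\to X$ need not themselves lie in $\mE$, so the naive two-out-of-three argument does not literally apply. I would first try to reduce to the case $B=X$ through the identification of the marked cells in step one, where the marked morphisms in each fiber are exactly those landing in $\id_X$ with underlying arrow in $\mE$. If that reduction works, (2) and (3) describe the same generating class, and the equivalence follows from the universal property of localization.
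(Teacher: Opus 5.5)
Your overall plan is the right one, and it is the paper's: pass to the map of cocartesian fibrations $\int\phi\colon\Fun^\oplax(\bD^1,\mC)\to\int H$, and split a marked cell into a cocartesian part and a fiberwise part. However, both halves of your argument have a real gap.

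\textbf{The equivalence of (1) and (3).} You never pin down the marked cells correctly.
\begin{itemize}
\item If they were the cocartesian lifts of morphisms of $\mE$, condition (1) would be vacuous, since any map of cocartesian fibrations already sends those to cocartesian morphisms.
\item The fiberwise class you arrive at is wrong. Morphisms $(A\to X\to Y)\to(X\xrightarrow{e}Y)$ with underlying arrow $A\to X$ lie over an arbitrary arrow rather than over $e$, so this is neither the class in (2) nor the one in (3).
\item A smaller slip: a morphism of $\Fun^\oplax(\bD^1,\mC)$ is cocartesian for the target projection when its \emph{source} component is an identity, not its target component.
\end{itemize}
The cells that $\W^{\mC}_\oplax\to\W^{\mC,\mE}_\oplax$ forces to be cocartesian are the images of $e\colon A\to B$ in $\mE$ under the diagonal $\mC\to\Fun^\oplax(\bD^1,\mC)$. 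That is, each is the square from $\id_A$ to $\id_B$ with both components $e$. This square factors as the cocartesian morphism $\id_A\to(A\xrightarrow{e}B)$ (source component $\id_A$), followed by the morphism $(A\xrightarrow{e}B)\to\id_B$ of $\mC_{/B}$. The second factor is exactly the morphism appearing in (3). Three facts then give (1)$\Leftrightarrow$(3) directly, with no appeal to $\Env$ or to (2):
\begin{itemize}
\item $\int\phi$ preserves the first factor, being a map of cocartesian fibrations.
\item By the pasting law for pullbacks, $g\circ f$ with $f$ cocartesian is cocartesian if and only if $g$ is.
\item A morphism over an identity is cocartesian if and only if it is an equivalence.
\end{itemize}

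\textbf{The implication (3)$\Rightarrow$(2).} You correctly observe that two-out-of-three inside $\mC_{/X}$ fails. But the hoped-for ``reduction to $B=X$'' is not supplied, and it does not come from the marking; it comes from the naturality of $\phi$. Take a morphism $g\colon(A\xrightarrow{a}X)\to(B\xrightarrow{h}X)$ of $\mC_{/X}$ lying over $e\in\mE$. It is the image of the morphism $(A\xrightarrow{e}B)\to(B\xrightarrow{\id}B)$ of $\mC_{/B}$ under the transport functor $\mC_{//^\oplax B}\to\mC_{//^\oplax X}$, which is postcomposition with $h$ and restricts to $\mC_{/B}\to\mC_{/X}$. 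Naturality of $\phi$ says that $\phi_X$ composed with this transport agrees with $H(h)\circ\phi_B$. So $\phi_X(g)$ is $H(h)$ applied to the equivalence provided by (3) at $B$.

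In particular, (2) and (3) do not describe the same class of morphisms. The class in (2) is the closure of the class in (3) under transport, which is precisely why naturality is needed.
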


\begin{proof}

(2) trivially implies (3). (3) implies (2) since every morphism 
$A \to B $ in $\mC_{/X}$ which lies over a morphism of $\mE$
is the image of the unique morphism $A \to B $ in $\mC_{/B}$, 
which also lies over a morphism of $\mE$, under the induced functor
$\mC_{//^\oplax A} \to \mC_{//^\oplax B}$, which restricts to a functor
$\mC_{/ A} \to \mC_{/ B}$.

It remains to see that (1) is equivalent to (3).
(1) is equivalent to say that for every morphism $A \to B$ in $\mC$ that belongs to $\mE$ the induced map $$\int \phi: \Fun^\oplax(\bD^1,\mC) \simeq \int \W^{\mC}_\oplax \to \int H $$ of cocartesian fibrations over $\mC$ sends the commutative square 
\begin{equation}\label{sqppj}
\xymatrix{
A \ar[r] \ar[d]^= & B \ar[d]^= \\
A \ar[r] & B
}
\end{equation}
to a cocartesian morphism.
The latter commutative square factors as 
\begin{equation}\label{gbbnmi}
\xymatrix{
A \ar[r]^= \ar[d]^= & A \ar[r] \ar[d] & B \ar[d]^= \\
A \ar[r] & B \ar[r]^= & B
}
\end{equation}
in $\Fun^\oplax(\bD^1,\mC).$
The left hand commutative square is cocartesian with respect to evaluation at the target $\Fun^\oplax(\bD^1,\mC) \to \mC$ and so is sent by $\int \phi: \Fun^\oplax(\bD^1,\mC) \to \int H $ to a cocartesian morphism.
The right hand commutative square is in $\mC_{/B}$ and so is sent by $\int \phi: \Fun^\oplax(\bD^1,\mC) \to \int H $ to a morphism in $H(B).$
Hence for every morphism $A \to B$ in $\mC$ that belongs to $\mE$
the commutative square (\ref{sqppj}) is sent by $\int \phi: \Fun^\oplax(\bD^1,\mC) \to \int H $ to a cocartesian morphism if and only if for every morphism $A \to B$ in $\mC$ that belongs to $\mE$
the right hand commutative square of (\ref{gbbnmi}), which is the unique morphism $A \to B$ in $\mC_{/B}$ is inverted by
$(\W^{\mC}_\oplax)_B \simeq \mC_{//^\oplax B} \to H(B) $.
In other words (1) is equivalent to (3).
\end{proof}

\subsection{Lax colimits}

\begin{definition}
Let $(\mC, \mE)$ be a marked $\infty$-category and $\phi: \mC \to \infty\scat$ a functor.

\begin{enumerate}[\normalfont(1)]\setlength{\itemsep}{-2pt}

\item The $\mE$-oplax colimit of $\phi$, denoted by $\mE-\overset{\oplax}{\colim}(\phi)$, is the $\mE$-oriented colimit of $\phi $ viewed as oriented functor.
\item The $\mE$-lax colimit of $\phi$, denoted by $\mE-\overset{\lax}{\colim}(\phi)$, is the $\mE$-antioriented colimit of $\phi $ viewed as antioriented functor.
\item The $\mE$-oplax limit of $\phi$, denoted by $\mE-\overset{\oplax}{\lim}(\phi)$, is the $\mE$-oriented limit of $\phi $ viewed as oriented functor.
\item The $\mE$-lax limit of $\phi$, denoted by $\mE-\overset{\lax}{\lim}(\phi)$, is the $\mE$-antioriented limit of $\phi $ viewed as antioriented functor.

\end{enumerate}

\end{definition}

\begin{remark}

We can also form the $\mE$-oriented colimit of the oriented functor
$\phi: \mC \to \infty\scat \subset \infty\mathfrak{Cat},$
which is very different from the $\mE$-oplax colimit.
  
\end{remark}

\begin{proposition}Let $(\mC, \mE)$ be a marked $\infty$-category and $\phi: \mC \to \infty\scat$ a functor.

\begin{enumerate}[\normalfont(1)]\setlength{\itemsep}{-2pt}
\item There is a canonical epimorphism from the $\mE$-oriented colimit of $\phi$ to the $\mE$-oplax colimit of $\phi.$ 

\item There is a canonical epimorphism from the $\mE$-antioriented colimit of $\phi$ to the $\mE$-lax colimit of $\phi.$

\item There is a canonical monomorphism from the $\mE$-oplax limit of $\phi$ to the $\mE$-oriented limit of $\phi$.

\item There is a canonical monomorphism from the $\mE$-lax limit of $\phi$
to the $\mE$-antioriented limit of $\phi$.

\end{enumerate}
    
\end{proposition}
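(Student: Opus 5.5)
The four statements reduce to one comparison. The $\mE$-oriented colimit and the $\mE$-oplax colimit of $\phi$ are both weighted colimits with the same weight $\W^{\mC^\circ,\mE}_\oplax$. The only difference is the enrichment: the oriented colimit corepresents
\[
\R\Mor_{\boxtimes\Fun(\mC^\circ,\infty\fcat)}\bigl(\W^{\mC^\circ,\mE}_\oplax,\Mor(\phi(-),X)\bigr),
\]
computed in the Gray-enriched functor category. The oplax colimit corepresents the analogous morphism object for $\phi$ viewed through $\infty\scat$, i.e.\ natural transformations in the cartesian-enriched functor category. So the plan is to produce a natural comparison of corepresented functors, show it is a monomorphism, and dualize.

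\textbf{Step 1: a natural inclusion.} By \cref{presheaves}, together with \cref{funcompo} applied on morphism objects, the functor
\[
\Fun(\mC^\circ,\infty\scat)\to{\boxtimes\Fun}(\mC^\circ,\infty\fcat)
\]
is fully faithful on underlying $1$-categories, and it induces inclusions on morphism $\infty$-categories. The source morphism object is the strict natural transformation $\infty$-category; the target is the Gray-enriched one. Concretely, an $n$-morphism into the strict version is a natural transformation valued in $\Fun(\bD^n,-)$, whereas the Gray version uses $\Fun^\oplax(\bD^n,-)$. \cref{funcompo} says $\Fun(\bD^n,Y)\to\Fun^\oplax(\bD^n,Y)$ is an inclusion. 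Hence for every $X$ we obtain a natural inclusion, in the sense of a monomorphism in $\infty\Cat$,
\[
\mE\text{-}\overset{\oplax}{\lim}\bigl(\Mor(\phi(-),X)\bigr)\hookrightarrow\mE\text{-}\overset{\to}{\lim}\bigl(\Mor(\phi(-),X)\bigr).
\]

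\textbf{Step 2: from the inclusion to an epimorphism.} Naturality in $X$ yields, by Yoneda applied to the two corepresenting objects, a map
\[
\mE\text{-}\overset{\to}{\colim}(\phi)\to\mE\text{-}\overset{\oplax}{\colim}(\phi).
\]
Mapping out of this map into any $X$ recovers the inclusion of Step 1. A map whose induced maps on morphism objects into every $X$ are monomorphisms is by definition an epimorphism: precomposition is an embedding on mapping spaces, after passing to cores via \cref{monochar}. This proves (1).

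\textbf{Step 3: the dual statements.} For (3) the same argument applies to the limit side, with corepresented functors replaced by represented ones. The resulting map $\mE$-oplax limit $\to$ $\mE$-oriented limit is a monomorphism for the same reason. For (2) and (4) I apply the involutions $(-)^\circ$ and $(-)^\co$, which exchange oriented with antioriented and the lax with the oplax weight. This reduces them to (1) and (3) respectively.

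\textbf{Main obstacle.} The hard step is Step 1. One must check that the strict and Gray morphism objects are computed with the \emph{same} weight and that the comparison is genuinely an inclusion at the level of the enriched end/weighted limit. The approach I would take is to express both weighted limits as limits (ends) over $\mC$ of cotensors $\Fun(\W,-)$ versus $\Fun^\oplax(\W,-)$. Then \cref{funcompo} applies pointwise, and monomorphisms are stable under limits, since inclusions are limit-stable by \cref{monochar}. If that presentation is unavailable, I would instead test against the disks $\bD^n$ using \cref{theta}. This reduces the claim to the pointwise inclusions $\Fun(\bD^n,-)\subset\Fun^\oplax(\bD^n,-)$.
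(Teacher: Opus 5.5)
Your overall architecture matches the paper's. You produce a monomorphism of corepresented (resp.\ represented) functors natural in $X$, apply Yoneda to get an epimorphism (resp.\ monomorphism), and use \cref{funcompo} as the key input. But Step 1 compares the wrong things.

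You assert that the only difference between the two universal properties is the enrichment of the functor category, with the same second argument $\Mor(\phi(-),X)$ on both sides. That is not so. By definition, the $\mE$-oplax colimit is the $\mE$-oriented colimit of $\phi$ regarded as an oriented functor into $\infty\scat$ (an oriented category via \cref{interchange}). The $\mE$-oriented colimit, by contrast, is formed in $\infty\fcat$. Both therefore corepresent the \emph{same} weighted-limit functor $\R\Mor_{\boxtimes\Fun(\mC^\circ,\infty\fcat)}(\W^{\mC^\circ,\mE}_\oplax,-)$, applied to different copresheaves:
on the oplax side, $X\mapsto \Mor_{\infty\scat}(\phi(-),X)=\Fun(\phi(-),X)$;
on the oriented side, $X\mapsto\Mor_{\infty\fcat}(\phi(-),X)$, which is a Gray internal hom ($\Fun^\lax$ or $\Fun^\oplax$ depending on the side).

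Your Step 1 map $\Mor_{\Fun(\mC^\circ,\infty\scat)}(\W,G)\hookrightarrow \R\Mor_{\boxtimes\Fun(\mC^\circ,\infty\fcat)}(\W,G)$ holds $G$ fixed and never changes the second argument. So it does not connect the two corepresented functors. Even on your reading, where the oplax side uses cartesian-enriched natural transformations, the second arguments still differ. Your map would then be at best one factor of the needed comparison.

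The missing step, which is also sufficient on its own, is to apply \cref{funcompo} to the inner homs. The inclusions $\Fun(\phi(c),X)\subset \Mor_{\infty\fcat}(\phi(c),X)$ are natural in $c$ and $X$. Since pullbacks in $\boxtimes\Fun(\mC^\circ,\infty\fcat)$ are computed pointwise, they assemble to a monomorphism there. The weighted-limit functor $\R\Mor_{\boxtimes\Fun(\mC^\circ,\infty\fcat)}(\W^{\mC^\circ,\mE}_\oplax,-)$ preserves pullbacks and hence monomorphisms. This yields $\Fun(\mE\text{-}\overset{\oplax}{\colim}(\phi),X)\hookrightarrow \Mor_{\infty\fcat}(\mE\text{-}\overset{\to}{\colim}(\phi),X)$ naturally in $X$. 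Your Steps 2 and 3 then go through unchanged.

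This is exactly the content of the paper's one-line proof. The end/cotensor analysis in your ``main obstacle'' paragraph is not needed.
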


\begin{proof}

This follows immediately from \cref{funcompo}.
\end{proof}

\begin{proposition} Let $(\mC, \mE)$ be a marked $\infty$-category and $\phi: \mC \to \infty\scat$ a functor.
For every $X \in \infty\scat$ there are canonical equivalences
$$ \Fun(X,\mE-\overset{\oplax}{\lim}(\phi)) \simeq \Mor_{\Fun(\mC, \infty\scat)}(W^{\oplax,\mC},\Fun(X,-) \circ \phi), $$
$$ \Fun(X,\mE-\overset{\lax}{\lim}(\phi)) \simeq \Mor_{\Fun(\mC, \infty\scat)}(W^{\lax,\mC^\circ},\Fun(X,-) \circ \phi). $$
$$ \Fun(\mE-\overset{\oplax}{\colim}(\phi),X) \simeq \Mor_{\Fun(\mC^\circ, \infty\scat)}(W^{\oplax,\mC^\circ},\Fun(-,X) \circ \phi), $$
$$ \Fun(\mE-\overset{\lax}{\colim}(\phi),X) \simeq \Mor_{\Fun(\mC^\circ, \infty\scat)}(W^{\lax,\mC},\Fun(-,X) \circ \phi). $$

\end{proposition}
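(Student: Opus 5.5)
The plan is to reduce all four equivalences to one formal statement and then take duals. The key fact is that a functor $\phi:\mC\to\infty\scat$, seen as an oriented or antioriented functor into $\infty\fcat$, factors through $\infty\scat\subset\infty\fcat$. By \cref{presheaves}, mapping between such functors in ${\Fun\boxtimes}(\mC,\infty\fcat)$ (resp.\ ${\boxtimes\Fun}$) recovers the strict morphism objects of $\Fun(\mC,\infty\scat)$, at least at the level of spaces of morphisms.

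First I treat the $\mE$-oplax limit. By definition it is the right $\W^{\mC,\mE}_\oplax$-weighted limit of $\phi$ in $\infty\fcat$. So for any $X\in\infty\scat$ there are equivalences of spaces
$$\Map(X,\mE\text{-}\overset{\oplax}{\lim}\phi)\simeq \Map_{\Fun\boxtimes(\mC,\infty\fcat)}(X\boxtimes\W^{\mC,\mE}_\oplax,\phi)\simeq \Map_{\Fun\boxtimes(\mC,\infty\fcat)}(\W^{\mC,\mE}_\oplax,\Fun^{\oplax}(X,-)\circ\phi).$$
These come from the tensor/cotensor structure of the enriched functor category in \cref{psinho}. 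The right hand side must then be compared with $\Map_{\Fun(\mC,\infty\scat)}(\W,\Fun(X,-)\circ\phi)$. Here I use that the weight and $\phi$ factor through $\infty\scat$, together with the fully faithfulness of \cref{presheaves}. To replace $\Fun^\oplax(X,-)$ by $\Fun(X,-)$, I would instead use the cartesian adjunction $X\times(-)\dashv\Fun(X,-)$ inside $\Fun(\mC,\infty\scat)$. In that category the strict weighted limit's universal property is exactly $\Map(X,-)$ against the $\infty\scat$-enriched $\Mor(\W,-)$. The point is then to show that the $\infty\fcat$-weighted limit of a functor landing in $\infty\scat$ agrees with the $\infty\scat$-enriched weighted limit. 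The plan is to identify both with $\Fun^{\cocart}_{\mC}$ of the Grothendieck constructions.

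Upgrading from spaces to $\infty$-categories is done by a Yoneda argument. Replace $X$ by $X\times\bD^n$ (equivalently $\Fun(\bD^n,\Fun(X,-))$) and use that these equivalences are natural in $X$. The disks $\bD^n$ detect equivalences of $\infty$-categories by \cref{theta}, which gives the equivalence of mapping $\infty$-categories
$$\Fun(X,\mE\text{-}\overset{\oplax}{\lim}\phi)\simeq\Mor_{\Fun(\mC,\infty\scat)}(\W,\Fun(X,-)\circ\phi).$$

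The other three statements follow by applying the involutions $(-)^\co,(-)^\op,(-)^\circ$. These exchange oriented with antioriented, limits with colimits, and $\W_\oplax$ with $\W_\lax$ over $\mC^\circ$. For the colimit statements, $\Fun(\mE\text{-}\overset{\oplax}{\colim}\phi,X)$ is by definition the weighted limit of $\Fun(-,X)\circ\phi:\mC^\circ\to\infty\scat$, so the first case applies with $\mC^\circ$ in place of $\mC$.

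The main obstacle is the comparison step: that the enriched morphism object in ${\Fun\boxtimes}(\mC,\infty\fcat)$ between functors landing in $\infty\scat$ has, after applying $\Map(X,-)$, the same value as the strict natural transformation $\infty$-category. \cref{presheaves} only gives this on underlying $1$-categories. To get it for all $X$ I would use the Grothendieck construction to express both sides as mapping spaces of cocartesian fibrations over $\mC$ (via \cref{univ4} and \cref{weigrot}), where the cartesian tensor $X\times-$ is visibly preserved.
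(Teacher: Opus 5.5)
There is a genuine gap, and it is already in your first display. Writing $\Map(X,\mE\text{-}\overset{\oplax}{\lim}\phi)\simeq\Map_{\Fun\boxtimes(\mC,\infty\fcat)}(\W,\Fun^{\oplax}(X,-)\circ\phi)$ treats the oplax limit as the weighted limit $\L\Mor_{\Fun\boxtimes(\mC,\infty\fcat)}(\W,\phi)$ computed in $\infty\fcat$. Its generalized points are then governed by the Gray cotensor.

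You then want to show that this $\infty\fcat$-weighted limit agrees with the $\infty\scat$-enriched weighted limit. That claim is false, not merely hard. It is exactly the paper's remark that the oriented (co)limit of $\phi:\mC\to\infty\scat\subset\infty\fcat$ is very different from the oplax one.

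Concretely, take $\mC=\bD^1$, $\mE$ trivial, and $\phi$ constant with value $B$. The weight is the inclusion $\{0\}\subset\bD^1$, viewed as a functor $\bD^1\to\infty\scat$.
\begin{itemize}
\item The $\infty\fcat$-weighted limit is the Gray internal hom $\L\Mor_{\infty\fcat}(\bD^1,B)$. Its $X$-points are $\Map(X\boxtimes\bD^1,B)$, up to the order of the factors.
\item On the other side, $\Map_{\Fun(\bD^1,\infty\scat)}(\W,\Fun(X,-)\circ\phi)\simeq\Map(X\times\bD^1,B)$.
\end{itemize}
For $X=\bD^1$ and $B=\bD^1\boxtimes\bD^1$ these spaces differ. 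The two sides agree only for $X=\bD^0$, which is exactly what \cref{presheaves} gives. So no argument through the Grothendieck construction can close this step.

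The missing idea is to use the definition of the $\mE$-oplax limit as the $\mE$-oriented limit of $\phi$ in the oriented category $\infty\scat$. That is, $L$ is the object representing $Y\mapsto\mE\text{-}\overset{\to}{\lim}(\Mor_{\infty\scat}(Y,-)\circ\phi)$. Here $\Mor_{\infty\scat}(Y,-)=\Fun(Y,-)$ is the cartesian functor category, not a Gray cotensor.

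With this definition, the argument runs as follows.
\begin{itemize}
\item $\Map(Y,L)$ is the underlying space of $\L\Mor_{\Fun\boxtimes(\mC,\infty\fcat)}(\W,\Fun(Y,-)\circ\phi)$.
\item Both $\W$ and $\Fun(Y,-)\circ\phi$ factor through $\infty\scat$. So the underlying-$1$-categorical statement of \cref{presheaves} suffices to identify this space with $\Map_{\Fun(\mC,\infty\scat)}(\W,\Fun(Y,-)\circ\phi)$. The obstacle you flagged never arises.
\item Take $Y=K\times X$. Combine $\Map(K,\Fun(X,L))\simeq\Map(K\times X,L)$ with the cotensor identity $\Map(K,\Mor_{\Fun(\mC,\infty\scat)}(\W,\Fun(X,-)\circ\phi))\simeq\Map_{\Fun(\mC,\infty\scat)}(\W,\Fun(K\times X,-)\circ\phi)$.
\item The Yoneda lemma in $K$ then gives the first equivalence. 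This is the paper's argument.
\end{itemize}

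Once this first step is corrected, your Yoneda upgrade via $X\times\bD^n$ and your reduction of the colimit cases to $\mC^\circ$ go through unchanged. The detour through \cref{univ4} and \cref{weigrot} becomes unnecessary.
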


\begin{proof}

This follows immediately from \cref{presheaves}
and that $\Fun(\mC, \infty\scat)$ is tensored and cotensored so that for every $K \in \infty\scat$ there is a canonical equivalence 
$$\Map_{\infty\scat}(K,\Mor_{\Fun(\mC, \infty\scat)}(W^{\oplax,\mC},\Fun(X,-) \circ \phi)) \simeq \Mor_{\Fun(\mC, \infty\scat)}(W^{\oplax,\mC},\Fun(K \times X,-) \circ \phi),$$
and similar for the other cases.
\end{proof}

\begin{proposition}\label{repre}

Let $0 \leq \n \leq \infty.$
We assume that the $n$-categorical Grothendieck construction is an equivalence.
Let $\mA, \mB, \mC,\mD$ be $n$-categories and $\mC \to \mD$ a cocartesian fibration classifying a functor $F: \mD \to n\Cat$.
Let $ \langle \mC, \mA \rangle \to \mD$ be the cartesian fibration classifying 
the functor $\Fun(-,\mA) \circ F: \mD^{\circ} \to n\widehat{\Cat}.$
For every functor $\alpha: \mB \to \mD$ there is a canonical equivalence
$$ \Fun_\mD(\mB, \langle \mC, \mA \rangle) \simeq \Fun(\mB\times_\mD \mC, \mA).$$
Moreover, an $n$-morphism $\bD^n \to \langle \mC, \mA \rangle$
is (co)cartesian if the induced functor 
$\bD^n \times_\mD \mC \to \mA$ inverts (co)cartesian cells. 

\end{proposition}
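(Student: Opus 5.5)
The plan is to identify both sides with mapping spaces out of $\mB$ over $\mD$ and compare them through the Grothendieck construction, which is an equivalence by hypothesis. Since $\langle \mC, \mA \rangle \to \mD$ is the cartesian fibration classifying $\Fun(-,\mA)\circ F$, it is the pullback of the universal cartesian fibration along $\Fun(-,\mA)\circ F: \mD^\circ \to n\widehat{\Cat}$. The first step is therefore to reduce to the case $\alpha = \id_\mD$. Pulling back along $\alpha$ gives
\[
\Fun_\mD(\mB, \langle \mC, \mA \rangle) \simeq \Fun_\mB(\mB, \alpha^*\langle \mC, \mA \rangle),
\]
and $\alpha^*\langle \mC, \mA \rangle$ classifies $\Fun(-,\mA)\circ F\circ\alpha$. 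On the other side, $\mB\times_\mD\mC$ classifies $F\circ\alpha$. So both sides are compatible with base change, and it suffices to produce, for every $\mD$, a natural equivalence between the $\infty$-category of sections of $\langle \mC,\mA\rangle\to\mD$ and $\Fun(\mC,\mA)$.

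The second step computes the sections. A section of a cartesian fibration is a lax (or oplax, depending on the variance conventions) limit of the classified presheaf; this is the identification one expects from \cref{THH4}. I would rather derive it directly. Sections over $\mD$ are maps $\mD\to \langle \mC,\mA\rangle$ over $\mD$, and the terminal fibration $\mD \to \mD$ is classified by the constant functor $*$. Using the equivalence of \cref{Groind} together with its cartesian version, I would write $\Fun_\mD(\mD,\langle\mC,\mA\rangle)$ as the morphism object in the functor category of the terminal-cocartesian envelope. More concretely, by \cref{envelo2} a functor over $\mD$ out of $\mB$ extends uniquely to a cartesian map from $\Env(\mB^\coop)^\coop$. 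The latter is a free cartesian fibration, a colimit of representable slices $\mD_{//^\oplax X}$ weighted by $\mB$. This reduces the statement to representables. For $\mB=\{X\}$, the left side is the fiber $\Fun(F(X),\mA)$ and the right side is $\Fun(\{X\}\times_\mD\mC,\mA)=\Fun(F(X),\mA)$, so they agree. For general $\mB$, I would write $\mB\to\mD$ as the colimit (in $n\Cat_{/\mD}$) of its cells $\theta\to\mB$ with $\theta\in\Theta$, as in \cref{theta}. Both sides send this colimit to a limit: the left side trivially, and the right side because pullback along $\mC\to\mD$ preserves colimits. That pullback property needs exponentiability of the cocartesian fibration $\mC\to\mD$, which I expect to get from \cref{paracocart}/\cref{thetalocal}-type locality, or from \cref{cocexp} if it is available.

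The main obstacle is the case $\mB=\theta\in\Theta$, and in particular the disks $\bD^k\to\mD$. Here I would use that over $\bD^k$ the Grothendieck construction is an equivalence by hypothesis. Functors $\bD^k\to n\Cat$ are then explicit: an iterated suspension datum, that is, a $k$-cell of $n\Cat$. Both sides can be computed as oriented/lax ends over $\bD^k$ via \cref{bdesc} and the suspension adjunction \cref{suspi}, by induction on $k$ using \cref{homs}. Naturality of the comparison map, built from the evaluation $\langle\mC,\mA\rangle\times_\mD\mC\to\mA$ coming from the counit of the classified functor, ensures that these pieces assemble.

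For the last claim, I would test an $n$-morphism $\bD^n\to\langle\mC,\mA\rangle$ by pulling back to $\bD^n$, where the previous equivalence identifies it with $G:\bD^n\times_\mD\mC\to\mA$. Cartesianness of the top cell of $\bD^n$ in the classifying fibration over $\bD^n$ means that its lift corresponds to an equivalence in the classified functor, that is, that the transition is invertible. Unwinding this through the equivalence says exactly that $G$ sends (co)cartesian cells of $\bD^n\times_\mD\mC$ to equivalences.
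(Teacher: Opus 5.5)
Your proposal has a genuine gap. The comparison map is never constructed, and the reduction to $\Theta$ relies on a fact that, in this paper, is only available downstream of this proposition.

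\begin{itemize}
\item The ``evaluation $\langle\mC,\mA\rangle\times_\mD\mC\to\mA$ coming from the counit'' does not come for free. $\langle\mC,\mA\rangle$ is specified only through the presheaf it classifies. Such an evaluation functor is exactly what the statement with $\mB=\langle\mC,\mA\rangle$ would produce from the identity.
\item Passing from cells $\theta\to\mB$ to $\mB$ requires $\mB\times_\mD\mC\simeq\colim_{\theta\to\mB}(\theta\times_\mD\mC)$, i.e.\ that pullback along a cocartesian fibration preserves the density colimit. \cref{paracocart} and \cref{thetalocal} only detect cocartesian fibrations and their maps locally, and say nothing about this. The results that do give it (\cref{flatness}, \cref{expone}, \cref{cocexp}) are proved through \cref{glue}, \cref{cocartcolim} and \cref{laxgro}, all of which depend on the present proposition. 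Using them here is circular, or for \cref{flatness} would at least need an induction on $n$ that you do not set up.
\item The disk case, where all the content sits, is only announced (``both sides can be computed as oriented/lax ends by induction on $k$'').
\item Your alternative reduction via \cref{envelo2} has the same problem: you never say what the right-hand side does to $\Env(\mB^\coop)^\coop$.
\end{itemize}

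The missing idea is to give $\langle\mC,\mA\rangle$ a concrete model. After that the equivalence is formal and no decomposition of $\mB$ is needed.
\begin{itemize}
\item By \cref{clas}, the target projection $n\coCart\to n\Cat$ is the cartesian fibration classifying $\Fun(-,n\Cat)$.
\item Pull it back along $\mD\xrightarrow{F}n\Cat\xrightarrow{(-)\times\mA^\circ}n\Cat$. Take the full cartesian subfibration on those cocartesian fibrations over $F(X)\times\mA^\circ$ whose fibers over objects of $F(X)$ are representable. Its fiber over $X$ is $\Fun(F(X),\mA)\subset\Fun(F(X)\times\mA^\circ,n\Cat)$, so it classifies $\Fun(-,\mA)\circ F$.
\item A functor from $\mB$ over $\mD$ into the pullback of $\Fun(\bD^1,n\Cat)$ is an object of $\Fun(\mB,n\Cat)_{/(F\alpha)\times\mA^\circ}\simeq(n\Cat^\cocart_{/\mB})_{/\mB\times_\mD\mC\times\mA^\circ}$, by the Grothendieck construction over $\mB$.
\item By \cref{paracocart}, it lands in $n\coCart$ exactly when the total map is a cocartesian fibration over $\mB\times_\mD\mC\times\mA^\circ$.
\item The Grothendieck construction over that base turns these into $\Fun(\mB\times_\mD\mC,\Fun(\mA^\circ,n\Cat))$. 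The representability condition then cuts out $\Fun(\mB\times_\mD\mC,\mA)$, naturally in $\mB$.
\end{itemize}

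For the final assertion, your dimension-one unwinding is the right idea: a lift of $f\colon X\to Y$ is cartesian iff its component $g\to h\circ F(f)$ is invertible, i.e.\ the functor inverts cocartesian lifts. The paper's proof does not treat this part separately. In higher dimensions, ``the transition is invertible'' still needs an actual argument.
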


\begin{proof}

Let $ \mD \times_{n\Cat} n\coCart$ be the pullback along the functor $$\mD \xrightarrow{F} n \Cat \xrightarrow{(-)\times \mA^\circ} n \Cat.$$ 

Let $ \langle \mC, \mA \rangle \subset \mD \times_{n\Cat} n\coCart$ be the full cartesian subfibration spanned by the cocartesian fibrations of $n$-categories over $\mY \to F(X) \times \mA^\circ $ for some $X \in \mD$ 
whose fibers over objects of $F(X)$ are representable cocartesian fibrations over $\mA^\circ.$
Since the Grothendieck construction for $n$-categories holds,
the fiber of the cartesian fibration $\langle \mC, \mA \rangle$ over any $X \in \mD$ is the $n$-category $$\Fun(F(X),\mA) \subset \Fun(F(X),\Fun(\mA^\circ, n\Cat)) \simeq \Fun(F(X) \times \mA^\circ, n\Cat) \simeq n \Cat^{\cocart}_{/ F(X) \times \mA^\circ}. $$
Since $\mD$ is an $n$-category, also $ \langle \mC, \mA \rangle $ is an $n$-category.
In particular, the embedding $\langle \mC, \mA \rangle \subset \mD \times_{n\Cat} n\coCart$ induces an embedding 
$$\langle \mC, \mA \rangle \subset \mD \times_{\iota_n(n\Cat)} \iota_n(n\coCart).$$

\cref{clas} implies that the cartesian fibration $ \iota_n(n\coCart) \to \iota_n(n\Cat)$
classifies the functor $$\iota_n \circ \Fun(-,n\Cat) : \iota_n(n\Cat) \to \iota_n(n \widehat{\Cat}).$$
Thus the pullback $\mD \times_{\iota_n(n\Cat)} \iota_n(n\coCart) \to \mD$
classifies the functor $$\iota_n \circ \Fun(-,n\Cat) \circ (-\times \mA^\circ) \circ F \simeq \iota_n \circ \Fun(-,\Fun(\mA^\circ,n\Cat)) \circ F: \mD^\circ \to n \widehat{\Cat},$$
and the full cartesian subfibration $\langle \mC, \mA \rangle \to \mD$ classifies the functor $$\iota_n \circ \Fun(-,\mA) \circ F \simeq \Fun(-,\mA) \circ F: \mD^\circ \to n\widehat{\Cat}.$$

We will construct a canonical equivalence
$$ \Fun_\mD(\mB, \langle \mC, \mA \rangle) \simeq \Fun(\mB\times_\mD \mC, \mA).$$
The Grothendieck construction $$\Fun(\mB,n\Cat) \simeq n\Cat^\cocart_{/\mB}$$ provides an equivalence
$$ \Fun_\mD(\mB, \mD \times_{n\Cat}\Fun(\bD^1,n\Cat)) \simeq \Fun_{n\Cat}(\mB, \Fun(\bD^1,n\Cat)) $$$$ \simeq \Fun(\mB,n\Cat)_{/((-)\times \mA^\circ) \circ F \circ \alpha} \simeq (n\Cat^\cocart_{/\mB})_{/ \mB\times_\mD \mC \times \mA^\circ}.$$
By \cref{paracocart} the latter equivalence restricts to an equivalence
$$ \Fun_\mD(\mB, \mD \times_{n\Cat} n\coCart) \simeq n\Cat^\cocart_{/ \mB\times_\mD \mC \times \mA^\circ}.$$
The Grothendieck construction provides an equivalence
$$n\Cat^\cocart_{/ \mB\times_\mD \mC \times \mA^\circ}\simeq 
\Fun(\mB\times_\mD \mC \times \mA^\circ,n\Cat)
 \simeq \Fun(\mB\times_\mD \mC, \Fun(\mA^\circ,n\Cat)).$$
The resulting equivalence 
$$ \Fun_\mD(\mB, \mD \times_{n\Cat} n\coCart) \simeq \Fun(\mB\times_\mD \mC, \Fun(\mA^\circ,n\Cat)) $$
restricts to an equivalence 
$$ \Fun_\mD(\mB, \langle \mC, \mA \rangle) \simeq \Fun(\mB\times_\mD \mC, \mA)$$
by definition of $\langle \mC, \mA \rangle$.
\end{proof}

\begin{theorem}\label{laxgro}
Let $0 \leq \n \leq \infty.$ 
We assume that the $n$-categorical Grothendieck construction is an equivalence.
Let $(\mD, \mE) $ be a marked $n$-category, $\mC$ an $n$-category and
$\mC \to \mD$ a cocartesian fibration classifying a functor $F: \mD \to n\Cat$. Let $\bar{\mE} $ be the class of (co)cartesian cells in $\mC$ whose image in $\mD$ belongs to $\mE.$
There is a canonical equivalence
$$ \mE-\overset{\lax}{\colim}(F) \simeq \mC[\bar{\mE}^{-1}].$$    
\end{theorem}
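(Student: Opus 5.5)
We prove the equivalence by the Yoneda lemma in $\infty\scat$: for every $n$-category $\mA$ we construct an equivalence
$$\Fun(\mE-\overset{\lax}{\colim}(F),\mA) \simeq \Fun(\mC[\bar{\mE}^{-1}],\mA),$$
natural in $\mA$. The right-hand side is, by definition of localization, the full subcategory of $\Fun(\mC,\mA)$ spanned by the functors inverting the cells of $\bar{\mE}$. For the left-hand side we use the preceding proposition describing $\Fun(\mE-\overset{\lax}{\colim}(\phi),X)$ as the morphism category
$$\Mor_{\Fun(\mD^\circ,\infty\scat)}(\W^{\mD,\mE}_\lax,\Fun(-,\mA)\circ F).$$
Since everything is an $n$-category, and $n\Cat$ is closed under these constructions by \cref{dimensio}, we may work inside $n\Cat$ throughout. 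There we have the $n$-categorical Grothendieck construction, which we are assuming is an equivalence.

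First we treat the case of empty $\mE$. We translate the morphism category of presheaves into fibrations over $\mD$. The lax weight $\W^{\mD}_\lax$ is classified, via the Grothendieck construction for cartesian fibrations (the dual form of \cref{weigrot} and \cref{univ4}), by the cartesian fibration $\Fun^\oplax(\bD^1,\mD)\to\mD$ given by evaluating at the source, or by its appropriate dual. The functor $\Fun(-,\mA)\circ F$ is classified by the cartesian fibration $\langle\mC,\mA\rangle\to\mD$ of \cref{repre}. By the Yoneda lemma and \cref{envelo2}, maps of cartesian fibrations out of this free/representable fibration are the same as sections. The weight is the free cartesian fibration on $\mathrm{id}_\mD$, so we get
$$\Mor(\W^{\mD}_\lax,\Fun(-,\mA)\circ F)\simeq \Fun_\mD(\mD,\langle\mC,\mA\rangle).$$
We must check that this holds at the level of morphism $\infty$-categories and not only mapping spaces. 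We do this using the cotensor compatibility of \cref{cotensol}, replacing $\mA$ by $\Fun(K,\mA)$. \cref{repre} applied with $\mB=\mD$ then identifies $\Fun_\mD(\mD,\langle\mC,\mA\rangle)\simeq\Fun(\mC,\mA)$. This gives $\overset{\lax}{\colim}(F)\simeq\mC$, naturally in $\mA$.

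Next we incorporate $\mE$. The weight $\W^{\mD,\mE}_\lax$ is obtained from $\W^{\mD}_\lax$ by inverting the cells listed in \cref{fibinv}. By the lemma following \cref{fibinv}, maps out of it are exactly those maps out of $\W^{\mD}_\lax$ that send the corresponding cells to (co)cartesian cells. Under the equivalence above these correspond to sections $\mD\to\langle\mC,\mA\rangle$ sending cells of $\mE$ to (co)cartesian cells; this is the cell-level analogue of \cref{weichara}. By the final clause of \cref{repre}, an $n$-morphism of $\langle\mC,\mA\rangle$ is (co)cartesian exactly when the associated functor $\bD^n\times_\mD\mC\to\mA$ inverts the (co)cartesian cells. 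So the condition says that the corresponding functor $\mC\to\mA$ inverts the (co)cartesian lifts of cells of $\mE$, that is, the class $\bar{\mE}$. This is the universal property of $\mC[\bar{\mE}^{-1}]$.

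The main obstacle is the step translating the presheaf-level condition "factors through $\W^{\mD,\mE}_\lax$" into the fibration-level condition on sections of $\langle\mC,\mA\rangle$. This requires matching precisely which cells of the free fibration are inverted in \cref{fibinv} with the cells of $\mE$ in a section, including higher cells and the correct cartesian/cocartesian variance. To handle it, we would first reduce to the generating cells $\bD^k\to\mD$ by pulling back, using \cref{thetalocal}, and then use the argument of \cref{weichara}. That argument factors the relevant morphisms of the free fibration as a (co)cartesian part followed by a fiberwise part, so invertibility reduces to fiberwise statements over each object of $\mD$.
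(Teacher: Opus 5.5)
Your proposal is the paper's proof. It tests against an $n$-category $\mA$, identifies $\Fun(\mE\text{-}\overset{\lax}{\colim}(F),\mA)$ with morphisms from $\W^{\mD}_\lax$ to $\Fun(-,\mA)\circ F$, passes via the assumed Grothendieck equivalence to maps of cartesian fibrations $\Fun^\oplax(\bD^1,\mD)\to\langle\mC,\mA\rangle$ over $\mD$, reduces to sections by the free-fibration property, applies \cref{repre}, and restricts to the $\mE$-version, a step the paper only asserts but you spell out via the cell-wise criterion of \cref{repre}. The detour through \cref{cotensol} is unnecessary, since the Grothendieck equivalence, \cref{envelo2} and \cref{repre} are already equivalences of $\infty$-categories, not just of mapping spaces.
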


\begin{proof}
By \cref{dimensio} the $\infty$-category $\Fun^\oplax(\bD^1,\mD)$ is an $n$-category. By \cref{slicecocart} evaluation at the source $\Fun^\oplax(\bD^1,\mD)\to \mD$ is a cartesian fibration and by \cref{weigrot} classifies the functor $ \W^{\mD}_\lax: \mD^\circ \to n\Cat$.

Let $\mA$ be an $n$-category.
Let $\langle \mC, \mA \rangle \to \mD$ be the cartesian fibration 
of $n$-categories classifying the functor $$\mD^\circ \to n\Cat, \Fun(-,\mA) \circ F.$$
The Grothendieck construction provides a canonical equivalence 
$$ \Mor_{\Fun(\mD^\circ,\infty\scat)}(\W^{\mD}_\lax, \Fun(-,\mA) \circ F)\simeq \Fun^\cart_{\Fun^\oplax(\{0\},\mD)}(\Fun^\oplax(\bD^1,\mD), \langle \mC, \mA \rangle).$$

By \cref{envelo} there is a canonical equivalence 
$$ \Fun^\cart_{\Fun^\oplax(\{0\},\mD)}(\Fun^\oplax(\bD^1,\mD), \langle \mC, \mA \rangle) \simeq \Fun_{\mD}(\mD, \langle \mC, \mA \rangle).$$
By \cref{repre} there is a canonical equivalence
$ \Fun_{\mD}(\mD, \langle \mC, \mA \rangle) \simeq \Fun(\mC,\mA).$
We obtain an equivalence 
$$ \Mor_{\Fun(\mD^\circ,\infty\scat)}(\W^{\mD}_\lax, \Fun(-,\mA) \circ F)\simeq \Fun(\mC,\mA).$$
The latter equivalence restricts to an equivalence 
$$\Fun(\overset{\mE-\lax}{\colim}(F), \mA) \simeq \Mor_{\Fun(\mD^\circ,\infty\scat)}(\W^{\mD, \mE}_\lax, \Fun(-,\mA) \circ F) \simeq \Fun(\mC[\bar{\mE}^{-1}],\mA). $$
\end{proof}

\begin{corollary}\label{cocartcolim}

Let $0 \leq \n \leq \infty$ and $\mC$ an $n$-category.
We assume that the $n$-categorical Grothendieck construction is an equivalence.
The inclusion $n\Cat^\cocart_{/\mC} \subset {n\Cat}_{/\mC}$
preserves small colimits.
    
\end{corollary}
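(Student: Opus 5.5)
We prove that the inclusion $n\Cat^\cocart_{/\mC} \subset n\Cat_{/\mC}$ preserves small colimits by identifying colimits of cocartesian fibrations with lax colimits of families of functors, and then exhibiting those lax colimits as cocartesian fibrations via \cref{laxgro}.

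First, since the $n$-categorical Grothendieck construction is an equivalence, $n\Cat^\cocart_{/\mC}\simeq \Fun(\mC,n\Cat)$ admits small colimits. Let $K$ be a small category and $i\mapsto (\mX_i\to\mC)$ a $K$-indexed diagram in $n\Cat^\cocart_{/\mC}$. Under straightening this corresponds to a functor $G: K\times\mC \to n\Cat$, and its colimit in $n\Cat^\cocart_{/\mC}$ straightens to the pointwise colimit $X\mapsto \colim_{i\in K} G(i,X)$.

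The key step is to compute this object as a partial lax colimit. Let $\mE$ consist of the cells of $K\times\mC$ lying in $K\times\{\text{identities}\}$, that is, all cells pulled back from $K$.
\begin{itemize}
\item The fibration $\int_{K\times\mC} G\to K\times\mC$ is cocartesian. Inverting the cocartesian lifts of $\mE$ and applying \cref{laxgro} fibrewise over $\mC$ identifies $\int_{K\times \mC}G[\bar{\mE}^{-1}]$ with the oriented colimit over $K$ formed fibrewise.
\item For the fibrewise identification, \cref{laxgro} with $\mE$ equal to all cells of $K$ gives that the full localization of $\int_K G(-,X)$ at cocartesian cells is $\colim_K G(-,X)$. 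This uses the standard fact that the lax colimit with all cells inverted is the ordinary colimit: the weight $\W^{K,\text{all}}_\lax$ is the constant point weight, since inverting all cells of $K_{//X}$ in the relevant sense contracts it.
\end{itemize}

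On the other hand, compute the colimit in $n\Cat_{/\mC}$. Its underlying object is $\colim_K \mX_i$ in $n\Cat$. Since $K\times\mC\to K$ has each $\mX_i=\int_\mC G(i,-)$ as fibre, we get $\int_{K\times\mC}G \to K$ as a cocartesian fibration classifying $i\mapsto \mX_i$. Applying \cref{laxgro} with all cells of $K$ marked identifies $\colim_K\mX_i$ with $\int_{K\times\mC}G$ localized at the cells that are cocartesian over $K$. These are exactly the cocartesian lifts of cells in $\mE$, so they coincide with $\bar{\mE}$.

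Hence both colimits are the same localization $\int_{K\times\mC}G[\bar{\mE}^{-1}]$ over $\mC$. It remains to check that the comparison map, which exists by the universal property of the colimit in $n\Cat_{/\mC}$, is this identification. Equivalently, we must show that localizing at $\bar{\mE}$ commutes with passing to fibres over $\mC$ and yields a cocartesian fibration over $\mC$.

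\textbf{Main obstacle.} This last point is where the difficulty lies. The plan is to reduce, via \cref{thetalocal} and \cref{fiberwiseeq}, to base change along disks $\bD^k\to\mC$. We then need that pulling back along $\bD^k\to\mC$ commutes with this localization. To get this, we would express the localization as an iterated pushout along the maps $\Env(\bD^m)\to L(\Env(\bD^m))$, as in \cref{fibinv}, and use that pullback along cocartesian fibrations preserves such pushouts. This last input is exactly where exponentiability of cocartesian fibrations would be needed, and it is the step most likely to require extra work.
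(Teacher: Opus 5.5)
Your proposal has a genuine gap, and it sits exactly where you flag the ``main obstacle''. Up to that point you only show that the colimit of the $\mathcal{X}_i$ in $n\Cat$ and the total space of the colimit in $n\Cat^\cocart_{/\mC}$ are each, abstractly, some localization of $\int_{K\times\mC}G$. The actual content of the statement is left unproved: that the canonical comparison map between them is an equivalence, i.e.\ that $\colim_K\mathcal{X}_i\to\mC$ is a cocartesian fibration with the pointwise fibres.

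The repair you sketch does not close this gap, for three reasons.
\begin{itemize}
\item Exponentiability of cocartesian fibrations is \cref{expone}, which the paper derives from the full Grothendieck equivalence \cref{Groeq}. The inductive proof of \cref{Groeq} uses the present corollary (in \cref{glue}, which feeds \cref{Segal} and \cref{flatness}). So \cref{expone} is not available at this stage.
\item Exponentiability of a fibration $P\to\mC$ says that $P\times_\mC(-)$ preserves colimits. Your disk reduction needs something else: that base change along an arbitrary $\bD^k\to\mC$ commutes with the pushout defining the localization.
\item The pushouts of \cref{fibinv} are formed among cocartesian fibrations. Comparing them with pushouts in the plain slice is again an instance of the very statement you are proving.
\end{itemize}

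The missing idea is that no fibrewise or localization analysis is needed at all. By \cref{laxgro} with $\mE=\emptyset$, the composite
\[
\Fun(\mC,n\Cat)\simeq n\Cat^\cocart_{/\mC}\subset n\Cat_{/\mC}\to n\Cat,
\]
which sends $F$ to the total space of $\int_\mC F$, is the lax colimit functor $\overset{\lax}{\colim}$. Being a $\W^{\mC}_\lax$-weighted colimit functor, it is a left adjoint and so preserves small colimits. The forgetful functor $n\Cat_{/\mC}\to n\Cat$ creates colimits. Hence the image of a colimit cocone in $n\Cat^\cocart_{/\mC}$ is a colimit cocone in $n\Cat_{/\mC}$.

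In your notation this is simply
\[
\overset{\lax}{\colim}_\mC\bigl(\colim_K G\bigr)\simeq\colim_K\overset{\lax}{\colim}_\mC G(i,-)\simeq\colim_K\mathcal{X}_i,
\]
and the comparison map is automatically the canonical one. This is exactly the paper's two-line proof.
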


\begin{proof}

By assumption the Grothendieck construction gives an 
equivalence
$\Fun(\mC, n\Cat) \simeq n\Cat^\cocart_{/\mC}.$
The forgetful functor ${n\Cat}_{/\mC} \to {n\Cat}$
preserves and so detects small colimits. 
So it suffices to see that the composition 
$\Fun(\mC, n\Cat) \simeq n\Cat^\cocart_{/\mC} \subset {n\Cat}_{/\mC} \to {n\Cat}$
preserves colimits. 
By \cref{laxgro} the latter composition identifies with
the left adjoint functor $\overset{\lax}{\colim}: \Fun(\mC, n\Cat) \to {n\Cat}.$  
\end{proof}

Next we study cofinality of (op)lax colimits.

\begin{definition}Let $(\mC,\mE)$ be a marked $\infty$-category.
A functor $\phi: \mC \to \mD$ of $\infty$-categories is $\mE$-cofinal if for every functor $H: \mD \to \mD'$ that admits a $\mE$-lax colimit
such that $H \circ \phi$ also admits a $\mE$-lax colimit,
the canonical morphism $$ \colim^\mE(H \circ \phi) \to \colim^\mE(H) $$ is an equivalence.

A functor $\phi: \mC \to \mD$ of $\infty$-categories is $\mE$-lax
final if $\phi^{\co\op}: \mC^{\co\op} \to \mD^{\co\op}$ is $\mE$-lax cofinal.
    
\end{definition}

\begin{remark}
A functor $\phi: \mC \to \mD$ of $\infty$-categories is $\mE$-lax final 
if and only if for every functor $H: \mD \to \infty\scat$ 
the canonical morphism $${\lim}^\mE(H) \to {\lim}^\mE(H \circ \phi) $$ is an equivalence.
This holds because for every functor $H: \mD \to \mD'$ that admits a limit
such that $H \circ \phi$ also admits a limit, and every $Z \in \mD'$ 
the induced morphism $$ \Map_{\mD'}(Z, {\lim}^\mE(H)) \to \Map_{\mD'}(Z, {\lim}^\mE(H \circ \phi)) $$ 
identifies with the canonical morphism $$\lim(\Map_{\mD'}(Z,-)\circ H)  \to \lim(\Map_{\mD'}(Z,-)\circ H \circ \phi). $$ 

Dually, a functor $\phi: \mC \to \mD$ of $\infty$-categories is $\mE$-lax cofinal 
if and only if for every functor $H: \mD \to \infty\scat$ 
the canonical morphism $$\colim(H \circ \phi) \to \colim(H) $$ is an equivalence.
    
\end{remark}

\begin{definition}Let $(X,\mE)$ be a marked $\infty$-category.
The localization of $X$ with respect to $\mE$ is the 
pushout $$ X[\mE^{-1}]:= X \coprod_{(\coprod_{n > 0}\mE_n \times \bD^n)} (\coprod_{n > 0} \mE_n \times \bD^{n-1}).$$

\end{definition}

\begin{lemma}

Let $(X,\mE)$ be a marked $\infty$-category.
For every $\infty$-category $Y$ the induced functor
$$ \Fun(X[\mE^{-1}],Y) \to \Fun(X,Y) $$ is fully faithful and the essential image precisely consists of the functors $X \to Y$ sending $\mE$ to invertible cells.

\end{lemma}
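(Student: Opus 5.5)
The plan is to compute $\Fun(X[\mE^{-1}],Y)$ directly from the defining pushout, using that $\Fun(-,Y)\colon \infty\Cat^\op \to \infty\Cat$ sends colimits to limits, since it is right adjoint to $(-)\times Y$ and $\infty\Cat$ is cartesian closed by \cref{carclo}. Applying $\Fun(-,Y)$ to the pushout defining $X[\mE^{-1}]$ gives a pullback square
$$\Fun(X[\mE^{-1}],Y)\simeq \Fun(X,Y)\times_{\prod_{n>0}\Fun(\mE_n,\Fun(\bD^n,Y))}\prod_{n>0}\Fun(\mE_n,\Fun(\bD^{n-1},Y)).$$
Here we also use $\Fun(\mE_n\times \bD^n,Y)\simeq \Fun(\mE_n,\Fun(\bD^n,Y))$ and that $\Fun$ sends coproducts to products.

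Next, fully faithful functors are stable under pullback: equivalences are, and morphism objects commute with pullbacks, as already used in the proof of \cref{pulla}. It therefore suffices to show that the right vertical functor is fully faithful. Because products of fully faithful functors are fully faithful, and $\Fun(\mE_n,-)$ preserves fully faithful functors (it is a right adjoint, hence preserves the pullback characterization $\mA\simeq \mA\times_{\mB}\ldots$ of monomorphisms and of fully faithfulness), this reduces to one claim. We must show that the functor $\Fun(\bD^{n-1},Y)\to\Fun(\bD^n,Y)$, induced by the collapse $\bD^n\to\bD^{n-1}$, is fully faithful and that its essential image consists of the invertible $n$-cells.

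This is the key step, and I expect it to be the main obstacle. The collapse $\bD^n\to\bD^{n-1}$ should be the localization of $\bD^n$ at its top cell, i.e. an epimorphism in the strong sense. I would proceed by induction on $n$ through the suspension adjunction of \cref{suspi}. For $n=1$, $\Fun(\bD^0,Y)\to\Fun(\bD^1,Y)$ is the inclusion of identity arrows. This is fully faithful with image the equivalences, which one checks on morphism $\infty$-categories using the description of $\Fun(\bD^1,Y)$ as a fibration over $Y\times Y$ with fibers $\Mor_Y(A,B)$ (compare \cref{targetfi2}, in its cartesian version). For $n>1$, $\bD^n=S(\bD^{n-1})\to S(\bD^{n-2})=\bD^{n-1}$. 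Functors out of a suspension fiber over $Y\times Y$ with fibers $\Fun(\bD^{n-1},\Mor_Y(A,B))$ and $\Fun(\bD^{n-2},\Mor_Y(A,B))$ respectively. The inductive hypothesis applied to $\Mor_Y(A,B)$ then gives fiberwise full faithfulness with the correct image, and fiberwise full faithfulness over a common base implies global full faithfulness. Alternatively, one may invoke \cite[Corollary 4.1.4.]{oriented}, as in the proof of the analogous lemma for $X^{\mE^{-1}}$ above, which asserts exactly this full faithfulness.

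Finally, we identify the essential image. A point of the pullback is a functor $X\to Y$ together with, for each marked cell in $\mE_n$, a lift of its image $\bD^n\to Y$ through $\Fun(\bD^{n-1},Y)$. Since that functor is fully faithful with image the invertible $n$-cells, such lifts exist, and are then unique, exactly when every cell of $\mE$ is sent to an invertible cell. Hence the essential image of $\Fun(X[\mE^{-1}],Y)\to\Fun(X,Y)$ is as claimed.
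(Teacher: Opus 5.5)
Your overall argument is the paper's proof. Applying $\Fun(-,Y)$ to the defining pushout gives the pullback $\Fun(X,Y)\times_{\prod_{n>0}\Fun(\mE_n,\Fun(\bD^n,Y))}\prod_{n>0}\Fun(\mE_n,\Fun(\bD^{n-1},Y))$. Full faithfulness and the description of the essential image then follow from \cite[Corollary 4.1.4.]{oriented}, which the paper uses for exactly the full faithfulness of $\Fun(\bD^{n-1},Y)\to\Fun(\bD^n,Y)$. With that citation (your ``alternative'') the proof is complete.

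The self-contained suspension induction you offer as your main route for this step does not work as written, because two of its ingredients are false.

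First, for the cartesian functor categories used here, the fiber of $\Fun(S(Z),Y)\to Y\times Y$ over $(A,B)$ is not $\Fun(Z,\Mor_Y(A,B))$. A morphism in $\Fun(\bD^1,Y)$ is a square commuting up to an \emph{invertible} cell. So already for $Z=\bD^0$ the fiber is only the maximal subspace $\iota_0\Mor_Y(A,B)$; for $Y=\bD^2$ it is a two-point set rather than $\bD^1$. The description with the full morphism $\infty$-categories is the oplax one of \cref{targetfibr} and \cref{targetfi2}, and it has no cartesian analogue.

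Second, full faithfulness on fibers over a common base does not imply full faithfulness. For instance, the inclusion $\partial\bD^1\subset\bD^1$ over $\bD^1$ is an equivalence on every fiber but is not full. The implication needs extra structure, such as a map of (co)cartesian fibrations.

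A direct proof would have to compare the morphism $\infty$-categories of $\Fun(\bD^{n-1},Y)$ and $\Fun(\bD^n,Y)$ themselves, rather than argue fiberwise over $Y\times Y$.
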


\begin{proof}
The induced functor
$ \Fun(X[\mE^{-1}],Y) \to \Fun(X,Y) $ identifies with the functor
$$ \Fun(X[\mE^{-1}],Y) \simeq \Fun(X,Y) \times_{\prod_{n > 0}\Fun(\mE_n, \Fun(\bD^n,Y))} \prod_{n > 0}\Fun(\mE_n,\Fun(\bD^{n-1},Y)) \to \Fun(X,Y). $$
This functor is fully faithful by \cite[Corollary 4.1.4.]{oriented} and the essential image precisely consists of the functors $X \to Y$ sending $\mE$ to invertible cells.
\end{proof}

\cref{laxgro} implies the following $\infty$-categorical version of Quillen's Theorem A:

\begin{theorem}\label{QuillenA}
Let $(Y,\mE)$ be a marked $\infty$-category.
A functor $\phi: Y \to X $ is $\mE$-lax cofinal if
and only if for every $t \in X$
the induced functor $$\{ t \} \,{\vec{\times}}_X Y \to X_{t//^\oplax} $$ induces an equivalence 
$$ (\{ t \} \,{\vec{\times}}_X Y)[{\bar{\mE}^{-1}}] \to (X_{t//^\oplax})
[(\overline{\phi(\mE)})^{-1}].$$ 

\end{theorem}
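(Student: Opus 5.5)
The plan is to reduce $\mE$-lax cofinality to a statement about functors $H: X \to \infty\scat$, and then to compare lax colimits fiberwise through the Grothendieck construction and \cref{laxgro}.

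By the remark following the definition of cofinality, $\phi$ is $\mE$-lax cofinal if and only if, for every $H: X \to \infty\scat$, the canonical map
\[
\mE\text{-}\overset{\lax}{\colim}(H\circ\phi) \to \overset{\lax}{\colim}(H)
\]
is an equivalence, where on the right we use the marking $\overline{\phi(\mE)}$. Let $p:\mC\to X$ be the cocartesian fibration classifying $H$. By \cref{laxgro}, and using \cref{Groind} to know that the Grothendieck construction is an equivalence in every dimension, the source of this map is $(Y\times_X\mC)[\bar{\mE}^{-1}]$. The target is $\mC[\overline{\phi(\mE)}^{-1}]$, and the map between them is induced by the projection $Y\times_X\mC\to\mC$.

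First I treat the representable case. Take $H=\Mor_X(t,-)$ for $t\in X$. By \cref{Grorep} it is classified by $X_{t//^\oplax}\to X$. The pullback $Y\times_X X_{t//^\oplax}$ should be identified with $\{t\}\,\vec{\times}_X Y$. For this I would use \cref{adesc}(2) together with \cref{slicecocart}, checking that the oplax coslice is the oriented pullback $\{t\}\vec{\times}_X X$. With this identification, the map for representable $H$ is precisely the map in the statement of the theorem. This already gives the ``only if'' direction.

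For the converse, consider the two functors $\Fun(X,\infty\scat)\to\infty\scat$ given by
\[
H\mapsto \mE\text{-}\overset{\lax}{\colim}(H\circ\phi) \qquad\text{and}\qquad H\mapsto \overset{\lax}{\colim}(H).
\]
Both preserve small colimits. The second is a weighted colimit functor, hence a left adjoint. The first is the composite of restriction along $\phi$, which is a left adjoint, with a weighted colimit. The canonical map between them is a natural transformation of colimit-preserving functors. By the enriched density statement \cref{Yonedaext}, applied to $\Fun(X,\infty\scat)=\mP(X^\circ)$, it is therefore an equivalence as soon as it is one on the representables $\Mor_X(t,-)$.

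There is one subtlety here. The representables generate $\Fun(X,\infty\scat)$ only under weighted colimits, that is, colimits together with tensors by $\infty$-categories. So I also have to check that both functors preserve tensors $K\otimes H = K\times H$. This is immediate from the formula $\Fun(\colim,A)\simeq \Mor(W,\Fun(-,A)\circ H)$ in the proposition before \cref{repre}.

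The main obstacle is the naturality and identification step. Concretely, I must show that the comparison map produced by \cref{laxgro}, evaluated at representables, really agrees with the functor $(\{t\}\vec{\times}_X Y)[\bar{\mE}^{-1}]\to (X_{t//^\oplax})[\overline{\phi(\mE)}^{-1}]$ from the statement. I must also show that the marked classes correspond: the cocartesian lifts of $\mE$ in the pullback fibration are the lifts of $\bar{\mE}$ in the oriented fiber. I would handle this with \cref{homs}, which describes morphism $\infty$-categories in the oriented pullback, together with the base-change stability of cocartesian cells from \cref{elemen}.
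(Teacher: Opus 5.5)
Your proposal is correct and follows the paper's proof essentially step for step: reduce to the corepresentables $\Mor_X(t,-)$, which generate $\Fun(X,\infty\scat)$ under small colimits and tensors (both lax-colimit functors preserve these), identify the lax colimits with localized Grothendieck constructions via \cref{laxgro}, and use $\{t\}\,\vec{\times}_X Y\simeq Y\times_X X_{t//^\oplax}\simeq \int_Y \Mor_X(t,-)\circ\phi$. One small citation correction: the hypothesis of \cref{laxgro}, that the Grothendieck construction is an equivalence, is supplied by \cref{Groeq}, whereas \cref{Groind} only reduces that statement to the $n$-categorical cases.
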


\begin{proof}

The functor $\phi$ is $\mE$-lax cofinal 
if and only if for every functor $H: X \to \infty\scat$ 
the canonical functor $$ \colim^\mE(H \circ \phi) \to \colim^\mE(H) $$ is an equivalence.
The functor $\phi^*: \Fun(X, \infty\scat) \to \Fun(Y, \infty\scat)$ and the functors $$ \Fun(X, \infty\scat) \to \infty\scat, \Fun(Y, \infty\scat) \to \infty\scat $$ forming the colimit preserve small colimits and tensors. 
Moreover the $\infty$-category $ \Fun(X, \infty\scat)$ is generated under small colimits and tensors by the corepresentable functors.
Hence we can reduce to the case that $H= \Mor_X(t,-): X \to \infty\scat$ for some $t \in X.$
By \cref{laxgro} the $\mE$-lax colimit of a functor $F: Y \to \infty\scat$ is $(\int_Y F)[{\bar{\mE}^{-1}}].$
Thus $\phi$ is $\mE$-lax cofinal if and only if the induced functor $$\{ t \} \,{\vec{\times}}_X Y \simeq Y \times_X X_{t//^\oplax} \simeq Y \times_X \int_X \Mor_X(t,-) \simeq \int_Y \Mor_\X(t,-) \circ \phi \to \int_X \Mor_X(t,-) \simeq X_{t//^\oplax} $$ induces an equivalence from the localization with respect to $\bar{\mE}$ 
to the localization with respect to $\overline{\phi(\mE)}$.
\end{proof}

\subsection{Oriented pullbacks}

\begin{proposition}

Let $\mC$ be an oriented $\infty$-category.

\begin{enumerate}[\normalfont(1)]\setlength{\itemsep}{-2pt}

\item Let $Y \to X \leftarrow Z$ morphisms in $\mC.$

The oriented pullback is the $\{0\}^{\triangleright} $-oplax pullback 
of the functor $(\partial\bD^1)^{\triangleright} \to \mC $
corresponding to $Y \to X \leftarrow Z$.

\item Let $Y \leftarrow X \to Z$ morphisms in $\mC.$

The oriented pushout is the $\{0\}^{\triangleleft} $-oplax pushout 
of the functor $(\partial\bD^1)^{\triangleleft} \to \mC $
corresponding to $Y \leftarrow X \to Z. $
    
\end{enumerate}
    
\end{proposition}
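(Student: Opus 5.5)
The plan is to compute the $\{0\}^{\triangleright}$-partial oplax weight on $\mC' := (\partial\bD^1)^{\triangleright}$ explicitly. Once it is written as a pushout of representables, the enriched Yoneda lemma turns the weighted-limit formula into the defining formula of the oriented pullback. This is the same strategy as in the preceding Example, where the full oriented limit of the cospan came out as $Y \times_X X^{\bD^1}\times_X X^{\bD^1}\times_X Z$, with two oriented arrows. The marking $\mE=\{0\}^{\triangleright}$, i.e.\ the single edge $0\to *$, should collapse exactly one of these two arrows.

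First I would recall from the Example the equivalence in $\Fun(\mC',\infty\Cat)$
$$\W := \W^{\mC'}_\oplax \simeq \Map(0,-) \coprod_{\Map(*,-)} \Map(*,-)\boxtimes\bD^1 \coprod_{\Map(*,-)} \Map(*,-)\boxtimes\bD^1 \coprod_{\Map(*,-)} \Map(1,-).$$
Evaluated at the cone point, this presents $\mC'_{//^\oplax *}$ as the zigzag $(0\to *)\to(\id_*)\leftarrow(1\to *)$. Next I would identify $\W^{\mC',\mE}_\oplax$ using \cref{weichara}. A natural transformation $\W\to H$ factors through $\W\to\W^{\mC',\mE}_\oplax$ if and only if, at $*$, it inverts the unique morphism $(0\to *)\to(\id_*)$ of $\mC'_{/*}$ lying over the marked edge. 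In the pushout presentation, this morphism is precisely the first cylinder $\Map(*,-)\boxtimes\bD^1$, evaluated on $\id_*$. Inverting it amounts to replacing that cylinder by $\Map(*,-)\boxtimes\bD^0$, and this yields
$$\W^{\mC',\mE}_\oplax \simeq \Map(0,-) \coprod_{\Map(*,-)} \Map(*,-)\boxtimes\bD^1 \coprod_{\Map(*,-)} \Map(1,-).$$
To make this rigorous, I would show that both sides corepresent the same subfunctor of $\Mor(\W,-)$, using \cref{weichara} for the left side and the universal property of the pushout for the right side.

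Then, for every $T\in\mC$, I would apply the enriched Yoneda lemma termwise to the pushout. Writing $F$ for the diagram $Y\to X\leftarrow Z$, this gives
$$\R\Mor(\W^{\mC',\mE}_\oplax, \R\Mor_\mC(T,-)\circ F) \simeq \R\Mor_\mC(T,Y)\times_{\R\Mor_\mC(T,X)}\Fun^\oplax(\bD^1,\R\Mor_\mC(T,X))\times_{\R\Mor_\mC(T,X)}\R\Mor_\mC(T,Z).$$
Here I use that tensoring by $\bD^1$ is adjoint to $\Fun^\oplax(\bD^1,-)$ on the correct side, which is where the orientation convention must be checked. This is exactly the right-hand side in the definition of the oriented pullback, naturally in $T$, so the two representing objects agree. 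Part (2) then follows by applying (1) to the oriented category $\mC^\op$, since oriented pushouts are defined as oriented pullbacks in $\mC^\op$.

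I expect the main obstacle to be the second step: the rigorous identification of the localized weight. Concretely, one must check that the $\mE^{-1}$-construction of \cref{fibinv}, applied to the cocartesian fibration classifying $\W$, collapses only the one cylinder and does not also identify cells in the other. I would try first to verify this fiberwise, using \cref{fiberwiseeq}, at each of the three objects $0,1,*$; only the fiber over $*$ is affected. A secondary point needing care is that the two copies of $\Map(*,-)\boxtimes\bD^1$ are glued with the correct orientation, so that the surviving arrow points from $Y$'s image to $Z$'s image.
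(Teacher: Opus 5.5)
This is essentially the paper's proof. The paper also presents the $\{0\}^{\triangleright}$-localized weight as $\Map(0,-)\coprod_{\Map(*,-)}\Map(*,-)\boxtimes\bD^1\coprod_{\Map(*,-)}\Map(1,-)$, applies the enriched Yoneda lemma termwise, and obtains (2) by passing to $\mC^\op$. It reads the localized weight off objectwise from the localized slices instead of going through \cref{weichara}, but that difference is cosmetic.

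The orientation check you deferred does not come out the way you expect, though. Inverting $(0\to *)\to(\id_*)$ in $(\partial\bD^1)^{\triangleright}_{/*}$ leaves the arrow $(1\to *)\to(\id_*)\simeq(0\to *)$. So the surviving cylinder has its source end glued to $\Map(1,-)$ and its target end glued to $\Map(0,-)$, via $\Map(*,-)\to\Map(0,-)$. The resulting weighted limit is therefore $Z\,\vec{\times}_X\,Y$, not $Y\,\vec{\times}_X\,Z$. To get $Y\,\vec{\times}_X\,Z$ you would mark $\{1\}^{\triangleright}$ instead.

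The paper's proof makes the same slip when it identifies the localized diagram with $\{0\}\subset\bD^1\supset\{1\}$. So this is a labelling problem in the statement, not a defect of your method.
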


\begin{proof}

(1): The $\{0\}^{\triangleright} $-oplax weight is the functor 
$W: (\partial\bD^1)^{\triangleright} \to \infty\Cat$ classifying the diagram 
$$ (\partial\bD^1)^{\triangleright}_{/0} \to (\partial\bD^1)_{/*}^{\triangleright}[(\{0\}^{\triangleright})^{-1}] \leftarrow (\partial\bD^1)^{\triangleright}_{/1}.$$ The latter identifies with the diagram $ \{0\} \subset \bD^1 \supset \{1\}$.
We prove that the oriented pullback of $Y \to X \leftarrow Z$ is the $W$-weighted colimit of the functor $F: (\partial\bD^1)^{\triangleright} \to \mC $
corresponding to $Y \to X \leftarrow Z$.

There is a canonical equivalence
$$ W \simeq \Map_{(\partial\bD^1)^{\triangleright}}(0,-) \coprod_{\Map_{(\partial\bD^1)^{\triangleright}}(*,-) \boxtimes \{ 0\}} \Map_{(\partial\bD^1)^{\triangleright}}(*,-) \boxtimes \bD^1 \coprod_{\Map_{(\partial\bD^1)^{\triangleright}}(*,-) \boxtimes \{ 1\}} \Map_{(\partial\bD^1)^{\triangleright}}(1,-)$$
in $\Fun((\partial\bD^1)^{\triangleright}, \infty\Cat).$

Hence for every $T \in \mC$ there is a canonical equivalence
$$ \RMor_{\Fun((\partial\bD^1)^{\triangleright}, \infty\fcat)}(W, \RMor_{\mC}(T,-) \circ F) \simeq $$
$$ \RMor_{\mC}(T, Y) \times_{\Fun^\oplax(\{0 \},\RMor_{\mC}(T, X))}  \Fun^\oplax(\bD^1, \RMor_{\mC}(T, X))  \times_{\Fun^\oplax(\{1 \},\RMor_{\mC}(T, X))} \RMor_{\mC}(T, Z).$$

This proves (1). The oriented pushout in $\mC$ is the oriented pullback in $\mC^\op.$
\end{proof}

\begin{corollary}

Let $\mC$ be an antioriented $\infty$-category.

\begin{enumerate}[\normalfont(1)]\setlength{\itemsep}{-2pt}

\item Let $Y \leftarrow X \to Z$ be morphisms in $\mC.$

The antioriented pushout is the $\{0\}^{\triangleleft} $-lax colimit 
of the functor $(\partial\bD^1)^{\triangleleft} \to \mC $
corresponding to $Y \leftarrow X \to Z. $

\item Let $Y \to X \leftarrow Z$ be morphisms in $\mC.$

The antioriented pullback is the $\{0\}^{\triangleleft} $-lax limit 
of the functor $(\partial\bD^1)^{\triangleright} \to \mC $
corresponding to $Y \to X \leftarrow Z$.

\end{enumerate}

\end{corollary}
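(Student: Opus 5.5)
The plan is to deduce the corollary from the proposition just proved by passing to the opposite enriched category. If $\mC$ is an antioriented $\infty$-category, then $\mC^\circ$ is an oriented $\infty$-category. By definition, the antioriented pushout of $Y \leftarrow X \to Z$ in $\mC$ is the oriented pullback of the corresponding diagram $Y \to X \leftarrow Z$ in $\mC^\circ$. Likewise, the antioriented pullback in $\mC$ is the oriented pushout in $\mC^\circ$. So I will translate each side of the claimed equivalence along $(-)^\circ$ and compare.

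For (1), part (1) of the preceding proposition applied to $\mC^\circ$ identifies the oriented pullback there with the $\{0\}^{\triangleright}$-oplax weighted limit of the diagram $(\partial\bD^1)^{\triangleright} \to \mC^\circ$. Equivalently, it is the right weighted limit whose weight is the $\{0\}^\triangleright$-localization of $\W_\oplax$, as computed in that proof: the diagram $\{0\}\subset\bD^1\supset\{1\}$. I then need to check that a weighted limit in $\mC^\circ$ is the corresponding weighted colimit in $\mC$. This holds by the definition of weighted limits as weighted colimits in the opposite enriched category. The right $\W_\oplax$-type weight on $(\partial\bD^1)^{\triangleright}$ becomes a left weight on $((\partial\bD^1)^{\triangleright})^{\op} \simeq (\partial\bD^1)^{\triangleleft}$. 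Comparing with the definitions of the $\mE$-antioriented (lax) colimit, this weight is $\W^{(\partial\bD^1)^\triangleleft,\{0\}^\triangleleft}_\lax$. Concretely, it is again the diagram $\{0\}\subset\bD^1\supset\{1\}$, now read via left tensors.

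Once the weight is identified, the verification mirrors the computation in the proposition. For $T\in\mC$ the left mapping $\infty$-category out of the weight into $\L\Mor_\mC(-,T)\circ F$ becomes
$$\L\Mor_\mC(Y,T)\times_{\L\Mor_\mC(X,T)}\Fun^\lax(\bD^1,\L\Mor_\mC(X,T))\times_{\L\Mor_\mC(X,T)}\L\Mor_\mC(Z,T).$$
This uses that left tensoring with $\bD^1$ is adjoint to $\Fun^\lax(\bD^1,-)$, in contrast to $\Fun^\oplax$ for right tensoring. The resulting expression is exactly the universal property of the antioriented pushout, by its definition as the oriented pullback in $\mC^\circ$ together with \cref{bdesc}.

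Part (2) follows dually, using part (2) of the proposition in $\mC^\circ$, or equivalently part (1) of this corollary applied in $\mC^\circ$.

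The main obstacle is the bookkeeping of weights under $(-)^\circ$. One must check that the oplax weight on $(\partial\bD^1)^{\triangleright}$, with the marking $\{0\}^\triangleright$ inverted, corresponds under $\circ$ to the lax weight on $(\partial\bD^1)^{\triangleleft}$ with $\{0\}^\triangleleft$ inverted. Since both localized weights are the same elementary diagram $\{0\}\subset\bD^1\supset\{1\}$, I would verify this directly on this small indexing category rather than through a general duality statement.
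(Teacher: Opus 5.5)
Your core argument is the paper's own. The corollary is stated without a separate proof as the formal dual of the preceding proposition, and your first two paragraphs give exactly that duality: the antioriented pushout in $\mC$ is the oriented pullback in the oriented category $\mC^\circ$. By the proposition this is the $\{0\}^{\triangleright}$-weighted oriented limit there, which is in turn the corresponding weighted colimit in $\mC$, with localized weight $\{0\}\subset\bD^1\supset\{1\}$ on $(\partial\bD^1)^{\triangleright}\simeq((\partial\bD^1)^{\triangleleft})^\circ$. For (2), use part (2) of the proposition in $\mC^\circ$, as you say. The alternative ``part (1) of this corollary applied in $\mC^\circ$'' does not make sense, since $\mC^\circ$ is oriented, not antioriented.

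The step that fails is your ``concrete verification'', which has the lax/oplax variance backwards. Passing to $\mC^\circ$ does not change morphism $\infty$-categories: $\R\Mor_{\mC^\circ}(T,X)\simeq\L\Mor_\mC(X,T)$. So the factor $\Fun^\oplax(\bD^1,\R\Mor_{\mC^\circ}(T,X))$ in the proposition becomes $\Fun^\oplax(\bD^1,\L\Mor_\mC(X,T))$, not $\Fun^\lax(\bD^1,\L\Mor_\mC(X,T))$. The paper's conventions give the same thing directly. Since $(-)\boxtimes K\dashv\Fun^\oplax(K,-)$, a left tensor in an antioriented category satisfies $\L\Mor_\mC(K\otimes X,T)\simeq\Fun^\oplax(K,\L\Mor_\mC(X,T))$. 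This is what makes \cref{bdesc}(1) agree with the definition of the antioriented pushout. By contrast, $\Fun^\lax$ governs left cotensors (compare $Y^K_\lax$ in \cref{cotensol}) and right tensors.

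So your assertion that left tensoring with $\bD^1$ is adjoint to $\Fun^\lax(\bD^1,-)$ is backwards. The displayed $\Fun^\lax$-fibre product is therefore not the universal property of the antioriented pushout, and that paragraph contradicts your first one. The weighted colimit must be read through the left hom $\L\Mor_{{\Fun\boxtimes}((\partial\bD^1)^{\triangleright},\infty\fcat)}$, as in the ``precisely'' clause of the definition of antioriented colimits. That is, it is the end of $\Fun^\oplax(W(c),\L\Mor_\mC(F(c),T))$. With that correction, or by simply deleting the paragraph, the proof is complete.
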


\begin{proposition}\label{lemkanex}
Let $X$ be an $\infty$-category and $n \geq 0$. Every functor $f: \tau_n(X) \to Y$ is the left Kan extension of the functor $X \to \tau_{n}(X) \xrightarrow{f} Y$ along $X \to \tau_n(X).$
    
\end{proposition}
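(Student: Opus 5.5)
Write $q: X \to \tau_n(X)$ for the localization. The plan is to show that the canonical comparison map $\mathrm{Lan}_q(f\circ q) \to f$ is an equivalence by evaluating it pointwise.

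\emph{Step 1: reduce to a statement about $\tau_n$.} The pointwise left Kan extension at an object $t\in\tau_n(X)$ is the colimit of $f\circ q$ over the comma $\infty$-category $X\times_{\tau_n(X)}\tau_n(X)_{/t}$, or more precisely over its appropriate oplax version. It therefore suffices to show that $q$ is \emph{cofinal} in the sense that this comma category maps cofinally to $\tau_n(X)_{/t}$, which has the terminal object $\mathrm{id}_t$. Equivalently, it suffices to show that the restriction functor
\[
q^*:\Fun(\tau_n(X),Y)\to\Fun(X,Y)
\]
is fully faithful. Indeed, $\mathrm{Lan}_q$ is left adjoint to $q^*$, and the counit $\mathrm{Lan}_q q^* f \to f$ is an equivalence for all $f$ exactly when $q^*$ is fully faithful.

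\emph{Step 2: prove $q^*$ is fully faithful.} Recall that $\tau_n$ is left adjoint to the inclusion $\iota_n: n\Cat \subset \infty\Cat$. The key input is that $\tau_n(X)$ is obtained from $X$ by inverting all cells of dimension $>n$. Concretely, $\tau_n(X)\simeq X[\mE^{-1}]$, where $\mE$ is the collection of all cells of $X$ of dimension $>n$. This should follow by comparing universal properties. Functors out of $X[\mE^{-1}]$ into an $n$-category are all functors out of $X$, since every cell of dimension $>n$ is invertible in an $n$-category. Conversely, I expect $X[\mE^{-1}]$ to already be an $n$-category. I would check this by induction on the morphism $\infty$-categories using \cref{theta}: $\tau_n$ commutes with suspension, $\tau_{n}S(\mC)\simeq S(\tau_{n-1}\mC)$, together with compatibility with the Segal colimits. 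Given this identification, the preceding lemma on localizations says that $\Fun(X[\mE^{-1}],Y)\to\Fun(X,Y)$ is fully faithful for every $\infty$-category $Y$. That is exactly the full faithfulness of $q^*$ needed in Step 1.

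\emph{Step 3: conclude.} With $q^*$ fully faithful, the counit $\mathrm{Lan}_q(f\circ q)\to f$ is an equivalence, which is the claim. If one prefers the pointwise formula, it can be matched with the adjoint one, since $Y$ need not be cocomplete. Note that the adjunction argument only uses that $f\circ q$ admits some left Kan extension, namely $f$ itself, together with the universal property of the Kan extension. That universal property is precisely the full faithfulness of $q^*$ on mapping spaces $\Map(f, g)\simeq \Map(fq, gq)$, where $g$ is any functor $\tau_n(X)\to Y$.

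The main obstacle is Step 2, specifically the identification $\tau_n(X)\simeq X[\mE^{-1}]$ for an \emph{arbitrary} target $Y$, not just an $n$-category target. This requires knowing that inverting the high-dimensional cells of $X$ via the pushout of disks already yields an $n$-category. I would try to prove it first for $X=\bD^k$, where $\bD^k[\text{cells}>n]$ equals $\bD^{\min(k,n)}$ because the top cell is contracted by the pushout along $\bD^k\to\bD^{k-1}$ with $k>n$. I would then extend to general $X$ by writing $X$ as a colimit of disks using \cref{theta}. Both $\tau_n$ and $(-)[\mE^{-1}]$, with $\mE$ the cells of dimension $>n$, commute with colimits, so the identification for disks propagates to all $X$.
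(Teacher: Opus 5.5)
Your proposal is correct and follows essentially the paper's route. The paper likewise presents $\tau_n(X)$ as a pushout of $X$ that collapses all cells of dimension $>n$ (along $\bD^m\to\bD^n$ rather than your $\bD^m\to\bD^{m-1}$). It then uses the full faithfulness of $\Fun(\bD^n,Y)\to\Fun(\bD^m,Y)$ to show that restriction $\Fun(\tau_n(X),Y)\to\Fun(X,Y)$ is fully faithful for every $Y$, and reads off the Kan extension property from $\Mor(f,g)\simeq\Mor(f\circ q,g\circ q)$.

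The paper simply asserts that this pushout is $\tau_n(X)$, so your disk-by-disk check that it is already an $n$-category fills in a step the paper leaves implicit. For your colimit step, note why it works: a functor that inverts the $>n$-cells of each piece factors through a colimit of $n$-categories, which is again an $n$-category.
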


\begin{proof}

Let $X'$ be the pushout $$ X \coprod_{(\coprod_{m \geq n}\Fun(\bD^m,X) \times \bD^m)} (\coprod_{m \geq n} \Fun(\bD^{m},X) \times \bD^{n}).$$
Then for every $\infty$-category $Z$ the canonical functor $X \to X'$ induces an equivalence 
$$ \Fun(X',Y) \to \Fun(X,Y) \times_{\prod_{m \geq n}\Fun(\Fun(\bD^m,X),\Fun(\bD^m,Y))} \prod_{m \geq n}\Fun(\Fun(\bD^m,X),\Fun(\bD^{n},Y)). $$

By \cite[Corollary 4.1.4.]{oriented} for every $m \geq n$ the induced functor 
$\Fun(\bD^{n},Y) \to \Fun(\bD^{m},Y) $ is fully faithful.

Hence the projection 
$$ \Fun(X',Y) \to \Fun(X,Y) \times_{\prod_{m \geq n}\Fun(\Fun(\bD^m,X),\Fun(\bD^m,Y))} \prod_{m \geq n}\Fun(\Fun(\bD^m,X),\Fun(\bD^{n},Y)) \to \Fun(X,Y)$$ is fully faithful and the essential image is the full subcategory of functors 
$X \to Y$ inverting all morphisms of dimension $m > n$.
In particular, we find that the functor $X \to X'$ is the functor
$\alpha: X \to \tau_n(X).$

Thus the induced functor
$$ \alpha^* : \Fun(\tau_n(X),Y) \to \Fun(X,Y) $$ is fully faithful.
Hence for every functors $f,g: \tau_n(X) \to Y $ the induced functor
$$ \Mor_{\Fun(\tau_n(X),Y)}(f,g) \to \Mor_{\Fun(X,Y)}(f \circ \alpha ,g \circ \alpha) $$
is an equivalence. So $f$ is the left Kan extension of the functor $X \xrightarrow{\alpha} \tau_n(X) \xrightarrow{f} Y$ along $\alpha: X \to \tau_n(X).$
\end{proof}

\begin{definition}Let $n \geq 0$.
A Kan fibration is a cocartesian fibration whose fibers are spaces, i.e. a left fibration, such that the fiber transports are equivalences.
 
\end{definition}

\begin{proposition}
Let $X$ be an $\infty$-category.
A left fibration $Y \to X$ is a Kan fibration if and only if 
the canonical commutative square
$$\begin{xy}
\xymatrix{
Y \ar[d] \ar[r]
& \tau_0 Y \ar[d] 
\\ 
X \ar[r] & \tau_0 X}
\end{xy}$$	
is a pullback square.
    
\end{proposition}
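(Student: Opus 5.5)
The plan is to reduce the statement to a comparison of fibers and then use that a left fibration is determined, up to equivalence, by its straightening. I will write $q: Y \to X$ for the left fibration and $\alpha: X \to \tau_0 X$, $\beta: Y \to \tau_0 Y$ for the localizations.

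\emph{Step 1: the square is well defined.} The map $\tau_0 Y \to \tau_0 X$ is obtained by applying $\tau_0$ to $q$, so the square commutes by naturality of the unit.

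\emph{Step 2: ``if'' direction.} Assume the square is a pullback. Then $q$ is the base change of $\tau_0 Y \to \tau_0 X$ along $\alpha$. A functor between spaces is a left fibration whose fiber transports are equivalences. Indeed, every cell of a space is invertible, so it is cocartesian and its transport is invertible; this follows from the example on equivalences being cocartesian and \cref{elemen}. Both properties are stable under base change: cocartesian cells pull back to cocartesian cells by \cref{elemen}, and the transport along a morphism $f$ of $X$ is the transport along $\alpha(f)$. Hence $q$ is a Kan fibration.

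\emph{Step 3: ``only if'' direction.} Let $q$ be a Kan fibration, and let $F: X \to \infty\scat$ be the functor it classifies, using \cref{Groind} and the fact that the Grothendieck construction is an equivalence; we may assume the latter since the lax colimit comparison is already used in this section. The functor $F$ takes values in $\mS$ and sends every 1-morphism to an equivalence. Higher cells are then sent to invertible cells automatically, since $\mS$ has no non-invertible higher cells. Hence $F$ inverts all cells and factors through $\alpha$: by the lemma on $X[\mE^{-1}]$ with $\mE$ all cells, or by \cref{lemkanex} with $n=0$, we get $F \simeq G\circ\alpha$ for some $G: \tau_0 X \to \mS$.

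Let $P \to \tau_0 X$ be the left fibration classified by $G$; by \cref{Gro0} it is just the space-level unstraightening. Since the Grothendieck construction is compatible with pullback (it is defined as the pullback of the universal fibration), we have $Y \simeq X\times_{\tau_0 X} P$ over $X$. It remains to identify $P$ with $\tau_0 Y$.

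\emph{Step 4: identifying $P$ with $\tau_0 Y$ (main obstacle).} By \cref{laxgro} with $\mE$ all cells of $X$, $\bar{\mE}$ consists of all cocartesian cells of $Y$, and these are all cells of $Y$ because $q$ is a left fibration whose transports are invertible. So $Y[\bar{\mE}^{-1}] = \tau_0 Y$, and
\[
\tau_0 Y \simeq \mE\text{-}\overset{\lax}{\colim}(F) = \colim_X F \simeq \colim_{\tau_0 X} G \simeq P.
\]
Here the middle equivalence uses that $\alpha$ is cofinal for ordinary colimits, as $\alpha^*$ is fully faithful. The last equivalence is the fact that a space over $\tau_0 X$ is the colimit of its fibers (\cref{Gro0}). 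One must then check that the resulting map $\tau_0 Y \to P$ is the one induced by $\beta$, over $\tau_0 X$. Both are maps out of the localization $\tau_0 Y$ agreeing on $Y$, so they coincide by universality.

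Combining Steps 3 and 4, $Y \simeq X\times_{\tau_0 X}\tau_0 Y$, which proves the ``only if'' direction.
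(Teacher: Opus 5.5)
Your Steps 1--3 agree with the paper's proof. The ``if'' direction is stability of Kan fibrations under base change. For ``only if'', the paper likewise factors the straightening through $\tau_0 X$ via \cref{lemkanex}, unstraightens to a space $P \to \tau_0 X$ (the paper's $Y'$), and gets $Y \simeq X \times_{\tau_0 X} P$.

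The gap is in Step 4. An abstract equivalence $\tau_0 Y \simeq P$ does not make the canonical square a pullback. What is needed is that a specific map is an equivalence: the map $\gamma\colon \tau_0 Y \to P$ over $\tau_0 X$ determined by $\gamma\circ\beta \simeq \pi$, where $\pi\colon Y \simeq X\times_{\tau_0 X} P \to P$ is the projection. Your closing sentence (``both are maps out of the localization agreeing on $Y$'') presupposes that the composite of your chain, precomposed with $\beta$, is $\pi$. That is the whole content of the step. Your chain passes through:
\begin{itemize}
\item \cref{laxgro}, itself an identification of corepresented functors built from \cref{repre} and the universal property of the free fibration;
\item an identification of the all-cells $\mE$-lax colimit with the conical colimit, which you assert but do not prove;
\item cofinality of $\alpha$;
\item straightening over a space.
\end{itemize}
Nothing shows that these identifications carry $\beta$ to $\pi$, and this does not follow ``by universality''.

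The repair is the Yoneda argument the paper uses, which needs only what you have after Step 3. Let $Z \to \tau_0 X$ be any space over $\tau_0 X$, classified by $h\colon \tau_0 X \to \mS$.
\begin{itemize}
\item Restriction along $\pi$ gives a map $\Fun_{\tau_0 X}(P, Z) \to \Fun_X(Y, X\times_{\tau_0 X} Z)$. The Grothendieck construction is defined by pulling back the universal fibration, so it commutes with base change along $\alpha$. Hence this map is identified with
\[
\alpha^*\colon \Mor_{\Fun(\tau_0 X,\mS)}(G,h) \to \Mor_{\Fun(X,\mS)}(G\circ\alpha, h\circ\alpha),
\]
which is an equivalence by \cref{lemkanex}.
\item Restriction along $\beta$ gives $\Fun_{\tau_0 X}(\tau_0 Y, Z) \to \Fun_{\tau_0 X}(Y, Z) \simeq \Fun_X(Y, X\times_{\tau_0 X} Z)$, which is an equivalence because $Z$ is a space.
\end{itemize}
Since $\gamma\circ\beta\simeq\pi$, restriction along $\gamma$ is an equivalence for every such $Z$. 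By Yoneda in $\mS_{/\tau_0 X}$, $\gamma$ is an equivalence. This route also makes \cref{laxgro} and the cofinality argument unnecessary.
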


\begin{proof}

If the square of the statement is a pullback square, the functor $Y \to X$ is a Kan fibration since Kan fibrations are stable under base change.

We prove the converse. Let $\phi: Y \to X$ be a Kan fibration.
Then $\phi: Y \to X$ classifies a functor $g: X \to \iota_0(\mS) $ and so extends to a functor $f: \tau_0(X) \to \mS.$
Let $\phi': Y' \to \tau_0(X)$ be the left fibration classifying $f.$
Then there is a pullback square
$$\begin{xy}
\xymatrix{
Y \ar[d]^{\phi} \ar[r]
& Y' \ar[d]^{\phi'}
\\ 
X \ar[r]^\alpha & \tau_0 X}
\end{xy}$$	
We will construct a canonical equivalence $Y' \simeq \tau_0 Y$ over $\tau_0 X.$

By \cref{lemkanex} the functor $f: \tau_0 X  \to \mS $ is the left Kan extension of the functor $g = f \circ \alpha: X \to \mS$ along $\alpha: X \to \tau_0(X).$ In other words, for every functor $h: \tau_0 X \to \mS $ the induced functor
\begin{equation}\label{kolt} \alpha^*: \Mor_{\Fun(\tau_0 X,\mS)}(f, h) \to \Mor_{\Fun(X,\mS)}(g, h \circ \alpha) \end{equation}
is an equivalence. Let $Z \to \tau_0 X $ be the left fibration classifying $h: \tau_0 X \to \mS $. Then $Z$ is a space since every morphism in $Z$ is cocartesian over its image in $\tau_0 X$ and so is an equivalence.
Via the Grothendieck construction the equivalence (\ref{kolt}) identifies with the canonical functor 
$$ \Fun_{\tau_0 X}(Y', Z) \to \Fun_{X}(Y, X \times_{\tau_0 X} Z),$$
which therefore is an equivalence.

The induced functor
$$ \Fun_{\tau_0 X}(\tau_0(Y), Z) \to \Fun_{X}(Y, X \times_{\tau_0 X} Z)$$ identifies with the following canonical equivalence
$$\Fun_{\tau_0 X}(\tau_0(Y), Z) \simeq \{ \tau(\phi)\} \times_{\Fun(\tau_0 Y , \tau_0 X)} \Fun(\tau_0 Y, Z) \simeq
\{ \alpha \circ \phi \} \times_{\Fun(Y, \tau_0 X)} \Fun(Y, Z) $$$$ \simeq \Fun_{\tau_0 X }(Y, Z) \simeq 
\Fun_X(Y, X \times_{\tau_0 X} Z). $$
Hence there is a canonical equivalence $Y' \simeq \tau_0 Y $ over $\tau_0(X) .$
\end{proof}

\begin{corollary}\label{Kanfi}
Let $n\geq 0$ and $Y \to X$ a Kan fibration and $t \in X.$
The fiber sequence $Y_t \to Y \to X$ is preserved by the functor
$\tau_0: \infty\Cat \to \mS.$
    
\end{corollary}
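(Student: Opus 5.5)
The plan is to use the proposition just proved: since $Y \to X$ is a Kan fibration, the square with vertices $Y, \tau_0 Y, X, \tau_0 X$ is a pullback. Taking fibers over $t$ and over its image $[t] \in \tau_0 X$, pasting of pullbacks gives a canonical equivalence
\[
Y_t \simeq \{t\} \times_X Y \simeq \{[t]\} \times_{\tau_0 X} \tau_0 Y .
\]
So it suffices to show two things: that $Y_t$ is already a space, so that $\tau_0(Y_t) \simeq Y_t$; and that $\tau_0$ sends the fiber sequence $Y_t \to Y \to X$ to the sequence $Y_t \to \tau_0 Y \to \tau_0 X$, which is a fiber sequence of spaces by the displayed equivalence. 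The first point holds because a left fibration has fibers that are spaces.

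For the second point, I would check that the canonical comparison $\tau_0(Y_t) \to \{[t]\}\times_{\tau_0 X}\tau_0 Y$ is the one induced by the pullback square. Naturality of the unit $\mathrm{id} \to \tau_0$ gives a map of squares. This map is the square $Y_t \to Y \to X$, $Y_t \to \{t\}$ mapped to its $\tau_0$-image, composed with the equivalence above. That image is exactly the pullback square in $\mS$: the big square obtained by pasting is the pullback square of the previous proposition pasted with the fiber square $\{[t]\}\times_{\tau_0 X}\tau_0 Y \to \tau_0 Y$. Concretely, the outer pasted rectangle with vertices $Y_t, \tau_0 Y, \{t\}, \tau_0 X$ factors through $\{[t]\} = \tau_0\{t\}$ and $\tau_0(Y_t) = Y_t$, so it is the $\tau_0$-image of the original fiber square.

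The main obstacle I expect is this bookkeeping of naturality: making sure the equivalence obtained by pasting coincides with the comparison map produced by applying $\tau_0$. This is rather than only establishing some abstract equivalence. I would handle it by observing that every map involved is a component of the unit of the adjunction $\tau_0 \dashv \iota$ (the inclusion $\mS \subset \infty\Cat$ of \cref{lao}), and that this unit is an equivalence on the spaces $Y_t$ and $\{t\}$. Commutativity then follows from naturality of the unit.
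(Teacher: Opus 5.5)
Your proposal is correct and follows essentially the paper's own route: the paper gives no separate proof and obtains the corollary directly from the preceding proposition (for a Kan fibration the square $Y \to \tau_0 Y$ over $X \to \tau_0 X$ is a pullback), pasted with the fiber square, together with the fact that $Y_t$ is a space so that $\tau_0(Y_t)\simeq Y_t$. Your naturality bookkeeping with the unit of $\tau_0 \dashv (\mS\subset\infty\Cat)$ is all that remains to check. Note also that this inclusion preserves pullbacks, so the pasted rectangle, whose vertices are all spaces, is a pullback in $\mS$.
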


\begin{lemma}\label{clafib} Let $X$ be an $\infty$-category.

\begin{enumerate}[\normalfont(1)]\setlength{\itemsep}{-2pt}
\item The embedding $ \LFib_{X} \subset \infty\scat^{\cocart}_{/X}  $
admits a left adjoint, which we call the classifying left fibration.

\item For every $t \in X$ the functor
$\infty\scat_{/X}^\cocart  \to \infty\scat$ which takes the fiber over $t$,
and the forgetful functor $\infty\scat_{/X}^\cocart  \to \infty\scat$, 
send local equivalences to functors inverted by $\tau_0.$

\end{enumerate}
\end{lemma}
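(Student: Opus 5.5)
For (1), I would write $\LFib_X$ as the full subcategory of $\infty\scat^{\cocart}_{/X}$ of cocartesian fibrations all of whose cells of positive dimension are cocartesian. By the Remark following the definition of left fibrations, these are exactly the cocartesian fibrations whose fibers are spaces. The plan is to take the reflection to be the localization $Y \mapsto Y^{\mE^{-1}}$ of \cref{fibinv}, with $\mE$ the collection of all cells of positive dimension of $Y$. The lemma following \cref{fibinv} gives, for every left fibration $Z \to X$, a fully faithful functor
$$\Fun^\cocart_X(Y^{\mE^{-1}},Z) \to \Fun^\cocart_X(Y,Z).$$
Its essential image consists of the maps sending $\mE$ to cocartesian cells. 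Every map of cocartesian fibrations into a left fibration does this, because in $Z$ every cell of positive dimension is cocartesian. So this functor is an equivalence.

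It then remains to check that $Y^{\mE^{-1}} \to X$ is itself a left fibration, i.e. that inverting the chosen cells does not create new non-cocartesian cells. If this is hard to see directly, I would use an alternative route via the Grothendieck construction $\Fun(X,\infty\scat) \simeq \infty\scat^\cocart_{/X}$, which we may assume to be an equivalence. Left fibrations then correspond to functors $X \to \mS$, and the reflection corresponds to postcomposition with $\tau_0 : \infty\scat \to \mS$, which is left adjoint to the inclusion. Under this route (1) is immediate, and the local equivalences are exactly the maps $Y \to Y'$ whose fiberwise maps $Y_t \to Y'_t$ become equivalences after applying $\tau_0$.

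For (2), the fiber statement follows from this description. The reflection $L$ commutes with taking fibers, since it is computed fiberwise by $\tau_0$. A local equivalence $f$ has $L(f)$ an equivalence, so $\tau_0(Y_t) \simeq L(Y)_t \to L(Y')_t \simeq \tau_0(Y'_t)$ is an equivalence.

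For the total space, I would use \cref{laxgro} with $\mE$ empty: the forgetful functor $\infty\scat^\cocart_{/X} \to \infty\scat$ is the lax colimit functor $\Fun(X,\infty\scat) \to \infty\scat$. This functor commutes with $\tau_0$ in the following sense. Both $\tau_0 \circ \overset{\lax}{\colim}$ and $\colim \circ (\tau_0)_!$ are left adjoints to $\mS \to \Fun(X,\infty\scat)$, $K \mapsto \mathrm{const}_K$, viewed through maps into spaces, since $\Fun(Y,K) \simeq \Map(\tau_0 Y, K)$ for a space $K$. Hence $\tau_0(Y) \simeq \colim_X \tau_0(Y_t)$. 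A local equivalence induces a natural equivalence $\tau_0(Y_t) \simeq \tau_0(Y'_t)$, hence an equivalence of colimits, so $\tau_0$ inverts $Y \to Y'$.

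The main obstacle I expect is the adjunction identification just used: that maps out of the lax colimit into a space $K$ are natural transformations into the constant functor at $K$. This needs the lax weight to be contractible after $\tau_0$ objectwise, or equivalently that a lax cocone into a space is automatically strict. I would argue it via \cref{laxgro} directly: $\Fun(Y,K)$ is computed by \cref{repre} as sections of $\langle Y, K\rangle$, and since $K$ is a space all cells of these sections are forced to be invertible. That reduces the claim to a strict cocone into $K$.
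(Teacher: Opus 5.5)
Via the Grothendieck-construction route you fall back on, this is essentially the paper's proof. For (1), the paper transports the adjunction $\Fun(X,\tau_0)\dashv\Fun(X,\mS)\subset\Fun(X,\infty\scat)$ along $\int_X$. For the total-space part of (2), the paper also identifies the forgetful functor with the lax colimit via \cref{laxgro}. It then argues slightly more economically: it uses only that $\tau_0$, as a left adjoint, preserves lax colimits, together with $\tau_0\circ j\circ\tau_0\simeq\tau_0$. This shows the unit $Y\to L(Y)$ is inverted by $\tau_0$, so the paper never has to identify lax colimits in $\mS$ with ordinary colimits. That identification is the obstacle you flag, and your resolution of it is correct anyway.

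Drop your first route for (1). The construction $Y^{\mE^{-1}}$ of \cref{fibinv} is itself defined using the reflection $L$, so using it to build $L$ would be circular.
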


\begin{proof}
(1): The embedding $ \LFib_{X} \subset \infty\scat^{\cocart}_{/X} $ identifies via the Grothendieck construction with the induced embedding
$ \Fun(X,\mS) \subset \Fun(X,\infty\scat),$
which is right adjoint to the functor
$$\Fun(X,\tau_0): \Fun(X,\infty\scat) \to \Fun(X,\mS).$$

(2): The first part of (2) follows from the construction of the left adjoint. We prove the second part of (2), the assertion regarding the forgetful functor.

Let $Y \to X$ be a cocartesian fibration and $Y' \to X$ the classifying left fibration. We prove that the universal map $\theta: Y \to Y' $
of cocartesian fibrations over $X$ is inverted by $\tau_0.$
Let $F: X \to \infty\scat$ be the functor classified by $Y \to X$.
Let $\bj: \mS \subset \infty\scat$ be the canonical embedding.
By definition, $Y' \to X$ is classified by the functor
$ \bj \circ \tau_0 \circ F: X \to \infty\scat. $
By \cref{laxgro} the universal map $\theta: Y \to Y' $
of cocartesian fibrations over $X$ identifies with the canonical functor
$$ \overset{\lax}{\colim}(F) \to \overset{\lax}{\colim}(\bj \circ \tau_0 \circ F).$$

The functor $\tau_0: \infty\scat \to \mS$ preserves lax colimits because it is a left adjoint. Hence the map $$\tau_0(\theta): \tau_0(Y) \to \tau_0(Y') $$ identifies with the canonical equivalence
$$ \tau_0(\overset{\lax}{\colim}(F)) \simeq \overset{\lax}{\colim}(\tau_0 \circ F) \simeq \overset{\lax}{\colim}(\tau_0 \circ \bj \circ \tau_0 \circ F) \simeq \tau_0(\overset{\lax}{\colim}(\bj \circ \tau_0 \circ F)). $$
\end{proof}

\begin{definition}
A functor is a weak equivalence if it is inverted by $\tau_0: \infty\fcat\to \mS.$
    
\end{definition}

\begin{remark}
Let $\phi: Y \to X$ be a cocartesian fibration. By construction of the
classifying left fibration we have the following:
The classifying left fibration of $\phi$ is a Kan fibration if and only if the fiber transports of $\phi$ are weak equivalences.
    
\end{remark}

\begin{proposition}\label{cocafi}

Let $Y \to X$ be a cocartesian fibration whose fiber transports are weak equivalences.
For every $t \in X $ the fiber sequence $Y_t \to Y \to X$ is preserved by the functor
$\tau_0: \infty\fcat \to \mS.$

\end{proposition}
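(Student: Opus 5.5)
Let $\phi: Y \to X$ be the cocartesian fibration and $\theta: Y \to Y'$ the universal map to its classifying left fibration $Y' \to X$ from \cref{clafib} (1). The plan is to reduce to \cref{Kanfi}, comparing the fiber sequence $Y_t \to Y \to X$ with $Y'_t \to Y' \to X$ via $\theta$.

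\textbf{Step 1: $Y'$ is a Kan fibration.} By the remark following \cref{clafib}, $Y' \to X$ is a Kan fibration if and only if the fiber transports of $\phi$ are weak equivalences, which is our hypothesis. Hence \cref{Kanfi} applies, and $\tau_0$ carries $Y'_t \to Y' \to X$ to a fiber sequence of spaces
\[
\tau_0(Y'_t) \to \tau_0(Y') \to \tau_0(X).
\]

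\textbf{Step 2: compare the two sequences.} The map $\theta$ lies over $X$, so it induces a map of sequences from $Y_t \to Y \to X$ to $Y'_t \to Y' \to X$ which is the identity on $X$. By \cref{clafib} (2), $\theta$ is a local equivalence, so both $\theta_t: Y_t \to Y'_t$ (fiber over $t$) and $\theta: Y \to Y'$ (forgetful functor) are inverted by $\tau_0$. After applying $\tau_0$, we therefore obtain a map of sequences of spaces which is an equivalence on every term. The sequence obtained from $Y$ is then equivalent to the one from $Y'$, which is a fiber sequence by Step 1. It remains to observe that the composite $Y_t \to X$ factors through the point $t$ compatibly with $\theta$, so the null-homotopies defining the two fiber sequences match.

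\textbf{Main obstacle.} The substantive input is \cref{clafib} (2), in particular that the forgetful functor sends $\theta$ to a weak equivalence. This is already established via \cref{laxgro} and the fact that $\tau_0$ commutes with lax colimits. So I expect the only remaining care to be in Step 1: checking that the characterization of Kan fibrations in terms of fiber transports being equivalences matches exactly the construction of $Y'$, namely $\bj\circ\tau_0\circ F$, whose transports are $\tau_0$ of the transports of $F$. Given this, the proof is a formal diagram comparison.
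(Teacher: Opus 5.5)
Your proposal is correct and follows the paper's proof essentially verbatim. The paper also passes to the classifying left fibration $Y'$, which is a Kan fibration by hypothesis so \cref{Kanfi} handles the bottom square $Y'_t \to Y'$ over $\{t\}\to X$, and then uses \cref{clafib}~(2) to see that $\tau_0$ inverts $\theta_t$ and $\theta$; it phrases this last step as pasting a square with $\tau_0$-inverted vertical maps onto the Kan fibration square, which amounts to your termwise comparison of the two sequences.
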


\begin{proof}

Let $\phi: Y \to X$ be a cocartesian fibration. Let $\phi': Y' \to X$ be the classifying left fibration of $\phi$ and $\theta: Y \to Y'$
the universal map of cocartesian fibrations over $X.$
By assumption the functor $\phi'$ is a Kan fibration.

We consider the following commutative diagram of pullback squares:
$$\begin{xy}
\xymatrix{
Y_t \ar[d]^{\theta_t} \ar[r]
& Y \ar[d]^{\theta}
\\ 
Y'_t \ar[r] \ar[d] & Y' \ar[d]^{\phi'}
\\ 
\{ t \} \ar[r] & X}
\end{xy}$$	

We want to see that the outer pullback square is preserved by $\tau_0: \infty\fcat \to \mS.$
By \cref{Kanfi} the bottom pullback square is preserved by $\tau_0.$
So it suffices to see that the top pullback square is preserved by $\tau_0.$
By \cref{clafib} the functor $\tau_0: \infty\fcat \to \mS $
inverts both vertical functors $\theta_t$ and $\theta$ in the top pullback square.
\end{proof}

\begin{proposition}\label{envlemo}Let $Y \to X$ be a functor. There is a canonical equivalence 
$$ \tau_0(Y \,{\vec{\times}}_X X) \simeq \tau_0(Y)$$ over $\tau_0(X).$

\end{proposition}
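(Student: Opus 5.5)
The plan is to show that the canonical inclusion $\iota: Y \to Y \,{\vec{\times}}_X X$ and the source projection $\pi: Y \,{\vec{\times}}_X X \to Y$ become inverse equivalences after applying $\tau_0$, compatibly with the maps to $\tau_0(X)$. By \cref{adesc} and the definition of the enveloping cocartesian fibration, $Y \,{\vec{\times}}_X X \simeq Y \times_{\Fun^\oplax(\{0\},X)} \Fun^\oplax(\bD^1,X) = \Env(Y)$. The identity arrows give the inclusion $\iota$ (via the remark after the definition of $\Env$), and projecting to the first factor gives $\pi$. We equip $Y$ with $Y \to X \to \tau_0(X)$ and $\Env(Y)$ with the target projection $\Env(Y) \to X \to \tau_0(X)$.

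First, the composite $\pi \circ \iota$ is the identity of $Y$ on the nose. Second, we connect $\iota \circ \pi$ to the identity of $\Env(Y)$ by a (possibly lax or oplax) natural transformation. The key observation is that $\tau_0$ inverts any functor of the form $Z \boxtimes \bD^1 \to W$ or $\bD^1 \boxtimes Z \to W$ restricted to either end. This holds because $\tau_0: \infty\Cat \to \mS$ is monoidal for the Gray tensor product (\cref{grayspace}) and $\tau_0(\bD^1) \simeq *$. Hence two functors related by such a transformation induce homotopic maps on classifying spaces.

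To build the transformation, I would use the functor $m: \bD^1 \boxtimes \bD^1 \to \bD^1 \times \bD^1 \to \bD^1$, where the first map is the canonical epimorphism and the second is the maximum. This $m$ restricts to the identity on $\{1\} \times \bD^1$ and on $\bD^1 \times \{1\}$, and to the constant map at $0$ on the cell $\{0\} \boxtimes \{0\}$. It sends $\{0\} \boxtimes \bD^1$ to the identity, and $\{1\} \boxtimes \bD^1$ to the constant map at $1$. Precomposition with $m$ defines, by adjunction,
\[
\Fun^\oplax(\bD^1,X) \boxtimes \bD^1 \to \Fun^\oplax(\bD^1,X)
\]
(or the version with $\bD^1$ on the left, whichever orientation the adjunction produces). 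At one end this is $s \mapsto \mathrm{id}_{s(0)}$; at the other end it is the identity. Throughout, the source is kept fixed, since $m(0,-) = \mathrm{id}$ on the relevant coordinate.

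Because the source is constant along the transformation, it lies over $\Fun^\oplax(\{0\},X)$, so it pulls back along $Y \to X$ to a transformation $\Env(Y) \boxtimes \bD^1 \to \Env(Y)$ from $\iota\circ\pi$ to $\mathrm{id}$. Applying $\tau_0$ then gives $\tau_0(\iota)\circ\tau_0(\pi) \simeq \mathrm{id}$. Together with $\pi\circ\iota = \mathrm{id}$, this shows $\tau_0(\iota)$ is an equivalence.

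For compatibility over $\tau_0(X)$, note that $\iota$ commutes strictly with the maps to $X$. Indeed, the target of an identity arrow on $p(y)$ is $p(y)$, so $\iota$ is a map over $X$, and hence $\tau_0(\iota)$ is a map over $\tau_0(X)$. This identifies $\tau_0(Y \,{\vec{\times}}_X X) \simeq \tau_0(Y)$ over $\tau_0(X)$.

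The main obstacle is the construction of the transformation. One must get the Gray orientation right, so that precomposition with $m$ is adjoint to a map out of $\Fun^\oplax(\bD^1,X)\boxtimes\bD^1$ rather than a lax variant. One must also check that it is genuinely constant on sources, so that the pullback along $Y$ is legitimate. If the orientation is wrong, I would use the $\bD^1 \boxtimes(-)$ version or pass to $(-)^\co$ via \cref{grayhoms}; this does no harm, since $\tau_0$ inverts either kind of transformation.
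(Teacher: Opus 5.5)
Your strategy is sound, but the homotopy you write down fails at the point you yourself flagged as the main obstacle.

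Write $F=\Fun^\oplax(\bD^1,X)$. Let $H\colon F\boxtimes\bD^1_t\to F$ be adjoint to $\mathrm{ev}\circ(\mathrm{id}_F\boxtimes m)$ under $(F\boxtimes\bD^1_t)\boxtimes\bD^1_a\simeq F\boxtimes(\bD^1_t\boxtimes\bD^1_a)$.

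With $m=\max$, the following happens:
\begin{itemize}
\item The source coordinate restricts to $t\mapsto\max(t,0)=t$, so the source moves along $s$.
\item The target is the constant coordinate, since $\max(t,1)=1$.
\item The endpoints are $\mathrm{id}_F$ and $s\mapsto\mathrm{id}_{s(1)}$, not $s\mapsto\mathrm{id}_{s(0)}$.
\end{itemize}
So the transformation lives over $\Fun^\oplax(\{1\},X)$, not over $\Fun^\oplax(\{0\},X)$. It therefore does not pull back to $\Env(Y)=Y\times_X F$.

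Your list of properties of $m$ is also inconsistent. ``Identity on $\{1\}\times\bD^1$'' holds for $\min$, while ``constant at $1$ on $\{1\}\boxtimes\bD^1$'' holds for $\max$. Moreover, ``$m(0,-)=\mathrm{id}$'' is a condition at time $0$. The condition that the source is fixed is that $m|_{\bD^1_t\boxtimes\{0\}_a}$ is constant at $0$.

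The fix is to replace $\max$ by $\min$:
\begin{itemize}
\item $m(-,0)\equiv 0$ gives a canonical identification $\mathrm{ev}_0\circ H\simeq\mathrm{ev}_0\circ\mathrm{pr}_F$.
\item $m(0,-)\equiv 0$ gives $s\mapsto\mathrm{id}_{s(0)}$.
\item $m(1,-)=\mathrm{id}$ gives $\mathrm{id}_F$.
\end{itemize}
Pulling back along $Y\to X$ then yields $\Env(Y)\boxtimes\bD^1\to\Env(Y)$ from $\iota\pi$ to the identity, and the rest of your argument goes through.

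There is no orientation issue, since $m$ factors through $\bD^1\times\bD^1$ and $\min$ is symmetric. The $\bD^1\boxtimes F$ variant would not come from precomposition anyway, because $\boxtimes$ is not symmetric, so the version with $\bD^1$ on the right is the one to use.

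With this correction, your proof takes a genuinely different route from the paper's. The paper argues purely with adjoints. For $\alpha\colon X\to\tau_0(X)$, the functor $\tau_0\colon\infty\scat_{/X}\to\mS_{/\tau_0(X)}$ is left adjoint to $\alpha^*$, which lands in cocartesian fibrations, so the adjunction restricts to $\infty\scat^\cocart_{/X}$. Composing with $\Env\dashv(\text{inclusion})$ from \cref{huip} and using uniqueness of left adjoints gives $\tau_0\simeq\tau_0\circ\Env$. The comparison map is $\tau_0$ of the unit $\iota\colon Y\to\Env(Y)$, the same map you show is an equivalence.

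The paper's argument is very short and natural in $Y$ by construction, but it depends on the universal property of the free cocartesian fibration (\cref{envelo}). Yours is the oriented analogue of the deformation retraction of a mapping path space. It uses only the explicit formula for $\Env$, the monoidality of $\tau_0$ for $\boxtimes$ together with $\tau_0(\bD^1)\simeq *$ (\cref{grayspace}), and no fibration theory. It also exhibits $\tau_0(\pi)$ as an explicit inverse.
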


\begin{proof}Let $\alpha: X \to \tau_0(X)$ be the universal functor.
The adjunction $$\tau_0: \infty\scat_{/X} \rightleftarrows \mS_{/\tau_0(X)} : \alpha^* $$
restricts to an adjunction $$\tau_0: \infty\scat^\cocart_{/X} \rightleftarrows \mS_{/\tau_0(X)} : \alpha^*. $$
Since $\alpha^*: \mS_{/\tau_0(X)} \to \infty\scat_{/X} $ factors as
$\mS_{/\tau_0(X)} \xrightarrow{\alpha^*} \infty\scat^\cocart_{/X}  \subset \infty\scat_{/X} $, uniqueness of left adjoints implies that 
$ \tau_0: \infty\scat_{/X} \to \mS_{/\tau_0(X)}$ factors as
$  \infty\scat_{/X} \xrightarrow{\Env}  \infty\scat^\cocart_{/X} \xrightarrow{\tau_0} \mS_{/\tau_0(X)}. $
Hence there is a canonical equivalence 
$$ \tau_0(Y \,{\vec{\times}}_X X) \simeq \tau_0(Y)$$ over $\tau_0(X).$
\end{proof}

\begin{theorem}\label{QuillenB}
Let $\phi: Y \to X$ be a functor such that the fiber transports of the enveloping cocartesian fibration of $\phi$ are weak equivalences.  
In other words, for every morphism $s \to t $ in $X$
the induced functor $$ Y \times_X X_{//^\oplax s} \to Y \times_X X_{//^\oplax t} $$
is a weak equivalence.

Then, for every $t \in X $, the functor $\tau_0: \infty\fcat \to \mS$
preserves the oriented right fiber of $\phi: Y \to X $ over $t$.

\end{theorem}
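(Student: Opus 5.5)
Apply \cref{cocafi} to the enveloping cocartesian fibration of $\phi$, then use \cref{envlemo} to identify the $\tau_0$ of its total space with $\tau_0(Y)$.

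\textbf{Setup.} Let $E := \Env(Y) = \Fun^\oplax(\bD^1,X)\times_{\Fun^\oplax(\{0\},X)} Y \to X$ be the enveloping cocartesian fibration of $\phi$, with projection to the target. By the remark following the definition of $\Env$, its fiber over $t\in X$ is $Y\,{\vec{\times}}_X \{t\}$, the oriented right fiber of $\phi$ over $t$. The hypothesis states exactly that the fiber transports of $E\to X$ are weak equivalences. The second formulation in the statement, via $Y\times_X X_{//^\oplax s}$, should match these fibers in the same way: unwinding the oriented pullback of \cref{0desc} identifies both with $Y\times_X \Fun^\oplax(\bD^1,X)\times_X\{s\}$.

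\textbf{Key steps.}
\begin{enumerate}
\item Apply \cref{cocafi} to the cocartesian fibration $E\to X$. This gives that, for every $t\in X$, the pullback square with corners $E_t$, $E$, $\{t\}$, $X$ is sent by $\tau_0$ to a pullback in $\mS$. That is,
\[
\tau_0(Y\,{\vec{\times}}_X\{t\}) \simeq \tau_0(E)\times_{\tau_0(X)}\{t\}.
\]
\item Identify $\tau_0(E)$ over $\tau_0(X)$. By \cref{envlemo}, $\tau_0(Y\,{\vec{\times}}_X X)\simeq\tau_0(Y)$ over $\tau_0(X)$. Here $Y\,{\vec{\times}}_X X$ is precisely $\Env(Y)$ by \cref{0desc}(2) applied in $\infty\fcat$, where the cotensor with $\bD^1$ is $\Fun^\oplax(\bD^1,-)$.
\item Combine the two steps to get
\[
\tau_0(Y\,{\vec{\times}}_X\{t\})\simeq \tau_0(Y)\times_{\tau_0(X)}\{t\}.
\]
Finally, $\tau_0(\{t\})$ is a point, and the map $\tau_0(Y)\to\tau_0(X)$ is $\tau_0(\phi)$, because the equivalence of \cref{envlemo} lies over $\tau_0(X)$ and the inclusion $Y\to\Env(Y)$ lies over $X$. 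Hence the right-hand side is the fiber of $\tau_0(\phi)$ over $t$. So $\tau_0$ carries the oriented right fiber of $\phi$ over $t$ to the homotopy fiber of $\tau_0(\phi)$, which is the claim.
\end{enumerate}

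\textbf{Main obstacle.} The main delicate point is bookkeeping. I need to check that the composite equivalence is the canonical comparison map, i.e. the one induced by applying $\tau_0$ to the oriented pullback square $Y\,{\vec{\times}}_X\{t\}\to Y$, $\{t\}\to X$. Concretely, the oriented square factors as the honest pullback square $E_t\to E$ over $\{t\}\to X$ pasted with the oriented square exhibiting $E=Y\,{\vec{\times}}_X X$ (the pasting law \cref{pasting}). It therefore suffices that $\tau_0$ sends the latter oriented square to a square that is invertible on its $Y$-to-$E$ edge. This is exactly the content of \cref{envlemo}, since $Y\to E$ is the unit of the envelope adjunction of \cref{huip}, and that unit becomes an equivalence after $\tau_0$.
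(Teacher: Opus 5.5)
Your proposal is correct and is essentially the paper's proof: apply \cref{cocafi} to the enveloping cocartesian fibration $Y\,{\vec{\times}}_X X\to X$ to get a fiber sequence after $\tau_0$, then use \cref{envlemo} to identify $\tau_0(Y\,{\vec{\times}}_X X)\to\tau_0(X)$ with $\tau_0(\phi)$. Your additional bookkeeping (that $\tau_0$ inverts the unit $Y\to\Env(Y)$, together with the pasting decomposition) only makes explicit why the comparison map is the canonical one, a point the paper leaves implicit.
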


\begin{proof}

By \cref{cocafi} applied to the enveloping cocartesian fibration $ Y \,{\vec{\times}}_X X \to X $, the induced sequence
$$\tau_0(Y \,{\vec{\times}}_X \{ t\}) \to \tau_0(Y \,{\vec{\times}}_X X) \to \tau_0(X) $$
is a fiber sequence.
By \cref{envlemo} the second functor in the latter sequence identifies with the functor
$$ \tau_0(Y) \to \tau_0(X).$$
\end{proof}

\subsection{Oriented realization}

\begin{notation}
Let $\Min, \Max \subset \Fun(\bD^1, \Delta)$ be the full subcategories of order preserving maps preserving the minimum or maximum, respectively. 
    
\end{notation}

\begin{definition}

Let $\mC$ be an oriented category and $X $ a simplical object in $\mC$, i.e. a functor $\Delta^\op \to \mC.$
The oriented realization of $X$ is the $\Max$-oriented colimit.
    
\end{definition}

\begin{theorem}\label{realchar} Let $\mC$ be an oriented category that $\Delta$-indexed coends and right tensors with objects of $\Delta$.
Let $X$ be a simplical object in $\mC$.
There is a canonical equivalence
$$ \overset{\to}{| X|} \simeq \int_{[n] \in \Delta} X_n \ot [n]$$

\end{theorem}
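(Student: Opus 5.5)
The plan is to identify the weight explicitly, and then use the general fact that a weighted colimit in a tensored enriched category is a coend of tensors.

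By definition, $\overset{\to}{|X|}$ is the $\Max$-oriented colimit of $X:\Delta^\op\to\mC$. This is the right weighted colimit with weight $\W:=\W^{\Delta,\Max}_\oplax$, a functor $\Delta\to\infty\scat\subset\infty\fcat$, where $(\Delta^\op)^\circ=\Delta$. I will show that $\W$ is equivalent to the inclusion $\Delta\to\infty\scat$, $[n]\mapsto[n]$.

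Here $\Delta$ is a $1$-category, for which the Grothendieck construction is available by Example~\ref{Gro0} and its $1$-categorical analogue. By Remark~\ref{weigrot}, $\W^{\Delta}_\oplax$ corresponds to the target fibration $\Fun^\oplax(\bD^1,\Delta)\to\Delta$. Since $\Delta$ is a $1$-category, this is the ordinary arrow category, and the fiber over $[n]$ is $\Delta_{/[n]}$.

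Next, by Notation~\ref{fibinv} together with Proposition~\ref{weichara}, inverting $\Max$ replaces each fiber $\Delta_{/[n]}$ by its localization at morphisms of $\Delta_{/[n]}$ that lie in $\Max$. These localizations assemble into a cocartesian fibration over $\Delta$, naturally in $[n]$.

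For this localization I will use the last-vertex functor $\lambda_n:\Delta_{/[n]}\to[n]$, $(\sigma:[k]\to[n])\mapsto\sigma(k)$. It admits a fully faithful right adjoint $i\mapsto(\{0,\dots,i\}\subset[n])$. Indeed, a map $\sigma\to\{0,\dots,i\}$ over $[n]$ exists, and is then unique, exactly when $\sigma(k)\le i$.

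The unit $\sigma\to\{0,\dots,\sigma(k)\}$ is the corestriction $[k]\to[\sigma(k)]$, which preserves the maximum and so lies in $\Max$. Conversely, every morphism of $\Delta_{/[n]}$ in $\Max$ is sent by $\lambda_n$ to an identity. A reflective localization is the localization at any class that contains the units and is inverted, so $\Delta_{/[n]}[\Max^{-1}]\simeq[n]$.

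Compatibility with cocartesian transport along $[n]\to[m]$ is immediate: postcomposition commutes with taking the last vertex. Hence $\W\simeq([n]\mapsto[n])$ as a functor $\Delta\to\infty\scat$. By Theorem~\ref{presheaves}, this is also an identification as an (anti)oriented functor into $\infty\fcat$.

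Given the weight, the coend formula follows formally. By the enriched co-Yoneda lemma (Theorem~\ref{Yonedaext} together with the coend description used in Proposition~\ref{homsus}), we have $\W\simeq\int^{[n]\in\Delta}\Mor_\Delta(-,[n])\ot\W([n])$ in the enriched presheaf category. The functor $H\mapsto\colim^H(X)$ sends colimits and tensors of weights to colimits and tensors in $\mC$, whenever these exist. It also sends representable weights $\Mor_\Delta(-,[n])$ to $X_n$. Hence
\[
\overset{\to}{|X|}\simeq\int^{[n]\in\Delta}X_n\ot[n].
\]
Both sides exist by hypothesis, since $\mC$ has $\Delta$-indexed coends and right tensors with the $[n]$. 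Equivalently, one checks representability directly: $\R\Mor_\mC$ out of the coend is the end of $\Fun^\oplax([n],\R\Mor_\mC(X_n,T))$, and this matches the weighted limit formula defining the oriented colimit.

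I expect the main obstacle to be the bookkeeping of orientations and variances. One must check that the right tensor $X_n\ot[n]$ with the oplax Gray enrichment is the one paired with the $\Max$ weight rather than the $\Min$ weight. One must also check that the localization of the cocartesian fibration in Notation~\ref{fibinv} is computed fiberwise, so that the fibers of the localized fibration are indeed $\Delta_{/[n]}[\Max^{-1}]$. For the latter I would use Proposition~\ref{weichara}, which reduces the universal property of $\W^{\Delta,\Max}_\oplax$ to inverting, fiberwise, the maps in $\Delta_{/[n]}$ that lie over $\Max$.
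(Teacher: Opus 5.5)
Your proposal is correct and follows essentially the same route as the paper. The paper also uses Proposition~\ref{weichara} to reduce the weight to $[n]\mapsto\Delta_{/[n]}[(\overline{\Max})^{-1}]$. It identifies this with the inclusion $\Delta\subset\infty\Cat$ via the same reflective localization of $\Delta_{/[n]}$ onto the initial segments $\{0<\dots<i\}\subset[n]$, where the paper derives it from the factorization system $(\Max,\Min')$ on $\Delta$ and a relative-localization argument, and you write down the last-vertex adjunction and its strict naturality by hand. It then concludes with the end formula for mapping objects, of which your co-Yoneda formulation is the dual.
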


\begin{proof}

For every $[n] \in \Delta$ let $ \overline{\Max}$ be the set of morphisms of $\Delta_{/ [n]}$ lying over morphisms of $\Delta$ that belong to $\Max.$
By \cref{weichara} the $\Max$-oriented colimit is the colimit weighted with respect to the weight $$ \Delta \to \infty\Cat, [n] \mapsto \Delta_{//^\oplax [n]}[(\overline{\Max})^{-1}]= \Delta_{/ [n]} [(\overline{\Max})^{-1}].$$

Every morphism of $\Delta$ uniquely factors as a map in $\Delta$ preserving the maximum followed by an inert map in $\Delta$ preserving the minimum, i.e. a map of the form $[m] \simeq \{0 < ... < m \} \subset [n]$. This gives a factorization system whose left class is the class $\Max$ and whose right class is the class $\Min'$ of inert maps in $\Delta$ preserving the minimum.
This implies by \cite[Theorem 2.2.12.]{gepner2026homotopy} that the embedding $\Min' \subset \Fun(\bD^1,\Delta)$
admits a left adjoint that sends a map $f: [m] \to [n]$ to the map
$h: [m'] \to [n]$ appearing in the unique factorization $ [m] \xrightarrow{g} [m'] \xrightarrow{h} [n]$ of $f$ into a map $ [m] \to [m']$ in $\Max$ and a map $[m'] \to [n]$ in $\Min'.$ The unit of this localization is the commutative square given by $(g, \id_{[n]}).$
In particular, the local equivalences of this localization are inverted by evaluation at the target $ \Fun(\bD^1,\Delta) \to \Delta.$
So the localization $\Fun(\bD^1,\Delta) \rightleftarrows \Min'$ is relative to $\Delta$ via evaluation at the target. Hence it induces a localization $L: \Delta_{/ [n]} \rightleftarrows \Min'_{[n]} $ on the fiber over $ [n]$ whose local equivalences are maps in $\overline{\Max}.$
The localization functor sends $\overline{\Max} $ to $\overline{\Max}$
since for every maps $f: [n] \to [m],g: [m] \to [k]$ in $\Delta$ the morphism $g$ preserves the maximum if $f $ and $g f$ preserve the maximum.
Hence the latter localization induces a localization
$$ L: \Delta_{/ [n]}[(\overline{\Max})^{-1}] \rightleftarrows \Min'_{[n]}[(\overline{\Max})^{-1}]$$ whose unit is an equivalence since it belongs to
$\overline{\Max} $. So the latter localization is an equivalence.

Since the localization $\Fun(\bD^1,\Delta) \rightleftarrows \Min'$ is relative to $\Delta$ via evaluation at the target, \cite[Lemma 2.2.4.11., Remark 2.2.4.12.]{lurie.higheralgebra} implies that evaluation at the target $\Min' \to \Delta$ is a cocartesian fibration and the localization functor $ \Fun(\bD^1,\Delta) \to \Min'$ is a map of cocartesian fibrations
over $\Delta.$
Therefore also the fiber transports of evaluation at the target $\Min' \to \Delta$ send $\overline{\Max} $ to $\overline{\Max}$.
Therefore the equivalence $$ L: \Delta_{/ [n]}[(\overline{\Max})^{-1}] \simeq \Min'_{[n]}[(\overline{\Max})^{-1}]$$
is natural in $[n] \in \Delta.$

A map in $\Min'_{[n]}$ never preserves the maximum unless it is an identity because objects in $\Min'_{[n]}$ are of the form 
$[m] \simeq \{0 < ... < m \} \subset [n]$. 
Thus the canonical functor $\Min'_{[n]} \to \Min'_{[n]}[(\overline{\Max})^{-1}]$, which is natural in $[n] \in \Delta$, 
is an equivalence.

The Yoneda embedding $$[n] \to \infty\Cat_{/ [n]}, i \mapsto [n]_{/i} \simeq \{0< ... < i \} \subset [n]$$ induces an equivalence
$[n] \simeq \Min'_{[n]}.$
This equivalence is natural in $[n] \in \Delta$ because
for every map $\phi: [n] \to [m]$ in $\Delta$
the restriction $\{0< ... < i \} \subset [n] \xrightarrow{\phi} [m]$
uniquely factors as $$\{0< ... < i \} \to \{ 0 < ... < \phi(i)\} \subset [m],$$ where the first map preserves the maximum and the second map is intert and preserves the minimum. Thus the induced functor
$\Min'_{[n]} \to \Min'_{[m]}$ sends $\{0< ... < i \} \subset [n]$
to $\{ 0 < ... < \phi(i)\} \subset [m].$

We obtain a canonical equivalence
$$ [n] \simeq \Min'_{[n]} \to \Min'_{[n]}[(\overline{\Max})^{-1}] \simeq \Delta_{/ [n]}[(\overline{\Max})^{-1}],$$ natural in $[n] \in \Delta.$

Hence the $\Max$-oriented colimit is the colimit weighted with respect to the embedding $ \bj: \Delta \subset \infty\Cat.$

For every $Y \in \mC$ by the end formula for mapping spaces in ordinary functor categories \cite[Proposition 5.1.]{articles} there is a canonical equivalence
$$ \Map_{\Fun(\Delta, \infty\Cat)}(\bj, \Mor_{\mC}(-,Y) \circ X) \simeq$$$$ \int^{[n]\in \Delta} \Map_{\infty\Cat}([n], \Mor_{\mC}(X_n,Y)) $$$$ \simeq \int^{[n]\in \Delta} \Map_{\mC}(X_n \ot [n],Y) \simeq $$$$\Map_{\mC}(\int_{[n]\in \Delta} X_n \ot [n],Y). $$
\end{proof}

\begin{corollary}\label{realisorire} Let $\mC$ be an oriented category that admits $\Delta$-indexed coends and right tensors with objects of $\Delta$ and right cotensors with objects of $\Delta^\op$ and finite limits.

There is an adjunction 
\begin{equation}\label{adjori}
L: \Fun(\Delta^\op, \mC) \rightleftarrows \Fun(\bD^1, \mC): R, \end{equation}
where the left adjoint sends a simplicial object 
$X \in \Fun(\Delta^\op, \mC)$ to $ X_0 \to \overset{\to}{| X|} $
and the right sends a morphism $A \to B$ in $\mC$ to the simplicial object
$$ [n] \mapsto A^{\{0,...,n \}} \times_{B^{\{0,...,n \}}} B^{[n]}.$$
    
\end{corollary}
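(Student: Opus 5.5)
We want, for $X$ simplicial and $(A\to B)$ an arrow, a natural equivalence
$$\Map_{\Fun(\bD^1,\mC)}\big((X_0\to \overset{\to}{|X|}),(A\to B)\big)\simeq \Map_{\Fun(\Delta^\op,\mC)}(X,R(A\to B)).$$
The plan is to compute both sides via \cref{realchar} and the universal properties of right tensors, cotensors and pullbacks, reducing to a comparison of ends over $\Delta$.

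First I make $L$ a functor. By \cref{realchar}, $\overset{\to}{|X|}\simeq\int_{[n]\in\Delta}X_n\otimes[n]$. The inclusion of the vertex $\{0\}\subset[0]$ gives a map $X_0\simeq X_0\otimes[0]\to\int_{[n]}X_n\otimes[n]$ (the coend leg at $[0]$), natural in $X$. This is the object $X_0\to\overset{\to}{|X|}$ of $\Fun(\bD^1,\mC)$.

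A map from $(X_0\to \overset{\to}{|X|})$ to $(A\to B)$ consists of three pieces of data:
\begin{enumerate}[\normalfont(1)]\setlength{\itemsep}{-2pt}
\item a map $X_0\to A$;
\item a map $\overset{\to}{|X|}\to B$, which by the coend formula is a compatible family $X_n\otimes[n]\to B$, that is, $X_n\to B^{[n]}$ natural in $[n]$, using the right cotensor adjunction;
\item a homotopy between the two composites $X_0\to B$.
\end{enumerate}
I rewrite (2) as $\Map_{\Fun(\Delta^\op,\mC)}(X,B^{[\bullet]})$. Similarly, (1) is $\Map(X,A^{\{0,\dots,\bullet\}})$, because the cotensor $A^{\{0,\dots,n\}}$ with the discrete set is the $(n{+}1)$-fold product, and $[n]\mapsto A^{\{0,\dots,n\}}$ is right Kan extended from $[0]$ along $\{[0]\}\subset\Delta^\op$ (coskeletal in degree $0$). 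Thus maps $X\to A^{\{0,\dots,\bullet\}}$ correspond to maps $X_0\to A$.

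The compatibility (3) then matches the fiber product over $B^{\{0,\dots,\bullet\}}$: the restriction $B^{[n]}\to B^{\{0,\dots,n\}}$ is the vertex inclusion. Under the right-Kan-extension identification, a map $X\to B^{\{0,\dots,\bullet\}}$ is determined by its degree-$0$ component $X_0\to B$. Therefore the pullback of mapping spaces
$$\Map(X,A^{\{\bullet\}})\times_{\Map(X,B^{\{\bullet\}})}\Map(X,B^{[\bullet]})\simeq\Map(X,R(A\to B))$$
becomes $\Map(X_0,A)\times_{\Map(X_0,B)}\Map(\overset{\to}{|X|},B)$. Here the right-hand map restricts along $X_0\to\overset{\to}{|X|}$; this is exactly $\Map$ in the arrow category. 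Finite limits in $\mC$ guarantee that $R$ exists and commutes with these pullbacks.

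The main obstacle is checking that the vertex map $X_0\to\overset{\to}{|X|}$ corresponds, under the end identification, to evaluating the family $X_n\to B^{[n]}$ at $n=0$ followed by $B^{[0]}\simeq B$. Equivalently, the comparison $B^{[\bullet]}\to B^{\{0,\dots,\bullet\}}$ must be the unit of the coskeleton adjunction applied to $B^{[\bullet]}$. I would verify this by naturality in $X$: both sides are limit-preserving in $X$ viewed contravariantly, so it suffices to check on representables $X=h_{[k]}\otimes C$ (tensored with objects $C$ of $\mC$). There, both reduce to the identity $\Map(C,B^{[k]})\to\Map(C,B^{\{0,\dots,k\}})$ restricted to vertices.
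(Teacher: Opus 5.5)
Your proposal is correct and follows the paper's proof. The paper likewise combines the adjunction $X\mapsto\int_{[n]\in\Delta}X_n\otimes[n]$ $\dashv$ $B\mapsto B^{[\bullet]}$ (identified with oriented realization via \cref{realchar}) with evaluation at $[0]$ being left adjoint to $Z\mapsto Z^{\{0,\dots,\bullet\}}$, and writes the morphism object in $\Fun(\bD^1,\mC)$ as $\RMor_\mC(\overset{\to}{|X|},B)\times_{\RMor_\mC(X_0,B)}\RMor_\mC(X_0,A)$, using $\RMor$ instead of $\Map$ so that the adjunction is oriented. The compatibility you flag as the main obstacle is left implicit in the paper and is formal: under the end formula the coend leg at $[0]$ corresponds to evaluation at $[0]$, and $B^{[\bullet]}\to B^{\{0,\dots,\bullet\}}$ is by construction the unit of the evaluation/coskeleton adjunction at $B^{[\bullet]}$, whose adjoint is exactly $\alpha\mapsto\alpha_0$. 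So your reduction to objects $h_{[k]}\otimes C$ is unnecessary, and it would in any case need coproducts in $\mC$ that the hypotheses do not provide.
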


\begin{proof}

By the end formula for mapping spaces in functor categories, there is an adjunction
$$ \Fun(\Delta^\op, \mC) \rightleftarrows \mC, $$
where the left adjoint sends a simplicial object 
$X \in \Fun(\Delta^\op, \mC)$ to $ \int_{[n] \in \Delta} X_n \ot [n] $
and the right sends an object $ B \in \mC $ to the simplicial object
$$B^{[\bullet]}: [n] \mapsto B^{[n]}.$$

By \cref{realchar} the left adjoint assigns the oriented realization.

The functor $\Fun(\Delta^\op, \mC)  \to \mC$ evaluating at $[0]$
admits a right adjoint sending $Z $ to the simplicial object $$[n] \mapsto Z^{\{0,...,n \}} \simeq Z^{\times n+1}.$$

Let $X \in \Fun(\Delta^\op, \mC)$ and $f: A \to B$ a functor.
Hence there is a canonical equivalence
$$ \RMor_{\Fun(\bD^1, \mC)}(L(X), f) \simeq $$$$ \RMor_{\mC}(\overset{\to}{| X|}, B) \times_{\RMor_{\mC}(X_0, B)} \RMor_{\mC}(X_0, A) \simeq $$
$$\RMor_{\Fun(\Delta^\op,\mC)}(X, B^{[\bullet]}) \times_{\RMor_{\mC}(X_0, B)} \RMor_{\mC}(X_0, A) \simeq $$$$ \RMor_{\Fun(\Delta^\op,\mC)}(X, R(f)). $$
\end{proof}

\begin{remark}\label{equivori}

Let $$ \Fun^\surj(\bD^1,  \infty\fcat)\subset \Fun(\bD^1,  \infty\fcat)$$
be the full subcategory of essentially surjective functors.

For $\mC= \infty\fcat$ the adjunction 
$$
L: \Fun(\Delta^\op,  \infty\fcat) \rightleftarrows \Fun(\bD^1,  \infty\fcat): R $$ of
(\ref{adjori})
restricts to an adjunction 
\begin{equation}\label{adjori2}
\Fun(\Delta^\op,  \infty\fcat) \rightleftarrows \Fun^\surj(\bD^1,  \infty\fcat) \end{equation}
because for every $X \in \Fun(\Delta^\op,  \infty\fcat)$ the canonical functor $X_0 \to \int_{[n] \in \Delta} X_n \ot [n]$
is essentially surjective.

For $\mC= \infty\fcat$, Loubaton \cite{loubaton2025effectivity} proves that the right adjoint in the adjunction (\ref{adjori2}) is fully faithful.
    
\end{remark}

\begin{notation}

Let $\mV$ be a monoidal category, $A $ an associative algebra in $\mV$ and $M$ a right $A$-module in $\mV$ and $N$ a left $A$-module in $\mV.$ 
We write $\B_\bullet(M,A,N)$ for the Bar-construction of $M, N,$
which is a simplicial object in $\mV$ sending
$[n]$ to $M \ot A^{\ot n} \ot N.$

\end{notation}

For the following corollary $\mV$ is the cartesian monoidal structure on
$\infty\Cat.$

\begin{corollary}

Let $\mA$ be a monoidal $\infty$-category.
There is a canonical equivalence of $\infty$-categories:
$$|\overset{\to}{\B_\bullet(*,\mA,*)}| \simeq \B\mA.$$

\end{corollary}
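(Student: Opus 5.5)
Apply \cref{realchar} to the oriented category $\infty\fcat$, which admits $\Delta$-indexed coends and right tensors with objects of $\Delta$; the tensor of an $\infty$-category $\mC$ with $[n]$ is the Gray tensor product $\mC\boxtimes[n]$. This identifies the oriented realization with the coend
$$|\overset{\to}{\B_\bullet(*,\mA,*)}| \simeq \int_{[n]\in\Delta} \mA^{\times n}\boxtimes[n].$$
It therefore suffices to show that this coend is $\B\mA$, the one-object $\infty$-category with endomorphism $\infty$-category $\mA$. Here $\B\mA$ is $S(\mA)$ with its two objects identified, or equivalently the enriched category with one object and morphism object $\mA$.

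The key step is to compare mapping spaces out of the coend with mapping spaces out of $\B\mA$. For an $\infty$-category $Y$, the end formula used in the proof of \cref{realchar} gives
$$\Map(\textstyle\int_{[n]} \mA^{\times n}\boxtimes[n],Y)\simeq \int^{[n]}\Map(\mA^{\times n},\Fun^\oplax([n],Y)).$$
One then uses \cref{homs2} and the oriented pullback description of morphism $\infty$-categories (\cref{homs}) iteratively. Under these, $\Fun^\oplax([n],Y)$ is the $\infty$-category of strings $y_0\to\cdots\to y_n$ with oplax squares, and its fibre over $(y_0,\dots,y_n)$ is $\prod_i\Mor_Y(y_{i-1},y_i)$, fibred as a bifibration in the sense of \cref{targetfi2}. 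Maps $\mA^{\times n}\to\Fun^\oplax([n],Y)$ that are compatible across the cosimplicial structure are forced, through the face maps $d_0$ and $d_n$ (which land in $\mA^{0}=*$), to have constant endpoints. The degeneracies and inner faces then impose exactly the associativity and unit coherences of a monoidal functor $\mA\to\Mor_Y(y,y)$.

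Matching this with the universal property of $\B\mA$ would go through \cref{susp}(5) and \cref{denseinherited}. By those results, a functor $\B\mA\to Y$ is an object $y$ together with a monoidal functor $\mA\to\Mor_Y(y,y)$, viewed as a $\Theta(\infty\Cat)$-Segal object. Concretely, the natural comparison map sends $\mA^{\times n}\boxtimes[n]\to\B\mA$ by collapsing the objects of $[n]$ and composing along the string. The check is that this is an equivalence after applying $\Map(-,Y)$.

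The main obstacle is the coherent identification of the coend with the Segal data. The Gray tensor makes $\mA^{\times n}\boxtimes[n]$ a genuinely higher-dimensional shape: it carries oplax squares between a composite of $\mA$-labelled arrows and the composite of their tensor product. In $\B\mA$ these cells must become the composition witnesses rather than free cells. I would first try to handle this using the Segal presentation of \cref{denseinherited}. One observes that the coend sends $[n]$-strings to wedges $S(\mA)\vee\cdots\vee S(\mA)$, glued along the bar relations, which is exactly the Segal colimit presenting $\B\mA$.

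As a sanity check, the same identification follows formally if one uses \cref{realisorire} together with Loubaton's full faithfulness result (\cref{equivori}). The right adjoint $R$ applied to $*\to\B\mA$ is the simplicial object $[n]\mapsto\{*\}^{\times(n+1)}\times_{(\B\mA)^{\times(n+1)}}(\B\mA)^{[n]}\simeq\mA^{\times n}$, which is the bar construction. Since $*\to\B\mA$ is essentially surjective, the counit $LR(*\to\B\mA)\to(*\to\B\mA)$ is an equivalence, and this yields the claim directly.
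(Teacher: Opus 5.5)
Your last paragraph is exactly the paper's proof. The right adjoint $R$ from \cref{realisorire} sends the essentially surjective functor $*\to\B\mA$ to the simplicial object $[n]\mapsto\mA^{\times n}$, i.e.\ to $\B_\bullet(*,\mA,*)$. Loubaton's full faithfulness of $R$ on $\Fun^\surj(\bD^1,\infty\fcat)$ (\cref{equivori}) then makes the counit $LR(*\to\B\mA)\to(*\to\B\mA)$ an equivalence, and its target component is the claimed equivalence. That argument is complete, and it should be your proof rather than a sanity check.

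The route you present as primary is not a proof as it stands. Once you pass to the end formula, all the content sits in the step you yourself call the main obstacle. You would need to show that the coend $\int_{[n]}\mA^{\times n}\boxtimes[n]$, with the extra oplax cells produced by the Gray tensor, agrees with the Segal colimit presenting $\B\mA$. Equivalently, those cells must become composition and coherence witnesses rather than free cells. You only assert this (``the degeneracies and inner faces then impose exactly the associativity and unit coherences''). It is essentially the effectivity statement that Loubaton's theorem provides, and it does not follow from \cref{susp} or \cref{denseinherited} alone.

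Separately, $\B\mA$ is not $S(\mA)$ with its two objects identified. That pushout ignores the monoidal structure of $\mA$, and its endomorphism $\infty$-category is the free monoid $\coprod_{n\geq 0}\mA^{\times n}$. For $\mA=*$ it is $\bD^1$ with its endpoints glued, whose endomorphisms form the free monoid on one generator, not a point. The correct description is the other one you give: the one-object $\infty\Cat$-enriched category with endomorphism object $\mA$.
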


\begin{proof}

The right adjoint of adjunction (\ref{adjori}) sends
the essentially surjective functor $* \to \B\mA$ to the Bar-construction 
$\B_\bullet(*,\mA,*): [n] \mapsto A^{\times n}$ for the monoid $\mA$ in the cartesian monoidal structure on $\infty\Cat.$

Hence by \cref{realisorire} and \cref{equivori} there is a canonical equivalence in $\Fun(\bD^1, \infty\fcat): $
$$ (* \to |\overset{\to}{\B_\bullet(*,\mA,*)}|) \simeq L(R(* \to \B\mA)) \simeq (* \to \B\mA).$$
\end{proof}

\section{\mbox{Classification of fibrations}}

\subsection{The Grothendieck construction for enriched correspondences}

\begin{definition}
Let $\mC$ be a category that admits an initial object.    
The initial object of $\mC$ is empty if an object $X$ of $\mC$ is initial if and only if there is a morphism $X \to \emptyset$ in $\mC.$

\end{definition}

\begin{remark}
The initial object of $\mC$ is empty if for every non-initial object $X$ of $\mC$ the functor
$\Map_\mC(X,-): \mC \to \mS$ corepresented by $X$ preserves the initial object, or if the presheaf $ \Map_\mC(-,\emptyset): \mC^\op \setminus \{\emptyset\} \to \mS$ is initial.
    
\end{remark}

\begin{definition}
A monoidal category is semicartesian if the tensor unit is a final object.
    
\end{definition}

\begin{definition}
Let $\mV$ be a semicartesian presentably monoidal category such that the initial object is empty.
A $\mV$-enriched correspondence is a $\mV$-enriched
functor $\phi: \mM \to \bD^1.$
    
\end{definition}

\begin{remark}
Let $\mV$ be a semicartesian presentably monoidal category such that the initial object is empty.
A $\mV$-enriched correspondence $\phi: \mM \to \bD^1$ is a $\mV$-enriched
cocartesian fibration if for every $X \in \mM_0 $ there is an object $X' \in \mM_1 $
and a morphism $X \to X'$ in $\mM$ that induces for every $Y \in \mM_1$ an equivalence
$ \Mor_\mM(X',Y) \to \Mor_\mM(X,Y).$
Therefore a $\mV$-enriched correspondence $\mM \to \bD^1$ 
is a $\mV$-enriched cocartesian fibration
if and only if the embedding $\mM_1 \subset \mM$ admits a $\mV$-enriched left adjoint.
    
\end{remark}

\begin{theorem}\label{Groprof}
	
Let $\mV$ be a semicartesian presentably monoidal category such that the initial object is empty.
The functor $ {\mV\mathrm{-}\Cat}_{/\bD^1} \to {\mV\mathrm{-}\Cat} \times {\mV\mathrm{-}\Cat}$
is a cartesian fibration and classifies the functor
$${\mV\mathrm{-}\Cat}^\op \times {\mV\mathrm{-}\Cat}^\op \to \Cat, (\mC,\mD) \mapsto {\mV\mathrm{-}\Fun}(\mD, \mP_\mV(\mC)).$$
Moreover the canonical equivalence
$$ \{\mC,\mD\} \times_{{\mV\mathrm{-}\Cat} \times  {\mV\mathrm{-}\Cat}} {\mV\mathrm{-}\Cat}_{/\bD^1} \simeq {\mV\mathrm{-}\Fun}(\mD, \mP_\mV(\mC))$$
sends a $\mV$-enriched correspondence $\mM\to \bD^1$
to the $\mV$-enriched functor $$\mD \subset \mM \subset \mP_\mV(\mM) \to \mP_\mV(\mC).$$
The latter equivalence restricts to an equivalence
$$ \{\mC,\mD\} \times_{{\mV\mathrm{-}\Cat} \times  {\mV\mathrm{-}\Cat}} {\mV\mathrm{-}\Cat}^\cart_{/\bD^1} \simeq {\mV\mathrm{-}\Fun}(\mD, \mC), $$
where the left hand side is the full subcategory of cartesian fibrations.
\end{theorem}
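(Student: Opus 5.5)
The plan is to reduce everything to presheaves and then use the fact that $\mP_\mV(-)$ turns colimits of enriched categories into limits. Let $\mM \to \bD^1$ be a $\mV$-enriched correspondence with fibers $\mC := \mM_0$ and $\mD := \mM_1$. Because $\mV$ is semicartesian with empty initial object, the morphism objects $\Mor_\mM(Y,X)$ with $Y \in \mD$ and $X \in \mC$ are initial. So $\mM$ is determined by $\mC$, $\mD$ and the bimodule
\[
\mD \times \mC^\circ \to \mV, \qquad (Y,X) \mapsto \Mor_\mM(X,Y).
\]
By \cref{psinho} this is the same as an enriched functor $\mD \to \mP_\mV(\mC)$.

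To make this precise I would construct the comparison functor directly. It sends $\mM$ to the composite
\[
\mD \subset \mM \xrightarrow{\text{Yoneda}} \mP_\mV(\mM) \xrightarrow{i^*} \mP_\mV(\mC),
\]
where $i: \mC \subset \mM$ is the inclusion. The inverse I would build as a collage. Given $F: \mD \to \mP_\mV(\mC)$, form the full subcategory of the oplax-type gluing, namely the pullback
\[
\mP_\mV(\mC) \times_{\mP_\mV(\mC)^{\{1\}}} \mP_\mV(\mC)^{\bD^1},
\]
restricted appropriately. Concretely, I would take the full $\mV$-enriched subcategory of $\mP_\mV(\mC)^{\bD^1}$ (the cotensor, using that $\mV$-$\Cat$ admits cotensors) spanned by the arrows $\emptyset \to F(Y)$ and $\iota(X) \to \ast$. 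The emptiness of the initial object and the finality of the unit (\cref{tensorunit}) are exactly what make the mixed morphism objects come out correctly: $\Mor(\iota X, F Y)$ on one side and initial on the other. Alternatively, and more cleanly, I would realize the collage as a pushout $\mC \coprod_{\ldots} \mD$ along fully faithful functors and compute its morphism objects with \cref{homsus} and \cref{fullyfaith}.

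Next comes the fibration claim. For a morphism $(\alpha,\beta): (\mC',\mD') \to (\mC,\mD)$, I would take the cartesian lift of $\mM$ to be the restricted collage
\[
\mM' := \mC' \coprod \mD' \text{ with } \Mor(X',Y') = \Mor_\mM(\alpha X', \beta Y').
\]
Equivalently, $\mM'$ is the pullback of $\mM$ along $\mC' \coprod \mD' \to \mC \coprod \mD$ over $\partial\bD^1 \subset \bD^1$. Checking cartesianness reduces, by the morphism-object description together with \cref{monochar}-style arguments, to the statement that maps into $\mM$ over $\bD^1$ restrict along the fibers. I would also verify that the classified functor is $(\mC,\mD) \mapsto {\mV\mathrm{-}\Fun}(\mD,\mP_\mV(\mC))$, with transition maps given by precomposition with $\beta$ and restriction $\alpha^*$. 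This follows from naturality of Yoneda and restriction, as in \cref{fullyfaith}(2).

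Finally comes the restriction to cartesian fibrations. By the (dual of the) remark preceding the theorem, $\mM \to \bD^1$ is cartesian if and only if $\mC \subset \mM$ admits a $\mV$-enriched right adjoint. By \cref{adj}/representability, that holds if and only if each presheaf $\Mor_\mM(-,Y)|_{\mC}$ is representable, i.e. if and only if $F$ factors through the enriched Yoneda embedding $\mC \subset \mP_\mV(\mC)$ of \cref{Yonedaext}. Since that embedding is fully faithful, the cartesian fibrations correspond to ${\mV\mathrm{-}\Fun}(\mD,\mC)$.

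The main obstacle I expect is the essential surjectivity and full faithfulness of the fiberwise comparison. That is, showing that an enriched category over $\bD^1$ is recovered from its fibers and bimodule, in a homotopy coherent way and not just on morphism objects. I would handle this by exhibiting every such $\mM$ as the pushout of its fibers along the collage of the bimodule, using the density of $\Theta(\mV)$ (\cref{denseinherited}) to check the equivalence on Segal presheaves.
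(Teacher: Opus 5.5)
\textbf{There is a genuine gap.} The substance of the theorem is a homotopy-coherent equivalence between the fiber of ${\mV\mathrm{-}\Cat}_{/\bD^1}$ over $(\mC,\mD)$ and ${\mV\mathrm{-}\Fun}(\mD,\mP_{\mV}(\mC))$. It has to be natural enough in $(\mC,\mD)$ to give the cartesian fibration. You correctly name this as the obstacle, but you do not supply it.

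The one concrete inverse you give does not work. In the cotensor $\mP_{\mV}(\mC)^{\bD^1}$, the morphism object from $(a\to b)$ to $(a'\to b')$ is $\Mor(a,a')\times_{\Mor(a,b')}\Mor(b,b')$. This gives three wrong answers:
\begin{itemize}
\item From $(\iota X\to\ast)$ to $(\emptyset\to FY)$ you get an object mapping to $\Mor(\iota X,\emptyset)\simeq\emptyset$. By emptiness it is initial, not $F(Y)(X)$.
\item From $(\emptyset\to FY)$ to $(\iota X\to\ast)$ you get $\tu\times_{\tu}\tu\simeq\tu$, not an initial object.
\item Between $(\emptyset\to FY)$ and $(\emptyset\to FY')$ you get $\Mor_{\mP_{\mV}(\mC)}(FY,FY')$ instead of $\Mor_{\mD}(Y,Y')$.
\end{itemize}
So the mixed morphism objects land in the wrong directions, and $\mD$ is replaced by the image of $F$. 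Any correct collage must contain $\mD$ itself, and then its universal property is exactly what has to be proved.

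Your other routes do not fill this in either:
\begin{itemize}
\item The pushout alternative is unspecified: a pushout of $\mC$ and $\mD$ over what?
\item ``Exhibiting $\mM$ as the pushout of its fibers along the collage of the bimodule'' presupposes the collage.
\item Density of $\Theta(\mV)$ only says $\mM$ is determined by its nerve. Computing that nerve from $\mC$, $\mD$ and $F$ is the same coherence problem.
\item Your cartesian lift, described as a pullback of $\mM$ along $\mC'\coprod\mD'\to\mC\coprod\mD$, is not defined, since $\mM$ does not map to $\mC\coprod\mD$.
\item Morphism-object checks in the style of \cref{monochar} do not control mapping spaces of enriched functors, and those mapping spaces are what cartesianness is about.
\end{itemize}

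The missing idea, which is how the paper proceeds, is the algebraic model: a $\mV$-enriched category with object space $X\coprod Y$ is an algebra over $\Delta^{\op}_{X\coprod Y}$. The argument then runs as follows.
\begin{itemize}
\item Let $\BM=(\Delta_{/[1]})^{\op}$. The map of double categories $\theta':\BM_{X,Y}\to\Delta^{\op}_{X\coprod Y}$ induces a monoidal adjunction on $\mV$-valued functors. Its left adjoint $\theta'_!$ extends by $\emptyset$ on the $Y\times X$ component and is a fully faithful monoidal embedding.
\item On algebras, this identifies $\Alg_{\BM_{X,Y}/\Delta^{\op}}(\mV)$ with the enriched categories on $X\coprod Y$ whose morphism objects from $Y$ to $X$ are initial. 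After the reduction over $\mS_{/\{0,1\}}$, which your first paragraph essentially contains, these are exactly the correspondences.
\item So a correspondence \emph{is} a pair of enriched categories with a bimodule. The fiber is then computed by the identification of bimodules with enriched functors $\mD\to\mP_{\mV}(\mC)$ \cite[Proposition 5.11.]{HEINE2023108941new}.
\item Cartesianness comes from the same model. The functor lies over the cartesian fibration $\mS_{/\{0,1\}}\simeq\mS\times\mS$ recording object spaces, and it preserves cartesian morphisms.
\item Fiberwise it is the restriction-of-scalars cartesian fibration $\Alg_{\BM}(\mV^{\BM_{X,Y}})\to\Alg_{\Delta^{\op}}(\mV^{\Delta^{\op}_X})\times\Alg_{\Delta^{\op}}(\mV^{\Delta^{\op}_Y})$. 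Its cartesian morphisms are detected in $\Fun(X\times Y,\mV)$, hence are preserved by the fiber transports.
\end{itemize}
Once this is in place, your final step is fine: cartesian fibrations correspond to $F$ factoring through the enriched Yoneda embedding. The paper leaves that step implicit.
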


\begin{proof}

The canonical commutative square
$$\begin{xy}
\xymatrix{
{\mV\mathrm{-}\Cat}_{/\bD^1} \ar[d]^{} \ar[r]
& {\mV\mathrm{-}\Cat} \ar[d]
\\ 
\mS_{/\{0,1\}} \ar[r]& \mS,}
\end{xy}$$
where the horizontal functors are the forgetful functors and the verical functors assign the underlying space,
induces an embedding to the pullback. The essential image
precisely consists of the $\mV$-enriched categories $\mC$ with space of objects $X$ equipped with a map $X \to \{0,1\}$
such that the morphism object from an object lying over 1 to an object lying over 0 is the initial object.
This holds since the map to the pullback induces on the fiber over any $\mC \in {\mV\mathrm{-}\Cat}$ the induced map
$$\Map_{{\mV\mathrm{-}\Cat}}(\mC, \bD^1) \to \Map_\mS(X,\{0,1\})$$ whose fiber over any $\varphi\in  \Map_\mS(X,\{0,1\})$ is the space
$\Map_{{\mV\mathrm{-}\Cat}_X}(\mC, \varphi^*(\bD^1)). $
The latter space is contractible if $\mC$ is as above.

Let 
$$ {\mV\mathrm{-}\Cat}'_{X \coprod Y} \subset {\mV\mathrm{-}\Cat}_{X \coprod Y}$$
be the full subcategory of $\mV$-enriched categories 
$\mC$ with space of objects $X \coprod Y$ such that
the morphism objects from an object in $Y$ to an object in $X$ is the initial object.
Then for every small spaces $X,Y$ there is a diagram of pullback squares
$$\begin{xy}
\xymatrix{
{\mV\mathrm{-}\Cat}'_{X \coprod Y} \ar[d]^{} \ar[r]
& {\mV\mathrm{-}\Cat}_{/\bD^1} \ar[d]
\\ 
{\mV\mathrm{-}\Cat}_{X} \times {\mV\mathrm{-}\Cat}_{Y} \ar[r] \ar[d] &  {\mV\mathrm{-}\Cat} \times {\mV\mathrm{-}\Cat}\ar[d]
\\ \{(X,Y)\} \ar[r]& \mS \times \mS \simeq \mS_{/\{0,1\}}
,}
\end{xy}$$
where the upper right vertical functor takes the fibers over 0,1, the left upper vertical functor restricts along the embeddings $X \subset X \coprod Y, Y \subset X \coprod Y$.
Hence the fiber of the functor $ {\mV\mathrm{-}\Cat}_{/\bD^1} \to {\mV\mathrm{-}\Cat} \times {\mV\mathrm{-}\Cat}$ over any $(\mC,\mD) \in {\mV\mathrm{-}\Cat}_X \times {\mV\mathrm{-}\Cat}_Y$
is the fiber over $(\mC,\mD)$ of the functor $$ {\mV\mathrm{-}\Cat}'_{X \coprod Y} \to {\mV\mathrm{-}\Cat}_{X} \times {\mV\mathrm{-}\Cat}_{Y}. $$

Let $\BM:= (\Delta_{/[1]})^\op$.
There are two functors $\BM \to \Delta^\op$ taking the fiber over 0,1, respectively, and two embeddings $\Delta^\op \to \BM$ sending $[n]$ to the constant functor $[n] \to [1]$ with value 0,1, respectively, which we call left and right embedding.
Let $$\theta : \BM_{X,Y} \to \BM \times_{\Delta^\op \times \Delta^\op} \Delta^\op_{X \coprod Y}$$ be the map of left fibrations over $\BM$ 
classifying the map of presheaves on $\Delta_{/[1]}$
from the presheaf $$\varphi: [n] \to [1] \mapsto X^{\varphi^{-1}(\{0\})} \times Y^{\varphi^{-1}(\{1\})}$$ to the presheaf 
$\varphi \mapsto (X\coprod Y)^n$, whose component at any
$\varphi: [n] \to [1]$ is the canonical map
$$ X^{\varphi^{-1}(\{0\})} \times Y^{\varphi^{-1}(\{1\})} \to (X \coprod Y)^{\varphi^{-1}(\{0\}) \coprod \varphi^{-1}(\{1\})}. $$

The left and right embeddings $\Delta^\op \subset \BM$
induce equivalences $$ \Delta^\op_X \simeq \Delta^\op \times_\BM \BM_{X,Y}, \Delta^\op_Y \simeq \Delta^\op \times_\BM \BM_{X,Y}$$ over $\Delta^\op.$
The restriction $$ \Delta^\op_X \subset \BM_{X,Y} \to \Delta^\op_{X \coprod Y} $$
is the embedding induced by the embedding $X \subset X \coprod Y,$
and similar for $Y.$
Let $\theta': \BM_{X,Y} \to \Delta^\op_{X \coprod Y}$ be $\theta$ followed by the projection, which is a map of left fibrations over $\Delta^\op$
since $\BM \to \Delta^\op$ is a left fibration.

By \cite[Proposition 10.20.]{HEINE2023108941new} for every double category $\mO \to \Delta^\op$ 
the category $\Fun(\mO_{[1]}, \mV)$ carries a canonical presentably monoidal structure, denoted by $\mV^\mO$,
such that
$$\Alg_{\Delta^\op}(\mV^\mO) \simeq \Alg_{\mO/\Delta^\op}(\mV).$$
Moreover every map of double categories $\mO \to \mO'$ 
the induced functor
$\Fun(\mO'_{[1]}, \mV) \to \Fun(\mO_{[1]}, \mV)$
refines to a lax monoidal functor $\mV^{\mO'} \to \mV^\mO,$
which admits a monoidal left adjoint.
Hence using the map $$\theta': \BM_{X,Y} \to \Delta^\op_{X \coprod Y}$$ of double categories the induced functor
$$\theta'^*: \Fun((X \times X) \coprod (X \times Y) \coprod (Y \times X) \coprod (Y \times Y),\mV) \to \Fun((X \times X) \coprod (X \times Y) \coprod (Y \times Y),\mV)$$
is lax monoidal and admits a monoidal left adjoint $\theta'_!.$
Since $\theta'^* $ factors as 
$$ \Fun(X \times X,\mV) \times \Fun(X \times Y,\mV) \times \Fun(Y \times X,\mV) \times \Fun(Y \times Y,\mV) \to \Fun(X \times X,\mV) \times \Fun(X \times Y,\mV) \times
\Fun(Y \times Y,\mV), $$
the left adjoint $\theta'_!$ is induced by the embedding 
$\{\emptyset\}\subset \Fun(Y \times X, \mV)$ and so is a monoidal embedding.
The monoidal adjunction
$$ \theta'_! : \Fun((X \times X) \coprod (X \times Y) \coprod (Y \times Y),\mV) \rightleftarrows \Fun((X \times X) \coprod (X \times Y) \coprod (Y \times X) \coprod (Y \times Y),\mV): \theta'^*$$
over 
$\Fun(X \times X,\mV) \times \Fun(Y \times Y,\mV)$ induces on associative algebras an adjunction
$$ \Alg_{\BM_{X,Y}/\Delta^\op}(\mV) \rightleftarrows \Alg_{\Delta^\op_{X \coprod Y}/\Delta^\op}(\mV) \simeq {\mV\mathrm{-}\Cat}_{X \coprod Y}$$ 
over $\Alg_{\Delta^\op_{X}/\Delta^\op}(\mV) \times \Alg_{\Delta^\op_{Y}/\Delta^\op}(\mV)$ whose left adjoint is fully faithful.
The essential image of the left adjoint precisely consists of the $\Delta^\op_{X \coprod Y}$-algebras in $\mV$ whose underlying functor 
$$(X \times X) \coprod (X \times Y) \coprod (Y \times X) \coprod (Y \times Y) \to \mV$$ lands in the essential image of $\kappa,$
i.e. restricts on $Y \times X$ to the constant functor with value $\emptyset.$
Hence the left adjoint induces an equivalence
$$ \Alg_{\BM_{X,Y}/\Delta^\op}(\mV) \to {\mV\mathrm{-}\Cat}'_{X \coprod Y} $$
over $\Alg_{\Delta^\op_{X}/\Delta^\op}(\mV) \times \Alg_{\Delta^\op_{Y}/\Delta^\op}(\mV) \simeq {\mV\mathrm{-}\Cat}_X \times {\mV\mathrm{-}\Cat}_Y $.
Consequently, the fiber of the functor $$ {\mV\mathrm{-}\Cat}'_{X \coprod Y} \to {\mV\mathrm{-}\Cat}_{X} \times {\mV\mathrm{-}\Cat}_{Y} $$ over $(\mC,\mD)$ is the fiber of the functor
$$ \Alg_{\BM_{X,Y}/\Delta^\op}(\mV) \to {\mV\mathrm{-}\Cat}_{X} \times {\mV\mathrm{-}\Cat}_{Y} $$ over $(\mC,\mD).$
By \cite[Proposition 5.11.]{HEINE2023108941new} there is a canonical natural equivalence
$$ \{(\mC,\mD)\} \times_{{\mV\mathrm{-}\Cat}_{X} \times {\mV\mathrm{-}\Cat}_{Y}} \Alg_{\BM_{X,Y}/\Delta^\op}(\mV) \simeq {\mV\mathrm{-}\Fun}(\mD, \mP_\mV(\mC)).$$

We prove next that the functor 
$$ {\mV\mathrm{-}\Cat}_{/\bD^1} \to {\mV\mathrm{-}\Cat} \times {\mV\mathrm{-}\Cat}$$
is a cartesian fibration.
The latter fits into a commutative triangle 
$$\begin{xy}\label{pulbast}
\xymatrix{
{\mV\mathrm{-}\Cat}_{/\bD^1} \ar[d]^{} \ar[r]
& {\mV\mathrm{-}\Cat} \times {\mV\mathrm{-}\Cat} \ar[d]
\\ 
\mS_{/\{0,1\}} \ar[r]^\simeq &  \mS \times \mS,}
\end{xy}$$
where both vertical functors are cartesian fibrations induced by the cartesian fibration ${\mV\mathrm{-}\Cat} \to \mS$. Moreover the top horizontal functor preserves cartesian morphisms.
Hence the top horizontal functor is a cartesian fibration if it induces fiberwise cartesian fibrations whose cartesian morphisms are preserved by the fiber transports.

By the existence of the pullback square (\ref{pulbast}), the top horizontal functor induces on the fiber over $(X,Y) \in \mS \times \mS$
the functor 
$$ {\mV\mathrm{-}\Cat}'_{X \coprod Y} \to {\mV\mathrm{-}\Cat}_{X} \times {\mV\mathrm{-}\Cat}_{Y}.$$
As we have shown, the latter functor is equivalent to 
$ \gamma: \Alg_{\BM_{X,Y}/\Delta^\op}(\mV) \to \Alg_{\Delta^\op_{X}/\Delta^\op}(\mV) \times \Alg_{\Delta^\op_{Y}/\Delta^\op}(\mV).$ The equivalences $$ \Delta^\op_X \simeq \Delta^\op \times_\BM \BM_{X,Y}, \Delta^\op_Y \simeq \Delta^\op \times_\BM \BM_{X,Y}$$ over $\Delta^\op$ give rise to a lax monoidal functor
$$ \mV^{\BM_{X,Y}} \to \mV^{\Delta^\op_X} \times \mV^{\Delta^\op_Y}.$$
The latter induces a functor 
$$ \Alg_\BM(\mV^{\BM_{X,Y}}) \to \Alg_{\Delta^\op}(\mV^{\Delta^\op_X}) \times \Alg_{\Delta^\op}(\mV^{\Delta^\op_Y}), $$
which is equivalent to $\gamma$ and which is a cartesian fibration. Moreover the $\gamma$-cartesian morphisms are detected by the forgetful functor
$$  \Alg_{\BM_{X,Y}/\Delta^\op}(\mV) \to \Fun(X \times Y,\mV).$$
This characterization implies that the $\gamma$-cartesian morphisms are preserved by the fiber transports. 
\end{proof}

\begin{corollary} Let $\mV$ be a semicartesian presentably monoidal category such that the initial object is empty.
A $\mV$-enriched correspondence $\mM \to \bD^1$ is a $\mV$-enriched cocartesian and cartesian fibration if and only if it classifies a $\mV$-enriched adjunction $\mM_0 \rightleftarrows \mM_1.$
    
\end{corollary}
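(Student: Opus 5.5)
The plan is to reduce the statement to \cref{Groprof} together with the enriched Yoneda lemma. Let $\mM \to \bD^1$ be a $\mV$-enriched correspondence with fibers $\mC := \mM_0$ and $\mD := \mM_1$. By \cref{Groprof} it corresponds to the $\mV$-enriched functor
$$\Psi\colon \mD \to \mP_\mV(\mC), \qquad Y \mapsto \Mor_\mM(-,Y)|_{\mC}.$$
Moreover, $\mM$ is a cartesian fibration exactly when $\Psi$ factors through the Yoneda embedding $\mC \subset \mP_\mV(\mC)$. In that case it lands in a $\mV$-enriched functor $G\colon \mD \to \mC$ with natural equivalences $\Mor_\mM(X,Y) \simeq \Mor_\mC(X, G(Y))$ for $X \in \mC$ and $Y \in \mD$. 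By the remark preceding \cref{Groprof}, applied in the dual form via $(-)^\circ$, this is equivalent to the embedding $\mC \subset \mM$ admitting a $\mV$-enriched right adjoint that sends $Y$ to $G(Y)$.

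Dually, again by that remark, $\mM$ is a cocartesian fibration if and only if the embedding $\mD \subset \mM$ admits a $\mV$-enriched left adjoint. Unwinding this gives, for every $X \in \mC$, an object $F(X) \in \mD$ and equivalences $\Mor_\mD(F(X),Y) \simeq \Mor_\mM(X,Y)$ natural in $Y \in \mD$. One can also obtain this by applying \cref{Groprof} to $\mM^\circ \to (\bD^1)^\circ \simeq \bD^1$, which swaps the roles of the fibers; here one needs $\mV^\rev$ to again be semicartesian with empty initial object, which is clear.

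Now the two directions. Suppose $\mM$ is bicartesian. Combining the two descriptions gives equivalences
$$\Mor_\mD(F(X),Y) \simeq \Mor_\mM(X,Y) \simeq \Mor_\mC(X,G(Y)),$$
natural in $Y$ and, via the cocartesian description, natural in $X$ as well. By the criterion for enriched right adjoints stated earlier (representability of $\Mor_\mD(\phi(-),Y)$), this says precisely that $G$ is right adjoint to the $\mV$-enriched functor $F$. So $\mM$ classifies the adjunction $F \dashv G$.

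Conversely, suppose the cartesian fibration $\mM$ classifies a functor $G\colon \mD \to \mC$, and $G$ admits a $\mV$-enriched left adjoint $F$. Then $\Mor_\mM(X,Y) \simeq \Mor_\mC(X,G(Y)) \simeq \Mor_\mD(F(X),Y)$, so $X \to F(X)$ (the image of the unit) exhibits the cocartesian lift required by the remark. Hence $\mM$ is also cocartesian.

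The main obstacle is making the naturality precise: the equivalence $\Mor_\mM(X,Y) \simeq \Mor_\mD(F(X),Y)$ must be natural in $X$ as an enriched functor, and it must be compatible with the composition in $\mM$ so that it agrees with the Yoneda identification coming from $\Psi$. I would handle this by never comparing by hand. Instead, I would observe that both conditions amount to the enriched profunctor $\Mor_\mM(-,-)\colon \mC^\circ \otimes \mD \to \mV$ being representable on each side. The statement then reduces to the standard fact that a profunctor representable in both variables is the same as an adjunction, which follows from the enriched Yoneda lemma (the characterization of right adjoints by representability cited above).
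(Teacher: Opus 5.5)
Your proposal is correct and follows essentially the same route as the paper: use \cref{Groprof} (for $\mM$ and, dually, for $\mM^\circ$) to identify cartesianness and cocartesianness with the two curryings of the profunctor $\Mor_\mM(-,-)$ factoring through the respective Yoneda embeddings. Then invoke the fact that a profunctor representable on both sides is the same as an enriched adjunction, which the paper takes directly from \cite[Remark 2.72.]{heine2024bienriched}.

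Your matching of cartesian with $G\colon \mM_1 \to \mM_0$ and cocartesian with $F\colon \mM_0 \to \mM_1$ agrees with \cref{Groprof} and the remark preceding it. The paper's proof labels the two the other way round, which is harmless because the statement is symmetric.
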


\begin{proof}

By \cref{Groprof} a $\mV$-enriched correspondence $\mM \to \bD^1$ classifies a $\mV$-enriched functor $$\mM_1 \to {\Fun_\mV}(\mM^\circ_0,\mV)$$ corresponding to a $\mV^\rev$-enriched functor $\mM_0^\circ \to {\mV\mathrm{-}\Fun}(\mM_1,\mV)$. 
The correspondence classifies a $\mV$-enriched cocartesian fibration
$\mM \to \bD^1$ if and only if the $\mV$-enriched functor $\mM_1 \to {\Fun_\mV}(\mM^\circ_0,\mV)$ factors as $G: \mM_1 \to \mM_0$.
The correspondence classifies a $\mV$-enriched cartesian fibration
$\mM \to \bD^1$ if and only if the $\mV$-enriched functor $\mM_0^\circ \to {\mV\mathrm{-}\Fun}(\mM_1,\mV)$ factors as $F^\circ: \mM_0^\circ \to \mM_1^\circ $.

By \cite[Remark 2.72.]{heine2024bienriched} a $\mV$-enriched functor $G: \mM_1 \to \mM_0$ admits a $\mV$-enriched left adjoint if and only if the $\mV$-enriched functor $\mM_0^\circ \to {\mV\mathrm{-}\Fun}(\mM_1,\mV)$ factors as $F^\circ: \mM_0^\circ \to \mM_1^\circ $, and dually
a $\mV$-enriched functor $F: \mM_0 \to \mM_1$ admits a $\mV$-enriched right adjoint if and only if the $\mV$-enriched functor $\mM_1 \to {\Fun_\mV}(\mM^\circ_0,\mV)$ factors as $G: \mM_1 \to \mM_0$.
This proves the result.
\end{proof}

\begin{corollary}\label{Grsuspo} Let $\mV$ be a semicartesian presentably monoidal category such that the initial object is empty
and $A \in \mV.$
The functor $ {\mV\mathrm{-}\Cat}_{/S(A)} \to {\mV\mathrm{-}\Cat} \times {\mV\mathrm{-}\Cat}$
taking the fibers over 0,1 is a cartesian fibration and classifies the functor
$${\mV\mathrm{-}\Cat}^\op \times {\mV\mathrm{-}\Cat}^\op \to \Cat, (\mC,\mD) \mapsto {\mV\mathrm{-}\Fun}(\mD, {\Fun_\mV}(\mC^\circ,\mV_{/A})).$$

\end{corollary}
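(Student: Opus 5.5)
The plan is to reduce to \cref{Groprof} by pulling back along the unique functor $S(A) \to \bD^1$ and then identifying the extra data with a slice. The functor $S(A)\to\bD^1$ is the suspension of $A \to \tu_\mV$, using that $\tu_\mV$ is final and $S(\tu_\mV)\simeq \bD^1$. Composition with it gives a functor
$$ {\mV\mathrm{-}\Cat}_{/S(A)} \to {\mV\mathrm{-}\Cat}_{/\bD^1} $$
over ${\mV\mathrm{-}\Cat} \times {\mV\mathrm{-}\Cat}$. It preserves the fibers over $0$ and $1$, since the objects of $S(A)$ are $\{0,1\}$ and the endomorphism objects are $\tu$.

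First I will describe a $\mV$-enriched functor $\mM \to S(A)$ as a correspondence $\mM \to \bD^1$ together with extra data. This data is a compatible family of morphisms $\Mor_\mM(X,Y) \to A$ in $\mV$ for $X\in\mM_0$, $Y\in\mM_1$, compatible with the left action of $\Mor_{\mM_0}$ and the right action of $\Mor_{\mM_1}$.

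To make this precise I will rerun the algebra argument from the proof of \cref{Groprof}. There, ${\mV\mathrm{-}\Cat}'_{X\coprod Y}$ was identified with $\Alg_{\BM_{X,Y}/\Delta^\op}(\mV)$, that is, with triples $(\mC,\mD,$ bimodule$)$. The same analysis over $S(A)$ identifies the fiber of ${\mV\mathrm{-}\Cat}_{/S(A)}$ over $(X,Y)$ with $\BM_{X,Y}$-algebras in $\mV$ together with a map of bimodules to the constant bimodule $A$. Here $A$ is viewed as a $(\tu,\tu)$-bimodule via the maps $\Mor\to\tu$, which uses semicartesianness. Equivalently, these are $\BM_{X,Y}$-algebras in the slice $\mV_{/A}$ over the fixed categories $\mC,\mD$.

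Then \cite[Proposition 5.11.]{HEINE2023108941new}, applied as in \cref{Groprof}, identifies the fiber over $(\mC,\mD)$ with ${\mV\mathrm{-}\Fun}(\mD, {\Fun_\mV}(\mC^\circ,\mV_{/A}))$. The input is that presheaves valued in $\mV_{/A}$ correspond to presheaves $P$ with a map $P \to$ const$_A$, and $\mV_{/A}$ is presentably left $\mV$-tensored via $V\otimes(B\to A) = (V\otimes B \to B \to A)$, again using that $\tu$ is final.

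Finally, the proof that the functor is a cartesian fibration goes through verbatim. Base change along $(\mC',\mD')\to(\mC,\mD)$ restricts the bimodule and composes with the map to $A$. So cartesian morphisms are again detected on the underlying $\Fun(X\times Y,\mV)$ component and are preserved by fiber transports, exactly as in \cref{Groprof}.

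The main obstacle is the second step: making precise that the fiber over $(\mC,\mD)$ of enriched categories over $S(A)$ is the category of bimodules over $A$, naturally in $(\mC,\mD)$. I would handle it by checking that giving $\mM\to S(A)$ over the fixed object map to $\{0,1\}$ is the same as giving a $\BM_{X,Y}$-algebra map to the $\BM_{X,Y}$-algebra underlying $S(A)$. Then I would note that $\Alg_{\BM}(\mV)_{/S(A)} \simeq \Alg_\BM(\mV_{/S(A)})$, which reduces to slice monoidal categories of $\mV^{\BM_{X,Y}}$.
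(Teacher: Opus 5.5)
Your proposal is correct, but it takes a more laborious route than the paper: you reopen the proof of \cref{Groprof}, whereas the paper only uses its statement. The paper makes three observations.
\begin{enumerate}
\item ${\mV\mathrm{-}\Cat}_{/S(A)} \simeq ({\mV\mathrm{-}\Cat}_{/\bD^1})_{/S(A)}$.
\item The fibers of $S(A)$ over $0,1$ are the final object $*$, because $\tu$ is final. So the base becomes ${\mV\mathrm{-}\Cat}_{/*}\times{\mV\mathrm{-}\Cat}_{/*}\simeq{\mV\mathrm{-}\Cat}\times{\mV\mathrm{-}\Cat}$.
\item A formal lemma: for a cartesian fibration $p\colon E\to B$ and $e\in E$, the functor $E_{/e}\to B_{/p(e)}$ is again a cartesian fibration. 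Its fiber over $f\colon b\to p(e)$ is the slice of $E_b$ over the cartesian pullback $f^*e$.
\end{enumerate}
In this situation $f^*e$ is the cartesian lift $S(A)'$, which corresponds to the constant functor $\underline{A}$. So the fiber is ${\mV\mathrm{-}\Fun}(\mD,\mP_\mV(\mC))_{/\underline{A}}\simeq{\mV\mathrm{-}\Fun}(\mD,{\Fun_\mV}(\mC^\circ,\mV_{/A}))$, and the cartesian fibration property comes for free.

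You instead do the slicing one level down, at $\BM_{X,Y}$-algebras. This costs an extra lemma, $\Alg(\mathcal{W})_{/R}\simeq\Alg(\mathcal{W}_{/R})$ for the monoidal categories $\mV^{\BM_{X,Y}}$, and a rerun of the fiberwise cartesianness argument. In exchange you get explicit descriptions of the fiber (bimodules over $(\mC,\mD)$ with a map to the constant bimodule $A$) and of the cartesian morphisms. Your first paragraph, which composes with $S(A)\to\bD^1$, already sets up the paper's slice, so the detour through algebras can be avoided.

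One imprecision: $\mV_{/A}$ is not monoidal, so ``$\BM_{X,Y}$-algebras in $\mV_{/A}$'' has to mean the slice of bimodules over the constant bimodule $A$. Accordingly, \cite[Proposition 5.11.]{HEINE2023108941new} should be applied to $\mV$ itself and then followed by slicing over the image $\underline{A}$ of that bimodule. That is exactly the paper's final step.
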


\begin{proof}

By \cref{Groprof} the functor $ {\mV\mathrm{-}\Cat}_{/\bD^1} \to {\mV\mathrm{-}\Cat} \times {\mV\mathrm{-}\Cat}$ is a cartesian fibration.
Hence passing to slice categories the functor
$$ {\mV\mathrm{-}\Cat}_{/S(A)} \simeq ({\mV\mathrm{-}\Cat}_{/\bD^1})_{/S(A)} \to {\mV\mathrm{-}\Cat_{/*}} \times {\mV\mathrm{-}\Cat}_{/*} \simeq {\mV\mathrm{-}\Cat} \times {\mV\mathrm{-}\Cat}$$ is a cartesian fibration.
The latter is the functor $$ {\mV\mathrm{-}\Cat}_{/S(A)} \to {\mV\mathrm{-}\Cat}_{/\bD^1} \to {\mV\mathrm{-}\Cat} \times {\mV\mathrm{-}\Cat},$$ which identifies with the functor taking the fibers over 0,1.

By \cref{Groprof} there is a canonical equivalence
$$ \{\mC,\mD\} \times_{{\mV\mathrm{-}\Cat} \times  {\mV\mathrm{-}\Cat}} {\mV\mathrm{-}\Cat}_{/\bD^1} \simeq {\mV\mathrm{-}\Fun}(\mD, \mP_\mV(\mC)).$$
Let $S(A)' \to S(\mA)$ be a cartesian lift lying over the $\mV$-enriched functors
$\mC \to *, \mD \to *.$
The latter induces an equivalence
$$ {\mV\mathrm{-}\Fun}(\mD, {\Fun_\mV}(\mC^\circ,\mV_{/A})) \simeq $$$$ {\mV\mathrm{-}\Fun}(\mD, \mP_\mV(\mC))_{/\underline{A}} \simeq $$
$$ (\{(\mC,\mD)\} \times_{{\mV\mathrm{-}\Cat} \times {\mV\mathrm{-}\Cat}} {\mV\mathrm{-}\Cat}_{/\bD^1})_{/S(A)'} \simeq $$$$ \{(\mC,\mD)\} \times_{{\mV\mathrm{-}\Cat}_{/\mC} \times {\mV\mathrm{-}\Cat}_{/\mD} } ({\mV\mathrm{-}\Cat}_{/\bD^1})_{/S(A)'} \to $$
$$ \{(\mC,\mD)\} \times_{{\mV\mathrm{-}\Cat} \times {\mV\mathrm{-}\Cat}} ({\mV\mathrm{-}\Cat}_{/\bD^1})_{/S(A)} \simeq $$$$\{(\mC,\mD)\} \times_{{\mV\mathrm{-}\Cat} \times {\mV\mathrm{-}\Cat}} {\mV\mathrm{-}\Cat}_{/S(A)}.$$
Here $\underline{A} \in {\mV\mathrm{-}\Fun}(\mD, {\Cat_\mV}(\mC^\circ,\mV)) $ is the image of $A$ under the diagonal functor
$ \mV \to {\mV\mathrm{-}\Fun}(\mD, {\Cat_\mV}(\mC^\circ,\mV)). $
\end{proof}

\begin{corollary}\label{corrbifib}
Let $\mV$ be a semicartesian presentably monoidal category such that the initial object is empty
and $A \in \mV.$
The functor ${\mV\mathrm{-}\Cat_{/S(A)}} \to {\mV\mathrm{-}\Cat} \times {\mV\mathrm{-}\Cat} $ taking the fibers over 0,1 is a bifibration.
    
\end{corollary}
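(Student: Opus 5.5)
Cartesianness over ${\mV\mathrm{-}\Cat} \times {\mV\mathrm{-}\Cat}$ is already given by \cref{Grsuspo}. By the definition of a bifibration, what remains is to show the functor is also compatible with the cocartesian direction. Concretely, I would prove that the composite ${\mV\mathrm{-}\Cat}_{/S(A)} \to {\mV\mathrm{-}\Cat}$ taking the fiber over $1$ is a cartesian fibration (this is the source projection, contravariant). I would also prove that the composite taking the fiber over $0$ becomes, over each fixed fiber over $1$, a cocartesian fibration. Finally, these cartesian and cocartesian morphisms should be compatible in the sense required of a map of cartesian fibrations over the first factor and of cocartesian fibrations over the second.

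To produce the cocartesian lifts I would use the classification in \cref{Grsuspo}. Fix $\mD$ and write the fiber as ${\mV\mathrm{-}\Fun}(\mD, {\Fun_\mV}(\mC^\circ,\mV_{/A}))$. A $\mV$-enriched functor $\mC \to \mC'$ induces restriction ${\Fun_\mV}(\mC'^\circ,\mV_{/A}) \to {\Fun_\mV}(\mC^\circ,\mV_{/A})$, and this is how the cartesian transport in the $\mC$-variable arises. To obtain cocartesian transport, I would show that this restriction admits a $\mV$-enriched left adjoint, namely enriched left Kan extension $\alpha_!$. This holds because $\mV_{/A}$ is presentably tensored, the tensor unit is final, and by \cref{Yonedaext} and \cref{psinho} left Kan extension exists on presheaves valued in $\mV$. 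It descends to $\mV_{/A}$ because $\alpha_!$ of the constant presheaf on the final object maps to the constant presheaf on $\tu$, using semicartesianness. A functor classifying a cartesian fibration whose transport functors all admit left adjoints is a bicartesian fibration, and the cocartesian lifts are exactly the units. This yields the bifibration structure fiberwise in $\mC$.

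I would carry out the argument in the following order. First, reduce from ${\mV\mathrm{-}\Cat}\times{\mV\mathrm{-}\Cat}$ to the two one-variable statements by fixing one variable; the cartesian transport in the other variable is restriction along functors $\mD' \to \mD$, which commutes with postcomposition by $\alpha_!$. Second, construct the left adjoints $\alpha_!$ and verify that they respect the slice over $A$. Third, check the compatibility: the cartesian transport in $\mD$ must preserve $\mC$-cocartesian morphisms. This reduces to the Beck--Chevalley-type statement that precomposition with $\mD'\to\mD$ commutes with postcomposition by $\alpha_!$, and that statement is formal since these two operations act on different variables.

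The main obstacle I expect is the variance and handedness bookkeeping. Which factor carries the cartesian structure and which the cocartesian one depends on the orientation conventions of \cref{Groprof}, namely that morphisms go from fiber $0$ to fiber $1$ and that the morphism objects from $1$ to $0$ are empty. A related subtlety is that the structure on the other side is cocartesian over ${\mV\mathrm{-}\Cat}^\op$-type data. I would settle this first on $A=\tu$, i.e.\ $S(A)=\bD^1$, using \cref{Groprof} directly, and only then pass to general $A$ by the slice argument used in the proof of \cref{Grsuspo}.
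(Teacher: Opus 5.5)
Your plan is essentially the paper's proof. Cartesianness comes from \cref{Grsuspo}, and for each fixed fiber $\mD$ over $1$ the cocartesian lifts in the $\mC$-variable come from postcomposing with the $\mV$-enriched left adjoint of restriction ${\Fun_\mV}(\mC^\circ,\mV_{/A})\to{\Fun_\mV}(\mB^\circ,\mV_{/A})$; the paper cites Heine's Proposition 5.20 for this adjoint and Lurie/Heine for the reduction to transports, while you additionally spell out the formal Beck--Chevalley compatibility.

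One step needs a different justification. $\alpha_!$ need not preserve constant presheaves: for $\mV=\mS$ and $\alpha:\ast\sqcup\ast\to\ast$ it sends $\underline{\ast}$ to two points. Nothing of the sort is needed, though. Under ${\Fun_\mV}(\mC^\circ,\mV_{/A})\simeq\mP_\mV(\mC)_{/\underline{A}}$ and $\alpha^*\underline{A}\simeq\underline{A}$, the left adjoint on slices is simply $\alpha_!$ followed by the counit $\alpha_!\alpha^*\underline{A}\to\underline{A}$.
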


\begin{proof}

By \cref{Grsuspo} the functor ${\mV\mathrm{-}\Cat_{/S(A)}} \to {\mV\mathrm{-}\Cat} \times {\mV\mathrm{-}\Cat} $ taking the fibers over 0,1 is a cartesian fibration.
Thus by \cite[Corollary 5.2.2.4.]{lurie.HTT} and \cite[Lemma 2.42.]{heine2023monadicity} it is a bifibration if and only if for every $\mV$-enriched category $\mD$ and $\mV$-enriched functor $\phi: \mB \to \mC$ corresponding to a functor $\theta: \bD^1 \to {\mV\mathrm{-}\Cat}$
the functor $\{\mD\} \times_{{\mV\mathrm{-}\Cat}} {\mV\mathrm{-}\Cat_{/S(A)}} \times_{{\mV\mathrm{-}\Cat}} \bD^1 \to \bD^1 $ is a cocartesian fibration. 
By \cref{Grsuspo} the functor $\theta$ is a cartesian fibration that classifies the functor
$$\phi^* : {\mV\mathrm{-}\Fun}(\mD, {\Fun_\mV}(\mC^\circ,\mV_{/A})) \to {\mV\mathrm{-}\Fun}(\mD, {\Fun_\mV}(\mB^\circ,\mV_{/A})). $$
Hence the functor $\theta$ is also a cocartesian fibration if and only 
if the functor $\phi^*$ admits a left adjoint.
By \cite[Proposition 5.20.]{heine2024bienriched} the $\mV$-enriched functor 
$ {\Fun_\mV}(\mC^\circ,\mV_{/A}) \to {\Fun_\mV}(\mB^\circ,\mV_{/A})$
admits a $\mV$-enriched left adjoint. This implies that the functor
$\phi^*$ admits a left adjoint.
\end{proof}

\subsection{Straightening over a suspension}

\begin{proposition}\label{glue}
Let $1 \leq \m \leq \infty.$
We assume that the $m-1$-categorical Grothendieck construction is an equivalence.
Let $\n \geq 2$ and $\mA_1, ..., \mA_n$ be $m-1$-categories and
$\mM_1 \to S(\mA_1), ..., \mM_n \to S(\mA_n)$ cocartesian fibrations
between $m$-categories equipped with equivalences
$$\mM_\bi \cap \mM_{\bi+1}:= \{\bi\}\times_{S(\mA_\bi)} \mM_\bi \simeq \{\bi\}\times_{S(\mA_{\bi+1})} \mM_{\bi+1}, $$
where we denote the set of objects of $S(\mA_1) \vee \ldots \vee S(\mA_n)$
by $\{0,\ldots,n\}.$
Let $\mN$ be a $m$-category.

\begin{enumerate}[\normalfont(1)]\setlength{\itemsep}{-2pt}
\item The induced functor
\[
\mM:= \mM_1 \coprod_{\mM_1 \cap \mM_2}\cdots\coprod_{\mM_{n-1} \cap \mM_{n}} \mM_n \to S(\mA_1) \vee \ldots \vee S(\mA_n)
\]
is a cocartesian fibration.

\vspace{1mm}
\item A functor $\mM \to \mN$ over $ S(\mA_1) \vee \ldots \vee S(\mA_n) $
is a map of cocartesian fibrations over $ S(\mA_1) \vee \ldots \vee S(\mA_n) $ if and only if all restrictions to $\mM_i$ are maps of cocartesian fibrations over $ S(\mA_i) $ for every $i=1,\ldots, n$.

\vspace{1mm}
\item Let $\mM \to S(\mA_1\ldots\mA_n) $ be a cocartesian fibration.
The induced functor
\[
(S(\mA_1) \times_{S(\mA_1) \vee \ldots \vee S(\mA_n)} \mM) \coprod_{\{1\} \times_{S(\mA_1) \vee \ldots \vee S(\mA_n)} \mM} \cdots \coprod_{\{n-1\} \times_{S(\mA_1) \vee \ldots \vee S(\mA_n)} \mM} (S(\mA_n) \times_{S(\mA_1) \vee \ldots \vee S(\mA_n)} \mM) \to \mM
\]
is an equivalence.
\end{enumerate}
\end{proposition}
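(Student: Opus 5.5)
Write $W:=S(\mA_1)\vee\cdots\vee S(\mA_n)$ throughout. The plan is to reduce all three statements to an explicit description of the morphism $\infty$-categories of the iterated pushout $\mM$. The pushouts are taken along the full subcategories $\mM_i\cap\mM_{i+1}$, which are fibers over objects and hence full in both $\mM_i$ and $\mM_{i+1}$ (the endomorphism category of $i$ in $W$ is trivial). So \cref{fullyfaith} applies with $\mV=\infty\Cat$ under the cartesian product. We argue by induction on $n$, gluing one suspension at a time. Each inclusion $\mM_{\le i}\subset\mM_{\le i+1}$ is then fully faithful. Moreover, \cref{homsus} computes the mapping objects between an object $X$ over $k\le i$ and an object $Y$ over $\ell>i$ as the coend
\[
\int_{T\in \mM_i\cap\mM_{i+1}}\Mor_{\mM_{\le i}}(X,T)\times \Mor_{\mM_{i+1}}(T,Y).
\]
Mapping objects in the reverse direction are empty, since $W$ has no morphisms going backward. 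We use the cocartesian structure to simplify this coend. Over $S(\mA_{i+1})$ every morphism $T\to Y$ factors through a cocartesian lift, so $\Mor_{\mM_{i+1}}(-,Y)$ restricted to the fiber over $i$ is computed as in \cref{Groprof}/\cref{Grsuspo}. Concretely, via the straightening of $\mM_{i+1}\to S(\mA_{i+1})$, which is available because the $(m-1)$-categorical Grothendieck construction is assumed to be an equivalence, this functor is the composite of the transport $\mA_{i+1}\to\Fun(\mM_i\cap\mM_{i+1},\mM_{i+1}\cap\mM_{i+2})$ with the mapping objects in the fiber. The coend then collapses by the enriched co-Yoneda lemma. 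The result should be
\[
\Mor_\mM(X,Y)\simeq \Mor_{\mM_{\le i}}(X,-)\times_{\ldots}\cdots,
\]
a relative-tensor/composite description showing that $\Mor_\mM(X,Y)\to\Mor_W(k,\ell)\simeq \mA_{k+1}\times\cdots\times\mA_\ell$ is again a cocartesian fibration of $(m-1)$-categories.

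For (1) we use \cref{cocarto}. The 1-cocartesian lifts come from composing cocartesian lifts in the individual $\mM_i$. Each such lift stays cocartesian in $\mM$ because of the pullback squares \eqref{filler}, which can be read off from the coend formula. For the higher-dimensional part, the description above shows that each functor $\mM$ induces on morphism $\infty$-categories is a cocartesian fibration: over $\mA_{k+1}\times\cdots\times\mA_\ell$ it is an iterated composite of the fibrations $\Mor_{\mM_j}(-,-)\to\mA_j$, each of which is cocartesian by \cref{cocarto} applied to $\mM_j$. Pre- and postcomposition preserve cocartesian cells because they do so in each $\mM_j$ and are computed factorwise. The constraint that the total space is an $m$-category is inherited from \cref{dimensio}-type arguments.

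(2) follows from the same description. Cocartesian cells of $\mM$ over $W$ are generated under composition (\cref{elemen}) and pre/postcomposition by cocartesian cells lying in some $\mM_i$. So preserving cocartesian cells can be checked on each $\mM_i$, and the converse is immediate from \cref{elemen}, using that each $\mM_i$ is a pullback of $\mM$ and hence detects cocartesian cells. For (3), the gluing of the pullbacks is, by (1), a cocartesian fibration over $W$. The comparison map to $\mM$ is a map of cocartesian fibrations by (2), and it is fiberwise the identity over each object $0,\ldots,n$, so it is an equivalence by \cref{fiberwiseeq}.

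The main obstacle is the coend computation: we have to identify $\Mor_\mM(X,Y)$ as a cocartesian fibration over the product of the $\mA_j$ and check the cocartesian-preservation condition of \cref{cocarto}. The first thing to try is to compute it via \cref{homsus} together with the representable description $\Mor_{\mM_{i+1}}(T,Y)\simeq \Mor_{\mathrm{fiber}}(f_!T,Y)$ fibered over $\mA_{i+1}$, and then apply co-Yoneda.
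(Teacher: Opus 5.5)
Your skeleton matches the paper's proof. You use \cref{fullyfaith} and \cref{homsus} to write $\Mor_\mM(X,Y)$ as the coend $\int^{T\in F_i}\Mor_{\mM_{\le i}}(X,T)\times\Mor_{\mM_{i+1}}(T,Y)$ over $F_i:=\mM_i\cap\mM_{i+1}$. You then reduce (1) via \cref{cocarto} to facts about these morphism $(m-1)$-categories and about lifts in $\mM_i$ staying cocartesian, and you deduce (3) from (2) and \cref{fiberwiseeq}.

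The step you flag as the main obstacle is not carried out, and the mechanism you propose does not work as stated. The co-Yoneda lemma collapses $\int^T A(T)\times B(T)$ only when one factor is (co)representable on $F_i$. Neither $\Mor_{\mM_{\le i}}(X,-)$ nor $\Mor_{\mM_{i+1}}(-,Y)$ is. Only their fibers over a point $b$ of $\mA_{k+1}\times\cdots\times\mA_i$, resp.\ $a\in\mA_{i+1}$, are: namely $\Mor_{F_i}(b_!X,-)$ and $\Mor_{F_{i+1}}(a_!(-),Y)$. To use this you must know two things: that forming fibers commutes with the coend, and that the coend is a fibration at all. Neither is formal, because pulling back to a point of an arbitrary $(m-1)$-category does not preserve colimits in the slice. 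The same objection applies to your claim that the pullback squares \eqref{filler} ``can be read off from the coend formula''. Your description of the cocartesian cells of $\mM$ used for (2) rests on this same missing computation. The displayed formula $\Mor_\mM(X,Y)\simeq\Mor_{\mM_{\le i}}(X,-)\times_{\ldots}\cdots$ is not an actual statement.

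The missing input is \cref{cocartcolim}: the inclusion of (co)cartesian fibrations into the slice preserves small colimits. You also need that taking the fiber over an object preserves small colimits in $(m-1)\Cat^{\cocart}_{/\mC}$, since under the hypothesis this is evaluation in $\Fun(\mC,(m-1)\Cat)$. This is exactly where the $(m-1)$-categorical Grothendieck hypothesis enters, through \cref{laxgro}. The paper argues as follows.
\begin{itemize}
\item The coend is a colimit, in the slice over $\mA_{k+1}\times\cdots\times\mA_\ell$, of products of hom-fibrations.
\item Its transition maps are pre- and postcomposition, which are maps of fibrations by \cref{cocarto} applied to each piece. Hence the coend is again a fibration.
\item Its fiber over $(b,a)$ is the coend of the fibers, $\int^T\Mor_{F_i}(b_!X,T)\times\Mor_{F_{i+1}}(a_!T,Y)\simeq\Mor_{F_{i+1}}(a_!b_!X,Y)$. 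Here co-Yoneda legitimately applies.
\item The same fiber computation over $\{f\}\subset\mA_1$ shows that a cocartesian lift $X\to f_!X$ in $\mM_1$ stays cocartesian in $\mM$.
\end{itemize}
Alternatively, you could complete your own route: write $\Mor_{\mM_{\le i}}(X,-)$ as a lax colimit of corepresentables via \cref{laxgro}, commute it with the coend, then apply co-Yoneda. You would still have to identify the resulting lax colimit as a fibration over the whole product.

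Two smaller points. First, with the paper's conventions the hom-functors of a cocartesian fibration are cartesian fibrations, not cocartesian ones; apply \cref{cocarto} to $\phi^{\co}$ and use \cref{oppo}. Second, what the hypothesis lets you straighten is the hom-fibrations over $\mA_{i+1}$ (via \cref{Grsuspo}). It does not straighten $\mM_{i+1}\to S(\mA_{i+1})$ itself, which is the $m$-categorical statement of \cref{suspe}.
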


\begin{proof}
By \cref{cocartcolim} for every $m-1$-category $\mC$ the inclusion
$(m-1) \Cat^\cocart_{/\mC}\subset (m-1) \Cat_{/\mC}$
preserves small colimits under the assumption that the Grothendieck construction for $m-1$-categories holds.
Since the functors $\mM_i \to S(\mA_i)$ are cocartesian fibrations
for $i=1,...,n$ and cocartesian fibrations are closed under products,
this and \cref{homsus} imply that $\mM \to S(\mA_1) \vee \ldots \vee S(\mA_n)$ is enriched in the category of cartesian fibrations.

So to prove (1) and (2) it remains to see that for every $\bi=1,...,n$
the embedding $ \mM_i \subset \mM$ preserves cocartesian morphisms
since cocartesian morphisms are closed under composition.
By \cref{fullyfaith} for every $ \bi < \bj$ the canonical functor
$$\mM_\bi \coprod_{\mM_\bi \cap \mM_{\bi+1}} ... \coprod_{\mM_{\bj-1} \cap \mM_{\bj}} \mM_\bj \to \mM$$ is fully faithful.
Consequently, it suffices to see that 
for every $f \in \mA_{1}$ and $X \to f_!(X) $ a cocartesian lift of $f$ in $\mM_1$ and any $Z \in \mM_n$ the induced commutative square
$$\begin{xy}
\xymatrix{
\Mor_{\mM}(f_!(X),Z) \ar[d]^{} \ar[r]
& \Mor_{\mM}(X,Z) \ar[d] 
\\ 
\{f\} \times \mA_{2} \times ... \times \mA_n \ar[r] & \mA_{1} \times \mA_{2} \times ... \times \mA_n}
\end{xy}$$	
is a pullback square.
By the pasting law this is equivalent to say that the 
induced commutative square
$$\begin{xy}
\xymatrix{
\Mor_{\mM}(f_!(X),Z) \ar[d]^{} \ar[r]
& \Mor_{\mM}(X,Z) \ar[d] 
\\ 
\{f\} \ar[r] & \mA_{1}}
\end{xy}$$	
is a pullback square.
By \cref{homsus} the latter identifies with the following commutative square:
\begin{equation}\label{sqqo}
\begin{xy}
\xymatrix{
\Mor_{\mM}(f_!(X),Z) \ar[d]^{} \ar[r]
& \int^{Y \in \{1\}\times_{S(\mA_1) \vee \ldots \vee S(\mA_n)} \mM} \Mor_{\mM_1}(X,Y) \times \Mor_{\mM}(Y,Z) \ar[d] 
\\ 
\{f\} \ar[r] & \mA_{1}}
\end{xy}\end{equation}	
For every $m-1$-category $\mC$ and $X \in \mC$ the functor
$\Fun(\mC,(m-1)\Cat) \to (m-1)\Cat $
evaluating at $X$ factors as $$\Fun(\mC,(m-1)\Cat) \to (m-1)\Cat^\cocart_{/\mC} \to (m-1)\Cat,$$
where the last functor takes the fiber over $X$.
By assumption the Grothendieck construction for $m-1$-categories holds.
Since small colimits in $\Fun(\mC,(m-1)\Cat)$ are formed object-wise by \cref{colimenrfun}, the functor
$$(m-1)\Cat^\cocart_{/\mC} \to (m-1)\Cat$$ taking the fiber over $X$
preserves small colimits.

Therefore the induced functor to the fiber in the commutative square 
\ref{sqqo} identifies with the following functor, which is equivalent to the identity:
$$ \Mor_{\mM}(f_!(X),Z) \to \int^{Y \in \{1\}\times_{S(\mA_1) \vee \ldots \vee S(\mA_n)} \mM} \{f\} \times_{\mA_1}\Mor_{\mM_1}(X,Y) \times \Mor_{\mM}(Y,Z) $$
$$ \simeq \int^{Y \in \{1\}\times_{S(\mA_1) \vee \ldots \vee S(\mA_n)} \mM} \Mor_{\{1\}\times_{S(\mA_1) \vee \ldots \vee S(\mA_n)} \mM}(f_!(X),Y) \times \Mor_{\mM}(Y,Z) \simeq \Mor_{\mM}(f_!(X),Z).$$
The first equivalence holds since the morphism
$X \to f_!(X)$ is cocartesian over $f$ for the functor
$\mM_1 \to S(\mA_1).$
The second equivalence holds since the functor $$\Mor_{\{1\}\times_{S(\mA_1) \vee \ldots \vee S(\mA_n)} \mM}(f_!(X),-): \{1\}\times_{S(\mA_1) \vee \ldots \vee S(\mA_n)} \mM \to (m-1)\Cat $$ is corepresentable and so is sent by the $(m-1)\Cat$-enriched Yoneda extension
$$ \Fun(\{1\}\times_{S(\mA_1) \vee \ldots \vee S(\mA_n)} \mM, (m-1)\Cat) \to (m-1)\Cat , F \mapsto \int^{Y \in \{1\}\times_{S(\mA_1) \vee \ldots \vee S(\mA_n)} \mM} F(Y) \times \Mor_{\mM}(Y,Z) $$ of the 
functor $ \Mor_{\mM}(-,Z): (\{1\}\times_{S(\mA_1) \vee \ldots \vee S(\mA_n)} \mM)^\op \to (m-1)\Cat$ to $\Mor_{\mM}(f_!(X),Z).$ 

\vspace{1mm}
(3): By (2) the functor of (3) is a map of cocartesian fibrations over $S(\mA_1) \vee \ldots \vee S(\mA_n)$ since the functor $\mM \to S(\mA_1) \vee \ldots \vee S(\mA_n)$ is a cocartesian fibration.
The pullback of the functor of (3)
along the canonical embedding $S(\mA_\bi) \to S(\mA_1) \vee \ldots \vee S(\mA_n)$ is the identity for every $\bi=1 ,..., n.$
Consequently, the functor of (3) induces on the fiber over every object of $S(\mA_1) \vee \ldots \vee S(\mA_n)$ an equivalence.
It is therefore an equivalence by \cref{fiberwiseeq}.
\end{proof}

\begin{corollary}\label{Segal}

Let $1 \leq \m \leq \infty.$
We assume that the $m-1$-categorical Grothendieck construction is an equivalence.
Let $\n \geq 2$ and $\mA_1, ..., \mA_n$ be $m-1$-categories.
The following canonical functor is an equivalence:
$$ {m \Cat^\cocart_{/ S(\mA_1) \vee \ldots \vee S(\mA_n)}} \to {m \Cat^\cocart_{/ S(\mA_1)}} \times_{m\Cat} ... \times_{m\Cat} {m\Cat^\cocart_{/ S(\mA_n)}}.$$

\end{corollary}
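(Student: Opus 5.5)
The plan is to build an explicit inverse from the gluing construction of \cref{glue}, and to check that the two composites are equivalent to the identity. Write $W:=S(\mA_1)\vee\ldots\vee S(\mA_n)$ and let $\iota_i\colon S(\mA_i)\to W$ be the canonical inclusions.

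\textbf{Forward functor.} The canonical functor $\Phi$ sends a cocartesian fibration $\mM\to W$ to the tuple of pullbacks $(\iota_i^*\mM)_i$. The gluing data are the identifications of the fibers over $i$. By \cref{elemen}, pullbacks of cocartesian fibrations are cocartesian and maps of cocartesian fibrations pull back to such maps. Hence $\Phi$ is well defined.

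\textbf{Inverse functor.} Conversely, an object of the iterated fiber product is a tuple of cocartesian fibrations $\mM_i\to S(\mA_i)$ together with equivalences $\{i\}\times_{S(\mA_i)}\mM_i\simeq\{i\}\times_{S(\mA_{i+1})}\mM_{i+1}$. Send it to the pushout
\[
\Psi(\mM_\bullet):=\mM_1\coprod_{\mM_1\cap\mM_2}\cdots\coprod_{\mM_{n-1}\cap\mM_n}\mM_n\to W.
\]
This is a cocartesian fibration by \cref{glue}(1). On morphisms, $\Psi$ sends maps of cocartesian fibrations to maps over $W$. These are maps of cocartesian fibrations by \cref{glue}(2), applied with $\mN$ the target pushout, because each restriction to $\mM_i$ factors as a map of cocartesian fibrations over $S(\mA_i)$ followed by the inclusion of a piece. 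To make $\Psi$ a functor rather than an objectwise assignment, I would define it as the restriction of the colimit functor on $\infty\Cat_{/W}$ applied to the gluing diagram. This is functorial in the tuple, since pushouts in $\infty\Cat_{/W}$ are computed in $\infty\Cat$.

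\textbf{The composite $\Psi\circ\Phi$.} There is a canonical comparison $\Psi\Phi(\mM)\to\mM$, and it is an equivalence by \cref{glue}(3).

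\textbf{The composite $\Phi\circ\Psi$ (main obstacle).} One must show $\iota_i^*\Psi(\mM_\bullet)\simeq\mM_i$, compatibly with the gluing equivalences. I would proceed in three steps.
\begin{enumerate}[\normalfont(1)]
\item By \cref{fullyfaith}, each inclusion $\mM_i\to\Psi(\mM_\bullet)$ is fully faithful, because the maps $\mM_i\cap\mM_{i+1}\to\mM_i$ are fully faithful inclusions of fibers over objects.
\item The objects of the pushout lying over $S(\mA_i)$, namely over the objects $i-1$ and $i$, are exactly the objects of $\mM_i$. This is because the space of objects of the pushout is the pushout of object spaces, using \cref{homsus}.
\item Combining (1) and (2), the full subcategory of $\Psi(\mM_\bullet)$ on objects over $\{i-1,i\}$ is $\mM_i$. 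Its map to $W$ factors through $S(\mA_i)$, since $S(\mA_i)\subset W$ is the full subcategory on $\{i-1,i\}$. Therefore the pullback $\iota_i^*\Psi(\mM_\bullet)$ is $\mM_i$.
\end{enumerate}

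\textbf{Conclusion.} Both composites are naturally equivalent to the identity, so $\Phi$ is an equivalence. By \cref{fiberwiseeq}, it would even suffice to check these identifications fiberwise over each object of $W$.
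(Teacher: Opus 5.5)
Your proposal is correct, but it is organized differently from the paper's proof. The paper does not construct an inverse functor. It gets essential surjectivity directly from \cref{glue}~(1). It then proves full faithfulness on morphism $\infty$-categories: \cref{glue}~(3) writes a cocartesian fibration $\mC\to W$ as the pushout of its restrictions, which identifies $\Fun_W(\mC,\mD)$ with the iterated fiber product of the $\Fun_{S(\mA_i)}(\iota_i^*\mC,\iota_i^*\mD)$, and \cref{glue}~(2) then cuts this down to maps of cocartesian fibrations.

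You instead build the gluing functor $\Psi$ and check both composites. Your $\Psi\Phi\simeq\id$ is the same use of \cref{glue}~(3). Your $\Phi\Psi\simeq\id$ argument is the genuinely new piece: the pieces embed fully faithfully by \cref{fullyfaith}, and $S(\mA_i)\subset W$ is the full subcategory on $\{i-1,i\}$. This makes explicit the identification $\iota_i^*\Psi(\mM_\bullet)\simeq\mM_i$, which the paper's one-line appeal to \cref{glue}~(1) for essential surjectivity leaves implicit.

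The trade-off is as follows. The paper's route handles the morphism $\infty$-categories $\Fun^\cocart$ directly and never needs gluing on morphisms. Your route needs $\Psi$ to be functorial. It also needs a short extra remark to get an equivalence of $\infty$-categories rather than of underlying $(\infty,1)$-categories. For instance, you can note that $\Phi$ preserves cotensors by any $K$, or that your natural equivalences are enriched.

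One small correction concerns your step (2). The fact that the space of objects of the pushout is the pushout of the object spaces is not what \cref{homsus} says; that result computes the morphism objects $\Mor_{\mD'}(\beta'(X),\alpha'(Y))$ between the two sides. Your claim is a separate fact: passing to spaces of objects of (not necessarily univalent) enriched categories commutes with this pushout.
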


\begin{proof}

By \cref{glue} (1) the canonical functor of the statement is essentially surjective.
We prove that it is fully faithful. For every pair of cocartesian fibrations
$\mC \to S(\mA_1) \vee \ldots \vee S(\mA_n)$ and $\mD \to S(\mA_1) \vee \ldots \vee S(\mA_n)$
the functor on morphism $\infty$-categories induced by the functor of the statement identifies with the functor
$$ \alpha: \Fun^\cocart_{S(\mA_1) \vee \ldots \vee S(\mA_n)}(\mC,\mD) \to $$$$ \Fun^\cocart_{S(\mA_1)}(\mS(\mA_1) \times_{S(\mA_1) \vee \ldots \vee S(\mA_n)}\mC,\mS(\mA_1) \times_{S(\mA_1) \vee \ldots \vee S(\mA_n)}\mD) 
\times_{\Fun^\cocart_{\{1\}}(\{1\} \times_{S(\mA_1) \vee \ldots \vee S(\mA_n)}\mC,\{1\} \times_{S(\mA_1) \vee \ldots \vee S(\mA_n)} \mD)} $$$$ \cdots \times_{\Fun^\cocart_{\{n-1\}}(\{n-1\} \times_{S(\mA_1) \vee \ldots \vee S(\mA_n)}\mC,\{n-1\} \times_{S(\mA_1) \vee \ldots \vee S(\mA_n)} \mD)} \Fun^\cocart_{S(\mA_n)}(\mS(\mA_n) \times_{S(\mA_1) \vee \ldots \vee S(\mA_n)}\mC,\mS(\mA_n) \times_{S(\mA_1) \vee \ldots \vee S(\mA_n)}\mD). $$
There is a canonical equivalence
$$ \Fun_{S(\mA_1)}(\mS(\mA_1) \times_{S(\mA_1) \vee \ldots \vee S(\mA_n)}\mC,\mS(\mA_1) \times_{S(\mA_1) \vee \ldots \vee S(\mA_n)}\mD) 
\times_{\Fun_{\{1\}}(\{1\} \times_{S(\mA_1) \vee \ldots \vee S(\mA_n)}\mC,\{1\} \times_{S(\mA_1) \vee \ldots \vee S(\mA_n)} \mD)} $$$$ \cdots \times_{\Fun_{\{n-1\}}(\{n-1\} \times_{S(\mA_1) \vee \ldots \vee S(\mA_n)}\mC,\{n-1\} \times_{S(\mA_1) \vee \ldots \vee S(\mA_n)} \mD)} \Fun_{S(\mA_n)}(\mS(\mA_n) \times_{S(\mA_1) \vee \ldots \vee S(\mA_n)}\mC,\mS(\mA_n) \times_{S(\mA_1) \vee \ldots \vee S(\mA_n)}\mD) $$

$$ \simeq \Fun_{S(\mA_1) \vee \ldots \vee S(\mA_n)}(\mS(\mA_1) \times_{S(\mA_1) \vee \ldots \vee S(\mA_n)}\mC,\mD) 
\times_{\Fun_{S(\mA_1) \vee \ldots \vee S(\mA_n)}(\{1\} \times_{S(\mA_1) \vee \ldots \vee S(\mA_n)}\mC, \mD)} $$$$ ... \times_{\Fun_{S(\mA_1) \vee \ldots \vee S(\mA_n)}(\{n-1\} \times_{S(\mA_1) \vee \ldots \vee S(\mA_n)}\mC, \mD)} \Fun_{S(\mA_1) \vee \ldots \vee S(\mA_n)}(\mS(\mA_n) \times_{S(\mA_1) \vee \ldots \vee S(\mA_n)}\mC,\mD) \simeq $$

$$ \Fun_{S(\mA_1) \vee \ldots \vee S(\mA_n)}(\mS(\mA_1) \times_{S(\mA_1) \vee \ldots \vee S(\mA_n)}\mC \coprod_{\{1\} \times_{S(\mA_1) \vee \ldots \vee S(\mA_n)}\mC} \cdots \coprod_{\{n-1\} \times_{S(\mA_1) \vee \ldots \vee S(\mA_n)}\mC} \mS(\mA_n) \times_{S(\mA_1) \vee \ldots \vee S(\mA_n)} \mC,\mD) $$
$$ \simeq \Fun_{S(\mA_1) \vee \ldots \vee S(\mA_n)}(\mC,\mD).$$
By \cref{glue} (2) the composed latter equivalence restricts to an equivalence
$$\beta: \Fun^\cocart_{S(\mA_1)}(\mS(\mA_1) \times_{S(\mA_1) \vee \ldots \vee S(\mA_n)}\mC,\mS(\mA_1) \times_{S(\mA_1) \vee \ldots \vee S(\mA_n)}\mD) 
\times_{\Fun^\cocart_{\{1\}}(\{1\} \times_{S(\mA_1) \vee \ldots \vee S(\mA_n)}\mC,\{1\} \times_{S(\mA_1) \vee \ldots \vee S(\mA_n)} \mD)} $$$$ \cdots \times_{\Fun^\cocart_{\{n-1\}}(\{n-1\} \times_{S(\mA_1) \vee \ldots \vee S(\mA_n)}\mC,\{n-1\} \times_{S(\mA_1) \vee \ldots \vee S(\mA_n)} \mD)} \Fun^\cocart_{S(\mA_n)}(\mS(\mA_n) \times_{S(\mA_1) \vee \ldots \vee S(\mA_n)}\mC,\mS(\mA_n) \times_{S(\mA_1) \vee \ldots \vee S(\mA_n)}\mD) $$
$$ \simeq \Fun^\cocart_{S(\mA_1) \vee \ldots \vee S(\mA_n)}(\mC,\mD).$$
The composition $\beta \circ \alpha$ is the identity.
\end{proof}

\begin{corollary}
Let $1 \leq \m \leq \infty.$
We assume that the $m-1$-categorical Grothendieck construction is an equivalence.
The presheaf $$ \mC \mapsto \iota_0(m\Cat^{\cocart}_{/\mC})$$
on the 1-category $\Theta((m-1)\Cat)$ satisfies the Segal condition.

\end{corollary}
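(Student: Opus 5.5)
The plan is to deduce the statement directly from \cref{Segal} by passing to maximal subspaces. First I fix what the presheaf is. The 1-category $\Theta((m-1)\Cat)$ is the full subcategory of $m\Cat = (m-1)\Cat\mathrm{-}\Cat$ spanned by $B\tu \simeq \bD^0$ and the wedges $S(\mA_1)\vee\ldots\vee S(\mA_n)$ with $\mA_i \in (m-1)\Cat$. Contravariant functoriality comes from pulling back cocartesian fibrations along functors, which preserves cocartesian fibrations by \cref{elemen}. So $\mC \mapsto \iota_0(m\Cat^{\cocart}_{/\mC})$ is the restriction to $\Theta((m-1)\Cat)^\op$ of the straightening of the cartesian fibration $\coCart \to \infty\scat$ (evaluation at the target), after taking cores. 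At $\mC = B\tu = \bD^0$, a cocartesian fibration over the point is just an $m$-category, so the value there is $\iota_0(m\Cat)$.

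Next I identify the Segal map. For $n \geq 2$ it is
\[
\iota_0(m\Cat^\cocart_{/S(\mA_1)\vee\ldots\vee S(\mA_n)}) \to \iota_0(m\Cat^\cocart_{/S(\mA_1)}) \times_{\iota_0(m\Cat)} \cdots \times_{\iota_0(m\Cat)} \iota_0(m\Cat^\cocart_{/S(\mA_n)}).
\]
Its components are pullback along the $n$ inclusions $S(\mA_i) \to S(\mA_1)\vee\ldots\vee S(\mA_n)$. The fibered products are formed over the restrictions to the wedge points $\{1\},\ldots,\{n-1\}$, that is, over the functor taking the fiber over the shared object. This is exactly $\iota_0$ applied to the canonical functor of \cref{Segal}. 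There, $m\Cat$ means $m\Cat^\cocart_{/\{i\}}$, and the maps are restriction to the source and target objects of the suspensions.

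Since $\iota_0 \colon \Cat \to \mS$ is a right adjoint, it preserves pullbacks. Hence $\iota_0$ of the iterated fibered product of $\infty$-categories is the iterated fibered product of the maximal subspaces. The equivalence of \cref{Segal}, which uses the assumption that the $(m-1)$-categorical Grothendieck construction is an equivalence, therefore yields that the Segal map is an equivalence.

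The only point needing care is bookkeeping, and I expect no genuine obstacle. I have to check that the functor in \cref{Segal} is compatible with the functoriality of the presheaf on $\Theta((m-1)\Cat)$. That is, the restriction maps of the presheaf along the wedge inclusions and the point inclusions must agree with the pullback functors used in \cref{Segal}, including the identification of the wedge point $\{i\}$ as the target of $S(\mA_i)$ and the source of $S(\mA_{i+1})$. This holds because both are defined by base change along the same functors, and base change is functorial up to coherent equivalence.
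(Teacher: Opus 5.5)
Your proposal is correct and matches the paper's intended argument. The paper states this corollary without proof, as an immediate consequence of \cref{Segal}: apply the pullback-preserving maximal-subspace functor $\iota_0$ to that equivalence, and use the identification $m\Cat^\cocart_{/\bD^0}\simeq m\Cat$ at the wedge points.
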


\begin{lemma}\label{flatness}

Let $1 \leq \m \leq \infty.$
We assume that the $m-1$-categorical Grothendieck construction is an equivalence.

Let $\phi: \mC \to \mD$ be a cocartesian fibration between $m$-categories and $\mB \in m\Cat. $
The induced functor
$\phi^*: m\Cat_{/\mD} \to m\Cat_{/\mC} $ preserves the density colimit $$ \colim_{\mA \in \Theta((m-1)\Cat) \times_{m\Cat} m\Cat_{/\mB}} \mA \simeq \mB.$$

\end{lemma}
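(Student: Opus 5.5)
The plan is to reduce the statement to a computation of morphism $(m-1)$-categories, using the $(m-1)$-categorical Grothendieck construction through \cref{cocartcolim} and \cref{glue}. Fix $\mB \to \mD$, write $X := \mC \times_\mD \mB \to \mB$ (a cocartesian fibration by base change), and let $Y := \colim_{\mA} \mC \times_\mD \mA$, where $\mA$ ranges over $\Theta((m-1)\Cat) \times_{m\Cat} m\Cat_{/\mB}$. There is a canonical comparison functor $\gamma: Y \to X$ over $\mB$, and the goal is to show that $\gamma$ is an equivalence.

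\emph{Step 1: reduce the indexing category.} By \cref{glue}~(3), the assignment $\mA \mapsto \mC \times_\mD \mA$ sends a wedge $S(\mA_1)\vee\cdots\vee S(\mA_n)$ to the iterated pushout of the pieces $\mC\times_\mD S(\mA_i)$ along the fibers over the gluing objects. The density colimit computing $\mB$ has the same Segal decomposition on the base. Hence, in both the colimit defining $Y$ and the density colimit for $\mB$, the indexing category can be replaced by its full subcategory on the objects $B\tu = \bD^0$ and $S(\mA)$ lying over $\mB$; I expect this to follow from a cofinality argument on left Kan extensions. In particular $\gamma$ is essentially surjective, because every object of $X$ lies over some point $\bD^0 \to \mB$.

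\emph{Step 2: compare morphism categories.} Fix objects $x$ over $b$ and $y$ over $b'$. The colimit $Y$ is glued from the $\mC\times_\mD S(\mA)$ along fibers over points. Applying the enriched gluing formula \cref{homsus} (together with \cref{fullyfaith} for the fully faithful inclusions of fibers) to this presentation, I aim to express $\Mor_Y(x,y)$ as a colimit over cells $S(\mA)\to\mB$ from $b$ to $b'$. Equivalently, the indexing runs over the density diagram $\mA \to \Mor_\mB(b,b')$ in $(m-1)\Cat$, and the terms are $\mA\times_{\Mor_\mD}\Mor_\mC(x,y)$. On the other side, $\Mor_X(x,y) \simeq \Mor_\mB(b,b')\times_{\Mor_\mD(\phi b,\phi b')}\Mor_\mC(x,y)$.

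Since $\phi$ is cocartesian, $\Mor_\mC(x,y)\to\Mor_\mD(\phi x,\phi y)$ is, by \cref{cocarto} and the cocartesian lifts, classified by the functor $a\mapsto \Mor_{\mC_{\phi y}}(a_!x,y)$. By the assumed $(m-1)$-categorical Grothendieck construction, it is therefore a (co)cartesian fibration of $(m-1)$-categories. By \cref{cocartcolim} (and its cartesian dual), forming the total category commutes with colimits of the base, as it is a lax colimit. Hence pulling this fibration back along the density colimit $\Mor_\mB(b,b') \simeq \colim \mA$ in $(m-1)\Cat$ preserves the colimit, which gives $\Mor_Y(x,y)\simeq\Mor_X(x,y)$. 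This shows that $\gamma$ is fully faithful.

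\emph{Main obstacle.} The hard step is Step 2. Identifying $\Mor_Y(x,y)$ as a colimit indexed by cells of $\Mor_\mB(b,b')$ requires handling composites of cells through intermediate objects. This is the coend term in \cref{homsus}, and it has to match the density colimit of $\Mor_\mB(b,b')$ rather than a naive one. My first attempt would be to induct on the number $n$ of wedge summands, applying \cref{homsus} one pushout at a time. At each stage the coend over an intermediate fiber would be collapsed by the enriched Yoneda lemma, in the same way as in the proof of \cref{glue}~(1).
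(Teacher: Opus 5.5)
There is a genuine gap, and Step~1 is in fact false. The full subcategory of $\Theta((m-1)\Cat)$ on $\bD^0$ and the suspensions $S(\mA)$ is not dense. For $m=1$ and $\mB=\bD^1\vee\bD^1$, the colimit over the cells $\bD^0\to\mB$ and $S(K)\to\mB$ is the free category on the edges $0\to1$, $1\to2$, $0\to2$, and in it the long edge is not the composite. Composition is encoded only by the wedges. A cell $S(\mA)\to S(\mA_1)\vee S(\mA_2)$ that lands on composites factors through neither summand. So the Segal decomposition from \cref{glue}~(3) does not make either of your functors left Kan extended from that subcategory.

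Step~2 then needs the morphism objects of a colimit in $m\Cat$ over a large, non-sequential diagram, and these are not accessible. \cref{homsus} only computes morphisms across a single pushout along a fully faithful functor. Iterating it over wedge summands reproves \cref{glue}, not the density colimit. Moreover, the index category you name for $\Mor_Y(x,y)$, namely maps $\mA\to\Mor_\mB(b,b')$ in $(m-1)\Cat$, has a terminal object. So the fibration argument there is vacuous, and all the content sits in the composites through intermediate objects that you defer to the ``main obstacle''. Separately, \cref{cocartcolim} concerns colimits of fibrations over a fixed base, not base change along a colimit of the base. The latter is exponentiability, \cref{expone}, which the paper only obtains after \cref{Groeq}.

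The missing idea is to avoid morphism objects of colimits altogether and work with presheaves. The localization $\mP(\Theta((m-1)\Cat))\rightleftarrows {\mP((m-1)\Cat)}\mathrm{-}\Cat$ has only the Segal maps $\N(S(\mA_1))\vee\cdots\vee\N(S(\mA_n))\to\N(S(\mA_1)\vee\cdots\vee S(\mA_n))$ as generating local equivalences. Moreover $\Theta((m-1)\Cat)$ is dense there, and the inclusion of $m\Cat$ preserves the density colimit of $\mB$.

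The pullback $\N(\phi)^*\colon\mP(\Theta((m-1)\Cat))_{/\N(\mD)}\to\mP(\Theta((m-1)\Cat))_{/\N(\mC)}$ preserves all small colimits for free. Hence it descends to a colimit-preserving functor $({\mP((m-1)\Cat)}\mathrm{-}\Cat)_{/\mD}\to m\Cat_{/\mC}$ agreeing with $\phi^*$ on $m\Cat_{/\mD}$, provided it sends Segal maps over $\mD$ to local equivalences. After localizing, that condition says that $\phi^*$ of a wedge $S(\mA_1)\vee\cdots\vee S(\mA_n)\to\mD$ is the iterated pushout of the pulled-back pieces along the fibers over the gluing points. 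This is exactly \cref{glue}~(3).

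So you had the right ingredient, but you used it for an invalid reduction instead of as the check on generating local equivalences. Routing through ${\mP((m-1)\Cat)}\mathrm{-}\Cat$ rather than $m\Cat$ is what makes the Segal maps the only thing to check.
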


\begin{proof}

By \cite[Theorem 2.4.8.]{GepnerHeine2026} there is a localization
$$\mP(\Theta((m-1)\Cat)) \rightleftarrows {{\mP((m-1)\mathrm{-}\Cat)}\mathrm{-}\Cat} : \N$$
whose generating local equivalences are of the form
$$ \N(S(\mA_1)) \vee ... \vee \N(S(\mA_n)) \to N(S(\mA_1) \vee \ldots \vee S(\mA_n))$$ for $\mA_1,..., \mA_n \in (m-1)\Cat $.
In particular, $\Theta((m-1)\Cat)$ is dense in ${{\mP((m-1)\mathrm{-}\Cat)}\mathrm{-}\Cat}$
and so for every $\mB \in {{\mP((m-1)\mathrm{-}\Cat)}\mathrm{-}\Cat}$ the canonical map 
$$ \colim_{\mA \in \Theta((m-1)\Cat) \times_{{{\mP((m-1)\mathrm{-}\Cat)}\mathrm{-}\Cat}} {{\mP((m-1)\mathrm{-}\Cat)}\mathrm{-}\Cat}_{/\mB}} \mA \to \mB$$ is an equivalence.
In particular, for every $\mB \in m\Cat$
the canonical map $$ \colim_{\mA \in \theta((m-1)\Cat) \times_{m\Cat} m\Cat_{/\mB}} \mA \to \mB$$ is an equivalence and the latter 
colimit is preserved by the embedding $$m\Cat= {(m-1)\Cat \mathrm{-}\Cat} \subset {{\mP((m-1)\mathrm{-}\Cat)}\mathrm{-}\Cat}.$$
This implies that for every $\mB \in m\Cat_{/\mD}$
the colimit $ \colim_{\mA \in \Theta((m-1)\Cat) \times_{m\Cat} m\Cat_{/\mB}} \mA \to \mB$ is preserved by the embedding $$ m\Cat_{/\mD} \subset ({{\mP((m-1)\mathrm{-}\Cat)}\mathrm{-}\Cat})_{/\mD}.$$
Moreover there is a further localization
$$ \mP(\Theta((m-1)\Cat)) \rightleftarrows m\Cat= {(m-1)\Cat \mathrm{-}\Cat} $$
whose right adjoint is the restriction of $\N$.
It will suffice to prove that the induced functor
$$({\mP((m-1)\Cat)}\mathrm{-}\Cat)_{/\mD} \xrightarrow{\phi^*}({\mP((m-1)\Cat)}\mathrm{-}\Cat)_{/\mC} \to {m\Cat}_{/\mC} $$ preserves small colimits.
This is because the latter functor restricts on ${m\Cat}_{/\mD} $
to $\phi^* : {m\Cat}_{/\mD} \to {m\Cat}_{/\mC}$ since the embedding
${m\Cat} \subset {{\mP((m-1)\mathrm{-}\Cat)}\mathrm{-}\Cat}$ preserves small limits.

There is an induced localization
$$\mP(\Theta((m-1)\Cat))_{/\N(\mD)} \rightleftarrows ({{\mP((m-1)\mathrm{-}\Cat)}\mathrm{-}\Cat})_{/\mD}$$
whose generating local equivalences are of the form
$$ N(S(\mA_1)) \vee ... \vee N(S(\mA_n)) \to N(S(\mA_1) \vee \ldots \vee S(\mA_n))$$ for $\mA_1,..., \mA_n \in (m-1)\Cat $ and a map
$ N(S(\mA_1) \vee \ldots \vee S(\mA_n)) \to N(\mD).$
Moreover there is an induced localization
$$\mP(\Theta((m-1)\Cat))_{/\N(\mC)} \rightleftarrows m\Cat_{/\mC}$$
whose local equivalences are detected by the forgetful functor
to the localization $$L: \mP(\Theta((m-1)\Cat)) \rightleftarrows m\Cat.$$

The functor $$\N(\phi)^*: \mP(\Theta((m-1)\Cat))_{/\N(\mD)} \to \mP(\Theta((m-1)\Cat))_{/\N(\mC)} $$ preserves small colimits
and so admits a right adjoint.
We will prove that this functor sends generating local equivalences
for the localization $$\mP(\Theta((m-1)\Cat))_{/\N(\mD)} \rightleftarrows ({{\mP((m-1)\mathrm{-}\Cat)}\mathrm{-}\Cat})_{/\mD}$$ to local equivalences for the
localization $$\mP(\Theta((m-1)\Cat))_{/\N(\mC)} \rightleftarrows m\Cat_{/\mC}.$$
In this case $\N(\phi)^*$ descends to a left adjoint functor 
$({\mP((m-1)\Cat)}\mathrm{-}\Cat)_{/\mD} \to {m\Cat}_{/\mC} $, 
which factors as $$({\mP((m-1)\Cat)}\mathrm{-}\Cat)_{/\mD} \xrightarrow{\phi^*}({\mP((m-1)\Cat)}\mathrm{-}\Cat)_{/\mC} \to {m\Cat}_{/\mC} $$
since $N$ preserves limits as a right adjoint.

So we have to see that for any $\mA_1,..., \mA_n \in (m-1)\Cat $ and map
$ \N(S(\mA_1) \vee \ldots \vee S(\mA_n)) \to \N(\mD)$
the induced map
$$ \theta: \N(\phi^*(S(\mA_1))) \coprod_{\N(\phi^*(\{1\} ))} ... \coprod_{\N(\phi^*(\{n-1\}))} \N(\phi^*(S(\mA_n))) \simeq $$ 
$$\N(\phi)^*(\N(S(\mA_1))) \coprod_{\N(\phi)^*(\N(\{1\}))} ... \coprod_{\N(\phi)^*(\N(\{n-1\}))} \N(\phi)^*(\N(S(\mA_n))) \simeq $$$$ \N(\phi)^*(\N(S(\mA_1)) \vee ... \vee \N(S(\mA_n))) \to \N(\phi)^*(N(S(\mA_1) \vee \ldots \vee S(\mA_n)))$$$$ \simeq \N(\phi^*(S(\mA_1) \vee \ldots \vee S(\mA_n))) $$
is a local equivalence for the localization $\L: \mP(\Theta((m-1)\Cat)) \rightleftarrows m\Cat: \N$. We prove that $\theta$ is inverted by $L$.
Since $\N$ is fully faithful, $L$  sends $\theta$ to the following functor
$$ \phi^*(S(\mA_1)) \coprod_{\phi^*(\bD^0)} ... \coprod_{\phi^*(\bD^0)} \phi^*(S(\mA_n)) \to \phi^*(S(\mA_1) \vee \ldots \vee S(\mA_n)).$$
The functor $L(\theta)$ is a functor over $\mC$ but also a functor over $S(\mA_1) \vee \ldots \vee S(\mA_n).$

Let $$ \mM := \phi^*(S(\mA_1) \vee \ldots \vee S(\mA_n)) \to S(\mA_1) \vee \ldots \vee S(\mA_n). $$
Since $\phi: \mC \to \mD$ is a cocartesian fibration,
the pullback $\mM \to S(\mA_1) \vee \ldots \vee S(\mA_n)$
is a cocartesian fibration.
The functor $L(\theta)$ identifies with the canonical functor
$$ \kappa: S(\mA_1) \times_{S(\mA_1) \vee \ldots \vee S(\mA_n)} \mM \coprod_{\{1\} \times_{S(\mA_1) \vee \ldots \vee S(\mA_n)} \mM} ... \coprod_{\{n-1\} \times_{S(\mA_1) \vee \ldots \vee S(\mA_n)} \mM} S(\mA_n) \times_{S(\mA_1) \vee \ldots \vee S(\mA_n)} \mM \to \mM.$$ 
The latter is an equivalence by \cref{glue}.
\end{proof}

\begin{proposition}\label{suspe}
Let $1 \leq \m \leq \infty$ and $\mA$ an $m-1$-category, for which the
Grothendieck construction is an equivalence.
The Grothendieck construction of $ S(\mA) $ is an equivalence.

\end{proposition}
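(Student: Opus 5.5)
The plan is to verify criterion (3) of \cref{Grothendieck-char} for $\mC = S(\mA)$. Concretely, I would show that $\iota_1(m\Cat^\cocart_{/S(\mA)})$ admits small colimits, and that for $i \in \{0,1\}$ the fiber functors $\iota_1(m\Cat^\cocart_{/S(\mA)}) \to \iota_1(m\Cat)$ preserve them. Throughout I work with $\mV = (m-1)\Cat$, which is semicartesian with empty initial object, so that $m\Cat = \mV\mathrm{-}\Cat$ and the results on enriched correspondences apply.

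\textbf{Step 1: colimits in $m\Cat_{/S(\mA)}$.} By \cref{Grsuspo} and \cref{corrbifib}, the functor $m\Cat_{/S(\mA)} \to m\Cat \times m\Cat$ taking fibers over $0,1$ is a bifibration. Its fiber over $(\mC_0,\mC_1)$ is $\mV\mathrm{-}\Fun(\mC_1,\Fun_\mV(\mC_0^\circ,\mV_{/\mA}))$. An object $\mM$ is determined by $\mM_0$, $\mM_1$, and the bimodule
\[
(x,z) \mapsto \big(\Mor_\mM(x,z) \to \mA\big).
\]
Hence a colimit of a diagram $i \mapsto \mM^i$ is computed in three moves:
\begin{enumerate}[\normalfont(a)]\setlength{\itemsep}{-2pt}
\item form $\mC_0 = \colim \mM^i_0$ and $\mC_1 = \colim \mM^i_1$ in $m\Cat$;
\item pushforward each $\mM^i$ along the cocartesian transport of the bifibration to the fiber over $(\mC_0,\mC_1)$;
\item take the colimit in that fiber, which is a functor category into $\mV_{/\mA}$ and so has pointwise colimits.
\end{enumerate}
By construction the fiber functors over $0$ and $1$ preserve this colimit.

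\textbf{Step 2: the cocartesian subcategory is closed under these colimits.} This is the main step. By \cref{cocarto}, $\mM \to S(\mA)$ is a cocartesian fibration if and only if three conditions hold:
\begin{enumerate}[\normalfont(i)]\setlength{\itemsep}{-2pt}
\item each $\Mor_\mM(x,z) \to \mA$ is a cocartesian fibration, and the pre- and postcomposition functors preserve cocartesian morphisms;
\item the $1$-cocartesian lifts over the generating $1$-cells of $S(\mA)$ exist;
\item these lifts are compatible with the hom-fibrations.
\end{enumerate}
Using the hypothesis that $\int_\mA$ is an equivalence, I translate condition (i) into the statement that the bimodule factors through
\[
(m-1)\Cat^\cocart_{/\mA} \simeq \Fun(\mA,(m-1)\Cat).
\]
By the argument of \cref{cocartcolim}, which for a single base only uses $\int_\mA$ being an equivalence together with \cref{laxgro}, the inclusion $(m-1)\Cat^\cocart_{/\mA} \subset (m-1)\Cat_{/\mA}$ preserves colimits. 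This makes condition (i) stable under the pointwise colimits of Step 1(c).

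For conditions (ii) and (iii), I unwind the definition of a cocartesian $1$-morphism over $a \in \mA$. The requirement is that, for each $a \in \mA$, the corepresentable-type functor $\{a\}\times_\mA \Mor_\mM(x,-)$ on $\mM_1$ is corepresented by some $a_!x$. Equivalently, under $\int_\mA$, the bimodule comes from a functor
\[
\mA \to \Fun(\mM_0,\mM_1)
\]
via the Yoneda embedding $\mM_1^{\circ}\to\Fun(\mM_1,\mV)$. I then check that the colimit constructed in Step 1 has this form: a colimit of left Kan extended corepresentables along the maps $\mM^i_1 \to \mC_1$ is again corepresentable, with representing object the image of $a_!x$ in $\mC_1$. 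This uses that $\mV$-enriched left Kan extension $(\mM^i_1 \to \mC_1)_!$ sends corepresentables to corepresentables, together with \cref{homsus}-type formulas for the transported hom objects. I expect the bookkeeping in this step — identifying the pushforward along the bifibration explicitly and verifying that cocartesian lifts and maps of cocartesian fibrations are preserved — to be the main obstacle.

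\textbf{Step 3: conclusion.} Colimits exist in $\iota_1(m\Cat^\cocart_{/S(\mA)})$, because the maps in the colimit cocone send cocartesian lifts $x \to a_!x$ to cocartesian lifts. They are preserved by both fiber functors, and since every object of $S(\mA)$ is $0$ or $1$, these are all the fiber functors. Criterion (3) of \cref{Grothendieck-char} therefore holds, and $\int_{S(\mA)}$ is an equivalence.
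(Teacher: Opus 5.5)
Your reduction to criterion (3) of \cref{Grothendieck-char} is legitimate, and Step 1 correctly computes colimits in $m\Cat_{/S(\mA)}$ from the bifibration of \cref{corrbifib}. There is a gap, though, in the corepresentability claim of Step 2, which carries the whole argument.

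The cocartesian transport to the fiber over $(\mC_0,\mC_1)$ left Kan extends the bimodule in \emph{both} variables: along $g_i\colon\mM^i_1\to\mC_1$ and along $f_i\colon\mM^i_0\to\mC_0$. Over $a\in\mA$ the transported bimodule is
\[
(x,z)\mapsto\int^{x'\in\mM^i_0}\Mor_{\mC_0}(x,f_i x')\otimes\Mor_{\mC_1}(g_i(a_!x'),z).
\]
Even for $x=f_i(x_i)$ this is in general not corepresented by $g_i(a_!x_i)$; it is when $f_i$ is fully faithful, but not otherwise. So the individual terms of your colimit are not corepresentable. In any case, corepresentable copresheaves are not closed under colimits, so the principle ``left Kan extension preserves corepresentables, hence so does the colimit'' does not give the claim.

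What is true needs a global input. You would build $\bar a_!\colon\mC_0\to\mC_1$ from the $(a_!)_i$, using that the transition maps are maps of cocartesian fibrations. Then set $G=\Mor_{\mC_1}(\bar a_!(-),z)\in\mP_\mV(\mC_0)$ and use $\mP_\mV(\colim_i\mM^i_0)\simeq\lim_i\mP_\mV(\mM^i_0)$ to see that $G$ is the colimit of the $(f_i)_!f_i^*G$. Even passing to the fiber over $a$ inside these coends and colimits requires them to be formed among cartesian fibrations over $\mA$, i.e.\ the cartesian analogue of \cref{cocartcolim} over $\mA$.

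Two smaller points. First, the hom-fibrations $\Mor_\mM(x,z)\to\mA$ of a cocartesian fibration over $S(\mA)$ are \emph{cartesian}, not cocartesian; your own formula $a\mapsto\Mor(a_!x,z)$ shows this. Second, \cref{cocartcolim} rests on \cref{laxgro}, whose proof via \cref{repre} uses the Grothendieck construction over many $(m-1)$-categories besides $\mA$. Your parenthetical that only $\int_\mA$ is needed is therefore wrong. Your route needs the full $(m-1)$-categorical hypothesis, which is available where the result is applied (\cref{suspe2}) but not in the statement as given.

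The paper avoids colimit computations altogether. By \cref{corrbifib}, both $\iota_1(m\Cat^\cocart_{/S(\mA)})$ and $\iota_1\Fun(S(\mA),m\Cat)$ are bifibrations fibered in spaces over $\iota_1(m\Cat\times m\Cat)$, and $\int_{S(\mA)}$ maps one to the other, so it suffices to compare fibers over each $(\mC,\mD)$. By \cref{Grsuspo}, the first fiber sits inside enriched functors $\mC^\circ\times\mD\to m\Cat_{/\mA}$. Requiring cartesian hom-fibrations and applying the Grothendieck construction over $\mA$ turns these into functors out of $\mA^\circ\times\mC^\circ\times\mD$. The existence of cocartesian lifts is then exactly representability, which cuts the fiber down to $\Map(\mA,\Fun(\mC,\mD))$, the fiber on the functor side. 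An equivalence on underlying $1$-categories then yields criterion (3) for free, using only $\int_\mA$ and a Yoneda argument.
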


\begin{proof}

By \cref{corrbifib} the functor $m\Cat^\cocart_{/S(\mA)} \to m\Cat \times m\Cat$ taking the fibers over the two objects of $S(\mA)$
induces on underlying 1-categories a bifibration fibered in spaces.
Similarly, by \cref{corrbifib} the conservative functor $\Fun(S(\mA), m\Cat) \to m\Cat \times m\Cat$ evaluating at the two objects of $S(\mA)$
induces on underlying 1-categories a bifibration, which is fibered in spaces by conservativity.
By \cref{Grothendieck-char} it suffices to show that the Grothendieck construction
$$\int_{S(\mA)}: \Fun(S(\mA), \infty\scat) \to \infty\scat^\cocart_{/S(\mA)} $$
induces an equivalence on underlying 1-categories. The Grothendieck construction induces on underlying 1-categories a map of bifibrations 
fibered in spaces over the underlying 1-category of $m\Cat \times m\Cat$.
So it suffices to see that the Grothendieck construction induces an equivalence of spaces $$ \kappa: \{(\mC,\mD)\} \times_{m\Cat \times m\Cat} m\Cat^\cocart_{/S(\mA)} \simeq \Map_{\infty\scat}(\mA,\Fun(\mC,\mD)) $$ on the fibers over any $\mC,\mD \in m\Cat.$

By \cref{Grsuspo} there is a canonical equivalence of spaces
$$ \{(\mC,\mD)\} \times_{m\Cat \times m\Cat} m\Cat_{/S(\mA)} \simeq 
\Map_{\infty\scat}(\mC^\circ \times \mD, m\Cat_{/\mA}).$$

Let $ m\Cat'_{/S(\mA)} \subset m\Cat_{/S(\mA)}$ be the subcategory
of functors $\mB \to S(\mA)$ that induce on morphism $\infty$-categories a cartesian fibration and functors over $S(\mA)$ preserving all cartesian morphims of the cartesian fibrations on morphism $\infty$-categories.
The last equivalence restricts to an equivalence
$$ \{(\mC,\mD)\} \times_{m\Cat \times m\Cat} m\Cat'_{/S(\mA)} \simeq \Fun(\mC^\circ \times \mD, m\Cat^\cart_{/\mA}).$$
By assumption the Grothendieck construction $\Fun(\mA^\circ, m\Cat) \to m\Cat^\cart_{/\mA} $ is an equivalence.
We obtain an equivalence $$ \{(\mC,\mD)\} \times_{m\Cat \times m\Cat} m\Cat'_{/S(\mA)} \simeq $$$$ \Map_{\infty\scat}(\mC^\circ \times \mD, m\Cat^\cart_{/\mA}) \simeq $$$$\Map_{\infty\scat}(\mC^\circ \times \mD,\Fun(\mA^\circ, m\Cat)) \simeq $$$$ \Map_{\infty\scat}(\mA^\circ \times \mC^\circ \times \mD, m\Cat)) \simeq $$$$
\Map_{\infty\scat}(\mA^\circ, \Fun(\mC^\circ, \Fun(\mD, m\Cat))) \simeq $$$$\Map_{\infty\scat}(\mA, \Fun(\mC, \Fun(\mD, m\Cat)^\circ)).$$
This equivalence restricts to an equivalence $$ \{(\mC,\mD)\} \times_{m\Cat \times m\Cat} m\Cat^\cocart_{/S(\mA)} \simeq
\Map_{\infty\scat}(\mA, \Fun(\mC, \mD)),$$ 
which identifies with $\kappa.$
\end{proof}

\begin{corollary}\label{suspe2} Let $1 \leq \m \leq \infty.$
We assume that the $m-1$-categorical Grothendieck construction is an equivalence.
The $m$-categorical Grothendieck construction for any object of 
$\Theta((m-1)\Cat)$ is an equivalence.

\end{corollary}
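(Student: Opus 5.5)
Objects of $\Theta((m-1)\Cat)$ come in three kinds: $B\tu$, which is the point $\bD^0$; a single suspension $S(\mA)$; and a wedge $S(\mA_1)\vee\ldots\vee S(\mA_n)$ with $n\geq 2$. I will treat each kind separately.

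For $\bD^0$ the statement is trivial, since both sides are $m\Cat$ and $\int_{\bD^0}$ is the identity. For $S(\mA)$ with $\mA$ an $(m-1)$-category, I apply \cref{suspe}. Its hypothesis is that the Grothendieck construction for $\mA$ is an equivalence, and this holds because the $(m-1)$-categorical Grothendieck construction is assumed to be an equivalence. So the real work is the case of wedges.

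For $\mB=S(\mA_1)\vee\ldots\vee S(\mA_n)$ I will compare two Segal decompositions. On the fibration side, \cref{Segal} gives an equivalence
$$m\Cat^\cocart_{/\mB}\simeq m\Cat^\cocart_{/S(\mA_1)}\times_{m\Cat}\cdots\times_{m\Cat} m\Cat^\cocart_{/S(\mA_n)},$$
where the fiber products are taken over the fibers at the shared objects $1,\ldots,n-1$.

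On the functor side, $\mB$ is a pushout $S(\mA_1)\coprod_{\bD^0}\cdots\coprod_{\bD^0}S(\mA_n)$ in $m\Cat$. Since $\Fun(-,m\Cat)$ sends colimits to limits, this gives
$$\Fun(\mB,m\Cat)\simeq \Fun(S(\mA_1),m\Cat)\times_{m\Cat}\cdots\times_{m\Cat}\Fun(S(\mA_n),m\Cat).$$

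Next I check that $\int$ is compatible with these two decompositions. The Grothendieck construction is natural under base change, as the map \eqref{GroCon} of cartesian fibrations over $m\Cat$ shows: restricting a functor to $S(\mA_i)$ or to a point corresponds to pulling back the associated fibration. So $\int_\mB$ corresponds to the fiber product $\int_{S(\mA_1)}\times_{\int_{\bD^0}}\cdots\times_{\int_{\bD^0}}\int_{S(\mA_n)}$. Each factor is an equivalence by the suspension case, and $\int_{\bD^0}$ is the identity, so $\int_\mB$ is an equivalence.

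The main point to verify is that the square relating $\int_\mB$ to the product of the $\int_{S(\mA_i)}$ actually commutes, with the restriction functors on both sides matching. I expect this to follow directly from the definition of $\int$ as pullback of the universal cocartesian fibration, because pullback along $S(\mA_i)\to\mB$ commutes with pullback along a classifying map $\mB\to m\Cat$.
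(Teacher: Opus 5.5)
Your proof is correct, but it takes a different route from the paper.

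\textbf{The paper's route.} The paper never compares $\int_\mB$ with the functors $\int_{S(\mA_i)}$. It applies the criterion \cref{Grothendieck-char}(3) as follows:
\begin{enumerate}
\item By \cref{Segal}, $\iota_1(m\Cat^\cocart_{/\mB})$ is a fiber product of the categories $\iota_1(m\Cat^\cocart_{/S(\mA_i)})$.
\item By \cref{suspe} and the easy direction of \cref{Grothendieck-char}, each of these has small colimits, and its fiber functors preserve them.
\item Limits of cocomplete categories along cocontinuous functors are computed in large categories. Hence $\iota_1(m\Cat^\cocart_{/\mB})$ is cocomplete and its fiber functors are cocontinuous.
\item \cref{Grothendieck-char} then shows that $\int_\mB$ is an equivalence.
\end{enumerate}

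\textbf{Your route.} You match the Segal decomposition of $m\Cat^\cocart_{/\mB}$ with the decomposition $\Fun(\mB,m\Cat)\simeq\Fun(S(\mA_1),m\Cat)\times_{m\Cat}\cdots\times_{m\Cat}\Fun(S(\mA_n),m\Cat)$ from cartesian closedness. You then conclude by two-out-of-three.

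\textbf{What each buys.} Your argument bypasses \cref{Grothendieck-char}, with its Yoneda-extension and comparison argument, and any discussion of colimits. It also yields the $\infty$-categorical equivalence directly.

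The cost is that $\int$ must be natural in the base coherently enough to give a map of limit diagrams. The relevant diagram is the zigzag $S(\mA_1)\leftarrow\bD^0\to S(\mA_2)\leftarrow\cdots$ with cone point $\mB$; separately commuting squares are not enough. As you say, this is supplied by the definition of $\int$ as a map of cartesian fibrations over $m\Cat$.

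The paper's route needs only properties of $m\Cat^\cocart_{/\mB}$ and its fiber functors, with no compatibility check on $\int$. It is also the same template the paper reuses in \cref{Groeq}, together with \cref{Grothe}.
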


\begin{proof}
In view of \cref{Grothendieck-char} this follows immediately from \cref{suspe} and \cref{Segal},
where we use that the category of categories having small colimits and functors preserving small colimits admits large limits preserved by the inclusion to the category of large categories.
\end{proof}

\begin{corollary}\label{repro}
Let $1 \leq \m \leq \infty.$
We assume that the $m-1$-categorical Grothendieck construction is an equivalence.
The presheaf $$\beta: \Theta((m-1)\Cat)^\circ \to \mS, \mC \mapsto \iota_0(m\Cat^\cocart_{/\mC})$$ is the $\Theta((m-1)\Cat)$-nerve of $\iota_m(m\Cat).$
  
\end{corollary}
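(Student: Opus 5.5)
The natural comparison map here is the Grothendieck construction itself. I would show that it gives a map of presheaves on $\Theta((m-1)\Cat)$ from the $\Theta((m-1)\Cat)$-nerve of $\iota_m(m\Cat)$ to $\beta$, and that this map is objectwise an equivalence.

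First I would identify the nerve. For $\mC \in \Theta((m-1)\Cat)$, the object $\mC$ is an $m$-category, being built from suspensions of $(m-1)$-categories by wedges. Since $\iota_m$ is right adjoint to the inclusion $m\Cat \subset \infty\Cat$, there is a natural equivalence
$$\Map_{\infty\Cat}(\mC,\iota_m(m\Cat)) \simeq \Map_{\infty\Cat}(\mC, m\Cat) \simeq \iota_0\Fun(\mC, m\Cat).$$
The last identification uses that $\Fun(\mC,m\Cat)$ is the fiber over $\mC$ of the cartesian fibration $m\Cat \times_{\infty\widehat{\Cat}} \infty\widehat{\Cat}_{//^\oplax m\Cat} \to m\Cat$, so that its core is the mapping space. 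Hence the nerve is the presheaf $\mC \mapsto \iota_0\Fun(\mC,m\Cat)$, whose functoriality in $\mC$ is precomposition.

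Next I would produce the comparison map. By definition, the Grothendieck construction restricts to a map of cartesian fibrations
$$\int: m\Cat \times_{\infty\widehat{\Cat}} \infty\widehat{\Cat}_{//^\oplax m\Cat} \to m\coCart$$
over $m\Cat$. Its fiber over $\mC$ is $\int_\mC: \Fun(\mC,m\Cat)\to m\Cat^\cocart_{/\mC}$. Because $\int$ preserves cartesian morphisms, the square formed by $\int_\mC$, $\int_{\mC'}$ and the two restriction functors along any $f:\mC'\to\mC$ commutes. On the source the restriction is precomposition with $f$, and on the target it is pullback $f^*$ of cocartesian fibrations, which are stable under base change. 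Straightening the two cartesian fibrations and passing to the core $\iota_0$ therefore gives a natural transformation of presheaves on $m\Cat$, and I would restrict it along $\Theta((m-1)\Cat)\subset m\Cat$. Concretely, its component at $\mC$ sends $F$ to the pullback of the universal cocartesian fibration along $F$, and the component at $f$ is compatible with restriction because pullbacks compose.

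Finally, each component is an equivalence: \cref{suspe2} says that $\int_\mC$ is an equivalence of $\infty$-categories for every $\mC\in\Theta((m-1)\Cat)$, so it induces an equivalence on cores. Hence $\beta$ is equivalent to the nerve of $\iota_m(m\Cat)$. The main obstacle is the naturality step, namely checking that the fiberwise Grothendieck constructions assemble coherently into a map of presheaves rather than merely being compatible up to some unspecified homotopy. I would handle this entirely through the map of cartesian fibrations over $m\Cat$ recorded in the definition, together with the equivalence between such maps and natural transformations of the classified functors. I would not build the transformation by hand.
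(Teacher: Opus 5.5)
Your proof is correct, but it is organized differently from the paper's. The paper does not invoke \cref{suspe2} at this point. It first uses \cref{suspe} to identify the morphism presheaves of $\beta$, evaluated on suspensions, with $\Map(-,\iota_{m-1}\Fun(X,Y))$. Combined with the Segal condition coming from \cref{Segal}, \cref{denseinherited} then shows that $\beta$ is the nerve of some $m$-category $m\Cat'$, with objects $\iota_0(m\Cat)$ and morphism $(m-1)$-categories $\iota_{m-1}\Fun(X,Y)$. Finally it compares $m\Cat'$ with $\iota_m(m\Cat)$ through the unique tensor-preserving functor $m\Cat\to m\Cat'$ that is the identity on objects, using that $m\Cat$ is the tensored $m\Cat$-enriched category generated by the point.

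You instead take \cref{suspe2} as input and use the Grothendieck construction itself as the comparison map. That corollary already packages \cref{suspe} and \cref{Segal} via \cref{Grothendieck-char}, and it does not depend on the present statement, so there is no circularity. Naturality comes from the definition of $\int$ as a map of cartesian fibrations over $m\Cat$. One caution: do this straightening only for the underlying $(\infty,1)$-cartesian fibrations over $\iota_1(m\Cat)$, i.e.\ with Lurie's straightening, not the higher Grothendieck construction currently being established. Since $\beta$ only sees cores, that suffices.

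Your route avoids both the density theorem and the universal-property argument. It also makes the equivalence $\Map(-,\iota_m(m\Cat))\simeq\beta$ induced by $\int$ by construction. That is exactly the compatibility the paper needs in the proof of \cref{Grothe}, where it must separately assert that the identification sends the universal cocartesian fibration to the identity.

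The paper's route, in exchange, produces the representing $m$-category intrinsically from $\beta$. It exhibits the morphism objects and tensoring of the representing object directly, rather than through the fiberwise statement \cref{suspe2}.
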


\begin{proof}
By \cref{suspe} for every $X, Y \in m\Cat$ and $\mA \in (m-1)\widehat{\Cat} $ there is a canonical equivalence
$$\rho: \Mor_\beta(X,Y)(S(\mA)) = \{(X,Y)\}\times_{m\widehat{\Cat} \times m\widehat{\Cat}} \Map_{m\widehat{\Cat}}(S(\mA),\beta) \simeq $$$$ \{(X,Y)\}\times_{m\widehat{\Cat} \times m\widehat{\Cat}} m\widehat{\Cat}^\cocart_{/S(\mA)} \simeq $$
$$\{(X,Y)\}\times_{\infty\widehat{\Cat} \times \infty\widehat{\Cat}} \Map_{\infty\widehat{\Cat}}(S(\mA),m\Cat) \simeq $$$$\Map_{\infty\widehat{\Cat}}(\mA,(\Fun(X,Y))) 
\simeq $$$$ \Map_{m\widehat{\Cat}}(\mA,\iota_{m-1}(\Fun(X,Y))).$$
\cref{denseinherited} implies that $\beta$ is the $\Theta((m-1)\Cat)$-nerve of 
a $m$-category $m\Cat'$ whose space of objects is $\iota_0(m\Cat)$
and whose $\infty$-categories of morphisms $X \to Y$ are $\iota_{m-1}(\Fun(X,Y)).$
This also guarantees that $m\Cat'$ is tensored over $m\Cat$.
Since $m\Cat$ is the $m\Cat$-enriched category with tensors generated by the point, there is a unique functor $m\Cat \to m\Cat'$ preserving tensors and inducing the identity on spaces of objects. 
The latter functor induces the inverse of the equivalence 
$\rho$ on morphism $\infty$-categories and so is an equivalence.
\end{proof}

\subsection{The Grothendieck construction is an equivalence}

\begin{proposition}\label{Grothe}

Let $1 \leq \m \leq \infty.$
We assume that the $m-1$-categorical Grothendieck construction is an equivalence.
Let $\mC$ be a $m$-category.
The following canonical functor is an equivalence:
$$\iota_1(m\Cat^\cocart_{/\mC}) \to \lim_{\mB \in \theta((m-1)\Cat) \times_{m\Cat} m\Cat_{/\mC}} \iota_1(m\Cat^\cocart_{/\mB}).$$
    
\end{proposition}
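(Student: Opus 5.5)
We prove the statement by comparing both sides with a presheaf-type description and using descent of cocartesian fibrations along the density colimit. Write $I := \Theta((m-1)\Cat) \times_{m\Cat} m\Cat_{/\mC}$. By \cref{flatness} (applied with $\mB=\mC$) the density colimit $\colim_{\mB \in I} \mB \simeq \mC$ is preserved by pullback along any cocartesian fibration $\phi: \mM \to \mC$, so that
$$\mM \simeq \colim_{\mB \in I} \mB \times_\mC \mM$$
in $m\Cat_{/\mC}$. This already gives a candidate inverse: send a compatible family $(\mM_\mB \to \mB)_{\mB \in I}$ to the colimit $\colim_{\mB\in I}\mM_\mB \to \mC$. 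The proof then splits into showing that the restriction functor is fully faithful and that it is essentially surjective, with the inverse just described.

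For full faithfulness, let $\mM,\mN$ be cocartesian fibrations over $\mC$. Mapping spaces in the limit are $\lim_{\mB\in I}\Map^\cocart_{\mB}(\mB\times_\mC\mM,\mB\times_\mC\mN)$. Using the colimit decomposition of $\mM$ above, $\Map_\mC(\mM,\mN) \simeq \lim_{\mB}\Map_\mB(\mB\times_\mC \mM, \mB\times_\mC\mN)$. It remains to show that a functor over $\mC$ is a map of cocartesian fibrations if and only if every pullback to $\mB \in I$ is. This is local detection of cocartesian cells: every cell of $\mC$ is the image of a cell $\bD^k \to \mC$, and $\bD^k$ belongs to $\Theta((m-1)\Cat)$. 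By \cref{elemen}, cocartesian cells of $\mM$ lying over $\bD^k$ are exactly the cocartesian cells of $\bD^k\times_\mC\mM$.

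For essential surjectivity, take a compatible family $(\mM_\mB\to\mB)$. By \cref{suspe2} each $\mM_\mB$ corresponds to a functor $\mB \to m\Cat$, and compatibility yields an element of
$$\lim_{\mB \in I}\iota_0\Fun(\mB, m\Cat) \simeq \lim_{\mB\in I} \Map(\mB, \iota_m(m\Cat)).$$
By density of $\Theta((m-1)\Cat)$ (\cref{denseinherited}, cf.\ \cref{repro}) this is $\Map(\mC,\iota_m(m\Cat))$, so we obtain a functor $F:\mC\to m\Cat$. Its Grothendieck construction $\int_\mC F$ restricts to the given $\mM_\mB$, because $\int$ is compatible with base change, being defined as a pullback of the universal fibration. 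Strictly, this handles objects, that is $\iota_0$; for $\iota_1$ it is combined with the full faithfulness above.

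The main obstacle is the full faithfulness step, specifically identifying $\Map_\mC(\mM,\mN)$ with the limit over $I$. This relies on \cref{flatness} producing a colimit in $m\Cat_{/\mC}$, together with the local detection of cocartesian maps for cells of all dimensions. I would carry out this detection via \cref{thetalocal} and \cref{elemen}, checking that the $n$-cells of $\mM$ over $\mC$ are all seen by the pullbacks to disks $\bD^n \to \mC$, which belong to $I$.
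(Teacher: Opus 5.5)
Your proposal is correct and follows essentially the same route as the paper. Full faithfulness comes from identifying the limit of the $\Fun_\mB(\mB\times_\mC\mM,\mB\times_\mC\mN)$ with $\Fun_\mC(\mM,\mN)$ via \cref{flatness}, together with detecting maps of cocartesian fibrations on pullbacks to disks. Essential surjectivity comes from identifying $\lim_{\mB}\iota_0(m\Cat^\cocart_{/\mB})$ with $\Map(\mC,\iota_m(m\Cat))$ by density and using $\int_\mC$ as a section. (Your opening ``candidate inverse'' $\colim_{\mB}\mM_\mB$ is only justified a posteriori, via flatness applied to $\int_\mC F$, since proving directly that it is a cocartesian fibration would need \cref{inverso}, which depends on this proposition; but you never actually rely on it.)
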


\begin{proof}

We prove that the functor of the statement is essentially surjective and fully faithful.
Let $$\bj:  \theta((m-1)\Cat) \subset \iota_1((m-1)\Cat)$$ be the canonical embedding, and let $\rho: \iota_1(m\Cat)^\op \to \Cat$ be the functor of 1-categories that sends $\mC$ to $\iota_1(m\Cat^\cocart_{/\mC})$.	
The unit natural transformation $\rho \to \bj_*(\bj^*(\rho)) $ to the right Kan extension of the restriction
induces at any $\mC \in m\Cat$ the canonical functor of the statement
$$\kappa: \iota_1(m\Cat^\cocart_{/\mC}) \to \lim_{\mB \in \theta((m-1)\Cat) \times_{m\Cat} m\Cat_{/\mC}}(\iota_1(m\Cat^\cocart_{/\mB})).$$

The induced natural transformation $\iota_0 \circ \rho \to \iota_0 \circ \bj_*(\bj^*(\rho)) \simeq \bj_*(\iota_0 \circ  \rho \circ \bj) $ induces at any $\mC \in m\Cat$ the canonical map 
$$\iota_0(m\Cat^\cocart_{/\mC}) \to \lim_{\mB \in \theta((m-1)\Cat) \times_{m\Cat} m\Cat_{/\mC}}(\iota_0(m\Cat^\cocart_{/\mB})). $$

By \cref{repro} the restriction $\iota_0 \circ \rho \circ \bj: \theta((m-1)\Cat)^\op \to \mS$ is represented by $\iota_m(m\Cat)$.
We obtain a natural transformation $$\iota_0 \circ \rho \to \bj_*(\iota_0 \circ  \rho \circ \bj) \simeq \bj_*(\Map_{\m\widehat{\Cat}}(- , \iota_m(m\Cat))\circ \bj) \simeq  \Map_{\m\widehat{\Cat}}(- , \iota_m(m\Cat)), $$ where the last equivalence is by density of
$\theta((m-1)\Cat)$ in $m\Cat,$
that induces at any $\mC \in m\Cat$ the map
$$\iota_0(m\Cat^\cocart_{/\mC}) \to $$$$ \lim_{\mB \in \theta((m-1)\Cat) \times_{m\Cat} m\Cat_{/\mC}}(\iota_0(m\Cat^\cocart_{/\mB})) \simeq $$$$ \lim_{\mB \in \theta((m-1)\Cat) \times_{m\Cat} m\Cat_{/\mC}}\Map_{\m\widehat{\Cat}}(\mB,  \iota_m(m\Cat))\simeq $$
$$ \Map_{\m\widehat{\Cat}}(\colim_{\mB \in \theta((m-1)\Cat) \times_{m\Cat} m\Cat_{/\mC}}, m\Cat) \simeq $$$$ \Map_{\m\widehat{\Cat}}(\mC,  m\Cat) \simeq \Map_{\m\widehat{\Cat}}(\mC,  \iota_m(m\Cat)). $$ 
The latter map sends the cocartesian fibration 
$  \iota_m(m\Cat_{*//}) \to  \iota_m(m\Cat) $ for $\mC= \iota_m(m\Cat)$ to the identity.

The canonical functor $\Fun(\mC, m\Cat) \to m\Cat^\cocart_{/\mC}$
sending the identity to the universal cocartesian fibration $  \iota_m(m\Cat_{*//}) \to  \iota_m(m\Cat) $
induces on underlying spaces a map
$$\Map_{\m\widehat{\Cat}}(\mC, \iota_m(m\Cat)) \simeq \Map_{\infty\widehat{\Cat}}(\mC, m\Cat) \to \iota_0(m\Cat^\cocart_{/\mC}), $$
which is natural in $\mC \in {m\Cat}$,
and so gives a natural transformation $$\Map_{\m\widehat{\Cat}}(-, \iota_m(m\Cat)) \to \iota_0 \circ \rho .$$
By the Yoneda lemma the composition 
$$\Map_{\m\widehat{\Cat}}(-, \iota_m(m\Cat)) \to \iota_0 \circ \rho \to \Map_{\m\widehat{\Cat}}(-, \iota_m(m\Cat)) $$
is the identity since it sends the identity of $ \iota_m(m\Cat)$ to the identity.
Hence the functor 
$$\kappa: \iota_1(m\Cat^\cocart_{/\mC}) \to \lim_{\mB \in \theta((m-1)\Cat) \times_{m\Cat} m\Cat_{/\mC}}\iota_1(m\Cat^\cocart_{/\mB})$$ is essentially surjective.

We prove next that $\kappa$ is fully faithful.
The functor $\kappa$ induces on morphism categories between $\mD, \mE \in m\Cat^\cocart_{/\mC}$
the canonical functor:
$$ \alpha: \Fun^\cocart_\mC(\mD,\mE) \to \lim_{\mB \in \theta((m-1)\Cat) \times_{m\Cat} m\Cat_{/\mC}} \Fun^\cocart_\mB(\mB \times_\mC \mD, \mB \times_\mC \mE).$$
There is a canonical equivalence
$$\sigma: \lim_{\mB \in \theta((m-1)\Cat) \times_{m\Cat} m\Cat_{/\mC}} \Fun_\mB(\mB \times_\mC \mD, \mB \times_\mC \mE) \simeq $$
$$  \lim_{\mB \in \theta((m-1)\Cat) \times_{m\Cat} m\Cat_{/\mC}} \Fun_\mC(\mB \times_\mC \mD, \mE) \simeq \Fun_\mC(\colim_{\mB \in \theta((m-1)\Cat) \times_{m\Cat} m\Cat_{/\mC}} (\mB \times_\mC \mD), \mE)$$
$$ \simeq \Fun_\mC((\colim_{\mB \in \theta((m-1)\Cat) \times_{m\Cat} m\Cat_{/\mC}}\mB) \times_\mC \mD, \mE) \simeq \Fun_\mC(\mD, \mE)$$
where the last equivalence is by \cref{flatness}.
The composition $\sigma \circ \alpha $ is the canonical embedding
$$\Fun^\cocart_\mC(\mD,\mE) \subset \Fun_\mC(\mD,\mE).$$

Hence $\alpha$ is fully faithful. But $\alpha$ is also essentially surjective since any functor $\mD \to \mE$ over $\mB$
between cocartesian fibrations is a map of cocartesian fibrations if
all pullbacks to any suspension of an $m-1$-category $\mA$ are maps of
cocartesian fibrations over $S(\mA).$
In fact this is already satisfied if all pullbacks to any 
disk of dimension smaller $m+1$ are maps of cocartesian fibrations over that disk.
\end{proof}

\begin{theorem}\label{Groeq}
The $\infty$-categorical Grothendieck construction is an equivalence.

\end{theorem}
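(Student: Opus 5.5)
By \cref{Groind}, it suffices to prove that for every $n \geq 0$ the $n$-categorical Grothendieck construction is an equivalence. I will argue by induction on $n$.

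\emph{Base case.} The case $n=0$ is \cref{Gro0}.

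\emph{Inductive step.} Assume the $(m-1)$-categorical Grothendieck construction is an equivalence, and fix an $m$-category $\mC$. By \cref{Grothendieck-char}, criterion (3), it suffices to show two things:
\begin{itemize}
\item the $1$-category $\iota_1(m\Cat^\cocart_{/\mC})$ admits small colimits;
\item for every $X \in \mC$, taking the fiber over $X$ preserves these colimits.
\end{itemize}
Alternatively, one can show directly that $\int_\mC$ induces an equivalence on underlying $1$-categories and then upgrade using tensors, exactly as in the proof of \cref{suspe}.

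\emph{Reduction to test objects.} The key input is \cref{Grothe}, which identifies
$$\iota_1(m\Cat^\cocart_{/\mC}) \simeq \lim_{\mB \in \Theta((m-1)\Cat) \times_{m\Cat} m\Cat_{/\mC}} \iota_1(m\Cat^\cocart_{/\mB}).$$
By \cref{suspe2}, each term satisfies
$$\iota_1(m\Cat^\cocart_{/\mB}) \simeq \iota_1(\Fun(\mB, m\Cat)),$$
compatibly with the restriction functors. These equivalences are induced by $\int_\mB$, and the Grothendieck constructions are compatible with pullback along $\mB \to \mC$, because $\int$ is a map of cartesian fibrations over $\infty\scat$.

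On the functor side I will use density of $\Theta((m-1)\Cat)$ in $m\Cat$ (\cref{denseinherited}). Since $\Fun(-, m\Cat)$ sends colimits to limits, it gives
$$\iota_1(\Fun(\mC, m\Cat)) \simeq \lim_{\mB} \iota_1(\Fun(\mB, m\Cat)).$$
To justify this I must check that the density colimit $\colim_\mB \mB \simeq \mC$ is also preserved after applying $\Fun(-, m\Cat)$ in the $1$-categorical sense. This follows because $\Fun(-, m\Cat)$ is the cotensor/internal hom right adjoint in $\infty\Cat$, hence turns colimits in $m\Cat \subset \infty\Cat$ into limits.

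\emph{Compatibility square.} The Grothendieck construction $\int_\mC$ then sits in a commutative square over these two limit descriptions, with the bottom map $\lim_\mB \int_\mB$ an equivalence. Hence $\int_\mC$ is an equivalence on underlying $1$-categories. The step needing most care is precisely this compatibility square: the naturality of the equivalences $\int_\mB$ in $\mB$ must match the restriction maps used in \cref{Grothe}. I expect this to follow formally from the definition of $\int$ as the unique map of cartesian fibrations over $\infty\widehat{\Cat}$ classified by the universal cocartesian fibration.

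\emph{Upgrade to $m$-categories.} Finally, I upgrade from underlying $1$-categories to an equivalence of $m$-categories. Both sides admit tensors with $m$-categories and $\int_\mC$ preserves them, since pulling back the universal fibration commutes with products by a fixed $\mA$. Then, by the argument showing (2)$\Leftrightarrow$(3) in \cref{Grothendieck-char}, equivalence on $\iota_1$ together with compatibility with tensors gives equivalence on morphism $\infty$-categories, because $\Map(\mA, \Mor(X,Y)) \simeq \Map(\mA \otimes X, Y)$.

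Induction on $n$ then completes the proof.
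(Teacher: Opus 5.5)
Your proof is correct, but the last step is organized differently from the paper's. The paper never compares $\int_\mC$ with anything directly. It verifies criterion (3) of \cref{Grothendieck-char}: by \cref{Grothe}, $\iota_1(m\Cat^\cocart_{/\mC})$ is a limit of the categories $\iota_1(m\Cat^\cocart_{/\mB})$ for $\mB\in\Theta((m-1)\Cat)_{/\mC}$. Each of these is cocomplete with colimit-preserving fiber functors by \cref{suspe2}. So colimits in the limit are computed termwise, and the fiber over $X$ factors through the term indexed by $\{X\}\to\mC$.

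Turning this abstract property into the statement that $\int_\mC$ is an equivalence is then left to the proof of \cref{Grothendieck-char}. That proof uses the enriched Yoneda extension of $\mC^\circ\to m\Cat^\cocart_{/\mC}$, conservativity of the resulting nerve via \cref{envelo} and \cref{fiberwiseeq}, and a separate argument that the extension agrees with $\int_\mC$.

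You instead decompose both sides of $\int_\mC$ over the same index category, the functor side by density and cartesian closedness. You then conclude that $\iota_1(\int_\mC)$ itself is an equivalence. This bypasses the Yoneda-extension argument and the identification issue. The price is the compatibility square, which is indeed formal from the definition of $\int$ as a map of cartesian fibrations. The same naturality is what makes the transition functors in \cref{Grothe} colimit-preserving, which the paper's argument also tacitly needs.

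Your tensor upgrade is valid: the comparison $\mA\times\int_\mC F\to\int_\mC(\mA\times F)$ is a fiberwise equivalence, so \cref{fiberwiseeq} applies. You can skip it, though. Once $\iota_1(\int_\mC)$ is an equivalence, criterion (3) holds trivially, because $\iota_1\Fun(\mC,m\Cat)$ is cocomplete with colimit-preserving evaluations. \cref{Grothendieck-char} then finishes the argument.
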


\begin{proof}
By \cref{Groind} it suffices to see that for every $1 \leq \m \leq \infty$ the $m$-categorical Grothendieck construction is an equivalence.
We prove the latter by induction on $1 \leq \m \leq \infty$.
The case $m=0$ is \cref{Gro0}. Let $1 \leq \m \leq \infty.$
We assume that the $m-1$-categorical Grothendieck construction is an equivalence. 
In view of \cref{Grothendieck-char} we have to see that for every $m$-category $\mC$ the category $\iota_1(\infty\scat^\cocart_{/\mC})$ admits small colimits and for every $X \in \mC$ the functor
$\iota_1(\infty\scat^\cocart_{/\mC}) \to \iota_1(\mC)$
taking the fiber over $X$ preserves small colimits.
This holds by \cref{suspe2} and \cref{Grothe}.
\end{proof}

In the following we deduce important corollaries of \cref{Groeq}.

\begin{corollary}\label{orientpres0}
Let $S$ be an $\infty$-category.
The $\infty$-category $\infty\scat^\cocart_{/S}$
is presentable.

\end{corollary}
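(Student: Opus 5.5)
By \cref{Groeq}, the Grothendieck construction gives an equivalence of $\infty$-categories
\[
\int_S:\Fun(S,\infty\scat)\simeq\infty\scat^\cocart_{/S}.
\]
So it is enough to prove that $\Fun(S,\infty\scat)$ is presentable and then transport presentability along this equivalence. Presentability is a property of the underlying category. Throughout, $\infty\scat^\cocart_{/S}$ should be read as the large $\infty$-category of small cocartesian fibrations over the small $\infty$-category $S$.

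The first step is presentability of $\infty\Cat$ itself. The category $\infty\Cat$ was defined as a limit of presentable categories along right adjoints, and in addition \cref{theta} exhibits it as an accessible localization of $\Fun(\Theta^\op,\mS)$ by the Segal condition. Hence the underlying category of $\infty\scat$ is presentable.

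The second step concerns the functor $\infty$-category. By the universal property of $\Fun(S,-)$, its underlying category is equivalent to $\iota_1$ of $\infty\Cat_{\infty\Cat}$-functors, that is, to $\Fun(S,\infty\scat)$ regarded as enriched functors. I would argue in one of two ways.

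\begin{enumerate}[\normalfont(1)]\setlength{\itemsep}{-2pt}
\item Apply \cref{psinho}(3) to $\mN=\infty\scat$, which is presentably tensored over $(\infty\Cat,\times)$. This shows that $\Fun(S,\infty\scat)$ is presentably tensored, hence in particular presentable.
\item Use the more elementary description $\iota_1\Fun(S,\infty\scat)\simeq\infty\Cat\mathrm{-}\Fun(S,\infty\scat)$. This is a category of algebras of a monad, or equivalently of sections, over a presentable category. It is accessible and admits small colimits computed objectwise by \cref{colimenrfun}.
\end{enumerate}

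Either route yields presentability, and the equivalence of \cref{Groeq} then transfers it to $\infty\scat^\cocart_{/S}$.

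The only real subtlety is the distinction between the enriched object $\Fun(S,\infty\scat)$ and its underlying category, and the need to make sure the enriched presheaf-type category is genuinely presentable and not merely cocomplete. I would handle this by citing \cref{psinho}(3) directly, since it explicitly asserts presentability. As a cross-check, \cref{Grothendieck-char}(3) together with \cref{Groeq} already guarantees that $\iota_1(\infty\scat^\cocart_{/S})$ has small colimits. Accessibility also follows from \cref{seqcol}, which gives filtered colimits computed in $\infty\scat_{/S}$, combined with the fact that cocartesian fibrations are detected by pullback to the small set of disks $\bD^n$ (\cref{thetalocal}), which are compact.
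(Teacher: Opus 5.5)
Your main route (1), which gets presentability of $\Fun(S,\infty\scat)$ from \cref{psinho}(3) and transports it along the equivalence $\int_S$ of \cref{Groeq}, is exactly the paper's proof. The extra cross-checks are not needed, and the last one misquotes \cref{thetalocal}: that result detects cocartesian fibrations by pullback to objects of $\Theta$ (or oriented simplices, or oriented cubes), not to the disks $\bD^n$, which only detect locally cocartesian fibrations (cf.\ \cref{cocexp}).
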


\begin{proof}

We use \cref{Groeq} and that the $\infty$-category $\Fun(S,\infty\scat)$
is presentable by \cref{psinho} (3).
\end{proof}

\begin{corollary}\label{orientpres}
Let $S$ be an $\infty$-category.
The antioriented category $\infty\fcat^\cocart_{/S}$
is presentable.

\end{corollary}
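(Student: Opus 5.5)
The plan is to transfer presentability from the underlying $\infty$-category to the antioriented enrichment. By \cref{orientpres0} the underlying $\infty$-category of $\infty\fcat^\cocart_{/S}$ is presentable. Recall that a presentable left $(\infty\Cat,\boxtimes)$-enriched category is a presentable category with a closed left action of $(\infty\Cat,\boxtimes)$. So it remains to show two things: $\infty\fcat^\cocart_{/S}$ admits left tensors with every $\infty$-category $K$, and these tensors are compatible with colimits so that the action is closed. Since we already know the left morphism objects $\L\Mor(X,Y)\simeq\Fun^{\oplax,\cocart}_S(X,Y)$, it is more convenient to produce cotensors first and then get tensors by the adjoint functor theorem.

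\textbf{Step 1 (cotensors).} For a cocartesian fibration $Y\to S$ and $K\in\infty\Cat$, the fibration $Y^K_\lax\to S$ is cocartesian by \cref{restcoca}. By \cref{cotensol} there is a natural equivalence
\[
\Fun^{\oplax,\cocart}_S(X,Y^K_\lax)\simeq\Fun^\lax(K,\Fun^{\oplax,\cocart}_S(X,Y)).
\]
This exhibits $Y^K_\lax$ as the left cotensor of $K$ and $Y$ in $\infty\fcat^\cocart_{/S}$.

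\textbf{Step 2 (tensors).} For fixed $K$, the functor $Y\mapsto Y^K_\lax$ on underlying $\infty$-categories preserves small limits and filtered colimits (use \cref{seqcol} for the latter). To see this, I would transport along the equivalence $\int_S$ of \cref{Groeq}. I expect $Y^K_\lax$ to correspond to postcomposition with $\Fun^\lax(K,-)$, which is a right adjoint and preserves filtered colimits for $K$ compact. For general $K$, I would write $K$ as a colimit of disks, turning cotensors into limits. The adjoint functor theorem then yields a left adjoint $K\otimes-$, and the equivalence of Step 1 upgrades it to the enriched tensor. Joint cocontinuity of $(K,X)\mapsto K\otimes X$ follows because the cotensor sends colimits in $K$ to limits.

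\textbf{Step 3 (conclusion).} Presentability of $\infty\fcat^\cocart_{/S}$ as an antioriented category then follows from the characterization of presentable enriched categories as presentable categories with a closed action, as in \cref{linenr}.

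I expect the main obstacle to be Step 2, specifically identifying $Y^K_\lax$ under the Grothendieck construction, or otherwise verifying accessibility of the cotensor functor. The fallback is to use \cref{Groori}-style reasoning: identify $\infty\fcat^\cocart_{/S}$ with the full antioriented subcategory of ${\boxtimes\Fun}(S,\infty\fcat)$. Then apply \cref{colimenrfun} and \cref{psinho}(3), which show that ${\boxtimes\Fun}(S,\infty\fcat)$ is presentably tensored, and argue that this subcategory is closed under tensors and colimits.
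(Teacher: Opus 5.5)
Your architecture is the same as the paper's. You get presentability of the underlying category from \cref{orientpres0} and left cotensors $Y^K_\lax$ from \cref{restcoca} and \cref{cotensol}. You then reduce to showing that each functor $(-)^K_\lax$ on $\infty\Cat^\cocart_{/S}$ preserves small limits and is accessible. The paper cites \cite[Lemma 3.69]{heine2024higher} for this reduction, and your Steps 2--3 reconstruct it via the adjoint functor theorem.

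The gap is in the only nontrivial input. You want limit preservation and accessibility by transporting along $\int_S$, and you only \emph{expect} that $Y^K_\lax$ corresponds to $\Fun^\lax(K,-)\circ F$. That identification is not a formal consequence of \cref{Groeq}. That theorem is an equivalence $\Fun(S,\infty\scat)\simeq\infty\scat^\cocart_{/S}$, and it does not see the oplax morphism $\infty$-categories $\Fun^{\oplax,\cocart}_S(-,-)$ that characterize $Y^K_\lax$ as a Gray cotensor. You can compute $(Y^K_\lax)_s\simeq\Fun^\lax(K,Y_s)$ fiberwise. But to conclude via \cref{fiberwiseeq} you still need a natural comparison map of cocartesian fibrations, and constructing it is the actual content.

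Two further problems:
\begin{itemize}
\item Your fallback is circular as written: the antioriented adjunction of \cref{Groori} is constructed by invoking \cref{orientpres} itself.
\item The opening claim of Step 2, that $(-)^K_\lax$ preserves filtered colimits, fails for non-compact $K$. For $K$ an infinite set it is a fiberwise infinite product. Only accessibility holds, and only accessibility is needed.
\end{itemize}

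The idea you are missing is that the Grothendieck construction is not needed here at all. The functor $(-)^K_\lax$ is defined on all of $\infty\Cat_{/S}$. Unwinding $Y^K_\lax=S\times_{\Fun^\lax(K,S)}\Fun^\lax(K,Y)$ shows it is right adjoint to $(T\to S)\mapsto(K\boxtimes T\to T\to S)$. As a right adjoint between presentable categories, it preserves small limits and is accessible, for every $K$ at once.

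The inclusion $\infty\Cat^\cocart_{/S}\subset\infty\Cat_{/S}$ preserves small limits and filtered colimits (the latter by \cref{seqcol}), and it is conservative. Moreover, $(-)^K_\lax$ restricts to $\infty\Cat^\cocart_{/S}$ by \cref{restcoca} and \cref{cotensol}. So the restricted cotensor functor inherits both properties. This is exactly the paper's argument, and it makes both the disk decomposition of $K$ and the identification with postcomposition by $\Fun^\lax(K,-)$ unnecessary.
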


\begin{proof}

The underlying category of the antioriented category $\infty\mathfrak{Cat}^\cocart_{/S}$
is $\infty\Cat^\cocart_{/S}$, which is presentable by \cref{orientpres0}.
By \cref{cotensol} the antioriented category $\infty\mathfrak{Cat}^\cocart_{/S}$ is cotensored.
For every $\infty$-category $K$ and cocartesian fibration
$X \to S$ the cotensor is $X_\lax^K \to S.$
By \cite[Lemma 3.69.]{heine2024higher} it suffices to prove that 
the functor $(-)^{K}_\lax: \infty\Cat_{/S}^\cocart \to  \infty\Cat_{/S}^\cocart $ preserves small limits and is accessible.

The functor $(-)^{K}_\lax: \infty\Cat_{/S} \to \infty\Cat_{/S} $
is right adjoint to the functor $K \boxtimes (-): \infty\Cat_{/S} \to \infty\Cat_{/S}$
and so preserves small limits and is accessible.
Thus the functor $(-)^{K}_\lax: \infty\Cat_{/S}^\cocart \to  \infty\Cat_{/S}^\cocart $ also preserves small limits and is accessible because the inclusion $\infty\Cat_{/S}^\cocart \subset \infty\Cat_{/S} $ preserves small limits and small filtered colimits.
\end{proof}

We fix the following notation:

\begin{notation}
Let $\mD$ be an $\infty$-category, $\mE \subset \mD$ a subcategory and $\phi: \mC \to \mD$ a cartesian fibration. 
Let $$ \Fun^{\mE}_\mD(\mD,\mC) \subset \Fun_\mD(\mD,\mC) $$
denote the full subcategory of sections of $\phi$ sending cells of $\mE$ to
$\phi$-(co)cartesian cells, and define
$$ \Fun^{\oplax, \mE}_\mD(\mD,\mC):= \Fun^{\oplax, \cocart}_\mE(\mE,\mE \times_\mD \mC) \times_{\Fun^\oplax_\mE(\mE,\mE \times_\mD \mC)} \Fun^\oplax_\mD(\mD,\mC) \subset \Fun^\oplax_\mD(\mD,\mC). $$
    
\end{notation}

\begin{theorem}\label{limsect} Let $(\mD,\mE)$ be a marked $\infty$-category and $\mC \to \mD$ a cartesian fibration classifying a functor $F: \mD^\circ \to \infty\scat$.

\begin{enumerate}[\normalfont(1)]\setlength{\itemsep}{-2pt}
\item There is a canonical equivalence
$$ \mE-\overset{\lax}{\lim}(F) \simeq \Fun^{\mE}_\mD(\mD,\mC).$$ 

\item There is a canonical equivalence
$$ \mE-\overset{\bar{\to}}{\lim}(F) \simeq \Fun^{\oplax,\mE}_\mD(\mD,\mC).$$ 

\end{enumerate}

\end{theorem}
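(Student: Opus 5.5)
The plan is to compute both limits through the Grothendieck construction (\cref{Groeq}), following the pattern of the proof of \cref{laxgro}, with cartesian fibrations in place of cocartesian ones.

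First I unwind the definitions. The lax limit of $F$ is the left weighted limit with weight $\W^{\mD^\circ,\mE}_\lax$. For any $\infty$-category $K$, the $K$-points are computed by
$$\Map(K,\mE\text{-}\overset{\lax}{\lim}(F))\simeq \Map_{\Fun(\mD^\circ,\infty\scat)}(K\times \W^{\mD^\circ,\mE}_\lax, F).$$
The same formula holds for the antioriented limit, with $K\boxtimes(-)$ in place of $K\times(-)$ and antioriented natural transformations in place of ordinary ones. In the unmarked case, the Grothendieck construction for cartesian fibrations sends $\W^{\mD^\circ}_\lax$ to the free cartesian fibration on $\id_\mD$. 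By \cref{weigrot}, after the $(-)^{\co\op}$ dualities, this is $\Fun^\oplax(\bD^1,\mD)\to\mD$, given by evaluation at the appropriate endpoint. It is the cartesian envelope $\Env(\mD^\coop)^\coop$ of $\id:\mD\to\mD$.

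Transporting along the equivalence of \cref{Groeq} (its cartesian variant), morphisms $\W\to F$ in $\Fun(\mD^\circ,\infty\scat)$ become maps of cartesian fibrations $\Fun^\oplax(\bD^1,\mD)\to\mC$ over $\mD$. By \cref{envelo2}, these correspond to $\Fun_\mD(\mD,\mC)$, the category of sections. Inverting the marking turns the weight into $\W^{\mE^{-1}}$. By the lemma following \cref{fibinv}, applied in the cartesian form, together with \cref{weichara}, this cuts the sections down to those sending cells of $\mE$ to cartesian cells. That gives the full subcategory $\Fun^{\mE}_\mD(\mD,\mC)$ and proves (1). The only extra care is to check that the ordinary (strict) natural transformations in the Grothendieck equivalence match $\Fun_\mD$, and not an oplax variant. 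For this I use that the morphism $\infty$-categories of $\infty\scat^\cart_{/\mD}$ are $\Fun^\cart_\mD$.

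For (2), the left enrichment is what matters. By \cref{THH1}, that is, the antioriented refinement of the Grothendieck construction, we have
$$\L\Mor_{\boxtimes\Fun(\mD^\circ,\infty\fcat)}(G,F)\simeq \Fun^{\oplax,\cart}_\mD\Big(\int G,\int F\Big).$$
This is obtained from \cref{repren}, \cref{cotensol} and \cref{presheaves} by matching cotensors $(-)^K_\lax$ on both sides. Applying it to $G=\W^{\mD^\circ}_\lax$ gives $\Fun^{\oplax,\cart}_\mD(\Fun^\oplax(\bD^1,\mD),\mC)$. I then need an oplax version of \cref{envelo2}: restriction along $\mD\to\Fun^\oplax(\bD^1,\mD)$ should induce an equivalence
$$\Fun^{\oplax,\cart}_\mD(\Env,\mC)\simeq\Fun^\oplax_\mD(\mD,\mC).$$
I would prove this by testing against $K$-points. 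Using \cref{cotensol}, $K$-points of the left side are $\Fun^\cart_\mD(\Env,\mC^K_\lax)$ (in the cartesian-dual form), and \cref{envelo2} identifies these with $\Fun_\mD(\mD,\mC^K_\lax)$. By \cref{univeqr} the latter is $\Fun^\lax(K,\Fun^\oplax_\mD(\mD,\mC))$. Naturality in $K$ then gives the equivalence.

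Finally, imposing the marking $\mE$ on the weight restricts, cell by cell, to those oplax sections and higher oplax transformations whose restriction to $\mE$ is a map of (co)cartesian fibrations. This is exactly $\Fun^{\oplax,\mE}_\mD(\mD,\mC)$ as defined in the notation before the statement.

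I expect the main obstacle to be this last step. I must show that the localization $\W\to\W^{\mE^{-1}}$ corresponds, at every level of the cotensors $\mC^K_\lax$, precisely to the cartesianness condition on $\mE$. This includes checking that the cartesian cells of $\mC^K_\lax$ over $\mD$ are detected pointwise, so that the defining pullback over $\Fun^{\oplax,\cocart}_\mE$ matches the cell-by-cell condition.
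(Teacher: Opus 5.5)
Your proposal is correct and follows essentially the paper's proof. In (1) you use \cref{weigrot}, \cref{Groeq} and \cref{envelo2} to identify morphisms out of the lax weight with sections, then restrict to the $\mE$-marked weight. In (2) you test against $\mA$-points using the cotensor $\mC^{\mA}_\lax$, \cref{envelo2} and \cref{univeqr}.

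The only difference is that in (2) the paper skips your detour through the enriched equivalence \cref{Groorihom} and a separate oplax envelope lemma. It computes $\Map_{\infty\Cat}(\mA,-)$ of the weighted limit directly, using \cref{presheaves} to pass to ordinary functor categories. Note that \cref{univeqr} relates $\Fun_\mD(\mD,\mC^{\mA}_\lax)$ to $\Fun^\lax(\mA,\Fun^\oplax_\mD(\mD,\mC))$ only after passing to maximal subspaces, which is how both arguments actually use it.
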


\begin{proof}
By \cref{weigrot} the cartesian fibration 
$\Fun^\oplax(\bD^1,\mD)\to \mD$ evaluating at the source
classifies the functor $ \W^{\mD}_\lax: \mD^\circ \to \infty\scat$.

(1): For every $\infty$-category $\mA$ there is a canonical equivalence $$\Mor_{\Fun(\mD^\circ,\infty\scat)}(\W^{\mD}_\lax, \Fun(\mA,-) \circ F) \simeq$$$$ \Fun^\cart_{\Fun^\oplax(\{0\},\mD)}(\Fun^\oplax(\bD^1,\mD), \mC^\mA) \simeq $$$$ \Fun_\mD(\mD,\mC^\mA) \simeq $$$$\Fun_\mD(\mA \times \mD,\mC) \simeq $$$$\Fun(\mA, \Fun_\mD(\mD,\mC)).$$
The first equivalence is by \cref{Groeq},
the second equivalence is by \cref{envelo2}.
The last two equivalences are the canonical ones.
The resulting equivalence $$\Mor_{\Fun(\mD^\circ,\infty\Cat)}(\W^{\mD}_\lax, \Fun(\mA,-) \circ F) \simeq \Fun(\mA, \Fun_\mD(\mD,\mC)) $$
restricts to an equivalence $$\Fun(\mA, \mE-\overset{\lax}{\lim}(F)) \simeq \Mor_{\Fun(\mD^\circ,\infty\scat)}(\W^{\mD,\mE}_\lax, \Fun(\mA,-) \circ F) \simeq \Fun(\mA, \Fun^\mE_\mD(\mD,\mC)).$$

(2): For every $\infty$-category $\mA$ there is a canonical equivalence $$ \Mor_{{\Fun\boxtimes}(\mD^\circ,\infty\mathfrak{Cat})}(\W^{\mD}_\lax, \Fun^\lax(\mA,-) \circ F) \simeq $$$$ \Mor_{{\Fun}(\mD^\circ,\infty\scat)}(\W^{\mD}_\lax, \Fun^\lax(\mA,-) \circ F)\simeq $$
$$ \Fun^\cart_{\Fun^\oplax(\{0\},\mD)}(\Fun^\oplax(\bD^1,\mD), \mC_\lax^\mA) \simeq $$$$ \Fun_\mD(\mD,\mC^\mA_\lax).$$
The first equivalence is by \cref{Groeq} and the third equivalence is by \cref{envelo2}.
The resulting equivalence
$$ \Map_{{\Fun\boxtimes}(\mD^\circ,\infty\mathfrak{Cat})}(\W^{\mD}_\lax, \Fun^\lax(\mA,-) \circ F) \simeq $$$$ \Map_{\infty\scat_{/\mD}}(\mD,\mC^\mA_\lax) \simeq $$$$ \Map_{\infty\scat}(\mA, \Fun^\oplax_\mD(\mD,\mC)), $$
where the last equivalence is by \cref{univeqr}, 
restricts to an equivalence $$\Map_{\infty\Cat}(\mA, \mE-\overset{\bar{\to}}{\lim}(F)) \simeq $$$$ \Map_{{\Fun\boxtimes}(\mD^\circ,\infty\mathfrak{Cat})}(\W^{\mD,\mE}_\lax, \Fun^\lax(\mA,-) \circ F) \simeq $$$$ \Map_{\infty\Cat}(\mA, \Fun^{\oplax,\mE}_\mD(\mD,\mC)).$$
\end{proof}

\begin{theorem}\label{Graytenso}

Let $\mC$ be an $\infty$-category, $\mD$ an antioriented category  and $X \in \mD. $
The antioriented colimit of the constant functor $\mC \to \mD$
with value $X$ if it exists, is $\mC \ot X $.
    
\end{theorem}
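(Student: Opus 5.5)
The plan is to compare the two universal properties directly, object by object in $\mD$. By definition the antioriented colimit of the constant functor $\underline{X}: \mC \to \mD$ represents the copresheaf
$$ Y \mapsto \overset{\bar{\to}}{\lim}(\L\Mor_\mD(-,Y)\circ \underline{X}) = \L\Mor_{{\Fun\boxtimes}(\mC^\circ,\infty\fcat)}(\W^{\mC}_\lax, \underline{\L\Mor_\mD(X,Y)}),$$
where the second argument is the constant functor $\mC^\circ \to \infty\fcat$ with value $\L\Mor_\mD(X,Y)$. The tensor $\mC \ot X$ (left tensor with respect to the $(\infty\Cat,\boxtimes)$-enrichment) represents $Y \mapsto \Fun^\lax(\mC,\L\Mor_\mD(X,Y))$. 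So I would reduce the statement to a natural equivalence of $\infty$-categories, for every $\infty$-category $K$,
$$ \L\Mor_{{\Fun\boxtimes}(\mC^\circ,\infty\fcat)}(\W^{\mC}_\lax, \underline{K}) \simeq \Fun^\lax(\mC,K),$$
natural in $K \in \infty\fcat$ as an oriented functor. Given this, the two copresheaves agree, so one is representable iff the other is, and the representing objects coincide.

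To establish this equivalence I would compute the left-hand side as the (unmarked) antioriented/oriented weighted limit of a constant functor, and then apply \cref{limsect} (2) with $\mE$ empty. The constant functor $\mC^\circ \to \infty\scat$ with value $K$ is classified, via the (cartesian) Grothendieck construction of \cref{Groeq}, by the projection $\mC \times K \to \mC$. Then \cref{limsect} (2) identifies the limit with $\Fun^{\oplax}_\mC(\mC,\mC\times K)$, the fiber of $\Fun^\oplax(\mC,\mC\times K) \to \Fun^\oplax(\mC,\mC)$ over the identity. Since $\Fun^\oplax(\mC,-)$ preserves products (as a right adjoint), this fiber is $\Fun^\oplax(\mC,K)$. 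I would then use the duality \cref{grayhoms} (and the convention that an antioriented colimit uses the lax weight on $\mC$) to match the variance: the computation should be run with the correct orientation so that the answer is $\Fun^\lax(\mC,K)$, which is exactly the right adjoint of $\mC\boxtimes(-)$. That is what makes the representing object the left tensor $\mC \ot X$.

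A further subtlety is that \cref{limsect} is stated for functors landing in $\infty\scat$ and yields an $\infty$-category, whereas here I need the $\infty\fcat$-enriched morphism object and naturality in $K$. I would handle this as in the proof of \cref{limsect} (2): test against maps from an arbitrary $\infty$-category $\mA$, replacing $K$ by $\Fun^\lax(\mA,K)$ (cotensors in $\infty\fcat$). Then \cref{presheaves} and \cref{interchange} ensure that constant functors valued in $\infty\scat\subset\infty\fcat$ have the same morphism spaces in both settings. Yoneda in $\mA$ then upgrades the space-level equivalence to the enriched one, naturally in $K$.

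The main obstacle I expect is bookkeeping the orientations. This means making sure the lax weight $\W^\mC_\lax$ on $\mC^\circ$ and the left enrichment give $\Fun^\lax(\mC,-)$ rather than $\Fun^\oplax(\mC,-)$, and hence the tensor $\mC \boxtimes X$ rather than $X\boxtimes\mC$ or a $\mathrm{co}$/$\mathrm{op}$-twisted version. If the direct computation produces $\Fun^\oplax$, I would first try applying the involution $(-)^{\co\op}$ of \cref{dua}, using that it exchanges the lax and oplax weights. I would then check the result on the simplest case, $\mC=\bD^1$, against \cref{bdesc}(1).
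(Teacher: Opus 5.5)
Your core computation is the paper's argument run at the enriched level. The paper also uses \cref{Groeq} to turn $\W^{\mC}_\lax$ into the source projection $\Fun^\oplax(\bD^1,\mC)\to\mC$ and the constant functor into $\mC\times K\to\mC$, and \cref{envelo2} to reduce to sections. It does this only on mapping spaces (after \cref{presheaves}), ending with $\Map_{\infty\Cat}(\mC,\L\Mor_\mD(X,Z))$. You package the same steps as \cref{limsect}(2) and get the $\infty$-category $\Fun^\oplax(\mC,K)$.

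The gap is in your last step. You assert that the left tensor $\mC\ot X$ corepresents $\Fun^\lax(\mC,\L\Mor_\mD(X,-))$, and you plan to convert $\Fun^\oplax(\mC,K)$ into $\Fun^\lax(\mC,K)$ using \cref{grayhoms} or $(-)^{\co\op}$. No such conversion exists:
\begin{itemize}
\item $\Fun^\lax(\mC,-)$ and $\Fun^\oplax(\mC,-)$ are right adjoint to $\mC\boxtimes(-)$ and $(-)\boxtimes\mC$. These differ in general, because the Gray tensor product is not symmetric.
\item \cref{grayhoms} only relates $\Fun^\oplax(\mC,K)^\op$ to $\Fun^\lax(\mC^\op,K^\op)$, which changes both $\mC$ and $K$.
\item $(-)^{\co\op}$ is the composite of the two equivalences $(\infty\Cat,\boxtimes)\simeq(\infty\Cat,\boxtimes)^\rev$ of \cref{dua}, so it is monoidal. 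It gives $\Fun^\oplax(\mC,K)^{\co\op}\simeq\Fun^\oplax(\mC^{\co\op},K^{\co\op})$ and does not swap lax and oplax.
\end{itemize}
As written, your final identification either stalls or rests on a false equivalence.

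The mismatch is not real; it is your description of the tensor that is off. By the definition of tensors, $\L\Mor_\mD(\mC\ot X,Y)\simeq\Mor_\mV(\mC,\L\Mor_\mD(X,Y))$, where $\mV=(\infty\Cat,\boxtimes)$ is left enriched in itself. So $\Map(T,\Mor_\mV(\mC,W))\simeq\Map(T\boxtimes\mC,W)$: the test object enters on the left, and $\Mor_\mV(\mC,-)=\Fun^\oplax(\mC,-)$. Hence $\mC\ot X$ corepresents $\Fun^\oplax(\mC,\L\Mor_\mD(X,-))$, which is exactly what \cref{limsect}(2) delivers, and no twisting is needed.

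With this correction your route is valid and gives a bit more than the paper's proof. You get an equivalence of enriched copresheaves, so the antioriented colimit exists exactly when the tensor does. Alternatively, do as the paper does and compare only maximal subspaces. There $\iota_0\Fun^\lax(\mC,K)\simeq\iota_0\Fun^\oplax(\mC,K)\simeq\Map_{\infty\Cat}(\mC,K)$, and the orientation bookkeeping disappears.
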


\begin{proof}
For every $Z \in \mC$ there is a canonical equivalence
$$ \map_\mC(\overset{\bar{\to}}{\colim}(\underline{X}),Z) \simeq $$$$ \map_{\Fun\boxtimes(\mC^\op, \infty\fcat)}(W^{\mC}_\lax,\underline{\L\Mor_\mC(X,Z)}) \simeq $$
$$ \map_{{{\Fun}}(\mC^\op, \infty\scat)}(W^{\mC}_\lax,\underline{\L\Mor_\mC(X,Z)}) \simeq  $$$$ \map_{\infty\scat^\cart_{/\mC}}(\Fun^\oplax(\bD^1,\mC),\L\Mor_\mC(X,Z)\times \mC) \simeq $$$$ \map_{\infty\scat_{/ \mC}}(\mC,\L\Mor_\mC(X,Z) \times \mC) \simeq $$$$ \map_{\infty\scat}(\mC,\L\Mor_\mC(X,Z)).$$
The first equivalence characterizes the antioriented colimit,
the second equivalence is by \cref{presheaves},
the third equivalence is by \cref{Groeq} and the fourth equivalence is \cref{envelo2}.
\end{proof}

We also obtain the following result concerning limits:

\begin{proposition}

Let $X$ be an $\infty$-category and $\alpha: F \to G$ a natural transformation of functors $X \to \infty\scat,$ which is objectwise an inclusion.
The canonical commutative square of $\infty$-categories
$$ \begin{tikzcd}
\lim F \ar{r}{} \ar{d} & \lim G \ar{d}{} \\
\prod_{Z \in X} F(Z)  \ar{r}[swap]{} & \prod_{Z \in X} G(Z)
\end{tikzcd}$$
is a pullback square.

\end{proposition}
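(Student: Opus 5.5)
We compute both limits as spaces of sections, using the Grothendieck construction (\cref{Groeq}) together with \cref{limsect}(1) for the trivial marking in which every cell is marked. That is, $\lim F$ is the full subcategory of $\Fun_X(X,\int F)$ spanned by sections sending all cells of $X$ to cocartesian cells, and similarly for $G$. The transformation $\alpha$ corresponds to a map of cocartesian fibrations $\int\alpha: \int F \to \int G$ over $X$. Since $\alpha$ is objectwise an inclusion, $\int\alpha$ is fiberwise an inclusion.

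The first step is to show that $\int\alpha$ is itself an inclusion of $\infty$-categories, and that a cell of $\int F$ is cocartesian if and only if its image in $\int G$ is cocartesian.

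For the inclusion claim we use the characterization of inclusions via morphism $\infty$-categories. Consider the square (\ref{filler}) for a cocartesian lift $x\to f_!x$ of a morphism $f: s \to t$. It identifies $\Mor_{\int F}(x,y)$ with the union over $f$ of $\Mor_{F(t)}(f_!x,y)$. The map to $\int G$ is then the union of the maps $\Mor_{F(t)}(f_!x,y)\to \Mor_{G(t)}(\alpha f_!x,\alpha y)$. Here we use that $\alpha(f_!x)\simeq f_!\alpha(x)$ by naturality. Each of these maps is an inclusion because $\alpha_t$ is. We then induct on dimension to handle the higher cells.

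For the cocartesian claim: cocartesian cells of $\int F$ are equivalences in fibers after transport. Because the fiber maps are inclusions, they reflect equivalences, and so cocartesianness is detected in $\int G$.

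The second step is the pullback square itself. The lower-right corner $\prod_Z G(Z)$ is identified with $\Fun_{\iota_0 X}(\iota_0X, \iota_0X\times_X \int G)$, namely sections over the underlying space, and similarly for $F$. A section $\sigma$ of $\int G$ that is fully cocartesian, and whose values at objects lie in the fibers of $F$, therefore factors on objects through $\int F$. We need to show that it factors through $\int F$ as a functor. For a monomorphism $\int F \to \int G$, this amounts to showing that every cell of $\sigma$ lies in $\int F$.

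A cocartesian cell of $\int G$ whose source lies in $\int F$ is equivalent to the $\int\alpha$-image of the cocartesian lift in $\int F$. This holds because $\int\alpha$ preserves cocartesian lifts and lifts are unique. By induction on cell dimension it follows that all cells of $\sigma$ lie in $\int F$. Since $\int\alpha$ is a monomorphism, the factorization is unique, and the cocartesian condition is reflected by the first step. This gives the pullback on spaces of objects.

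To upgrade to a pullback of $\infty$-categories, we replace each corner by its functor $\infty$-category out of a test $\infty$-category $K$. We apply the same argument to $F^K:=\Fun(K,-)\circ F$, which is again objectwise an inclusion because $\Fun(K,-)$ preserves monomorphisms. Alternatively, we note that $\Fun(K,-)$ commutes with all four corners.

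The main obstacle is the first step, namely verifying that cells of all dimensions of $\int G$ lying over cocartesian data land in $\int F$. We attack it inductively via \cref{cocarto}, applied to the morphism functors $\Mor_{\int F}(x,y)\to\Mor_{\int G}(\alpha x,\alpha y)$. These functors are again maps of cocartesian fibrations over $\Mor_X(s,t)$ that are fiberwise inclusions.
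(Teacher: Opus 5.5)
Your proposal is correct and is essentially the paper's proof: identify $\lim F\to\lim G$ with $\Fun^{\cocart}_X(X,\int F)\to\Fun^{\cocart}_X(X,\int G)$, reduce to objects by replacing $F,G$ with $\Fun(K,-)\circ F$ and $\Fun(K,-)\circ G$, and show by induction on cell dimension that a cocartesian section of $\int G$ whose objects lie in $\int F$ factors through $\int F$, using that $\int\alpha$ is a monic map of cocartesian fibrations together with uniqueness of cocartesian lifts. Your extra sketch that $\int\alpha$ is an inclusion, which the paper only asserts, also works, with one correction: the functors $\Mor_{\int F}(x,y)\to\Mor_X(s,t)$ are cartesian rather than cocartesian fibrations, and this does not affect the fiberwise-inclusion induction.
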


\begin{proof}

For every $\infty$-category $K$ applying the functor
$\Fun(K,-): \infty\scat \to \infty\scat$ to the commutative square of the statement gives the same commutative square, where
$\alpha: F \to G$ is replaced by the natural transformation $\Fun(K,-) \circ \alpha: \Fun(K,-) \circ F \to \Fun(K,-) \circ G.$
The latter is also objectwise an inclusion
since monomorphisms are preserved by any right adjoint functor. 
Hence replacing $\alpha$ by $\Fun(K,-) \circ \alpha$, it suffices to prove that the commutative square of the statement 
induces an pullback square on maximal subspaces. 

Since $\alpha: F \to G$ is objectwise an inclusion, both vertical functors in the commutative square are inclusions.
Hence the functor from $\lim F$ to the pullback is an inclusion, and so induces an embedding on maximal subspaces. 
Therefore it remains to see that every object in the pullback of the commutative square lies in $\lim F$.
Let $H \in \lim G$ such that for every $Z \in X $ the image in $G(Z)$ lies in $F(Z)$. We want to see that $H \in \lim F$.
Since $\alpha: F \to G$ is objectwise an inclusion, the induced map $\int F \to \int G$ of cocartesian fibrations over $S$ is an inclusion.
The induced functor $\lim F \to \lim G$ identifies with the functor
$$ \Fun^\cocart_X(X, \int F) \to  \Fun^\cocart_X(X, \int G).$$

So we have to see that every cocartesian section of $\int G \to S$
lands in $\int F \subset \int G$ if it sends objects of $X$ to objects of $\int F. $
If this is shown, the resulting section of $\int F \to S$ is automatically a cocartesian section since the inclusion
$\int F \subset \int G$ is a map of cocartesian fibrations over $X$ and cocartesian lifts are unique. 

We prove by induction on $n \geq 0$ that every cocartesian section of $\int G \to S$ sends $n$-morphisms of $X$ to $n$-morphisms of $\int F$ if it sends every object of $X$ to objects of $\int F.$
The induction start $n=0$ is tautological. 
Let $n \geq 1$. We assume the statement for $n-1.$
By induction hypothesis, the image $\sigma$ of an $n$-morphism of $X$ in $\int G $, which is a cocartesian $n$-morphism, gives rise via the left inclusion $\bD^{n-1} \subset \bD^n$ to an $n-1$-morphism of $\int F.$
The latter extends along the left inclusion $\bD^{n-1} \subset \bD^n$ to a cocartesian $n$-morphism $\tau$ of $\int F$
that lies over the image of $\sigma$ in $X$.
The image of $\tau$ in $\int G$ is a cocartesian $n$-morphism whose restriction along the left inclusion $\bD^{n-1} \subset \bD^n$ agrees with the restriction of $\sigma$ and which lies in $X$ over the same $n$-morphism as $\sigma.$
So by uniqueness of cocartesian lifts, we find that $\tau \simeq \sigma.$ Hence $\sigma$ is an $n$-morphism of $\int F.$
\end{proof}

\cref{Groeq} also implies the following:

\begin{corollary}Lax colimits in $\infty\scat$ commute with weakly contractible limits.
Precisely, for every small $\infty$-category $S$ the functor
$\overset{\lax}{\colim}: \Fun(S,\infty\scat) \to \infty\scat $
preserves small weakly contractible limits.
    
\end{corollary}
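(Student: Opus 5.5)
The plan is to identify the lax colimit with the total space of the Grothendieck construction and then reduce to a statement about fibrations. By \cref{laxgro} (now unconditional thanks to \cref{Groeq}) with $\mE$ empty, the functor $\overset{\lax}{\colim}: \Fun(S,\infty\scat)\to\infty\scat$ is the composite
\[
\Fun(S,\infty\scat)\xrightarrow{\int_S}\infty\scat^\cocart_{/S}\subset\infty\scat_{/S}\to\infty\scat .
\]
The first functor is an equivalence, so it preserves all limits. The forgetful functor $\infty\scat_{/S}\to\infty\scat$ preserves weakly contractible limits; this is the standard fact that weakly contractible limits in a slice are computed underlying, which follows since the limit of a constant diagram on a weakly contractible $\infty$-category $K$ with value $S$ is $S$. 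It therefore remains to show that the (non-full) inclusion $\infty\scat^\cocart_{/S}\subset\infty\scat_{/S}$ preserves small limits, or at least weakly contractible ones.

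For that, I take a diagram $K\to\infty\scat^\cocart_{/S}$, $k\mapsto (X_k\to S)$, with transition maps that are maps of cocartesian fibrations. I form $X=\lim_k X_k$ in $\infty\scat_{/S}$, which for weakly contractible $K$ is the limit in $\infty\scat$. There are three things to prove.
\begin{enumerate}[\normalfont(1)]\setlength{\itemsep}{-2pt}
\item $X\to S$ is a cocartesian fibration whose cocartesian cells are exactly those cells mapping to cocartesian cells in every $X_k$.
\item Each projection $X\to X_k$ is a map of cocartesian fibrations.
\item A map $Y\to X$ over $S$ from a cocartesian fibration is a map of cocartesian fibrations iff each composite $Y\to X_k$ is.
\end{enumerate}

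Point (3) is immediate once (1) is known. For (1) I would use the inductive characterization \cref{cocarto}, together with the fact that $\Mor$ commutes with limits (\cref{homfil}). A cell is cocartesian iff certain squares of morphism $\infty$-categories are pullbacks, and limits of pullback squares are pullbacks. So a cell whose components are all cocartesian is cocartesian. Existence of lifts proceeds by induction on dimension: given a cell $\bD^k\to S$ and a lift of its source to $X$, each $X_k$ has a cocartesian lift, unique up to a contractible choice. Because the transition functors preserve cocartesian cells, these lifts form a compatible family; formally, the space of cocartesian lifts is a limit of contractible spaces, hence contractible. This yields a cell in $X$ with cocartesian components. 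Conversely, by this uniqueness every cocartesian cell of $X$ has cocartesian components. Point (2) follows from the same description.

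A cleaner alternative for the whole argument would be to use the Grothendieck equivalence the other way. Limits in $\Fun(S,\infty\scat)$ are objectwise, and by \cref{fiberwiseeq} fibers of $\int$ compute values. One could then compare $\int_S\lim F_k$ with $\lim\int_S F_k$ by checking the canonical map is a map of cocartesian fibrations that is a fiberwise equivalence, since fibers commute with limits. This still needs (1) to know that $\lim\int_S F_k$ is cocartesian, so it does not avoid the main step.

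I expect the main obstacle to be making the compatible-lift argument in (1) rigorous in all dimensions. This should be manageable via the pullback-square characterization of cocartesian cells plus the uniqueness of cocartesian lifts, following the same induction on $n$ used in the preceding proposition on inclusions. Weak contractibility is used only to pass from limits in $\infty\scat_{/S}$ to limits in $\infty\scat$.
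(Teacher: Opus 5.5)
This is the paper's argument: by \cref{laxgro} and \cref{Groeq} the lax colimit is the total space of $\int_S F$, and the forgetful functor $\infty\scat_{/S}\to\infty\scat$ preserves weakly contractible limits. You are also right that one needs the non-full inclusion $\infty\scat^\cocart_{/S}\subset\infty\scat_{/S}$ to preserve limits, a point the paper's one-line proof passes over by calling $\int_S$ an equivalence onto $\infty\scat_{/S}$. Your compatible-lifts argument for that step is sound but unnecessary: by \cref{huip} this inclusion has the left adjoint $\Env$, so it preserves all small limits.
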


\begin{proof}

By \cref{laxgro} the functor $\overset{\lax}{\colim}: \Fun(S,\infty\scat) \to \infty\scat $ factors as $$\Fun(S,\infty\scat) \xrightarrow{\int_S} \infty\scat_{/S} \to \infty\scat.$$ The latter functor is the forgetful functor, which preserves weakly contractible limits.
The Grothendieck construction $\int_S: \Fun(S,\infty\scat) \xrightarrow{} \infty\scat_{/S}$ is an equivalence by \cref{Groeq}. So the result follows.
\end{proof}

\subsection{The Grothendieck construction is an oriented equivalence}

In the following we enhance the Grothendieck construction to an antioriented equivalence.

\begin{construction}

Let $X$ be an $\infty$-category.
By \cref{orientpres} the antioriented category $ \infty\mathfrak{Cat}_{/X}^\cocart $
is a presentable antioriented category.
Thus the antioriented functor
$$ X^\circ \xrightarrow{} \infty\scat_{/X}^\cocart  \subset \infty\mathfrak{Cat}_{/X}^\cocart, Z \mapsto X_{Z//^\oplax} $$
uniquely extends to an antioriented adjunction
$${\Fun\boxtimes}(X,\infty\mathfrak{Cat}) \rightleftarrows \infty\mathfrak{Cat}_{/X}^\cocart: \gamma. $$

\end{construction}

\begin{theorem}\label{Groori}

The antioriented adjunction
$${\Fun\boxtimes}(X,\infty\mathfrak{Cat}) \rightleftarrows \infty\mathfrak{Cat}_{/X}^\cocart: \gamma $$
is an antioriented localization.
The local objects are precisely the oriented functors
$X \to \infty \fcat$ that land in $\infty\scat.$
    
\end{theorem}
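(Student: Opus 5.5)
The plan is to show that the right adjoint $\gamma$ is fully faithful, and that its essential image consists of exactly the oriented functors that factor through $\infty\scat$. Equivalently, the counit of the adjunction is an equivalence.

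\textbf{Step 1: compute $\gamma$.} By the enriched Yoneda extension (\cref{Yonedaext} (3)), $\gamma$ sends a cocartesian fibration $Y\to X$ to the antioriented functor
\[
Z\mapsto \L\Mor_{\infty\fcat^\cocart_{/X}}(X_{Z//^\oplax},Y)\simeq \Fun^{\oplax,\cocart}_X(X_{Z//^\oplax},Y).
\]
By \cref{envelo} together with \cref{cotensol}, this is equivalent to the fiber $Y_Z$: the slice $X_{Z//^\oplax}\to X$ is the free cocartesian fibration on $\{Z\}\to X$, and the identification $\Fun^\lax(K,-)$ is compatible with the cotensors $Y^K_\lax$. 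Hence $\gamma(Y)$ lands in $\infty\scat\subset\infty\fcat$. By \cref{presheaves}, on underlying categories $\gamma$ then factors through $\Fun(X,\infty\scat)$ and agrees there with the inverse of the Grothendieck construction $\int_X$ from \cref{Groeq}. This identification uses the same Yoneda argument as in the proof of \cref{Grothendieck-char}.

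\textbf{Step 2: the counit.} Write $F$ for the left adjoint. Restricted to $\Fun(X,\infty\scat)$, the functor $F$ agrees with $\int_X$, for the following reason. The functor $F$ preserves weighted colimits and agrees with $\int_X$ on representables. Moreover, $\int_X$ preserves colimits and tensors when viewed in $\infty\scat^\cocart_{/X}$, because it is an equivalence of $\infty$-categories compatible with tensors by $\infty\scat$. Each such functor is a weighted colimit of representables inside $\Fun(X,\infty\scat)$, so the two agree on all of $\Fun(X,\infty\scat)$.

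It follows that $F\gamma(Y)\simeq \int_X\int_X^{-1}Y\simeq Y$, so the counit $F\gamma\to\id$ is an equivalence on underlying categories. Because both sides are antioriented and the counit is an antioriented natural transformation, it is an equivalence of antioriented functors. Hence $\gamma$ is fully faithful as an antioriented functor, and the adjunction is a localization.

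\textbf{Step 3: the local objects.} The local objects form the essential image of $\gamma$. That image lies in the functors landing in $\infty\scat$, by Step 1. Conversely, every functor landing in $\infty\scat$ is of the form $\gamma(\int_X G)\simeq G$, again by Step 1 and \cref{Groeq}.

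The main obstacle is the subtle point in Step 2. Colimits and tensors computed in $\Fun\boxtimes(X,\infty\fcat)$ need not agree with those computed in $\Fun(X,\infty\scat)$, since the Gray tensor differs from the cartesian one. I therefore plan to avoid computing $F$ directly on general objects. Instead, I would verify the counit on underlying $1$-categories through the unit/counit identities: $\gamma F\gamma\simeq\gamma$, with $\gamma$ an equivalence onto $\Fun(X,\infty\scat)$ by Step 1 and \cref{Groeq}, so $\gamma$ is conservative on its image, which forces the counit to be an equivalence.
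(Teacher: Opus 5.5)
Your argument is correct in substance, and its decisive input is the same as the paper's. On underlying categories, $\gamma$ is the inverse of $\int_X$ (an equivalence by \cref{Groeq}) followed by $\Fun(X,\infty\scat)\to{\Fun\boxtimes}(X,\infty\fcat)$, which is fully faithful on underlying categories by \cref{presheaves}; in the paper this is the commutative square in the proof. That square is the compatibility of the Yoneda embedding of $\infty\fcat^\cocart_{/X}$ with that of $\infty\scat^\cocart_{/X}$ under change of enrichment. It is the square, not the fiber computation $\gamma(Y)(Z)\simeq Y_Z$, that shows $\gamma(Y)$ lands in $\infty\scat$. Every object of $\infty\fcat$ is already an object of $\infty\scat$, so the condition concerns the action on morphism $\infty$-categories. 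With the square in hand, $\gamma$ is fully faithful on underlying categories, and your Step 3 identifies the local objects exactly as the paper does.

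Where you differ is the upgrade to an antioriented statement, and there Step 2 is a detour with a misstated fix. You are right that the weighted-colimit argument for $F\simeq\int_X$ on $\Fun(X,\infty\scat)$ breaks, since tensors in ${\Fun\boxtimes}(X,\infty\fcat)$ are objectwise Gray tensors. However, conservativity of $\gamma$ plus the triangle identity only makes $\gamma\epsilon$ a split epimorphism: the forgetful functor from groups to sets is conservative, yet its counit is not invertible.

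What you need, and already have, is full faithfulness. The composite $\Map(A,B)\simeq\Map(\gamma A,\gamma B)\simeq\Map(F\gamma A,B)$ is precomposition with $\epsilon_A$, so $\epsilon$ is objectwise invertible. Hence it is invertible as an antioriented transformation, and an antioriented adjunction with invertible counit has an antioriented fully faithful right adjoint.

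The paper never touches the counit. As a right adjoint, $\gamma$ preserves left cotensors, and both sides admit them, so $\Map(K,\L\Mor(A,B))\simeq\Map(A,{}^{K}B)\simeq\Map(\gamma A,{}^{K}\gamma B)\simeq\Map(K,\L\Mor(\gamma A,\gamma B))$ for all $K$.

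The two upgrades are equally short. The paper's needs left cotensors on both sides; yours needs that invertibility of enriched transformations is detected objectwise. Neither needs any knowledge of $F$ beyond its existence.
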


\begin{proof}
The antioriented right adjoint $\gamma$ factors as
antioriented functors
$$ \infty\mathfrak{Cat}_{/X}^\cocart \to {\Fun\boxtimes}((\infty\mathfrak{Cat}_{/X}^\cocart)^\circ,\infty\mathfrak{Cat}) 
\to {\Fun\boxtimes}((\infty\scat_{/X}^\cocart)^\circ,\infty\mathfrak{Cat}) \to {\Fun\boxtimes}(X,\infty\mathfrak{Cat}). $$

The canonical antioriented functor
$$ \infty\scat_{/X}^\cocart \to \infty\mathfrak{Cat}_{/X}^\cocart $$
gives rise to a commutative square of antioriented categories
\begin{equation}\label{sqqo}
\begin{xy}
\xymatrix{
\infty\scat_{/X}^\cocart \ar[d]^{} \ar[r]
& {\Fun}((\infty\scat_{/X}^\cocart)^\circ,\infty\scat) \ar[d]^{} \ar[r]
& {\Fun}(X,\infty\scat) \ar[d]
\\ 
\infty\mathfrak{Cat}_{/X}^\cocart \ar[r] & {\Fun\boxtimes}((\infty\scat_{/X}^\cocart)^\circ,\infty\mathfrak{Cat}) \ar[r] & {\Fun\boxtimes}(X,\infty\mathfrak{Cat})}
\end{xy}\end{equation}

The bottom antioriented functor of the outer commutative square is $\gamma.$
The top functor of the outer commutative square is the right adjoint of the Grothendieck construction
$ {\Fun}(X,\infty\scat) \to \infty\scat_{/X}^\cocart$ and so an equivalence by \cref{Groeq}.
The left vertical antioriented functor of the outer commutative square 
induces an equivalence on underlying categories.
The right vertical antioriented functor of the outer commutative square 
induces an embedding on underlying categories by \cref{presheaves}.
Hence the antioriented functor $\gamma$ induces an embedding on underlying categories.
The antioriented functor $\gamma$ is an antioriented right adjoint and so preserves left cotensors. Since source and target of $\gamma$ are antioriented categories that admit left cotensors, the antioriented functor $\gamma$ is an antioriented embedding since it induces an embedding on underlying categories.
The commutative square (\ref{sqqo}) implies the characterization of local objects.
\end{proof}

\begin{corollary}\label{Groorihom}

Let $X$ be an $\infty$-category and $F,G: X \to \infty\fcat$ be oriented functors that land in $\infty\scat.$

There is a canonical equivalence
$$ \L\Mor_{{\Fun\boxtimes}(X,\infty\mathfrak{Cat})}(F,G) \simeq \Fun^{\oplax, \cocart}_X(\int F, \int G)$$
    
\end{corollary}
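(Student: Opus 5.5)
The plan is to derive this directly from \cref{Groori}, which says that the antioriented right adjoint
$$\gamma: \infty\mathfrak{Cat}_{/X}^\cocart \to {\Fun\boxtimes}(X,\infty\mathfrak{Cat})$$
is an antioriented embedding. Its essential image consists of the functors landing in $\infty\scat$. Since $\gamma$ is an antioriented embedding, it induces equivalences on left morphism $\infty$-categories
$$\L\Mor_{\infty\mathfrak{Cat}^\cocart_{/X}}(\mathcal{Y},\mathcal{Z}) \simeq \L\Mor_{{\Fun\boxtimes}(X,\infty\mathfrak{Cat})}(\gamma\mathcal{Y},\gamma\mathcal{Z})$$
for all cocartesian fibrations $\mathcal{Y},\mathcal{Z}\to X$. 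The proof then reduces to two things: identifying the left-hand side, and identifying $\gamma(\int F)\simeq F$ for $F$ landing in $\infty\scat$.

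First, by the theorem cited before \cref{orientpres0} (the antioriented subcategory $\infty\mathfrak{Cat}^\cocart_{/S}$), the left morphism object between cocartesian fibrations $\int F$ and $\int G$ is $\Fun^{\oplax,\cocart}_X(\int F,\int G)$. This is exactly the right-hand side of the claim.

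Second, I would show that the composite $\gamma\circ\int_X$ is equivalent to the inclusion $\Fun(X,\infty\scat)\to {\Fun\boxtimes}(X,\infty\mathfrak{Cat})$ of \cref{presheaves}. This uses the commutative diagram (\ref{sqqo}) from the proof of \cref{Groori}. Its top row is the right adjoint $\N$ of the Grothendieck construction, an equivalence by \cref{Groeq}, and it is inverse to $\int_X$. This was shown in the proof of \cref{Grothendieck-char} via the equivalence $\N(\mD)(Z)\simeq \mD_Z$ coming from \cref{envelo}. Commutativity of the outer square then gives $\gamma(\int F)\simeq F$ as antioriented functors, naturally in $F$. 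Substituting $\mathcal{Y}=\int F$ and $\mathcal{Z}=\int G$ into the displayed equivalence yields the claim.

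The main obstacle is the enriched bookkeeping: checking that $\gamma$ is fully faithful at the level of left morphism $\infty$-categories and not only on underlying categories. \cref{Groori} asserts an antioriented embedding, and its proof derives this from the fact that $\gamma$ preserves left cotensors. If needed, I would reprove the enriched statement directly. Both sides admit left cotensors, $\gamma$ preserves them, and the cotensor on the fibration side is $(-)^K_\lax$ by \cref{cotensol}. Hence for every $\infty$-category $K$,
$$\Map(K,\L\Mor(\mathcal{Y},\mathcal{Z}))\simeq \Map(\mathcal{Y},\mathcal{Z}^K_\lax)\simeq \Map(\gamma\mathcal{Y},\gamma(\mathcal{Z}^K_\lax))\simeq \Map(\gamma \mathcal{Y},{}^K\gamma\mathcal{Z}),$$
and the Yoneda lemma in $\infty\Cat$ finishes the argument. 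A minor point is the orientation convention: the statement calls $F,G$ oriented functors but works in ${\Fun\boxtimes}$. This should be harmless, since the proof only uses the full subcategory of functors landing in $\infty\scat$.
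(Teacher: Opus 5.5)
Your proposal is correct and follows the route the paper intends. The paper gives no separate proof: the corollary is read off from \cref{Groori}, which says the right adjoint $\gamma$ is an antioriented embedding whose image is the functors landing in $\infty\scat$, together with the identification $\L\Mor_{\infty\mathfrak{Cat}^\cocart_{/X}}(\int F,\int G)\simeq \Fun^{\oplax,\cocart}_X(\int F,\int G)$. Your extra checks (that $\gamma\circ\int_X$ agrees with the inclusion, via the commutative square in the proof of \cref{Groori}, and the cotensor argument for enriched full faithfulness) just unpack what that proof already does. The orientation worry is moot, since by definition ${\Fun\boxtimes}$ denotes the (antioriented) category of oriented functors.
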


\subsection{A classification of categorical principal bundles}

\begin{definition}

Let $G$ be a monoidal $\infty$-category and $S$ an $\infty$-category.
A principal $G$-bundle over $S$ is a left $G$-module $X \to S$ in $\infty\scat^\cocart_{/S}$ such that for every $s \in \rS$
the induced left $G$-module structure on the fiber $X_s$ is free on one generator. 

\end{definition}

\begin{notation}Let $G$ be a monoidal $\infty$-category.
Let $BG$ be the unique $\infty$-category whose space of objects is connected and whose morphism $\infty$-category between any object is $\G.$
    
\end{notation}

\begin{notation}

Let $G$ be a monoidal $\infty$-category and $S$ an $\infty$-category.
Let $$\LMod_G(\infty\scat^\cocart_{/S}):= \Fun(BG,\infty\scat^\cocart_{/S})$$ be the  $\infty$-category of left $G$-modules.
    
\end{notation}

\begin{notation}

Let $G$ be a monoidal $\infty$-category and $S$ an $\infty$-category.
Let $$\mathrm{Bun}_G(S) \subset \LMod_G(\infty\scat^\cocart_{/S})$$ be the full subcategory of $G$-principal bundles over $S$.
    
\end{notation}

\cref{Groeq} gives the following:

\begin{corollary}\label{grobun}
Let $G$ be a monoidal $\infty$-category and $S$ an $\infty$-category.
There is a canonical equivalence
$$\mathrm{Bun}_G(S) \simeq \Fun(S, BG). $$
    
\end{corollary}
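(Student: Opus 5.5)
The plan is to pass through the Grothendieck construction, using \cref{Groeq} applied over the base $S\times BG$.

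\textbf{Step 1 (modules as fibrations over $S\times BG$).} By the universal property of the product and \cref{Groeq},
\[
\LMod_G(\infty\scat^\cocart_{/S})=\Fun(BG,\infty\scat^\cocart_{/S})\simeq \Fun(BG,\Fun(S,\infty\scat))\simeq \Fun(S,\Fun(BG,\infty\scat)).
\]
So a left $G$-module in cocartesian fibrations over $S$ is the same as a functor $S\to\LMod_G(\infty\scat)=\Fun(BG,\infty\scat)$. This equivalence is compatible with taking fibers: for $s\in S$, the fiber $X_s$ with its induced $G$-action corresponds to the value at $s$. This holds because evaluation at $s$ corresponds to taking the fiber over $s$ under $\int_S$.

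\textbf{Step 2 (identify the free modules on one generator).} The principal bundles are therefore the functors $S\to\Fun(BG,\infty\scat)$ landing pointwise in the full subcategory $\mathcal F\subset\Fun(BG,\infty\scat)$ of $G$-modules free on one generator. The free module on one generator is the corepresentable functor $\Mor_{BG}(*,-)$, whose underlying category is $G$ with left multiplication. The enriched Yoneda lemma then gives
\[
\Mor_{\Fun(BG,\infty\scat)}(\Mor_{BG}(*,-),\Mor_{BG}(*,-))\simeq \Mor_{BG}(*,*)\simeq G .
\]
Hence $\mathcal F$ is the essential image of the fully faithful Yoneda functor $BG^{\circ}\to\Fun(BG,\infty\scat)$. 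Since $BG$ has a connected space of objects, $\mathcal F\simeq BG^{\circ}$, an $\infty$-category with a single object up to equivalence and endomorphisms $G$ (with the opposite composition).

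\textbf{Step 3 (fix variance).} The previous steps give $\mathrm{Bun}_G(S)\simeq\Fun(S,BG^{\circ})$. It remains to reconcile this with the statement's $BG$. The cleanest way is to take modules to be functors out of $BG^{\circ}$, or equivalently to use the $(-)^\circ$ convention for left versus right modules; with that convention the free module is represented covariantly and one lands in $BG$. Alternatively, one can note that the convention for $BG$ of a monoidal category (reversing the tensor) absorbs the difference.

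I expect the main obstacle to be Step 3 together with the fiberwise identification in Step 1: one must check that "free on one generator," defined fiberwise on $X_s$ as a $G$-module in $\infty\scat$, matches exactly the corepresentable functors, and that this property is invariant under the equivalence of \cref{Groeq} — meaning $\int_S$ commutes with $\Fun(BG,-)$ and with restriction to fibers. I would handle this by applying \cref{fiberwiseeq} and the naturality of $\int$ in the base, which is available from the fact that $\int$ is a map of cartesian fibrations over $\infty\scat$.
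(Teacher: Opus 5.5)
Steps 1--2 are the paper's argument: invert $\int_S$ using \cref{Groeq}, curry to $\Fun(S,\LMod_G(\infty\scat))$ compatibly with taking fibers, and restrict to the full subcategory spanned by the free module on one generator.

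Your Step 2 is more careful than the paper on one point. The paper simply asserts that this full subcategory is $BG$. With $\LMod_G=\Fun(BG,-)$, however, the enriched Yoneda lemma gives $(BG)^\circ\simeq B(G^{\rev})$. This genuinely differs from $BG$ already for $S=*$ and a discrete monoid $M\not\cong M^{\op}$. So the stated equivalence only holds once the reversal is built into the conventions. One way is your first fix (modules as functors out of $(BG)^\circ$); the other is to keep $\LMod_G=\Fun(BG,-)$ and take the target to be $B(G^{\rev})$.

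Your second alternative does not work as stated. If you choose one composition order for $BG$ and use it both in $\LMod_G=\Fun(BG,-)$ and in the target, a $(-)^\circ$ always remains. The Yoneda argument returns the opposite of whatever category the modules are functors out of.
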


\begin{proof}

The inverse of the Grothendieck construction is an equivalence
$$ \infty\scat^\cocart_{/S} \simeq \Fun(S, \infty\scat). $$
This equivalence induces an equivalence
$$ \LMod_G(\infty\scat^\cocart_{/S}) \simeq \LMod_G(\Fun(S, \infty\scat)) \simeq \Fun(S, \LMod_G(\infty\scat)). $$
Since $BG$ is the full subcategory of $\LMod_G(\infty\scat)$
spanned by the free left $G$-module on one generator, the latter equivalence restricts to an equivalence
$ \mathrm{Bun}_G(S) \simeq \Fun(S, BG). $
\end{proof}

\begin{remark}
For every space $X$ 
the $n$-th cohomology group of $X$ with coefficients in an abelian group $G$
is given by equivalence classes of maps $X \to B^n(G),$ which admits a geometric model by higher principal bundles.

By \cite{heine2025homology} there is an extension of homology and cohomology to $\infty$-categories.
For every $\infty$-category $X$ 
the $n$-th cohomology symmetric monoidal $\infty$-category of $X$ with coefficients in a symmetric monoidal $\infty$-category $G$
is given by the $\infty$-category of functors and lax transformations $X \to B^n(G).$
The latter admits a geometric model by \cref{grobun}.

\end{remark}

\subsection{Bicartesian fibrations classify higher adjunctions}

\begin{notation}Let $n \geq 1.$

\begin{enumerate}[\normalfont(1)]\setlength{\itemsep}{-2pt}

\item Let $\sigma_{n}: \infty\scat \to \infty\scat$ be the involution that reverses the morphisms of dimension $n.$

\item Let $\sigma_{\geq n}: \infty\scat \to \infty\scat$ be the involution that reverses the morphisms of dimension larger or equal $n.$

\end{enumerate}
    
\end{notation}

\begin{notation}Let $S$ be an $\infty$-category and $n \geq 1.$

Let $$\infty\scat^{\cocart, \bicart\geq n}_{/S} \subset \infty\scat^{\cocart}_{/S} $$
be the subcategory of cocartesian fibrations over $S$ 
that are cartesian fibrations in dimensions larger or equal $n$
and maps of cocartesian fibrations over $S$
that are maps of cartesian fibrations over $S$ in dimensions larger or equal $n$.

\end{notation}

\begin{theorem}\label{Grobica} Let $T$ be an $\infty$-category and $n \geq 1.$
The functor $$ \infty\widehat{\Cat}^\op \to \widehat{\mS}, \qquad S \mapsto \iota_0((\infty\scat^{\cocart, \bicart \geq n}_{/S})_{/S \times T}) $$ is represented by a subcategory $\infty\scat_{/T}^{\mathrm{L},\geq n} \subset \infty\scat_{/T} $.

\end{theorem}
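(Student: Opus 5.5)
We construct the representing object by hand and check representability first on disks, then on arbitrary $S$ via density. Straightening over $S \times T$ gives, by \cref{Groeq} and \cref{Grorep2}, an identification
\[
\iota_0\bigl((\infty\scat^{\cocart}_{/S})_{/S\times T}\bigr) \simeq \iota_0(\infty\scat^{\cocart}_{/S\times T} \text{ over } S \text{ with } T\text{-component}) ,
\]
and, more usefully, the slice $(\infty\scat^{\cocart}_{/S})_{/S\times T}$ is equivalent via $\int_S$ to $\Fun(S,\infty\scat)_{/\underline{T}}\simeq \Fun(S,\infty\scat_{/T})$, because slices of a functor category over a constant functor are functors into the slice. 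So without the bicartesian condition the functor $S\mapsto \iota_0((\infty\scat^{\cocart}_{/S})_{/S\times T})$ is represented by $\infty\scat_{/T}$. The plan is then to identify the bicartesian-in-degrees-$\geq n$ condition as a condition on cells of the classifying functor $S\to\infty\scat_{/T}$. We define $\infty\scat_{/T}^{\mathrm{L},\geq n}\subset\infty\scat_{/T}$ as the subcategory whose $k$-cells are those cells $\bD^k\to\infty\scat_{/T}$ whose associated cocartesian fibration over $\bD^k$, which is a map to $\bD^k\times T$, is cartesian in dimensions $\geq n$. For $k\ge n$ with $k$ as the relevant dimension, this means the transport of the $k$-cell admits the appropriate adjoint.

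First, we check that this collection of cells is closed under composition and under restriction along face maps $\bD^j\to\bD^k$, so that it defines a subcategory via \cref{monochar}. Pullbacks preserve cartesian fibrations, and the composition of cells amounts to gluing fibrations over wedges, which is controlled by \cref{glue}(2) and \cref{Segal}. Second, we show that a cocartesian fibration $X\to S$ is cartesian in dimensions $\geq n$ if and only if every pullback along every $\bD^k\to S$ is. This is the local characterization: \cref{thetalocal}, applied to $\phi^{\op}$-type duals, and the analogous statement for maps of fibrations, obtained by arguing one cell at a time. The same holds for maps over $S$, as in the end of the proof of \cref{Grothe}. Third, we conclude: a map $S\to\infty\scat_{/T}$ factors through the subcategory if and only if each cell does. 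By the second step, this is equivalent to the straightened fibration lying in $\infty\scat^{\cocart,\bicart\geq n}_{/S}$, and the same reasoning applies to the morphisms in the slice. This gives the natural identification of the two mapping spaces, which are embeddings into the unrestricted ones.

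The main obstacle is the second step in the higher-cell form. We must show that the cartesianness of $k$-cells (for $k\ge n$) in a cocartesian fibration over a general $S$ can be detected after pullback to disks $\bD^k$. By \cref{cocarto} applied to $\phi^{\op}$, this reduces inductively to the fibrations on morphism $\infty$-categories. For cells over a disk we use \cref{Groprof}/\cref{Grsuspo} to translate "cartesian" into the existence of adjoints, as in the corollary following \cref{Groprof}. The remaining care is checking that cartesian cells in a pullback are cartesian in the total space, which requires the bifibration control of \cref{corrbifib} together with the remark \cref{elemen}.
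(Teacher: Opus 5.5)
\textbf{Gap.} The unrestricted part, via \cref{Groeq}, is fine. The failure is at the step you yourself single out as the main obstacle.

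Your second step claims that a cocartesian fibration $X\to S$ is cartesian in dimensions $\geq n$ as soon as all pullbacks $\bD^k\times_S X\to\bD^k$ are, and you justify this by \cref{thetalocal}. But \cref{thetalocal} detects fibrations after pullback along all $\theta\to S$ with $\theta\in\Theta$ (or oriented simplices or cubes), not along disks. Pullback to disks only sees the \emph{local} notion, which is strictly weaker for general functors. Compare \cref{cocexp}, where disk-wise cocartesian functors are cocartesian only under the extra hypothesis of exponentiability.

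Concretely, a cell that is cartesian for $\bD^k\times_S X\to\bD^k$ need not be cartesian for $X\to S$. \cref{elemen} gives only the opposite implication (global $\Rightarrow$ local). \cref{corrbifib}, a statement about $\mV$-enriched categories over $S(A)$, says nothing about promoting local cartesian cells to global ones.

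The same problem sits in your first step. The composite of two admissible cells is the pullback along a non-standard map $\bD^k\to\bD^k\coprod_{\bD^j}\bD^k$. So closure under composition needs the fibration glued over the whole pasting shape to be again cartesian in dimensions $\geq n$. \cref{glue}(2) and \cref{Segal} concern only cocartesian fibrations and maps between them, so they do not give this.

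\textbf{What is missing.} The input that makes both steps work is that $X\to S$ is already cocartesian, hence exponentiable by \cref{expone}. So for $\theta=\colim_i\bD^{n_i}\to S$ one has $\theta\times_S X\simeq\colim_i(\bD^{n_i}\times_S X)$. The dual of \cref{inverso} (apply the involution exchanging cocartesian and cartesian cells, which preserves colimits) then shows that a colimit of compatible disk-wise fibrations that are cartesian in dimensions $\geq n$ is again such over $\theta$. Only then does \cref{thetalocal} give the global statement over $S$.

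Any direct cell-by-cell argument via \cref{cocarto} and adjoints would likewise need that cocartesian transports compose, so that their adjoints (the local cartesian transports) compose. Your sketch never invokes the cocartesianness of $X\to S$ at this point.

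This is exactly how the paper proceeds. It uses \cref{inverso} to show that $\theta\mapsto\iota_0((\infty\scat^{\cocart,\bicart\geq n}_{/\theta})_{/\theta\times T})$ satisfies the Segal condition, hence is the $\Theta$-nerve of a subcategory of $\infty\scat_{/T}$. It then passes to arbitrary $S$ by density of $\Theta$, getting injectivity from the unrestricted equivalence and surjectivity from \cref{thetalocal}. With the exponentiability and gluing argument inserted, your cell-wise route goes through and becomes essentially the paper's.
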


\begin{proof}
By \cref{Groeq} the functor $ \infty\widehat{\Cat}^\op \to \infty\widehat{\Cat}$ given by $$ S \mapsto (\infty\scat^{\cocart}_{/S})_{/ S \times T} \simeq \Fun(S, \infty\scat)_{/T} \simeq \Fun(S, \infty\scat_{/T}) $$ preserves small limits.
It restricts to a functor $$ \infty\widehat{\Cat}^\op \to \infty\widehat{\Cat}, S \mapsto (\infty\scat^{\cocart, \bicart\geq n}_{/S})_{/ S \times T} $$ since cartesian fibrations are stable under base change.

Therefore the restriction $\Theta^\op \to \infty\widehat{\Cat}, \theta \mapsto (\infty\scat^{\cocart}_{/\theta})_{/ \theta \times T} $ preserves the canonical colimit decomposition of every object of $\Theta$ into disks.
In other words, for every $\theta \in \Theta$ and $\{\theta_i\}$ the canonical colimit decomposition of $\theta$ the induced functor
$$ (\infty\scat^{\cocart}_{/\theta})_{/ \theta \times T} \to \lim_i \infty\scat^{\cocart}_{/\theta_i})_{/ \theta_i \times T} $$
is an equivalence. By \cref{inverso} the inverse sends $\{X_i \to \theta_i \times T\}$
to $\colim_i X_i \to \colim_i \theta_i \times T \simeq \theta \times T.$

Hence by \cref{inverso}, the functor $\Theta^\op \to \infty\scat$ given by $S \mapsto (\infty\scat^{\cocart, \bicart\geq n}_{/S})_{/ S \times T} $ also preserves the canonical colimit decomposition of every object of $\Theta$ into disks.
This implies that the presheaf $$ \Theta^\op \to \widehat{\mS},\qquad \theta \mapsto \iota_0((\infty\scat^{\cocart, \bicart\geq n}_{/\theta})_{/ \theta \times T}) $$
is the nerve of an $\infty$-category 
$\infty\scat_{/T}^{\mathrm{L},\geq n}.$

The embedding
$\iota_0((\infty\scat^{\cocart, \bicart,\geq n}_{/\theta})_{/ \theta \times T}) \to \iota_0((\infty\scat^{\cocart}_{/\theta})_{/ \theta \times T}$ natural in $\theta \in \Theta$
represents an inclusion $(\infty\scat^{\mathrm{L},\geq n})_{/ T} \subset \infty\scat_{/ T}$.
The right Kan extension of the presheaf $\Theta^\op \to \widehat{\mS}$ is given by $ \theta \mapsto \iota_0((\infty\scat^{\cocart, \bicart\geq n}_{/\theta})_{/ \theta \times T} $, which is also equivalent to 
$$ \Map_{\infty\widehat{\Cat}}((-)_{\mid \Theta}, \infty\scat^{\mathrm{L},\geq n}_{/ T}): \Theta^\op \to \widehat{\mS}, $$ is the presheaf
$$\Map_{\infty\widehat{\Cat}}(-, \infty\scat^{\mathrm{L},\geq n}_{/ T}): \infty\scat^\op \to \widehat{\mS}.$$ 
This right Kan extension is also the presheaf $$\Theta^\op \to \widehat{\mS},\qquad S \mapsto \lim_{\theta \to S}\iota_0((\infty\scat^{\cocart, \bicart\geq n}_{/\theta})_{/ \theta \times T}).$$

For every $S \in \infty\scat$ the induced map
$\iota_0((\infty\scat^{\cocart, \bicart\geq n}_{/S})_{/ S \times T}) \to \lim_{\theta \to S}\iota_0((\infty\scat^{\cocart, \bicart\geq n}_{/\theta})_{/ \theta \times T}) $
is the restriction of the induced map 
$\iota_0((\infty\scat^{\cocart}_{/S})_{/ S \times T}) \to \lim_{\theta \to S}\iota_0((\infty\scat^{\cocart}_{/\theta})_{/ \theta \times T}),$ which is an equivalence by density of $\Theta \subset \infty\scat$
since the functor $\infty\widehat{\Cat}^\op \to \infty\widehat{\Cat}, S \mapsto (\infty\scat^{\cocart}_{/S})_{/ S \times T} $ preserves small limits.
Thus the map $$\iota_0((\infty\scat^{\cocart, \bicart\geq n}_{/S})_{/ S \times T}) \to \lim_{\theta \to S}\iota_0((\infty\scat^{\cocart, \bicart\geq n}_{/\theta})_{/ \theta \times T}) $$ is an embedding of spaces.
But this map is an equivalence since it is also essentially surjective.
Indeed, by \cref{thetalocal}, a map of cocartesian fibration over
$S$ over $S \times T$ belongs to $(\infty\scat^{\cocart, \bicart,\geq n}_{/S})_{/ S \times T}$
if for every functor $\theta \to S$ the base change to $\theta$
belongs to $(\infty\scat^{\cocart, \bicart,\geq n}_{/\theta})_{/ \theta \times T}.$
\end{proof}

\begin{notation}Let $T$ be an $\infty$-category, $ n \geq 1$ and $\sigma$ an involution of $\infty\scat.$
Let $$\infty\scat_{/T}^{\mathrm{L},\geq n, \sigma} \subset \infty\scat_{/T}$$ be the image of
the subcategory $$\sigma_!(\infty\scat^{\mathrm{L},\geq n}_{/T^\sigma}) \subset \sigma_!(\infty\scat_{/T^\sigma}) $$
under the canonical equivalence
$$ \sigma: \sigma_!(\infty\scat_{/T^\sigma}) \simeq \infty\scat_{/T}. $$
\end{notation}

\begin{corollary}\label{Grobica2}Let $T$ be an $\infty$-category, $ n \geq 1$ and $\sigma$ an involution of $\infty\scat.$
The functor $$ \infty\widehat{\Cat}^\op \to \widehat{\mS},\qquad S \mapsto \iota_0((\infty\scat_{/S}^{\sigma-\cocart, \bicart \geq n})_{/S \times T}) $$ is represented by the $\infty$-category $\sigma_!(\infty\scat^{\mathrm{L},\geq n, \sigma}_{/T})^\sigma $.

\end{corollary}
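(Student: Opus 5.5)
The plan is to transport \cref{Grobica} along the involution $\sigma$. First I will check that $\sigma$, being an involution of $\infty\scat$, induces for every $S$ an equivalence
$$\infty\scat_{/S} \simeq \infty\scat_{/S^\sigma}, \qquad (X \to S) \mapsto (X^\sigma \to S^\sigma).$$
By definition of $\sigma$-cocartesian fibrations, this equivalence should identify $\infty\scat^{\sigma-\cocart,\bicart\geq n}_{/S}$ with $\infty\scat^{\cocart,\bicart\geq n}_{/S^\sigma}$, at least on maximal subspaces, which is all the statement needs. Here "$\sigma$-cocartesian" is read as: $\sigma$ carries the fibration to a cocartesian fibration, and carries the cartesian condition in dimensions $\geq n$ to the corresponding condition. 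If $\sigma$ permutes dimensions, the index $n$ has to be tracked; I will assume, as the notation suggests, that it is preserved. Since $\sigma$ commutes with finite products, objects over $S\times T$ go to objects over $S^\sigma\times T^\sigma$. This gives a natural equivalence of spaces
$$\iota_0((\infty\scat^{\sigma-\cocart,\bicart\geq n}_{/S})_{/S\times T}) \simeq \iota_0((\infty\scat^{\cocart,\bicart\geq n}_{/S^\sigma})_{/S^\sigma\times T^\sigma}),$$
natural in $S\in\infty\widehat{\Cat}^\op$ because $\sigma$ is a functor.

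Next I apply \cref{Grobica} with $T^\sigma$ in place of $T$. The right-hand side becomes
$$\Map_{\infty\widehat{\Cat}}(S^\sigma,\infty\scat^{\mathrm{L},\geq n}_{/T^\sigma}) \simeq \Map_{\infty\widehat{\Cat}}(S,(\infty\scat^{\mathrm{L},\geq n}_{/T^\sigma})^\sigma),$$
using that $\sigma$ is an autoequivalence of $\infty\widehat{\Cat}$ with $\sigma\circ\sigma\simeq\id$.

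Finally I identify $(\infty\scat^{\mathrm{L},\geq n}_{/T^\sigma})^\sigma$ with $\sigma_!(\infty\scat^{\mathrm{L},\geq n,\sigma}_{/T})^\sigma$, as in the notation preceding the corollary. By definition, $\infty\scat^{\mathrm{L},\geq n,\sigma}_{/T}$ is the image of $\sigma_!(\infty\scat^{\mathrm{L},\geq n}_{/T^\sigma})$ under $\sigma:\sigma_!(\infty\scat_{/T^\sigma})\simeq\infty\scat_{/T}$. So
$$\sigma_!(\infty\scat^{\mathrm{L},\geq n,\sigma}_{/T}) \simeq \sigma_!\sigma_!(\infty\scat^{\mathrm{L},\geq n}_{/T^\sigma}),$$
and I need to compare $\sigma_!\sigma_!$ with the identity, and $\sigma_!$ with the $\sigma$ applied to the underlying $\infty$-category. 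This bookkeeping, namely reconciling change of enrichment $\sigma_!$ with the involution applied to the total category, is the main obstacle. I would settle it by checking on morphism $\infty$-categories: for an $\infty$-category $\mC$, $\sigma(\mC)$ and $\sigma_!(\mC)$ differ by how $\sigma$ acts in dimension one versus on the morphism objects. Concretely, for $\sigma=(-)^\co$ one has $(-)^\co=((-)^\op)_!$ by \cref{ruik}, and the general case follows dimensionwise. Once this identification holds, the corollary is just \cref{Grobica} transported along these equivalences, which are all natural in $S$.
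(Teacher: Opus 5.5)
Your proposal is correct and follows essentially the paper's route. Both arguments apply \cref{Grobica} to $S^\sigma$ and $T^\sigma$, transport along $\sigma$ (using $(S\times T)^\sigma\simeq S^\sigma\times T^\sigma$), and use that $\infty\scat^{\mathrm{L},\geq n,\sigma}_{/T}\simeq\sigma_!(\infty\scat^{\mathrm{L},\geq n}_{/T^\sigma})$ by definition; the only difference is that the paper moves $\sigma_!$ across the mapping space instead of cancelling $\sigma_!\sigma_!$.

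The ``main obstacle'' you flag is not an obstacle. You only need $\sigma_!\circ\sigma_!\simeq(\sigma\circ\sigma)_!\simeq\id$, which follows from functoriality of change of enrichment. No comparison of $\sigma_!$ with $\sigma$ itself is required, nor does one hold in general, since $((-)^\op)_!=(-)^\co$.
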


\begin{proof}Let $S$ be an $\infty$-category.
There is a canonical equivalence
$$ \Map_{\infty\scat}(S,\sigma_!(\infty\scat^{\mathrm{L},\geq n, \sigma}_{/T})^\sigma) \simeq \Map_{\infty\scat}(\sigma_!(S^\sigma),\infty\scat_{/T}^{\mathrm{L},\geq n, \sigma}) \simeq \Map_{\infty\scat}(\sigma_!(S^\sigma), \sigma_!(\infty\scat_{/T^\sigma}^{\mathrm{L},\geq n})) \simeq $$$$ \Map_{\infty\scat}(S^\sigma, \infty\scat_{/T^\sigma}^{\mathrm{L},\geq n}) \simeq \iota_0((\infty\scat_{/S^\sigma}^{\cocart, \bicart\geq n})_{/ S^\sigma \times T^\sigma}) \simeq \iota_0((\infty\scat_{/S}^{\sigma-\cocart, \bicart \geq n})_{/S \times T}),$$
where the last equivalence applies $\sigma.$
\end{proof}

\begin{corollary}\label{Grobica2}Let $T$ be an $\infty$-category, $ n \geq 1$ and $\sigma$ an involution of $\infty\scat$
which fixes the orientation of morphisms of dimension smaller $n.$
There is a canonical equivalence $$\infty\scat^{\mathrm{L},\geq n}_{/T} \simeq \sigma_!(\infty\scat^{\mathrm{L},\geq n, \sigma}_{/T})^\sigma .$$

\end{corollary}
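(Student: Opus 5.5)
The plan is to compare the two representing objects through the functors they represent, using the preceding corollary together with the hypothesis that $\sigma$ fixes the orientation of all cells of dimension $<n$. By \cref{Grobica}, $\infty\scat^{\mathrm{L},\geq n}_{/T}$ represents the presheaf $S \mapsto \iota_0((\infty\scat^{\cocart, \bicart \geq n}_{/S})_{/S\times T})$ on $\infty\widehat{\Cat}$. By the preceding corollary, $\sigma_!(\infty\scat^{\mathrm{L},\geq n,\sigma}_{/T})^\sigma$ represents $S \mapsto \iota_0((\infty\scat^{\sigma-\cocart, \bicart \geq n}_{/S})_{/S\times T})$. The Yoneda lemma therefore reduces the claim to a natural identification, for all $S$,
$$\infty\scat^{\cocart, \bicart \geq n}_{/S} = \infty\scat^{\sigma-\cocart, \bicart \geq n}_{/S}$$
as subcategories of $\infty\scat_{/S}$. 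Here "$\sigma$-cocartesian" means that applying $\sigma$ gives a cocartesian fibration. Naturality in $S$ is automatic, because both sides are stable under base change and are cut out as subcategories of the same ambient category.

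To prove this identification, I would unwind the cellwise definitions. A functor $\phi$ lies in the left-hand side exactly when cells of dimension $<n$ admit cocartesian lifts and cells of dimension $\geq n$ admit both cocartesian and cartesian lifts, in the sense of the orientation-reversing variants. Maps are required to preserve the corresponding lifts. The inductive characterization \cref{cocarto}, applied to the morphism $\infty$-categories, lets me treat each dimension $k$ separately.

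For $k<n$, the involution $\sigma$ fixes the orientation of $k$-cells. So the $k$-dimensional lifting condition, which concerns $\phi_{X,Y}$ on iterated morphism categories, is the same for $\phi$ and for $\sigma(\phi)$.

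For $k\geq n$, both sides impose the bicartesian condition. An arbitrary orientation involution $\sigma$ either preserves or reverses the $k$-cells, and in either case it exchanges or fixes "cocartesian" and "cartesian" in dimension $k$. The condition "both cocartesian and cartesian lifts exist, and maps preserve both" is therefore invariant under $\sigma$.

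The main obstacle is making the dimensionwise statement rigorous. The definition of $n$-morphisms being cocartesian recursively involves pre- and postcomposition in lower dimensions, and $\sigma$ can act nontrivially there in a mixed way. I would handle this by induction via \cref{cocarto} and \cref{elemen}: show that a $k$-cell is $\phi$-(co)cartesian if and only if its image is $\sigma(\phi)$-(co)cartesian, with the roles of cocartesian and cartesian exchanged exactly when $\sigma$ reverses dimension $k$. The remaining point is that all lower-dimensional data entering the definition involve only cells whose orientation is preserved when $k<n$, or else the bicartesian symmetric condition when $k\geq n$.
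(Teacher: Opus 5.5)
Your proposal is correct and is essentially the argument the paper leaves implicit (it gives no proof): both objects represent presheaves on $\infty\widehat{\Cat}$, by \cref{Grobica} and by the preceding corollary, and when $\sigma$ fixes orientations below dimension $n$ the $\sigma$-cocartesian fibrations that are bicartesian in dimensions $\geq n$, together with their maps, are exactly the cocartesian ones bicartesian in dimensions $\geq n$, so Yoneda gives the equivalence. The only imprecision is your last sentence: for $k\geq n$, whiskering cells of dimensions between $n$ and $k-1$ may be reversed by $\sigma$, and this is harmless because the definition of a cocartesian $k$-cell quantifies over all pairs of whiskering morphisms, a condition symmetric under swapping source and target, not because of the bicartesian condition; hence whether a $k$-cell is cocartesian depends only on the orientation of the $k$-cells themselves.
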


\begin{corollary}\label{Grobica2}Let $T$ be an $\infty$-category and $ n \geq 1$.
There is a canonical equivalence $$\infty\scat^{\mathrm{L},\geq n}_{/T} \simeq (\infty\scat^{\mathrm{L},\geq n, \sigma_{\geq n}}_{/T})^{\sigma_n} .$$

\end{corollary}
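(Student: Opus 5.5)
We deduce the statement by specializing the preceding corollary, which gives a canonical equivalence $\infty\scat^{\mathrm{L},\geq n}_{/T} \simeq \sigma_!(\infty\scat^{\mathrm{L},\geq n, \sigma}_{/T})^\sigma$ for every involution $\sigma$ of $\infty\scat$ fixing the orientation of morphisms of dimension smaller than $n$. The natural choice is $\sigma = \sigma_{\geq n}$: it reverses exactly the cells of dimension $\geq n$ and fixes all lower-dimensional cells, so the hypothesis holds. This gives
$$\infty\scat^{\mathrm{L},\geq n}_{/T} \simeq (\sigma_{\geq n})_!(\infty\scat^{\mathrm{L},\geq n, \sigma_{\geq n}}_{/T})^{\sigma_{\geq n}}.$$

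It remains to identify $(\sigma_{\geq n})_!(\mathcal{X})^{\sigma_{\geq n}}$ with $\mathcal{X}^{\sigma_n}$ for any $\infty$-category $\mathcal{X}$. The plan is to argue inductively on morphism objects, using that $\sigma_{\geq n}$ on $\infty\scat$ is built like $(-)^\op$ and $(-)^\co$ in \cref{ruik}: it is the identity on objects and acts on morphism $\infty$-categories by $\sigma_{\geq n-1}$. For $n = 1$, $\sigma_{\geq 1}$ reverses every cell, and the transferred enrichment $(\sigma_{\geq 1})_!$ applies this reversal to all morphism $\infty$-categories. Applying $\sigma_{\geq 1}$ again to the whole category therefore reverses the $1$-cells once and the cells of dimension $\geq 2$ twice, and the double reversals cancel. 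The net effect reverses only the $1$-cells, which is $\sigma_1$. For general $n$, the same bookkeeping through the definition $\sigma_{\geq n}(\mC)$ with $\Mor_{\sigma_{\geq n}\mC}(X,Y) = \sigma_{\geq n-1}\Mor_\mC(X,Y)$ reduces to the case $n-1$ on morphism $\infty$-categories. Equality on objects and morphism objects, compatibly with composition, is produced by the functoriality of $(-)_!$, as in the construction of the involutions in \cref{ruik}.

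The main obstacle is making precise what $\sigma_!$ means in the preceding notation. Transfer of enrichment along the monoidal involution $\sigma$ of $(\infty\Cat,\times)$ acts on $k$-cells of the ambient category as $\sigma$ acts on $(k-1)$-cells of the morphism objects. So the composite $\sigma_!(-)^\sigma$ must be computed as a product of commuting involutions on cells, tracked dimension by dimension: each cell of dimension $k \geq n$ is reversed an even number of times, and each cell of dimension exactly $n$ ends up reversed once. I would verify this cell count first on disks $\bD^k$, where every involution acts by an explicit reversal. Then I would extend to all $\infty$-categories by density of $\Theta$ (\cref{theta}), since all the involutions preserve colimits.
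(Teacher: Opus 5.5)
Your proposal is correct and takes the route the paper intends; the paper gives no separate argument. The statement is the preceding corollary specialized to $\sigma=\sigma_{\geq n}$, combined with the identity $(\sigma_{\geq n})_!(-)^{\sigma_{\geq n}}\simeq(-)^{\sigma_n}$, and your induction establishes that identity formally: the base case is $(\sigma_{\geq 1})_!\circ\sigma_{\geq 1}\simeq(-)^\circ=\sigma_1$, and the inductive step uses $\sigma_{\geq n}=(\sigma_{\geq n-1})_!$ together with functoriality of transfer. The density check on disks in your last paragraph is therefore unnecessary.
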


\begin{corollary}\label{leftrightadj}
Let $T$ be an $\infty$-category.
There is a canonical equivalence
$$ \infty\scat_{/T}^{\mathrm{L}} \simeq (\infty\scat_{/T}^{\mathrm{R}})^\coop.$$

\end{corollary}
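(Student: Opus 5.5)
The equivalence will be obtained by applying \cref{Grobica2} in the special case $\sigma=\sigma_{\geq 1}$ and $n=1$, where $\infty\scat^{\mathrm{L}}_{/T}:=\infty\scat^{\mathrm{L},\geq 1}_{/T}$ and $\infty\scat^{\mathrm{R}}_{/T}$ is the analogous subcategory with the roles of cocartesian and cartesian swapped. Under $\sigma_{\geq 1}$, which reverses all cells of positive dimension, a bicartesian fibration goes to a bicartesian fibration, with the cocartesian structure in dimension $1$ turned into a cartesian one and vice versa. So the first step is to identify $\infty\scat^{\mathrm{L},\geq 1,\sigma_{\geq 1}}_{/T}$ with $\infty\scat^{\mathrm{R}}_{/T}$.

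Unwinding the notation, $\infty\scat^{\mathrm{L},\geq 1,\sigma_{\geq 1}}_{/T}$ is the image of $\sigma_!(\infty\scat^{\mathrm{L}}_{/T^{\sigma}})$ in $\infty\scat_{/T}$. By \cref{Grobica}, $\infty\scat^{\mathrm{L}}_{/T^\sigma}$ represents the functor $S\mapsto \iota_0((\infty\scat^{\cocart,\bicart}_{/S})_{/S\times T^\sigma})$. Transporting along $\sigma$, exactly as in the proof of the first \cref{Grobica2}, gives that the image represents the functor
$$S\mapsto \iota_0((\infty\scat^{\sigma\text{-}\cocart,\bicart}_{/S})_{/S\times T}).$$
Since $\sigma$-cocartesian fibrations together with bicartesianness are the same as bicartesian fibrations whose classified structure is read via cartesian transport, this is the defining functor for $\infty\scat^{\mathrm{R}}_{/T}$. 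The defining representability statement is the cartesian dual of \cref{Grobica}, proved in the same way by applying $(-)^{\co\op}$ to the Grothendieck construction. The Yoneda lemma then identifies the two subcategories of $\infty\scat_{/T}$.

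Next I apply the last \cref{Grobica2} with $n=1$:
$$\infty\scat^{\mathrm{L}}_{/T}\simeq(\infty\scat^{\mathrm{L},\geq1,\sigma_{\geq1}}_{/T})^{\sigma_1}\simeq(\infty\scat^{\mathrm{R}}_{/T})^{\sigma_1}.$$
It remains to compare $\sigma_1$ with $(-)^{\coop}$ on this subcategory. The key point is that the higher cells of $\infty\scat^{\mathrm{R}}$ are right adjoint transformations, determined by their mates. Reversing all cells of dimension $\geq2$, which is the discrepancy between $(-)^{\coop}$ and $\sigma_1$ after the identification, is therefore absorbed by passing to mates, which the Grothendieck description already encodes.

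I expect this last comparison to be the main obstacle. $(-)^{\coop}$ reverses all cells, whereas $\sigma_1$ reverses only $1$-cells, so I must check that the composite $\sigma_{\geq 2}$, restricted to the bicartesian subcategories, is canonically trivialized. The first thing I would try is to run the representability argument again: the functor $S\mapsto\iota_0(\infty\scat^{\bicart}_{/S})_{/S\times T}$ is invariant under $\sigma_{\geq2}$ applied to $S$, because bicartesian fibrations are cartesian and cocartesian in every dimension. The two represented objects then agree by Yoneda.
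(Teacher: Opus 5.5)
\textbf{There is a genuine gap. It comes from choosing $\sigma=\sigma_{\geq1}=(-)^{\co\op}$.}

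\emph{The identification in your first step is wrong in dimensions $\geq2$.} Transport along $\sigma$ goes through $\infty\scat\simeq\sigma_!(\infty\scat)$. For $\sigma=(-)^{\co\op}$, $\sigma_!(\infty\scat)$ reverses every cell of dimension $\geq2$. A $1$-cell keeps its direction while its unit and counit are reversed, so left adjoint functors become right adjoints. For $k\geq2$, however, a $k$-cell and its unit/counit $(k+1)$-cells are both reversed, and reversing two consecutive levels preserves handedness. Hence $\infty\scat^{\mathrm{L},\geq1,\sigma_{\geq1}}_{/T}$ consists of right adjoint functors and \emph{left} adjoint higher transformations. It is not the subcategory $\infty\scat^{\mathrm{R}}_{/T}$ of right adjoints in all dimensions.

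\emph{Your justification also misreads the representability corollary.} It is $\sigma_!(\infty\scat^{\mathrm{L},\geq1,\sigma}_{/T})^{\sigma}$, not the image itself, that represents $S\mapsto\iota_0((\infty\scat^{\sigma\text{-}\cocart,\bicart}_{/S})_{/S\times T})$. Moreover, $\sigma$-cocartesian plus bicartesian is just bicartesian, so this is the functor already represented by $\infty\scat^{\mathrm{L}}_{/T}$. It therefore cannot serve as the defining property of a different subcategory $\infty\scat^{\mathrm{R}}_{/T}$.

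\emph{The repair you propose for the leftover $\sigma_{\geq2}$ does not work.} That residual twist is the same error reappearing on the outside. Applying $\sigma_{\geq2}$ to $X\to S\times T$ lands over $S^{\sigma_{\geq2}}\times T^{\sigma_{\geq2}}$. So representability only yields $\infty\scat^{\mathrm{L}}_{/T}\simeq(\infty\scat^{\mathrm{L}}_{/T^{\sigma_{\geq2}}})^{\sigma_{\geq2}}$, with $T$ changed, not the self-comparison you need for fixed $T$. Even for $T=*$ it would only prove a statement about the wrong subcategory. ``Absorbed by mates'' is not an argument.

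\textbf{The fix: take $\sigma=(-)^{\op}$ instead.} The statement is meant to drop out of the corollary $\infty\scat^{\mathrm{L},\geq n}_{/T}\simeq\sigma_!(\infty\scat^{\mathrm{L},\geq n,\sigma}_{/T})^{\sigma}$ with $n=1$ and $\sigma=(-)^{\op}$, which reverses odd-dimensional cells.
\begin{itemize}
\item The hypothesis on cells of dimension $<1$ is vacuous.
\item $(-)^{\op}$-cocartesian and bicartesian in dimensions $\geq1$ is again just bicartesian, so both sides represent the same functor.
\item By \cref{ruik}, $((-)^{\op})_!=(-)^{\co}$, so $\sigma_!(Y)^{\sigma}=(Y^{\co})^{\op}=Y^{\co\op}$ on the nose.
\item $\op_!(\infty\scat)=\infty\scat^{\co}$ reverses exactly the even-dimensional cells. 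So in every dimension exactly one of a cell and its unit/counit is reversed, and transport along $(-)^{\op}$ turns left adjoints into right adjoints in every dimension. Thus $\infty\scat^{\mathrm{L},\geq1,\op}_{/T}=\infty\scat^{\mathrm{R}}_{/T}$.
\end{itemize}
This gives $\infty\scat^{\mathrm{L}}_{/T}\simeq(\infty\scat^{\mathrm{R}}_{/T})^{\coop}$ directly, with the same $T$ and no separate $\sigma_{\geq2}$ comparison.
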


\subsection{Locally cocartesian fibrations and exponentiable fibrations}

The next definition is an $\infty$-categorical generalization of
the notion of exponentiable fibration of \cite[Definition 3.21.]{heine2026local} and \cite[Definition B.3.1.]{lurie.higheralgebra}:

\begin{definition}

A functor $\phi:T\to S$ is an exponentiable fibration if the associated basechange functor $$\phi^*:\infty\scat_{/S}\to\infty\scat_{/T}$$ preserves small colimits.
    
\end{definition}

\begin{remark}

The functor $$\phi^*:\infty\scat_{/S}\to\infty\scat_{/T}$$ preserves tensors and therefore admits a right adjoint if and only if the functor 
$$\phi^*:\infty\Cat_{/S}\to\infty\Cat_{/T}$$ on underlying categories
preserves small colimits.
    
\end{remark}

\begin{theorem}\label{expone}

Every cocartesian fibration is exponentiable.

\end{theorem}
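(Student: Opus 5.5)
Let $\phi: T \to S$ be a cocartesian fibration. We have to show that $\phi^*: \infty\Cat_{/S} \to \infty\Cat_{/T}$ preserves small colimits. The plan is to reduce, via the Grothendieck construction (\cref{Groeq}), to a statement about functor categories, where colimits are computed pointwise.

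Fix a small diagram $i \mapsto (A_i \to S)$ in $\infty\Cat_{/S}$ with colimit $A = \colim_i A_i$. Since the forgetful functors $\infty\Cat_{/S} \to \infty\Cat$ and $\infty\Cat_{/T} \to \infty\Cat$ create colimits, it suffices to show that the canonical functor
\[
\colim_i (A_i \times_S T) \to A \times_S T
\]
is an equivalence.

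The key step is to write $\phi$ as a pullback of the universal cocartesian fibration. By \cref{Groeq}, $\phi$ is classified by some $F: S \to \infty\scat$, so that $T \simeq S \times_{\infty\scat} \infty\scat_{*//^\oplax}$. Then, for any $\alpha: B \to S$, we have $B \times_S T \simeq \int_B (F \circ \alpha)$. By \cref{laxgro}, this is the lax colimit of $F \circ \alpha: B \to \infty\scat$.

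So the claim becomes the following: the assignment
\[
(B \to S) \mapsto \overset{\lax}{\colim}\,(F|_B)
\]
preserves colimits in the variable $B \in \infty\Cat_{/S}$. To prove this, write the lax colimit as a weighted colimit, with weight $\W^{B}_\lax$, which by \cref{weigrot} is classified by $\Fun^\oplax(\bD^1,B) \to B$. Then apply the Yoneda/end formula: for any $\infty$-category $X$,
\[
\Fun(\int_B F|_B, X)
\]
is computed as the space of maps of cartesian fibrations over $B$ from $\Fun^\oplax(\bD^1,B)$ into $\langle \int_B F|_B, X\rangle$. By \cref{envelo2} and \cref{repre}, this is $\Fun_B(B, \langle \ldots\rangle) \simeq \Fun_S(B, \langle T, X\rangle)$, where $\langle T, X\rangle \to S$ is the cartesian fibration classifying $\Fun(-,X)\circ F$ from \cref{repre}. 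In other words,
\[
\Fun(B \times_S T, X) \simeq \Fun_S(B, \langle T, X\rangle),
\]
naturally in $B$. The right-hand side visibly sends colimits in $B$ to limits. Hence, by the Yoneda lemma in $\infty\Cat$, the functor $B \mapsto B \times_S T$ preserves colimits. Equivalently, $\langle T, -\rangle$ is exactly the right adjoint $\phi_*$ we are looking for.

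I expect the main obstacle to be checking that the identification of \cref{repre} is natural in $B \in \infty\Cat_{/S}$, including for non-injective maps between the $B$'s. It must be compatible with base change: pulling $\langle T, X\rangle$ back along $B' \to B$ should correspond to restriction along $B' \times_S T \to B \times_S T$. I would verify this by noting that both sides are built by pullback from the universal objects over $S$. The equivalence of \cref{repre} is obtained from the Grothendieck constructions over $B$ and over $B \times_S T \times X^\circ$, and these are natural in the base by \cref{paracocart} and the cartesian-fibration structure of $\coCart \to \infty\scat$.

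An alternative route for the obstacle, which I would try if naturality proves awkward, is the density argument of \cref{flatness}. By \cref{thetalocal}, the pullback of $\phi$ to each $\theta \in \Theta$ is cocartesian, and $\theta$ is a wedge of suspensions. Gluing the cocartesian fibrations over these pieces (\cref{glue}) shows that $\phi^*$ preserves the canonical colimits of $\Theta$-objects. Combining this with preservation of filtered colimits and coproducts, which holds since pullback in $\infty\Cat$ commutes with these, one extends to all colimits by writing every object as an iterated colimit of $\Theta$-objects.
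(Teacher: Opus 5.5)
Your argument is correct and is essentially the paper's proof. In both, $\Map(B\times_S T, X)$ is identified, via \cref{laxgro}, the Grothendieck construction and \cref{envelo2}, with maps over the base into a fixed cartesian fibration classifying $\Fun(-,X)$, and this visibly turns colimits in $B$ into limits. The only difference is cosmetic: the paper first reduces, by stability of exponentiability under base change (\cref{expbase}), to the universal cocartesian fibration $\infty\scat_{*//^\oplax}\to\infty\scat$ and uses $\infty\scat_{//^\oplax X}\to\infty\scat$ as the representing object, whereas you work directly over $S$ with $\langle T,X\rangle$, which is the pullback of that object along $F$.
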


\begin{proof}

Let $f:T\to S$ be a cocartesian fibration. We would like to see that the functor $f^*:\infty\Cat_{/S}\to\infty\Cat_{/T}$ preserves small colimits.
In view of \cref{expbase} we can assume that $f$ is the universal cocartesian fibration.
Let $K$ be a category and $H: K^{\triangleright} \to \infty\widehat{\Cat}_{/ \infty\scat}$ a functor.
Let $ p: X := \colim(H) \to \infty\scat.$
For every $Z \in K$ let $p_Z: H(Z) \to \infty\scat$
be the composition $H(Z) \to X \xrightarrow{p} \infty\scat.$

We check that the induced functor
$$ \colim(H(-)\times_{\infty\scat} \infty\scat_{*//^\oplax}) \to X \times_{\infty\scat} \infty\scat_{*//^\oplax} $$
is an equivalence.
For that we need to see that for every $\infty$-category $\mB$
the induced map
$$\Map_{\infty\widehat{\Cat}}(X \times_{\infty\scat} \infty\scat_{*//^\oplax}, \mB) \to \lim_{Z\in \K}\Map_{\infty\widehat{\Cat}}(H(Z)\times_{\infty\scat} \infty\scat_{*//^\oplax},\mB) $$
is an equivalence.
By \cref{laxgro} the latter map identifies with the canonical map
$$\Map_{\infty\widehat{\Cat}}(\colim^\lax(X), \mB) \to \lim_{Z\in K} \Map_{\infty\widehat{\Cat}}(\colim^\lax(H(Z)),\mB),$$
which by the universal property of the lax limit identifies with the canonical map
$$\Map_{\Fun(X^\op, \infty\widehat{\scat})}(\W_{X}^\lax, p^*(\mB)) \to \lim_{Z\in K} \Map_{\Fun(H(Z)^\op, \infty\widehat{\scat})}(\W_{H(Z)}^\lax, p_Z^*(\mB)).$$

By the $\infty$-categorical Grothendieck construction the latter map identifies with the map
$$\Map_{\infty\widehat{\Cat}^\cart_{/X}}(\Fun^\oplax(\bD^1,X), X\times_{\infty\scat}\infty\scat_{//^\oplax\mB}) \to $$$$ \lim_{Z\in K} \Map_{\infty\widehat{\Cat}^\cart_{/H(Z)}}(\Fun^\oplax(\bD^1,H(Z)), H(Z)\times_{\infty\scat}\infty\scat_{//^\oplax\mB}).$$
By the universal property of the free cartesian fibration (\cref{envelo2}) the latter map identifies with the map
$$\Map_{\infty\widehat{\Cat}_{/X}}(X, X\times_{\infty\scat}\infty\scat_{//^\oplax\mB}) \to \lim_{Z\in K} \Map_{\infty\widehat{\Cat}_{/H(Z)}}(H(Z), H(Z)\times_{\infty\scat}\infty\scat_{//^\oplax\mB}),$$
which identifies with the canonical equivalence
$$\Map_{\infty\widehat{\Cat}_{/\infty\scat}}(X, \infty\scat_{//^\oplax\mB}) \to \lim_{Z\in K} \Map_{\infty\widehat{\Cat}_{/\infty\scat}}(H(Z), \infty\scat_{//^\oplax\mB}). $$
\end{proof}

\begin{corollary}\label{inverso} Let $F: K \to \infty\scat$ be a functor and $S$ the colimit of $F.$
Let $ X=\{X(k)\}_{k\in K}$ be an object of the limit $\lim_{k \in K} \infty\scat^{\cocart}_{/F(k)} $,
corresponding to a natural transformation $X \to F$ of functors
$K \to \infty\scat$.
The induced functor $$\colim_k X(k) \to \colim_k F(k) \simeq S $$
is a cocartesian fibration whose image under the canonical functor
$ \infty\scat^{\cocart}_{/S} \to \lim_{k \in K}\infty\scat^{\cocart}_{/F(k)} $ is $X.$
\end{corollary}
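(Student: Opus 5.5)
The statement to prove is \cref{inverso}. The plan is to combine \cref{Groeq}, which identifies $\infty\scat^\cocart_{/-}$ with $\Fun(-,\infty\scat)$, with the exponentiability of cocartesian fibrations, \cref{expone}.

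\textbf{Step 1: produce the glued fibration.} The functor $\infty\widehat{\Cat}^\op \to \infty\widehat{\Cat}$ given by $S \mapsto \Fun(S,\infty\scat)$ sends colimits to limits. Hence so does $S \mapsto \infty\scat^\cocart_{/S}$, and the canonical functor
$$\infty\scat^\cocart_{/S} \to \lim_{k \in K}\infty\scat^\cocart_{/F(k)}$$
is an equivalence. Under straightening, the family $X$ corresponds to a compatible family of functors $F(k) \to \infty\scat$. These glue to a functor $G: S \to \infty\scat$, whose unstraightening $p: Y := S \times_{\infty\scat} \infty\scat_{*//^\oplax} \to S$ is a cocartesian fibration. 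By construction, for each $k$ the base change $F(k)\times_S Y$ is identified with $X(k)$ over $F(k)$, compatibly in $k$.

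\textbf{Step 2: identify $Y$ with the colimit.} It remains to show that the canonical map $\colim_k X(k) \to Y$ over $S$ is an equivalence. Since $p$ is a cocartesian fibration, \cref{expone} shows that $p^*: \infty\scat_{/S} \to \infty\scat_{/Y}$ preserves small colimits. Apply $p^*$ to the colimit $S \simeq \colim_k F(k)$ in $\infty\scat_{/S}$; colimits in slices are computed in $\infty\scat$. This gives
$$Y \simeq p^*(S) \simeq \colim_k F(k)\times_S Y \simeq \colim_k X(k).$$

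\textbf{Step 3: conclude.} The composite $\colim_k X(k) \simeq Y \to S$ is the induced functor of the statement, so it is a cocartesian fibration. Its image under restriction to the $F(k)$ is the family $\{F(k)\times_S Y\}_k \simeq X$.

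\textbf{Main obstacle.} The main care is needed in Step 2. One must check that the equivalence produced by $p^*$ agrees with the canonical comparison map, which comes from the maps $X(k)\simeq F(k)\times_S Y \to Y$. This should follow from naturality of the identifications in Step 1.

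One must also confirm that \cref{expone} does not itself depend on \cref{inverso}; it is cited in the proof of \cref{Grobica}. The proof of \cref{expone} reduces to the universal fibration via \cref{expbase} and \cref{laxgro}, and \cref{inverso} is not used there, so there is no circularity.
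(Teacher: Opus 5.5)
Your proposal is correct and follows essentially the same route as the paper. Both arguments straighten via \cref{Groeq} to glue $X$ into a single functor $S \to \infty\scat$, then use \cref{expone} to commute the pullback that defines its unstraightening with the colimit $S \simeq \colim_k F(k)$. The only difference is cosmetic: the paper invokes exponentiability of the universal cocartesian fibration $\infty\scat_{*//^\oplax} \to \infty\scat$ rather than of its pullback $p$, which amounts to the same computation.
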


\begin{proof}
By \cref{Groeq} the induced functor
$ \infty\scat^{\cocart}_{/S} \to \lim_{k \in K}\infty\scat^{\cocart}_{/F(k)} $
identifies with the canonical equivalence
$$\Fun(S,\infty\scat) \to \lim_{k \in K} \Fun(F(k), \infty\scat).$$
Let $X': S \to \infty\scat $ be the image of $X$ under the canonical equivalence
$$ \lim_{k \in K}\infty\scat^{\cocart}_{/F(k)} \simeq \lim_{k \in K} \Fun(F(k), \infty\scat) \simeq \Fun(S,\infty\scat). $$

By naturality of the Grothendieck construction, the Grothendieck construction of $X'$ is sent by the functor
$ \infty\scat^{\cocart}_{/S} \to \lim_{k \in K}\infty\scat^{\cocart}_{/F(k)} $ to $X.$
We prove that the the Grothendieck construction of $X'$ is
$\colim X \to \colim F \simeq S. $

The Grothendieck construction of $X'$ is by definition the pullback
$ S \times_{\infty\scat} \infty\scat_{*//} $ of the universal cocartesian fibration along $X': S \to \infty\scat.$
By \cref{expone} the universal cocartesian fibration is exponentiable. Hence there is a canonical equivalence
$$ \int_S X' = S \times_{\infty\scat} \infty\scat_{*//} = (\colim F) \times_{\infty\scat} \infty\scat_{*//} \simeq $$$$ \colim_{k \in K}(F(k) \times_{\infty\scat} \infty\scat_{*//}) \simeq \colim_{k \in K} X(k) .$$
\end{proof}

\begin{corollary}

Let $\theta = \colim_{i} \bD^{n_i} \in \Theta$ and $X \to \theta$ a locally cocartesian fibration. The functor $$ \colim_{i} (\bD^{n_i} \times_{\theta} X) \to \theta = \colim_{i} \bD^{n_i} $$ is a cocartesian fibration. 

\end{corollary}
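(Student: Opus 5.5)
The plan is to reduce to \cref{inverso}, which says that a compatible family of cocartesian fibrations over a colimit diagram glues to a cocartesian fibration over the colimit. Write $\theta = \colim_i \bD^{n_i}$ for the canonical decomposition of $\theta\in\Theta$ into disks, glued along lower-dimensional disks via the iterated pushouts of the remark following \cref{Theta}. For each $i$ set $X_i := \bD^{n_i}\times_\theta X \to \bD^{n_i}$. Since $X\to\theta$ is locally cocartesian, each $X_i\to\bD^{n_i}$ is a cocartesian fibration by definition.

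The first step is to check that the family $\{X_i\}$ defines an object of $\lim_i \infty\scat^{\cocart}_{/\bD^{n_i}}$. For a transition map $\bD^{n_j}\to\bD^{n_i}$ in the diagram, we have
$\bD^{n_j}\times_{\bD^{n_i}} X_i \simeq \bD^{n_j}\times_\theta X = X_j$
by pasting of pullbacks. Moreover, base change of cocartesian fibrations is cocartesian, and the canonical maps are maps of cocartesian fibrations by \cref{elemen}. Hence the family is coherently compatible: it is the image of $X\in\infty\scat_{/\theta}$ under the restriction $\infty\scat_{/\theta}\to\lim_i\infty\scat_{/\bD^{n_i}}$, and it lands in the cocartesian subcategories levelwise.

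The second step is to apply \cref{inverso} with $K$ the indexing diagram and $F(i)=\bD^{n_i}$. This yields that $\colim_i X_i \to \colim_i \bD^{n_i}\simeq\theta$ is a cocartesian fibration. That is exactly the claim, since the functor in the statement is this induced map.

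The only point needing care is that \cref{inverso} is phrased for an object of the limit of the subcategories $\infty\scat^{\cocart}_{/F(k)}$, whose transition functors are base changes. One must verify that our family, built by pulling back $X$, genuinely lies in that limit, rather than only in the limit of the full slices. This holds because the inclusions $\infty\scat^{\cocart}_{/F(k)}\subset\infty\scat_{/F(k)}$ are compatible with base change.

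I expect no further obstacle. As a remark, combining this with \cref{fiberwiseeq} or the colimit statement of \cref{glue} (3) should also identify $\colim_i X_i$ with $X$. That identification is not needed for the stated claim.
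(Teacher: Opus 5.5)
Your argument is correct and matches the paper. The corollary is recorded without a separate proof, as an immediate consequence of \cref{inverso} applied to the compatible family of cocartesian fibrations $\bD^{n_i}\times_\theta X$; the same step also appears in the proof of \cref{cocexp}.

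Your closing remark is false, though. The canonical functor $\colim_i(\bD^{n_i}\times_\theta X)\to X$ can be an equivalence only if $X\to\theta$ is already cocartesian, by this very corollary. For a counterexample, take $\theta=\bD^1\vee\bD^1$ and a locally cocartesian fibration whose locally cocartesian edges do not compose. \cref{fiberwiseeq} and \cref{glue}~(3) do not apply, because they assume $X$ is cocartesian. In general $\colim_i(\bD^{n_i}\times_\theta X)$ is the cocartesian coreflection of $X$, which is what underlies \cref{thetaadj}.
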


\begin{proposition}\label{checkontheta}\label{expbase}
Let $p:X\to S$ be a functor.
The following are equivalent:

\begin{enumerate}[\normalfont(1)]\setlength{\itemsep}{-2pt}
\item The functor $p:X\to S$ is exponentiable.

\item For every functor $S' \to S$ the pullback $p':S' \times_S X\to S' $ of $p$ is exponentiable.

\item For every $\theta \in \Theta$ and functor $\theta\to S$ the pullback $ \theta \times_S X \to \theta$ is exponentiable.

\item For every $\theta \in \Theta$ and functor $\theta\to S$ the standard colimit decomposition $\theta=\colim_i\bD^{n_i}$ is preserved by basechange along $p$. 

\end{enumerate}
\end{proposition}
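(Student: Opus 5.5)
I will prove the cycle of implications (1)$\Rightarrow$(2)$\Rightarrow$(3)$\Rightarrow$(4)$\Rightarrow$(1).

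For (1)$\Rightarrow$(2), let $S'\to S$ be a functor. Base change along $p'$ factors as
\[
\infty\Cat_{/S'}\to\infty\Cat_{/S}\xrightarrow{p^*}\infty\Cat_{/X}\to\infty\Cat_{/S'\times_S X},
\]
where the first functor forgets to $S$ and the last takes the pullback along $S'\times_SX\to X$, that is, $Z\mapsto Z\times_X(S'\times_SX)$. More precisely, $p'^*(Y\to S')\simeq p^*(Y\to S)$, now regarded as an object over $S'\times_SX$ through the induced map. The forgetful functor $\infty\Cat_{/S'}\to\infty\Cat_{/S}$ preserves colimits, and colimits in slices are computed in $\infty\Cat$. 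Hence $p'^*$ followed by the conservative, colimit-reflecting forgetful functor to $\infty\Cat_{/X}$ preserves colimits, so $p'^*$ preserves colimits. The implication (2)$\Rightarrow$(3) is a special case. For (3)$\Rightarrow$(4), recall that $\theta\simeq\colim_i\bD^{n_i}$ is a colimit in $\infty\Cat_{/\theta}$ with terminal object $\theta$. Applying the colimit-preserving base change for $\theta\times_SX\to\theta$ yields $\colim_i(\bD^{n_i}\times_SX)\simeq\theta\times_SX$.

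The main implication is (4)$\Rightarrow$(1). Let $H:K\to\infty\Cat_{/S}$ with colimit $Y\to S$. I need to show that $\colim_k(H(k)\times_SX)\to Y\times_SX$ is an equivalence. I would first reduce to a statement about morphisms into an arbitrary $\mB$ and use density of $\Theta$ in $\infty\Cat$ (\cref{theta}). Write each $H(k)$, and $Y$, as the canonical colimit $\colim_{\theta\to H(k)}\theta$ over $\Theta_{/H(k)}$. By commuting colimits, $\colim_k H(k)\simeq\colim$ over the total category of the pairs $(k,\theta\to H(k))$, and the induced map $\colim_{(k,\theta)}\theta\to Y$ is an equivalence. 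This reduces the problem to showing that base change along $p$ preserves colimits of diagrams of objects of $\Theta$ over $S$. The key input is the density formula $Z\simeq\colim_{\theta\to Z}\theta$ in $\infty\Cat_{/S}$. Base change preserves it precisely when $Z\times_SX\simeq\colim_{\theta\to Z}\theta\times_SX$. I would therefore aim to prove the following: $p$ is exponentiable if and only if for every $Z\to S$ the density colimit of $Z$ is preserved by $p^*$. This holds because any colimit diagram can be rewritten through density colimits, as in the proof of \cref{flatness}.

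To verify density preservation from (4), I would follow the strategy of \cref{flatness}. Pass to the presheaf model $\mP(\Theta)\rightleftarrows\infty\Cat$, in which $\infty\Cat$ is the localization at the Segal maps $\N(\bD^{i_0})\coprod_{\N(\bD^{j_1})}\cdots\to\N(\theta)$. Base change $\N(p)^*:\mP(\Theta)_{/\N(S)}\to\mP(\Theta)_{/\N(X)}$ preserves all colimits, since it is base change in a presheaf topos. Composing with the localization gives a left adjoint functor $\mP(\Theta)_{/\N(S)}\to\infty\Cat_{/X}$. It descends to $\infty\Cat_{/S}$ precisely when it inverts the generating Segal local equivalences over $S$. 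After applying $L$, this condition is exactly hypothesis (4): $\colim_i(\bD^{n_i}\times_SX)\simeq\theta\times_SX$. Once the functor descends, the descended functor agrees with $p^*$, because $\N$ preserves limits. It is then a left adjoint, so $p^*$ preserves colimits.

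The main obstacle is this descent step. One has to check that the generating local equivalences are exactly the Segal maps for $\Theta$, as supplied by \cref{theta}, and that the descended functor really coincides with $p^*$, which needs compatibility of $\N$ with pullbacks. I would model both checks directly on the argument of \cref{flatness}.
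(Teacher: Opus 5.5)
This is essentially the paper's proof. It uses the same ingredients: the trivial implications; the reduction of (1)$\Rightarrow$(2) to the forgetful functor $\infty\Cat_{/S'\times_SX}\to\infty\Cat_{/X}$; and, for (4)$\Rightarrow$(1), the descent of $L\circ\N(p)^*$ along $\mP(\Theta)_{/\N(S)}\to\infty\Cat_{/S}$. Hypothesis (4) is exactly what makes this functor invert the Segal generators $\colim_i\N(\bD^{n_i})\to\N(\theta)$, and the descended left adjoint is $p^*$ because $\N$ preserves pullbacks.

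For (1)$\Rightarrow$(2), keep your ``more precisely'' version. The displayed four-term factorization is wrong: pulling back along $S'\times_SX\to X$ yields $Y\times_SS'\times_SX$, not $Y\times_SX$.

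You should also delete the intermediate ``density reduction''. The asserted equivalence ``$p$ is exponentiable if and only if $p^*$ preserves every density colimit $\colim_{\theta\to Z}\theta\simeq Z$'' is false. Pullback along an arbitrary functor preserves density colimits, because $\N(Z)\simeq\colim_{\theta\to Z}\N(\theta)$ in $\mP(\Theta)$, $\N$ preserves pullbacks, and pullback in $\mP(\Theta)$ preserves colimits. Yet the endpoint inclusion $\bD^1\to\bD^1\vee\bD^1$ is not exponentiable. Your descent argument already proves full colimit preservation directly, so it needs no such reduction.
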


\begin{proof}

Condition (1) implies (2) since the functor $$\infty\Cat_{/S'}\xrightarrow{{p'}^*} \infty\Cat_{/S' \times_S X} \to \infty\Cat_{/X}$$ factors as
$$\infty\Cat_{/S'}\to \infty\scat_{/S} \xrightarrow{p^*} \infty\Cat_{/X}.$$
Trivially, (2) implies (1).
Since (1) implies (2), we find that (1) implies (3).
Trivially, (3) implies (4).

We prove that (4) implies (1).
Let $\N: \infty\Cat \to \mP(\Theta)$ denote the nerve functor.
Consider the pullback functor $\N(p)^*:\mP(\Theta)_{/\N(S)}\to \mP(\Theta)_{/\N(X)}$, which preserves small colimits.
We must show that $\N(p)^*$ preserves local equivalences. If this is shown, $\N(p)^*$ descends to a colimit preserving functor $$ \N(p)^*:\infty\Cat_{/S}\to\infty\Cat_{/X}.$$
The generating local equivalences in $\mP(\Theta)_{/\N(S)}$ are of the form
\[
\colim_i \N(\bD^{n_i})\to\N(\theta)
\]
where $\theta=\colim_i \bD^{n_i}$ is the standard colimit decomposition of an object $\theta\in\Theta_{/S}$.

Taking the pullback along $\N(p)$, which preserves colimits, we obtain the following map in $\mP(\Theta)_{/\N(X)}$:
\[
\colim_i \N(p^*(\bD^{n_i}))\simeq\colim_i \N(p)^*\N(\bD^{n_i})\to\N(p)^*(\N(\theta)) \simeq \N(p^*(\theta)) .
\]
We want to see that the latter map is a local equivalence, i.e. is inverted by the localization functor. Since the nerve $\N$ is fully faithful, the localization functor sends the latter morphism to the functor 
\[
\colim_i p^*(\bD^{n_i}) \to p^*(\theta)
\]
over $X.$
This functor is an equivalence by assumption.
\end{proof}

\begin{corollary}\label{standardexp}

Let $\theta \in \Theta.$
Every standard disk inclusion $\bD^n \to \theta $ is exponentiable.
    
\end{corollary}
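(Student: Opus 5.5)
We need to show: for $\theta\in\Theta$, every standard disk inclusion $i:\bD^n\to\theta$ is exponentiable. By \cref{expbase} it suffices to check condition (4): for every $\theta'\in\Theta$ and functor $\theta'\to\theta$, basechange along $i$ preserves the standard colimit decomposition $\theta'=\colim_j\bD^{m_j}$. Equivalently, we must show that the canonical functor $\colim_j (\bD^{m_j}\times_\theta\bD^n)\to\theta'\times_\theta\bD^n$ is an equivalence.

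The plan is to exploit that a standard disk inclusion $\bD^n\subset\theta$ is a monomorphism whose image is a ``convex'' sub-object. It is closed under the relevant cells: any cell of $\theta$ whose boundary lies in $\bD^n$ lies in $\bD^n$, and objects of $\bD^n$ form an interval among objects of $\theta$ in its top-level wedge decomposition. I would proceed by induction on the height of $\theta$ using the recursive description $\theta=S(\theta_1)\vee\cdots\vee S(\theta_k)$. A standard disk inclusion is then either the inclusion of an object, or of the form $S(\bD^{n-1}\subset\theta_l)$ into one wedge summand $S(\theta_l)$. For the pullback along $\theta'\to\theta$, morphism $\infty$-categories of pullbacks are pullbacks of morphism $\infty$-categories. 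The top-level wedge decomposition of $\theta'$ maps into that of $\theta$, and since $\theta'\times_\theta\bD^n$ is a sub-object of $\theta'$ its morphism categories are the pullbacks $\Mor_{\theta'}(a,b)\times_{\Mor_\theta}\Mor_{\bD^n}$. These decompose by \cref{homsus}, which is applicable since the gluing is along fully faithful inclusions of objects, as a product/composite over the summands $S(\theta'_j)$. Pulling back the inclusion $\bD^{n-1}\subset\theta_l$ to each $\theta'_j\to\theta_l$ is again a basechange of a standard disk inclusion of lower height, so by induction it preserves the standard decomposition of $\theta'_j$.

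Then I would assemble the two levels. The standard colimit decomposition of $\theta'$ is built from the decompositions of the $\theta'_j$ by suspension and bipointed wedge. Suspension commutes with the pullback because $S$ of a pullback of monomorphisms over $S(\theta_l)$ is computed hom-wise. The wedge commutes because the pullback of a wedge along a subobject whose objects form an interval is the wedge of the pullbacks, again via the hom formula of \cref{homsus} for pushouts along fully faithful functors. Together with the inductive hypothesis, this identifies $\colim_j(\bD^{m_j}\times_\theta\bD^n)$ with $\theta'\times_\theta\bD^n$.

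The main obstacle is the bookkeeping in the wedge step: a single summand $S(\theta'_j)$ of $\theta'$ may map into several summands of $\theta$ via a composite, or collapse to an object. One must check that the pullback of $\bD^n$ then is empty or an identity-only piece that is compatible with the gluing. I would first handle this with the alternative route through \cref{thetalocal}-style reduction: both sides are sub-objects of $\theta'$ (colimits of monomorphisms over the standard decomposition remain monomorphic by \cref{fullyfaith}), so it suffices to compare objects and cells hom-wise. That reduces everything to the explicit combinatorics of cells in $\Theta$.
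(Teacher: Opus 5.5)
Your reduction to condition (4) of \cref{expbase} is exactly the paper's proof. The paper then simply asserts, as an observation, that $\colim_j(\bD^{m_j}\times_\theta\bD^n)\to\theta'\times_\theta\bD^n$ is an equivalence. Your induction on height via $\theta=S(\theta_1)\vee\cdots\vee S(\theta_k)$ is a sound way to supply that verification. It rests on three facts: $S$ preserves pullbacks, $S$ preserves the weakly contractible colimits of the decomposition zigzags, and the wedge hom formula from \cref{homsus}.

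Two auxiliary claims should be dropped, though.

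First, the ``convexity'' you invoke is false. Take $\theta=\bD^2\coprod_{\bD^1}\bD^2=S(\bD^1\vee\bD^1)$, so that $\Mor_\theta(0,1)$ is $f\to g\to h$, and let $\bD^n=S(\{f\to g\})$ be the first standard $2$-disk. The $1$-cell $h$ has both endpoints in $\bD^n$ but does not lie in it. All you actually use is that the objects of a standard disk form an interval, recursively in the morphism categories.

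Second, \cref{fullyfaith} does not show that the colimit of the pullbacks is a subobject of $\theta'$. The maps $\bD^{m_j}\times_\theta\bD^n\to\bD^{m_j}$ are monomorphisms but in general not fully faithful. Moreover, a colimit of levelwise monomorphisms need not be a monomorphism.

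The bookkeeping you flag can instead be closed directly:
\begin{itemize}
\item Since $\theta'\to\theta$ is monotone on objects, the objects of $\theta'$ over $\{l-1,l\}$ form an interval.
\item Inside this interval at most one summand $S(\theta'_{c+1})$ has its endpoints sent to $l-1$ and $l$. It factors through the fully faithful $S(\theta_l)\subset\theta$ and pulls back to $S(\theta'_{c+1}\times_{\theta_l}\bD^{n-1})$, possibly $\partial\bD^1$. Your inductive hypothesis, after applying $S$, handles this piece.
\item Every other summand inside the interval is sent to an identity and pulls back to itself.
\item Summands straddling the boundary of the interval pull back to one endpoint, and the remaining summands pull back to $\emptyset$.
\end{itemize}
To finish, compare via \cref{homsus} the homs of the wedge of these pieces with those of $\theta'\times_\theta\bD^n$. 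The latter are pullbacks of homs, for instance $\Mor_{\theta'}(a,b)\times_{\theta_l}\bD^{n-1}$ for $a\mapsto l-1$ and $b\mapsto l$.
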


\begin{proof}
We apply \cref{expbase}. We observe that for every $\theta'=\colim_i\bD^{n_i} \in \Theta$ and functor $\theta' \to \theta$ the canonical functor
$$\colim_i(\bD^{n_i} \times_{\theta} \bD^n) \to \theta' \times_{\theta} \bD^n $$
is an equivalence.   
\end{proof}

\begin{remark}
This fails for nonstandard inclusions of objects of $\Theta$.
Indeed, the standard example of a nonexponentiable fibration is the endpoint-preserving inclusion $\Delta^1\to\Delta^2\simeq\Delta^1\vee\Delta^1$.
\end{remark}

The functor $\Fun(\bD^1, \infty\scat) \to \infty\scat$ evaluating at the target is a 1-cartesian fibration and so in particular classifies a functor $\infty\Cat^\op \to \infty\widehat{\Cat}.$

\begin{lemma}\label{expful} Let $K$ be a category and $F: K \to \infty\Cat$ a functor. Let $Y \to \colim(F)$ be 
an exponentiable fibration and $Z \to \colim(F)$ a functor.
The induced functor
$$\nu: \infty\scat_{/\colim(F)} \to \lim_{k \in K} \infty\scat_{/F(k)} $$
induces an equivalence 
\begin{equation}\label{erfo}
\Fun_{\colim(F)}(Y, Z) \to \lim_{k \in K} \Fun_{F(k)}(F(k) \times_{\colim(F)} Y, F(k) \times_{\colim(F)} Z). \end{equation}

\end{lemma}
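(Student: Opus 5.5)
The plan is to reduce the mapping $\infty$-category computation to a statement about colimits in slice categories, and then to feed in exponentiability of $Y$. First, rewrite the source of (\ref{erfo}). A functor $Y \to Z$ over $\colim(F)$ is the same as a section over $Y$ of the pullback $Y \times_{\colim(F)} Z \to Y$, so
$$\Fun_{\colim(F)}(Y,Z) \simeq \Fun_{\colim(F)}(Y, Z).$$
This equivalence is the relative mapping object in $\infty\scat_{/\colim(F)}$. Likewise, each factor on the right-hand side satisfies
$$\Fun_{F(k)}(F(k)\times_{\colim(F)} Y, F(k)\times_{\colim(F)} Z) \simeq \Fun_{\colim(F)}(F(k)\times_{\colim(F)} Y, Z).$$
Here the functor $F(k)\times_{\colim(F)} Y \to Z$ is taken over $\colim(F)$, using the pullback adjunction: base change along $F(k) \to \colim(F)$ is right adjoint to postcomposition. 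Under these identifications, the map (\ref{erfo}) becomes the canonical comparison
$$\Fun_{\colim(F)}(Y,Z) \to \lim_{k\in K}\Fun_{\colim(F)}(F(k)\times_{\colim(F)} Y, Z).$$

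Second, this comparison is induced by the cocone $\{F(k)\times_{\colim(F)} Y \to Y\}_k$ in $\infty\scat_{/\colim(F)}$. The functor $\Fun_{\colim(F)}(-,Z)$ sends colimits in $\infty\Cat_{/\colim(F)}$ to limits, since $\infty\scat_{/\colim(F)}$ is cotensored and mapping $\infty$-categories are detected by $\Map(\mA, -)$ for test $\infty$-categories $\mA$. Concretely, I will use $\Map(\mA, \Fun_{\colim(F)}(W,Z)) \simeq \Map_{/\colim(F)}(\mA\times W, Z)$, and note that $\mA\times(-)$ preserves colimits in $\infty\Cat_{/\colim(F)}$ because $\infty\Cat$ is cartesian closed (\cref{carclo}). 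So it suffices to show that the canonical map
$$\colim_{k} (F(k)\times_{\colim(F)} Y) \to Y$$
is an equivalence in $\infty\Cat_{/\colim(F)}$. Colimits in the slice are computed in $\infty\Cat$, which is why this reduction is available.

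Third, and this is the key step, I apply exponentiability of $p\colon Y \to \colim(F)$. By definition, $p^*\colon \infty\scat_{/\colim(F)} \to \infty\scat_{/Y}$ preserves small colimits. In $\infty\Cat_{/\colim(F)}$, the colimit of the diagram $k \mapsto (F(k) \to \colim(F))$ is the identity of $\colim(F)$, because the forgetful functor to $\infty\Cat$ creates colimits. Applying $p^*$ gives $\colim_k(F(k)\times_{\colim(F)} Y) \simeq Y$ over $Y$, and hence also over $\colim(F)$. The same argument applies after base change by $\mA\times(-)$, and this is where the main care is needed: the statement concerns the mapping $\infty$-category, not merely the space. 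I will handle this by observing that $\mA \times Y \to \colim(F)$ equals $Y$ pulled back along the projection $\mA\times\colim(F)\to\colim(F)$, and that products with $\mA$ commute with colimits. Alternatively, one can argue directly that both sides of (\ref{erfo}) are right adjoint/cotensor compatible, so the equivalence on maximal subspaces for all $\mA$ suffices.

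Finally, the map $\nu$ appears in the statement as the functor inducing (\ref{erfo}). I will check that the composite identifications above agree with the map induced by $\nu$ on morphism $\infty$-categories. This is a naturality check on the pullback adjunction and is routine. I expect the main obstacle to be the bookkeeping that upgrades the equivalence on mapping spaces to one on mapping $\infty$-categories, together with confirming that colimits of slices are computed underlying. Both points should follow from cartesian closedness and the definition of exponentiable fibrations.
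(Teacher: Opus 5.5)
Your proposal is correct and follows the same route as the paper: use exponentiability of $Y \to \colim(F)$ to write $Y \simeq \colim_k(F(k)\times_{\colim(F)} Y)$, use that $\Fun_{\colim(F)}(-,Z)$ turns colimits in the slice into limits, and identify each term via base change, $\Fun_{\colim(F)}(F(k)\times_{\colim(F)} Y, Z) \simeq \Fun_{F(k)}(F(k)\times_{\colim(F)} Y, F(k)\times_{\colim(F)} Z)$. Two small remarks: your first displayed equivalence is tautological as written, and the extra base change by $\mA\times(-)$ in your third step is unnecessary, since your second step already handles mapping $\infty$-categories rather than just mapping spaces.
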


\begin{proof}

The induced functor (\ref{erfo}) identifies with the canonical equivalence
$$ \Fun_{\colim(F)}(Y, Z) \simeq \Fun_{\colim(F)}(\colim(F) \times_{\colim(F)} Y, Z) \simeq \Fun_{\colim(F)}(\colim_{k \in K}(F(k) \times_{\colim(F)} Y), Z) $$$$ \simeq \lim_{k \in K} \Fun_{\colim(F)}(F(k) \times_{\colim(F)} Y, Z) \simeq \lim_{k \in K} \Fun_{F(k)}(F(k) \times_{\colim(F)} Y, F(k) \times_{\colim(F)} Z).$$
\end{proof}

\begin{definition}

A functor $\phi: X \to Y$ is a locally cocartesian fibration if the pullback of $\phi$ along any functor $\bD^n \to Y$ for $n \geq 0$ is a 
cocartesian fibration.
    
\end{definition}

\begin{definition}Let $\phi: X \to Y, \phi': X' \to Y' $ be locally cocartesian fibrations.
A map of locally cocartesian fibrations $\phi \to \phi'$ is a  commutative square
$$\begin{xy}
\xymatrix{
X \ar[d]^{\phi} \ar[r]
& X' \ar[d]^{\phi'} 
\\ 
Y \ar[r] & Y'}
\end{xy}$$	
whose pullback along any functor $\bD^n \to Y$ for $n \geq 1$, 
is a map 
$$\begin{xy}
\xymatrix{
\bD^n \times_Y X \ar[rd] \ar[rr]
&& \bD^n \times_{Y'} X' \ar[ld]
\\ 
& \bD^n}
\end{xy}$$	
of cocartesian fibrations over $\bD^n$.

\end{definition}

\begin{notation}

Let $ \mathrm{Loc}\mathrm{Cocart} \subset \Fun(\bD^1, \infty\scat) $
be the subcategory of locally cocartesian fibrations and maps of such.

\end{notation}

\begin{notation}Let $S$ be an $\infty$-category.
Let $$\infty\scat^{\mathrm{loc}. \vspace{1mm} \cocart}_{/S} \subset \infty\scat_{/S}$$ be the subcategory of locally cocartesian fibrations over $S$ and maps of such.

\end{notation}

\begin{remark}

Note that $\mathrm{coCart} $ is a full subcategory of $ \mathrm{Loc}\mathrm{Cocart} $ and for every $\infty$-category $S$
that $\infty\scat^{\cocart}_{/S} $ is a full subcategory of $\infty\scat^{\mathrm{loc}. \vspace{1mm} \cocart}_{/S}.$
    
\end{remark}

\begin{notation}\label{reuft} Let $X \to S, Y \to S$ be locally cocartesian fibrations.
Let $$ \Fun^\cocart_S(X,Y) \subset \Fun_S(X,Y)$$ be the full subcategory of maps of locally cocartesian fibrations over $S$.

\end{notation}

\begin{remark}

\cref{reuft} is justified since maps of locally cocartesian fibrations between cocartesian fibrations are precisely maps of cocartesian fibrations.

\end{remark}

\begin{remark}

The functor $\Fun(\bD^1, \infty\scat) \to \infty\scat$ is a 1-cartesian fibration, which restricts to a 1-cartesian fibration
$ \mathrm{Loc}\mathrm{Cocart} \to \infty\scat $ since locally cocartesian fibrations and maps of such are stable under pullback.
The latter classifies in particular a functor $\infty\Cat^\op \to \infty\widehat{\Cat}.$

\end{remark}

\begin{theorem}\label{cocexp}

A functor is a cocartesian fibration if and only if it is  exponentiable and a locally cocartesian fibration.
    
\end{theorem}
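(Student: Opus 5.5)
The forward direction is already available. A cocartesian fibration is exponentiable by \cref{expone}. It is locally cocartesian because cocartesian fibrations are stable under base change, so its pullback along any $\bD^n \to Y$ is again cocartesian. The real content is the converse: suppose $p: X \to S$ is exponentiable and locally cocartesian, and show that it is cocartesian.

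\textbf{Reduction to $\Theta$.} By \cref{thetalocal} it suffices to show that for every $\theta \in \Theta$ and every functor $\theta \to S$, the pullback $X_\theta := \theta \times_S X \to \theta$ is a cocartesian fibration. Both hypotheses are stable under base change: exponentiability by \cref{expbase}, and local cocartesianness by definition. So we may assume $S = \theta$ with standard colimit decomposition $\theta = \colim_i \bD^{n_i}$, the iterated pushout of disks along disks.

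\textbf{Gluing over the decomposition.} The pieces $X_i := \bD^{n_i} \times_\theta X \to \bD^{n_i}$ are cocartesian fibrations by the local hypothesis. Their restrictions to the common sub-disks agree, so the family $\{X_i\}$ is an object of $\lim_i \infty\scat^{\cocart}_{/\bD^{n_i}}$. This requires checking that each transition map $X_j \to X_i$ over $\bD^{n_j} \subset \bD^{n_i}$ is a pullback. That holds because both are pullbacks of $X$. It is automatically a map of cocartesian fibrations, being a base change.

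By \cref{inverso}, the induced functor $\colim_i X_i \to \colim_i \bD^{n_i} \simeq \theta$ is a cocartesian fibration. By exponentiability of $X \to \theta$ (\cref{expbase}(4)), base change along $p$ preserves the standard colimit decomposition of $\theta$. Hence the canonical map $\colim_i X_i \to \theta \times_\theta X = X$ is an equivalence over $\theta$. Therefore $X \to \theta$ is cocartesian.

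\textbf{Main obstacle.} The delicate step is the use of \cref{inverso}. It was derived from \cref{Groeq} and \cref{expone}, and it is stated for arbitrary colimit diagrams $F: K \to \infty\scat$. The standard decomposition of $\theta$ is such a diagram indexed by a zigzag, so it applies directly. I expect the only care needed is to identify the colimit appearing in \cref{inverso} with the one whose preservation is guaranteed by \cref{expbase}(4), that is, to take the same indexing diagram, including the sub-disk intersections. Finally, \cref{thetalocal} assembles the $\theta$-local statements into the global one.
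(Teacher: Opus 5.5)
Your proposal is correct and follows essentially the same route as the paper. You reduce to $\theta \in \Theta$ via \cref{thetalocal}, use local cocartesianness to see that each piece $\bD^{n_i} \times_\theta X \to \bD^{n_i}$ is cocartesian, glue the pieces with \cref{inverso}, and use exponentiability to identify $\colim_i(\bD^{n_i}\times_\theta X)$ with the pullback of $X$ to $\theta$. The only cosmetic difference is that you first base change to $\theta$ and cite \cref{expbase}, whereas the paper applies $\phi^*$ directly to the decomposition $\colim_i \bD^{n_i} \simeq \theta$ viewed over the original base.
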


\begin{proof}

Every cocartesian fibration is a locally cocartesian fibration
since cocartesian fibration are stable under pullback.
Every cocartesian fibration is exponentiable by \cref{expone}.

We prove the converse. Let $\phi: X \to Y$ be a locally cocartesian fibration that is exponentiable.
To see that $\phi$ is a cocartesian fibration, by \cref{thetalocal} it suffices to see that for every $\theta \in \Theta$ and functor $\theta \to Y$
the pullback $\theta \times_{Y} X \to \theta $ is a cocartesian fibration. Since $\phi$ is exponentiable, the functor
$\phi^*: \infty\Cat_{/Y} \to \infty\Cat_{/X}$ preserves small colimits
and so preserves the colimit decomposition $\colim_{i} \bD^i \simeq \theta.$ Hence the canonical functor 
$$ \colim_{i}(\bD^i \times_{Y} X) \to \theta \times_{Y} X $$ 
over $\theta$ is an equivalence.
So it suffices to show that the functor $$ \colim_{i}(\bD^i \times_{Y} X) \to \colim_{i}(\bD^i) = \theta$$ is a cocartesian fibration.

Since $\phi: X \to Y$ is a locally cocartesian fibration, the functor
$\bD^i \times_{Y} X \to \bD^i$ is a cocartesian fibration.
Hence by \cref{inverso} also the induced functor $$ \colim_{i}(\bD^i \times_{Y} X) \to \colim_{i}(\bD^i) = \theta$$ is a cocartesian fibration.
\end{proof}

\cref{inverso} implies the following:

\begin{proposition}

Let $\theta = \colim_{i} \bD^{n_i} \in \Theta$ and $X \to \theta$ a locally cocartesian fibration and $Y \to \theta$ an exponentiable fibration. The canonical functor $$ \colim_{i} (\bD^{n_i} \times_{\theta} X) \to X $$
induces an equivalence $$\Fun_{\theta}(Y,\colim_{i} (\bD^{n_i} \times_{\theta} X)) \to \Fun_{\theta}(Y,X),$$
which restricts to an equivalence $$ \Fun^\cocart_{\theta}(Y,\colim_{i} (\bD^{n_i} \times_{\theta} X)) \to \Fun^\cocart_{\theta}(Y,X). $$

\end{proposition}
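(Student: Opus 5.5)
The plan is to reduce the statement to \cref{inverso} together with \cref{expful}, in three steps.

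First, the map $c:\colim_{i}(\bD^{n_i}\times_\theta X)\to X$ is a functor over $\theta$. Set $X':=\colim_{i}(\bD^{n_i}\times_\theta X)$. Since $X\to\theta$ is locally cocartesian, each $\bD^{n_i}\times_\theta X\to\bD^{n_i}$ is a cocartesian fibration. The standard inclusions $\bD^{n_j}\to\theta$ are exponentiable by \cref{standardexp}, so pulling back along them computes fibers. Hence the family $\{\bD^{n_i}\times_\theta X\}_i$ is compatible and defines an object of $\lim_i\infty\scat^{\cocart}_{/\bD^{n_i}}$. By \cref{inverso}, $X'\to\theta$ is then a cocartesian fibration with $\bD^{n_i}\times_\theta X'\simeq\bD^{n_i}\times_\theta X$. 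Under this identification $c$ restricts to the identity on each piece.

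Second, for the first equivalence I apply \cref{expful} to $F=(i\mapsto\bD^{n_i})$ with $\colim F=\theta$ and the exponentiable $Y$. This identifies $\Fun_\theta(Y,Z)$ with $\lim_i\Fun_{\bD^{n_i}}(\bD^{n_i}\times_\theta Y,\bD^{n_i}\times_\theta Z)$, naturally in $Z\to\theta$. Taking $Z=X'$ and $Z=X$, the map induced by $c$ becomes the limit of the maps induced by $\bD^{n_i}\times_\theta c$. Each of these is an equivalence by the first step, so the map induced by $c$ on $\Fun_\theta(Y,-)$ is an equivalence.

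Third, I check that the two full subcategories of maps of (locally) cocartesian fibrations correspond. By definition, and by \cref{thetalocal} for the cocartesian fibration $X'$, a functor $Y\to Z$ over $\theta$ is a map of locally cocartesian fibrations if and only if its pullback along every $\bD^n\to\theta$ is a map of cocartesian fibrations. Every $\bD^n\to\theta$ is a cell, which I expect to factor through some standard disk $\bD^{n_i}$ up to degeneracy in $\theta$. If so, this condition can be checked after base change to the $\bD^{n_i}$, where $c$ becomes the identity, and the equivalence restricts as claimed. If $Y$ is not itself locally cocartesian, I read $\Fun^\cocart$ as the functors preserving the relevant cocartesian cells after pullback to disks; the same argument applies.

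The main obstacle is this third step: the claim that cocartesian-ness of cells over an arbitrary $\bD^n\to\theta$ is detected on the standard disk pieces. Composite cells in $\theta$ are pasted from cells of several $\bD^{n_i}$, and the claim rests on the fact from \cref{elemen} that composites of cocartesian morphisms are cocartesian. My first attempt will be to reduce via \cref{glue}(2) and \cref{Segal}, which give exactly this locality for wedges of suspensions.
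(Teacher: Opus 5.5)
Your first two steps are correct and are essentially the paper's argument. The paper gets $\bD^{n_i}\times_\theta X\simeq\bD^{n_i}\times_\theta X'$ directly from \cref{standardexp} rather than from \cref{inverso}, and its chain of equivalences for $\Fun_\theta(Y,-)$ is exactly the proof of \cref{expful}.

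The gap is your third step. The expectation that every $\bD^n\to\theta$ factors through a standard disk up to degeneracy is false: the composite $1$-cell $0\to 2$ of $\theta=\bD^1\vee\bD^1$ lies in neither copy of $\bD^1$. Your fallback tools (\cref{elemen}, \cref{glue}(2), \cref{Segal}) are statements about cocartesian fibrations. They do show that, for the cocartesian fibrations $Y$ and $X'$, a functor over $\theta$ preserves cocartesian cells as soon as its pullbacks to the standard disks do. The cleanest route is \cref{inverso} together with \cref{expful}: maps of cocartesian fibrations over $\theta$ are compatible families of such maps over the $\bD^{n_i}$. This is precisely the criterion the paper's proof cites. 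But $X$ is only locally cocartesian, and its locally cocartesian lift over a composite cell is in general not the composite of the lifts over the pieces. So membership in $\Fun^\cocart_\theta(Y,X)$, which by definition tests every $\bD^n\to\theta$, is not detected on standard disks.

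In fact, with the paper's definitions the second assertion is false, so no argument can close this step. Here is a counterexample.
\begin{itemize}
\item Let $\theta=\bD^1\vee\bD^1=\{0<1<2\}$, and let $X$ be the poset with $x_0<x_1<b$ and $x_0<a<b$, mapped to $\theta$ by $x_0\mapsto 0$, $x_1\mapsto 1$, $a,b\mapsto 2$.
\item Its pullbacks along $0\to 1$, $1\to 2$ and $0\to 2$ are cocartesian, with lifts $x_0\to x_1$, $x_1\to b$ and $x_0\to a$. So $X$ is locally cocartesian.
\item $X'$ is the same poset without the relation $x_0<a$.
\item Take $Y=\theta$. The section $x_0<x_1<b$ is cocartesian in $X'$, so $\Fun^\cocart_\theta(\theta,X')\simeq\bD^0$.
\item It is also the only section of $X$, consistent with the first assertion. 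But its restriction along $0\to 2$ is $x_0\to b$, which is not cocartesian there, because $\Mor_X(b,a)=\emptyset$ while $\Mor_X(x_0,a)\simeq\bD^0$.
\item Hence $\Fun^\cocart_\theta(\theta,X)=\emptyset$, and postcomposition with $c$ does not even land in it.
\end{itemize}

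The restriction statement is correct only if $\Fun^\cocart_\theta(Y,X)$ is read as requiring cocartesian cells to be preserved over the standard disk inclusions $\bD^{n_i}\to\theta$ only. That is what the paper's proof tacitly does. Under that reading your third step closes at once: each $\bD^{n_i}\times_\theta c$ is an equivalence by your first step, and the standard-disk test suffices on the $X'$ side.
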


\begin{proof}
The canonical functor $$ \colim_{i} (\bD^{n_i} \times_{\theta} Y) \to \colim_{i} (\bD^{n_i}) \times_{\theta} Y \simeq Y $$
is an equivalence since $Y$ is exponentiable.
Moreover the canonical functor $$ \bD^{n_i} \times_{\theta} X \to \bD^{n_i} \times_{\theta} \colim_{j} (\bD^{n_j} \times_{\theta} X) $$
is an equivalence since the standard disk inclusion $\bD^{n_i} \to \theta$ is exponentiable by \cref{standardexp}.

The induced functor $\Fun_{\theta}(Y,\colim_{i} (\bD^{n_i} \times_{\theta} X)) \to \Fun_{\theta}(Y,X)$ identifies with the identity
$$\lim_{i} \Fun_{\bD^{n_i}}(\bD^{n_i} \times_{\theta} Y,\bD^{n_i} \times_{\theta} X) \simeq \lim_{i} \Fun_{\bD^{n_i}}(\bD^{n_i} \times_{\theta} Y,\bD^{n_i} \times_{\theta} \colim_{i} (\bD^{n_i} \times_{\theta} X)) \simeq $$$$ \lim_{i} \Fun_{\theta}(\bD^{n_i} \times_{\theta} Y,\colim_{i} (\bD^{n_i} \times_{\theta} X)) \simeq $$$$ \Fun_{\theta}(\colim_{i} (\bD^{n_i} \times_{\theta} Y),\colim_{i} (\bD^{n_i} \times_{\theta} X)) \to \Fun_{\theta}(\colim_{i} (\bD^{n_i} \times_{\theta} Y),X) $$$$\simeq \lim_{i} \Fun_{\theta}(\bD^{n_i} \times_{\theta} Y,X) \simeq \lim_{i} \Fun_{\bD^{n_i}}(\bD^{n_i} \times_{\theta} Y,\bD^{n_i} \times_{\theta} X). $$
The induced functor $\Fun_{\theta}(Y,\colim_{i} (\bD^{n_i} \times_{\theta} X)) \to \Fun_{\theta}(Y,X)$
restricts to an equivalence $$ \Fun^\cocart_{\theta}(Y,\colim_{i} (\bD^{n_i} \times_{\theta} X)) \to \Fun^\cocart_{\theta}(Y,X)$$ since for any cocartesian fibrations $A \to \theta, B \to \theta,$ a functor $A \to B$ over $\theta$ is a map of cocartesian fibrations over $\theta$ if and only if its pullback along any standard disk inclusion
$\bD^n \to \theta$ is a map of cocartesian fibrations over $\bD^n.$
\end{proof}

\begin{corollary}\label{thetaadj} Let $\theta \in \Theta$.

\begin{enumerate}[\normalfont(1)]\setlength{\itemsep}{-2pt}
\item The embedding of the full subcategory of $\infty\scat_{/\theta}$ of cocartesian fibrations over $\theta$ into the full subcategory of $\infty\scat_{/\theta}$ of locally cocartesian fibrations over $\theta$
admits a right adjoint.

\item The embedding of the subcategory of $\infty\scat_{/\theta}$ of cocartesian fibrations over $\theta$ into the subcategory of $\infty\scat_{/\theta}$ of locally cocartesian fibrations over $\theta$
admits a right adjoint.

\end{enumerate}
    
\end{corollary}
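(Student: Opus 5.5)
The right adjoint is constructed directly from the preceding proposition. Given a locally cocartesian fibration $X \to \theta$, with $\theta=\colim_i \bD^{n_i}$ the standard decomposition, I would set
$$R(X):=\colim_i(\bD^{n_i}\times_\theta X)\to\theta.$$
By the corollary following \cref{inverso} (equivalently \cref{inverso} itself), $R(X)\to\theta$ is a cocartesian fibration. Indeed, each $\bD^{n_i}\times_\theta X\to\bD^{n_i}$ is a cocartesian fibration by the definition of locally cocartesian. Moreover, the restrictions of these fibrations to the overlaps agree, since pulling back along a further disk inclusion commutes with pulling back from $X$. So they assemble to an object of $\lim_i\infty\scat^\cocart_{/\bD^{n_i}}$. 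The canonical functor $\epsilon: R(X)\to X$ over $\theta$ will serve as the counit.

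For (1), I need that for every cocartesian fibration $Y\to\theta$, composition with $\epsilon$ induces an equivalence
$$\Map_{\infty\scat_{/\theta}}(Y,R(X))\to\Map_{\infty\scat_{/\theta}}(Y,X).$$
By \cref{expone}, $Y$ is exponentiable, so the preceding proposition applies: the functor $\Fun_\theta(Y,R(X))\to\Fun_\theta(Y,X)$ is an equivalence. Passing to maximal subspaces gives the required equivalence of mapping spaces in the full subcategories. Hence $R$ is right adjoint to the full embedding of cocartesian fibrations into locally cocartesian fibrations.

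For (2), the morphisms in both subcategories are maps of (locally) cocartesian fibrations. The preceding proposition also gives the restricted equivalence $\Fun^\cocart_\theta(Y,R(X))\to\Fun^\cocart_\theta(Y,X)$. Two points need checking, and this is where I expect the only real subtlety.

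\emph{First}, $\epsilon$ must itself be a map of locally cocartesian fibrations. Its pullback along a standard disk inclusion $\bD^{n_i}\to\theta$ is an equivalence $\bD^{n_i}\times_\theta R(X)\simeq\bD^{n_i}\times_\theta X$, by \cref{standardexp}. For an arbitrary functor $\bD^n\to\theta$, I would argue that every cell of $\theta$ factors through some standard disk inclusion. This holds because $\theta$ is a gluing of disks along faces, so each map from a disk lands in one of the glued pieces. Granting this, the pullback of $\epsilon$ along $\bD^n\to\theta$ is again an equivalence, in particular a map of cocartesian fibrations.

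\emph{Second}, by \cref{reuft}, $\Fun^\cocart_\theta$ means maps of locally cocartesian fibrations. Between cocartesian fibrations these are exactly the maps of cocartesian fibrations, by the remark following \cref{reuft}. Therefore the maximal subspaces of the restricted equivalence give the mapping spaces in the non-full subcategories, and $R$ together with $\epsilon$ exhibits the required right adjoint in (2) as well.
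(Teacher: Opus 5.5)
Your part (1) is correct and follows the paper's route. The paper deduces the corollary directly from the preceding proposition: $R(X)=\colim_i(\bD^{n_i}\times_\theta X)$ is cocartesian by the corollary after \cref{inverso}, and the universal property comes from that proposition together with \cref{expone}. You may keep the equivalence on morphism $\infty$-categories rather than passing to maximal subspaces.

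Part (2), however, has a genuine gap, located exactly where you flagged the subtlety.

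\begin{itemize}
\item The claim that every cell of $\theta$ factors through a standard disk inclusion is false. In $\theta=\bD^1\vee\bD^1=[2]$, the composite $0\to 2$ lies in neither glued piece.
\item The conclusion itself also fails. Write $f_{01},f_{12},f_{02}$ for the transports of $X$. Over $0\to 2$, the fibration $R(X)$ has transport $f_{12}f_{01}$. The functor $\epsilon$ sends the cocartesian edge $x\to f_{12}f_{01}(x)$ of $R(X)$ to the edge of $X$ classified by the comparison map $f_{02}(x)\to f_{12}f_{01}(x)$ in $X_2$. That edge is cocartesian over $0\to 2$ only if the comparison is invertible.
\end{itemize}

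Here is a concrete counterexample. Take the poset $X=\{x_0<x_1<b,\ x_0<a<b\}$ over $[2]$, with $x_0\mapsto 0$, $x_1\mapsto 1$ and $a,b\mapsto 2$.
\begin{itemize}
\item $X$ is locally cocartesian, with local lifts $x_0\to x_1$, $x_1\to b$ and $x_0\to a$, but it is not cocartesian.
\item $R(X)=\{x_0<x_1<b,\ a<b\}$.
\item The cocartesian edge $x_0\to b$ of $R(X)$ over $0\to 2$ is not cocartesian in $\bD^1\times_\theta X$, because $\Map_X(b,a)=\emptyset$ while $\Map_X(x_0,a)\neq\emptyset$.
\end{itemize}
So $\epsilon$ is not a morphism of the non-full subcategory, and $(R,\epsilon)$ cannot be the coreflection there. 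In fact, in this example any map of locally cocartesian fibrations from a cocartesian $Y$ to $X$ forces $Y_0=\emptyset$. Hence a right adjoint in (2), if one exists, must differ from $R$ already over $0$.

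Citing the restricted half of the preceding proposition does not repair this. Applied to $Y=R(X)$ and the identity, it would say that $\epsilon$ is a map of locally cocartesian fibrations, which the example contradicts. Its justification in the paper tests only the standard disk inclusions. That is legitimate for functors between cocartesian fibrations, but not for functors into a merely locally cocartesian $X$. There, pullbacks along all $\bD^n\to\theta$ must be checked, including composites such as $0\to 2$.

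To prove (2) you need one of the following:
\begin{itemize}
\item a different construction of the right adjoint (in the example, one that discards the objects of $X_0$ at which the comparison fails to be invertible), or
\item a notion of map of locally cocartesian fibrations that is tested only on the standard disks.
\end{itemize}
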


The 1-cartesian fibration $\Fun(\bD^1, \infty\scat) \to \infty\scat$ evaluating at the target restricts to a 1-cartesian fibration
$ \mathrm{COCART} \to \infty\scat $ since cocartesian fibrations are stable under pullback, which in particular classifies a functor $\infty\Cat^\op \to \infty\widehat{\Cat}.$

\begin{corollary}

The presheaf $\infty\Cat^\op \to \infty\widehat{\Cat}$ classified by the 1-cartesian fibration $\mathrm{COCART} \to \infty\scat$ preserves small limits.
    
\end{corollary}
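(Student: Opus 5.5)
The statement asks that the presheaf $S \mapsto \iota_1(\infty\scat^\cocart_{/S})$, i.e. the functor classified by the 1-cartesian fibration $\mathrm{COCART}\to\infty\scat$, send small colimits in $\infty\Cat$ to limits. I would prove this by reducing it to the Grothendieck construction and the representability of functor categories.

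\textbf{Step 1: identify the presheaf with a functor-category presheaf.} By \cref{Groeq}, for every $\infty$-category $S$ the Grothendieck construction $\int_S: \Fun(S,\infty\scat)\to\infty\scat^\cocart_{/S}$ is an equivalence. By its definition as a map of cartesian fibrations over $\infty\scat$, pulling back along the universal cocartesian fibration is natural in $S$. So on fibers we obtain a natural equivalence between the presheaf classified by $\mathrm{COCART}\to\infty\scat$ and the presheaf $S\mapsto \iota_1\Fun(S,\infty\scat)$. This is the restriction of (\ref{GroCon}), which by \cref{fiberwiseeq}, applied fiberwise as in \cref{clas}, is an equivalence of cartesian fibrations.

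\textbf{Step 2: the functor-category presheaf preserves limits.} The functor $\Fun(-,\infty\scat): \infty\Cat^\op\to\infty\widehat{\Cat}$ is right adjoint to $(-)\times\infty\scat$ in the appropriate sense, by cartesian closedness (\cref{carclo}). Concretely, for every $\infty$-category $K$,
\[
\Map(K,\Fun(\colim_i S_i,\infty\scat))\simeq\Map(K\times\colim_i S_i,\infty\scat)\simeq\lim_i\Map(K\times S_i,\infty\scat),
\]
because $K\times(-)$ preserves colimits, being a left adjoint. By Yoneda, $\Fun(\colim_i S_i,\infty\scat)\simeq\lim_i\Fun(S_i,\infty\scat)$. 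Passing to underlying categories preserves limits, so the claim follows by transport along Step 1.

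\textbf{Alternative route.} One could instead argue directly through \cref{inverso} and \cref{expone}. The comparison functor $\infty\scat^\cocart_{/\colim F}\to\lim_k\infty\scat^\cocart_{/F(k)}$ has a candidate inverse $X\mapsto\colim_k X(k)$. This is a cocartesian fibration restricting correctly, so the comparison is essentially surjective. Full faithfulness follows from \cref{expful}, since cocartesian fibrations are exponentiable (\cref{expone}); one must also check that the cocartesian-map condition is detected on the pieces.

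\textbf{Main obstacle.} The main obstacle is naturality in Step 1, i.e. that the fiberwise equivalences assemble into an equivalence of presheaves rather than objectwise equivalences. This is exactly what \cref{fiberwiseeq} provides for the map of cartesian fibrations (\ref{GroCon}). In the alternative route, the corresponding obstacle is that last detection step: in full faithfulness, verifying that a map whose restrictions to every $F(k)$ are maps of cocartesian fibrations is itself one. I would handle it via \cref{thetalocal}, since every functor $\theta\to\colim F$ from a $\Theta$-object can be checked locally.
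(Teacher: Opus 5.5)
Your Steps 1--2 rest on identifying the fiber of $\mathrm{COCART}\to\infty\scat$ over $S$ with $\infty\scat^\cocart_{/S}$, and that is where the argument breaks.

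$\mathrm{COCART}$ is cut out of $\Fun(\bD^1,\infty\scat)$ by a condition on objects only. The paper justifies the restriction solely by stability of cocartesian fibrations under pullback, whereas for $\mathrm{Loc}\mathrm{Cocart}$ it also requires maps of such. In the paper's proof, the morphism $\infty$-categories of the fiber are $\Fun_{S}(X,Y)$, i.e. all functors over $S$. So the fiber is the full subcategory of $\infty\scat_{/S}$ spanned by cocartesian fibrations. Its mapping $\infty$-categories are $\Fun_S(X,Y)\simeq\Fun^\cocart_S(\Env(X),Y)$ by \cref{envelo}, not $\Fun^\cocart_S(X,Y)$.

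\cref{Groeq} identifies only the non-full subcategory $\infty\scat^\cocart_{/S}$ with $\Fun(S,\infty\scat)$. So the equivalence of presheaves in your Step 1 does not hold for the presheaf in the statement. Steps 1--2 therefore only show that $S\mapsto\infty\scat^\cocart_{/S}$ preserves limits, which is essentially \cref{Groeq} plus cartesian closedness and does not give the corollary. (Also, the presheaf is $\infty\widehat{\Cat}$-valued, so there is no reason to pass to $\iota_1$.)

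The missing ingredient is exponentiability, and your alternative route is where it lives; that route is essentially the paper's proof. The comparison functor goes from the fiber over $\colim F$ to $\lim_k$ of the fibers over $F(k)$.
\begin{itemize}
\item On maximal subspaces it is an equivalence. These subspaces agree with those of $\infty\scat^\cocart_{/-}$, since equivalences are maps of cocartesian fibrations, so \cref{Groeq} (or your \cref{inverso} argument) applies.
\item On morphism $\infty$-categories it is $\Fun_{\colim F}(X,Y)\to\lim_k\Fun_{F(k)}(F(k)\times_{\colim F}X,F(k)\times_{\colim F}Y)$. This is an equivalence by \cref{expful}, because $X$ is exponentiable by \cref{expone}.
\end{itemize}
With the correct reading of $\mathrm{COCART}$, the step you single out as the main obstacle is not needed at all. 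That step was detecting on the pieces, via \cref{thetalocal}, that a map is a map of cocartesian fibrations; here morphisms are arbitrary functors over the base. Make the alternative route the proof and discard Steps 1--2.
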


\begin{proof}

Let $K$ be a category and $F: K \to \infty\Cat $ a functor.
The induced functor
$$ \mathrm{coCart}_{\colim(F)} \to \lim_{k \in K} \mathrm{coCart}_{F(k)} $$ induces on maximal subspaces the
canonical map
$$ \iota_0(\infty\scat^\cocart_{/\colim(F)}) \to \lim_{k \in K}  \iota_0(\infty\scat^\cocart_{/F(k)}) $$
which is an equivalence since the Grothendieck construction is an equivalence by \cref{Groeq}.

Moreover the induced functor
$$ \mathrm{coCart}_{\colim(F)} \to \lim_{k \in K}  \mathrm{coCart}_{F(k)} $$ induces on morphism $\infty$-categories between
cocartesian fibrations $X \to \colim(F), Y \to \colim(F) $
the canonical functor
$$ \Fun_{\colim(F)}(X,Y) \to \lim_{k \in K} \Fun_{F(k)}(F(k) \times_{\colim(F)} X, F(k) \times_{\colim(F)} Y), $$
which is an equivalence by \cref{expful} since every cocartesian fibration is exponentiable by \cref{expone}.
\end{proof}

\begin{definition}
A double $\infty$-category is a Segal $\Theta$-object in $\infty\Cat,$ 
a functor $\Theta^\op \to \infty\Cat$ that satisfies the Segal condition,
i.e. which is local in the $(\infty\Cat, \times)$-enriched sense with 
respect to the map 
$$ \N(\bD^{i_0}) \coprod_{\N(\bD^{j_1})} \N(\bD^{i_1})\coprod_{\N(\bD^{j_2})} \cdots \N(\coprod_{\bD^{j_n}}) \N(\bD^{i_n}) \to \N(\bD^{i_0} \coprod_{\bD^{j_1}} \bD^{i_1} \coprod_{\bD^{j_2}} \cdots \coprod_{\bD^{j_n}} \bD^{i_n})$$
for any sequence of natural numbers $n, i_0,\ldots, i_n, j_1,\ldots, j_n$ and monomorphisms 
$ \bD^{j_\ell} \rightarrowtail \bD^{i_\ell},\bD^{j_\ell} \rightarrowtail \bD^{i_{\ell-1}}$, $ 1 \leq \ell \leq n$.

\end{definition}

\begin{corollary}
The presheaf $\Theta^\op \to \infty\widehat{\Cat}$ classified by the 1-cartesian fibration $\Theta \times_{\infty\scat} \mathrm{COCART} \to \Theta$ is a double $\infty$-category.

\end{corollary}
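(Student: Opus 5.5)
We must show that the presheaf $\Theta^\op \to \infty\widehat{\Cat}$, $\theta \mapsto \infty\scat^\cocart_{/\theta}$ (the fibers of $\Theta \times_{\infty\scat} \mathrm{COCART} \to \Theta$ with their transport functors), satisfies the Segal condition. That is, for every $\theta = \bD^{i_0} \coprod_{\bD^{j_1}} \cdots \coprod_{\bD^{j_n}} \bD^{i_n}$ built from monomorphisms $\bD^{j_\ell} \rightarrowtail \bD^{i_\ell}, \bD^{j_\ell} \rightarrowtail \bD^{i_{\ell-1}}$, the canonical restriction functor
$$\infty\scat^\cocart_{/\theta} \to \infty\scat^\cocart_{/\bD^{i_0}} \times_{\infty\scat^\cocart_{/\bD^{j_1}}} \cdots \times_{\infty\scat^\cocart_{/\bD^{j_n}}} \infty\scat^\cocart_{/\bD^{i_n}}$$
should be an equivalence. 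Locality in the $(\infty\Cat,\times)$-enriched sense is the same as this pullback condition holding for the values of the presheaf. Since $\infty\Cat$ is cotensored, checking it on underlying $\infty$-categories suffices; alternatively one may apply $\Fun(K,-)$ and reduce to the same statement.

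The plan is to deduce this directly from the preceding corollary: the presheaf $\infty\Cat^\op \to \infty\widehat{\Cat}$ classified by $\mathrm{COCART} \to \infty\scat$ sends small colimits to limits. The Segal condition only requires that our presheaf, obtained by restricting that presheaf along $\Theta \subset \infty\Cat$, carry the specified iterated pushout decompositions of objects of $\Theta$ to the corresponding iterated pullbacks. Step one is therefore to confirm, using the remark after \cref{Theta} together with \cref{theta}, that the iterated pushout $\bD^{i_0} \coprod_{\bD^{j_1}} \cdots \coprod_{\bD^{j_n}} \bD^{i_n}$ is an honest colimit in $\infty\Cat$. Step two applies the preceding corollary to this finite diagram to obtain the required equivalence. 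Step three checks that the comparison maps agree with the Segal maps of the restricted presheaf. This holds because the transport functors of the 1-cartesian fibration are pullbacks, so restriction along each disk inclusion is precisely the Segal projection.

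The only delicate point is that the preceding corollary was stated for the cartesian fibration $\mathrm{COCART} \to \infty\scat$, where it is viewed as a presheaf valued in $\infty\widehat{\Cat}$. One must make sure that the fibers are taken with the full $\infty$-categorical mapping objects $\Fun_S(X,Y)$ of maps of cocartesian fibrations, and not just underlying 1-categories. The proof of that corollary handles objects via \cref{Groeq} and morphism $\infty$-categories via \cref{expful} and \cref{expone}, so it gives an equivalence of $\infty$-categories and not merely of spaces, which is what we need. The remaining enrichment issue, that locality with respect to $(\infty\Cat,\times)$ requires the pullback equivalence after applying $\Fun(K,-)$, is automatic because $\Fun(K,-)$ preserves limits. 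I expect no substantial obstacle beyond carefully identifying the Segal maps.
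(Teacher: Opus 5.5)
Your proposal is correct and follows the paper's route. The paper records this corollary without separate proof, as an immediate consequence of the preceding one: restrict the limit-preserving presheaf classified by $\mathrm{COCART} \to \infty\scat$ along $\Theta \subset \infty\Cat$, and use that the Segal decompositions $\bD^{i_0} \coprod_{\bD^{j_1}} \cdots \coprod_{\bD^{j_n}} \bD^{i_n}$ are pushouts in $\infty\Cat$. (A cosmetic point: the proof of the preceding corollary computes the morphism $\infty$-categories of the fibers as $\Fun_\theta(X,Y)$, all functors over $\theta$, so calling the fibers $\infty\scat^\cocart_{/\theta}$, whose morphisms are maps of cocartesian fibrations, is a slight abuse that does not affect your argument.)
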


\begin{theorem}\label{expright} Let $S$ be an $\infty$-category.
The embedding $$\infty\scat^\cocart_{/S} \subset \infty\scat^{\mathrm{loc}. \vspace{1mm} \cocart}_{/S}$$
admits a right adjoint.

\end{theorem}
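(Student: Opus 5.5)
We construct the right adjoint by density over $\Theta$, localizing the question to the pieces of $S$ where \cref{thetaadj} already gives the answer. Write $\mathrm{Cocart}(S)$ for $\infty\scat^\cocart_{/S}$ and $\mathrm{Loc}(S)$ for $\infty\scat^{\mathrm{loc}. \vspace{1mm} \cocart}_{/S}$. For a locally cocartesian fibration $X \to S$ we want a cocartesian fibration $RX \to S$ and a map $RX \to X$ of locally cocartesian fibrations. This map should induce an equivalence $\Fun^\cocart_S(Y,RX) \simeq \Fun^\cocart_S(Y,X)$ for every cocartesian $Y \to S$, where the right side is as in \cref{reuft}.

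First, both assignments $S \mapsto \mathrm{Cocart}(S)$ and $S \mapsto \mathrm{Loc}(S)$ turn the density colimit $S \simeq \colim_{\theta \to S} \theta$, taken over $\Theta_{/S}$, into a limit of $\infty$-categories.
\begin{itemize}
\item For cocartesian fibrations this is the corollary preceding the statement (the $\mathrm{COCART}$ presheaf preserves limits), together with \cref{inverso}.
\item For locally cocartesian fibrations we argue directly. Being locally cocartesian, and being a map of such, is tested on disks $\bD^n \to S$, and every disk factors through some $\theta$. So $\mathrm{Loc}(S)$ is the full sub-$\infty$-category of $\infty\scat_{/S}$ cut out by conditions on the restrictions to the $\theta$'s.
\end{itemize}
What remains is the comparison of $\infty\scat_{/S}$ with $\lim_\theta \infty\scat_{/\theta}$. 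This is not automatic, since non-exponentiable maps do not glue. We therefore only use that mapping $\infty$-categories glue when the source is exponentiable, which is \cref{expful}; this suffices because the source $Y$ is cocartesian and hence exponentiable by \cref{expone}.

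Second, on each $\theta$ we use the right adjoint $R_\theta$ from \cref{thetaadj}(2). The preceding proposition describes it explicitly: $R_\theta X = \colim_i (\bD^{n_i} \times_\theta X)$ over the standard decomposition $\theta = \colim_i \bD^{n_i}$. This description is compatible with base change along maps $\theta' \to \theta$ in $\Theta_{/S}$. To see this, use \cref{standardexp} and the fact that a map $\theta' \to \theta$ carries the standard decomposition of $\theta'$ into the disks of $\theta$. Consequently the family $\{R_\theta(\theta \times_S X)\}_\theta$ is an object of $\lim_\theta \mathrm{Cocart}(\theta)$. By \cref{inverso}, its colimit $RX := \colim_\theta R_\theta(\theta \times_S X) \to S$ is a cocartesian fibration. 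The counits $R_\theta(\theta\times_S X) \to \theta \times_S X \to X$ assemble into a map $RX \to X$ over $S$, and this map is a map of locally cocartesian fibrations because that property is checked on disks.

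Third, we verify the universal property. For $Y \to S$ cocartesian, \cref{expful} gives
\[
\Fun_S(Y,X) \simeq \lim_\theta \Fun_\theta(\theta\times_S Y, \theta \times_S X).
\]
This equivalence restricts to the subcategories of maps of locally cocartesian fibrations, since that condition is checked on disks. Applying the same reasoning to $RX$ gives the analogous decomposition of $\Fun^\cocart_S(Y,RX)$. The adjunction on each $\theta$ then identifies the two limits.

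The main obstacle is the base-change compatibility of $R_\theta$. We must show that for every $\theta' \to \theta$ the canonical map $\theta' \times_\theta R_\theta(X) \to R_{\theta'}(\theta'\times_\theta X)$ is an equivalence, functorially in $\Theta_{/S}$. We would prove it by pulling back the colimit $\colim_i(\bD^{n_i}\times_\theta X)$ along $\theta' \to \theta$. Since $R_\theta(X)$ is cocartesian, it is exponentiable by \cref{expone}, and the pullback decomposes compatibly by \cref{standardexp} and \cref{inverso}. If this fails for non-standard maps $\theta' \to \theta$, as the remark after \cref{standardexp} warns it might, we would instead index the limit only over disks $\bD^n \to S$ and their face inclusions, where the disk case is trivial.
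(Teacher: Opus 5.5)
\textbf{The gap.} The step you single out as the main obstacle is not merely hard; it is false. The right adjoints $R_\theta$ of \cref{thetaadj} do not commute with base change along maps in $\Theta_{/S}$. Hence $\{R_\theta(\theta\times_S X)\}_\theta$ is not an object of $\lim_\theta\infty\scat^\cocart_{/\theta}$, and neither the colimit $RX$ nor the map $RX\to X$ is defined.

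The "fact" you invoke, that a map $\theta'\to\theta$ carries the disks of $\theta'$ into disks of $\theta$, already fails for the non-standard edge $g\colon\bD^1\to\theta=\bD^1\vee\bD^1$ from $0$ to $2$. The resulting failure does not depend on any formula for $R_\theta$. Over a disk, locally cocartesian fibrations and their maps are just cocartesian ones (pull back along the identity), so $R_{\bD^n}=\id$.

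For a counterexample, let $X\to\theta$ be the poset with $a<b<c_1$ and $a<c_0<c_1$ (where $b,c_0$ are incomparable), mapping $a\mapsto 0$, $b\mapsto 1$ and $c_0,c_1\mapsto 2$. It is locally cocartesian, with transports $F_{01}(a)=b$, $F_{12}(b)=c_1$ and $F_{02}(a)=c_0$, so $F_{02}\not\simeq F_{12}F_{01}$. Suppose $R_\theta X$ restricted to $X$ along both standard edges and along $g$. Since $R_\theta X$ is cocartesian, its transport over $g$ would then be both $F_{12}F_{01}$ and $F_{02}$, a contradiction.

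Compatibility even fails along the standard vertex inclusion of $0$. Since $0$ is initial in $\theta$, adjunction gives $\iota_0((R_\theta X)_0)\simeq\Map_{\infty\scat^\cocart_{/\theta}}(\theta,R_\theta X)$. This is the space of sections of $X$ carrying all three edges to locally cocartesian edges, and there are none. So $(R_\theta X)_0=\emptyset\neq X_0$. For the same reason, the map $\colim_i(\bD^{n_i}\times_\theta X)\to X$ sends the composite cocartesian edge $a\to c_1$ over $g$ to an edge that is not locally cocartesian. It is therefore not a map of locally cocartesian fibrations and cannot serve as the counit.

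Your fallback does not help either. Disks with their face inclusions are not dense in $\infty\Cat$: for $S=\bD^1\vee\bD^1$ their colimit is the free category on three arrows, in which $0\to 2$ is not a composite. So $\infty\scat^\cocart_{/S}$ is not the limit over disks. That limit is blind to composition in $S$, which is exactly what separates cocartesian from locally cocartesian fibrations.

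\textbf{The missing idea.} The paper never asks the $R_\theta$ to be compatible; it only uses that they are fiberwise right adjoints.
\begin{enumerate}
\item Passing to the dual cocartesian fibrations over $\Theta^\circ$, the fiberwise right adjoints of \cref{thetaadj} assemble into a right adjoint \emph{relative} to $\Theta^\circ$. This relative right adjoint does not preserve cocartesian morphisms.
\item Applied to the restriction $\nu(X)$, it produces a section over $\Theta_{/S}$ that is not cocartesian, i.e.\ an object of the lax limit of $\theta\mapsto\infty\scat^\cocart_{/\theta}$. This is essentially your family $\{R_\theta(\theta\times_S X)\}_\theta$ together with its comparison maps $\theta'\times_\theta R_\theta(-)\to R_{\theta'}(\theta'\times_\theta -)$, which are not equivalences.
\item The paper then applies the right adjoint of the inclusion of cocartesian sections (the strict limit, which is $\infty\scat^\cocart_{/S}$) into all sections. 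This right adjoint exists by \cite[Theorem~10.3]{articles}, because the fibers are presentable and base change along $\theta'\to\theta$ is a left adjoint.
\end{enumerate}
This coreflection is what replaces your base-change compatibility. With it, the universal property is verified exactly as in your third step, via \cref{expful} and \cref{expone}.
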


\begin{proof}

For every cartesian fibration of 1-categories $A \to B$ let
$A^\vee \to B^\circ$ be the cocartesian fibration of 1-categories which classifies the same functor.

By \cref{thetaadj} the embedding $$\Theta \times_{\infty\scat}  \mathrm{coCart} \subset \Theta \times_{\infty\scat}  \mathrm{Loc}\mathrm{Cocart} $$ of cartesian fibrations over $\Theta $
admits fiberwise a right adjoint.
Hence the induced embedding $$(\Theta \times_{\infty\scat}  \mathrm{coCart})^\vee \subset (\Theta \times_{\infty\scat}  \mathrm{Loc}\mathrm{Cocart})^\vee $$ of cocartesian fibrations over $\Theta^\circ $
admits fiberwise a right adjoint and so admits a right adjoint relative to $\Theta. $

The adjunction $$ (\Theta \times_{\infty\scat} \mathrm{coCart})^\vee \rightleftarrows (\Theta \times_{\infty\scat} \mathrm{Loc}\mathrm{Cocart})^\vee $$
relative to $\Theta^\circ$ induces for every cocartesian fibration $T \to \Theta^\circ $ an adjunction
$$ \Fun_{\Theta^\circ}(T, (\Theta \times_{\infty\scat} \mathrm{coCart})^\vee) \rightleftarrows \Fun_{\Theta^\circ}(T, (\Theta \times_{\infty\scat} \mathrm{Loc}\mathrm{Cocart})^\vee).$$

The cartesian fibration $\Theta \times_{\infty\scat} \mathrm{coCart} \to \Theta$ has presentable fibers and left adjoint fiber transports. This implies by \cite[Theorem 10.3.]{articles} that the embedding \begin{equation}\label{colozt}
\Fun^\cocart_{\Theta^\circ}(T, (\Theta \times_{\infty\scat} \mathrm{coCart})^\vee) \subset \Fun_{\Theta^\circ}(T, (\Theta \times_{\infty\scat} \mathrm{coCart})^\vee) \end{equation} 
preserves small colimits and starts at a presentable category and so 
admits a right adjoint.

Composing with the colocalization (\ref{colozt})
we obtain an adunction 
$$ \Fun^\cocart_{\Theta^\circ}(T, (\Theta \times_{\infty\scat} \mathrm{coCart})^\vee) \rightleftarrows \Fun_{\Theta^\circ}(T, (\Theta \times_{\infty\scat} \mathrm{coCart})^\vee) \rightleftarrows \Fun_{\Theta^\circ}(T, (\Theta \times_{\infty\scat} \mathrm{Loc}\mathrm{Cocart})^\vee),$$
which restricts to an adjunction
$$ \Fun^\cocart_{\Theta^\circ}(T, (\Theta \times_{\infty\scat} \mathrm{coCart})^\vee) \rightleftarrows \Fun^\cocart_{\Theta^\circ}(T, (\Theta \times_{\infty\scat} \mathrm{Loc}\mathrm{Cocart})^\vee).$$

Let $$ \nu: \mathrm{Loc} \mathrm{Cocart}_S \to \lim_{\theta \in \Theta_{/S}} \mathrm{Loc} \mathrm{Cocart}_\theta $$
be the canonical functor and $\nu'$ the composition $$ \mathrm{coCart}_S \subset \mathrm{Loc} \mathrm{Cocart}_S \xrightarrow{\nu} \lim_{\theta \in \Theta_{/S}} \mathrm{Loc} \mathrm{Cocart}_\theta.$$

Using \cref{envelo}, we obtain an adjunction
$$ \nu': \mathrm{coCart}_S \simeq \lim_{\theta \in \Theta_{/S}} \mathrm{coCart}_{\theta} \simeq $$$$ \lim_{\theta \in \Theta_{/S}} \Fun^\cocart_{\Theta^\circ}((\Theta^\circ)_{\theta/},(\Theta \times_{\infty\scat} \mathrm{coCart})^\vee) \simeq $$
$$ \Fun^\cocart_{\Theta^\circ}(\colim_{\theta \in \Theta_{/S}} (\Theta^\circ)_{\theta/},(\Theta \times_{\infty\scat} \mathrm{coCart})^\vee) \simeq $$
$$ \Fun^\cocart_{\Theta^\circ}(\Theta^\circ \times_{\infty\scat^\circ} (\infty\scat^\circ)_{S/},(\Theta \times_{\infty\scat} \mathrm{coCart})^\vee) \rightleftarrows $$$$ \Fun^\cocart_{\Theta^\circ}(\Theta^\circ \times_{\infty\scat^\circ} (\infty\scat^\circ)_{S/},(\Theta \times_{\infty\scat} \mathrm{Loc} \mathrm{Cocart})^\vee) $$
$$ \simeq \Fun^\cocart_{\Theta^\circ}((\colim_{\theta \in \Theta_{/S}} (\Theta^\circ)_{\theta/},(\Theta \times_{\infty\scat}  \mathrm{Loc} \mathrm{Cocart})^\vee) $$$$ \simeq \lim_{\theta \in \Theta_{/S}} \Fun^\cocart_{\Theta^\circ}((\Theta^\circ)_{\theta/},(\Theta \times_{\infty\scat} \mathrm{Loc} \mathrm{Cocart})^\vee) \simeq $$$$ \lim_{\theta \in \Theta_{/S}} \mathrm{Loc}\mathrm{Cocart}_\theta: R.$$

So by \cref{expful} for every cocartesian fibration $ Z \to S$ and locally cocartesian fibration $ Y \to S$ we obtain a natural equivalence
$$ \Map_{\mathrm{coCart}_S}(Z,R(\nu(Y))) \simeq \Map_{\lim_{\theta \in \Theta_{/S}} \mathrm{Loc} \mathrm{Cocart}_\theta}(\nu(Z),\nu(Y)) \simeq \Map_{\mathrm{Loc} \mathrm{Cocart}_S}(Z,Y).$$
\end{proof}

\begin{remark}

By \cref{expright} the embedding $$\infty\scat^\cocart_{/S} \subset \infty\scat^{\mathrm{loc}. \vspace{1mm} \cocart}_{/S}$$
admits a right adjoint $\R.$
Hence by \cref{envelo} for every locally cocartesian fibration $Y \to S$ and $s \in S$
there is a canonical equivalence
$$ R(Y)_s \simeq \Fun^\cocart_S(S_{s //^\oplax}, R(Y)) \simeq \Fun^\cocart_S(S_{s //^\oplax}, Y),$$
describing the fibers of $R(Y).$

\end{remark}

\bibliographystyle{plain}
\bibliography{mainbib}

\end{document}